\setlist[enumerate]{leftmargin=16 pt,align=left,labelwidth=\parindent,labelsep=4pt}
\setlist[itemize]{leftmargin=16 pt,align=left,labelwidth=\parindent,labelsep=4pt}
\def\namedlabel#1#2{\begingroup
#2%
\def\@currentlabel{#2}%
\phantomsection\label{#1}\endgroup
}
\theoremstyle{theorem} 
\newtheorem{theorem}{Theorem}[section]
\newtheorem{corollary}[theorem]{Corollary}
\newtheorem{lemma}[theorem]{Lemma}
\newtheorem{setup}[theorem]{Setup}
\newtheorem{proposition}[theorem]{Proposition}
\theoremstyle{definition} 
\newtheorem{definition}[theorem]{Definition}
\newtheorem{question}[theorem]{Question}
\newtheorem{example}[theorem]{Example}
\newtheorem{remark}[theorem]{Remark}
\numberwithin{equation}{subsection}
\renewcommand{\(}{\left(}
\renewcommand{\)}{\right)}
\newcommand{\NN}{\mathbb{N}}
\newcommand{\RR}{\mathbb{R}}
\newcommand{\ZZ}{\mathbb{Z}}
\newcommand{\QQ}{\mathbb{Q}}
\newcommand{\FF}{\mathbb{F}}
\newcommand{\CC}{\mathbb{C}}
\newcommand{\PP}{\mathbb{P}}
\newcommand{\m}{\mathfrak{m}}
\newcommand{\n}{\mathfrak{n}}
\newcommand{\cJ}{{\mathcal J}}
\newcommand{\cB}{\mathscr{B}}
\newcommand{\cA}{\mathscr{A}}
\newcommand{\cP}{{\mathcal P}}
\newcommand{\cO}{{\mathcal O}}
\newcommand{\cC}{{\mathcal C}}
\newcommand{\cG}{{\mathcal G}}
\newcommand{\kk}{\Bbbk}
\newcommand*\cube{\mbox{\mancube}}
\newcommand{\shL}{\mathcal L}
\newcommand{\shF}{\mathcal F}
\newcommand{\degtotal}{m}
\newcommand{\degsym}{ n }
\newcommand{ \degoperator}{\ell}
\def\@tocline#1#2#3#4#5#6#7{\relax
  \ifnum #1>\c@tocdepth % then omit
  \else
    \par \addpenalty\@secpenalty\addvspace{#2}%
    \begingroup \hyphenpenalty\@M
    \@ifempty{#4}{%
      \@tempdima\csname r@tocindent\number#1\endcsname\relax
    }{%
      \@tempdima#4\relax
    }%
    \parindent\z@ \leftskip#3\relax \advance\leftskip\@tempdima\relax
    \rightskip\@pnumwidth plus4em \parfillskip-\@pnumwidth
    #5\leavevmode\hskip-\@tempdima
      \ifcase #1
       \or\or \hskip 1.9em \or \hskip 2em \else \hskip 3em \fi%
      #6\nobreak\relax
    \dotfill\hbox to\@pnumwidth{\@tocpagenum{#7}}\par
    \nobreak
    \endgroup
  \fi}
\newcommand{\Spec}{\operatorname{Spec}}
\newcommand{\Hom}{\operatorname{Hom}}
\newcommand{\Frac}{\operatorname{Frac}}
\newcommand{\End}{\operatorname{End}}
\newcommand{\Ker}{\operatorname{Ker}}
\newcommand{\IM}{\operatorname{Im}}
\newcommand{\gr}{\operatorname{gr}}
\newcommand{\e}{\operatorname{e}}
\newcommand{\Der}{\operatorname{Der}}
\newcommand{\Rank}{\operatorname{rank}}
\newcommand{\sep}[1]{{#1}^{\mathrm{sep}}}
\newcommand{\dif}[2]{^{{\langle #1\rangle}_{\! #2}}}
\newcommand{\Fdif}[1]{{ \llbracket #1 \rrbracket }}
\newcommand{\dm}[1]{\operatorname{s}_{#1}^{\operatorname{diff}}}
\newcommand{\pps}[1]{\operatorname{s}_{#1}^{\operatorname{pp}}}
\newcommand{\Sym}{\operatorname{Sym} }
\newcommand{\Syz}{\operatorname{Syz} }
\newcommand{\Proj}{\operatorname{Proj} }
\newcommand{\tensor}{\otimes}
\newcommand{\mondeg}[1] { \deg \, ( #1) }
\newcommand{\p}{\mathfrak{p}}
\newcommand{\q}{\mathfrak{q}}
\newcommand{\Q}{\mathbb{Q}}
\renewcommand{\a}{\mathfrak{a}}
\renewcommand{\b}{\mathfrak{b}}
\newcommand{\OO}{\mathcal{O}}
\newcommand{\vol}{\mathrm{vol}}
\newcommand{\Vol}{\mathrm{Vol}}
\newcommand{\frk}{\mathrm{freerank}}
\newcommand{\ModDif}[3]{{P}^{#1}_{{#2}|{#3}}}
\newcommand{\wModDif}[3]{\widehat{{P}}^{#1}_{{#2}|{#3}}}
\newcommand{\ddlambda}{{\frac{1}{\lambda !}} {{\partial^{\lambda}}} }
\newcommand{\NF}[1]{{#1}^{\bowtie}}
\newcommand{\lra}{\longrightarrow}
\definecolor{blue-violet}{rgb}{0.54, 0.17, 0.89}
\definecolor{Blue}{rgb}{0.01, 0.28, 1.0}
\definecolor{Green}{rgb}{0.04, 0.85, 0.32}
\begin{document}

\title{Quantifying Singularities With Differential Operators}

\author[H. Brenner]{Holger Brenner}
\address{Holger Brenner, Institut f\"ur Mathematik, Universit\"at Osnabr\"uck, Albrechtstrasse 28a, 49076 Osnabr\"uck, Germany}
\email{holger.brenner@uni-osnabrueck.de}

\author[J. Jeffries]{Jack Jeffries{$^1$}}
\address{Jack Jeffries, Centro de Investigaci\'on en Matem\'aticas, Guanajuato, Gto., M\'exico}
\email{jeffries@cimat.mx}

\author[L. N\'u\~nez-Betancourt]{Luis N\'u\~nez-Betancourt${^2}$}
\address{Luis N\'u\~nez-Betancourt, Centro de Investigaci\'on en Matem\'aticas, Guanajuato, Gto., M\'exico}
\email{luisnub@cimat.mx}

\thanks{{$^1$}The second author was partially supported by the NSF Grant DMS~\#1606353}
\thanks{{$^2$}The third author was partially supported by the NSF Grant DMS~\#1502282 and CONACYT Grant \#284598}

\subjclass[2010]{Primary 	14F10; 16S32; 14B05,  Secondary 13A35.}
\keywords{$D$-modules; Singularities; Numerical invariants; Rings of  invariants.}

\maketitle

\begin{center}
\dedicatory{\emph{Dedicated to Professor Gennady Lyubeznik on the occasion of his sixtieth birthday}}
\end{center}

\begin{abstract}
The $F$-signature of a local ring of prime characteristic is a
numerical invariant that detects many interesting properties. For example,
this invariant detects (non)singularity and strong $F$-regularity. However,
it is very difficult to compute.

Motivated by different aspects of the $F$-signature, we  define a numerical invariant for rings of characteristic zero or $p>0$
that exhibits many of the useful properties of the $F$-signature. We also
compute many examples of this invariant, including cases where the
$F$-signature is not known.

We also obtain a number of results on symbolic powers and Bernstein-Sato polynomials.
\end{abstract}

\setcounter{tocdepth}{2}
\tableofcontents

%%%%%%%%%%%%%%%%%%%%%%%%%%%%%%%%%%%%%%%%%%%%%%%%%%%%%%%%%%%%%

\section{Introduction}
The $F$-signature of a local ring $(R,\m,K)$ is an important numerical invariant for rings of prime characteristic $p>0$. 
Suppose that $R$ is a reduced ring, and the Frobenius map is a finite morphism. Then, we can work with the ring consisting of its $p^e$-roots, denoted by $R^{1/p^e}$, which is a module-finite extension of $R$. The free rank\index{free rank} of a finitely generated $R$-module $M$ is the largest rank of a free $R$-linear direct summand of $M$. 
Kunz \cite{KunzReg} showed that if $R$ is local, then $R$ is regular if and only $R^{1/p}$ is free (equivalently $R^{1/p^e}$ is free for every or some $e\geq 1$).
One may then expect that the free rank  $R^{1/p^e}$ quantifies the singularity of $R$. This information is encoded in the following invariant.

The $F$-signature of $R$ is defined by
$$
s(R)=\lim\limits_{e\to\infty }\frac{\frk_R(R^{1/p^e}) }{p^{e(d+\alpha)}},
$$
where $d=\dim(R)$, and $p^\alpha=\dim_K K^{1/p}$.
This number was implicitly introduced in the work on rings of differential operators  by Smith and Van den Bergh~\cite{SmithVDB} (see also \cite{SeibertHilbertKunz}) and formally defined by Huneke and Leuschke \cite{HLMCM}, provided that the limit exists. After partial results \cite{HLMCM,YaoObsFsig,AE,SinghSemigroup}, Tucker \cite{TuckerFSig}  showed that  the $F$-signature exists as a limit, rather than just a limsup, in general. 

Yao \cite{YaoObsFsig} gave an important characterization of the $F$-signature in terms of ideals defined by Cartier maps.
Let 
\[I_e=\{r\in R\ |\ \phi(r^{1/p^e})\in\m \hbox{ for every }\phi\in\Hom_R(R^{1/p^e},R)\}.\]
Then, 
\[
s(R)=\lim\limits_{e\to\infty }\frac{\lambda_R(R/I_e)}{p^{ed}}, \]
where $\lambda_R(M)$ denotes the length of $M$ as an $R$-module.

The $F$-signature has a number of properties that relate to different aspects of the singularity of $R$. For example,
\begin{itemize}
	\item[(i)] $s(R)\in [0,1]$.
	\item[(ii)] $R$ is regular if and only if $s(R)=1$ \cite{HLMCM}.
	\item[(iii)] $R$ is strongly $F$-regular if and only if $s(R)>0$ \cite{AL}.
\end{itemize}

Unfortunately, computing the $F$-signature is a very difficult task. Some notable examples include particular rings of invariants. In particular, $s(R^G)=\frac{1}{|G|}$ when $G$ is a finite subgroup of $\mathrm{SL}_n(K)$ of invertible order acting linearly on a polynomial ring $R$. 
In addition, the $F$-signature of an affine toric variety can be computed as the volume of a certain polytope \cite{WatanabeYoshida,VonKorff} (see also \cite{SinghSemigroup}).

By its nature, there is not a notion of $F$-signature in characteristic zero. However, there has been work trying to find an appropriate analogue. Some approaches to this are via reduction to positive characteristic, symmetric powers of syzygies and of K\"{a}hler differentials \cite{SymSig,BCHigh}.

Suppose that $K$ is a perfect field and $R$ is the localization of a finitely generated $K$-algebra  at a maximal  ideal.
As $R^{1/p}$ detects singularity in prime characteristic, the module of  K\"{a}hler differentials, $\Omega_{R|K}$, detects singularity.
As a characteristic-free analogue for $R^{1/p^e}$, we consider ``higher''  K\"{a}hler differentials.
%We also give a description of the differential signature in terms of the free rank of modules.
Let  $\ModDif{n}{R}{K} = (R \otimes_K R) /  \Delta_{R|K}^{n+1}$,
where $\Delta_{R|K}$ is the kernel of the multiplication map $\mu:R\otimes_K R \rightarrow R$. These modules are known as the \emph{modules of principal parts}, introduced by Grothendieck \cite[D\'{e}finition 16.3.1]{EGAIV}.
We note that the module of  $K$-linear K\"{a}hler differentials of $R$ is isomorphic to 
$\Delta_{R|A}/\Delta^2_{R|A}$ and that $D^n_{R|K}\cong \Hom_R(\ModDif{n}{R}{K},R)$, where $ D^n_{R|K} $ denotes the $R$-module of all differential operators on $R$ of order at most $n$. In Theorem~\ref{Mel}, 
we give a  characteristic-free characterization of regularity: $R$ is regular if and only if 
$\ModDif{n}{R}{K}$ is free for every (some) $n\geq 1$.
As with the $F$-signature, one can expect that the free rank of $\ModDif{n}{R}{K}$  quantifies singularity. 
In this work, we introduce a numerical invariant that does this.
We define the  $K$-principal parts signature  of $R$ by
$$\pps{K}(R)=\limsup\limits_{n\to\infty}\frac{\frk_R(\ModDif{n}{R}{K} )}{\Rank( \ModDif{n}{R}{K} ) }.$$
The free rank of $\ModDif{n}{R}{K}$ has an easy interpretation in terms of partial differential equations on $R$. It is the maximal $t$ such that there exists a surjection $\ModDif{n}{R}{K} \rightarrow R^t$. Such a surjection is the same as an independent collection of $t$ differential operators $\delta_1, \ldots, \delta_t$ of order $\leq n$ with the property that the algebraic partial differential equation $\delta_j(-) = 1$ has a solution in $R$, see Lemma \ref{unitarysystem}. Hence the differential signature is a measure for the asymptotic size of such ``unitary'' operators.

We observe that if ``free rank'' is replaced by ``minimal number of generators'' in the previous definition, then one obtains the Hilbert-Samuel multiplicity of $R$. This characterization motivates the following analogy: the principal parts  signature is to the $F$-signature as Hilbert-Samuel multiplicity is to the Hilbert-Kunz multiplicity  (see Remark~\ref{analogy}).

Unfortunately, the module of principal parts might not be a finitely generated $R$-module for rings that are not $K$-algebras essentially of finite type. 
In order to have a definition of a signature for any local $K$-algebra, we make use of the action of differential operators on the ring $R$. 
%This is motivated by the differential nature of the $F$-signature and Yao's characterization, we define an invariant that works in all equal characteristics.
Suppose that $(R,\m)$ is a local ring containing a field $K$ of any characteristic.
We consider the \emph{differential powers} of $\m$, which are given by
$$\m\dif{n}{K} =\{ r\in R\ |\ \delta(r)\in\m \hbox{ for every }\delta\in D^{n-1}_{R|K} \}.$$ 
For prime ideals in polynomial rings, the differential powers coincide with symbolic powers, by the Zariski-Nagata Theorem \cite{ZariskiHolFunct,Nagata} (see also \cite{SurveySP}).
We define the $K$-differential signature of $R$ by
\[
\dm{K}(R)=\limsup\limits_{n\to\infty}\frac{\lambda_R(R/\m\dif{n}{K})}{n^d / d!}.\]

In Theorem~\ref{ThmDiffSigRanks},  we show that 
if $R$ is the localization of a finitely generated $K$-algebra at a maximal ideal, with $K=\overline{K}$, then $\dm{K}(R)=\pps{K}(R)$. In fact, we show this equality in a more general and technical setting.

We are  able to show that the differential signature shares several features with the $F$-signature. 
%In the following, and throughout the paper, we  say that $(R,\m,K)$ is an \emph{algebra with coefficient field $K$} if $R$ is a $K$-algebra that is either $\NN$-graded with $R_0=K$ or local and essentially of finite type over $K$, and $R/\m\cong K$; see Definition~\ref{def-pseudocoefficient} for a slightly more general setup.
Let $R$ be a reduced ring that is the localization of a finitely generated $K$-algebra at a maximal ideal, with $K$ perfect; see Definition~\ref{def-pseudocoefficient} for a slightly more general setup.
\begin{itemize}
	%\item[(i)] If $K$ is perfect and $R$ is a domain, then  $\dm{K}(R)\in [0,1]$. (Corollary~\ref{leq-1})
	\item[(i)] If $R$ is a domain, then  $\dm{K}(R)\in [0,1]$. (Corollary~\ref{leq-1})
	%\item[(ii)] If $K$ is perfect and $R$ regular, then $\dm{K}(R)=1$. (Example~\ref{reg-1})
	\item[(ii)] If $R$ is regular, then $\dm{K}(R)=1$. (Example~\ref{reg-1})
	\item[(iii-a)] If $\dm{K}(R)>0$, then $R$ is a simple $D_{R|K}$-module. (Theorem~\ref{ThmDifMultDsimple})
	\item[(iii-b)] If $R$ is a graded Gorenstein domain in characteristic zero with an isolated singularity and  $\dm{K}(R)>0$, then $R$ has negative $a$-invariant. 
   (Theorem~\ref{Possiganeg})
	\item[(iii-c)] If $K$ has positive characteristic and $R$ is $F$-pure, then $\dm{K}(R)>0$  if and only if $R$ is  strongly $F$-regular. (Theorem~\ref{ThmFregPos})
	\item[(iii-d)] If $K$ has characteristic zero, $R$ has dense $F$-pure type, the anticanonical cover of $R$ is finitely generated, and  $\dm{K}(R)>0$, then $R$ is log-terminal. (Theorem~\ref{ThmKLTPos})
	\item[(iii-e)] If $R$ is a direct summand of a regular ring, then $\dm{K}(R)>0$. (Theorem~\ref{ThmDirSumPos})
\end{itemize}

The behavior of the $F$-signature in a relative setting, say over $\Spec \ZZ$, is not well understood, since it is not possible to compare the splitting behavior of the Frobenius morphisms for different prime characteristics. An advantage of the differential signature is that its definition refers to the module $P^n_{R|K}$ which behaves nicely in a relative setting. See Subsection \ref{SubSecrelative} and in particular Corollaries~\ref{freerankprincipalrelative}	and \ref{freerankprincipalrelativesequence}.

We are able to compute many examples of differential signature of interesting rings. For instance,  $\dm{K}(R^G)=\frac{1}{|G|}=s(R^G)$ when $G$ is a finite subgroup of $\mathrm{SL}_n$ of invertible order acting linearly on a polynomial ring $R$ in Theorem~\ref{group-formula}. In Theorem~\ref{quadricsignature}, we show that $\dm{K}(R)=\left(\frac{1}{2} \right)^{d-1}$ for a quadric hypersurface of dimension $d\geq 2$. We also give a formula for the differential signature of a normal affine toric ring in terms of the volume of a certain polytope in Theorem~\ref{ThmToric}. The values we obtain are positive and rational, but may differ from the values of the $F$-signature. However, the formulas are highly analogous; roughly, up to a common scaling factor, the $F$-signature is the volume of the intersection of the $d$-dimensional unit cube with a linear subspace and the differential signature is $d!$ times the volume of the $d$-dimensional unit simplex intersected with the same linear subspace.
 {In Theorem~\ref{ThmDet}, we  compute the differential signature of $K[X]/I_t(X)$, where $K$ is a field of characteristic zero, $X$ is a generic matrix, and $I_t(X)$ is the ideal generated by the $t\times t$-minors of $X$.} Formulas for symmetric and skew-symmetric rank varieties appear in the same section. We point out that the $F$-signature for these classes of rings is not known for $t>2$.

We expect that the differential signature will find applications to geometry and singularity theory. One such application is in the forthcoming work \cite{JS}, where differential signature is applied to give a characteristic-free approach to bounding \'etale fundamental groups of singularities.

Unfortunately, we are not able to show in general that the differential signature exists as a limit rather than just a limsup. In Theorem~\ref{existenceandraionality}, we show that the limit exists and is rational when $R$ is an algebra with coefficient field $K$ and the associated graded ring of the differential operators with respect to the order filtration is finitely generated. Moreover, every statement about the differential signature in this paper, including (non)vanishing, bounds, and computations, is equally valid for the liminf definition as for the limsup. However, the simple example $\CC[x^2,x^3]$ in Example~\ref{example-not-D-graded} illustrates why recent advances in convergence of numerical limits in Commutative Algebra do not apply to differential signature. 
%The same example, in Example~\ref{example-equals-1}, also illustrates the necessity of the hypotheses in many of the statements listed above. 

For our work on the differential signature, we develop new tools to study differential operators. We introduce an algorithmic framework for the module of principal parts, differential operators, and their induced action on symmetric powers of the module of K\"ahler differentials (Subsections \ref{SubSecJacobi} and \ref{SubAlg}). This is in particular important for computing the differential signature of quadric hypersurfaces.
We define the differential core of an ideal, which emulates the splitting prime of a ring, see Section~\ref{SecDiffPrimes}. 
We define and work with ring extensions that are differentially extensible, a notion implicit in the work of Levasseur-Stafford  \cite{LS} and Schwarz \cite{Schwarz}.
In Theorem~\ref{BS-Thm}, we
use  these extensions to reduce the computation of the new notion of  Bernstein-Sato polynomials in certain  singular rings \cite{AMHNB} to the classical  Bernstein-Sato theory. As a consequence, we obtain a method for computing the Bernstein-Sato polynomials of elements of determinantal rings and other rings of invariants, see Remark~\ref{Rem-com-BS}. Our approach to Bernstein-Sato polynomials is a generalization of the methods of Hsiao and Matusevich \cite{Hsiao-Matusevich}.

% We also introduce the notions of $D$-graded and $D$-reduced rings to study the  differential signature as the multiplicity of a certain graded algebra in Section~\ref{SecGoodDprop}. For these classes of rings, which include all of the aforementioned examples, we are able to show stronger results about the behavior of differential signature and differential powers.

We also establish some new results and new proofs of old results that are of independent interest from differential signature. In Theorem~\ref{Mel}, we give a  characteristic-free characterization of regularity that can be interpreted as a converse to the Zariski-Nagata Theorem (see \cite{ZariskiHolFunct,Nagata,SurveySP}). In  Remark~\ref{Kunz}, we compare our characterization with Kunz's criterion for regularity in prime characteristic \cite{KunzReg}. We provide a Fedder/Glassbrenner-type criterion for $D$-simplicity in Subsection~\ref{FGCriterion}.  
We also give a new description of $F$-signature in Remark~\ref{analogy}, and a simplified proof of the polytope formula for $F$-signature of toric varieties in Theorem~\ref{WYVKS}.

An index with notation and new or uncommon terminology is provided at the end for the reader's convenience.

%%%%%%%%%%%%%%%%%%%%%%%%%%%%%%%%%%%%%%%%%%%%%%%%%%%%%%%%%%%%%

%%%%%%%%%%%%%%%%%%%%%%%%%%%%%%%%%%%%%%%%%%%%
\section{Differential operators and $n$-differentials}\label{basics}
%%%%%%%%%%%%%%%%%%%%%%%%%%%%%%%%%%%%%%%%%%%%%5

In this section, we recall the definition of the ring of differential operators and different characterizations. We often work in the following setting.

\begin{definition}\label{def-pseudocoefficient} A ring $(R,\m,\kk)$ is an \emph{algebra with pseudocoefficient field $K$}\index{algebra with pseudocoefficient field} if $R$ is a commutative $K$-algebra with $1\neq 0$ that is either $\NN$-graded  {and finitely generated over} $R_0=K$ or local and essentially of finite type over $K$, $\m$ is the homogeneous (respectively, the  unique) maximal ideal of $R$, and the inclusion map from $K\rightarrow \kk=R/\m$ is a finite separable extension of fields. If $K=\kk$, which is automatic in the graded case, then $R$ is an \emph{algebra with coefficient field $K$}.
\end{definition}

We note that if $K$ is perfect, and $R$ is a finitely generated $K$-algebra, then $R_{\m}$ is an algebra with pseudocoefficient field $K$ for any maximal ideal $\m \subset R$. That is, coordinate rings of closed points of varieties over perfect fields are of this form. 

\subsection{Differential operators}

\begin{definition}
Let $R$ be a commutative ring and $A$ be a subring, both with $1\neq 0$.
The \textit{$A$-linear differential operators of $R$ of order $n$},\index{$D^n_{R \vert A}$}\index{$D_{R \vert A}$} $D^{n}_{R|A}\subseteq \Hom_A(R,R)$, are defined inductively as follows:
\begin{itemize} 
\item[(i)]  $D^{0}_{R|A} =\Hom_R(R,R).$ 
 \item[(ii)]  $D^{n}_{R|A} = \{\delta\in \Hom_A(R,R)\;|\; [\delta,r]\in D^{n-1}_{R|A} \;\forall \; r \in R \}.$ 
\end{itemize} 
The ring of $A$-linear differential operators is defined by $D_{R|A}=\displaystyle\bigcup_{n\in\NN}D^{n}_{R|A}$.

Throughout, when we discuss rings of differential operators $D_{R|A}$,  the rings $A$ and $R$ are assumed to be commutative with $1\neq 0$.

More generally, if $M$ and $N$ are $R$-modules, one defines $D^{n}_{R|A}(M,N)$ and $D_{R|A}(M,N)$ as submodules of $\Hom_A(M,N)$ by the similar rules:
\begin{itemize} 
\item[(i)]  $D^{0}_{R|A}(M,N) =\Hom_R(M,N).$ 
 \item[(ii)] If $r_M$ and $r_N$ denote the multiplication by $r$ in the modules $M$ and $N$ respectively, then  
$$D^{n}_{R|A} (M,N)= \{\delta\in \Hom_A(M,N)\;|\; \delta r_M- r_N\delta \in D^{n-1}_{R|A}(M,N) \;\forall \; r \in R \}.$$ 
\end{itemize} 
\end{definition}

 These are $R$-modules, where $R$ acts by postcomposition of maps.

The ring structure on $D_{R|A}$ is given by composition and satisfies $D^m_{R|A}D^n_{R|A} \subseteq D^{m+n}_{R|A}$.

 {
\begin{remark}
In this note, we often say that a local $K$-algebra essentially of finite type is smooth without assuming that it is of finite type; in many sources, smooth entails finite type. Here, by smooth we mean that $R$ is flat over $K$ and that
$\Omega_{R|K}$ is projective as an $R$-module.
\end{remark}
}

\begin{example}\label{example-regular-D} Let  $(R,\m,\kk)$ be an algebra with pseudocoefficent field $K$. We recall that a graded or local ring essentially of finite type over $K$ is smooth if and only if $R\otimes_K L$ is regular for some (equivalently, every) perfect field $L/K$; in particular, if $K$ is perfect, this is equivalent to $R$ being regular.
	Set $d=\dim(R)$. In this case,  $R$ is differentially smooth over $K$ \cite[16.10.2]{EGAIV}. We then have the following description of the ring of differential operators.
	
Let $\m=(x_1,\dots,x_d)$, where $x_i$ are homogeneous in the graded case. Then, for each $\alpha=(a_1,\dots,a_d) \in \NN^d$, there is a differential operator $\delta_{\alpha}$ such that 
\[\delta_{\alpha} (x_1^{b_1} \cdots x_d^{b_d}) = \binom{b_1}{a_1} \cdots \binom{b_d}{a_d}  x_1^{b_1-a_1} \cdots x_d^{b_d-a_d},\]
and $D^n_{R|K}=R\langle \, \delta_\alpha \ | \ |\alpha|\leq n \, \rangle$ \cite[16.11.2]{EGAIV}.

In the graded case, where $R$ is a polynomial ring, we  write $ \delta_{\alpha}=\frac{1}{a_1 !} \cdots \frac{1}{a_d !} \frac{\partial^{a_1}}{\partial x_1^{a_1}} \cdots \frac{\partial^{a_d}}{\partial x_d^{a_d}}$, where $\frac{\partial^{a_i}}{\partial x_i^{a_i}}$ is the $a_i$-iterate of the usual partial derivative $\frac{\partial}{\partial x_i}$. Since $\frac{1}{a_1 !} \cdots \frac{1}{a_d !} \in R$ for all $\alpha$ only for $R$ of characteristic zero, we take this as an honest equality of operators there, and merely a formal one in characteristic $p$. By a standard abuse of notation, we write $\frac{1}{\alpha !} \partial^{\alpha}$\index{$\ddlambda$} for the operator $\delta_{\alpha}$ in any characteristic.

In particular, $D_{R|K}$ is generated by $R$ and the partial derivatives $\frac{\partial}{\partial x_i}$ in characteristic zero, and is a Noetherian noncommutative ring. In positive characteristic, $D_{R|K}$ is not Noetherian for $d>0$.
\end{example}

Even for nice rings of characteristic zero, the ring of differential operators may fail to be Noetherian.

\begin{example}\label{example-BGG} Let $R=\CC[x,y,x]/(x^3+y^3+z^3)$, and $\m=(x,y,z)$ be the homogeneous maximal ideal. The following facts hold about $D_{R|\CC}$ \cite{DiffNonNoeth}:
\begin{itemize}
\item $D_{R|\CC}$ is a graded ring.
\item $[D_{R|\CC}]_{<0} = 0$.
\item $[D_{R|\CC}]_{0} = \CC\langle 1, E, E^2, \dots \rangle$, where $E=x \frac{\partial}{\partial x} +y \frac{\partial}{\partial y}+z \frac{\partial}{\partial z}$.
\item For every $n$, $\displaystyle \frac{[D^n_{R|\CC}]_{1}}{[D^{n-1}_{R|\CC}]_{1} +E [D^{n-1}_{R|\CC}]_{1}}\cong \CC^3$ as $\CC$-vector spaces.
\end{itemize}
From this, it follows that $D_{R|\CC}$ is not a finitely generated $\CC$-algebra, and neither left- nor right-Noetherian.
\end{example}

 We note that $R$ is  a left $D_{R|A}$-module, as elements of $D_{R|A}$ are endomorphisms of $R$. We call every  $D_{R|A}$-submodule of $R$ a \emph{$D_{R|A}$-ideal}.\index{D-ideal}

\begin{lemma}[{\cite[Lemma~4.1]{Switala}}]\label{diff-ops-are-cts}
	 Let $R$ be a ring, $A$ be a subring, and $I\subseteq R$ be an ideal. Then every differential operator $\delta\in D_{R|A}$ is $I$-adically continuous.
\end{lemma}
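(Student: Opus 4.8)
Proof plan for Lemma~\ref{diff-ops-are-cts} ($I$-adic continuity of differential operators).

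The plan is to argue by induction on the order $n$ of the differential operator $\delta \in D^n_{R|A}$, proving the sharper quantitative statement that $\delta(I^{n+k}) \subseteq I^k$ for every $k \geq 0$; this immediately yields $I$-adic continuity, since then $\delta$ maps a neighborhood basis $\{I^{n+k}\}_{k}$ of $0$ into the neighborhood basis $\{I^k\}_k$, and continuity at an arbitrary point follows by additivity. For the base case $n=0$, an operator of order $0$ is multiplication by a ring element, which obviously sends $I^{k}$ into $I^{k}$, so the claim $\delta(I^k) \subseteq I^k$ holds.

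For the inductive step, suppose the quantitative claim holds for all operators of order $\leq n-1$, and let $\delta \in D^n_{R|A}$. The key structural input is the defining relation: for every $r \in R$ the commutator $[\delta, r] = \delta r - r \delta$ lies in $D^{n-1}_{R|A}$. I would show $\delta(I^{n+k}) \subseteq I^k$ for all $k$ by a secondary induction on $k$ (or, equivalently, handle a general product of $n+k$ generators of $I$ directly). The mechanism is the standard one: write an element of $I^{n+k}$ as $r \cdot y$ with $r \in I$ and $y \in I^{n+k-1}$, and expand
\[
\delta(ry) = r\,\delta(y) + [\delta, r](y).
\]
By the outer induction hypothesis applied to $[\delta,r] \in D^{n-1}_{R|A}$, we get $[\delta,r](y) \in [\delta,r](I^{(n-1)+k}) \subseteq I^k$. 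For the term $r\,\delta(y)$, the secondary induction on $k$ gives $\delta(y) \in \delta(I^{n+(k-1)}) \subseteq I^{k-1}$, hence $r\,\delta(y) \in I \cdot I^{k-1} = I^k$; the case $k=0$ is trivial since $I^0 = R$ and $I^k = R$ is the whole ring. Since $I^{n+k}$ is generated as an ideal by such products $ry$, and $\delta$ is additive, we conclude $\delta(I^{n+k}) \subseteq I^k$, completing the induction.

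The only genuinely delicate point is bookkeeping the two nested inductions (on $n$ and on $k$) so that each commutator invocation lands on an operator of strictly smaller order with a correctly shifted exponent; once the quantitative inclusion $\delta(I^{n+k}) \subseteq I^k$ is in place, $I$-adic continuity is formal. I do not expect any obstacle beyond this indexing care — no finiteness or Noetherian hypotheses on $R$ are needed, and no hypothesis on $A$ beyond its being a subring, which matches the generality of the statement.
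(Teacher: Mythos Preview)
Your proof is correct. The paper does not give its own proof of this lemma; it simply cites \cite[Lemma~4.1]{Switala}. Your double induction establishing the quantitative inclusion $\delta(I^{n+k}) \subseteq I^k$ for $\delta \in D^n_{R|A}$ is the standard argument for this fact (and is essentially what appears in the cited reference), so there is nothing to compare.
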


\subsection{Modules of principal parts}

A key description of the differential operators comes from the fact that they are represented by an $R$-module analogously to how derivations are represented by the K\"ahler differentials.

\begin{definition} 
Let $R$ be a ring and $A$ be a subring. The \textit{module of $n$-differentials}, or \textit{principal parts},\index{principal parts}\index{$\ModDif{n}{R}{A}$} of $R$ over $A$, is 
\[ \ModDif{n}{R}{A} = (R \otimes_A R) /  \Delta_{R|A}^{n+1}\]
where $\Delta_{R|A}$\index{$\Delta_{R  \vert A}$} is the kernel of the multiplication map $\mu:R\otimes_A R \rightarrow R$.
\end{definition}

We warn the reader that even if $R$ is Noetherian, $\ModDif{}{R}{A}:=R\otimes_A R$\index{$\ModDif{}{R}{A}$} may not be, and $\ModDif{n}{R}{A}$ may fail to be finitely generated as an $R$-module. However, both of these finiteness conditions hold if $R$ is a essentially of finite type over $A$.

Note that for $A\subseteq R$ and $R$-modules $M$ and $N$, 
there is an $(R\otimes_A R)$-module structure on $\Hom_A(M,N)$ given by the rule
\[ ((a \otimes b) \cdot \phi) (m) = a \phi(b m). \]
We also endow  $\Hom_R( R \otimes_A M, N)$ with an $(R\otimes_A R)$-module structure by the rule
\[ ((a \otimes b) \cdot \phi) (r \otimes m) = \phi(ar \otimes bm). \]
The natural isomorphism
\begin{equation}\label{eq:Hom} \Hom_R(R\otimes_A M, N) \rightarrow  \Hom_A(M,N)
\end{equation}
given by composing the adjunction isomorphism and the evaluation isomorphism is given by $\Phi(\phi)(m)=\phi(1\otimes m)$. One has that $(a\otimes b)(\phi)(r\otimes m) = \phi(ar \otimes bm)= a\phi(r\otimes bm)$, so that $\Phi( (a\otimes b)(\phi))(m)=a \phi(1\otimes bm)$. On the other hand, $(a\otimes b)(\Phi(\phi))(m)=a \phi(1 \otimes bm)$, so the natural isomorphism is an isomorphism of $(R\otimes_A R)$-modules via the structures given above.

The following characterization of differential operators is useful.

\begin{lemma}[{\cite[2.2.3]{HeynemannSweedler}}]\label{diff-ann} A map $\delta\in \Hom_A(M,N)$ is a differential operator of order $\leq n$ if and only if $\Delta_{R|A}^{n+1} \cdot \delta=0$ under the $(R\otimes_A R)$-module action described above.
\end{lemma}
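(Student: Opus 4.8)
The plan is to run a single induction on $n$ that proves both implications simultaneously, the key being the elements $\xi_r := 1\otimes r - r\otimes 1 \in \Delta_{R|A}$ (these indeed lie in $\Delta_{R|A}$ since $\mu(\xi_r)=0$). First I would record two elementary facts. (a) Under the $(R\otimes_A R)$-action on $\Hom_A(M,N)$ described above, $(\xi_r\cdot\delta)(m) = \delta(rm) - r\delta(m)$, so that $\xi_r\cdot\delta = \delta r_M - r_N\delta$; thus multiplication by $\xi_r$ is precisely the ``commutator'' operation appearing in the inductive definition of $D^n_{R|A}(M,N)$. (b) The set $\{\xi_r \mid r\in R\}$ generates $\Delta_{R|A}$ as an ideal of $R\otimes_A R$: if $\sum_i a_i\otimes b_i$ lies in $\ker\mu$, i.e. $\sum_i a_ib_i = 0$, then $\sum_i a_i\otimes b_i = \sum_i(a_i\otimes 1)\,\xi_{b_i}$.

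From (b), together with commutativity of $R\otimes_A R$ and the fact that $\Delta^{n}_{R|A}$ is an ideal, one obtains $\Delta^{n+1}_{R|A} = \Delta^n_{R|A}\cdot\Delta_{R|A} = \sum_{r\in R}\Delta^n_{R|A}\,\xi_r$, and hence, using associativity of the module action,
\[
\Delta^{n+1}_{R|A}\cdot\delta \;=\; \sum_{r\in R}\Delta^n_{R|A}\cdot(\xi_r\cdot\delta) \;=\; \sum_{r\in R}\Delta^n_{R|A}\cdot(\delta r_M - r_N\delta).
\]
In particular $\Delta^{n+1}_{R|A}\cdot\delta = 0$ if and only if $\Delta^n_{R|A}\cdot(\delta r_M - r_N\delta) = 0$ for every $r\in R$. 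For the base case $n=0$: membership $\delta\in D^0_{R|A}(M,N) = \Hom_R(M,N)$ says exactly that $\delta(rm) = r\delta(m)$ for all $r,m$, i.e. $\xi_r\cdot\delta = 0$ for all $r$, i.e. (by (b)) $\Delta_{R|A}\cdot\delta = 0$. For the inductive step, assuming the statement for $n-1$: $\delta\in D^n_{R|A}(M,N)$ means $\delta r_M - r_N\delta\in D^{n-1}_{R|A}(M,N)$ for all $r$, which by the inductive hypothesis (applied to the operator $\xi_r\cdot\delta$) is equivalent to $\Delta^n_{R|A}\cdot(\xi_r\cdot\delta) = 0$ for all $r$, which by the displayed identity is equivalent to $\Delta^{n+1}_{R|A}\cdot\delta = 0$. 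This closes the induction.

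The argument is essentially bookkeeping; the only genuinely careful points are verifying (a) — that the prescribed $(R\otimes_A R)$-module structure converts multiplication by $\xi_r$ into $\delta r_M - r_N\delta$, which forces one to keep track of which tensor factor acts by post- and which by pre-composition — and (b), from which the crucial identity $\Delta^{n+1}_{R|A} = \sum_{r}\Delta^n_{R|A}\,\xi_r$ follows. Once those two facts are in hand, the induction goes through with no residual obstacle, and the same proof covers both the ring case $M=N=R$ and the general module case.
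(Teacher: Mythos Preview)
Your proof is correct and is the standard argument. The paper does not supply its own proof of this lemma; it simply cites \cite[2.2.3]{HeynemannSweedler}, so there is nothing in the paper to compare against. Your verification of (a) and (b), together with the induction, is exactly the expected route.
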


The modules of $n$-differentials represent the differential operators in the same way that the  {K\"ahler} differentials represent the modules of derivations. Namely, let $d^n$\index{$d^n$}\index{universal differential} be the \emph{universal differential} $d^n:R\rightarrow \ModDif{n}{R}{A}$ given by $d^n(x)=\overline{1\otimes x}$. Then one has the following:

\begin{proposition}[{\cite[16.8.8]{EGAIV}, \cite[2.2.6]{HeynemannSweedler}}]\label{universaldifferential} Let $R$ be a ring and $A$ be a subring. For all $R$-modules $M,N$ and $\delta\in D^n_{R|A}(M,N)$, the map $d^*:\Hom_R(\ModDif{n}{R}{A}\otimes_R M,N)\rightarrow D^n_{R|A}(M,N)$ given by $\phi\mapsto \phi\circ d^n$ is an $R$-module isomorphism.
\end{proposition}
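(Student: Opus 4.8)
The plan is to produce the isomorphism in two moves: first relate $D^n_{R|A}(M,N)$ to $\Hom_R(\ModDif{n}{R}{A}\otimes_R M, N)$ by unwinding the module of principal parts, and then check the map $\phi \mapsto \phi \circ d^n$ is precisely this identification. Since $\ModDif{n}{R}{A} = (R\otimes_A R)/\Delta_{R|A}^{n+1}$, the left $R$-module structure on $\ModDif{n}{R}{A}$ comes from the \emph{left} factor of $R\otimes_A R$, while tensoring with $M$ over $R$ and taking $\Hom_R$ to $N$ will be done using the \emph{right} $R$-action on $\ModDif{n}{R}{A}$ (i.e. the right factor of $R\otimes_A R$). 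Concretely, I would first observe
\[
\Hom_R\!\big(\ModDif{n}{R}{A}\otimes_R M,\, N\big)
\;\cong\;
\Hom_R\!\big((R\otimes_A R)/\Delta_{R|A}^{n+1}\otimes_R M,\, N\big),
\]
and identify the right-hand side, via the standard adjunction/evaluation isomorphism \eqref{eq:Hom} that the excerpt has already set up, with the submodule of $\Hom_A(M,N)$ consisting of those $A$-linear maps annihilated by $\Delta_{R|A}^{n+1}$ under the $(R\otimes_A R)$-action $((a\otimes b)\cdot\phi)(m) = a\,\phi(bm)$.

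The second move is to invoke Lemma~\ref{diff-ann} (the Heyneman--Sweedler criterion): a map $\delta \in \Hom_A(M,N)$ lies in $D^n_{R|A}(M,N)$ precisely when $\Delta_{R|A}^{n+1}\cdot\delta = 0$ under that same $(R\otimes_A R)$-action. So the submodule of $\Hom_A(M,N)$ picked out in the previous paragraph is exactly $D^n_{R|A}(M,N)$, and the composite isomorphism $\Hom_R(\ModDif{n}{R}{A}\otimes_R M, N) \xrightarrow{\sim} D^n_{R|A}(M,N)$ exists. It remains to check that this composite isomorphism is the \emph{specific} map $d^*$ in the statement, $\phi \mapsto \phi\circ d^n$ with $d^n(x) = \overline{1\otimes x}$. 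Chasing through \eqref{eq:Hom}: given $\phi \in \Hom_R(\ModDif{n}{R}{A}\otimes_R M, N)$, the isomorphism \eqref{eq:Hom} (applied with $M$ replaced by $M$ and the free-module slot filled by $R\otimes_A R$ modulo $\Delta^{n+1}$) sends $\phi$ to the $A$-linear map $m \mapsto \phi(\overline{1\otimes 1}\otimes m)$, and one checks that for $\delta = \phi\circ d^n$ we have $\delta(x\cdot m) = \phi(d^n(x)\otimes m) = \phi(\overline{1\otimes x}\otimes m)$, matching the description after using $R$-linearity of $\phi$ over the appropriate factor; so $d^* = \Phi$ under these identifications. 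Then $d^*$ is an isomorphism of $R$-modules because $\Phi$ is (the $R$-module structure here being postcomposition in the target $N$, which corresponds to the diagonal/scalar action on the $\Hom$, and this is preserved throughout).

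The main obstacle I anticipate is purely bookkeeping: keeping straight which of the two $R$-actions on $R\otimes_A R$ (hence on $\ModDif{n}{R}{A}$) is being used where — the left action to make $\ModDif{n}{R}{A}$ an $R$-module so that $\ModDif{n}{R}{A}\otimes_R M$ makes sense and carries the output $R$-structure, and the right action that becomes the ``$m \mapsto bm$'' part of the $(R\otimes_A R)$-action on $\Hom_A(M,N)$ — and verifying that the $(R\otimes_A R)$-module structures match on the nose as in the paragraph preceding Lemma~\ref{diff-ann}. There is also the subtlety that $R\otimes_A R$, and $\ModDif{n}{R}{A}$, need not be finitely generated, but nothing in the argument requires finiteness: the adjunction isomorphism \eqref{eq:Hom}, Lemma~\ref{diff-ann}, and the right-exactness of $\otimes$ all hold in complete generality, so the proof goes through for arbitrary $R \supseteq A$ and arbitrary $R$-modules $M, N$. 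Once the module-structure bookkeeping is pinned down, the proof is essentially a one-line composition of \eqref{eq:Hom} with Lemma~\ref{diff-ann}.
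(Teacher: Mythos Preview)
Your proposal is correct and uses essentially the same ingredients as the paper. Note that the paper does not prove Proposition~\ref{universaldifferential} directly (it is only cited from \cite{EGAIV} and \cite{HeynemannSweedler}), but it does prove the more general Proposition~\ref{representing-differential} for $I$-differential operators; that proof uses the adjunction isomorphism~\eqref{eq:Hom} and the annihilation criterion of Lemma~\ref{diff-ann} exactly as you do, with the only cosmetic difference being that the paper checks injectivity separately (via the observation that $\ModDif{I}{R}{A}(M)$ is generated as an $R$-module by the image of $d^I$) rather than obtaining both directions simultaneously from the identification of submodules of $\Hom_A(M,N)$.
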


We  find it useful to compare different filtrations on rings of differential operators. To this end, and motivated by the previous proposition, we make the following definition. 
\begin{definition} \label{DefIDef}
Let $I$ be an ideal of $R\otimes_A R$, and $M$ and $N$ be $R$-modules. 
\begin{enumerate}
 \item[(i)] The \emph{$I$-differential operators} of $M$ into $N$ are
\[ D^I_{R|A}(M,N)= ( 0 :_{\Hom_A(M,N)} I). \]\index{$D^I_{R \vert A}$}
\item[(ii)] The \emph{module of $I$-differentials} of $M$ is
\[ \ModDif{I}{R}{A}(M) = \frac{R \otimes_A M}{ I \cdot (R\otimes_A M)}.\]\index{$\ModDif{I}{R}{A}$}
\item[(iii)] The \emph{universal $I$-differential}\index{universal differential}\index{$d^n$} on $M$ is the map 
$d^I_{M|A}: M \rightarrow \ModDif{I}{R}{A}(M)$ given by $d^I_{M|A}(m)= \overline{1 \otimes m}$.
\end{enumerate}
\end{definition}

\begin{remark} We also define $\ModDif{}{R}{A}(M)={R \otimes_A M}$ and $\ModDif{n}{R}{A}(M)=\frac{R \otimes_A M}{ \Delta_{R|A}^n \cdot (R\otimes_A M)}$. Note that $\ModDif{}{R}{A}$ is an algebra, and that every $\ModDif{I}{R}{A}$ is a quotient of this ring and $\ModDif{I}{R}{A}(M)$ is a module over this ring.
\end{remark}

\begin{remark}
	Each of the modules of differentials $\ModDif{}{R}{A}$, $\ModDif{I}{R}{A}$, $\ModDif{}{R}{A}(M)$, and $\ModDif{I}{R}{A}(M)$ can be considered as an $R$-module in multiple ways. However, unless specified otherwise, when we consider any of these as an $R$-module, we mean the $R$-module structure coming from the left copy of $R$.
\end{remark}

The following proposition is an extension of Proposition~\ref{universaldifferential}.

\begin{proposition}\label{representing-differential} With the notation in Definition~\ref{DefIDef}, the map 
\[\psi: \Hom_{R}(\ModDif{I}{R}{A}(M),N) \rightarrow D^I_{R|A}(M,N)\] given by
 $\psi(\phi)=\phi \circ d^I_{M|A}$
 is an isomorphism of $R$-modules.
\end{proposition}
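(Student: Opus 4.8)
The plan is to imitate, at the level of $I$-differentials, the standard proof that the module of principal parts represents differential operators (Proposition~\ref{universaldifferential}), by factoring the desired isomorphism through the natural isomorphism $\Hom_R(R\otimes_A M, N)\cong \Hom_A(M,N)$ of \eqref{eq:Hom} and then taking a suitable quotient/submodule on each side. Concretely, recall that $\ModDif{I}{R}{A}(M) = (R\otimes_A M)/\big(I\cdot(R\otimes_A M)\big)$, so that by the universal property of quotients, $\Hom_R(\ModDif{I}{R}{A}(M),N)$ is identified with the submodule of those $\phi\in\Hom_R(R\otimes_A M,N)$ that kill $I\cdot(R\otimes_A M)$, i.e. those $\phi$ with $I\cdot\phi = 0$ for the $(R\otimes_A R)$-action on $\Hom_R(R\otimes_A M,N)$ described before \eqref{eq:Hom}. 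On the other side, $D^I_{R|A}(M,N)$ is by Definition~\ref{DefIDef}(i) precisely $(0:_{\Hom_A(M,N)} I)$, the submodule of $\Hom_A(M,N)$ annihilated by $I$ for the $(R\otimes_A R)$-action $((a\otimes b)\cdot\phi)(m)=a\phi(bm)$.

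The key step is then to observe that the natural isomorphism $\Phi\colon \Hom_R(R\otimes_A M,N)\to\Hom_A(M,N)$, $\Phi(\phi)(m)=\phi(1\otimes m)$, is $(R\otimes_A R)$-linear — this is exactly the computation already carried out in the paragraph following \eqref{eq:Hom}, where it is checked that $\Phi((a\otimes b)\phi) = (a\otimes b)\Phi(\phi)$. An isomorphism of $(R\otimes_A R)$-modules restricts to an isomorphism between the annihilators of the ideal $I$ on both sides. Hence $\Phi$ restricts to an isomorphism $(0:_{\Hom_R(R\otimes_A M,N)} I)\xrightarrow{\ \sim\ }(0:_{\Hom_A(M,N)} I) = D^I_{R|A}(M,N)$. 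It remains to identify the source $(0:_{\Hom_R(R\otimes_A M,N)} I)$ with $\Hom_R(\ModDif{I}{R}{A}(M),N)$ compatibly with the map $\psi$: a homomorphism $\phi\colon R\otimes_A M\to N$ factors through the quotient $\ModDif{I}{R}{A}(M)$ iff $\phi$ vanishes on $I\cdot(R\otimes_A M)$, and since $\phi$ is $R$-linear (for the left structure) this vanishing is equivalent to $\xi\cdot\phi=0$ for every $\xi\in I$, i.e. to $I\cdot\phi=0$ under the $(R\otimes_A R)$-action. Thus precomposition with the quotient map $R\otimes_A M\twoheadrightarrow \ModDif{I}{R}{A}(M)$ gives an $R$-linear isomorphism $\Hom_R(\ModDif{I}{R}{A}(M),N)\xrightarrow{\ \sim\ }(0:_{\Hom_R(R\otimes_A M,N)} I)$.

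Finally I would chase the composite to check it is the stated map $\psi$. Starting from $\phi\in\Hom_R(\ModDif{I}{R}{A}(M),N)$, composing with the quotient map gives $\tilde\phi\in\Hom_R(R\otimes_A M,N)$ with $\tilde\phi(r\otimes m)=\phi(\overline{r\otimes m})$; applying $\Phi$ yields the operator $m\mapsto\tilde\phi(1\otimes m)=\phi(\overline{1\otimes m})=\phi(d^I_{M|A}(m))$, which is exactly $\psi(\phi)=\phi\circ d^I_{M|A}$. Since both identifications are $R$-linear and bijective and $\Phi$ is an $R$-linear isomorphism (the left $R$-structures match), $\psi$ is an isomorphism of $R$-modules, as claimed.

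I do not anticipate a serious obstacle here; the proof is essentially a bookkeeping exercise in compatibility of module structures. The one point that warrants care — and the place where a reader could get confused — is keeping track of which of the several $(R\otimes_A R)$- and $R$-module structures is in play at each stage (left vs. right copy of $R$, the $\Hom$-side action vs. the quotient-side action), and verifying that the quotient map $R\otimes_A M\to\ModDif{I}{R}{A}(M)$ and the isomorphism $\Phi$ are simultaneously $R$-linear for the relevant (left) structures while also being compatible with the $(R\otimes_A R)$-action used to cut out the annihilators. All of this is already implicit in the discussion preceding the proposition, so the write-up mainly needs to assemble these observations in the right order.
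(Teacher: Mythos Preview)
Your proof is correct and uses the same essential ingredient as the paper: the $(R\otimes_A R)$-linearity of the adjunction isomorphism \eqref{eq:Hom}. The paper organizes the argument slightly differently---checking injectivity directly from the fact that the image of $d^I_{M|A}$ generates $\ModDif{I}{R}{A}(M)$ as an $R$-module, and surjectivity via the factorization through the quotient---whereas you package both directions at once by observing that an $(R\otimes_A R)$-linear isomorphism restricts to an isomorphism on $I$-annihilators; but this is a cosmetic difference, not a genuinely different route.
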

\begin{proof} The map $\psi$ is clearly additive, and the action of $R$ on $\ModDif{I}{R}{A}(M)$ corresponds to postcomposition of maps in the adjunction isomorphism~\eqref{eq:Hom}, so it is $R$-linear.
	
It follows from the definitions that $d^I_{M|A} \in D^I(M, \ModDif{I}{R}{A}(M))$. 
It is a routine verification that the image of $\psi$ consists of $I$-differential operators. Since the target is generated as an $R$-module 
by elements in the image of $D^I_{M|A}$, $\psi$ is injective.
Now, by the discussion preceding Equation~\ref{eq:Hom}, we have an $(R\otimes_A R)$-isomorphism from $\Hom_R(R\otimes_A M, N) \rightarrow \Hom_A(M,N)$.
 Thus, given $\delta\in D^I_{R|A}(M,N)\subseteq \Hom_A(M,N)$, $\delta$ factors as an $R$-linear map through $R \otimes_A M$, 
 and since $I \cdot \delta = 0$, $\delta$ factors through $\ModDif{I}{R}{A}(M)$. Thus, $\psi$ is surjective.
 \end{proof}
 
 We note that not every $I$-differential operator is a differential operator. However, one has the following, which follows immediately from Lemma~\ref{diff-ann}.

 \begin{lemma}\label{I-diff-ops}
 With the notation in Definition~\ref{DefIDef}, we have 
  \begin{enumerate}
   \item[(i)] If $I \subseteq R\otimes_A R$ contains $\Delta^{n+1}_{R|A}$ for some $n$, then every $I$-differential operator from $M$ to $N$
   is a differential operator from $M$ to $N$ of order $\leq n$.
   \item[(ii)] If $I_n \subseteq R\otimes_A R$ form a system of ideals cofinal with the powers of $\Delta_{R|A}$ as $n$ varies, then $\delta \in \Hom_A(M,N)$
   is an element of $D_{R|A}$ if and only if $\delta$ is an $I_n$-differential operator for some $n$.
  \end{enumerate}
 \end{lemma}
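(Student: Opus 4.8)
\textbf{Proof plan for Lemma~\ref{I-diff-ops}.}

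The strategy is to deduce both parts directly from the characterization of differential operators provided by Lemma~\ref{diff-ann}, namely that $\delta \in \Hom_A(M,N)$ is a differential operator of order $\leq n$ if and only if $\Delta_{R|A}^{n+1} \cdot \delta = 0$ under the $(R\otimes_A R)$-action, together with the definition $D^I_{R|A}(M,N) = (0 :_{\Hom_A(M,N)} I)$ from Definition~\ref{DefIDef}(i). The key elementary observation is the order-reversing behavior of annihilators: if $J \subseteq I$ are ideals of $R\otimes_A R$, then $(0 :_{\Hom_A(M,N)} I) \subseteq (0 :_{\Hom_A(M,N)} J)$.

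For part (i), suppose $\Delta^{n+1}_{R|A} \subseteq I$ and let $\delta$ be an $I$-differential operator, i.e. $I \cdot \delta = 0$. Then a fortiori $\Delta^{n+1}_{R|A} \cdot \delta = 0$, so by Lemma~\ref{diff-ann}, $\delta \in D^n_{R|A}(M,N)$. That is all there is to it. For part (ii), assume the $I_n$ form a system of ideals cofinal with the powers of $\Delta_{R|A}$ — concretely, for each $n$ there is $m$ with $I_m \subseteq \Delta^{n+1}_{R|A}$, and for each $n$ there is $m'$ with $\Delta^{m'+1}_{R|A} \subseteq I_n$. If $\delta$ is an $I_n$-differential operator for some $n$, pick $m'$ with $\Delta^{m'+1}_{R|A} \subseteq I_n$; then $\Delta^{m'+1}_{R|A} \cdot \delta = 0$, so $\delta \in D^{m'}_{R|A}(M,N) \subseteq D_{R|A}(M,N)$ by Lemma~\ref{diff-ann}. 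Conversely, if $\delta \in D_{R|A}(M,N)$, then $\delta \in D^n_{R|A}(M,N)$ for some $n$, so $\Delta^{n+1}_{R|A} \cdot \delta = 0$ by Lemma~\ref{diff-ann}; choosing $m$ with $I_m \subseteq \Delta^{n+1}_{R|A}$, we get $I_m \cdot \delta = 0$, i.e. $\delta$ is an $I_m$-differential operator.

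There is essentially no obstacle here; the lemma is a bookkeeping consequence of Lemma~\ref{diff-ann} and the contravariance of annihilator ideals, which is exactly why the authors say it "follows immediately." The only point requiring a moment's care is spelling out precisely what "cofinal with the powers of $\Delta_{R|A}$" means (namely the two-sided interleaving of inclusions stated above), so that both directions of the equivalence in (ii) can be run; once that is fixed, each implication is a single line.
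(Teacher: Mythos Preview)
Your proposal is correct and is exactly the argument the paper has in mind: both parts are immediate from Lemma~\ref{diff-ann} via the order-reversing behavior of annihilators, and you have simply written out the one-line details that the paper omits.
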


\subsection{Behavior of differential operators under localization and completion}

All differential operators on a localization occur as localizations of differential operators on the original ring. Namely,

 \begin{proposition}[{\cite[2.2.2 \& 2.2.10]{Masson}}]\label{localization1} Let $K$ be a field, $R$ be a $K$-algebra, $R\rightarrow S$ be formally \'etale, and suppose that  {$\ModDif{n}{R}{K}$} is finitely presented for all $n$. Then the natural maps
 \[ S \otimes_R \ModDif{n}{R}{K} \to \ModDif{n}{S}{K} \qquad \text{and} \qquad S \otimes_R D^n_{R|K} \rightarrow D^n_{S | K}\] are isomorphisms for all~$n$. In particular, formation of differential operators commutes with localization. Additionally, $$D^n_{R|K}=\{\delta \in D^n_{W^{-1}R | K} \ | \ \delta(R) \subseteq R \}$$ for a multiplicative system $W$, if $R$ has no $W$-torsion (so that $R\subseteq W^{-1}R$).
\end{proposition}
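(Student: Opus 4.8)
The plan is to reduce everything to the base-change isomorphism for the modules of principal parts,
\[
S\otimes_R\ModDif{n}{R}{K}\ \xrightarrow{\ \sim\ }\ \ModDif{n}{S}{K},
\]
and to read off the statements about differential operators and about localization as formal consequences. Granting this isomorphism, the assertion for $D^n$ follows by dualizing: Proposition~\ref{universaldifferential} (with $M=N=S$, resp.\ with $M=N=R$) gives $D^n_{S|K}\cong\Hom_S(\ModDif{n}{S}{K},S)$ and $D^n_{R|K}\cong\Hom_R(\ModDif{n}{R}{K},R)$, and then
\[
D^n_{S|K}\cong\Hom_S\!\big(S\otimes_R\ModDif{n}{R}{K},\,S\big)\cong\Hom_R\!\big(\ModDif{n}{R}{K},\,S\big)\cong S\otimes_R\Hom_R\!\big(\ModDif{n}{R}{K},R\big)\cong S\otimes_R D^n_{R|K},
\]
where the third isomorphism uses that $\ModDif{n}{R}{K}$ is finitely presented by hypothesis and that $R\to S$ is flat (localizations and completions of Noetherian rings are flat, the cases we use, and formally smooth maps are flat in general). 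A diagram chase identifies this composite with the natural map, so formation of $D^n$ commutes with formally \'etale base change; the statement that it commutes with localization is the case $S=W^{-1}R$, a localization being flat and formally \'etale over~$R$.

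The core is therefore the isomorphism $S\otimes_R\ModDif{n}{R}{K}\cong\ModDif{n}{S}{K}$, which is classical (see \cite[16.4,~16.8]{EGAIV}); here is the argument I would give. The ring map $R\otimes_K R\to S\otimes_K S$ carries $\Delta_{R|K}$ into $\Delta_{S|K}$, hence induces $\ModDif{n}{R}{K}\to\ModDif{n}{S}{K}$, which is left-$R$-linear and so extends to the natural $S$-linear map above. To produce an inverse, note that the augmentation ideal $\Delta_{R|K}/\Delta_{R|K}^{n+1}$ of $\ModDif{n}{R}{K}$ is nilpotent of exponent $\le n+1$, so the augmentation $S\otimes_R\ModDif{n}{R}{K}\to S$ is a surjection of $R$-algebras with nilpotent kernel. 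Formal \emph{smoothness} of $R\to S$ lifts $\mathrm{id}_S$ across this surjection to a ring map $\sigma\colon S\to S\otimes_R\ModDif{n}{R}{K}$ extending the ``right'' structure map $r\mapsto 1\otimes d^n(r)$; combining $\sigma$ with the ``left'' structure map $S\to S\otimes_R\ModDif{n}{R}{K}$ yields a ring map $S\otimes_K S\to S\otimes_R\ModDif{n}{R}{K}$ whose composite with the augmentation is the multiplication $S\otimes_K S\to S$, so it kills $\Delta_{S|K}^{n+1}$ and descends to a map $\ModDif{n}{S}{K}\to S\otimes_R\ModDif{n}{R}{K}$. That the two composites are the identity follows from formal \emph{unramifiedness}: in each case the two relevant ring maps out of $S$ agree on $R$ and lift the same map along a nilpotent-kernel surjection, hence coincide. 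I expect this construction of the inverse --- threading the two $R$-algebra structures on $\ModDif{n}{R}{K}$ through the infinitesimal lifting property, and invoking formal \'etaleness once in each direction --- to be the main obstacle; the alternative route, induction on $n$ via $0\to\Delta_{T|K}^n/\Delta_{T|K}^{n+1}\to\ModDif{n}{T}{K}\to\ModDif{n-1}{T}{K}\to 0$, meets the same difficulty disguised as left-exactness of $S\otimes_R(\Delta_{R|K}^n/\Delta_{R|K}^{n+1})\to\Delta_{S|K}^n/\Delta_{S|K}^{n+1}$.

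Finally, for the last assertion with $S=W^{-1}R$: since $R$ has no $W$-torsion, neither does $D^n_{R|K}\subseteq\Hom_K(R,R)$, so the natural map $D^n_{R|K}\to W^{-1}D^n_{R|K}\cong D^n_{W^{-1}R|K}$ is injective, and every operator in $D^n_{R|K}$ visibly carries $R$ into $R$. Conversely, suppose $\delta\in D^n_{W^{-1}R|K}$ with $\delta(R)\subseteq R$; I would show $\delta|_R\in D^n_{R|K}$ by induction on $n$, the case $n=0$ being that $q\in W^{-1}R$ with $qR\subseteq R$ satisfies $q=q\cdot 1\in R$: for $r\in R$ the commutator $[\delta,r]$ has order $\le n-1$ and $[\delta,r](R)=\delta(rR)-r\delta(R)\subseteq R$, so $[\delta,r]|_R\in D^{n-1}_{R|K}$, whence $\delta|_R\in D^n_{R|K}$. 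Moreover $\delta$ is the unique extension of $\delta|_R$ to $W^{-1}R$: any differential operator $\epsilon$ on $W^{-1}R$ vanishing on $R$ commutes with multiplication by each $r\in R$ (again by induction on the order, using $[\epsilon,r](\rho)=\epsilon(r\rho)-r\epsilon(\rho)=0$ for $\rho\in R$), hence is $R$-linear, hence $W^{-1}R$-linear, hence equal to multiplication by $\epsilon(1)=0$. This identifies $D^n_{R|K}$ with $\{\delta\in D^n_{W^{-1}R|K}\mid\delta(R)\subseteq R\}$, completing the proof.
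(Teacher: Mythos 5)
The paper gives no proof of this proposition, citing it from Masson; your reconstruction is the natural one and most of it is correct. In part~(1), the augmentation $S\otimes_R\ModDif{n}{R}{K}\to S$ does have nilpotent kernel even without any flatness of $S$ over $R$ (a product of $n+1$ simple tensors with factors drawn from the image of $\Delta_{R|K}/\Delta_{R|K}^{n+1}$ already vanishes), formal smoothness lifts $\mathrm{id}_S$ through it, and formal unramifiedness identifies both composites with the identity; this is the expected argument. Part~(3), the commutator induction showing $\delta|_R\in D^n_{R|K}$ together with the uniqueness of extensions (vanishing on $R$ forces $R$-linearity, hence $W^{-1}R$-linearity, hence zero), is also complete and correct.

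The one genuine gap is in the dualizing step. To deduce $\Hom_R(\ModDif{n}{R}{K},S)\cong S\otimes_R\Hom_R(\ModDif{n}{R}{K},R)$ from the finite presentation of $\ModDif{n}{R}{K}$, you need $S$ to be flat over $R$, and your parenthetical assertion that ``formally smooth maps are flat in general'' is false: even formally \'etale ring maps need not be flat (this is exactly the gap between ``formally \'etale'' and ``weakly \'etale,'' and the Stacks project exhibits a formally \'etale ring map that is not weakly \'etale). For the paper's actual uses the gap is harmless --- the proposition is invoked here only for localizations and separable algebraic field extensions, both of which are flat and formally \'etale; completions of Noetherian local rings, although flat, are generally not formally \'etale and are instead covered by the separate Proposition~\ref{diff-ops-completion}. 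But as a proof of the proposition in the generality stated, the $D^n$ base-change step needs either an explicit flatness hypothesis on $R\to S$ or a different argument that avoids commuting $\Hom_R(\ModDif{n}{R}{K},-)$ with $S\otimes_R-$.
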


Formation of the module of differentials also commutes with localization. We provide a proof to fill a gap in the proof found in the standard reference, and because our statement is somewhat more general. 

\begin{proposition}[{\cite[Theorem~16.4.14]{EGAIV}}]\label{diffmod-localize} Let $A$ be a subring of $R$, and $W$ be a multiplicatively closed subset of $R$. Let $I \subseteq \ModDif{}{R}{A}$ contain $\Delta^n_{R|A}$ for some $n$, and set $I'$ to be the image of $I$ in $\ModDif{}{W^{-1}R}{A}\cong \ModDif{}{W^{-1}R}{(W\cap A)^{-1}A}$. Then, there are isomorphisms of $R$-modules 
	\[W^{-1}\ModDif{I}{R}{A} \cong \ModDif{I'}{W^{-1}R}{A} \cong \ModDif{I'}{W^{-1}R}{(W\cap A)^{-1}A} \]
given by the natural maps. In particular, 
\[W^{-1}\ModDif{n}{R}{A} = \ModDif{n}{W^{-1}R}{A} = \ModDif{n}{W^{-1}R}{(W\cap A)^{-1}A}. \]
\end{proposition}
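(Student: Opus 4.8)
The plan is to prove the two claimed natural isomorphisms in turn, reducing the general $I$-differentials statement to the classical case of principal parts and then exploiting the hypothesis that $I \supseteq \Delta^n_{R|A}$ to pass between the two coefficient rings $A$ and $(W\cap A)^{-1}A$.

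First I would establish the base change along the localization $R \to W^{-1}R$. The ring $\ModDif{}{W^{-1}R}{A} = W^{-1}R \otimes_A W^{-1}R$ is naturally the localization of $\ModDif{}{R}{A} = R\otimes_A R$ at the image of the multiplicative set $W\otimes 1$ (or equivalently $1\otimes W$, or $W\otimes W$ — all three generate the same localization since $w\otimes 1$ and $1\otimes w$ differ by the unit $w\otimes w^{-1}\cdot$ up to the relation, more carefully: inverting $w\otimes 1$ makes $1\otimes w = (w\otimes 1)\cdot(w^{-1}\otimes 1)$ a unit times... here one must be a little careful, but the standard fact is that $W^{-1}(R\otimes_A R) \cong W^{-1}R \otimes_A W^{-1}R$ where $W^{-1}$ on the left means inverting $w\otimes 1$ for $w\in W$, and the multiplication map is compatible). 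Since localization is exact, it commutes with forming the quotient by the ideal generated by $I$: thus $W^{-1}\ModDif{I}{R}{A} = W^{-1}(R\otimes_A R)/W^{-1}(I\cdot(R\otimes_A R)) = \ModDif{I'}{W^{-1}R}{A}$, where $I'$ is the extension of $I$. The only subtlety is checking that $I'\cdot(\ModDif{}{W^{-1}R}{A})$ is the localization of $I\cdot(\ModDif{}{R}{A})$, which holds because extension of ideals commutes with localization. This gives the first isomorphism, and specializing $I = \Delta^n_{R|A}$ gives $W^{-1}\ModDif{n}{R}{A} = \ModDif{n}{W^{-1}R}{A}$ once we know that the image of $\Delta^n_{R|A}$ generates $\Delta^n_{W^{-1}R|A}$, i.e. that $\Delta_{W^{-1}R|A}$ is the extension of $\Delta_{R|A}$; this follows from right-exactness of tensor and the fact that $W^{-1}R\otimes_R \Delta_{R|A}$ surjects onto $\Delta_{W^{-1}R|A}$ with the kernel being handled by exactness of localization applied to $0\to \Delta_{R|A}\to R\otimes_A R \to R\to 0$.

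Second I would prove the change-of-coefficient-ring isomorphism $\ModDif{I'}{W^{-1}R}{A} \cong \ModDif{I'}{W^{-1}R}{(W\cap A)^{-1}A}$. Write $S = W^{-1}R$ and $B = (W\cap A)^{-1}A$; note $B \subseteq S$ and there is a natural surjection $S\otimes_A S \twoheadrightarrow S\otimes_B S$. I claim that modulo $\Delta^n_{S|A}$ this surjection becomes an isomorphism, which suffices because $I'$ contains (the image of) $\Delta^n_{R|A}$, hence $I'\cdot(S\otimes_A S) \supseteq \Delta^n_{S|A}$, so both $\ModDif{I'}{S}{A}$ and $\ModDif{I'}{S}{B}$ are quotients of $S\otimes_A S/\Delta^n_{S|A}$ and $S\otimes_B S/\Delta^n_{S|B}$ respectively by the image of the same ideal $I'$. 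To see the claim: the kernel of $S\otimes_A S \to S\otimes_B S$ is generated by elements $a\otimes 1 - 1\otimes a$ for $a \in A$ that become invertible... more precisely it is generated by $\tfrac{1}{s}\otimes 1 - 1\otimes \tfrac1s$ for $s\in W\cap A$ regarded inside $S$; but each such element lies in $\Delta_{S|A}$ and in fact in $\bigcap_n \Delta^n_{S|A}$ — because inverting $s$ means $s\otimes 1$ and $1\otimes s$ are units congruent mod $\Delta_{S|A}$, so their ratio is a unit congruent to $1$ mod $\Delta_{S|A}$, and a unit of the form $1 + \delta$ with $\delta\in\Delta_{S|A}$ has $(1/s\otimes 1 - 1\otimes 1/s)$ expressible as $\delta$ times a unit, placing it in every power after a geometric-series / $\Delta$-adic continuity argument. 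Thus the kernel dies modulo $\Delta^n_{S|A}$ and we get the desired identification; specializing $I' = \Delta^n$ again gives the final displayed equalities.

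The main obstacle I expect is the second step — pinning down precisely why the kernel of $S\otimes_A S \to S\otimes_B S$ becomes zero after quotienting by a fixed power $\Delta^n_{S|A}$, rather than merely after passing to the $\Delta$-adic completion or the inverse limit. The clean way to handle this is to observe that for $s \in W\cap A$, inside $S\otimes_A S$ we have $1\otimes 1 = (s\otimes 1)(1\otimes s^{-1})$ and $(s\otimes 1) = (1\otimes s) + \gamma$ with $\gamma := s\otimes1 - 1\otimes s \in \Delta_{S|A}$; hence $1\otimes s^{-1} - s^{-1}\otimes 1 = (s^{-1}\otimes 1)(1\otimes s^{-1})\bigl((1\otimes s)-(s\otimes 1)\bigr) = -(s^{-1}\otimes1)(1\otimes s^{-1})\gamma$, an element of $\Delta_{S|A}$; and then one iterates, or better, one notes directly that since $\gamma$ lies in the nilpotent-modulo-$\Delta^n$ ideal, the generator $1\otimes s^{-1} - s^{-1}\otimes 1$ of the kernel is a multiple of $\gamma$ and more generally that the whole kernel ideal $J$ satisfies $J \subseteq \Delta_{S|A}\cdot J + (\text{lower})$, forcing $J \subseteq \Delta^n_{S|A}$ by Artin–Rees–type descending induction, or simply by the observation that $J$ is generated by $\Delta$-adically-small elements and $\Delta^n$-adic continuity of the relevant maps (Lemma~\ref{diff-ops-are-cts} is in this spirit, though here the cleanest route is the explicit factorization above showing each generator of $J$ already lies in $\Delta^n_{S|A}$ for all $n$). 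Once this is in hand the rest is formal bookkeeping with localizations and quotients.
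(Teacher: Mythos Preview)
Your proof has the key idea but has misplaced it, and Step~1 as written has a genuine gap.

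In Step~1 you assert as a ``standard fact'' that localizing $R\otimes_A R$ at $W\otimes 1$ yields $W^{-1}R\otimes_A W^{-1}R$. This is false at the level of the algebras $\ModDif{}{R}{A}$: inverting $w\otimes 1$ does \emph{not} automatically invert $1\otimes w$. (Take $A=k$, $R=k[x]$, $W=\{x^n\}$; then $(W\otimes 1)^{-1}(R\otimes_A R)=k[x,x^{-1},y]$, not $k[x,x^{-1},y,y^{-1}]$.) What is true is that $\ModDif{I'}{W^{-1}R}{A}$ is the further localization of $W^{-1}\ModDif{I}{R}{A}$ at the image of $1\otimes W$, and one must show this further localization is already an isomorphism. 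That is exactly where the hypothesis $I\supseteq\Delta^n$ enters: in $W^{-1}\ModDif{I}{R}{A}$ one has $(w\otimes 1-1\otimes w)^{n+1}=0$, so expanding gives $w^{n+1}\otimes 1=(1\otimes w)\cdot\alpha$ for some $\alpha$, and since $w\otimes 1$ is a unit so is $1\otimes w$. This is precisely the paper's argument, and it is the step you skipped.

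Conversely, your Step~2 is over-engineered. The isomorphism $\ModDif{}{W^{-1}R}{A}\cong\ModDif{}{W^{-1}R}{(W\cap A)^{-1}A}$ holds \emph{before} any quotient: for $s\in W\cap A\subseteq A$ we already have $s\otimes 1=1\otimes s$ in $W^{-1}R\otimes_A W^{-1}R$, and since $s$ is a unit there, $s^{-1}\otimes 1=1\otimes s^{-1}$ as well; so the kernel of $S\otimes_A S\to S\otimes_B S$ is zero, not merely contained in $\Delta^n$. The paper dispatches this in one line. The nilpotency/unit argument you develop in Step~2 is exactly the tool needed to fix Step~1 --- you have the right idea applied to the wrong isomorphism.
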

\begin{proof}
Since 
$$\ModDif{}{W^{-1}R}{A}\cong \ModDif{}{W^{-1}R}{(W\cap A)^{-1}A},$$
 the isomorphism 
 $$\ModDif{I'}{W^{-1}R}{A} \cong \ModDif{I'}{W^{-1}R}{(W\cap A)^{-1}A}$$ is immediate from the definitions. Now, $\ModDif{I'}{W^{-1}R}{A}$ is the localization of $W^{-1}\ModDif{I}{R}{A}$ at the image of $(1\otimes W)$. We remind the reader that, since $W^{-1}\ModDif{I}{R}{A}$ is to be interpreted as a localization as an $R$-module, it is the localization of $\ModDif{I}{R}{A}$ at the image of $(W \otimes 1)$. To show that the map given by localization at the image of $(1\otimes W)$ is an isomorphism, it suffices to show that each element in this multiplicative set is already a unit in $W^{-1}\ModDif{I}{R}{A}$. Let $w\in W\subseteq R$. By assumption, $(w\otimes 1 - 1 \otimes w)^{n+1}=0$ in $W^{-1}\ModDif{I}{R}{A}$, so we can write $w^{n+1}\otimes 1 = (1\otimes w)\cdot \alpha$ for some $\alpha \in W^{-1}\ModDif{I}{R}{A}$. But, $w\otimes 1$ is a unit, so $w^{n+1}\otimes 1$ is a unit, and $1 \otimes w$ is as well. This concludes the proof of the first series of isomorphisms. For the second, we observe that $\Delta^{n+1}_{W^{-1}R|A}$ is the image of $\Delta^{n+1}_{R|A}$ under localization.
	\end{proof}

The following proposition is an immediate consequence of Propositions~\ref{diffmod-localize} and \ref{universaldifferential} combined with the fact that Hom commutes with localization for finitely presented modules. In contrast with Proposition~\ref{localization1}, we do not assume that $A$ is a field.

\begin{proposition}\label{localization2}
	Let $A$ be a subring of $R$, $W$ be a multiplicatively closed subset of $R$, and suppose that $\ModDif{n}{R}{A}$ is finitely presented for all $n$.  Then the natural maps 
	\[W^{-1}R \otimes_R D^n_{R|A} \rightarrow D^n_{W^{-1}R | A} \rightarrow D^n_{W^{-1}R | {(W\cap A)^{-1}A}}\] are isomorphisms for all $n$. More generally, if $I \subseteq \ModDif{}{R}{A}$ contains $\Delta^n_{R|A}$ for some $n$, and $I'$ is the image of $I$ in $\ModDif{}{W^{-1}R}{A}$, then the natural maps
		\[W^{-1}R \otimes_R D^I_{R|A} \rightarrow D^{I'}_{W^{-1}R | A} \rightarrow D^{I'}_{W^{-1}R | {(W\cap A)^{-1}A}}\] are isomorphisms.
\end{proposition}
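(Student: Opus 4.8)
The plan is to reduce everything to Proposition~\ref{diffmod-localize} by representing differential operators as $R$-linear maps out of modules of differentials, and then using that $\Hom$ commutes with localization for finitely presented modules. In contrast to Proposition~\ref{localization1}, this avoids citing Masson's theorem and instead re-derives the $D^n$ statement on the way.

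First I would treat the $D^n$ case. Proposition~\ref{universaldifferential} applied with $M=N=R$ (so that $\ModDif{n}{R}{A}\otimes_R R=\ModDif{n}{R}{A}$) gives a natural $R$-module isomorphism $D^n_{R|A}\cong\Hom_R(\ModDif{n}{R}{A},R)$. Since $\ModDif{n}{R}{A}$ is finitely presented, tensoring with $W^{-1}R$ and commuting past $\Hom$ produces $W^{-1}R\otimes_R D^n_{R|A}\cong\Hom_{W^{-1}R}(W^{-1}R\otimes_R\ModDif{n}{R}{A},\,W^{-1}R)$. By Proposition~\ref{diffmod-localize}, the natural maps identify $W^{-1}R\otimes_R\ModDif{n}{R}{A}$ with $\ModDif{n}{W^{-1}R}{A}$ and with $\ModDif{n}{W^{-1}R}{(W\cap A)^{-1}A}$. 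Applying Proposition~\ref{universaldifferential} once more over the ring $W^{-1}R$, with subring $A$ and then with subring $(W\cap A)^{-1}A$, turns these $\Hom$ modules back into $D^n_{W^{-1}R|A}$ and $D^n_{W^{-1}R|(W\cap A)^{-1}A}$. Composing this chain of isomorphisms gives the first assertion.

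For the general ideal $I$ with $\Delta^n_{R|A}\subseteq I$, I would run the identical chain with Proposition~\ref{representing-differential} in place of Proposition~\ref{universaldifferential}: one has $D^I_{R|A}\cong\Hom_R(\ModDif{I}{R}{A},R)$, Proposition~\ref{diffmod-localize} identifies $W^{-1}R\otimes_R\ModDif{I}{R}{A}$ with $\ModDif{I'}{W^{-1}R}{A}$ and with $\ModDif{I'}{W^{-1}R}{(W\cap A)^{-1}A}$, and Proposition~\ref{representing-differential} over $W^{-1}R$ converts these back into the $I'$-differential operators. This step uses that $\ModDif{I}{R}{A}$ is finitely presented; since it is a quotient of the finitely presented $R$-module $\ModDif{n}{R}{A}$ by a left-$R$-submodule, this holds whenever $R$ is Noetherian, which covers the settings in which the statement is used (or one may add it as a hypothesis).

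The only genuine work I anticipate is a naturality bookkeeping: checking that the composite of all these isomorphisms equals the particular natural map named in the statement, and not merely some abstract isomorphism. This is a diagram chase through the adjunction and evaluation isomorphisms of \eqref{eq:Hom}, through localization, and through base change of the subring; the compatibilities needed are that each arrow is induced by the appropriate universal differential $d^n$ (respectively $d^I$) and respects the correspondence $\phi\mapsto\phi\circ d^n$. Once those are recorded, the proposition follows.
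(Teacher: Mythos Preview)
Your proposal is correct and matches the paper's approach: the paper states this as an immediate consequence of Propositions~\ref{diffmod-localize} and~\ref{universaldifferential} (equivalently, Proposition~\ref{representing-differential} in the $I$-case) together with the fact that $\Hom$ commutes with localization for finitely presented modules. Your observation that finite presentation of $\ModDif{I}{R}{A}$ requires $R$ Noetherian (or an extra hypothesis) is a fair caveat that the paper does not make explicit.
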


\begin{remark} The isomorphisms above may be interpreted more concretely as follows. An $A$-linear differential operator $\delta$ on $R$ extends to a differential operator $\tilde{\delta}$ on $W^{-1}R$ by the rule $\tilde{\delta} (\frac{r}{w})=\frac{ \delta(r)}{w}$ if $\delta$ has order zero. Assume for the sake of induction that the action of every element in
$D_{R|A}^{n-1}$ on $W^{-1}R$ is defined. Take $\delta\in D^{n}_{R|A}.$ Then,
\[
\tilde{\delta} \left(\frac{r}{w}\right) = \frac{\delta(r) - \widetilde{[\delta,w]}(\frac{r}{w})}{w},
\]
which is well defined since the order of $[\delta,w]$ is as most $n-1$. Note that one has the equality  $\tilde{\delta} (\frac{r}{1})=\frac{\delta(r)}{1}$ by induction. Then, the previous proposition can be interpreted as saying that, when the modules of principal parts are finitely presented, every $A$-linear differential operator on $W^{-1}R$ of order at most $n$ can be written in the form $\frac{1}{w} \tilde{\delta}$ for some $\delta\in D^n_{R|A}$; one checks easily that this does not depend on the choice of representatives.
\end{remark}

	We need a generalization of Proposition~\ref{localization1} to $I$-differential operators.

\begin{lemma}\label{etalemap}
	Let $A\subseteq R\to S$ be maps of rings. If $I\subseteq \ModDif{}{R}{A}$ and $J\subseteq \ModDif{}{S}{A}$ are such that $I \ModDif{}{S}{A} \subseteq J$, then there is an $S$-module homomorphism $\alpha:S \otimes_R \ModDif{I}{R}{A} \to \ModDif{J}{S}{A}$.
	
	If $I=\Delta_{R|A}^n$ and $J=\Delta_{S|A}^n$, then $\alpha$ is an isomorphism on the \'etale locus of the map $R\to S$. Similarly, in characteristic $p>0$, $I=\Delta_{R|A}^{[p^e]}$ and $J=\Delta_{S|A}^{[p^e]}$, then $\alpha$ is an isomorphism on the \'etale locus of the map $R\to S$.
\end{lemma}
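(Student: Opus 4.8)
The plan is to construct $\alpha$ directly from the universal property of $\ModDif{I}{R}{A}(M)$ applied to $M=R$, and then prove it is an isomorphism on the \'etale locus by a standard reduction.

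First I would build the map. Starting from the ring map $R \to S$, apply the functor $S \otimes_R (-)$ to the left-hand copy: since $\ModDif{}{R}{A} = R \otimes_A R$, there is a natural $S$-algebra map $S \otimes_R (R\otimes_A R) = S \otimes_A R \to S\otimes_A S = \ModDif{}{S}{A}$ induced by $R \to S$ on the right factor. Chasing through, this sends (the image of) $I$ into $I\ModDif{}{S}{A} \subseteq J$, hence descends to a ring map $S \otimes_R \ModDif{I}{R}{A} \to \ModDif{J}{S}{A}$, which in particular is an $S$-module homomorphism $\alpha$. (Equivalently, one can phrase this via $d^I$: the composite $S \to \ModDif{J}{S}{A}$, $d^J_{S|A}$ restricted along $R\to S$, is an $I$-differential into an $S$-module, so it factors through $S\otimes_R \ModDif{I}{R}{A}$ by Proposition~\ref{representing-differential}; either presentation works.)

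For the isomorphism claim when $I=\Delta_{R|A}^n$, $J=\Delta_{S|A}^n$, I would argue as follows. Etaleness is local, and both sides localize well: $\ModDif{n}{-}{A}$ commutes with localization by Proposition~\ref{diffmod-localize}, so it suffices to treat a standard \'etale neighborhood, i.e. we may assume $R\to S$ is \'etale. Then $\Delta_{S|R}$ is zero (equivalently $\ModDif{n}{S}{R}=S$ for all $n$, i.e. $S\otimes_A S \twoheadrightarrow S\otimes_R S = S$ has kernel that is already negligible modulo each $\Delta_{S|A}^{n+1}$ after the identification), and the key point is the base-change formula $\Delta_{S|A} = \Delta_{R|A}\cdot(S\otimes_A S) + \Delta_{S|R}$; since $\Delta_{S|R}$ is generated by elements that become nilpotent — more precisely, \'etaleness gives $\Delta_{S|R}/\Delta_{S|R}^2 = \Omega_{S|R}=0$ so $\Delta_{S|R}$ is locally idempotent, hence locally zero in the completed/truncated context — we get $\Delta_{S|A}^n = \Delta_{R|A}^n \cdot (S\otimes_A S)$ after passing to $\ModDif{}{S}{A}$, which is exactly the statement that $\alpha$ is an isomorphism. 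The characteristic $p$ variant with Frobenius powers $\Delta^{[p^e]}$ runs identically, using that $\Delta_{S|R}^{[p^e]}$ is generated by $p^e$-th powers of a generating set of $\Delta_{S|R}$, together with the same vanishing of $\Omega_{S|R}$.

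The main obstacle I anticipate is the precise handling of the ideal $\Delta_{S|R}$: one must justify that, after \'etale localization, $\Delta_{S|R}$ contributes nothing to $\Delta_{S|A}^n$ modulo the relevant truncation, and the cleanest route is probably to invoke the known fact (e.g.\ from \cite{EGAIV}, or Proposition~\ref{localization1}'s proof) that $S\otimes_A S \to \ModDif{n}{S}{A}$ factors through $S\otimes_R S$-completion along the diagonal in the \'etale case, so that $\ModDif{n}{S}{A}\cong S\otimes_R \ModDif{n}{R}{A}$ canonically, and then check this canonical isomorphism agrees with $\alpha$. I would be careful that all of this is compatible with the three-way identification $\ModDif{}{W^{-1}R}{A}\cong \ModDif{}{W^{-1}R}{(W\cap A)^{-1}A}$ used elsewhere, so that localizing $A$ causes no trouble; this is routine but worth a sentence. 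Everything else — additivity, $S$-linearity, functoriality of the construction — is formal.
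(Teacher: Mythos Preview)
Your construction of $\alpha$ is correct and matches the paper's. The gap is in the isomorphism argument. The inference ``$\Delta_{S|R}/\Delta_{S|R}^2=0$, so $\Delta_{S|R}$ is locally idempotent, hence locally zero in the completed/truncated context'' is not valid: an idempotent ideal $I=I^2$ satisfies $I^n=I$ for all $n$, not $I^n=0$, so it certainly does not vanish after truncating by powers. Moreover, even if you had the equality $\Delta_{S|A}^n=\Delta_{R|A}^n\cdot(S\otimes_A S)$, that would compare two ideals of $S\otimes_A S$, whereas $\alpha$ is a map from $(S\otimes_A R)/\Delta_{R|A}^{n+1}$ to $(S\otimes_A S)/\Delta_{S|A}^{n+1}$; you still have to show that $S\otimes_A R$ surjects onto the target and that the kernel is exactly $\Delta_{R|A}^{n+1}$. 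Your fallback of citing the formally \'etale base-change isomorphism from Proposition~\ref{localization1}/EGA is legitimate, but then you are invoking precisely the statement to be proved rather than proving it, and that reference carries a finite-presentation hypothesis not assumed here.

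The paper proceeds differently and more concretely. It invokes the local structure theorem for \'etale maps to write $S$ locally as $W^{-1}R[\theta]/(f(\theta))$ with $f'(\theta)$ a unit. Then both sides of $\alpha$ are written out explicitly, and with the substitution $\theta'=\bar\theta-\theta$ the Taylor expansion $f(\theta+\theta')=f(\theta)+f'(\theta)\theta'+\cdots$ gives, in the target, a relation expressing $\theta'$ as a unit times higher powers of $\theta'$. One then iteratively eliminates all $\theta'$-terms of degree $<n$, showing every element of the target comes from the source; this handles both surjectivity and injectivity at once. The Frobenius-power case is the same computation with $\Delta^{[p^e]}$ in place of $\Delta^n$. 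Your structural decomposition $\Delta_{S|A}=\Delta_{R|A}(S\otimes_A S)+(\text{lift of }\Delta_{S|R})$ is the right shape, but the work lies in turning the vanishing of $\Omega_{S|R}$ into an honest elimination procedure, and that is what the explicit computation provides.
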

\begin{proof}
	The  map $\alpha$ is just the map given by $(S \otimes_R R\otimes_A R)/I^e \to (S \otimes_R R\otimes_A R \otimes_R S)/I^e \to (S \otimes_R R\otimes_A R \otimes_R S)/J$.
	
	To verify that $\alpha$ is an isomorphism when stated, we use the local structure theorem for \'etale maps \cite[{Tag~025A}]{stacks-project}. Write $S$ as a localization (at $W^{-1}$) of $R[\theta]/f(\theta)$ in which $f'(\theta)$ is invertible. In the case of powers, the map $\alpha$ takes the form
	\[ \frac{W^{-1} R[\theta]\otimes_A R }{\Delta^n_{R|A} + (f(\theta))} \to \frac{W^{-1} R[\theta]\otimes_A W^{-1} R[\overline{\theta}] }{(\Delta_{R|A} + (\overline{\theta}-\theta))^n + (f(\theta),f(\overline{\theta}))}.\]
	By the same argument as Proposition~\ref{diffmod-localize}, we can rewrite the right-hand side as
	\[\frac{((W^{-1} R)\otimes_A R)[\theta,\overline{\theta}] }{(\Delta_{R|A} + (\overline{\theta}-\theta))^n + (f(\theta),f(\overline{\theta}))}.\]
	We write $\theta'=\overline{\theta}-\theta$ and use the Taylor expansion of $f(\theta+\theta')$ to rewrite the target module as
	\[\frac{((W^{-1} R)\otimes_A R)[\theta,{\theta'}] }{(\Delta_{R|A} + \theta')^n + (f(\theta),\theta' + \frac{\theta'^2 f''(\theta)}{2! f'(\theta)}+\cdots))}.\]
	Given an element in this module, we can expand as a polynomial expression in $\theta'$. If there is a term of the form  $B \theta'^i$ , with $1\leq i <n$, we can subtract off $B (\theta'^i + \frac{\theta'^{i+1} f''(\theta)}{2! f'(\theta)}+\cdots)$ to obtain an expression where the least such $i$ for which $B\neq 0$ is larger. Iterating this, we obtain an expression for the element with no $\theta'$ term. That is, the map $\alpha$ is an isomorphism.
	
	The argument is entirely analogous in the case of Frobenius powers.
\end{proof}

\begin{proposition}\label{RankDiff} Let $K$ be a field, and $(R,\m,\kk)$ be a local or graded domain that is essentially of finite type over $K$. Suppose that $\Frac(R)$ is separable over $K$. Let $d=\dim R$ and $t$ be the transcendence degree of $\kk$ over $K$. Then  { ${\Rank_R(\ModDif{n}{R}{K})=\binom{d+t+n}{d+t}}$}. 
\end{proposition}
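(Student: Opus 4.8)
The plan is to compute the rank by localizing at the generic point of $\Spec R$ and reducing to the smooth (in fact, separably generated) case, where the rank of $\ModDif{n}{R}{K}$ is computed by the EGA description of principal parts of a smooth algebra. First I would observe that, since $R$ is a domain essentially of finite type over $K$, the modules $\ModDif{n}{R}{K}$ are finitely presented, so by Proposition~\ref{diffmod-localize} we have $\Frac(R) \otimes_R \ModDif{n}{R}{K} \cong \ModDif{n}{\Frac(R)}{K}$, and hence $\Rank_R(\ModDif{n}{R}{K}) = \dim_{\Frac(R)} \ModDif{n}{\Frac(R)}{K}$. Thus it suffices to compute this dimension for the field $L := \Frac(R)$, which is a finitely generated field extension of $K$ of transcendence degree $d + t$: indeed, $\trd_K L = \trd_K \kk + \dim R = t + d$ since $R$ is a domain essentially of finite type over $K$ with residue field $\kk$ of transcendence degree $t$ over $K$.

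Next I would use the separability hypothesis: $L$ is separable over $K$, so it is separably generated, i.e.\ there is a transcendence basis $y_1, \dots, y_{d+t}$ with $L$ finite separable over $K(y_1, \dots, y_{d+t})$. Then $L$ is a smooth (even formally étale over a polynomial subring, then localized) $K$-algebra, and $\Omega_{L|K}$ is free of rank $d+t$. By Example~\ref{example-regular-D} applied to the smooth algebra $L$ — or directly by \cite[16.10.2, 16.11.2]{EGAIV} — the module $\ModDif{n}{L}{K}$ is free over $L$ with basis the classes of $1$ and the universal differentials $d^n(y_1)^{a_1}\cdots$ of ``degree'' $\le n$; more precisely, for a smooth $K$-algebra of relative dimension $m$ the module $\ModDif{n}{}{}$ of principal parts is locally free of rank $\binom{m+n}{m}$ (the number of monomials of degree $\le n$ in $m$ variables). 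With $m = d+t$ this gives $\dim_L \ModDif{n}{L}{K} = \binom{d+t+n}{d+t}$, which is the claimed formula.

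The main obstacle is making the reduction to the smooth case rigorous: one must know that $\ModDif{n}{L}{K}$ genuinely is free of the stated rank for the (non-finite-type, but essentially of finite type) field extension $L/K$, and that the étale/separable base of the transcendence basis really does let us invoke the EGA smoothness computation. Concretely, I would write $L$ as a localization of $K[y_1, \dots, y_{d+t}, \theta]/(f(\theta))$ with $f'(\theta)$ a unit (local structure of étale maps, as in Lemma~\ref{etalemap}), use Proposition~\ref{diffmod-localize} to pass to this presentation, and then use Lemma~\ref{etalemap} (the isomorphism $S \otimes_R \ModDif{n}{R}{K} \cong \ModDif{n}{S}{K}$ on the étale locus, with $R = K[y_1,\dots,y_{d+t}]$ and $S = L$) to conclude $\ModDif{n}{L}{K} \cong L \otimes_R \ModDif{n}{R}{K}$, which is free of rank $\binom{d+t+n}{d+t}$ since $\ModDif{n}{R}{K}$ is free of that rank for a polynomial ring $R$ (Example~\ref{example-regular-D}). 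Assembling: $\Rank_R(\ModDif{n}{R}{K}) = \dim_L \ModDif{n}{L}{K} = \binom{d+t+n}{d+t}$.
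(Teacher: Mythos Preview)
Your proposal is correct and follows essentially the same approach as the paper: reduce to the fraction field via Proposition~\ref{diffmod-localize}, observe that $\trd_K \Frac(R) = d+t$, pick a separating transcendence basis, and use the \'etale base change property of principal parts to reduce to a polynomial ring in $d+t$ variables, where the rank is read off directly. The paper invokes Proposition~\ref{localization1} (formally \'etale base change) rather than Lemma~\ref{etalemap} for the reduction step, but these are the same idea; otherwise the arguments coincide.
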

\begin{proof} Let $F=\Frac(R)$, and $e=d+t$, which, by standard dimension theory, is the transcendence degree of $F$ over $K$. Since $F$ is separable over $K$, we can write $F=K(x_1,\dots,x_e)(\alpha)$, where $x_1,\dots,x_e$ are a transcendence basis for $F$ over $K$, and $F$ is a separable algebraic extension of $L=K(x_1,\dots,x_e)$.
It follows from  {Proposition~\ref{diffmod-localize}} that $\Rank_R(\ModDif{n}{R}{K})=\Rank_F(\ModDif{n}{F}{K})$. Then, since $F$ is a separable extension of $L$, by Lemma~\ref{localization1}, we have  {$\Rank_F(\ModDif{n}{F}{K}) = \Rank_L(\ModDif{n}{L}{K})$}. Applying  {Proposition~\ref{diffmod-localize}} again, this is equal to  {$\Rank_R(\ModDif{n}{K[x_1,\dots,x_e]}{K})$}. In this case, we compute  {$\displaystyle \ModDif{n}{K[x_1,\dots,x_e]}{K}\cong \frac{K[x_1,\dots,x_e,z_1,\dots,z_e]}{(z_1,\dots,z_e)^n}$} (see \S\ref{SubSecJacobi} below) from which the claim follows.
\end{proof}

\begin{definition} Let $(R,\m,\kk)$ be a complete local ring, and $A\subseteq R$ be a subring. The \emph{complete module of $n$-differentials} or \emph{complete module of principal parts} of $R$ over $A$ is
	$\wModDif{n}{R}{A}$\index{$\wModDif{n}{R}{A}$}, the $\m$-adic completion of $\ModDif{n}{R}{A}$.
\end{definition}

\begin{lemma}\label{separatedquot} Let $(R,\m,\kk)$ be a complete local ring, and $K\cong \kk$ be a coefficient field. Then,
	\begin{enumerate}
		\item 	$\wModDif{n}{R}{K}$ is finitely generated, and
		\item  $\wModDif{n}{R}{K}\cong \sep{(\ModDif{n}{R}{K})}$, where $\sep{M}=M/(\cap_{n=1}^\infty\m^n M)$\index{$\sep{M}$}, the maximal separated quotient of $M$.
	\end{enumerate}	
\end{lemma}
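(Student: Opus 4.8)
The plan is to use the finite filtration of $\ModDif{n}{R}{K}$ by the images $F_j$ of the powers $\Delta_{R|K}^j$, and to reduce both assertions to the behaviour of $\Omega_{R|K}$ and its symmetric powers under $\m$-adic completion. Two generalities about completion will be used repeatedly. First, $\m$-adic completion is right exact on arbitrary $R$-modules: for $0\to L\to M\to N\to 0$ the kernels of the maps $M/\m^kM\to N/\m^kN$, namely $(L+\m^kM)/\m^kM$, form a surjective inverse system, so the relevant $\varprojlim^1$ vanishes and $\IM(\widehat L\to\widehat M)=\ker(\widehat M\to\widehat N)$; in particular completion preserves surjections. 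Second, if a module $M$ admits a finite filtration $M=F_0\supseteq F_1\supseteq\cdots\supseteq F_{r+1}=0$ for which each canonical map $F_j/F_{j+1}\to\widehat{F_j/F_{j+1}}$ is surjective, then $M\to\widehat M$ is surjective; by induction this reduces to the two-step case $0\to F_1\to M\to M/F_1\to 0$, where for $x\in\widehat M$ one lifts the image of $x$ in $\widehat{M/F_1}$ to $M$, observes that the difference between $x$ and the completion of that lift lies in $\ker(\widehat M\to\widehat{M/F_1})=\IM(\widehat{F_1}\to\widehat M)$, and hence comes from $F_1$.

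Let $F_j$ be the image of $\Delta_{R|K}^j$ in $\ModDif{n}{R}{K}=(R\otimes_KR)/\Delta_{R|K}^{n+1}$, so that $\ModDif{n}{R}{K}=F_0\supseteq F_1\supseteq\cdots\supseteq F_n\supseteq F_{n+1}=0$ and $F_j/F_{j+1}\cong\Delta_{R|K}^j/\Delta_{R|K}^{j+1}$ as $R$-modules. Since the associated graded ring of $R\otimes_KR$ along $\Delta_{R|K}$ is generated in degree one over its degree-zero part $R$ by $\Delta_{R|K}/\Delta_{R|K}^2=\Omega_{R|K}$, there is a surjection of $R$-modules $\Sym^j_R(\Omega_{R|K})\twoheadrightarrow F_j/F_{j+1}$ for each $j$.

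The heart of the argument is the identification $\widehat{\Sym^j_R(\Omega_{R|K})}\cong\Sym^j_R(\widehat{\Omega}_{R|K})$, where $\widehat{\Omega}_{R|K}$ denotes the $\m$-adic completion of $\Omega_{R|K}$. By Cohen's structure theorem $R\cong K[[x_1,\dots,x_s]]/I$, and then $\widehat{\Omega}_{R|K}\cong(\bigoplus_i R\,d\bar x_i)/(df:f\in I)$ is finitely generated and the canonical map $\Omega_{R|K}\to\widehat{\Omega}_{R|K}$ is surjective with kernel $V:=\bigcap_k\m^k\Omega_{R|K}$, i.e.\ it is the surjection onto $\sep{\Omega_{R|K}}$; this is standard for formal differentials of complete local rings with coefficient field, and can be extracted from the surjection of $\Omega$ onto the free module $\bigoplus_i S\,dx_i$ (hit by the $dx_i$) for a power series ring $S$. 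Applying the right-exact functor $\Sym^j_R$ to $0\to V\to\Omega_{R|K}\to\widehat{\Omega}_{R|K}\to0$, the kernel of $\Sym^j_R(\Omega_{R|K})\to\Sym^j_R(\widehat{\Omega}_{R|K})$ is the image of $V\otimes_R\Sym^{j-1}_R(\Omega_{R|K})$, which is contained in $\bigcap_k\m^k\Sym^j_R(\Omega_{R|K})$ because $V\subseteq\m^k\Omega_{R|K}$ for every $k$. Conversely $\Sym^j_R(\widehat{\Omega}_{R|K})$ is a finitely generated module over the complete local ring $R$, hence $\m$-adically separated, so any surjection onto it from $\Sym^j_R(\Omega_{R|K})$ has kernel containing $\bigcap_k\m^k\Sym^j_R(\Omega_{R|K})$. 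Therefore $\Sym^j_R(\widehat{\Omega}_{R|K})=\sep{(\Sym^j_R(\Omega_{R|K}))}$; completing and using that a finitely generated module over $R$ is complete yields the claimed isomorphism, together with the surjectivity of $\Sym^j_R(\Omega_{R|K})\to\widehat{\Sym^j_R(\Omega_{R|K})}$.

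To conclude: applying completion to $\Sym^j_R(\Omega_{R|K})\twoheadrightarrow F_j/F_{j+1}$, the module $\widehat{F_j/F_{j+1}}$ is a quotient of the finitely generated $\Sym^j_R(\widehat{\Omega}_{R|K})$, hence finitely generated, and $F_j/F_{j+1}\to\widehat{F_j/F_{j+1}}$ is surjective. For (1), a descending induction on $j$ along the exact sequences $\widehat{F_{j+1}}\to\widehat{F_j}\to\widehat{F_j/F_{j+1}}\to0$ exhibits each $\widehat{F_j}$ as an extension of finitely generated modules, so $\wModDif{n}{R}{K}=\widehat{F_0}$ is finitely generated. For (2), the second generality applied to the filtration $(F_j)$ shows that $\ModDif{n}{R}{K}\to\wModDif{n}{R}{K}$ is surjective; since $\wModDif{n}{R}{K}$ is $\m$-adically separated it is then a separated quotient of $\ModDif{n}{R}{K}$, and since the completion map always factors as $\ModDif{n}{R}{K}\twoheadrightarrow\sep{\ModDif{n}{R}{K}}\hookrightarrow\wModDif{n}{R}{K}$, the second arrow must be an isomorphism. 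I expect the main obstacle to be the identification $\widehat{\Sym^j_R(\Omega_{R|K})}\cong\Sym^j_R(\widehat{\Omega}_{R|K})$ — controlling completion of symmetric powers of the non-finitely-generated module $\Omega_{R|K}$ — which relies on the standard structure of the formal differential module of a complete local ring with coefficient field; everything else is bookkeeping with the $\Delta_{R|K}$-adic filtration and right exactness of completion.
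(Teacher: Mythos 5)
The paper's own proof is essentially a citation: it invokes \cite[Remark~4.7]{Switala} for the fact that $\sep{(\ModDif{n}{R}{K})}$ is finitely generated (and hence complete), and then derives both assertions from universal properties. You instead attempt to prove finite generation from scratch via the $\Delta_{R|K}$-adic filtration and symmetric powers of $\Omega_{R|K}$, which is a reasonable direction and the intermediate computation of $\widehat{\Sym^j_R(\Omega_{R|K})}\cong\Sym^j_R(\widehat{\Omega}_{R|K})$ is handled correctly.

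However, there is a genuine gap in your two completion ``generalities,'' on which both the induction for~(1) and the surjectivity argument for~(2) rest. The claim that $\m$-adic completion is \emph{right exact} on arbitrary $R$-modules is false, and your argument for it proves something strictly weaker. Given $0\to L\to M\to N\to 0$, the surjectivity of the system $(L+\m^kM)/\m^kM$ does show $\varprojlim^1=0$, hence $\widehat M\to\widehat N$ is surjective with kernel $\varprojlim (L+\m^kM)/\m^kM$; this is the statement that completion preserves surjections. But you then identify that kernel with $\IM(\widehat L\to\widehat M)$, which is a separate assertion: it requires $\widehat L=\varprojlim L/\m^kL\to\varprojlim (L+\m^kM)/\m^kM$ to be surjective, i.e.\ that the kernel system $(L\cap\m^kM)/\m^kL$ is Mittag--Leffler. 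For finitely generated modules over a Noetherian ring this follows from Artin--Rees, but the modules appearing in your filtration (the $F_j$ and $F_{j+1}$, hence also $L\cap\m^kM=F_{j+1}\cap\m^kF_j$) are not finitely generated, and no Mittag--Leffler condition is verified. Concretely: in your two-step argument, the difference $x-\hat m$ lies in $\varprojlim (F_1+\m^kM)/\m^kM$, and to conclude it ``comes from $F_1$'' you would need the sequence of representatives $f_k\in F_1$ to be $\m$-adically Cauchy \emph{in} $F_1$, but you only know $f_{k+1}-f_k\in F_1\cap\m^kM$, not $f_{k+1}-f_k\in\m^kF_1$. The same obstruction appears in the inductive step for~(1), where you need the kernel of $\widehat{F_j}\to\widehat{F_j/F_{j+1}}$ to be (a quotient of) $\widehat{F_{j+1}}$ and hence finitely generated. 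Without controlling these intersections --- or supplying the external input the paper gets from Switala --- the finite generation in part~(1) is not established, and part~(2) depends on it.
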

\begin{proof}
	Each $\sep{(\ModDif{n}{R}{K})}$ is finitely generated over $R$, hence is complete \cite[Remark~4.7]{Switala}. The isomorphism in (2) then follows from the universal properties of the two modules, and the first statement is then immediate.
\end{proof}

The following proposition is an analogue of Proposition~\ref{representing-differential}  for complete rings.

\begin{proposition}\label{represent-complete} Let $(R,\m,\kk)$ be a complete local ring, and $A\subseteq R$ be a subring. Then $D^n_{R|A} {\cong}\Hom_R(\wModDif{n}{R}{A},R)$.
\end{proposition}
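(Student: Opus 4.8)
The plan is to identify $D^n_{R|A}$ with $\Hom_R(\ModDif{n}{R}{A}, R)$ via Proposition~\ref{universaldifferential} (with $M=N=R$), and then to pass to the $\m$-adic completion $\wModDif{n}{R}{A}$ on the source side of the Hom. The key point is that completion does not change the Hom into a complete target.

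First, I would invoke Proposition~\ref{universaldifferential} in the case $M=N=R$, which gives a canonical $R$-module isomorphism $\Hom_R(\ModDif{n}{R}{A}, R) \cong D^n_{R|A}$. So it suffices to prove that the natural surjection $\ModDif{n}{R}{A} \to \wModDif{n}{R}{A}$ induces an isomorphism $\Hom_R(\wModDif{n}{R}{A}, R) \to \Hom_R(\ModDif{n}{R}{A}, R)$. Here I would use that $R$ is $\m$-adically complete, hence $\m$-adically separated: any $R$-linear map $\phi: \ModDif{n}{R}{A} \to R$ kills $\bigcap_{k\geq 1} \m^k \ModDif{n}{R}{A}$, because $\phi(\m^k M) \subseteq \m^k R$ and $\bigcap_k \m^k R = 0$. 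Thus $\phi$ factors through the maximal separated quotient $\sep{(\ModDif{n}{R}{A})} = \ModDif{n}{R}{A}/\bigcap_k \m^k \ModDif{n}{R}{A}$, and this factorization is unique since $\ModDif{n}{R}{A}$ surjects onto its separated quotient. Conversely every map out of the separated quotient pulls back, so $\Hom_R(\sep{(\ModDif{n}{R}{A})}, R) \to \Hom_R(\ModDif{n}{R}{A}, R)$ is a bijection.

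The remaining step is to identify $\sep{(\ModDif{n}{R}{A})}$ with $\wModDif{n}{R}{A}$. When $A$ is a coefficient field this is exactly Lemma~\ref{separatedquot}(2), so in that case we are done immediately. For a general subring $A$, I would argue directly: the completion map $\ModDif{n}{R}{A} \to \wModDif{n}{R}{A}$ has kernel $\bigcap_k \m^k \ModDif{n}{R}{A}$ (by definition of $\m$-adic completion), so $\wModDif{n}{R}{A}$ receives a canonical injection from $\sep{(\ModDif{n}{R}{A})}$ with dense image; since $\Hom_R(-,R)$ only sees the separated quotient and $R$ is complete, we get $\Hom_R(\wModDif{n}{R}{A},R) \cong \Hom_R(\sep{(\ModDif{n}{R}{A})},R) \cong \Hom_R(\ModDif{n}{R}{A},R) \cong D^n_{R|A}$, which is what we want. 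Concretely, one can also observe that this chain of isomorphisms is realized by precomposition with the universal differential $\widehat{d^n}: R \to \wModDif{n}{R}{A}$ obtained from $d^n$ by completion, so that the final isomorphism $\Hom_R(\wModDif{n}{R}{A},R) \to D^n_{R|A}$ sends $\phi \mapsto \phi \circ \widehat{d^n}$.

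I expect the main (minor) obstacle to be the bookkeeping around separatedness versus completeness when $A$ is not a field: one must be a little careful that the relevant intersection $\bigcap_k \m^k \ModDif{n}{R}{A}$ is the kernel of the completion map and that maps to the complete module $R$ genuinely factor through it. But since $R$ itself is $\m$-adically separated and complete, both the "factor through separated quotient" and "extend to completion" directions are formal consequences of the universal properties, and no finite-generation hypothesis on $\ModDif{n}{R}{A}$ is actually needed for the $\Hom$ comparison. Thus the proof is short once Proposition~\ref{universaldifferential} is in hand.
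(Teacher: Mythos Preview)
Your proposal is correct and follows essentially the same approach as the paper: identify $D^n_{R|A}$ with $\Hom_R(\ModDif{n}{R}{A},R)$ via the universal property, then use completeness of $R$ to factor any such map uniquely through $\wModDif{n}{R}{A}$. The paper states the second step in a single sentence, whereas you spell out the passage through the separated quotient; both are fine.
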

\begin{proof}
	By Proposition~\ref{representing-differential}, we have $D^n_{R|A}\cong \Hom_R(\ModDif{n}{R}{A},R)$. Since $R$ is complete, a map from $\ModDif{n}{R}{A}$ to $R$ factors uniquely through $\wModDif{n}{R}{A}$.
\end{proof}

The analogue of Proposition~\ref{localization2} holds as well.

\begin{proposition}[{\cite[2.3.3]{LyuUMC}}]\label{diff-ops-completion} Let $(R,\m,K)$ be an $A$-algebra essentially of finite type, with $A$ Noetherian. Then there are isomorphisms
	\[ \widehat{R} \otimes_R D^n_{R|A} \rightarrow D^n_{\widehat{R}|A}. \]
\end{proposition}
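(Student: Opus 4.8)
The plan is to pass to the modules of principal parts, where completion is transparent, using the representability results of Propositions~\ref{representing-differential} and \ref{represent-complete}, and then to identify the completion of $\ModDif{n}{R}{A}$ with the complete module of principal parts of $\widehat R$. Since $R$ is essentially of finite type over the Noetherian ring $A$, it is Noetherian, $\Delta_{R|A}$ is finitely generated, and $\ModDif{n}{R}{A}$ is a finitely presented $R$-module (as remarked after the definition of principal parts). So first I would invoke Proposition~\ref{representing-differential} with $M=N=R$ and $I=\Delta_{R|A}^{n+1}$ to write $D^n_{R|A}\cong\Hom_R(\ModDif{n}{R}{A},R)$, and then use that $\widehat R$ is flat over the Noetherian ring $R$ together with finite presentation of $\ModDif{n}{R}{A}$ to get a natural isomorphism
\[\widehat R\otimes_R D^n_{R|A}\;\cong\;\widehat R\otimes_R\Hom_R(\ModDif{n}{R}{A},R)\;\cong\;\Hom_{\widehat R}\!\bigl(\widehat R\otimes_R\ModDif{n}{R}{A},\,\widehat R\bigr).\]
Because $\ModDif{n}{R}{A}$ is a finite module over the Noetherian ring $R$, the module $\widehat R\otimes_R\ModDif{n}{R}{A}$ is exactly the $\m$-adic completion of $\ModDif{n}{R}{A}$.

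The heart of the matter, and the step I expect to be the main obstacle, is to identify this completion with $\wModDif{n}{\widehat R}{A}$ as $\widehat R$-modules. In one direction, the inclusion $R\hookrightarrow\widehat R$ induces $R\otimes_A R\to\widehat R\otimes_A\widehat R$ carrying $\Delta_{R|A}$ into $\Delta_{\widehat R|A}$, hence an $R$-linear map (for the left structures) $\ModDif{n}{R}{A}\to\ModDif{n}{\widehat R}{A}$ compatible with the $\m$-adic filtrations, which on completions yields $\widehat R\otimes_R\ModDif{n}{R}{A}\to\wModDif{n}{\widehat R}{A}$. For the reverse map I would start from the universal differential $d^n\colon R\to\ModDif{n}{R}{A}$: it is a differential operator of order $\leq n$, hence $\m$-adically continuous by Lemma~\ref{diff-ops-are-cts}, so it extends by continuity (the target being complete and $R$ being $\m$-adically dense in $\widehat R$) to a map $\widehat R\to\widehat R\otimes_R\ModDif{n}{R}{A}$, which is again a differential operator of order $\leq n$ by induction on $n$, controlling commutators via continuity and density; by the universal property of $\ModDif{n}{\widehat R}{A}$ and completeness of the target, this factors through $\wModDif{n}{\widehat R}{A}$. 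Both modules are topologically generated over $\widehat R$ by the images of their respective universal differentials, and the two maps are continuous, $\widehat R$-linear, and intertwine these universal differentials, so they are mutually inverse. The delicate points are the continuity of $d^n$ into the left $\m$-adic topology on $\ModDif{n}{R}{A}$ and the fact that the extension of $d^n$ stays a differential operator; both follow from Lemma~\ref{diff-ops-are-cts} and $\m$-adic density of $R$ in $\widehat R$, but they need care.

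Finally, since $\widehat R$ is complete local, Proposition~\ref{represent-complete} gives $\Hom_{\widehat R}(\wModDif{n}{\widehat R}{A},\widehat R)\cong D^n_{\widehat R|A}$, and composing the isomorphisms above produces $\widehat R\otimes_R D^n_{R|A}\cong D^n_{\widehat R|A}$. I would close by checking that the composite is the expected natural map, sending $1\otimes\delta$ to the unique continuous extension of $\delta$ to $\widehat R$ (well-defined precisely because differential operators are $\m$-adically continuous). An alternative, and the route of \cite[2.3.3]{LyuUMC}, is to fix a presentation of $R$ as a localization of a polynomial ring over $A$ and argue directly with explicit generators of the modules of principal parts and of their completions, bypassing the abstract continuity arguments; either way the essential content is that differential operators extend canonically along the completion map.
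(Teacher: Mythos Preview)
The paper does not prove this proposition; it is simply quoted from \cite[2.3.3]{LyuUMC} without argument. Your proposal therefore cannot be compared to a proof in the paper, but it is a correct and natural approach, well aligned with the paper's general methodology of working through the representing modules $\ModDif{n}{R}{A}$.

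One small simplification: you can bypass the identification $\widehat R\otimes_R\ModDif{n}{R}{A}\cong\wModDif{n}{\widehat R}{A}$ altogether. After your second display you have $\widehat R\otimes_R D^n_{R|A}\cong\Hom_R(\ModDif{n}{R}{A},\widehat R)$, and by Proposition~\ref{universaldifferential} this is exactly $D^n_{R|A}(R,\widehat R)$. Now Lemma~\ref{diff-ops-are-cts} and $\m$-adic density of $R$ in $\widehat R$ show that restriction $D^n_{\widehat R|A}\to D^n_{R|A}(R,\widehat R)$ is injective, and the same continuity argument you give shows it is surjective (any $\delta\in D^n_{R|A}(R,\widehat R)$ extends uniquely by continuity, and the extension has order $\le n$ by induction on the order via density of commutators). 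This avoids constructing the inverse map on principal parts and the delicate check that the extension of $d^n$ lands in the right topology.
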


We note that for an algebra with a pseudocoefficient field $K$, the modules of principal parts $\ModDif{n}{R}{K}$ are all finitely presented, so Proposition~\ref{localization2} applies.

%%%%%%%%%%%%%%%%%%%%%%%%%
\subsection{The Jacobi-Taylor matrices}\label{SubSecJacobi}
%%%%%%%%%%%%%%%%%%%%%%%%%

In this subsection we introduce a family of matrices that give a presentation for the modules of principal parts and a computationally easy description of differential operators. We use these matrices for algorithmic aspects of the differential signature in Subsection~\ref{SubAlg}. In particular, we compute the differential signature for quadrics in Subsection~\ref{SubQuadrics}.

We point out that a closely related version of the Jacobi-Taylor matrices were independently and simultaneously introduced by Barajas and Duarte \cite{BarajasDuarte} under the name of higher Jacobian matrices. We point out that the hypersurface case was already studied by Duarte \cite{Duarte}. 
 
For polynomials $f_i$, $1 \leq i \leq m$, in $k$ variables the usual Jacobian matrix $J=\left( \partial_j f_i  \right)$ provides a representation
\[ R^m \stackrel{J^\text{tr} }{\longrightarrow} R^k  \longrightarrow \Omega_{R|K} \longrightarrow 0, \]
 {where $J^\text{tr}$ denotes the transpose matrix of $J$, }
of the module of K\"ahler differentials. In this subsection we provide a similar description of the module of principal parts. For a $k$-tuple
$\lambda \in \NN^k$ we define the operators $ \frac{ 1 }{ \lambda! } \partial^\lambda $ as in Example~\ref{example-regular-D}.

\begin{lemma}\label{principalpartdescription}
	Let $f_1 , \ldots , f_m \in K[x_1 , \ldots , x_k]$ denote polynomials with residue class ring
	$ R  =  K[x_1 , \ldots , x_k]/ \left( f_1 , \ldots , f_m \right) $. Then
	\[ R \otimes_K R  \cong  R[y_1 , \ldots , y_k]/ \left( g_1 , \ldots , g_m \right) , \]
	where $ g_i   = \sum_\lambda g_ {i, \lambda } y^\lambda $ and $ g_{i, \lambda }   =  \frac{ 1 }{ \lambda ! } \partial^\lambda (f_i).$
\end{lemma}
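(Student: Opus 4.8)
The plan is to make the change of variables $y_i = z_i + x_i$ explicit and identify $R \otimes_K R$ with a quotient of a polynomial ring over $R$. Concretely, consider the surjection $R[z_1,\dots,z_k] \to R \otimes_K R$ determined by $z_i \mapsto 1\otimes x_i - x_i \otimes 1$; equivalently, writing $y_i = x_i \otimes 1 + z_i$, this is the $R$-algebra map (for the left $R$-module structure) sending $y_i \mapsto 1 \otimes x_i$. Since $R\otimes_K R = K[x_1,\dots,x_k]\otimes_K K[x_1,\dots,x_k]/(f_i\otimes 1, 1\otimes f_i)$ and the $f_i \otimes 1$ are absorbed into the left copy $R$, the only relations remaining are the $1\otimes f_i$, which under the above map become $f_i(y_1,\dots,y_k) = f_i(x_1+z_1,\dots,x_k+z_k)$. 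So $R\otimes_K R \cong R[z_1,\dots,z_k]/(f_i(x+z) : i)$. The content of the lemma is then purely the Taylor expansion identity $f_i(x+z) = \sum_\lambda \frac{1}{\lambda!}\partial^\lambda(f_i)(x)\, z^\lambda$ in $R[z_1,\dots,z_k]$, rewritten with $z_\bullet$ renamed to $y_\bullet$ in the final statement (matching the lemma's notational convention where $g_i = \sum_\lambda g_{i,\lambda} y^\lambda$ with $g_{i,\lambda} = \frac{1}{\lambda!}\partial^\lambda(f_i)$).

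The key steps, in order, are: (1) Present $R\otimes_K R$ as a quotient of $K[x_1,\dots,x_k]\otimes_K K[x'_1,\dots,x'_k] = K[x,x']$ modulo $(f_i(x), f_i(x'))$, using right exactness of tensor product. (2) Introduce new variables $z_i = x'_i - x_i$ over the left copy $R = K[x]/(f_i(x))$, so that $K[x,x']/(f_i(x)) \cong R[z_1,\dots,z_k]$ as $R$-algebras, with $x'_i \mapsto x_i + z_i$. (3) Transport the remaining relations $f_i(x') $ through this isomorphism to get $f_i(x_1+z_1,\dots,x_k+z_k)$, yielding $R\otimes_K R \cong R[z]/(f_i(x+z))$. (4) Apply the multivariate Taylor expansion over the polynomial ring: for $f \in K[x_1,\dots,x_k]$, one has $f(x+z) = \sum_{\lambda \in \NN^k} \left(\frac{1}{\lambda!}\partial^\lambda f\right)(x)\, z^\lambda$ as an identity in $K[x,z]$, hence in $R[z]$ after reduction. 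The operators $\frac{1}{\lambda!}\partial^\lambda$ are exactly the divided-power operators $\delta_\lambda$ from Example~\ref{example-regular-D}, so each coefficient $g_{i,\lambda} = \frac{1}{\lambda!}\partial^\lambda(f_i)$ is a well-defined element of $R$ regardless of characteristic, because the binomial coefficients appearing are integers. (5) Rename $z_i$ to $y_i$ to match the statement.

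The Taylor expansion step deserves a word on why it is characteristic-free: one first verifies it on monomials $x^\beta$, where $x^\beta \mapsto (x+z)^\beta = \prod_j (x_j+z_j)^{\beta_j} = \sum_{\lambda \leq \beta} \binom{\beta}{\lambda} x^{\beta-\lambda} z^\lambda$ by the ordinary binomial theorem, and $\binom{\beta}{\lambda} x^{\beta - \lambda} = \left(\frac{1}{\lambda!}\partial^\lambda\right)(x^\beta)$ by the defining formula for $\delta_\lambda$; then extend by $K$-linearity. This shows the identity holds in $K[x,z]$ before passing to $R$.

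I do not anticipate a genuine obstacle here — the lemma is essentially bookkeeping. The only point requiring mild care is the bookkeeping around which copy of $R$ supplies the scalars and which supplies the polynomial generators, i.e. keeping track of the fact that the isomorphism is one of $R$-algebras for the \emph{left} $R$-module structure (consistent with the convention recorded earlier that $R\otimes_A R$ is regarded as an $R$-module via the left factor), and confirming that the substitution $x'_i \mapsto x_i + z_i$ is a genuine $R$-algebra isomorphism $K[x,x']/(f_i(x)) \xrightarrow{\sim} R[z]$ (it has the evident inverse $z_i \mapsto x'_i - x_i$). Everything else is the binomial theorem.
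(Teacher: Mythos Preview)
Your proposal is correct and follows essentially the same approach as the paper: present $R\otimes_K R$ as a quotient of a polynomial ring in two sets of variables, substitute $y_j = \tilde{x}_j - x_j$ (your $z_j = x'_j - x_j$), and expand $f_i(x+y)$ monomial-by-monomial via the binomial theorem to recover the divided-power Taylor coefficients. The only cosmetic difference is that the paper names the new variables $y$ from the outset rather than introducing $z$ and renaming.
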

\begin{proof}
	We work with the description 
	\[	\begin{aligned}		
	R \otimes_{ K } R  
	&=  { K[x_1 , \ldots , x_k ] / \left( f_1 , \ldots , f_m \right) \otimes_{ K } K[x_1 , \ldots , x_k ] /\left( f_1 , \ldots , f_m \right) } \\
	& = { K[x_1 , \ldots , x_k , \tilde{x}_1 , \ldots , \tilde{x}_k ] /\left( f_1 , \ldots , f_m , \tilde{f}_1 , \ldots , \tilde{f}_m\right) } ,
	\end{aligned} \]
	where $\tilde{f_i}$ comes from $f_i$ by replacing $x_j$ by $\tilde{x}_j$. We put $ y_j 
	= \tilde{x_j} - x_j $ and write the ring as
	\[  K[x_1 , \ldots , x_k, y_1 , \ldots , y_k ]/ \left( f_1 , \ldots , f_m, g_1 , \ldots , g_m \right)   =  R[ y_1 , \ldots , y_k ]/ \left( g_1 , \ldots , g_m \right) , \]
	where
	\[ 	 g_i = \tilde{f_i}  =  f_i \left( \tilde{x}_1 , \ldots , \tilde{x}_k \right) 
	=   f_i \left( x_1+y_1 , \ldots , x_k+y_k \right). \] Consider a monomial $x_1^{\nu_1} \cdots x_k^{\nu_k}$ in some $f$. This corresponds to a term in $g$ of the form
	\[ ( x_1+y_1)^{\nu_1} \cdots (x_k+y_k)^{\nu_k} .\]
	Multiplying out yields
{\small	
	\[   \sum_{\lambda \leq \nu} \binom { \nu_1 } { \lambda_1} \cdots \binom { \nu_k } { \lambda_k} x_1^{\nu_1- \lambda_1} y_1^{\lambda_1} \cdots x_k^{\nu_k- \lambda_k} y_k^{\lambda_k} 
	=  \sum_{\lambda \leq \nu} \binom { \nu_1 } { \lambda_1} \cdots \binom { \nu_k } { \lambda_k} x_1^{\nu_1- \lambda_1} \cdots x_k^{\nu_k- \lambda_k} y_1^{\lambda_1} \cdots y_k^{\lambda_k} . \]
}	
	Thus, the term for the monomial $y^\lambda$ in $g$ is
	$  {\binom { \nu_1 } { \lambda_1} \cdots \binom { \nu_k } { \lambda_k} x_1^{\nu_1- \lambda_1} \cdots x_k^{\nu_k- \lambda_k}}$, which coincides with $\frac{ 1 }{ \lambda ! } \partial^\lambda (x^\nu) $.
\end{proof}

\begin{definition} Let $f_1 , \ldots , f_m \in K[x_1 , \ldots ,x_k]$ be polynomials. For
	$n \in \NN$, let \[ \mathcal{A}   =  \left\{ (\mu,i) \ | \ \mu \in \NN^k \text{ such that} \mondeg {\mu} \leq n-1 , \, 1 \leq i \leq m \right\}\] and \[ \mathcal{B}  = \left\{ \nu \ | \ \nu \in \NN^ k \text{ such that } \mondeg { \nu } \leq n \right\} .\] Then the $\mathcal{A} \times \mathcal{B}$ matrix with entries
	\[ a_{( \mu,i ; \nu)} = \frac{ 1 }{ (\nu - \mu )! } \partial^{\nu - \mu} (f_i)\]
	is called the $n$-th \emph {Jacobi-Taylor matrix}.\index{Jacobi-Taylor matrix}
\end{definition}

We denote these matrices by $J_n$\index{$J_n$}. We may consider them over the polynomial ring or over the residue class ring. To give an example, in three variables and one equation $f$, the transposed second Jacobi-Taylor matrix is given as
\[ \begin{blockarray}{ccccc}
 & 1 & a & b & c \\ 
 \begin{block}{c[cccc]}
 1 & 0 & 0 & 0 & 0 \\ 
 a & \partial_x (f) & 0 & 0 & 0 \\
 b & \partial_y (f) & 0 & 0 & 0 \\
 c & \partial_z (f) & 0 & 0 & 0 \\
 a^2 & \frac{ 1 }{ 2 } \partial_x \partial_x (f) & \partial_x (f) & 0 & 0 \\
 ab & \partial_x \partial_y (f) & \partial_y (f) & \partial_x (f) & 0 \\ 
 ac & \partial_x \partial_z (f) & \partial_z (f) & 0 & \partial_x (f) \\ 
 b^2 & \frac{ 1 }{ 2 } \partial_y \partial_y (f) & 0 & \partial_y (f) & 0 \\
 bc & \partial_y \partial_z (f) & 0 & \partial_z (f) & \partial_y (f) \\  
 c^2 & \frac{ 1 }{ 2 } \partial_z \partial_z (f) & 0 & 0 & \partial_z (f) \\
   \end{block}
    \end{blockarray} , \]
where $1,a,b,c$ and $1,a,\dots, c^2$ indicate which column (respectively, row) corresponds with which indexing element of $J$ (respectively, $I$).
Note that in all $J_n$, for varying $n$, only a finite number of distinct entries occur, namely all partial derivatives of the $f_i$.

We now prove that this matrix  {gives} a presentation for the module of principal parts.
\begin{corollary}
	\label{principalpartsrepresentation}
	Let $f_1 , \ldots , f_m \in K[x_1 , \ldots , x_k]$ denote polynomials with residue class ring
	$R  =  K[x_1 , \ldots , x_k]/ \left( f_1 , \ldots , f_m \right)$. Then the module of principal parts $P^n_{R | K}$ has the presentation 
	\[ \bigoplus_{\substack{ \mondeg {\mu} \leq n - 1 \\ 1 \leq i \leq m }} R e_{\mu ,i} \stackrel{J_n^\mathrm{tr}  } { \longrightarrow }\bigoplus_{ \mondeg {\lambda} \leq n } R e_{\lambda } \longrightarrow P^n_{R | K} \longrightarrow 0 . \]
\end{corollary}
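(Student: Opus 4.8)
The plan is to build on Lemma~\ref{principalpartdescription}, which already identifies $R\otimes_K R$ with the polynomial ring $R[y_1,\dots,y_k]$ modulo the ideal $(g_1,\dots,g_m)$, where $g_i = \sum_\lambda g_{i,\lambda} y^\lambda$ and $g_{i,\lambda} = \frac{1}{\lambda!}\partial^\lambda(f_i)$. Under this identification, the kernel $\Delta_{R|K}$ of the multiplication map $R\otimes_K R \to R$ corresponds to the ideal $(y_1,\dots,y_k)$ (since $y_j = \tilde{x}_j - x_j \mapsto 0$), so $\Delta_{R|K}^{n+1}$ corresponds to $(y_1,\dots,y_k)^{n+1}$. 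Hence $P^n_{R|K} = (R\otimes_K R)/\Delta_{R|K}^{n+1}$ is the free $R$-module on monomials $y^\lambda$ with $\mondeg{\lambda}\le n$, modulo the relations coming from the $g_i$ truncated to degree $\le n$ in the $y$'s. First I would make this truncation precise: $P^n_{R|K} \cong R[y_1,\dots,y_k]/\bigl((g_1,\dots,g_m) + (y)^{n+1}\bigr)$.

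Next I would determine the relations explicitly. The $R$-submodule of $\bigoplus_{\mondeg{\lambda}\le n} Re_\lambda$ that we must quotient by is generated by the images of $y^\mu g_i$ for all $\mu,i$ with $\mondeg{\mu}\le n$ (larger $\mu$ give relations already lying in $(y)^{n+1}$, hence zero), reduced modulo $(y)^{n+1}$. Writing out $y^\mu g_i = \sum_\nu g_{i,\nu-\mu} \, y^{\nu}$ where the sum runs over $\nu \ge \mu$, and discarding terms with $\mondeg{\nu} > n$, one sees that the coefficient of $e_\nu$ in the relation indexed by $(\mu,i)$ is exactly $g_{i,\nu-\mu} = \frac{1}{(\nu-\mu)!}\partial^{\nu-\mu}(f_i)$, which is the entry $a_{(\mu,i;\nu)}$ of the Jacobi-Taylor matrix $J_n$. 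It therefore suffices to check that, among the relations $y^\mu g_i$, those with $\mondeg{\mu} \le n-1$ already generate the same submodule as those with $\mondeg{\mu}\le n$ modulo $(y)^{n+1}$. This is immediate: if $\mondeg{\mu} = n$, then $y^\mu g_i \equiv g_{i,0}\, y^\mu = f_i\, y^\mu \pmod{(y)^{n+1}}$, and $f_i = 0$ in $R$, so these relations are already zero in the free $R$-module. Thus the relations with $\mondeg{\mu}\le n-1$ suffice, the indexing sets are precisely $\mathcal{A}$ and $\mathcal{B}$, and the relation map is $J_n^{\mathrm{tr}}$.

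Assembling these observations gives the asserted presentation. The map $\bigoplus_{\mondeg{\mu}\le n-1,\,1\le i\le m} Re_{\mu,i} \xrightarrow{J_n^{\mathrm{tr}}} \bigoplus_{\mondeg{\lambda}\le n} Re_\lambda$ sends $e_{\mu,i}$ to $\sum_{\mondeg{\nu}\le n} a_{(\mu,i;\nu)} e_\nu$, which is precisely the image of $y^\mu g_i$ in the truncated module; the cokernel is $P^n_{R|K}$ by the discussion above, and exactness on the right is clear since the $y^\lambda$ with $\mondeg{\lambda}\le n$ span $P^n_{R|K}$ over $R$.

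I expect the only genuinely delicate point to be the bookkeeping that the submodule generated by all truncated relations $y^\mu g_i$ (without a degree bound on $\mu$) coincides with the submodule generated by those with $\mondeg{\mu}\le n-1$ — i.e., confirming that no relations are lost by restricting the index set to $\mathcal{A}$. As noted, this reduces to the elementary fact that $f_i \equiv 0$ in $R$ kills the top-degree relations; but one should state it carefully since it is exactly the reason the Jacobi-Taylor matrix has the shape it does. Everything else is a direct unwinding of Lemma~\ref{principalpartdescription} together with the identification $\Delta_{R|K}^{n+1} \leftrightarrow (y)^{n+1}$.
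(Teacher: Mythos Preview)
Your proposal is correct and follows essentially the same route as the paper's proof: both start from Lemma~\ref{principalpartdescription}, identify $\Delta^{n+1}$ with $(y)^{n+1}$, use the monomials $y^\lambda$ with $\mondeg{\lambda}\le n$ as the generating set, and read off the relations as the truncations of $y^\mu g_i$. Your justification that the index set can be restricted to $\mondeg{\mu}\le n-1$ (via $g_{i,0}=f_i=0$ in $R$) is actually more explicit than the paper's, which simply asserts that the degree-$\le n$ part of the ideal is generated by these elements.
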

\begin{proof}
	Due to Lemma~\ref{principalpartdescription} we have
	\[ P^n_{R{{|}} K}  = (R \otimes_K R) /\Delta^{n+1} \cong  R[y_1 , \ldots , y_k]/ \left( g_1 , \ldots , g_m , y^\lambda , \mondeg {\lambda} \geq n+1 \right) \, \]
	In particular, the monomials
	$y^\lambda$, $ \mondeg  {\lambda } \leq n$, give an $R$-module generating system for $P^n_{R {{|}} K}$ and a surjective mapping
	\[ \bigoplus_{ \mondeg  {\lambda} \leq n } R e_\lambda \longrightarrow P^n_{R | K}  ,\, e_\lambda \longmapsto  y^\lambda .\]
	The part of the ideal generated by $g_i$ of degree $\leq n$ is generated as an $R$-module by
	\[  y^\mu g_i = y^\mu \left( \sum_\lambda g_{i, \lambda} y^\lambda \right) , \, \mondeg {\mu} \leq n-1, \, 1 \leq i \leq m  \,  . \]
	Hence the kernel of the mapping is generated by all $\lambda$-tuples  {
	\[ C_{\mu, i} = \left( C_{\nu; \mu,i } \right) \text{ with } C_{\nu; \mu,i} = g_{i, \nu-\mu } ,\, \mondeg {\mu}  \leq n-1, \, 1 \leq i \leq m .   \]}
	So the kernel is the image of the map
	\[ \bigoplus_{\substack{ \mondeg {\mu}  \leq n - 1 \\ 1 \leq i \leq m }} R e_{\mu,i} \longrightarrow \bigoplus_{\mondeg {\lambda} \leq n } R e_\lambda , \, e_{ \mu,i} \longmapsto  C_{\mu, i} .\]
	The entry of this matrix in row index $\nu$ and column index $(\mu,i)$ is
	\[ g_{i, \nu - \mu} = \frac{ 1 }{ (\nu-\mu)! } \partial^{\nu - \mu} (f_i) , \] 
	so this is the transposed Jacobi-Taylor matrix.
\end{proof}

\begin{remark}
	\label{JacobiTaylorrelation}
	Every Jacobi-Taylor matrix evolves from the previous one in the block matrix form
	\[ J_n^\text{tr} = 	\begin{pmatrix} J_{n-1}^\text{tr} & 0 \\ S_n & T_n \end{pmatrix} \]
	In the matrix $T_n$ we only have first partial derivatives of the $f_i$, this matrix sends $e_{\mu,i}$ to $ \sum_j \partial_j(f_i) \cdot e_{\mu +e_j}$.
	We have a commutative diagram with exact rows	
		\[\xymatrix{ 0 \ar[r] &\bigoplus\limits_{ \mondeg {\mu}  = n-1, i } R e_{ \mu,i }
			\ar[r]\ar[d]_-{T_n} &\bigoplus\limits_{ \mondeg {\mu}  \leq n-1, i } R e_{ \mu,i } \ar[r]\ar[d]_-{J^{\mathrm{tr}}_n} &\bigoplus\limits_{ \mondeg {\mu}  \leq n-2, i } R e_{ \mu,i } \ar[r]\ar[d]_-{J^{\mathrm{tr}}_{n-1}} & 0 \\
			0 \ar[r] & \bigoplus\limits_{ \mondeg {\lambda}  = n } R {e_\lambda } \ar[r]\ar[d] & \bigoplus\limits_{ \mondeg {\lambda} \leq n } R {e_\lambda } \ar[r]\ar[d] & \bigoplus\limits_{ \mondeg {\lambda} \leq n-1 } R {e_\lambda } \ar[r]\ar[d] & 0\\
			0 \ar[r] & \Delta^{n}/\Delta^{n+1} \ar[r] \ar[d] & \ModDif{n}{R}{K}\ar[r]\ar[d] & \ModDif{n-1}{R}{K}\ar[r]\ar[d] & 0\\ & 0 &0& \, 0 . }  \]
		The first two rows split. The columns in the middle and on the right are also exact. In the column on the left the second map is surjective and it is exact provided that $J^{\mathrm{tr}}_{n-1}$ is injective. This is not always the case, e.g. in positive characteristic it might be that all first partial derivatives and hence $T_n$ is $0$.
\end{remark}

\begin{corollary}
	\label{JacobiTayloroperators}
	Let $f_1 , \ldots , f_m \in K[x_1 , \ldots , x_k]$ denote polynomials with residue class ring
	$R  =  K[x_1 , \ldots , x_k]/ \left( f_1 , \ldots , f_m \right)$.
	Then differential operators on $R$ of order $\leq n $ correspond to elements in the kernel of the $n$-th Jacobi-Taylor matrix. A $\lambda$-tuple $\left( a_\lambda \right)$ in the kernel corresponds to the operator that is represented on the level of the polynomial ring by
	\[  \sum_\lambda a_\lambda \frac{ 1 }{ \lambda! } \partial^\lambda .\]
\end{corollary}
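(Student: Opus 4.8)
The plan is to derive this directly from the presentation of $P^n_{R|K}$ established in Corollary~\ref{principalpartsrepresentation} together with the representability of differential operators. Taking $M=N=R$ in Proposition~\ref{universaldifferential} gives an $R$-module isomorphism $D^n_{R|K}\cong \Hom_R(P^n_{R|K},R)$, $\phi\mapsto \phi\circ d^n$, so it suffices to compute $\Hom_R(P^n_{R|K},R)$ from the presentation
\[ \bigoplus_{\substack{\mondeg{\mu}\leq n-1\\ 1\leq i\leq m}} R\,e_{\mu,i}\ \xrightarrow{\ J_n^{\mathrm{tr}}\ }\ \bigoplus_{\mondeg{\lambda}\leq n} R\,e_\lambda\ \longrightarrow\ P^n_{R|K}\ \longrightarrow\ 0. \]
Applying the left-exact functor $\Hom_R(-,R)$ and using dual bases, the transpose of the map given by the matrix $J_n^{\mathrm{tr}}$ is the map given by $J_n$, so we obtain a left-exact sequence
\[ 0\ \longrightarrow\ \Hom_R(P^n_{R|K},R)\ \longrightarrow\ \bigoplus_{\mondeg{\lambda}\leq n} R\,e_\lambda^{*}\ \xrightarrow{\ J_n\ }\ \bigoplus_{\substack{\mondeg{\mu}\leq n-1\\ 1\leq i\leq m}} R\,e_{\mu,i}^{*}. \]
Thus $\Hom_R(P^n_{R|K},R)$ is naturally identified with $\ker J_n$: a $\lambda$-tuple $(a_\lambda)$ with $\mondeg{\lambda}\leq n$ lies in $\ker J_n$ if and only if the $R$-linear functional $\psi\colon P^n_{R|K}\to R$ determined by $\psi(\overline{y^\lambda})=a_\lambda$ is well defined on the quotient by the image of $J_n^{\mathrm{tr}}$.

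Next I would unwind the isomorphism $\phi\mapsto\phi\circ d^n$ to read off the explicit shape of the operator. Using the coordinates $y_j=\tilde x_j-x_j$ from the proof of Corollary~\ref{principalpartsrepresentation}, the universal differential $d^n\colon R\to P^n_{R|K}$ sends the class of $h\in K[x_1,\dots,x_k]$ to $\overline{1\otimes h}=\overline{h(x_1+y_1,\dots,x_k+y_k)}$; expanding by the (characteristic-free) Taylor formula $h(x+y)=\sum_\lambda \frac{1}{\lambda!}\partial^\lambda(h)\,y^\lambda$ from Lemma~\ref{principalpartdescription} and reducing modulo $\Delta^{n+1}$, i.e. modulo $y^\lambda$ with $\mondeg{\lambda}\geq n+1$, yields
\[ d^n(h)=\sum_{\mondeg{\lambda}\leq n}\frac{1}{\lambda!}\partial^\lambda(h)\cdot\overline{y^\lambda}, \]
where $\frac{1}{\lambda!}\partial^\lambda$ denotes the operator $\delta_\lambda$ of Example~\ref{example-regular-D}. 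Composing with the functional $\psi$ attached to a tuple $(a_\lambda)\in\ker J_n$ gives
\[ (\psi\circ d^n)(h)=\sum_{\mondeg{\lambda}\leq n} a_\lambda\,\frac{1}{\lambda!}\partial^\lambda(h)=\Bigl(\sum_{\mondeg{\lambda}\leq n} a_\lambda\,\frac{1}{\lambda!}\partial^\lambda\Bigr)(h), \]
which is precisely the asserted operator. Since $\phi\mapsto\phi\circ d^n$ is a bijection onto $D^n_{R|K}$, every differential operator of order $\leq n$ arises in this way from a unique kernel element, and conversely the displayed formula descends to a well-defined operator on $R$ exactly when $(a_\lambda)\in\ker J_n$.

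The argument is essentially bookkeeping, and I do not anticipate a real obstacle: the content is all in Corollary~\ref{principalpartsrepresentation} and Proposition~\ref{universaldifferential}. The only points that require care are, first, keeping the transposes straight so that it is $\ker J_n$ rather than $\ker J_n^{\mathrm{tr}}$ that appears (the presentation map is $J_n^{\mathrm{tr}}$, and dualizing transposes it back to $J_n$), and second, checking that the Taylor-expansion description of $d^n$ is compatible with the identification of $P^n_{R|K}$ in Corollary~\ref{principalpartsrepresentation}, including the convention in characteristic $p$ whereby $\frac{1}{\lambda!}\partial^\lambda$ is read as the integer operator $\delta_\lambda$ and the binomial Taylor identity of Lemma~\ref{principalpartdescription} still holds.
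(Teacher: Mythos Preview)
Your proposal is correct and follows essentially the same route as the paper: both arguments dualize the presentation of $P^n_{R|K}$ from Corollary~\ref{principalpartsrepresentation} to identify $D^n_{R|K}$ with $\ker J_n$, then unwind the universal differential $d^n$ via the Taylor expansion to recover the explicit formula $\sum_\lambda a_\lambda\frac{1}{\lambda!}\partial^\lambda$. The only cosmetic difference is that the paper verifies the Taylor description of $d^n$ on monomials $x^\nu$ directly, whereas you invoke it for general $h$.
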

\begin{proof}
	We work with the presentation
	\[ \bigoplus_{\substack{ \mondeg {\mu}  \leq n - 1 \\ 1 \leq i \leq m }} R e_{\mu ,i} \stackrel{  J_n^{ \mathrm{tr} } } {\longrightarrow}  \bigoplus_{  \mondeg {\lambda}  \leq n } R e_{\lambda } \longrightarrow P^n_{R {{|}} K} \longrightarrow 0 \]
	from Corollary~\ref{principalpartsrepresentation}. A differential operator on $R$ is the same as an $R$-linear form on $	P^n_{R | K}$. This again is the same as an $R$-linear form $\varphi$ on $ \bigoplus_{  \mondeg {\lambda} \leq n } R e_{\lambda }$ (given by an $R$-tuple $\left(  a_\lambda \right)$), fulfilling $\varphi \circ { J_n^{ \text{tr} } } =  0$. This is equivalent with $ J_n \circ { \varphi^{ \text{tr} } } = 0$.
	
	In the notation of Lemma~\ref{principalpartdescription}, the universal operator $d^n:  R \rightarrow  P^n_{R{{|}} K}$ sends a monomial
	$x^\nu$ to
	\[	\begin{aligned} 1 \otimes x^\nu  & =   1 \otimes x_1^{\nu_1} \cdots x_k^{\nu_k} \\
	& =  \tilde{x}_1^{\nu_1} \cdots \tilde{x}_k^{\nu_k} \\
	& = (x_1+y_1)^{\nu_1} \cdots (x_k+y_k)^{\nu_k} \\
	& = \sum_{\lambda \leq \nu} \binom { \nu_1 } { \lambda_1} \cdots \binom { \nu_k } { \lambda_k} x_1^{\nu_1- \lambda_1} \cdots x_k^{\nu_k- \lambda_k} y_1^{\lambda_1} \cdots y_k^{\lambda_k} .
	\end{aligned} \]
	The composition with the linear form on $P^n_{R | K}$ given by $\left( a_\lambda \right)$ yields 
	\[ \sum_{\lambda \leq \nu} \binom { \nu_1 } { \lambda_1} \cdots \binom { \nu_k } { \lambda_k} x_1^{\nu_1- \lambda_1} \cdots x_k^{\nu_k- \lambda_k} a_\lambda \,  . \]
	This coincides with
	\[  \sum_\lambda a_\lambda \frac{ 1 }{ \lambda! }  \partial^\lambda \left( x^\nu \right).\]\qedhere
	\end{proof}

\begin{remark}
	It follows from the previous corollary that a differential operator $\delta$ on $K[x_1,\dots,x_k]$ of order $n$ descends to a differential operator on $K[x_1,\dots,x_k]/(f_1,\dots,f_m)$ if and only if $\delta(x^{\lambda} f_i) \in (f_1,\dots,f_m)$ for all $i$ and all $\lambda$ of degree at most $n-1$. This fact is known to experts, but we could not find a clear reference.
\end{remark}

For the universal differential operator $d^n$ we have a canonical lifting
\[ 
\xymatrix{
&  R \ar[dl]_-{d^{n'}} \ar[dr]^-{d^n} & \\
  \bigoplus\limits_{  \mondeg {\lambda} \leq n } R e_{\lambda } \ar[rr]   & & \ModDif{n}{R}{K} .
 }\]
An element $h$ is sent by $d^{n \prime}$ to $\sum_{ \mondeg {\lambda} \leq n} \frac{1}{\lambda!} \partial^\lambda (h) e_\lambda$. The commutativity follows from the proof of Corollary~\ref{JacobiTayloroperators}.

\begin{remark}
Let $R=K[x_1, \ldots, x_k]	/(f_1, \ldots , f_m)$ and let $W$ be a multiplicative subset of $R$.
Then by Proposition~\ref{diffmod-localize} we have $  P^n_{W^{-1}R|K}  \cong W^{-1} P^n_{R|K} \cong P^n_{R|K} \otimes_R  W^{-1}R $. Hence the representation for the module of principal parts given by the Jacobi-Taylor matrices given in Corollary~\ref{principalpartsrepresentation} can be used directly also for algebras essentially of finite type over $K$ and in particular for localizations. 
\end{remark}

\begin{lemma}
	\label{JacobiTaylorsymmetric}
	Let $f_1 , \ldots , f_m \in K[x_1 , \ldots , x_k]$ denote polynomials with residue class ring $R  =  K[x_1 , \ldots , x_k]/ \left( f_1 , \ldots , f_m \right)$. Let $\delta\in D^n_{R|K}$ be given by the $\lambda$-tuple $\left( a_\lambda \right), \deg(\lambda)\leq n$ in the kernel of the $n$-th Jacobi-Taylor matrix in the sense of Corollary~\ref{JacobiTayloroperators}.
	The image of $\delta$ under the natural $R$-linear map $D^{n}_{R|K} \rightarrow \Hom_R(\Sym^n(\Omega_{R|K} ) , R )$ is given by the restricted tuple $\left( a_\lambda \right)$, $ \mondeg {\lambda} = n $.
\end{lemma}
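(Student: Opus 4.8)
The claim is that the natural $R$-linear map $D^n_{R|K} \to \Hom_R(\Sym^n(\Omega_{R|K}),R)$ sends a differential operator $\delta$, described via Corollary~\ref{JacobiTayloroperators} by a tuple $(a_\lambda)_{\deg \lambda \le n}$ in $\ker J_n$, to the functional on $\Sym^n(\Omega_{R|K})$ given by the truncated tuple $(a_\lambda)_{\deg\lambda = n}$. To make sense of the target, I first need to pin down the presentation of $\Sym^n(\Omega_{R|K})$ that makes ``the tuple $(a_\lambda)_{\deg\lambda=n}$'' a well-defined functional. Since $\Omega_{R|K}$ is presented by $R^m \xrightarrow{J_1^{\mathrm{tr}}} R^k \to \Omega_{R|K} \to 0$ with $J_1$ the ordinary Jacobian, $\Sym^n(\Omega_{R|K})$ is presented by the degree-$n$ piece: $\bigoplus_{\deg\mu = n-1,\, 1\le i\le m} R e_{\mu,i} \xrightarrow{T_n} \bigoplus_{\deg\lambda = n} R e_\lambda \to \Sym^n(\Omega_{R|K})\to 0$, where $T_n(e_{\mu,i}) = \sum_j \partial_j(f_i)\, e_{\mu+e_j}$ is exactly the block appearing in Remark~\ref{JacobiTaylorrelation}. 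So a functional on $\Sym^n(\Omega_{R|K})$ is a tuple $(a_\lambda)_{\deg\lambda=n}$ with $\sum_\lambda a_\lambda (T_n^{\mathrm{tr}})_{\bullet,\lambda} = 0$, i.e. $T_n \circ (a_\lambda)^{\mathrm{tr}} = 0$. The first task is therefore to record this presentation of $\Sym^n(\Omega_{R|K})$ and observe that it is the cokernel of $T_n^{\mathrm{tr}}$ acting into the top-degree summand.

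**The plan.** First I would identify the map $D^n_{R|K}\to\Hom_R(\Sym^n\Omega_{R|K},R)$ concretely. By Proposition~\ref{universaldifferential} (or \ref{representing-differential}), $D^n_{R|K} \cong \Hom_R(\ModDif{n}{R}{K},R)$, and there is a canonical surjection $\ModDif{n}{R}{K} \twoheadrightarrow \Delta^n/\Delta^{n+1}$ (the bottom row of the diagram in Remark~\ref{JacobiTaylorrelation}), together with the standard identification $\Delta^n/\Delta^{n+1} \cong \Sym^n(\Delta/\Delta^2) = \Sym^n(\Omega_{R|K})$. The map in question is precovariant: it is $\Hom_R(-,R)$ applied to $\Sym^n\Omega_{R|K} = \Delta^n/\Delta^{n+1} \hookrightarrow \ModDif{n}{R}{K}$, i.e. restriction of a functional on $\ModDif{n}{R}{K}$ to the submodule $\Delta^n/\Delta^{n+1}$. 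So I need only track what the functional corresponding to $\delta$ does on the generators $y^\lambda$, $\deg\lambda = n$, of $\Delta^n/\Delta^{n+1}$ inside $\ModDif{n}{R}{K}$.

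**Executing the computation.** From Corollary~\ref{principalpartsrepresentation} and its proof, $\ModDif{n}{R}{K}$ is generated by $\{\overline{y^\lambda} : \deg\lambda\le n\}$ and $\delta$ corresponds to the functional $\overline{y^\lambda}\mapsto a_\lambda$. Under the inclusion $\Delta^n/\Delta^{n+1}\hookrightarrow\ModDif{n}{R}{K}$, the generator of $\Delta^n/\Delta^{n+1}$ indexed by $\lambda$ with $\deg\lambda=n$ maps to $\overline{y^\lambda}$. Hence the restricted functional sends that generator to $a_\lambda$ — which is exactly the assertion, once I check compatibility with the presentation $T_n^{\mathrm{tr}}$ of $\Sym^n\Omega_{R|K}$: the relations $T_n$ are precisely the degree-exactly-$n$ part of the relations $g_i$ (namely $y^\mu g_i$ with $\deg\mu = n-1$, keeping only terms with $y$-degree $n$, which picks out $\sum_j \partial_j(f_i) y^{\mu+e_j}$), so the tuple $(a_\lambda)_{\deg\lambda=n}$ lies in the required orthogonal space automatically because $(a_\lambda)_{\deg\lambda\le n}\in\ker J_n$ and $T_n$ is the relevant sub-block of $J_n^{\mathrm{tr}}$ by Remark~\ref{JacobiTaylorrelation}. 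Thus the restricted functional is well-defined and equals $(a_\lambda)_{\deg\lambda=n}$.

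**Main obstacle.** The only genuinely delicate point is bookkeeping: making precise that the natural map $D^n_{R|K}\to\Hom_R(\Sym^n\Omega_{R|K},R)$ really is ``restrict the functional from $\ModDif{n}{R}{K}$ to the sub $\Delta^n/\Delta^{n+1}$'' under the identifications in Proposition~\ref{universaldifferential}, rather than something dualized or shifted; and checking that the canonical iso $\Delta^n/\Delta^{n+1}\cong\Sym^n\Omega_{R|K}$ carries the class of $y^\lambda$ to the monomial symmetric generator $e_\lambda$ with the correct (trivial, in this normalization) coefficient. Both are standard — the first follows from functoriality of $\Hom_R(-,R)$ applied to the bottom row of the Remark~\ref{JacobiTaylorrelation} diagram, the second from the definition $d^1(x_j) = \overline{y_j}$ and multiplicativity of the $\ModDif{}{R}{K}$-algebra structure — so I would state them cleanly and let the computation above close the argument. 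Everything else is a direct comparison of tuples already carried out in the proofs of Corollaries~\ref{principalpartsrepresentation} and \ref{JacobiTayloroperators}.
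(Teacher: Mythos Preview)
Your proposal is correct and follows essentially the same route as the paper: present $\Sym^n(\Omega_{R|K})$ via the matrix $T_n$ of Remark~\ref{JacobiTaylorrelation}, identify the natural map $D^n_{R|K}\to\Hom_R(\Sym^n\Omega_{R|K},R)$ as restriction of the linear form on $\ModDif{n}{R}{K}$ along $\Sym^n\Omega_{R|K}\to\Delta^n/\Delta^{n+1}\hookrightarrow\ModDif{n}{R}{K}$, and read off that the generator $e_\lambda$ (with $\deg\lambda=n$) is sent to $a_\lambda$. One small imprecision worth fixing: the canonical map $\Sym^n(\Omega_{R|K})\to\Delta^n/\Delta^{n+1}$ is in general only a surjection, not an isomorphism (it is an isomorphism in the differentially smooth case), but your argument never actually uses injectivity---only the composite into $\ModDif{n}{R}{K}$ and its effect on generators---so the proof stands as written.
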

\begin{proof}
	From the presentation
	\[ \bigoplus_{i = 1}^m R e_i \stackrel{J^\text{tr} }{\longrightarrow} \bigoplus_{j = 1}^k R e_j \longrightarrow \Omega_{R|K} \longrightarrow 0 \]
	we get for the symmetric powers $\Sym^n (\Omega_{R|K} ) $ the presentation

	\[ \xymatrix{ \(\bigoplus\limits_{i = 1}^m R e_i \)  \tensor \Sym^{n-1}\( \bigoplus\limits_{j = 1}^k R e_j \) \ar[r]\ar[d]^{\cong}  & \Sym^n\( \bigoplus\limits_{j = 1}^k R e_j  \) \ar[r]\ar[d]^{\cong} & \Sym^n \(\Omega_{R|K} \) \ar[r]\ar[d]^{\cong} & 0 \\ 
	\bigoplus\limits_{i, \mondeg {\mu}  = n-1 } R e_{ \mu, i} \ar[r] & \bigoplus\limits_{  \mondeg {\lambda}  = n} Re_\lambda \ar[r] &  \Sym^n\(\Omega_{R|K} \) \ar[r] & 0 }	\]
	sending $e_\lambda \mapsto (dx)^\lambda$ and $e_{\mu, i} \mapsto  \sum_j \partial_j (f_i) e_{\mu + e_j} $. This last map is the matrix $T_n$ from Remark~\ref{JacobiTaylorrelation}. A linear form on $\Sym^n (\Omega_{R|K} )$ is the same as a linear form on $ \bigoplus_{ \mondeg {\lambda} = n} Re_\lambda $ annihilating $T_n$ from the left.
	
	We work with the commutative diagram
	\[\xymatrix{ \bigoplus\limits_{ \mondeg {\lambda} = n } R {e_\lambda }\ar[d] \ar[rr] &  & \bigoplus\limits_{ \mondeg {\lambda}  \leq n } R {e_\lambda }\ar[d] \\ \Sym^n(\Omega_{R|K} ) \ar[r] & \Delta^{n}/\Delta^{n+1} \ar[r] & \ModDif{n}{R}{K}  . }  \]
	A differential operator of order $\leq n$, considered as a linear form on $\ModDif{n}{R}{K}$ via the second row, induces a linear form on $\Sym^n(\Omega_{R|K} )$.
	If such a differential operator is given by a $\lambda$-tuple $\left( a_\lambda \right)$, $ \mondeg {\lambda} \leq n$, then both linear forms are given by sending $e_\lambda$ to $a_\lambda$. So the induced linear form is just given by the restricted tuple.
\end{proof}

%%%%%%%%%%%%%%%%%%%%%%%%%%%%%%%%%%%%%%%%%%%%%%%
\section{Differential powers and $D$-simplicity}\label{SecDiffPrimes}
%%%%%%%%%%%%%%%%%%%%%%%%%%%%%%%%%%%%%%%%%%%%%%
In this section we recall the definition of differential powers of ideals, and the related notion of $D$-ideals. These notions are essential to define the differential signature. We use these powers to give a criterion for the $D$-simplicity of $R$.

\subsection{$D$-ideals}

\begin{definition} We say that an ideal of $R$ is a \textit{$D_{R|A}$-ideal} if it is a $D_{R|A}$-submodule of $R$. We say that $R$ is \textit{$D_{R|A}$-simple} (or just \textit{$D$-simple}\index{D-simple} if no confusion is likely) if $R$ has no proper  {nonzero} $D_{R|A}$-ideals. Equivalently, $R$ is $D$-simple if it is simple as a $D_{R|A}$-module.
\end{definition}

We caution the reader that the property that the ring $D_{R|A}$ is a simple ring is also studied in the literature with similar nomenclature.

\begin{proposition}\label{PropDidealsLoc}
Suppose that $\ModDif{n}{R}{A}$ is finitely presented for all $n$.
Let $W\subseteq R$ be a multiplicative system. 
There is a natural bijection between \[\cA=\{I\subseteq R\ |\ I \ \text{is a  $D_{R|A}$-ideal and} \ I\cap W=\varnothing\}\]
 and 
\[\cB=\{ J\subseteq W^{-1}R \ |\  J \ \text{is a $D_{W^{-1}R | A}$-ideal}\,\}.\]
\end{proposition}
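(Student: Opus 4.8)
The plan is to establish the bijection via the standard contraction/extension maps and show they respect the $D$-module structure. For $I \in \cA$, send $I \mapsto W^{-1}I$; for $J \in \cB$, send $J \mapsto J \cap R$ (the contraction along $R \hookrightarrow W^{-1}R$, which makes sense once we know $W$ has no zerodivisors issues, or more carefully, the preimage of $J$ under $R \to W^{-1}R$). First I would recall that by ordinary commutative algebra these two maps are inverse bijections between ideals of $R$ disjoint from $W$ (more precisely, ideals $I$ with $I = (W^{-1}I)\cap R$) and ideals of $W^{-1}R$; the disjointness-from-$W$ condition combined with the $D$-ideal condition will need a small argument to see it lands in the right set. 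The real content is checking that each map carries $D$-ideals to $D$-ideals.

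For the forward direction: given a $D_{R|A}$-ideal $I$ with $I \cap W = \varnothing$, I claim $W^{-1}I$ is a $D_{W^{-1}R|A}$-ideal. Take $\delta \in D_{W^{-1}R|A}$ of order $\leq n$. By Proposition~\ref{localization2} (which applies since the $\ModDif{n}{R}{A}$ are finitely presented), $\delta = \frac{1}{w}\tilde\eta$ for some $\eta \in D^n_{R|A}$, where $\tilde\eta$ is the canonical extension of $\eta$ to $W^{-1}R$. Since multiplication by $\frac{1}{w}$ preserves $W^{-1}I$, it suffices to show $\tilde\eta(W^{-1}I) \subseteq W^{-1}I$. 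Using the explicit inductive formula for $\tilde\eta$ from the Remark following Proposition~\ref{localization2}, namely $\tilde\eta(r/v) = \bigl(\eta(r) - \widetilde{[\eta,v]}(r/v)\bigr)/v$, I would induct on the order $n$ of $\eta$: the base case $n=0$ is clear since $\eta(I) \subseteq I$, and for the inductive step $[\eta,v]$ has order $\leq n-1$ and sends $I$ into $I$ (as $I$ is a $D_{R|A}$-ideal: $[\eta,v](x) = \eta(vx) - v\eta(x) \in I$ for $x \in I$), so by induction $\widetilde{[\eta,v]}(W^{-1}I) \subseteq W^{-1}I$; combined with $\eta(r) \in I$ for $r \in I$, the formula gives $\tilde\eta(r/v) \in W^{-1}I$. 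Hence $W^{-1}I$ is a $D_{W^{-1}R|A}$-ideal. I should also note $W^{-1}I$ is proper: this is exactly $I \cap W = \varnothing$.

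For the reverse direction: given a $D_{W^{-1}R|A}$-ideal $J$, set $I = J \cap R$ (preimage under $R \to W^{-1}R$). For $\eta \in D^n_{R|A}$ and $x \in I$, the extension $\tilde\eta \in D^n_{W^{-1}R|A}$ satisfies $\tilde\eta(x/1) = \eta(x)/1$ by the Remark cited above, and $\tilde\eta(x/1) \in J$ since $x/1 \in J$ and $J$ is a $D_{W^{-1}R|A}$-ideal; therefore $\eta(x) \in J \cap R = I$, so $I$ is a $D_{R|A}$-ideal. Clearly $I \cap W = \varnothing$ since a unit of $W^{-1}R$ cannot lie in a proper ideal $J$ — unless $J = W^{-1}R$, in which case we should just exclude it or note both extreme cases $I = R$, $J = W^{-1}R$ match up; I will state the bijection including the improper ideals on both sides or restrict to proper ones consistently. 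Finally, that the two maps are mutually inverse is the classical fact $W^{-1}(J\cap R) = J$ for any ideal $J$ of $W^{-1}R$ and $(W^{-1}I)\cap R = I$ for $I$ disjoint from $W$ (here using that a $D$-ideal disjoint from $W$ is automatically $W$-saturated, which follows since for $w \in W$, $wr \in I$ with $r \notin I$ would force... actually this is just the standard fact and needs no $D$-structure).

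The main obstacle I anticipate is purely bookkeeping: making sure the explicit formula for the localized differential operator is used correctly in the induction, and handling the subtlety that Proposition~\ref{localization2} gives that \emph{every} operator on $W^{-1}R$ arises as $\frac1w\tilde\eta$ — this surjectivity is what makes the forward direction work, and it relies essentially on the finite-presentation hypothesis, so I would make sure to invoke it explicitly. Everything else is the classical localization correspondence for ideals overlaid with a routine check that commutators and localizations of differential operators behave as expected.
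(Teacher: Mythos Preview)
Your approach matches the paper's: extension $I\mapsto W^{-1}I$ and contraction $J\mapsto \iota^{-1}(J)$, invoking Proposition~\ref{localization2} (and hence the finite-presentation hypothesis) for the forward direction. Both you and the paper correctly verify that each map sends $D$-ideals to $D$-ideals. The problem is the claimed inverse relation $(W^{-1}I)\cap R = I$. You are right to flag this as needing $W$-saturation, but wrong to then dismiss it as ``the standard fact'': disjointness from $W$ is strictly weaker than $W$-saturation, and the $D$-structure does not close the gap. Concretely, take $R=K[x,y]/(xy)$, $A=K$, $W=\{y^n:n\geq 0\}$, so $W^{-1}R\cong K[y,y^{-1}]$ and $\ker\iota=(x)$. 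Every $\delta\in D_{R|K}$ extends to $\tilde\delta$ on $W^{-1}R$ with $\tilde\delta\circ\iota=\iota\circ\delta$; since $\iota(x)=0$ this forces $\iota(\delta(xr))=\tilde\delta(0)=0$, hence $\delta((x))\subseteq\ker\iota=(x)$, so $(x)$ is a $D_{R|K}$-ideal. Now both $(0)$ and $(x)$ lie in $\cA$ and both extend to $(0)\subseteq W^{-1}R$, so extension is not injective and the bijection fails.

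The paper's own proof has the same gap: it simply asserts $\varphi\circ\phi(I)=I$. The statement becomes correct upon restricting $\cA$ to $W$-saturated $D$-ideals, or upon weakening the conclusion to say that extension and contraction are well-defined between the two classes with $\phi\circ\varphi=\mathrm{id}_{\cB}$. The only downstream application (Lemma~\ref{PropMinimalPrime}, that minimal primary components of $D$-ideals are $D$-ideals) uses only these well-definedness statements, so nothing else in the paper is affected.
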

\begin{proof}
Let $\phi:\cA\to\cB$ given by $I\mapsto I \cdot W^{-1}R$.
Since $I$ is a $D_{R|A}$-ideal, $D_{R|A} I\subseteq I$. Then, 
 $W^{-1}R\otimes_R D_{R|A} I\subseteq I \cdot W^{-1} R$ by Proposition~\ref{localization2}.
Then, $\phi$ is well-defined. 
 
Let $\iota:R\to W^{-1}R$ denote the localization map, and  $\varphi:\cB\to \cA$ given by $J\mapsto \iota^{-1}(J).$ 
Let $\delta\in D_{R|A}$, and $f\in  \iota^{-1} (J)$. Then, $\delta \iota(f)=\iota(\delta f)\in J$ 
because $J\in \cB.$ As a consequence, $\delta f\in \iota^{-1}(J).$

Since $\varphi\circ \phi(I)=I$ and $\phi\circ\varphi(J)=J,$
we obtain the desired conclusion. 
\end{proof}

We use the previous proposition to obtain properties of $D$-ideals.

\begin{lemma}\label{PropMinimalPrime} 
Suppose that $\ModDif{n}{R}{A}$ is finitely presented for all $n$.
Every minimal primary component of a $D_{R|A}$-ideal is a $D_{R|A}$-ideal. In particular, the minimal primes of a radical $D_{R|A}$-ideal are $D_{R|A}$-ideals.
\end{lemma}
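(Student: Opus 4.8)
The plan is to localize at a suitably chosen minimal prime and reduce to the case where the $D$-ideal is primary to the maximal ideal, then lift back. Let $I$ be a $D_{R|A}$-ideal and let $\q$ be a minimal prime of $I$; write $I = \q_1 \cap \cdots \cap \q_r \cap \mathfrak{b}$ for an irredundant primary decomposition in which $\q = \q_1$ is the $\q$-primary component (so $\q = \sqrt{\q_1}$ and $\q$ does not contain any of the other associated primes). The $\q$-primary component of $I$ can be recovered as $\iota^{-1}(I \cdot R_\q)$, where $\iota \colon R \to R_\q$ is the localization map: indeed, since $\q_1$ is the unique minimal prime contained in $\q$ and $\q$ avoids $\sqrt{\mathfrak{b}}$ and all $\sqrt{\q_j}$ for $j \geq 2$, the extensions of $\q_2,\dots,\q_r,\mathfrak{b}$ to $R_\q$ are the unit ideal, so $I \cdot R_\q = \q_1 \cdot R_\q$, and the contraction of this back to $R$ is exactly $\q_1$.

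First I would invoke Proposition~\ref{PropDidealsLoc}, whose hypothesis that $\ModDif{n}{R}{A}$ is finitely presented for all $n$ is exactly what we have assumed, to see that the localization $I \cdot R_\q$ is a $D_{R_\q|A}$-ideal. Then, again by Proposition~\ref{PropDidealsLoc} (its map $\varphi$, contraction along $\iota$), the contraction $\iota^{-1}(I \cdot R_\q)$ is a $D_{R|A}$-ideal. Combining this with the identification of the previous paragraph, the $\q$-primary component $\q_1$ of $I$ is a $D_{R|A}$-ideal. Since $\q$ was an arbitrary minimal prime of $I$, every minimal primary component of $I$ is a $D_{R|A}$-ideal.

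For the second assertion: if $I$ is a radical $D_{R|A}$-ideal, then $I = \q_1 \cap \cdots \cap \q_r$ where the $\q_i$ are precisely the minimal primes of $I$, and every component is a minimal component. Hence each $\q_i$ is a $D_{R|A}$-ideal by the first part.

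The only real point requiring care — the "main obstacle," though it is more bookkeeping than difficulty — is the commutative-algebra identity that the contraction of $I \cdot R_\q$ to $R$ equals the $\q$-primary component of $I$. This is the standard fact that minimal (isolated) components of a primary decomposition are intrinsic and computed by localization (see e.g.\ Atiyah–Macdonald, or Matsumura); one must check that $\q$ being minimal over $I$ forces all other primary components to become the unit ideal after inverting the elements outside $\q$. Everything else is a direct application of Proposition~\ref{PropDidealsLoc} in both directions.
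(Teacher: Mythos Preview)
Your proof is correct and follows essentially the same approach as the paper: localize at the minimal prime, use Proposition~\ref{PropDidealsLoc} to see the extension is a $D$-ideal, then contract back and identify the result with the minimal primary component. The paper's proof is more terse (it writes only that $I_P$ is a $D_{R_P|A}$-ideal and $I_P\cap R$ is a $D_{R|A}$-ideal), but the content is identical.
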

\begin{proof} 
 {
Let $I$ be a $D$-ideal of $R$ and $P$ a prime containing $I$. It follows from Proposition~\ref{PropDidealsLoc} that $I_P$ is a $D_{R_P|A}$-ideal, and that $I_P \cap R$ is a $D_{R|A}$-ideal. 
}
\end{proof}

\begin{remark}\label{rem-radicals-D-ideals}
It is not necessarily true that every minimal prime of a $D_{R|A}$-ideal is a $D_{R|A}$-ideal. For example, one can check that the $K$-linear endomorphism of $R=K[x]/(x^2)$ such that $\delta(1)=0$ and $\delta(x)=1$ is a $K$-linear differential operator of order $2$ (and of order 1 if $K$ has characteristic 2) and that $R$ is $D_{R|K}$-simple. Consequently, $(0)$ is a $D_{R|K}$-ideal but $\sqrt{(0)}=(x)$ is not. 
\end{remark}

We end this section with a property that  {relates} $D$-ideals of quotient with $D$-ideals of the original ring.

\begin{lemma}\label{lemma-D-ideal-from-quotient} Let $R$ be a ring and $A$ be a subring. Let $I\subseteq J$ be two ideals of $R$. Set $R'=R/I$, $A'=A/(A\cap I)$, and $J'$ to be the image of $J$ in $R'$. If $I$ is a $D_{R|A}$-ideal and $J'$ is a $D_{R' | A'}$-ideal, then $J$ is a $D_{R|A}$-ideal.
\end{lemma}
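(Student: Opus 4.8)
The goal is to show that if $I$ is a $D_{R|A}$-ideal and $J'$ (the image of $J$ in $R'=R/I$) is a $D_{R'|A'}$-ideal, then $J$ is a $D_{R|A}$-ideal, i.e.\ $\delta(J)\subseteq J$ for every $\delta \in D_{R|A}$. The key structural fact is that, because $I$ is a $D_{R|A}$-ideal and $A\cap I$ is (trivially) an ideal of $A$, every $A$-linear differential operator $\delta$ on $R$ of order $\leq n$ that preserves $I$ descends to an $A'$-linear differential operator $\bar\delta$ on $R'=R/I$ of order $\leq n$. So first I would record this descent statement: given $\delta\in D^n_{R|A}$ with $\delta(I)\subseteq I$, the induced map $\bar\delta : R' \to R'$ is well-defined, it is $A'$-linear (since $\delta$ is $A$-linear and $A' = A/(A\cap I)$), and it lies in $D^n_{R'|A'}$ because the defining commutator identity $[\bar\delta, \bar r] = \overline{[\delta, r]}$ holds and one induces on $n$ (the order-zero case being clear). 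This is essentially the content already noted in the Remark following Corollary~\ref{JacobiTayloroperators} in the polynomial setting, phrased more abstractly.

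\textbf{Main argument.} With the descent in hand, let $\delta \in D_{R|A}$ be arbitrary, say of order $\leq n$. Since $I$ is a $D_{R|A}$-ideal, $\delta(I)\subseteq I$, so $\delta$ descends to $\bar\delta \in D_{R'|A'}$. Since $J'$ is a $D_{R'|A'}$-ideal, $\bar\delta(J')\subseteq J'$. Now take any $x\in J$ and let $\bar x$ be its image in $R'$; then $\bar x \in J'$, so $\overline{\delta(x)} = \bar\delta(\bar x) \in J'$, which means $\delta(x) \in J' + I$ pulled back to $R$, i.e.\ $\delta(x)$ lies in the preimage of $J'$ under $R \to R'$. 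But that preimage is exactly $J$, since $I\subseteq J$ and $J' = J/I$. Hence $\delta(x)\in J$, proving $J$ is a $D_{R|A}$-ideal.

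\textbf{Where the work is.} The only genuinely nontrivial step is the descent lemma — verifying that $\bar\delta$ is a differential operator of the same order over the quotient coefficient ring $A'$. The well-definedness of $\bar\delta$ is immediate from $\delta(I)\subseteq I$; the $A'$-linearity is immediate from $A$-linearity; the order bound requires the inductive commutator computation. One subtlety to be careful about: one must check that $[\bar\delta, \bar r] $ equals the descent of $[\delta, r]$ and that $[\delta,r]$ itself preserves $I$ (it does, since $D^{n-1}_{R|A}$ preserves $I$ whenever $\delta$ does — but actually one should note $[\delta, r]$ need not preserve $I$ a priori unless $I$ is a $D_{R|A}$-ideal, which it is by hypothesis, so $[\delta,r](I) \subseteq I$ as $[\delta,r]\in D^{n-1}_{R|A}$). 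So the induction on $n$ goes through cleanly. Everything else is a routine diagram chase through $R \twoheadrightarrow R' = R/I$ using $I \subseteq J$.
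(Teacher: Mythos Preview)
Your proof is correct and follows essentially the same approach as the paper: both rely on the fact that, since $I$ is a $D_{R|A}$-ideal, every $\delta\in D^n_{R|A}$ descends to $\bar\delta\in D^n_{R'|A'}$, after which the conclusion is immediate. The only cosmetic difference is that the paper phrases the final step as a contrapositive (if $J$ fails to be a $D_{R|A}$-ideal then $J'$ fails to be a $D_{R'|A'}$-ideal), whereas you argue directly; your version also spells out the inductive commutator verification for the descent, which the paper leaves as ``immediate from the definitions.''
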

\begin{proof} If $I$ is a $D_{R|A}$-ideal, so that for every differential operator $\delta\in D^n_{R|A}$,  $\delta(I)\subseteq I$, then $\delta$ descends to a map on $R'$. It is immediate from the definitions that the map induced by $\delta$ on the quotient lies in $D^n_{R'|A'}$ so that there is a map $D_{R|A}\rightarrow D_{R'|A'}$ of filtered rings. If one also has that $J$ is not a $D_{R|A}$-ideal, there is some $g \in J$ and $\delta\in D_{R|A}$ such that $\delta(g)\notin J$. This then descends to a map $\bar{\delta}\in D_{R'|A'}$ with $\bar{\delta}(\bar{g})\notin J'$, so that $J'$ is not a $D_{R' | A'}$-ideal.
\end{proof}

\subsection{Differential powers and cores}
Motivated by Zariski's \cite{ZariskiHolFunct} study on symbolic powers for polynomial rings in characteristic zero, the differential powers were recently introduced \cite{SurveySP} to push this study to other rings.
  
\begin{definition}
Let $R$ be a ring and $A$ be a subring. Let $I$ be an ideal of $R$, and $n$ be a positive integer.
We define the \emph{$A$-linear $n$th differential powers of $I$}\index{differential power}\index{$I\dif{n}{A}$} by
$$
I\dif{n}{A}=\{f\in R \, | \, \delta(f)\in I \hbox{ for all } \delta\in D^{n-1}_{R|A}\}.
$$
\end{definition}

\begin{example}\label{diff-powers-regular} It follows from Example~\ref{example-regular-D} that $\m\dif{n}{K}=\m^n$ for 
an algebra with pseudocoefficent field $K$ that is smooth over $K$, 
$(R,\m,\kk)$. This follows from essentially the same argument  for polynomial rings \cite{SurveySP} applied in the more general setting of Example~\ref{example-regular-D}, but we reproduce it here for transparency. By Proposition~\ref{properties-diff-powers}~(ii) below, we have that $\m^n \subseteq \m\dif{n}{K}$. To see the other containment, let $x_1,\dots,x_d$ be a minimal generating set for $\m$, and pick $f\notin \m^n$. Write $f=g + \sum_{\alpha\in S} u_\alpha x^\alpha$, with $S$ a subset of $\NN^d$ consisting of elements with sum $<n$, $u_\alpha \notin \m$, and $g\in \m^n$. Since $f\notin \m^n$, $S$ is nonempty, so pick $\alpha\in S$ with $|\alpha|$ minimal. Then, in the notation of Example~\ref{example-regular-D}, the differential operator $D_\alpha$ has order $<n$ and is easily seen to satisfy $\partial^{\alpha}(f)\notin \m$, so $f\notin \m\dif{n}{K}$. Thus, $\m^n = \m\dif{n}{K}$.
\end{example}

If $f \notin \m\dif{n}{A}$, then there exists a $\delta \in D^{n-1}_{R|A}$ such that $\delta(f)$ is not in $\m$, hence it is a unit $u$. But then $u^{-1} \circ \delta$ is a differential operator of the same order with $(u^{-1} \circ \delta) (f)=1$.

We now recall a few properties of differential powers.

 {
\begin{proposition}[\cite{SurveySP}]\label{properties-diff-powers}
Let $R$ be a ring, $A$ be a subring,  $I,J_\alpha\subseteq R$ be ideals.
\begin{itemize}
\item[(i)] $I\dif{n}{A}$ is an ideal.
\item[(ii)] $I^{n}\subseteq I\dif{n}{A}$. 
\item[(iii)] $\left(\bigcap_{\alpha}J_\alpha\right)\dif{n}{A} 
=\bigcap_{\alpha }(J_\alpha)\dif{n}{A}$.
\item[(iv)] If $I$ is $\p$-primary, then $I\dif{n}{A}$ is also $\p$-primary .
\item[(v)] If $I$ is prime, then $I^{(n)} \subseteq I\dif{n}{A}$.
\end{itemize}
\end{proposition}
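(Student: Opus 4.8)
The plan is to prove each item by essentially elementary manipulations with the definition of the differential power, leaning on the fact that differential operators of order $\leq n-1$ form an $R$-module. I will go through the list in order.

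\textbf{Plan.} For \emph{(i)}, I would first observe that $I\dif{n}{A}$ is closed under addition because each $\delta \in D^{n-1}_{R|A}$ is additive, so $\delta(f+g) = \delta(f)+\delta(g) \in I$ whenever $\delta(f),\delta(g)\in I$; and it is closed under multiplication by $r\in R$ because for $\delta \in D^{n-1}_{R|A}$ the commutator identity gives $\delta(rf) = r\delta(f) + [\delta,r](f)$ where $[\delta,r]\in D^{n-2}_{R|A}\subseteq D^{n-1}_{R|A}$, so both terms lie in $I$ when $f\in I\dif{n}{A}$ (using the inductive structure of the order filtration). For \emph{(ii)}, I would argue by induction on $n$: the case $n=1$ is clear since $D^0_{R|A}=\Hom_R(R,R)$ acts by multiplication, so $I\dif{1}{A}=I$; for the inductive step, take $f = f_1\cdots f_n \in I^n$ with $f_i\in I$, and for $\delta\in D^{n-1}_{R|A}$ expand $\delta(f_1\cdot(f_2\cdots f_n))$ using the commutator identity to reduce to lower order operators applied to $f_2\cdots f_n \in I^{n-1}$, which lands in $I\dif{n-1}{A}\subseteq I$ by induction, plus a term with an explicit factor of $f_1\in I$. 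For \emph{(iii)}, the inclusion $\left(\bigcap_\alpha J_\alpha\right)\dif{n}{A}\subseteq \bigcap_\alpha (J_\alpha)\dif{n}{A}$ is immediate from monotonicity (if $I\subseteq J$ then $I\dif{n}{A}\subseteq J\dif{n}{A}$, since the defining condition is weaker), and conversely if $f$ lies in $(J_\alpha)\dif{n}{A}$ for every $\alpha$, then for each fixed $\delta\in D^{n-1}_{R|A}$ we get $\delta(f)\in J_\alpha$ for all $\alpha$, hence $\delta(f)\in\bigcap_\alpha J_\alpha$, so $f\in\left(\bigcap_\alpha J_\alpha\right)\dif{n}{A}$.

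\textbf{Items (iv) and (v).} For \emph{(iv)}, suppose $I$ is $\p$-primary. By \emph{(ii)}, $I^n\subseteq I\dif{n}{A}\subseteq I$ (the latter because $D^{n-1}_{R|A}\ni\id$), so $\sqrt{I\dif{n}{A}}=\sqrt{I}=\p$; it remains to check the primary condition. Suppose $ab\in I\dif{n}{A}$ with $b\notin\p$; I want $a\in I\dif{n}{A}$, i.e. $\delta(a)\in I$ for all $\delta\in D^{n-1}_{R|A}$. I would argue by induction on the order of $\delta$: for $\delta$ of order $0$ (multiplication by $r$) we have $r a b\in I$ and $b\notin\p$, so $ra\in I$ by primariness. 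For $\delta$ of order $k\leq n-1$, use $\delta(ab) = a\delta(b) + [\delta,a](b)$... actually cleaner: use $b\,\delta(a) = \delta(ab) - [\delta,b](a)$, where $\delta(ab)\in I$ and $[\delta,b]$ has order $\leq k-1$; the term $[\delta,b](a)$ need not obviously be in $I$, so instead I would localize. The slick route: by \emph{(i)} $I\dif{n}{A}$ is an ideal, and its formation commutes with localization at a multiplicative set disjoint from $\p$ by Proposition~\ref{localization2} (applicable since the relevant $\ModDif{n}{R}{A}$ are finitely presented in the settings of interest, or else cite \cite{SurveySP} directly); then $I\dif{n}{A} = (I_\p\dif{n}{A_\p})\cap R$, and $I_\p\dif{n}{A_\p}$ is $\p R_\p$-primary because the containment $I^n_\p\subseteq I_\p\dif{n}{A_\p}\subseteq I_\p$ forces it to have the same radical, and in the local ring $R_\p$ any ideal between $I_\p$ and a power of its radical whose radical is the maximal ideal is automatically primary. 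Contracting back to $R$ preserves $\p$-primariness. For \emph{(v)}, if $I$ is prime then $I$ is $I$-primary, so $I^{(n)} = I^nR_I\cap R$; since $I^n\subseteq I\dif{n}{A}$ by \emph{(ii)} and $I\dif{n}{A}$ is $I$-primary by \emph{(iv)}, taking the $I$-primary component gives $I^{(n)}\subseteq I\dif{n}{A}$ directly — indeed $I^{(n)}$ is the smallest $I$-primary ideal containing $I^n$.

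\textbf{Expected obstacle.} The only genuinely delicate point is the primary condition in \emph{(iv)}; the commutator manipulations do not immediately close up because lowering the order of an operator produces terms not visibly in $I$, so the cleanest argument routes through localization and the compatibility of differential powers with localization. Since all of this is already developed in the paper (Proposition~\ref{localization2} and the discussion around differential powers, or the cited reference \cite{SurveySP}), I expect the actual write-up to be short, citing \cite{SurveySP} for the statements and only sketching the localization reduction. Everything else — \emph{(i)}, \emph{(ii)}, \emph{(iii)}, and the reductions in \emph{(iv)}, \emph{(v)} — is routine bookkeeping with the order filtration and the commutator identity $\delta r - r\delta \in D^{n-1}_{R|A}$.
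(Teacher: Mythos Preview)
Your arguments for (i), (ii), (iii), and (v) are correct and more detailed than the paper's own proof, which simply cites \cite{SurveySP} for (i)--(iv) and notes that (v) follows from (ii) and (iv), exactly as you do.

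There is a genuine misstep in (iv). You set up the right induction --- on the order $k\le n-1$ of $\delta$, with $a,b$ fixed --- write $b\,\delta(a)=\delta(ab)-[\delta,b](a)$, and then claim that ``$[\delta,b](a)$ need not obviously be in $I$.'' But it is: $[\delta,b]\in D^{k-1}_{R|A}$, and your inductive hypothesis (for this same pair $a,b$) is precisely that every operator of order $\le k-1$ sends $a$ into $I$. Hence $[\delta,b](a)\in I$, so $b\,\delta(a)\in I$, and since $b\notin\p=\sqrt{I}$ and $I$ is $\p$-primary, $\delta(a)\in I$. The direct induction closes up with no extra hypotheses. By abandoning it for a localization argument via Proposition~\ref{localization2}, you impose the finite presentation of $\ModDif{n}{R}{A}$, which is \emph{not} assumed in the statement; your hedge ``or else cite \cite{SurveySP} directly'' is fine as a fallback, but the elementary argument you started is both cleaner and fully general.
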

\begin{proof}
Parts (i)--(iv) are Proposition 2.4, Proposition 2.5, Exercise 2.13, and Propositions 2.6 of \cite{SurveySP}, respectively. The last part is a consequence of \mbox{(ii)} and~\mbox{(iv)}.
\end{proof}
}

%\begin{remark}\label{differential-filtrations-modules}
%One may also define various  differential filtrations on $R$-modules. For example, let $A$ be a subring of $R$ and $M,N$ be two $R$-modules. Let $N'$ be a submodule of $N$. One obtains a filtration on $M$ via $M_n=\{m \in M \ | \ \forall \delta\in D_{R|A}^{n-1}(M,N), \ \delta(m) \in N' \}$. We isolate the particular case where  $M$ is a module over a local ring $(R,\m)$ and we define $M\difM{n}{A}=M_n$ for $N=R$, $N'=\m$ in the setting of above.
%\end{remark}
We also note that if $A \subseteq B \subseteq R$, and $I$ is an ideal of $R$, then $I\dif{n}{A}\subseteq I\dif{n}{B}$ for all $n$.

Differential powers behave well with localization.

\begin{lemma}\label{diff-localize} 
 Let $W$ be a multiplicative set in $R$ and $I$ an ideal such that $W\cap I=\varnothing$. Suppose also that $\ModDif{n}{R}{A}$ is finitely presented for all $n$.	
 Then $I\dif{n}{A} = (W^{-1}I)\dif{n}{A} \cap R$.
\end{lemma}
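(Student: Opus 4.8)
The plan is to compare the two differential powers by tracking differential operators through the localization map, using the structural results on localization of differential operators already established. Write $S = W^{-1}R$. The key input is Proposition~\ref{localization2} (valid since $\ModDif{n}{R}{A}$ is finitely presented), which says that every $\delta \in D^{n}_{S|A}$ has the form $\frac{1}{w}\tilde{\delta}$ for some $\delta_0 \in D^n_{R|A}$, where $\tilde{\delta_0}$ is the canonical extension of $\delta_0$ to $S$ satisfying $\tilde{\delta_0}(\frac{r}{1}) = \frac{\delta_0(r)}{1}$; conversely each $\delta_0 \in D^n_{R|A}$ extends to such a $\tilde{\delta_0} \in D^n_{S|A}$.

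For the containment $I\dif{n}{A} \subseteq (W^{-1}I)\dif{n}{A} \cap R$: first note $I\dif{n}{A} \subseteq R$ trivially, and since $I \cap W = \varnothing$ we have $I\dif{n}{A} \cap W = \varnothing$ as well (as $I^n \subseteq I\dif{n}{A}$ and $W$ is saturated-enough — more simply, if $w \in I\dif{n}{A}\cap W$ then applying the identity operator gives $w \in I$, contradiction), so it makes sense to view elements of $I\dif{n}{A}$ inside $S$. Take $f \in I\dif{n}{A}$ and $\delta \in D^{n-1}_{S|A}$; write $\delta = \frac{1}{w}\tilde{\delta_0}$ with $\delta_0 \in D^{n-1}_{R|A}$. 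Then $\delta(\frac{f}{1}) = \frac{1}{w}\cdot \frac{\delta_0(f)}{1}$, and $\delta_0(f) \in I$ by hypothesis, so $\delta(\frac{f}{1}) \in W^{-1}I$. Hence $\frac{f}{1} \in (W^{-1}I)\dif{n}{A}$, and since $f \in R$ we get $f \in (W^{-1}I)\dif{n}{A}\cap R$.

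For the reverse containment, take $f \in R$ with $\frac{f}{1} \in (W^{-1}I)\dif{n}{A}$, and let $\delta_0 \in D^{n-1}_{R|A}$ be arbitrary. Its extension $\tilde{\delta_0} \in D^{n-1}_{S|A}$ satisfies $\tilde{\delta_0}(\frac{f}{1}) = \frac{\delta_0(f)}{1} \in W^{-1}I$ by assumption. Thus $\frac{\delta_0(f)}{1} \in W^{-1}I$, which means there exists $w \in W$ with $w\,\delta_0(f) \in I$. Since $I$ is an ideal with $I \cap W = \varnothing$, to conclude $\delta_0(f) \in I$ I want $I$ to be "$W$-saturated," i.e. $(W^{-1}I)\cap R = I$; but this need not hold for arbitrary $I$. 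The fix: instead of deducing membership element-by-element, argue directly with the ideal $I\dif{n}{A}$. Note $(W^{-1}I)\dif{n}{A}\cap R \supseteq (W^{-1}(I\dif{n}{A}))\cap R$ is not quite what's wanted either; rather, I will show $(W^{-1}I)\dif{n}{A} = W^{-1}(I\dif{n}{A})$ as ideals of $S$ — this is the genuinely useful statement — and then intersect with $R$, using that $I\dif{n}{A}$ is already $W$-saturated because it is $\p$-primary-ish. Concretely: $(W^{-1}I)\dif{n}{A} = \{ \sigma \in S : \delta(\sigma) \in W^{-1}I \ \forall\, \delta \in D^{n-1}_{S|A}\}$; writing each such $\delta$ as $\frac{1}{w}\tilde{\delta_0}$ and each $\sigma = \frac{r}{v}$, the condition becomes $\tilde{\delta_0}(\frac{r}{v}) \in W^{-1}I$ for all $\delta_0$, which (clearing the localization-induced denominators via the interpretation in the Remark after Proposition~\ref{localization2}) is equivalent to: some $v' \in W$ has $v' r \in I\dif{n}{A}$, i.e. $\sigma \in W^{-1}(I\dif{n}{A})$. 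Hence $(W^{-1}I)\dif{n}{A} \cap R = W^{-1}(I\dif{n}{A}) \cap R$, and the latter equals $I\dif{n}{A}$ once we observe $I\dif{n}{A}$ contains no element of $W$ and more strongly is $W$-saturated — which follows from Proposition~\ref{PropDidealsLoc}/Lemma~\ref{PropMinimalPrime} applied to the $D$-ideal structure, or simply from the fact that if $wg \in I\dif{n}{A}$ then for $\delta_0 \in D^{n-1}_{R|A}$ one has $\delta_0(wg) \in I$ and an induction on order using $[\delta_0, w] \in D^{n-2}_{R|A}$ plus $I\cap W = \varnothing$ forces $\delta_0(g) \in I$.

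The main obstacle is precisely this last point — showing $I\dif{n}{A}$ is $W$-saturated, equivalently that the naive element-wise argument can be repaired. The cleanest route is the order-induction: prove by induction on $m$ that if $w \in W$, $g \in R$, and $wg \in I\dif{n}{A}$, then $g \in I\dif{n}{A}$, by showing $\delta(g) \in I$ for all $\delta \in D^{m}_{R|A}$ with $m \le n-1$; the inductive step writes $w\,\delta(g) = \delta(wg) - [\delta,w](g)$, where $\delta(wg) \in I$ since $wg \in I\dif{n}{A}$ and $[\delta,w] \in D^{m-1}_{R|A}$ so $[\delta,w](g) \in I$ by the induction hypothesis, giving $w\,\delta(g) \in I$; then since this holds and we may also apply all operators of order $< m$ to $\delta(g)$... actually the slickest finish is: the set $\{g : wg \in I\dif{n}{A}\}$ is a $D_{R|A}$-ideal containing $I\dif{n}{A}$, is contained in $\sqrt{I\dif{n}{A}}$-locally-nothing, and equals $I\dif{n}{A}$ because $I\dif{n}{A}$ is primary (Proposition~\ref{properties-diff-powers}(iv), after reducing to the primary case via (iii) and a primary decomposition of $I$) with radical meeting $W$ trivially. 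I will present the order-induction as the primary argument since it avoids invoking primary decomposition and works verbatim.
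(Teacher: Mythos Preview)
Your $(\subseteq)$ argument is correct and matches the paper's. For $(\supseteq)$ you have put your finger on the real issue: to pass from $\frac{\delta_0(f)}{1}\in W^{-1}I$ to $\delta_0(f)\in I$ one needs $W^{-1}I\cap R=I$, i.e.\ that $I$ be $W$-saturated. The paper's proof simply asserts this equality and moves on; you instead try to justify it, but neither attempt succeeds. The order induction only produces $w\,\delta(g)\in I$, and already the base case $m=0$ (with $\delta=\mathrm{id}$) asks you to deduce $g\in I$ from $wg\in I$, which is precisely the saturation statement you are trying to prove. The primary-decomposition route works only when every associated prime of $I$ misses $W$, a condition strictly stronger than $W\cap I=\varnothing$.

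The gap is not repairable, because the statement is false under the stated hypothesis. Take $R=K[x,y]$, $A=K$, $I=(xy)$, and $W=\{x^k:k\ge 0\}$. Then $W\cap I=\varnothing$, yet for $n=1$ the claimed equality becomes $(xy)=W^{-1}(xy)\cap R=(y)$, which fails. The hypothesis that makes the lemma true is that $W$ consist of nonzerodivisors on $R/I$ (equivalently $W^{-1}I\cap R=I$); under it the paper's two-line argument for $(\supseteq)$ is complete and your detours are unnecessary. All uses of the lemma in the paper are for prime or $\p$-primary ideals with $\p\cap W=\varnothing$, where this stronger hypothesis holds.
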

\begin{proof}
$(\subseteq)$: It suffices to show that if $D^{n-1}_{R|A}\cdot f\subseteq I$, then $D^{n-1}_{W^{-1}R | A}\cdot(\frac{f}{1}) \subseteq W^{-1}I$. By Proposition~\ref{localization2}, for any $\delta\in D^{n-1}_{W^{-1}R | A}$, there exists $g\in W^{-1}R$ and $\eta\in D^{n-1}_{R|A}$ such that $\delta(\frac{f}{1})=g \frac{\eta(f)}{1}$ for all $f\in R$. The claim is then clear.

$(\supseteq)$: Suppose that $f\in R$, and $D^{n-1}_{W^{-1}R | A}\cdot(\frac{f}{1}) \subseteq W^{-1}I$. If $\delta\in D^{n-1}_{R|A}$, then it extends to a differential operator $\tilde{\delta}\in D^{n-1}_{W^{-1}R | A}$ such that $\tilde{\delta}(\frac{f}{1})=\frac{\delta(f)}{1}$. By hypothesis, this element is in $W^{-1}I \cap R =I$. Thus, $f$ lies in $I\dif{n}{A}$.
\end{proof}

\begin{lemma}\label{diff-localize2} Let $W$ be a multiplicative set in $R$ and $I$ an ideal. Suppose also that $\ModDif{n}{R}{A}$ is finitely presented for all $n$. Then $I\dif{n}{A} (W^{-1}R) = (W^{-1} I)\dif{n}{A}$.
\end{lemma}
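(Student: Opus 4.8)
The plan is to dispatch this by a case distinction on whether $W$ meets $I$, reducing the substantive case to the already-proved Lemma~\ref{diff-localize} together with the standard fact that every ideal of a localization is extended from its contraction.

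\emph{Case 1: $W\cap I\neq\varnothing$.} Here $W^{-1}I=W^{-1}R$, so the right-hand side $(W^{-1}I)\dif{n}{A}$ is the differential power of the unit ideal of $W^{-1}R$, which equals $W^{-1}R$ because the defining condition ``$\delta(f)\in W^{-1}R$ for all $\delta\in D^{n-1}_{W^{-1}R|A}$'' is vacuous. For the left-hand side, Proposition~\ref{properties-diff-powers}(ii) gives $I^n\subseteq I\dif{n}{A}$; picking $w\in W\cap I$ we get $w^n\in I\dif{n}{A}\cap W$, so $I\dif{n}{A}\cdot W^{-1}R$ contains a unit and hence is all of $W^{-1}R$. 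Thus both sides coincide.

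\emph{Case 2: $W\cap I=\varnothing$.} By Lemma~\ref{diff-localize} (whose hypothesis on finite presentation of the $\ModDif{n}{R}{A}$ is exactly the one assumed here), $I\dif{n}{A}=(W^{-1}I)\dif{n}{A}\cap R$, the contraction of the ideal $(W^{-1}I)\dif{n}{A}\subseteq W^{-1}R$ along the localization map. Now for any ideal $J\subseteq W^{-1}R$ one has $(J\cap R)\cdot W^{-1}R=J$: the inclusion $\subseteq$ is clear, and if $\tfrac{r}{w}\in J$ then $\tfrac{r}{1}\in J$, so $r\in J\cap R$ and $\tfrac{r}{w}=\tfrac1w\cdot\tfrac{r}{1}\in (J\cap R)\cdot W^{-1}R$. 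Applying this with $J=(W^{-1}I)\dif{n}{A}$ yields
\[
I\dif{n}{A}\cdot W^{-1}R=\bigl((W^{-1}I)\dif{n}{A}\cap R\bigr)\cdot W^{-1}R=(W^{-1}I)\dif{n}{A},
\]
as claimed.

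The genuine content of the statement is entirely carried by Lemma~\ref{diff-localize}, so the only point requiring care is the bookkeeping of the case split: Lemma~\ref{diff-localize} requires $W\cap I=\varnothing$ while the present lemma is stated for arbitrary $I$, and in the excluded case one cannot invoke the previous result but must instead observe directly that both sides degenerate to the unit ideal of $W^{-1}R$. I do not anticipate any further obstacle, since localization commutes with formation of extended/contracted ideals and all the finite-presentation hypotheses needed to pass differential operators through localization (Proposition~\ref{localization2}) are in force via Lemma~\ref{diff-localize}.
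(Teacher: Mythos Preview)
Your proof is correct and follows the same approach as the paper: both reduce to Lemma~\ref{diff-localize} via the standard fact that every ideal of $W^{-1}R$ equals the extension of its contraction. Your Case~2 is verbatim the paper's one-line argument. Your Case~1 is an improvement: the paper's proof invokes Lemma~\ref{diff-localize} directly without noting that that lemma carries the hypothesis $W\cap I=\varnothing$, whereas the present statement does not. You correctly observe that when $W$ meets $I$ both sides trivially collapse to $W^{-1}R$, so the case split is the right bookkeeping to make the argument airtight.
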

\begin{proof}
 {For any $J\in W^{-1}R$, we have $J=(J \cap R) W^{-1}R$, so this follows from Lemma~\ref{diff-localize}.}
% $(\subseteq)$: We proceed  by induction on $n$. The case $n=1$ is trivial. Let $r\in I\dif{n}{A}$, $w\in W$, and $\delta\in D^{n-1}_{W^{-1}R | A}$. Then {\cb $\delta(\frac{r}{w})=\frac{1}{w}(\delta(r)-[\delta,w](\frac{r}{w}))$}. By the induction hypothesis, {\cb $[\delta,w](\frac{r}{w})\in W^{-1}I$}. Since we can write $\delta=\frac{1}{v} \otimes \delta'$ with $v\in W$ and $\delta'\in D^{n-1}_{R |A}$ by Proposition~\ref{localization2}, we also have that $\delta(r)\in W^{-1}I$ by hypothesis.

%$(\supseteq)$: By Proposition~\ref{localization2}, we know that $D^{n-1}_{R|A} \cdot a \subseteq D^{n-1}_{W^{-1}R | A} \cdot a$. But then $D^{n-1}_{W^{-1}R | A} \cdot a = D^{n-1}_{W^{-1}R | A} \cdot (\frac{as}{s}) = D^{n-1}_{W^{-1}R | A} \circ s \cdot (\frac{a}{s}) \subseteq D^{n-1}_{W^{-1}R | A} \cdot (\frac{a}{s}) $, since multiplication by an element does not increase the order of a differential operator.
\end{proof}

As a consequence of Proposition~\ref{diff-ops-completion}, differential powers of the maximal ideal commute with completion.

	\begin{lemma}\label{diff-powers-completion} Let  $(R,\m,\kk)$ be an algebra with pseudocoefficent field $K$. Then $\m\dif{n}{K}\widehat{R} = (\m\widehat{R})\dif{n}{K}$. Consequently, if $(S,\n,L)$ is a power series ring over $L$, then $\n\dif{n}{L}=\n^n$ for all $n$.
	\end{lemma}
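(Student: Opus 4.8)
The plan is to deduce both assertions from the fact that differential operators commute with completion (Proposition~\ref{diff-ops-completion}), together with the observation that $\m\dif{n}{K}$ is an $\m$-primary ideal. We may assume $R$ is local: when $R$ is graded, replace it with $R_\m$, which has the same completion, while $\m\dif{n}{K}R_\m=(\m R_\m)\dif{n}{K}$ by Lemma~\ref{diff-localize2} and $D^{n-1}_{R_\m|K}=R_\m\otimes_R D^{n-1}_{R|K}$ by Proposition~\ref{localization2} (the modules of principal parts being finitely presented for algebras with pseudocoefficient field). With this reduction in force, for $\delta\in D^{n-1}_{R|K}$ we write $\widehat\delta\in D^{n-1}_{\widehat R|K}$ for its unique continuous extension (Lemma~\ref{diff-ops-are-cts}); by Proposition~\ref{diff-ops-completion}, every element of $D^{n-1}_{\widehat R|K}$ is an $\widehat R$-linear combination $\sum_i a_i\widehat{\delta_i}$ of such extensions, and $\widehat\delta$ restricts to $\delta$ on $R$.

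I would first record two elementary facts. First, $\m\widehat R\cap R=\m$, since the natural map $R/\m\to\widehat R/\m\widehat R$ is an isomorphism. Second, $\m\dif{n}{K}$ is $\m$-primary: we have $\m^n\subseteq\m\dif{n}{K}$ by Proposition~\ref{properties-diff-powers}(ii), and $\m\dif{n}{K}\subseteq\m$ because $\id\in D^{0}_{R|K}$; hence $R/\m\dif{n}{K}$ is Artinian, therefore $\m$-adically complete, so $\widehat R/\m\dif{n}{K}\widehat R\cong R/\m\dif{n}{K}$ and in particular $R$ surjects onto $\widehat R/\m\dif{n}{K}\widehat R$.

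Now for the inclusion $\m\dif{n}{K}\widehat R\subseteq(\m\widehat R)\dif{n}{K}$: if $f\in\m\dif{n}{K}$ and $\varepsilon=\sum_i a_i\widehat{\delta_i}\in D^{n-1}_{\widehat R|K}$, then $\varepsilon(f)=\sum_i a_i\widehat{\delta_i}(f)=\sum_i a_i\delta_i(f)\in\m\widehat R$, since $\widehat{\delta_i}(f)=\delta_i(f)\in\m$ for $f\in R$; thus $f\in(\m\widehat R)\dif{n}{K}$, and as the latter is an ideal (Proposition~\ref{properties-diff-powers}(i)) containing $\m\dif{n}{K}$, it contains $\m\dif{n}{K}\widehat R$. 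Reading the same computation with $f\in R$ arbitrary and $\varepsilon$ ranging over the extensions $\widehat\delta$ of $\delta\in D^{n-1}_{R|K}$, and using $\m\widehat R\cap R=\m$, gives $(\m\widehat R)\dif{n}{K}\cap R=\m\dif{n}{K}$. For the reverse inclusion, take $g\in(\m\widehat R)\dif{n}{K}$ and, using the second elementary fact, pick $g_0\in R$ with $g-g_0\in\m\dif{n}{K}\widehat R$; then $g-g_0\in(\m\widehat R)\dif{n}{K}$ by the inclusion just proved, so $g_0=g-(g-g_0)\in(\m\widehat R)\dif{n}{K}\cap R=\m\dif{n}{K}$, and therefore $g=g_0+(g-g_0)\in\m\dif{n}{K}\widehat R$. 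Finally, for the last sentence: a power series ring $S=L\ps{y_1,\dots,y_d}$ is the $\m$-adic completion of the polynomial ring $R=L[y_1,\dots,y_d]$, which is an algebra with coefficient field $L$ smooth over $L$, so $\m\dif{n}{L}=\m^n$ by Example~\ref{diff-powers-regular}, and the first part gives $\n\dif{n}{L}=(\m\widehat R)\dif{n}{L}=\m\dif{n}{L}\widehat R=\m^n\widehat R=\n^n$.

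The only substantive ingredient is Proposition~\ref{diff-ops-completion}; everything else is bookkeeping, and I expect the one point requiring care to be the $\m$-primariness of $\m\dif{n}{K}$ — this is precisely what lets $R$ surject onto $\widehat R/\m\dif{n}{K}\widehat R$ and thereby makes the approximation step in the reverse inclusion go through without invoking any limiting process.
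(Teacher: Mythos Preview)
Your proof is correct and follows essentially the same approach as the paper: both directions rest on Proposition~\ref{diff-ops-completion}, the key observation for $(\supseteq)$ is that $(\m\widehat R)\dif{n}{K}$ is $\m$-primary so is expanded from $R$, and the power-series consequence is deduced by recognizing $S$ as the completion of a polynomial ring. Your version is a bit more explicit (the reduction to the local case and the isolation of $(\m\widehat R)\dif{n}{K}\cap R=\m\dif{n}{K}$), but the structure is the same.
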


\begin{proof} We first establish the equality $\m\dif{n}{K}\widehat{R} = (\m\widehat{R})\dif{n}{K}$. 

$(\subseteq)$:
	 Let $f\in R$, and assume $D^{n-1}_{R|K}\cdot f\subseteq \m$. If $\delta\in D^{n-1}_{\widehat{R}|K}$, then by Proposition~\ref{diff-ops-completion}, there is some $r\in \widehat{R}$ and $\eta\in D^{n-1}_{R|K}$ such that $\delta(f)=r\eta(f)$, which by hypothesis, lies in $\m\widehat{R}$.
	
$(\supseteq)$:
 Since $(\m\widehat{R})\dif{n}{K}$ is $\m$-primary, and every $\m$-primary ideal of $\widehat{R}$ is expanded from $R$, it suffices to show that if $f\in R$ and $D_{\widehat{R}|K}^{n-1}\cdot f \subseteq \m\widehat{R}$, then $D^{n-1}_{R|K}\cdot f\subseteq \m$. This is clear since every element of $D^{n-1}_{R|K}$ extends to $D_{\widehat{R}|K}^{n-1}$.

Now, suppose that $S$ is a power series ring over $L$. Write $S=L \llbracket x_1, \dots, x_d \rrbracket$ and $\n=(x_1,\dots, x_d)$. Set $R=L[x_1, \dots, x_d]_{\m}$, with $\m=(x_1,\dots, x_d)$. Since $S=\widehat{R}$ and $\n=\m\widehat{R}$, the second assertion of the Lemma follows from the first.
\end{proof}

We now introduce a differential version of the splitting prime for $F$-pure rings. However, this notion is valid in any characteristic. 

\begin{definition}
Let $(R,\m)$ be a local ring, $A$ be a subring, and let $J \subseteq R$ be an ideal.
We define the \emph{$A$-differential core of $J$}\index{differential core}\index{$\cP_A(J)$} by
$$
\cP_A(J) =\bigcap_{n\in\NN}J\dif{n}{A}.
$$
The \emph{$A$-differential core of $R$} is $\cP_A= \cP_A(\m)$.
\end{definition}

\begin{lemma}\label{LemmaDidealDifPower}
Let $R$ be a ring and $A$ be a subring.
Then,  $I$ is a $D_{R|A}$-ideal if and only if $I\dif{n}{A}=I$ for every integer $n\geq 1.$
\end{lemma}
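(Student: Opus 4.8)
The claim is the equivalence: $I$ is a $D_{R|A}$-ideal if and only if $I\dif{n}{A}=I$ for every $n\geq 1$. I would prove this directly from the definition of $I\dif{n}{A}$, namely $I\dif{n}{A}=\{f\in R \mid \delta(f)\in I \text{ for all } \delta\in D^{n-1}_{R|A}\}$, together with the bookkeeping fact that $D^0_{R|A}=\Hom_R(R,R)$ consists of multiplications by ring elements.

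\textbf{Key steps.} First, the easy direction ($\Leftarrow$): suppose $I\dif{n}{A}=I$ for all $n\geq 1$. Take any $\delta\in D_{R|A}$; then $\delta\in D^{n-1}_{R|A}$ for some $n$, and if $f\in I$, then since $f\in I = I\dif{n}{A}$ we get $\delta(f)\in I$ by the very definition of $I\dif{n}{A}$. Hence $D_{R|A}\cdot I\subseteq I$, i.e.\ $I$ is a $D_{R|A}$-ideal. Second, the converse ($\Rightarrow$): suppose $I$ is a $D_{R|A}$-ideal, so $\delta(I)\subseteq I$ for every $\delta\in D_{R|A}$. Fix $n\geq 1$. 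The inclusion $I\subseteq I\dif{n}{A}$ is immediate: for $f\in I$ and any $\delta\in D^{n-1}_{R|A}\subseteq D_{R|A}$, we have $\delta(f)\in I$ by hypothesis, so $f$ satisfies the defining condition of $I\dif{n}{A}$. For the reverse inclusion $I\dif{n}{A}\subseteq I$, observe that the identity map $\id_R$ lies in $D^0_{R|A}\subseteq D^{n-1}_{R|A}$ (using $n\geq 1$, so $n-1\geq 0$). Hence if $f\in I\dif{n}{A}$, applying the defining condition with $\delta=\id_R$ gives $f=\id_R(f)\in I$. Therefore $I\dif{n}{A}=I$, as desired.

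\textbf{Main obstacle.} Honestly, there is no serious obstacle here — the statement is a formal unwinding of definitions, and the only point requiring the slightest care is making sure $\id_R\in D^{n-1}_{R|A}$ for all $n\geq 1$, which holds because $D^0_{R|A}=\Hom_R(R,R)$ by definition (so in particular it contains $\id_R$, indeed all multiplications by units and by arbitrary ring elements) and $D^0_{R|A}\subseteq D^{n-1}_{R|A}$ for $n\geq 1$. One should state the hypothesis $n\geq 1$ is used precisely at this spot, since for $n=1$ the relevant operators are exactly $D^0_{R|A}$. No finiteness, localization, or smoothness hypotheses on $R$ or $A$ are needed, which matches the generality in which the lemma is stated.
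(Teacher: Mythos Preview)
Your proof is correct and follows essentially the same approach as the paper's own proof: both directions are unwound directly from the definitions, with the key observation that $\id_R$ (equivalently, multiplication by $1$) lies in $D^{0}_{R|A}\subseteq D^{n-1}_{R|A}$ to get $I\dif{n}{A}\subseteq I$. The only cosmetic difference is the order in which you treat the two implications.
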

\begin{proof}
We first show that if $I$ is a $D_{R|A}$-ideal, then $I\dif{n}{A}=I$ for every integer $n\geq 1.$ We note that  $I\dif{n}{A}\subseteq I$  because  if $f\in I\dif{n}{A},$ then $1\cdot f\in I$ as $1\in D^{0}_{R|A}.$
We now show the other containment.  If $f\in I $, then $D_{R|A} f\subseteq I$ as $I$ is an  $D_{R|A}$-ideal. Then, in particular, $D^{n-1}_{R|A} f\subseteq I$, and so, $f\in I\dif{n}{A}$. 

We now focus on the other implication. We suppose that  $I\dif{n}{A}=I$ for every integer $n\geq 1.$ Let $f\in I.$ Then, $f\in I\dif{n}{A}$ for every $n\in\NN$,  and so  $D^{n-1}_{R|A} f\subseteq I$ for every $n\in\NN.$  Hence,  $D_{R|A} f\subseteq I$, and so,   $D_{R|A} I\subseteq I$.
\end{proof}

\begin{remark}
 The previous proposition is not true if one replaces  the condition ``$I\dif{n}{A}=I$ for every integer $n\geq 1$'' with ``$I\dif{n}{A}=I$ for some integer $n>1.$'' For example, let $R=K[x^2,xy,y^2]$. Then, $D^1_{R|K}=R \cdot \langle 1, x \frac{\partial}{\partial x}, y \frac{\partial}{\partial x}, x \frac{\partial}{\partial y}, y \frac{\partial }{\partial y}\rangle$, and $D^1_{R|K}(\m)=\m$, so $\m\dif{2}{K}=\m$. However,  {$\frac{1}{2} \frac{\partial^2}{\partial x^2} \in D^2_{R|K}$,} so $x^2 \notin \m\dif{3}{K}$.
\end{remark}

We summarize a few properties of differential cores. In particular, we characterize $D$-simplicity using differential cores in Corollary~\ref{CorDifPrimeDsimple}.

\begin{proposition}\label{PropDiffPrime}
Let $(R,\m)$ be a local ring, $J$ an ideal of $R$, and $A$ be a subring.
Then, 
\begin{enumerate}
\item $\cP_A(J)$ is a $D_{R|A}$-ideal.
\item\label{cPAJ-contains} $\cP_A(J)$ contains every  $D_{R|A}$-ideal of $R$ contained in $J$.
\item\label{diff-prime-primary} $\cP_A(J)$ is a primary ideal if $J$ is prime. In particular, $\cP_A$ is primary.
\item $R/\cP_A$ is $D$-simple.
\end{enumerate}
\end{proposition}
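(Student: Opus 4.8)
The plan is to prove the four parts of Proposition~\ref{PropDiffPrime} in order, as each builds on the previous. For part~(1), I would combine Lemma~\ref{LemmaDidealDifPower} with Proposition~\ref{properties-diff-powers}~(iii): since $\cP_A(J) = \bigcap_{n} J\dif{n}{A}$ is an intersection of differential powers, and the formation of $k$th differential powers commutes with arbitrary intersections, one computes $\cP_A(J)\dif{k}{A} = \bigl(\bigcap_n J\dif{n}{A}\bigr)\dif{k}{A} = \bigcap_n (J\dif{n}{A})\dif{k}{A}$. The key subpoint is that $(J\dif{n}{A})\dif{k}{A} \supseteq J\dif{n+k-1}{A}$ (applying a differential operator of order $\le k-1$ followed by one of order $\le n-1$ lands inside a composite operator of order $\le n+k-2$), so the intersection over all $n$ of these is again all of $\cP_A(J)$; one also has $(J\dif{n}{A})\dif{k}{A}\subseteq J\dif{n}{A}$ trivially. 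Hence $\cP_A(J)\dif{k}{A} = \cP_A(J)$ for all $k$, and Lemma~\ref{LemmaDidealDifPower} gives that $\cP_A(J)$ is a $D_{R|A}$-ideal.

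For part~(2), if $I$ is a $D_{R|A}$-ideal with $I\subseteq J$, then by Lemma~\ref{LemmaDidealDifPower} we have $I = I\dif{n}{A}$ for all $n$, and since $I\subseteq J$ monotonicity of differential powers in the ideal argument gives $I = I\dif{n}{A}\subseteq J\dif{n}{A}$ for every $n$; intersecting over $n$ yields $I\subseteq \cP_A(J)$. For part~(3), suppose $J$ is prime. By Proposition~\ref{properties-diff-powers}~(iv), each $J\dif{n}{A}$ is $J$-primary (a prime ideal is in particular $J$-primary). An arbitrary intersection of $J$-primary ideals is $J$-primary: the radical of the intersection is $J$, and if $ab\in \cP_A(J)$ with $b\notin J = \sqrt{\cP_A(J)}$, then $ab$ lies in each $J\dif{n}{A}$ and $b\notin J$ forces $a\in J\dif{n}{A}$ for each $n$, so $a\in\cP_A(J)$. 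Taking $J = \m$ shows $\cP_A$ is primary.

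Part~(4) is the payoff and, I expect, the only step needing genuine care. The claim is that $R/\cP_A$ is $D$-simple, i.e. has no proper nonzero $D_{R/\cP_A \,|\, \bar A}$-ideals where $\bar A$ is the image of $A$. The natural approach: a $D_{R/\cP_A|\bar A}$-ideal of $R/\cP_A$ pulls back to an ideal $I$ of $R$ with $\cP_A\subseteq I$, and since $\cP_A$ is itself a $D_{R|A}$-ideal by part~(1), Lemma~\ref{lemma-D-ideal-from-quotient} tells us that if the image of $I$ in $R/\cP_A$ is a $D$-ideal of the quotient then $I$ is a $D_{R|A}$-ideal of $R$. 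Now $I$ is a $D_{R|A}$-ideal contained in $\m$ (here we should note that $I$ is proper, hence contained in the maximal ideal $\m$, unless $I = R$), so by part~(2) applied with $J = \m$ we get $I\subseteq \cP_A(\m) = \cP_A$, whence $I = \cP_A$ and the quotient ideal is zero. The main obstacle to watch is the bookkeeping around Lemma~\ref{lemma-D-ideal-from-quotient}: one must check its hypotheses apply with $R' = R/\cP_A$, $A' = A/(A\cap\cP_A)$, $I = \cP_A$, and $J = $ the pullback of the given quotient ideal, and in particular confirm that a $D_{R'|A'}$-ideal in the sense used there is the same as a $D$-submodule of $R'$ — which it is, by definition. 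With that identification the argument closes.
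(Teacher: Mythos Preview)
Your arguments for parts (1), (2), and (4) are correct and match the paper's approach; your route to (1) via Lemma~\ref{LemmaDidealDifPower} is a minor variant of the paper's direct verification, but it works.

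Part (3), however, has a genuine gap. You assert that an arbitrary intersection of $J$-primary ideals is $J$-primary, and in particular that $\sqrt{\cP_A(J)} = J$. This is false. For instance, when $R$ is a $D_{R|A}$-simple domain and $J = \m$, one has $\cP_A(\m) = (0)$, whose radical is $(0) \neq \m$. More generally, while each $J\dif{n}{A}$ contains $J^n$ and has radical $J$, the intersection over all $n$ need not: there is no single power of an element of $J$ lying in every $J\dif{n}{A}$ unless that element already lies in $\cP_A(J)$. Your zero-divisor computation only shows that if $ab \in \cP_A(J)$ and $b \notin J$ then $a \in \cP_A(J)$; since $\sqrt{\cP_A(J)}$ may be strictly smaller than $J$, this does not rule out zero-divisors $b$ lying in $J \setminus \sqrt{\cP_A(J)}$, and so does not establish primality.

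The paper's argument for (3) is different and uses the $D$-ideal structure rather than a direct primality check. Take any minimal prime $\p$ of $\cP_A(J)$; since $\cP_A(J) \subseteq J$ and $J$ is prime, $\p \subseteq J$. Let $\q$ be the $\p$-primary component of $\cP_A(J)$, so $\cP_A(J) \subseteq \q \subseteq J$. By Lemma~\ref{PropMinimalPrime}, minimal primary components of $D_{R|A}$-ideals are $D_{R|A}$-ideals, so $\q$ is a $D_{R|A}$-ideal contained in $J$, and then part~(2) forces $\q \subseteq \cP_A(J)$. Hence $\q = \cP_A(J)$ is primary. Note this uses Lemma~\ref{PropMinimalPrime}, which carries the implicit hypothesis that the modules $\ModDif{n}{R}{A}$ are finitely presented.
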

\begin{proof}
We proceed by parts.
\begin{enumerate}
\item Let  {$f\in \cP_A(J)$}, and $\delta\in D^n_{R|A}.$
For every $\partial \in  D^m_{R|A},$  $\partial \delta\in D^{m+n}_{R|A}.$
Since $f\in J\dif{m+n}{A}$ for every $m\in\NN$, we have that $\partial\delta\cdot f\in J.$
Then, $\delta f\in J\dif{m}{A}$ for every $m\in\NN.$ Hence,  $\delta \cdot f\in \cP_A(J)$.
\item Let $I$ be a  $D_{R|A}$-ideal with $I\subseteq J$. We have that $I\dif{n}{A}\subseteq J\dif{n}{A}.$ Then,  
$$
I=\bigcap_{n\in\NN} I\dif{n}{A}\subseteq \bigcap_{n\in\NN} J\dif{n}{A}=\cP_A(J),
$$
where the first equality follows from Lemma~\ref{LemmaDidealDifPower}.
\item 
Let $\p$ be a minimal prime of $\cP_A(J)$. Since $J$ is prime, and $\cP_A(J) \subseteq J$, we have that $\p \subseteq J$. Let $\q$ be the $\p$-primary component of $\cP_A(J)$. We can write $\cP_A(J)$ as the intersection of $\q$ with some other primary ideals; in particular $\cP_A(J) \subseteq \q \subseteq J$. By Lemma~\ref{PropMinimalPrime}, $\q$ is a $D_{R|A}$-ideal, and by Part~(\ref{cPAJ-contains}), $\q \subseteq \cP_A(J)$. Thus, $\q = \cP_A(J)$.
\item Suppose on the contrary that there is an ideal of $R/\cP_A$ that is stable under its differential operators. By Lemma~\ref{lemma-D-ideal-from-quotient}, there would then exist a proper $D_{R|A}$-ideal contatining $\cP_A$, which would contradict Part~(\ref{cPAJ-contains}).\qedhere
\end{enumerate}
\end{proof}
\begin{corollary}\label{CorDifPrimeDsimple}
Let $(R,\m)$ be a local ring, and $A$ be a subring. Then,
 $R$ is a simple  $D_{R|A}$-module if and only if $\cP_A=0$.
\end{corollary}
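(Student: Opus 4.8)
The statement to prove is Corollary~\ref{CorDifPrimeDsimple}: $R$ is a simple $D_{R|A}$-module if and only if $\cP_A = 0$, where $\cP_A = \cP_A(\m) = \bigcap_n \m\dif{n}{A}$.

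\medskip

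The plan is to deduce this directly from Proposition~\ref{PropDiffPrime}, applied with $J = \m$. First I would prove the forward direction: suppose $R$ is $D_{R|A}$-simple. By Proposition~\ref{PropDiffPrime}(1), $\cP_A = \cP_A(\m)$ is a $D_{R|A}$-ideal, and since $\cP_A(\m) \subseteq \m$ (because $1 \in D^0_{R|A}$, so $\m\dif{1}{A} \subseteq \m$), it is a proper $D_{R|A}$-ideal of $R$. By $D$-simplicity, the only proper $D_{R|A}$-ideal is $(0)$, hence $\cP_A = 0$.

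\medskip

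For the converse, suppose $\cP_A = 0$. By Proposition~\ref{PropDiffPrime}(4), $R/\cP_A = R/0 = R$ is $D$-simple, which is exactly the conclusion. Alternatively, to keep the argument self-contained one can argue directly: if $I$ is any nonzero $D_{R|A}$-ideal of $R$, then since $(R,\m)$ is local, $I \subseteq \m$, so by Proposition~\ref{PropDiffPrime}(2) (the statement that $\cP_A(\m)$ contains every $D_{R|A}$-ideal contained in $\m$) we get $I \subseteq \cP_A = 0$, a contradiction; hence $R$ has no nonzero proper $D_{R|A}$-ideal, i.e. $R$ is $D_{R|A}$-simple.

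\medskip

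There is essentially no obstacle here: the corollary is a formal consequence of parts (1), (2), and (4) of the preceding proposition, together with the elementary observation $\cP_A(\m) \subseteq \m$. The only point requiring minor care is noting that $R$ itself is never counted among the "proper" $D_{R|A}$-ideals and that $\cP_A$ being proper follows from its containment in $\m$; and in the converse direction, that locality of $R$ forces every proper ideal into $\m$ so that part (2) applies.
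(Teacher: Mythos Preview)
Your argument is correct and essentially the same as the paper's: the paper simply writes that the corollary ``follows immediately from Part~(\ref{cPAJ-contains}) of Proposition~\ref{PropDiffPrime},'' leaving implicit exactly the points you spell out (that $\cP_A$ is a proper $D_{R|A}$-ideal for the forward direction, and that locality forces any proper $D_{R|A}$-ideal into $\m$ for the converse). Your expanded version, including the alternative using part~(4), is just a more explicit rendering of the same idea.
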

\begin{proof}
This follows immediately from Part~(\ref{cPAJ-contains}) of Proposition~\ref{PropDiffPrime}.
\end{proof}

Thus, $D$-simplicity means that for all $f \neq 0$ in $R$ there exists a differential operator $\delta$ such that $\delta(f)$ is a unit $u$. By taking $u^{-1} \delta$, one also finds an operator sending $f$ to $1$.

An ongoing line of research is the comparison of $D_{R|K}$ and the ring generated by the derivations for finitely generated $K$-algebras. We now show that these algebras must differ for rings that are  $D_{R|K}$-simple, but not regular.

\begin{remark}\label{rem:der-simple}
	Let $K$ be a field of characteristic zero, and $R$ be essentially of finite type over $K$. We can consider the subalgebra $\mathscr{D}\subseteq D_{R|K}$ generated by $R$ and the $K$-linear derivations on $R$. The minimal primes of the singular locus of $R$ are stable under each derivation of $R$ \cite[Theorem~5]{Seidenberg}, hence are stable under the action of $\mathscr{D}$. It follows that if $R$ is $D$-simple and $\mathscr{D}= D_{R|K}$, then $R$ is regular. This is a special case of Nakai's conjecture that generalizes other known cases  \cite{Ishibashi}. This approach to Nakai's conjecture is employed in the work of Traves \cite{Traves}.
\end{remark}

\subsection{A Fedder/Glassbrenner-type criterion for $D$-simplicity}\label{FGCriterion}
In this subsection we introduce a similar  criterion for $D$-simplicity, which is
motivated by Glassbrenner's Criterion \cite{Glassbrenner} for strong $F$-regularity,

\begin{lemma}[{\cite[Lemma 1.6]{FedderFputityFsing}}]\label{Fedder} Let $A\subseteq B$ be Gorenstein rings, and suppose that $B$ is a finitely generated free $A$-module.
\begin{enumerate} 
\item[(i)] $\Hom_A(B,A) \cong B$ as $B$-modules.
\item[(ii)] If $\Phi$ is a generator for $\Hom_A(B,A)$ as a $B$-module, $\a \subseteq A$ and $\b \subseteq B$ are ideals, and $x\in B$, then $(x \Phi)(\b) \subseteq \a$ if and only if $x \in (\a B : \b)$.
\end{enumerate}
\end{lemma}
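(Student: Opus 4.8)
The plan is to prove (i) via duality for a finite free ring map, and then deduce (ii) by reducing modulo $\a$. Throughout I would first reduce to the case that $A$ is local (so that $B$, being module-finite over $A$, is semilocal): both assertions of the lemma may be checked after localizing $A$ at an arbitrary maximal ideal, and freeness, the Gorenstein property, and the choice of generator $\Phi$ all behave well under this localization.

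For (i), equip $\Hom_A(B,A)$ with the $B$-module structure $(b\cdot\varphi)(b')=\varphi(bb')$. Since $B$ is module-finite and $A$-free, $\Ext^j_A(B,A)=0$ for $j>0$, and since $A$ is Gorenstein local it is a canonical module for itself; the standard duality for finite homomorphisms then identifies $\Hom_A(B,A)$ with a canonical module of $B$. As $B$ is Gorenstein its canonical module is locally free of rank one, and because $B$ is semilocal we have $\operatorname{Pic}(B)=0$, so this canonical module is in fact free of rank one. Thus there is a generator $\Phi$ and an isomorphism of $B$-modules $B\to\Hom_A(B,A)$, $b\mapsto b\Phi$.

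For (ii), the implication ``$\Leftarrow$'' is immediate: if $x\b\subseteq\a B$, then, using that $\Phi$ is $A$-linear with image in $A$, $(x\Phi)(\b)=\Phi(x\b)\subseteq\Phi(\a B)=\a\,\Phi(B)\subseteq\a$. For ``$\Rightarrow$'' I would pass to $\bar A=A/\a$ and $\bar B=B/\a B$. Since $B$ is $A$-free, $\Hom$ commutes with the base change $-\otimes_A\bar A$; applying this to the isomorphism from (i) gives an isomorphism of $\bar B$-modules $\bar B\to\Hom_{\bar A}(\bar B,\bar A)$, $\bar b\mapsto\bar b\,\bar\Phi$, where $\bar\Phi$ denotes the image of $\Phi$. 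In particular $\Hom_{\bar A}(\bar B,\bar A)$ is $\bar B$-free on $\bar\Phi$. Now $(x\Phi)(\b)\subseteq\a$ says exactly that $\bar\Phi(\overline{xb})=0$ for every $b\in\b$; since $\b$ is an ideal of $B$ one may replace $b$ by $bc$ for arbitrary $c\in B$, so $\bar\Phi(\overline{xb}\cdot\bar c)=0$ for all $\bar c\in\bar B$, i.e. $\overline{xb}\cdot\bar\Phi=0$ in $\Hom_{\bar A}(\bar B,\bar A)$. Freeness of that module on $\bar\Phi$ forces $\overline{xb}=0$, i.e. $xb\in\a B$; letting $b$ range over $\b$ gives $x\b\subseteq\a B$, that is, $x\in(\a B:\b)$.

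The main obstacle is part (i): it requires the correct form of duality for finite (free) ring maps, together with the observation that ``$B$ Gorenstein'' only pins down the canonical module up to a line bundle — which is precisely why the reduction to $A$ local, forcing $B$ semilocal and $\operatorname{Pic}(B)=0$, is needed. Granting a generator $\Phi$ of $\Hom_A(B,A)$, part (ii) is essentially formal; the only point requiring a little care is the use of the ideal property of $\b$ to upgrade the pointwise vanishing $\bar\Phi(\overline{xb})=0$ to the vanishing of $\overline{xb}\cdot\bar\Phi$ inside $\Hom_{\bar A}(\bar B,\bar A)$.
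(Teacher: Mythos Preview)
The paper does not give a proof of this lemma; it simply cites Fedder's original paper. So there is no ``paper's own proof'' to compare against.

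Your argument is essentially correct and is the standard one. A small caution: your reduction to the local case is fine for part~(ii) (ideal membership is local), but for part~(i) it is not quite complete as written. Knowing that $\Hom_A(B,A)\cong B$ after localizing at every maximal ideal of $A$ only tells you that $\Hom_A(B,A)$ is an invertible $B$-module, not that it is globally free; you would still need $\operatorname{Pic}(B)=0$, which you only obtain after the reduction, not before. In the paper's applications $A$ is already local (it is $S=K[x_1,\dots,x_d]_\m$ in Setup~\ref{fin-type}), so this subtlety does not arise there, and your proof in the local case is what is actually needed. Your proof of (ii), reducing modulo $\a$ and using that $\b$ is an ideal to upgrade pointwise vanishing to vanishing of $\overline{xb}\cdot\bar\Phi$, is clean and correct.
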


\begin{setup}\label{fin-type}
Let $K$ be a field, $K[x_1,\dots,x_d]$ be a polynomial ring, and $\m=(x_1,\dots,x_d)$.
Let $R=S/I$, where $S=K[x_1,\dots,x_d]_{\m}$.   {Then $\ModDif{}{S}{K} = W^{-1}K[x_1,\dots,x_d,\tilde{x}_1,\dots,\tilde{x}_d]$, where $W$ is the multiplicative set given by the product of $R\setminus \m$, and the corresponding set when $x_1,\ldots,x_d$ are replaced for $\tilde{x}_1,\ldots,\tilde{x}_d$}. Set $\Delta^{[n]}_{S|K}=((x_1-\tilde{x}_1)^n,\dots,(x_d-\tilde{x}_d)^n)$, and $\ModDif{[n]}{S}{K}=\ModDif{}{S}{K}/\Delta^{[n]}_{S|K}$. Then $\ModDif{}{R}{K}$ is naturally a quotient of $\ModDif{}{S}{K}$, and we write $\Delta^{[n]}_{R|K}, \ModDif{[n]}{R}{K}$ for the image of $\Delta^{[n]}_{S|K}, \ModDif{[n]}{S}{K}$ under this quotient map. By abuse of notation, we use $d$ for the universal differential in various of these settings.
\end{setup}

In the context of Setup~\ref{fin-type}, for an ideal ${J}$ of $R$ we define
\[J^{\Fdif{n}} := \{ r \in R \ | \ \delta(r)\in I \ \text{for all} \ \delta\in D^{[n]}_{R|K}\} \]\index{$I^{\Fdif{n}}$}
where $D^{[n]}_{R|K}$ is the set of $\Delta^{[n]}_{R|K}$-differential operators of $R$. This definition depends not only on ${J}$, but also on the presentation of $R$. We  revisit this definition in Section~\ref{five}.

\begin{proposition}\label{fedformula} In the context of Setup~\ref{fin-type}, let $K$ be a field and let $J$ be an ideal of $R$, and $J'$ be the preimage of $J$ in $S$. Then $$J^{\Fdif{n}}=\IM\left(\displaystyle \big( d(J') \ModDif{}{S}{K} + \Delta^{[n]}_{S|K} \big)  :_{\ModDif{}{S}{K}} \Big( \big( d(I)  \ModDif{}{S}{K} + \Delta^{[n]}_{S|K} \big) :_{\ModDif{}{S}{K}}   I  \ModDif{}{S}{K}   \Big)\right)$$
and
$$J\dif{n}{K}=\IM\left({\displaystyle \big( d(J') \ModDif{}{S}{K} + \Delta^{[n]}_{S|K} \big)  :_{\ModDif{}{S}{K}} \Big( \big( d(I)  \ModDif{}{S}{K} + \Delta^{[n]}_{S|K} \big) :_{\ModDif{}{S}{K}}   \big( I  \ModDif{}{S}{K} + \Delta^n_{S|K} \big)  \Big)}\right),$$ where the images are taken  in $R$.
\end{proposition}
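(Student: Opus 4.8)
The plan is to translate both differential-power constructions into colon-ideal computations in the ambient "big" ring $\ModDif{}{S}{K}$ using the representation of differential operators via modules of principal parts (Proposition~\ref{representing-differential}) together with Fedder's duality lemma (Lemma~\ref{Fedder}). The first step is to recall that, by Setup~\ref{fin-type} and Proposition~\ref{representing-differential}, a $\Delta^{[n]}_{R|K}$-differential operator on $R$ corresponds to an $R$-linear functional on $\ModDif{[n]}{R}{K}$, i.e.\ to an element of $\Hom_{R}(\ModDif{[n]}{R}{K}, R)$, and similarly an honest differential operator of order $\le n-1$ corresponds to a functional on $\ModDif{n}{R}{K}$. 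Since $\ModDif{[n]}{R}{K}$ is the quotient of the free-over-$R$ module $\ModDif{[n]}{S}{K} = \ModDif{}{S}{K}/\Delta^{[n]}_{S|K}$ by the submodule generated by $d(I)$, an $R$-functional on it is the same as a $\ModDif{[n]}{S}{K}$-functional annihilating $d(I)$; and $\ModDif{[n]}{S}{K}$ is a free $S$-module (a complete intersection quotient of a polynomial ring by $(x_i-\tilde x_i)^n$), so Lemma~\ref{Fedder}(i) applies: $\Hom_S(\ModDif{[n]}{S}{K},S)\cong \ModDif{[n]}{S}{K}$ via a free generator $\Phi$.

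**Key steps.** First I would fix the free $S$-generator $\Phi$ of $\Hom_S(\ModDif{[n]}{S}{K},S)$ coming from the Gorenstein pairing, so that every differential operator in $D^{[n]}_{R|K}$ lifts to an element $x\Phi$ of $\Hom_S(\ModDif{[n]}{S}{K},S)$ with $x\in \ModDif{}{S}{K}$. The condition that this operator is well-defined on $R$ (descends through $d(I)$) is, by Lemma~\ref{Fedder}(ii), precisely $x\in \big(d(I)\ModDif{}{S}{K}+\Delta^{[n]}_{S|K} \big) :_{\ModDif{}{S}{K}} I\ModDif{}{S}{K}$ — note one must track that $d(I)$ as a submodule of $\ModDif{[n]}{S}{K}$ is the image of $I\ModDif{}{S}{K}$, which is why the inner colon is against $I\ModDif{}{S}{K}$. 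Next, $r\in R$ lies in $J^{\Fdif{n}}$ iff every such $\delta=x\Phi$ sends $r$ into $I$, i.e.\ $(x\Phi)(d(r)\cdot(\text{something}))\in J'S$; unwinding, this says $x$ must carry the cyclic submodule $d(J')\ModDif{}{S}{K}$ generated by $d(r)$ into $J'$, which by another application of Lemma~\ref{Fedder}(ii) is the outer colon $\big(d(J')\ModDif{}{S}{K}+\Delta^{[n]}_{S|K}\big):_{\ModDif{}{S}{K}}(\cdots)$. Taking the image in $R$ of the set of all lifts gives the claimed formula. For the second formula, the only change is that ordinary differential operators of order $\le n-1$ on $R$ correspond to functionals on $\ModDif{n}{R}{K}=\ModDif{}{R}{K}/\Delta^n_{R|K}$, and $\ModDif{n}{S}{K}=\ModDif{}{S}{K}/\Delta^n_{S|K}$ is again free over $S$ and Gorenstein — but now the lift of an operator must additionally kill $\Delta^n_{S|K}$, which is why the innermost colon is taken against $I\ModDif{}{S}{K}+\Delta^n_{S|K}$ rather than just $I\ModDif{}{S}{K}$. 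I would set up both cases in parallel and only remark on the discrepancy.

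**Main obstacle.** The delicate point is the bookkeeping of which module is "free and Gorenstein over which ring," and correctly identifying the submodules $d(I)$, $d(J')$ inside $\ModDif{[n]}{S}{K}$ with the expanded ideals $I\ModDif{}{S}{K}$, $d(J')\ModDif{}{S}{K}$ — in particular that $\ModDif{n}{R}{K}$ (resp.\ $\ModDif{[n]}{R}{K}$) is the cokernel of $I\ModDif{}{S}{K}\to \ModDif{n}{S}{K}$, not of $I$ acting some other way, and that $\ModDif{[n]}{S}{K}$ really is $S$-free of the right rank so that Lemma~\ref{Fedder} is applicable. One also has to be careful that $\Hom_R(\ModDif{n}{R}{K},R)$ is computed as $\Hom_S$-into-$S$ landing in the subring $R$, so the "image in $R$" at the end is genuinely needed and not a triviality; I expect verifying compatibility of the two colon operations through the non-flat base change $S\to R$ to be where the real care is required, while everything downstream is a direct two-fold application of Fedder's lemma.
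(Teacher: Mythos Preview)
Your overall architecture matches the paper's: represent operators via Proposition~\ref{representing-differential}, note that $\ModDif{[n]}{S}{K}$ is free and Gorenstein over $S$, and apply Lemma~\ref{Fedder} twice. But your bookkeeping of the two $S$-structures on $\ModDif{}{S}{K}$ is off, and this hides the one genuine step you are missing.

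Concretely: the kernel of $\ModDif{[n]}{S}{K}\to \ModDif{[n]}{R}{K}$ is $I\ModDif{[n]}{S}{K} + d(I)\ModDif{[n]}{S}{K}$, so Fedder gives that $x\Phi$ descends to $\Hom_R(\ModDif{[n]}{R}{K},R)$ iff $x$ lies in the colon of $I\ModDif{[n]}{S}{K}$ by $d(I)\ModDif{[n]}{S}{K}$, not the other way around. (Your sentence ``$d(I)$ as a submodule of $\ModDif{[n]}{S}{K}$ is the image of $I\ModDif{}{S}{K}$'' conflates the left and right actions; $d(I)$ generates $d(I)\ModDif{}{S}{K}$, which is different from $I\ModDif{}{S}{K}$.) Likewise, the second application of Fedder --- via the commutativity trick $(x\Phi)(d(f)) = (d(f)\Phi)(x)$ --- yields the condition $d(f)\in \big(J'\ModDif{}{S}{K}+\Delta^{[n]}\big):W$, again with the left copy of $J'$, not $d(J')$. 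So what drops out of your two Fedder steps is the statement with every $I$ and $d(I)$, $J'$ and $d(J')$, interchanged, and with the element $d(f)$ rather than $f$. The paper's final move is to apply the swap involution $a\otimes b\mapsto b\otimes a$ of $\ModDif{}{S}{K}$: this fixes $\Delta^{[n]}_{S|K}$ and $\Delta^{n}_{S|K}$, exchanges $I\leftrightarrow d(I)$ and $J'\leftrightarrow d(J')$, and turns ``$d(f)\in\cdots$'' into ``$f$ lies in the image in $R$ of $\cdots$''. That swap is what produces the formula as stated, and it is absent from your sketch.

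One smaller correction for the second formula: $\ModDif{n}{S}{K}\cong S[z_1,\dots,z_d]/(z_1,\dots,z_d)^{n}$ is \emph{not} Gorenstein over $S$ when $d>1$, so you cannot run Fedder there directly. This is precisely why the paper (and, implicitly, you) still works inside $\ModDif{[n]}{S}{K}$ and records the extra condition ``kill $\Delta^{n}_{S|K}$'' by enlarging the inner colon to $I\ModDif{}{S}{K}+\Delta^{n}_{S|K}$; just drop the Gorenstein claim and your handling of that case is fine.
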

\begin{proof}
For the first part, by Proposition~\ref{representing-differential}, we have that $\delta(f)\in J$ for all $\delta\in D^{[n]}_{R|K}$  if and only if  for every ${\phi\in \Hom_R( \ModDif{[n]}{R}{K}, R)}$, we have $\phi( {d(f)})\in J$. 
We can write 
$${\ModDif{[n]}{R}{K} = \ModDif{[n]}{S}{K} / \big( I\ModDif{[n]}{S}{K} + d(I)\ModDif{[n]}{S}{K}\big)}.$$ Since $\ModDif{[n]}{S}{K}$ is Gorenstein and free over $S$, Lemma~\ref{Fedder} applies. If $\Phi$ is a generator for $\Hom_S(\ModDif{[n]}{S}{K},S)$, then \[(r\Phi)\big( I\ModDif{[n]}{S}{K} + d(I)\ModDif{[n]}{S}{K}\big) \subseteq I\]
 if and only if 
\[ r \in \big(  I\ModDif{[n]}{S}{K} :_{\ModDif{[n]}{S}{K}} \big( I\ModDif{[n]}{S}{K} + d(I)\ModDif{[n]}{S}{K}\big) \big)=: W,\]
 so $\Hom_R( \ModDif{[n]}{R}{K}, R)$ consists of images of maps $r \Phi$, with $r \in W$. Then $\overline{(r \Phi)}(\overline{d(f)})\in J$ if and only if $\Phi(r \cdot d(f) ) \in J' $. 

Thus, $f\in J^{\Fdif{n}}$  is equivalent to $\Phi(d(f) \cdot  W)\subseteq J'$, which in turn is equivalent to $(d(f) \cdot \Phi)(W) \subseteq J'$. Applying Lemma~\ref{Fedder} again, $d(f)$ satisfies this condition if and only if $d(f) \in \big( J' \ModDif{[n]}{S}{K}:_{\ModDif{[n]}{S}{K}} W \big)$. By taking preimages in $\ModDif{}{S}{K}$, and simplifying, this occurs if and only if
\[ d(f) \cdot \Big( \big( I  \ModDif{}{S}{K} + \Delta^{[n]}_{S|K} \big) :_{\ModDif{}{S}{K}}   d(I)  \ModDif{}{S}{K}   \Big) \not\subseteq J' \ModDif{}{S}{K} + \Delta^{[n]}_{S|K}. \]
By switching the left inclusion $S \rightarrow \ModDif{}{S}{K}$ and the right inclusion $d: S \rightarrow \ModDif{}{S}{K}$, one obtains the statement of the theorem.

The proof of the second part proceeds similarly. We note that $\Delta^{[n]}_{S|K}$ is contained in $\Delta^{n}_{S|K}$, so every element of $H:=\Hom_S(\ModDif{n}{S}{K},S)$ is the image of some element of $\Hom_S(\ModDif{[n]}{S}{K},S)$. In particular, if $\Phi$ is a generator of the latter module, as in the proof of the first part,
\[(r\Phi)\big( I\ModDif{[n]}{S}{K} + d(I)\ModDif{[n]}{S}{K} + \Delta^n_{S|K}\ModDif{[n]}{S}{K} \big) \subseteq I\]
 if and only if 
\[ r \in \big(  I\ModDif{[n]}{S}{K} :_{\ModDif{[n]}{S}{K}} \big( I\ModDif{[n]}{S}{K} + d(I)\ModDif{[n]}{S}{K} + \Delta^n_{S|K}\ModDif{[n]}{S}{K}\big) \big).\]
The rest is analogous to the previous part.
\end{proof}

 {
We are now ready to state the  criterion for $D$-simplicity.
}

\begin{theorem}\label{criterion}  In the context of Setup~\ref{fin-type},  $R$ is $D_{R|K}$-simple if and only if for every $f \in S$ whose image is nonzero in $R$, there is some $n$ such that
\[ f \cdot \Big( \big( d(I)  \ModDif{}{S}{K} + \Delta^{[n]}_{S|K} \big) :_{\ModDif{}{S}{K}}   I  \ModDif{}{S}{K}   \Big) \not\subseteq \m^{[n]} \ModDif{}{S}{K} + d(\m)  \ModDif{}{S}{K} \quad \text{in}  \  \ModDif{}{S}{K}. \]
\end{theorem}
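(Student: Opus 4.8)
The plan is to reduce the statement to the Fedder-type formula for differential powers established in Proposition~\ref{fedformula}, passing through the characterization of $D$-simplicity via the differential core.

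First I would reformulate $D$-simplicity. By Corollary~\ref{CorDifPrimeDsimple}, $R$ is $D_{R|K}$-simple if and only if $\cP_K=\bigcap_n\m\dif{n}{K}=0$; since $\m\dif{n}{K}=\{r\in R\mid D^{n-1}_{R|K}r\subseteq\m\}$ and $D_{R|K}=\bigcup_n D^{n-1}_{R|K}$, this says exactly that for every $f\in S$ whose image $\bar f$ in $R$ is nonzero there is a differential operator $\delta$ with $\delta(\bar f)\notin\m$. I would then replace the order filtration by the bracket ideals $\Delta^{[n]}_{R|K}$ (images of $\Delta^{[n]}_{S|K}$) and the corresponding operator sets $D^{[n]}_{R|K}$: since $\Delta_{R|K}$ is generated by the images of the $d$ elements $x_i-\tilde{x}_i$, one has $\Delta^{d(n-1)+1}_{R|K}\subseteq\Delta^{[n]}_{R|K}\subseteq\Delta^{n}_{R|K}$, so the $\Delta^{[n]}_{R|K}$ are cofinal with the powers of $\Delta_{R|K}$, and Lemma~\ref{I-diff-ops}(ii) gives $D_{R|K}=\bigcup_n D^{[n]}_{R|K}$. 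Consequently $R$ is $D_{R|K}$-simple if and only if for every $f$ with $\bar f\neq 0$ there is some $n$ with $\bar f\notin\m^{\Fdif{n}}$.

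It remains to translate the membership $\bar f\in\m^{\Fdif{n}}$. I would invoke Proposition~\ref{fedformula} with $J=\m$, whose preimage $J'$ in $S$ is the maximal ideal $\m$ of $S$ (as $I\subseteq\m$). The chain of equivalences in the proof of that proposition shows that, for $f\in S$ regarded in the left copy of $S$ inside $\ModDif{}{S}{K}$,
\[
\bar f\in\m^{\Fdif{n}}\iff f\cdot\Big(\big(d(I)\ModDif{}{S}{K}+\Delta^{[n]}_{S|K}\big):_{\ModDif{}{S}{K}} I\ModDif{}{S}{K}\Big)\subseteq d(\m)\ModDif{}{S}{K}+\Delta^{[n]}_{S|K}.
\]
Negating this and combining with the previous paragraph gives the theorem, once one observes that $\m^{[n]}\ModDif{}{S}{K}+d(\m)\ModDif{}{S}{K}=\Delta^{[n]}_{S|K}+d(\m)\ModDif{}{S}{K}$ (here $\m^{[n]}=(x_1^n,\dots,x_d^n)$), which holds because $x_i\equiv x_i-\tilde{x}_i$ modulo $d(\m)\ModDif{}{S}{K}$.

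The only genuinely delicate point lies inside Proposition~\ref{fedformula}, not in the present argument: converting the statement that $\bar f$ lies in the ideal $\m^{\Fdif{n}}$ of $R$ into the colon condition in $\ModDif{}{S}{K}$. That conversion uses the Gorenstein-duality Lemma~\ref{Fedder} applied to the finite free extension $S\hookrightarrow\ModDif{[n]}{S}{K}$ (twice, because of the nested colon), together with the involution of $\ModDif{}{S}{K}$ that exchanges the two copies of $S$ and hence swaps $I\ModDif{}{S}{K}\leftrightarrow d(I)\ModDif{}{S}{K}$ and $\m\ModDif{}{S}{K}\leftrightarrow d(\m)\ModDif{}{S}{K}$. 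Granting Proposition~\ref{fedformula}, what remains here---checking the cofinality of the bracket powers and negating the formula for $J=\m$---is routine.
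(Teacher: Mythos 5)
Your proposal is correct and follows essentially the same route as the paper's proof: characterize $D$-simplicity as ``for every nonzero $f$ there is a $\delta\in D_{R|K}$ with $\delta(f)\notin\m$,'' pass to the bracket-power filtration via the cofinality in Lemma~\ref{I-diff-ops}(ii) to reduce to $\bar{f}\notin\m^{\Fdif{n}}$ for some $n$, apply Proposition~\ref{fedformula} with $J=\m$, and finish with the identity $\m^{[n]}\ModDif{}{S}{K}+d(\m)\ModDif{}{S}{K}=\Delta^{[n]}_{S|K}+d(\m)\ModDif{}{S}{K}$. The only cosmetic difference is that you pass through the differential core (Corollary~\ref{CorDifPrimeDsimple}) before arriving at the same reformulation, whereas the paper states it directly.
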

\begin{proof} We note first that $R$ is $D_{R|K}$-simple if and only if for every nonzero element $r$ of $R$, there is a differential operator $\delta$ such that $\delta(r)=1$; equivalently, $\delta(r)\notin \m$.
By Lemma~\ref{I-diff-ops}, $\delta\in \Hom_K(R,R)$ is an element of $D_{R|K}$ if and only if $\delta$ is a $\Delta^{[n]}_{R|K}$-differential operator for some $n$. Thus, $R$ is $D_{R|K}$-simple if and only if for every nonzero $f\in R$, there is some $n$ such that $f\notin \m R^{\Fdif{n}}$. The theorem then follows from Proposition~\ref{fedformula} and the observation that $d(\m)\ModDif{}{S}{K} + \Delta^{[n]}_{S|K} = \m^{[n]} \ModDif{}{S}{K} + d(\m)$.
\end{proof}

%%%%%%%%%%%%%%%%%%%%%%%%%%%%%%%%%%%%%%%%%%%%%%%%
\section{Differential signature}
%%%%%%%%%%%%%%%%%%%%%%%%%%%%%%%%%%%%%%%%%%%%%%%

In this section, we introduce our main object of study. As noted in the introduction, there are multiple definitions for the differential signature, that all agree in the case of an algebra with a pseudocoefficient field or complete local ring; each of these definitions provides different insights into this limit. Our first goal below is to establish the equivalence of the definitions. We then proceed to collect some of the basic properties of differential signature.

%%%%%%%%%%%%%%%%%%%%%%%%%%%%%%%%%%%%%%%
\subsection{Differential signature}

\begin{definition}
Let $(R,\m)$ be a local ring, $A$ be a subring, and let $d=\dim(R)$.
We define the \textit{differential signature}\index{differential signature}\index{$\dm{A}{R}$} of $R$ by
\[
\dm{A}(R)=\limsup\limits_{n\to\infty}\frac{\lambda_R(R/\m\dif{n}{A})}{n^d / d!} .
\]
\end{definition}

\begin{example}\label{reg-1}
Let $K$ be a field, and $(R,\m)$ be a graded or local ring of dimension $d$, essentially of finite type and smooth over $K$. Then, by Example~\ref{diff-powers-regular}, $\m\dif{n}{K}=\m^n$ for all $n>0$. Now, we compute
\[ \dm{K}(R)=\limsup_{n\rightarrow\infty}\frac{\lambda_R(R/\m\dif{n}{K})}{n^d/d!} = \limsup_{n\rightarrow\infty}\frac{\lambda_R(R/\m^n)}{n^d/d!} =e(R)= 1.\]
\end{example}

\begin{example}\label{cubic}
Let $R=\CC[x,y,z]/(x^3+y^3+z^3)$. Then $\m\dif{n}{\CC}=\m$ for all $n>0$. Indeed, one always has that $\m\dif{n}{\CC} \subseteq \m$  for each $n$. If $f\in \m$ is homogeneous, and $\delta\in D^{n-1}_{R|\CC}$ is homogenous, $\deg(\delta(f)) > \deg(f)>0$, by Example~\ref{example-BGG}, so $\delta(f)\in \m$. Again by Example~\ref{example-BGG}, $D_{R|\CC}$ is graded, so the previous computation implies that $\m\subseteq \m\dif{n}{\CC}$ as well.

Now, we compute
\[ \dm{\CC}(R)=\limsup_{n\rightarrow\infty}\frac{\lambda_R(R/\m\dif{n}{\CC})}{n^2/2!} = \limsup_{n\rightarrow\infty}\frac{\lambda_R(R/\m)}{n^2/2!}= \limsup_{n\rightarrow\infty}\frac{1}{n^2/2} =0.\]
\end{example}

In Theorem~\ref{Possiganeg},  we generalize this example to show that for cones over smooth curves of genus $\geq 1$ the differential signature is always $0$.

%%%%%%%%%%%%%%%%%%%%%%%%%%%%%%%%%%%%%%%%%%%%
\subsection{Principal parts signature}

We now proceed to define a signature in terms of the free ranks of the modules of principal parts. We recall that the \emph{free rank}\index{free rank} of a module $M$ is the maximal rank $a$ of a free summand in any direct sum decomposition $M=R^{a}\oplus M'$.

\begin{definition}
	Let $A\subseteq R$ be a map of rings that is essentially of finite type. The \emph{principal parts signature}\index{principal parts signature}\index{$\pps{K}(R)$} of $R$ over $A$ is
	\[\pps{K}(R):=\limsup_{n \rightarrow \infty} \frac{\frk_R(\ModDif{n}{R}{A})}{\mathrm{rank}_R(\ModDif{n}{R}{A})}.\]
\end{definition}

The rank of $\ModDif{n}{R}{A}$ may not always be defined; we say that the principal part signature is not defined in this case. Of course, this is not an issue when $R$ is a domain.

To work with this definition, we use the following two characterizations of free rank.

\begin{lemma}\label{freerankinterpretation}\label{freerank-conditions} Let $(R,\m,\kk)$ be local or graded and $M$ be a finitely generated $R$-module. 
	\begin{enumerate}
		\item
		Define a submodule
		\[ \NF{M} := \{ m \in M \ | \ \forall \phi \in \Hom_R(M,R), \ \phi(m) \in \m \}. \]\index{$\NF{M}$}
		Then, one has that $\mathrm{freerank}_R(M)=\lambda_R(M/\NF{M}) =\dim_{\kk} (M/\NF{M}) $.
		
		\item
		Consider the short exact sequence of $R$-modules,
		\[ 0 \longrightarrow \operatorname{Hom}_R(M, {\mathfrak m} ) \longrightarrow  \operatorname{Hom}_R(M,R )  \longrightarrow  Q \longrightarrow 0  . \]
		Then the free rank of $M$ is the same as the $\kk$-dimension of the quotient $Q$.
	\end{enumerate}
\end{lemma}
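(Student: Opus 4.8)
The plan is to prove part (1) first and then deduce part (2) from it, since $Q$ in (2) is visibly a quotient that will turn out to be $M/\NF{M}$. For part (1), the two asserted equalities $\mathrm{freerank}_R(M)=\lambda_R(M/\NF{M})=\dim_\kk(M/\NF{M})$ split naturally: the second equality is automatic once we show $M/\NF{M}$ is a $\kk$-vector space, i.e. that $\m\cdot M\subseteq \NF{M}$, which is immediate because for any $\phi\in\Hom_R(M,R)$ and $m\in M$ one has $\phi(\m m)=\m\phi(m)\subseteq\m$. So the content is the first equality. First I would show $\mathrm{freerank}_R(M)\leq \dim_\kk(M/\NF{M})$: given a decomposition $M=R^a\oplus M'$, let $\pi_i\colon M\to R$ be the $i$-th coordinate projection and $e_i$ the $i$-th basis element of the free summand; then no nontrivial $\kk$-combination of the images $\bar e_i$ in $M/\NF{M}$ can vanish, because if $\sum c_i e_i\in\NF{M}$ with some $c_i$ a unit (lift the $c_i\in\kk$ to $R$, WLOG one is a unit by clearing denominators / using that $\kk$ is a field and scaling), then $\pi_j(\sum c_i e_i)=c_j\notin\m$ for that index, contradicting membership in $\NF{M}$. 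Hence the $\bar e_i$ are $\kk$-linearly independent, giving the inequality.

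For the reverse inequality $\mathrm{freerank}_R(M)\geq \dim_\kk(M/\NF{M})$, set $t=\dim_\kk(M/\NF{M})$ and pick $m_1,\dots,m_t\in M$ whose images form a $\kk$-basis of $M/\NF{M}$. The goal is to produce $\phi_1,\dots,\phi_t\in\Hom_R(M,R)$ with $\phi_i(m_j)=\delta_{ij}$, which then yields a surjection $M\onto R^t$ via $m\mapsto(\phi_1(m),\dots,\phi_t(m))$ that splits $m_j\mapsto e_j$, hence a free summand of rank $t$. To build the $\phi_i$: for each $j$, since $m_j\notin\NF{M}$, there is $\psi_j\in\Hom_R(M,R)$ with $\psi_j(m_j)\notin\m$, i.e. $\psi_j(m_j)$ a unit; rescaling, $\psi_j(m_j)=1$. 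Now I want to correct the off-diagonal entries. Consider the matrix $A=(\psi_i(m_j))_{i,j}\in M_t(R)$; modulo $\m$ it is... not yet the identity, only with $1$'s on the diagonal. The cleaner route is: the reductions $\bar m_j$ being a $\kk$-basis means the map $\overline{A}\colon \kk^t\to\kk^t$ — wait, I need the $\psi_i\bmod\m$ to separate the $\bar m_j$. Since the $\bar m_j$ span $M/\NF{M}$ and $\NF{M}=\bigcap_\phi\ker(\phi\bmod\m)$ (as $\phi(m)\in\m\iff\phi\bmod\m$ kills $\bar m$), the induced functionals $\{\bar\phi\colon M/\NF{M}\to\kk\}$ over all $\phi$ separate points, hence span the full dual $(M/\NF{M})^\vee$; choose $\phi_1,\dots,\phi_t$ whose reductions form the dual basis to $\bar m_1,\dots,\bar m_t$. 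Then $A=(\phi_i(m_j))$ satisfies $A\equiv I_t\bmod\m$, so $\det A$ is a unit in $R$ (here I use that $R$ is local, or in the graded case that $\det A\in 1+\m$ is a homogeneous unit since $\m$ is the irrelevant ideal and $R_0=\kk$ is a field), hence $A$ is invertible over $R$; replacing $(\phi_i)$ by $A^{-1}(\phi_i)$ gives $\phi_i(m_j)=\delta_{ij}$, completing part (1).

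For part (2): apply $\Hom_R(M,-)$ to $0\to\m\to R\to\kk\to 0$ is not quite the given sequence, so instead note the given sequence simply defines $Q=\Hom_R(M,R)/\Hom_R(M,\m)$, and $\Hom_R(M,\m)=\{\phi\colon \IM(\phi)\subseteq\m\}$. Then I would exhibit a perfect pairing between $Q$ and $M/\NF{M}$ over $\kk$: the pairing $\Hom_R(M,R)\times M\to R\to\kk$, $(\phi,m)\mapsto\overline{\phi(m)}$, kills $\Hom_R(M,\m)$ in the first slot (by definition) and kills $\NF{M}$ in the second slot (also by definition of $\NF{M}$), so it descends to $Q\times(M/\NF{M})\to\kk$; its left and right radicals are exactly $0$ by construction (an element of $Q$ pairing to zero with all of $M$ lies in $\Hom_R(M,\m)$; an element of $M$ pairing to zero with all $\phi$ lies in $\NF{M}$). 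Both spaces are finite-dimensional over $\kk$ ($M/\NF{M}$ by part (1), and then $Q$ by the pairing), so a nondegenerate pairing forces $\dim_\kk Q=\dim_\kk(M/\NF{M})=\mathrm{freerank}_R(M)$. The main obstacle I anticipate is the passage from "there exist functionals with the right reductions mod $\m$" to "an $R$-matrix congruent to $I_t\bmod\m$, hence invertible": one must be careful that $R$ is local (or argue the graded analogue via $R_0=\kk$ and homogeneity of $\det A$), and that scaling a functional to send a given element to $1$ is legitimate — both are routine but are the only places the hypotheses on $(R,\m,\kk)$ are genuinely used.
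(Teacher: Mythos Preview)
Your argument is correct, but it differs from the paper's in an interesting way. The paper does not prove part~(1) at all---it simply cites it as well known---and proves part~(2) independently by a direct decomposition: writing $M=F\oplus N$ with $F\cong R^s$ a maximal free summand, it computes $Q\cong \kk^s\oplus \Hom_R(N,R)/\Hom_R(N,\m)$ and then observes that if $N$ admits no surjection onto $R$, the second summand vanishes. Your route instead supplies a full proof of~(1) via a Nakayama/dual-basis construction (the step $A\equiv I_t\bmod\m\Rightarrow A\in\mathrm{GL}_t(R)$), and then deduces~(2) by exhibiting a perfect $\kk$-pairing $Q\times(M/\NF{M})\to\kk$. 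Your approach is more self-contained and makes the relationship between $Q$ and $M/\NF{M}$ explicit (they are $\kk$-duals), whereas the paper's argument for~(2) is shorter but relies on~(1) only implicitly, through the existence of a maximal free summand. Either method works; yours has the advantage of clarifying that $Q\cong(M/\NF{M})^\vee$ canonically, which is a sharper statement than the mere equality of dimensions.
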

\begin{proof}
	The first part is well known {\cite[Discussion~6.7]{CraigSurvey}}. We now focus on the second part. The quotient $Q$ is a module over $\kk$. Let $M = F \oplus N$ with a free  module $F \cong R^s$. We have
	\[ \operatorname{Hom}_{ R } \left( M , \m \right) 	\cong \operatorname{Hom}_{ R } \left( F , \m \right) \oplus \operatorname{Hom}_{ R } \left( N , \m \right)  \]
	and
	\[ \operatorname{Hom}_{ R } \left( M , R \right) 		 \cong \operatorname{Hom}_{ R } \left( F , R \right) \oplus \operatorname{Hom}_{ R } \left( N , R \right)  .\]
	The quotient is
	\[	 \begin{aligned}
	Q & = \left( \operatorname{Hom}_{ R } \left( F , R \right) \oplus \operatorname{Hom}_{ R } \left( N , R \right) \right)/ \left( \operatorname{Hom}_{ R } \left( F , {\mathfrak m} \right) \oplus \operatorname{Hom}_{ R } \left( N , {\mathfrak m} \right) \right)  \\
	& \cong \operatorname{Hom}_{ R } \left( F , R \right) / \operatorname{Hom}_{ R } \left( F , {\mathfrak m} \right) \oplus \operatorname{Hom}_{ R } \left( N , R \right) /\operatorname{Hom}_{ R } \left( N , {\mathfrak m} \right)  \\
	& \cong  R^s/ {\mathfrak m}^{\oplus s} \oplus \operatorname{Hom}_{ R } \left( N , R \right) /\operatorname{Hom}_{ R } \left( N , {\mathfrak m} \right)  \\
	& \cong \kk^s \oplus \operatorname{Hom}_{ R } \left( N , R \right) /\operatorname{Hom}_{ R } \left( N , {\mathfrak m} \right).
	\end{aligned}
	\]
	So the $\kk$-dimension of $Q$ is at least $s$. If $F$ is a submodule where the free rank of $M$ is attained, then $N$ does not have a nontrivial free summand and there is no surjective homomorphism from $N$ to $R$. Hence
	$\operatorname{Hom}_{ R } \left( N , R \right)  =\operatorname{Hom}_{ R } \left( N , \m \right)$ and the right summand is $0$.
\end{proof}

\begin{definition}
	A \emph{unitary differential operator}\index{unitary differential operator} on a local or graded ring $(R,\m)$ is a differential operator $\delta :R \rightarrow R$, such that the image of $\delta$ is not contained in $\m$ {; in the graded case, we assume $\delta$ is graded.}
\end{definition}

 {Suppose that $K\subseteq R$.}
If $R$ contains only the constant units of the base field, this means that the partial differential equation $\delta (f)=1$ has a solution $f \in R$ for some  {$\delta \in D_{R\vert K}$}. For a local  {or graded} ring $R$ and a differential operator $\delta$ on $R$ we  denote in the following the induced differential operator with values in $R/{\mathfrak m}$ by $\delta'$.

\begin{lemma}\label{unitary}
	Let $(R,\m,\kk)$ be a local  {or graded} ring containing a  {coefficient field $K$, so that $K\cong \kk$}. Let $\delta$ be a $K$-linear differential operator from $R$ to $R$ of order at most $n$;  {in the graded case, we assume that $\delta$ is graded}. Let $\delta'$ be the induced operator $R \xrightarrow{\delta'} R \twoheadrightarrow R/\m=\kk$. Then the following are equivalent.
	\begin{enumerate}
\item The $R$-linear map $\delta :P^n_{R|K} \rightarrow R$ is surjective. 
\item There is a unit ( {of degree zero in the graded case}) inside the image of the differential operator $\delta$.
\item  $\delta'$ is surjective.
\item There exists a function $f \in R$ such that $\delta'(f)=1$.
\item  {$\delta$ is unitary.}
\end{enumerate}
\end{lemma}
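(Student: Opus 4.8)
The plan is to establish the chain of equivalences by proving a cycle of implications, together with a direct link through Lemma~\ref{freerank-conditions} (or Proposition~\ref{representing-differential}) for the purely module-theoretic statements. First I would observe that $(2)$, $(5)$ are tautologically the same: ``$\delta$ is unitary'' means precisely that $\IM(\delta) \not\subseteq \m$, and since $R$ is local (or graded with homogeneous maximal ideal) and $\delta$ is graded, an element not in $\m$ is a unit (of degree zero in the graded case). Next, $(2)\Leftrightarrow(3)$: the composite $\delta' : R \to R/\m = \kk$ has image the image of $\IM(\delta)$ in $\kk$, so $\delta'$ is surjective iff $\IM(\delta)\not\subseteq\m$; and $(3)\Leftrightarrow(4)$ is immediate, since surjectivity of a $K$-linear map onto the one-dimensional $\kk$-space means exactly that $1$ is hit, i.e. $\delta'(f)=1$ for some $f\in R$.

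The substantive point is $(1)\Leftrightarrow(2)$. Here I would invoke Proposition~\ref{representing-differential} (equivalently Proposition~\ref{universaldifferential}), which identifies $\delta \in D^n_{R|K}$ with the $R$-linear map $\widehat\delta : P^n_{R|K}\to R$ satisfying $\widehat\delta \circ d^n = \delta$. Since $d^n : R \to P^n_{R|K}$ sends $1 \mapsto \overline{1\otimes 1}$ and $d^n(x) = \overline{1\otimes x}$, the image of $\widehat\delta$ contains $\delta(R) = \IM(\delta)$; conversely, because $P^n_{R|K}$ is generated as an $R$-module by the classes $\overline{1\otimes x}$ for $x\in R$ (indeed by $d^n(1), d^n(x_1),\dots$ in local coordinates), every element of $\IM(\widehat\delta)$ is an $R$-linear combination of elements $\widehat\delta(\overline{r\otimes x}) = r\,\delta(x)$, so $\IM(\widehat\delta) = R\cdot\IM(\delta)$ is the ideal generated by $\IM(\delta)$. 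Hence $\widehat\delta$ is surjective iff the ideal generated by $\IM(\delta)$ is the unit ideal iff $\IM(\delta)\not\subseteq\m$ (using that $R$ is local or $*$local, so the unique maximal (homogeneous) ideal is the only obstruction), which is exactly $(2)$. In the graded case one additionally notes that a graded surjection $P^n_{R|K}\to R$ and the degree-zero part of the argument match up, so the ``unit of degree zero'' bookkeeping is consistent.

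I expect the main obstacle to be purely bookkeeping rather than conceptual: one must be careful that ``$\delta : P^n_{R|K}\to R$'' in statement $(1)$ refers to the $R$-linear representative $\widehat\delta$ of Proposition~\ref{representing-differential}, not to $\delta$ itself, and one must track the grading conventions (the operator $\delta$, the induced map on $P^n_{R|K}$, and the surjectivity statement all need to be interpreted as maps of graded modules of the appropriate degree). A secondary subtlety is confirming that $\IM(\widehat\delta)$ is genuinely the $R$-span of $\IM(\delta)$ and not something smaller — this uses that $P^n_{R|K}$ is generated over $R$ by elements in the image of $d^n$, which is part of the construction of the module of principal parts (as recorded, e.g., in Corollary~\ref{principalpartsrepresentation}, where the generators $e_\lambda$ correspond to $\overline{1\otimes x}$-type classes). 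Once these identifications are in place, each implication is a one-line check, so the proof is short.
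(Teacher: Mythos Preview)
Your proposal is correct and follows essentially the same approach as the paper: both argue that the equivalences among (2)--(5) are immediate, and both reduce (1)$\Leftrightarrow$(2) to the fact that $P^n_{R|K}$ is generated as an $R$-module by the image of $d^n$, so that the image of the associated $R$-linear form is exactly the ideal generated by $\IM(\delta)$. The only cosmetic difference is that the paper cites \cite[Proposition~16.3.8]{EGAIV} for this generation statement (valid in full generality), whereas your reference to Corollary~\ref{principalpartsrepresentation} covers only the essentially-of-finite-type case; the general fact follows directly from the observation that $R\otimes_K R$ is generated over the left copy of $R$ by the elements $1\otimes r$.
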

\begin{proof}
	The equivalence of (2), (3), (4),  {and (5)} is clear. If (2) holds, let $f \in R$ with $\delta(f)=u \notin {\mathfrak m}$. Let $d^n(f) \in \ModDif{n}{R}{K}$ be the image of $f$ under the $n$th universal differential map. Then $\delta (d^n(f))$ is the unit $u$ and since $\delta$ is an $R$-linear form on the module of principal parts it must be surjective. Suppose now that (2) does not hold. Then the image of the differential operator $\delta$ is inside the maximal ideal $\mathfrak m$ of $R$. Since $\ModDif{n}{R}{K}$ is generated as an $R$-module by the images $d^n(f)$, $ f \in R$ \cite[Proposition~16.3.8]{EGAIV}, also the image of $\delta$ considered as a linear form on $\ModDif{n}{R}{K}$ lies inside the maximal ideal, and (1) does not hold.
\end{proof}

Note that there are many surjective differential operators from $R$ to $K$: For example, the $K$-valued derivation space $\operatorname{Der}_R(R,K)$ is under certain conditions just the tangent space. But such an operator is in general not a unitary differential operator, for which we require that it comes from an operator with values in $R$.

\begin{lemma}\label{unitarysystem}
	Let $(R,\m,\kk)$ be a local ring containing a field $K$. Let $\delta_1 , \ldots, \delta_t$ be differential operators from $R$ to $R$ of order at most $n$. Let $\delta'_i$ be the induced operators $R \xrightarrow{\delta_i} R \twoheadrightarrow R/\m=\kk$. Write $\delta_i=\phi_i \circ d^n$. Then the following are equivalent.
\begin{enumerate}
	\item The $R$-linear map $\Phi=(\phi_1, \ldots, \phi_t) :\ModDif{n}{R}{K} \rightarrow R^t$ is surjective.
\item The $\delta'_i$ are linearly independent over $\kk$, where the vector space structure is given by postmultiplication.
\end{enumerate}
\end{lemma}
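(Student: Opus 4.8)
The plan is to prove this equivalence directly from the definitions and the representing property of the module of principal parts (Proposition~\ref{representing-differential} / Proposition~\ref{universaldifferential}), mirroring closely the proof of Lemma~\ref{unitary} but keeping track of the ``several operators at once'' refinement. First I would record the precise dictionary: writing $\delta_i = \phi_i\circ d^n$ with $\phi_i\in\Hom_R(\ModDif{n}{R}{K},R)$, the tuple map $\Phi=(\phi_1,\dots,\phi_t):\ModDif{n}{R}{K}\to R^t$ is $R$-linear, and since $R$ is local, $\Phi$ is surjective if and only if $\Phi$ is surjective after $\otimes_R\kk$, i.e. if and only if $\overline{\Phi}:\ModDif{n}{R}{K}\otimes_R\kk\to\kk^t$ is surjective, by Nakayama. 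So the whole statement reduces to understanding $\overline{\Phi}$.

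Next I would identify $\overline{\Phi}$ with the $\kk$-linear map determined by the $\delta'_i$. The key point is that the composite $R\xrightarrow{d^n}\ModDif{n}{R}{K}\to\ModDif{n}{R}{K}\otimes_R\kk$ is $K$-linear and, because $\ModDif{n}{R}{K}$ is generated as an $R$-module by the image of $d^n$ (EGA IV, \cite[16.3.8]{EGAIV}, as used in the proof of Lemma~\ref{unitary}), its image spans $\ModDif{n}{R}{K}\otimes_R\kk$ over $\kk$. Each $\phi_i$ modulo $\m$ sends $\overline{d^n(f)}$ to $\overline{\phi_i(d^n(f))}=\overline{\delta_i(f)}=\delta'_i(f)\in\kk$. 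Therefore, under the surjection $R\twoheadrightarrow \ModDif{n}{R}{K}\otimes_R\kk$, the map $\overline{\Phi}$ pulls back to the map $R\to\kk^t$, $f\mapsto(\delta'_1(f),\dots,\delta'_t(f))$. Hence: $\overline{\Phi}$ is surjective $\iff$ the map $f\mapsto(\delta'_1(f),\dots,\delta'_t(f))$ is surjective onto $\kk^t$ $\iff$ the $\kk$-subspace of $\Hom_K(R,\kk)$ spanned by $\delta'_1,\dots,\delta'_t$ has dimension $t$ $\iff$ the $\delta'_i$ are linearly independent over $\kk$. For this last equivalence I would note the standard duality fact: linear functionals $\ell_1,\dots,\ell_t$ on a $\kk$-vector space $V$ are linearly independent iff the map $V\to\kk^t$, $v\mapsto(\ell_1(v),\dots,\ell_t(v))$ is surjective; one direction is clear, and for the other, if the $\ell_i$ were dependent then the image would lie in a proper subspace of $\kk^t$. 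One subtlety to flag: the functionals $\delta'_i$ are a priori only $K$-linear, not $\kk$-linear, on $R$, but the ``vector space structure given by postmultiplication'' in statement (2) is exactly the $\kk$-action making $\Hom$ into a $\kk$-space, and the image $\ModDif{n}{R}{K}\otimes_R\kk$ is genuinely a $\kk$-space on which the induced maps are $\kk$-linear; I would phrase the duality argument there, which is clean.

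Then I would assemble: (1)$\Rightarrow$(2): if $\Phi$ is surjective, $\overline{\Phi}$ is surjective, so $f\mapsto(\delta'_i(f))$ is onto $\kk^t$, forcing the $\delta'_i$ independent. (2)$\Rightarrow$(1): if the $\delta'_i$ are independent, then $\overline{\Phi}$ is surjective (by the duality fact applied on $V=\ModDif{n}{R}{K}\otimes_R\kk$, using that the image of $d^n$ spans), hence by Nakayama $\Phi$ is surjective. In the graded case I would remark, as elsewhere in the paper, that one works with graded $\phi_i$ and the graded version of Nakayama, with no essential change.

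The main obstacle I anticipate is purely bookkeeping rather than conceptual: being careful about the two distinct ``reduction mod $\m$'' operations — reducing the \emph{target} $R^t$ to $\kk^t$ versus tensoring the \emph{source} $\ModDif{n}{R}{K}$ with $\kk$ — and checking they are compatible so that $\overline{\Phi}$ really is the map induced by $(\delta'_1,\dots,\delta'_t)$. Once that compatibility is in hand (it follows formally from $R$-linearity of $\Phi$ and right-exactness of $-\otimes_R\kk$), the argument is essentially the same three-line dance as in Lemma~\ref{unitary}, and the surjectivity-versus-independence step is elementary linear algebra. I do not expect to need finite generation of $\ModDif{n}{R}{K}$ for anything beyond invoking Nakayama, which is harmless here since $\ModDif{n}{R}{K}$ is finitely generated in the settings of interest (or one can argue with $\ModDif{n}{R}{K}\otimes_R\kk$ directly without finiteness).
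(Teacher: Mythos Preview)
Your proposal is correct and follows essentially the same approach as the paper's proof: both reduce the question to the map $\ModDif{n}{R}{K}\otimes_R\kk\to\kk^t$, use that the image of $d^n$ generates $\ModDif{n}{R}{K}$ as an $R$-module to identify this with the evaluation map $f\mapsto(\delta'_1(f),\dots,\delta'_t(f))$, invoke the elementary linear-algebra equivalence between surjectivity of evaluation and independence of functionals, and apply Nakayama for $(2)\Rightarrow(1)$. Your write-up is, if anything, a bit more streamlined in that it treats the two implications symmetrically through a single equivalence, whereas the paper argues $(1)\Rightarrow(2)$ and $(2)\Rightarrow(1)$ with slightly different phrasing; but the ingredients are identical.
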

\begin{proof}
We consider the $R$-linear map
	\[  \Phi':\ModDif{n}{R}{K} \xrightarrow{(\phi_1, \ldots, \phi_t)} R^t  \xrightarrow{\pi} \kk^t  .   \]
	
	If (1) holds, then $\Phi$ is surjective and hence also $\Phi'$ is surjective. The maps $\pi \circ \phi_i$ factor through $\kk$-linear maps $\phi'_i:\ModDif{n}{R}{K}\otimes_R \kk \rightarrow \kk$. The map $(\phi'_1,\dots,\phi'_t)$ is surjective, so the component maps are linearly independent. If the differential operators $\delta'_i$ were linearly dependent, since the image of $d^n$ generates $\ModDif{n}{R}{K}$ as an $R$-module, this would contradict the linear independence of the maps $\phi'_i$.
	
	Conversely, if (2) holds, then $\Phi'$ is surjective. By Nakayama's Lemma, $\Phi$ is surjective as well.
\end{proof}

\begin{definition}
	A set of differential operators forms an \emph{independent system of  unitary operators}\index{independent system of unitary operators} if it satisfies the equivalent conditions of the previous lemma.
\end{definition}

\begin{example}
	The differential operators $\frac{\partial}{\partial y}$ and $ x \frac{\partial}{\partial x} + \frac{\partial}{\partial y}$ on $K[x,y]_{(x,y)}$ show that the properties from Lemma~\ref{unitarysystem} are not equivalent to the property that the differential operators themeselves are $K$-linearly independent, even if they both are unitary. The two operators are independent over $K$, but as derivations to $K$ they are the same.
\end{example}

\begin{remark}
	\label{localunitaryoperators}
	We have the short exact sequences of $R$-modules
	\[ \xymatrix{ 0 \ar[r] & \Hom_R(\ModDif{n}{R}{K},\m) \ar[r] \ar[d]^{\cong} &   \Hom_R(\ModDif{n}{R}{K},R) \ar[r] \ar[d]^{\cong} & Q_n \ar[r] \ar[d]^{\cong} & 0 \\
		0 \ar[r] & D^n_{R|K}(R,\m) \ar[r] & D^n_{R|K} \ar[r] &  Q_n \ar[r] & 0 } \]
	which are identical by the universal property of the module of principal parts, and where the $K$-dimension of the quotient $Q_n$\index{$Q_n$} is the number of linearly independent unitary operators, which equals the free rank of $ P^n_{R|K}$ by Lemma~\ref{freerankinterpretation} (2).
\end{remark}

\begin{corollary}
	Let $(R,\m,\kk)$ be a local ring containing a field $K$. Then the free rank of $\ModDif{n}{R}{K}$ is equal to
	 the maximal number of independent unitary operators of order at most $n$.
\end{corollary}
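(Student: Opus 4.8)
The plan is to read this off from Lemma~\ref{unitarysystem} together with the standard description of free rank as the largest $t$ for which a surjection onto $R^t$ exists. First I would record that for any $R$-module $M$, an $R$-linear surjection $M \onto R^t$ splits, since $R^t$ is free and hence projective; therefore $M \cong R^t \oplus M'$, and conversely a free direct summand of rank $t$ produces such a surjection. Consequently the free rank of $\ModDif{n}{R}{K}$ is exactly the largest $t$ admitting a surjective $R$-linear map $\Phi\colon\ModDif{n}{R}{K}\onto R^t$ (one could instead quote Lemma~\ref{freerankinterpretation}(1)); note this step does not require $\ModDif{n}{R}{K}$ to be finitely generated.

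Next I would set up the dictionary between such surjections and systems of operators. Writing $\Phi=(\phi_1,\dots,\phi_t)$ with $\phi_i\in\Hom_R(\ModDif{n}{R}{K},R)$, Proposition~\ref{universaldifferential} identifies each $\phi_i$ with the differential operator $\delta_i=\phi_i\circ d^n\in D^n_{R|K}$, and does so bijectively: every $t$-tuple of differential operators of order at most $n$ comes from a unique such $\Phi$. By Lemma~\ref{unitarysystem}, $\Phi$ is surjective precisely when the induced maps $\delta_1',\dots,\delta_t'\colon R\to\kk$ are linearly independent over $\kk$, which by definition means that $\{\delta_1,\dots,\delta_t\}$ is an independent system of unitary operators. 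Feeding this into the previous paragraph gives that the free rank of $\ModDif{n}{R}{K}$ equals the maximal size of an independent system of unitary operators of order at most $n$, as claimed.

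I do not expect a real obstacle, since the essential content is already contained in Lemma~\ref{unitarysystem} and in the exact sequences of Remark~\ref{localunitaryoperators} (which yield the same statement through Lemma~\ref{freerankinterpretation}(2)); the corollary is genuinely a matter of matching definitions. The only points that deserve a sentence each are that every surjection $\ModDif{n}{R}{K}\onto R^t$ really does come from honest differential operators — immediate from Proposition~\ref{universaldifferential} — and the splitting argument showing that free rank coincides with the largest $t$ admitting such a surjection.
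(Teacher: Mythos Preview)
Your proposal is correct and follows essentially the same approach as the paper, which simply states that the result is clear from Lemma~\ref{unitarysystem}; you have merely unpacked the details behind that one-line proof.
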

\begin{proof}
This is clear from Lemma~\ref{unitarysystem}.
\end{proof}

\begin{lemma}
	\label{ppsignaturelocalize}
	Let $R$ be a domain essentially of finite type over a field $K$ and let $W \subseteq R$ be a multiplicative subset. Then we have the inequality 
	\[\pps{K}(R)  \leq \pps{K}(W^{-1} R)  . \]
	This holds in particular for a localization of a local $K$-algebra essentially of finite type.
\end{lemma}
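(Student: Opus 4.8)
The plan is to compare, for each $n$, the ratio $\frk_R(\ModDif{n}{R}{K})/\Rank_R(\ModDif{n}{R}{K})$ with its analogue over $W^{-1}R$, and then pass to the $\limsup$. The mechanism is that localization leaves the rank of the module of principal parts unchanged, while it can only increase the free rank.

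First I would note that we may assume $0\notin W$, since otherwise $W^{-1}R=0$ and there is nothing to prove. Since $R$ is essentially of finite type over $K$, each $\ModDif{n}{R}{K}$ is finitely presented, so Proposition~\ref{diffmod-localize} gives a natural isomorphism of $W^{-1}R$-modules $\ModDif{n}{W^{-1}R}{K}\cong W^{-1}R\otimes_R\ModDif{n}{R}{K}$ (here $(W\cap K)^{-1}K=K$, as $W\cap K$ consists of nonzero elements of the field $K$). As $R$ is a domain, so is $W^{-1}R$, and both have fraction field $F=\Frac(R)$; computing rank at the generic point,
\[ \Rank_{W^{-1}R}\big(\ModDif{n}{W^{-1}R}{K}\big)=\dim_F\big(\ModDif{n}{R}{K}\otimes_R F\big)=\Rank_R\big(\ModDif{n}{R}{K}\big). \]
This common value is at least $1$, since the multiplication map splits the universal differential $d^n\colon R\to\ModDif{n}{R}{K}$, so the ratios below make sense.

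Next, free rank is non-decreasing under localization: a decomposition $\ModDif{n}{R}{K}=R^{a}\oplus M'$ with $a=\frk_R(\ModDif{n}{R}{K})$ localizes to $\ModDif{n}{W^{-1}R}{K}\cong(W^{-1}R)^{a}\oplus W^{-1}M'$, whence $\frk_{W^{-1}R}(\ModDif{n}{W^{-1}R}{K})\geq a$. Combining this with the rank equality, for every $n$ we obtain
\[ \frac{\frk_R(\ModDif{n}{R}{K})}{\Rank_R(\ModDif{n}{R}{K})}\;\leq\;\frac{\frk_{W^{-1}R}(\ModDif{n}{W^{-1}R}{K})}{\Rank_{W^{-1}R}(\ModDif{n}{W^{-1}R}{K})}, \]
and taking $\limsup_{n\to\infty}$ yields $\pps{K}(R)\leq\pps{K}(W^{-1}R)$. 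The final sentence of the statement is the special case in which $R$ is itself a localization of a finitely generated $K$-algebra at a maximal ideal; such an $R$ is again a domain essentially of finite type over $K$, so the general argument applies verbatim.

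I do not expect a genuine obstacle here: the only points requiring attention are that Proposition~\ref{diffmod-localize} needs the finite presentation of the $\ModDif{n}{R}{K}$, which is automatic because $R$ is essentially of finite type, and the elementary observation that a direct summand stays a direct summand after base change, so that free rank cannot drop upon localizing.
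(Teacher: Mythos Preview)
Your proof is correct and follows essentially the same approach as the paper: invoke Proposition~\ref{diffmod-localize} to identify $\ModDif{n}{W^{-1}R}{K}$ with the localization of $\ModDif{n}{R}{K}$, observe that rank is preserved (generic rank) while free rank can only increase under localization, and pass to the $\limsup$. The paper's argument is a terse version of yours; your added care about the ratios being defined and the case $0\in W$ is harmless. (One minor remark: Proposition~\ref{diffmod-localize} itself does not require finite presentation---that hypothesis enters only for the dual statement about differential operators---but this does not affect your argument.)
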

\begin{proof}
By Proposition~\ref{diffmod-localize}, we have $P^n_{W^{-1}R|K} \cong W^{-1} P^n_{R|K} $.  Free ranks can only increase by localizing, and the rank is preserved under localization, since it coincides with generic rank.
\end{proof}

We also have the following weak equisingularity statement.
\begin{lemma}
	Let $R$ be a domain of finite type over a field $K$ and let $\p \subseteq R$ be a prime ideal. Then we have the equality 
	\[\pps{K}(R_\m) = \pps{K}( R_\p)  \,  \]
	for a very general maximal ideal $\m \in V(\p)$; i.e., there exists a countable union of closed subsets of $V(\p)$ of smaller dimension such that all maximal ideals outside of this set have this property.
	\end{lemma}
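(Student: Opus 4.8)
The plan is to reduce, for each fixed $n$, the behaviour of $\frk_{R_\q}(P^n_{R_\q|K})$ as $\q$ ranges over $V(\p)$ to the rank of a single matrix over $R$ reduced modulo $\q$, and then to exploit that $V(\p)$ is irreducible with generic point $\p$. Set $M=P^n_{R|K}$; since $R$ is Noetherian and of finite type over $K$, $M$ is finitely presented, so by Proposition~\ref{diffmod-localize} one has $P^n_{R_\q|K}\cong M_\q$ and, by finite presentation, $\Hom_{R_\q}(M_\q,R_\q)\cong\Hom_R(M,R)_\q$ for every prime $\q$. I would fix generators $m_1,\dots,m_t$ of $M$ over $R$ and generators $\phi_1,\dots,\phi_s$ of the finitely generated module $\Hom_R(M,R)$, and form the matrix $\Theta=(\phi_i(m_j))\in R^{s\times t}$. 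Using the description $\frk_{R_\q}(M_\q)=\dim_{\kappa(\q)}(M_\q/\NF{M_\q})$ of Lemma~\ref{freerankinterpretation} together with the fact that $\NF{M_\q}$ is exactly the kernel of the map $x\mapsto(\phi_i(x)\bmod\q R_\q)_i$, one identifies $M_\q/\NF{M_\q}$ with the $\kappa(\q)$-span of the columns of $\Theta\bmod\q$; hence $\frk_{R_\q}(P^n_{R_\q|K})=\Rank_{\kappa(\q)}(\Theta\bmod\q)$. On the other hand $\Rank_{R_\q}(P^n_{R_\q|K})=\Rank_R(M)$ is independent of $\q$, since the rank is computed over $\Frac(R_\q)=\Frac(R)$.

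Next I would invoke the lower semicontinuity of the rank of $\Theta$. Since every $\q\in V(\p)$ satisfies $\p\subseteq\q$, any $r\times r$ minor of $\Theta$ lying outside $\q$ also lies outside $\p$, so $\Rank_{\kappa(\q)}(\Theta\bmod\q)\le r_n:=\Rank_{\kappa(\p)}(\Theta\bmod\p)$, with equality precisely when some $r_n\times r_n$ minor of $\Theta$ avoids $\q$. Therefore $U_n:=\{\q\in V(\p)\ |\ \frk_{R_\q}(P^n_{R_\q|K})=r_n\}$ is a union of basic open sets $D(d)\cap V(\p)$ over the finitely many $r_n\times r_n$ minors $d$ of $\Theta$, hence open in $V(\p)$; and it is nonempty, as $\p\in U_n$, hence dense in the irreducible set $V(\p)$. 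For every $\q\in U_n$ we then have the equality of ratios $\frk_{R_\q}(P^n_{R_\q|K})/\Rank_{R_\q}(P^n_{R_\q|K})=r_n/\Rank_R(M)=\frk_{R_\p}(P^n_{R_\p|K})/\Rank_{R_\p}(P^n_{R_\p|K})$.

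Finally I would set $Z=\bigcup_{n\ge1}\bigl(V(\p)\setminus U_n\bigr)$. Each $V(\p)\setminus U_n$ is a closed subset of $V(\p)$ properly contained in it, hence of dimension strictly less than $\dim V(\p)$ since $V(\p)$ is irreducible; so $Z$ is a countable union of closed subsets of $V(\p)$ of smaller dimension. For any maximal ideal $\m\in V(\p)\setminus Z$ the equality of ratios above holds for every $n$, so passing to $\limsup_{n\to\infty}$ yields $\pps{K}(R_\m)=\pps{K}(R_\p)$. The only point requiring genuine care is the first step: one must verify that forming $P^n_{R|K}$ and $\Hom_R(P^n_{R|K},R)$ commutes with localization at the primes of $V(\p)$, and that the $\NF$-description of free rank really does translate into the rank of the single matrix $\Theta$ — this is exactly what makes the lower semicontinuity (and hence "the generic value is attained on a dense open subset") available; the rest is routine bookkeeping with countable unions of proper closed subsets of the irreducible scheme $V(\p)$.
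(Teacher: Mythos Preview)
Your proof is correct and takes essentially the same approach as the paper: for each $n$, show that the free rank of $P^n_{R_\q|K}$ equals the generic value $r_n=\frk_{R_\p}(P^n_{R_\p|K})$ on a dense open $U_n\subseteq V(\p)$, then take the countable intersection. The paper achieves this by spreading the direct-sum decomposition $(P^n_{R|K})_\p\cong R_\p^{r_n}\oplus N$ to an open neighbourhood $D(f_n)$ via finite generation, whereas you encode free rank as the rank of a fixed evaluation matrix $\Theta$ and invoke lower semicontinuity of matrix rank --- a minor difference in execution rather than in strategy.
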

\begin{proof}
	For each $n$ we have a decomposition \[ P^n_{ R_{\p }|K } \cong (P^n_{R|K})_\p \cong R_p^{r_n} \oplus M ,\]
	where $r_n$ is the free rank of $P^n_{ R_\p|K} $. Since everything is finitely generated, there exists  $f_n \notin \p$ such that also $ P^n_{ R_{f_n}|K} \cong R_{f_n}^{r_n} \oplus N$ holds. Then $V(\p) \cap \bigcup_{n \in {\mathbb N } } V(f_n)$ describes the exceptional locus.
	\end{proof}

\subsection{Comparison of the two signatures}

We now proceed to relate the differential power signature and the principal parts signature.

\begin{proposition}\label{freerank}
	Let $(R,\m,\kk)$ be a local  {or graded} ring with dimension $d$.
	\begin{enumerate}
		\item If $(R,\m,\kk)$ is an algebra with pseudocoefficient field $K$, then
		\[ \lambda_R(R/\m\dif{n+1}{K}) =  \frk_R(\ModDif{n}{R}{K}). \]
		\item If $(R,\m,\kk)$ is a complete local ring with coefficient field $K\cong \kk$, then
		\[ \lambda_R(R/\m\dif{n+1}{K}) =  \frk_R(\wModDif{n}{R}{K})=\frk_R(\ModDif{n}{R}{K}).\]
	\end{enumerate}
\end{proposition}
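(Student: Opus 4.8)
The key link between the differential powers of $\m$ and the free rank of the module of principal parts is the representability statement of Proposition~\ref{representing-differential} (respectively Proposition~\ref{represent-complete} in the complete case), together with Lemma~\ref{freerankinterpretation}(1). My plan is to compute $\lambda_R(R/\m\dif{n+1}{K})$ by unwinding the definition of $\m\dif{n+1}{K}$ and recognizing $R/\m\dif{n+1}{K}$ as the residue of the module of principal parts modulo its ``non-free'' submodule. Concretely, let $M=\ModDif{n}{R}{K}$. By Proposition~\ref{representing-differential} we have $D^n_{R|K}\cong\Hom_R(M,R)$, with a differential operator $\delta$ corresponding to the $R$-linear form $\phi$ via $\delta=\phi\circ d^n$. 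Under this identification, $f\in\m\dif{n+1}{K}$ means $\delta(f)\in\m$ for all $\delta\in D^n_{R|K}$, i.e. $\phi(d^n(f))\in\m$ for all $\phi\in\Hom_R(M,R)$, i.e. $d^n(f)\in\NF{M}$ in the notation of Lemma~\ref{freerankinterpretation}. So the universal differential $d^n$ induces an $R$-linear map $R/\m\dif{n+1}{K}\to M/\NF{M}$.

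The main point is then that this induced map is an isomorphism of $R$-modules (equivalently, of $\kk$-vector spaces, since $M/\NF{M}$ is killed by $\m$ and so is $R/\m\dif{n+1}{K}$ because $\m\subseteq\m\dif{1}{K}\subseteq\m\dif{n+1}{K}$... actually one checks $\m\subseteq\m\dif{n+1}{K}$ directly since $\id\in D^0$ forces $\m\dif{n+1}{K}\subseteq\m$, and $\m^{n+1}\subseteq\m\dif{n+1}{K}$ gives the reverse membership at the level of $\m$ only after noting $D^n$ preserves... more carefully: $r\in\m$ and $\delta\in D^n_{R|K}$ with $\delta(r)\notin\m$ is impossible when $n=0$, but for $n>0$ one must argue — instead I will argue injectivity and surjectivity of the induced map directly). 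Surjectivity: $M$ is generated as an $R$-module by the image of $d^n$ (Proposition~16.3.8 of EGAIV, already cited in the proof of Lemma~\ref{unitary}), hence $M/\NF{M}$, being a $\kk$-vector space, is spanned by the images of $d^n(f)$, $f\in R$; so $d^n$ induces a surjection $R\twoheadrightarrow M/\NF{M}$, which factors through $R/\m\dif{n+1}{K}$ by the computation above, and remains surjective. Injectivity: if $d^n(f)\in\NF{M}$ then by the displayed equivalence $f\in\m\dif{n+1}{K}$, so the kernel of $R\to M/\NF{M}$ is exactly $\m\dif{n+1}{K}$. Therefore $R/\m\dif{n+1}{K}\cong M/\NF{M}$, and by Lemma~\ref{freerankinterpretation}(1),
\[
\lambda_R(R/\m\dif{n+1}{K})=\lambda_R(M/\NF{M})=\frk_R(\ModDif{n}{R}{K}),
\]
which is part~(1). (The hypothesis that $R$ is an algebra with pseudocoefficient field $K$ is what guarantees $\ModDif{n}{R}{K}$ is finitely generated, so that free rank is defined and Lemma~\ref{freerankinterpretation} applies; one also needs $\kk$ to be the residue field, which is where pseudocoefficient enters, but note that in the non-coefficient-field case $\NF{M}$ is the set of $m$ with $\phi(m)\in\m$ for all $\phi$, and the identification $f\in\m\dif{n+1}{K}\iff d^n(f)\in\NF{M}$ still holds verbatim.)

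For part~(2), the argument is the same, with two modifications. First, since $R$ is complete, $\Hom_R(\ModDif{n}{R}{K},R)\cong\Hom_R(\wModDif{n}{R}{K},R)$ (Proposition~\ref{represent-complete}), so $D^n_{R|K}\cong\Hom_R(\wModDif{n}{R}{K},R)$; since $\wModDif{n}{R}{K}$ is finitely generated (Lemma~\ref{separatedquot}(1)), Lemma~\ref{freerankinterpretation}(1) applies to it and gives $\frk_R(\wModDif{n}{R}{K})=\lambda_R(\wModDif{n}{R}{K}/\NF{(\wModDif{n}{R}{K})})$. The same computation as above — now using the universal differential into $\wModDif{n}{R}{K}$ and the fact that its image generates it as an $R$-module (it generates $\ModDif{n}{R}{K}$ densely, hence the separated completion topologically, but since the module is finitely generated one gets honest generation) — shows $R/\m\dif{n+1}{K}\cong\wModDif{n}{R}{K}/\NF{(\wModDif{n}{R}{K})}$. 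Finally, $\frk_R(\wModDif{n}{R}{K})=\frk_R(\ModDif{n}{R}{K})$ because completion (equivalently, passing to the maximal separated quotient, Lemma~\ref{separatedquot}(2)) does not change the maximal free summand: a free summand $R^a$ of $\ModDif{n}{R}{K}$ survives in the quotient since $R^a$ is already separated, and conversely a splitting $\wModDif{n}{R}{K}=R^a\oplus N$ pulls back along $\ModDif{n}{R}{K}\to\wModDif{n}{R}{K}$ to a surjection onto $R^a$ which splits. I expect the only genuinely delicate point is the book-keeping in part~(2) around ``generates as an $R$-module'' versus ``generates topologically,'' which is resolved by finite generation of $\wModDif{n}{R}{K}$; everything else is a direct translation through the representability isomorphisms already established.
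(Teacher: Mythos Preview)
Your overall approach matches the paper's: identify $\frk_R(\ModDif{n}{R}{K})=\lambda_R(M/\NF{M})$ via Lemma~\ref{freerankinterpretation}(1), and show that $d^n$ induces a bijection $R/\m\dif{n+1}{K}\to M/\NF{M}$. Your injectivity argument and the treatment of part~(2) are essentially identical to the paper's.

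There is, however, a genuine gap in your surjectivity argument for part~(1) in the pseudocoefficient (but not coefficient) field case $K\subsetneq\kk$. First, a small correction: the map $\bar d^n$ is \emph{not} $R$-linear, since $d^n$ is a differential operator of order $n$; it is only $K$-linear. This is harmless for the conclusion, since a $K$-linear bijection between finite-length $R$-modules with residue field $\kk$ forces equal $K$-dimension and hence equal $R$-length after dividing by $[\kk:K]$. But it means your surjectivity step must produce a surjection of $K$-vector spaces. You argue that the image of $d^n$ generates $M$ as an $R$-module, hence spans $M/\NF{M}$ as a $\kk$-vector space; this is correct, but it does \emph{not} imply that the $K$-span of $\{d^n(f):f\in R\}$ is all of $M/\NF{M}$ when $K\neq\kk$. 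Concretely, $M/\m M\cong(\kk\otimes_K R)/\overline{\Delta^{n+1}}$, and the image of $d^n$ lands in the $K$-subspace coming from $1\otimes R\subset\kk\otimes_K R$; one must show this already hits everything after modding out by $\overline{\Delta^{n+1}}$. The paper fills exactly this gap: writing $\kk=K(u)$ with separable minimal polynomial $f$, one expands $f(u\otimes1)=f(1\otimes u+\delta)$ with $\delta=u\otimes1-1\otimes u$ and uses $\delta^{n+1}=0$ together with the invertibility of $f'(1\otimes u)$ to eliminate $\delta$, showing that the map $R\to M/\m M$ induced by $d^n$ is surjective. When $K=\kk$ (in particular throughout part~(2)) your argument is complete as stated.
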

\begin{proof} First, we consider Case~(1). Let 
\[\NF{\ModDif{n}{R}{K}}=\{ p \in \ModDif{n}{R}{K} \ | \ \phi(p)\in \m \quad \text{for all} \ \ \phi\in \Hom_R(\ModDif{n}{R}{K},R)\}\]
where we regard $\ModDif{n}{R}{K}$ as an $R$-module via the left factor. By Lemma~\ref{freerank-conditions} (1), 
\[{\frk_R(\ModDif{n}{R}{K})=\lambda_R(\ModDif{n}{R}{K} / \NF{\ModDif{n}{R}{K}})}.\] We analyze the map  {of $K$-vector spaces} $\bar{d}^n:R/\m\dif{n+1}{K}\rightarrow \ModDif{n}{R}{K} / \NF{\ModDif{n}{R}{K}}$ induced by $d^n:R\rightarrow \ModDif{n}{R}{K}$.

By Proposition~\ref{universaldifferential}, we have that for $r\in R$, $\delta(r)\in \m$ for all $\delta\in D^n_{R|K}$ if and only if $\phi(d^n(r))\in \m$ for all $\phi \in \Hom_R(\ModDif{n}{R}{K},R)$. That is, $r\in \m\dif{n+1}{K}$ if and only if $d^n(r)\in \NF{\ModDif{n}{R}{K}}$. Thus $\bar{d}$ is well-defined and injective.

Now, notice that $\m \ModDif{n}{R}{K} \subseteq \NF{\ModDif{n}{R}{K}}$, where, again, multiplication by elements of $R$ occurs via the left factor. We claim that the map induced by $d^n$ from $R$ to $S:=\ModDif{n}{R}{K}/\m \ModDif{n}{R}{K} \cong (\kk \otimes_K R)/\overline{\Delta^{n+1}_{R|K}}$ is surjective, where $\overline{\Delta^{n+1}_{R|K}}$ is the image of ${\Delta^{n+1}_{R|K}}$ in $\kk \otimes_K R$. It follows from the claim that $\bar{d}^n$ is surjective, which concludes the proof.

The claim is clear when $K=\kk$. In the general case, there exists a primitive element $u$ for $\kk$ over $K$; let $f$ be the minimal polynomial of $u$ over $K$. Setting $\delta=u\otimes 1 - 1\otimes u$, we have that $\kk \otimes_K R$ is generated as an algebra over $R$ by $\delta$, where we take $R$ to be the image of $1\otimes R$. Then, in $S$, $\delta^{n+1}=0$. By applying the Taylor expansion and the definition of~$f$, 
\[\begin{aligned}0=f(u\otimes 1)&=f(1\otimes u+\delta)=f(1\otimes u)+\delta \, f'(1\otimes u) + \delta^2 \, f_2(1\otimes u)+\cdots\\
&= \delta \, f'(1\otimes u) + \delta^2 \, f_2(1\otimes u)+\cdots + \delta^n \, f_n(1\otimes u),
\end{aligned}\] 
where $f_i=f^{(i)}/i!$, which is defined in any characteristic. By separability, $f'(1\otimes u)$ is a unit, so \[T=R[\delta]/(\delta^{n+1}, \delta+ \delta^2 (f_2/f')(1\otimes u) + \cdots + \delta^n (f_n/f')(1\otimes u))\]
 surjects onto $S$. But,  $T$ is in fact isomorphic to $R$ itself. Indeed, if not, let $t= r + r_a \delta^a+ \cdots + r_n \delta^n$ be a representative of an element of $T\setminus R$. Then, in $T$, this element is equal to $t-r_a \delta^{a-1}(\delta+ \delta^2 (f_2/f')(1\otimes u) + \cdots + \delta^n (f_n/f')(1\otimes u))$, which can be written as $r +  r'_{a+1}\delta^{a+1} + \cdots + r'_n \delta^n$. Applying this at most $n$ times gives a representative for $t$ in $R$. Thus, the map induced by $d^n$ gives a surjection from $R$ to $S$, which establishes the claim,  {so $R/\m\dif{n+1}{K}$ and $\ModDif{n}{R}{K} / \NF{\ModDif{n}{R}{K}}$ are $K$-vector spaces of the same (finite) dimension. Then, 
 \[ \lambda_R(R/\m\dif{n+1}{K}) = \frac{\dim_K(R/\m\dif{n+1}{K})}{\dim_K(\kk)} =  \frac{\dim_K(\ModDif{n}{R}{K} / \NF{\ModDif{n}{R}{K}})}{\dim_K(\kk)} = \lambda_R(\ModDif{n}{R}{K} / \NF{\ModDif{n}{R}{K}}).\]}

The argument for the first equality in Case~(2) is entirely analogous, with the use of Proposition~\ref{representing-differential} replaced by Proposition~\ref{represent-complete}. It remains to show that $\frk_R(\wModDif{n}{R}{K})=\frk_R(\ModDif{n}{R}{K})$ for each $n$. We now show that any free $R$-summand $F$ of  $\wModDif{n}{R}{K}$ is a free summand of $\ModDif{n}{R}{K}$ and vice versa.

Let $F$ be a free summand of $\wModDif{n}{R}{K}$. By Lemma~\ref{separatedquot}, $\wModDif{n}{R}{K}\cong \sep{\ModDif{n}{R}{K}}$, so the completion map $\ModDif{n}{R}{K}\rightarrow \wModDif{n}{R}{K}$ is surjective. It follows that the inclusion of $F$ into $ \wModDif{n}{R}{K}$ factors through $\ModDif{n}{R}{K}$, so $F$ is a free summand of $\ModDif{n}{R}{K}$.

Conversely, let $F$ be a free summand of $\ModDif{n}{R}{K}$. Since $F$ is a complete module, the splitting map from $\ModDif{n}{R}{K}$ to $F$ factors through $\wModDif{n}{R}{K}$, so $F$ is a free summand of $\wModDif{n}{R}{K}$.
\end{proof}

\begin{remark} \label{equalityvariant}	
In the case of a localization $R$ of a finitely generated $K$-algebra over a field $K$ at a maximal ideal with residue class field $K$, one may prove the previous proposition
slightly differently by showing that the natural map
\[D^{n}_{R|K} /D^n_{R|K} (R,\m)   \rightarrow \Hom_K(R/ \m\dif{n+1}{K}    , K) ,\, E \longmapsto \eta \circ E , \]
is an isomorphism of $K$-vector spaces. Here $\eta:R \rightarrow R/\m \cong K$ denotes the projection and the left hand side is $Q_n$ in the notation of Remark \ref{localunitaryoperators}, whose $K$-dimension is the free rank of the module of principal parts. The map is well defined, since $E \in D^{n}_{R|K}$ sends $ \m\dif{n+1}{K}  $ to $\m$ and $E \in D^n_{R|K} (R,\m)$ is sent to $0$. If $E \notin  D^n_{R|K} (R,\m)  $, then there exists $f \in R$ with $E(f) \notin \m$ and so $\eta \circ E \neq 0$, which gives the injectivity. To prove surjectivity, suppose that $U \subseteq 
\Hom_K(R/ \m\dif{n+1}{K}    , K)$ is the image space. Then there exists a subspace $W \subseteq R/ \m\dif{n+1}{K} $ such that 
\[U=W^\perp =\{ \varphi:R/ \m\dif{n+1}{K} \rightarrow K|_{\varphi(W)}=0 \}.\] Assume that $U$ is not the full space. Then $W \neq 0$ and there exists $h \in W$, $h \neq 0$. In particular, $h \notin  \m\dif{n+1}{K}$ and so there exists $E \in D^n_{R|K}$ with $E(h) \notin \m$. But then $\eta \circ E$ does not annihilate $W$ and so it does not belong to $ U$.	
\end{remark}

\begin{remark} The hypothesis that $(R,\m,\kk)$ is an algebra with pseudocoefficient field $K$ can be weakened slightly in Proposition~\ref{freerank}. Suppose that $R$ is a local $K$-algebra essentially of finite type, and the field extension $K \to \kk$ is algebraic and separably generated. The proof of Proposition~\ref{freerank}(1) goes through with only minor modifications: it is no longer true that there exists a primitive element $u$, but the argument of surjectivity of $\bar{d}^n$ follows in the same way.
	
	If $(R,\m,\kk)$ is a local ring essentially of finite type over a field of characteristic zero, then there exists a subfield $K$ of $R$ such that $K \to \kk$ is is algebraic and separably generated, and, hence, for which $\lambda_R(R/\m\dif{n+1}{K}) =  \frk_R(\ModDif{n}{R}{K})$ for all $n$.
	\end{remark}

The previous result allows us to give a different characterization of differential signature, which suggests that the module of principal parts works as a characteristic free analogue of $R^{1/p^e}$.

\begin{theorem}\label{ThmDiffSigRanks}
Let $(R,\m,\kk)$ be a domain that is an algebra with pseudocoefficient field $K$, and assume that $\Frac(R)/K$ is separable.
 Then, $\dm{K}(R)=\pps{K}(R)$.
\end{theorem}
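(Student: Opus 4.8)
The plan is to obtain the identity by assembling the two preceding computations and then performing a routine asymptotic comparison of the two normalizations. The numerators are already matched: Proposition~\ref{freerank}(1) gives $\lambda_R(R/\m\dif{n+1}{K}) = \frk_R(\ModDif{n}{R}{K})$ for every $n$, and this length is finite because $\m\dif{n+1}{K}\supseteq \m^{n+1}$ by Proposition~\ref{properties-diff-powers}(ii). Since $R$ is a domain, $\Rank_R(\ModDif{n}{R}{K})$ is defined, so both $\dm{K}(R)$ and $\pps{K}(R)$ make sense. It therefore remains to compare the denominator $n^d/d!$ in the definition of $\dm{K}(R)$ with the denominator $\Rank_R(\ModDif{n}{R}{K})$ in the definition of $\pps{K}(R)$.

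For this I would invoke Proposition~\ref{RankDiff}, which applies since $R$ is a graded or local domain essentially of finite type over $K$ with $\Frac(R)/K$ separable. The crucial point is that in the pseudocoefficient setting the extension $K\to\kk$ is finite, so the transcendence degree $t$ of $\kk$ over $K$ that occurs in Proposition~\ref{RankDiff} is $0$; hence $\Rank_R(\ModDif{n}{R}{K}) = \binom{d+n}{d}$. As $\binom{d+n}{d} = \frac{(n+1)(n+2)\cdots(n+d)}{d!}$ is a polynomial in $n$ of degree $d$ with leading coefficient $1/d!$, we have $c_n := \binom{d+n}{d}\big/\big((n+1)^d/d!\big)\to 1$.

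Putting these together, for each $n\geq 1$ one has
\[ \frac{\frk_R(\ModDif{n}{R}{K})}{\Rank_R(\ModDif{n}{R}{K})} \;=\; \frac{\lambda_R(R/\m\dif{n+1}{K})}{(n+1)^d/d!}\cdot c_n . \]
Taking $\limsup$ as $n\to\infty$ and using that multiplying a nonnegative sequence by a sequence converging to $1$ does not alter its $\limsup$, together with the harmless re-indexing $m = n+1$, yields $\pps{K}(R) = \limsup_{m\to\infty} \lambda_R(R/\m\dif{m}{K})\big/(m^d/d!) = \dm{K}(R)$.

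I do not expect a genuine obstacle: all the substantive work is in Propositions~\ref{freerank} and~\ref{RankDiff}. The one point that must not be overlooked is the verification that $t = 0$, i.e.\ that the rank is the full binomial coefficient $\binom{d+n}{d}$ and not $\binom{d+t+n}{d+t}$ --- this is precisely where the hypothesis that $K$ is a pseudocoefficient field (so that $\kk/K$ is finite) is used.
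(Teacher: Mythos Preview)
Your proposal is correct and follows essentially the same route as the paper's proof: invoke Proposition~\ref{freerank} to identify the numerators, invoke Proposition~\ref{RankDiff} to get $\Rank_R(\ModDif{n}{R}{K})=\binom{d+n}{d}$, and conclude via $\binom{d+n}{d}\sim n^d/d!$. You are in fact slightly more careful than the paper, which silently absorbs the index shift from Proposition~\ref{freerank} and does not spell out that $t=0$ in Proposition~\ref{RankDiff}.
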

\begin{proof}
We have that
\[
\dm{K}(R)=\limsup\limits_{n\to\infty}\frac{\lambda(R/\m\dif{n}{A})}{n^d / d!}
= \limsup\limits_{n\to\infty}\frac{\frk_R (\ModDif{n}{R}{K})}{n^d / d!},
\]
where the last equality follows  by Proposition~\ref{freerank}.
By Proposition~\ref{RankDiff}
$\Rank (\ModDif{n}{R}{K})=\binom{d+n}{d}$.
Since 
$\lim_{n\to \infty}\frac{n^d/d!}{\binom{d+n}{d}}=1,$
we obtain that 
\[
\limsup\limits_{n\to\infty}\frac{\frk_R (\ModDif{n}{R}{K})}{n^d / d!}=
\limsup\limits_{n\to\infty}\frac{\frk_R (\ModDif{n}{R}{K})}{\Rank{(\ModDif{n}{R}{K})} },
\]
which concludes the proof.
\end{proof}

We do not know whether the two definitions agree when one relaxes the assumption on existence of a pseudocoefficient field. There is, however, an inequality that holds under a much weaker assumption. We prepare for this with a straightforward lemma.

\begin{lemma}\label{lem:Lunz} Let $K$ be a field, and $(R,\m,\kk)$ be a local $K$-algebra essentially of finite type over a field $\kk$, and assume that $\kk$ is separable over $K$.
	
	Let $\lambda_1,\dots,\lambda_t$ be units of $R$ such that their images in $\kk$ form a separating transcendence basis for $\kk$ over $K$. Let $T=R[x_1,\dots,x_t]$, and $S=T_{\n}$, where $\n=\m+(x_1-\lambda_1,\dots,x_t-\lambda_t)$.
	
	Then, $(S,\n, \kk)$ is an algebra with pseudocoefficient field $K(x_1,\dots,x_t)$,  $${x_1-\lambda_1},\dots,{x_t-\lambda_t}$$ is a regular sequence on $S$, and 
	\[\sum_{s<n}\m\dif{n-s}{K} (x_1-\lambda_1,\dots,x_t-\lambda_t)^s \subseteq \n\dif{n}{K(\underline{x})}.\]
\end{lemma}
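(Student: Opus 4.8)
The plan is to verify each assertion in turn, treating the inclusion of differential powers as the substantive point.

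First I would establish that $(S,\n,\kk)$ is an algebra with pseudocoefficient field $L:=K(x_1,\dots,x_t)$. Since $T=R[x_1,\dots,x_t]$ is of finite type over $R$, hence essentially of finite type over $K$, its localization $S$ is essentially of finite type over $K$, hence over $L$. The residue field of $S$ is $T/\n$; since $\n=\m+(x_1-\lambda_1,\dots,x_t-\lambda_t)$, we have $T/\n\cong R/\m=\kk$, and the composite $L\hookrightarrow S\twoheadrightarrow \kk$ sends $x_i$ to the image $\overline{\lambda_i}$ of $\lambda_i$ in $\kk$. Because the $\overline{\lambda_i}$ form a separating transcendence basis of $\kk$ over $K$, the extension $K(\overline{\lambda_1},\dots,\overline{\lambda_t})/K$ inside $\kk$ is purely transcendental, and the induced map $L=K(x_1,\dots,x_t)\to K(\overline{\lambda_1},\dots,\overline{\lambda_t})$ is an isomorphism (both are rational function fields on the same transcendence basis, and the $x_i$ map to algebraically independent elements); then $\kk$ is finite and separable over $K(\overline{\lambda_1},\dots,\overline{\lambda_t})\cong L$ precisely because $\overline{\lambda_1},\dots,\overline{\lambda_t}$ was a \emph{separating} transcendence basis. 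This is exactly the pseudocoefficient field condition. That $x_1-\lambda_1,\dots,x_t-\lambda_t$ is a regular sequence on $S$ is immediate: $S/(x_1-\lambda_1,\dots,x_j-\lambda_j)$ is a localization of $R[x_{j+1},\dots,x_t]$ for each $j$, so each successive quotient by $x_{j+1}-\lambda_{j+1}$ is, up to localization, a polynomial-variable quotient, hence a nonzerodivisor; equivalently $S$ is flat over $R$ (it is a localization of a polynomial ring over $R$) and $x_i-\lambda_i$ reduces mod $\m$ to a regular sequence on $\kk[x_1,\dots,x_t]_{(x_i-\overline\lambda_i)}$.

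The main point is the inclusion $\sum_{s<n}\m\dif{n-s}{K}(x_1-\lambda_1,\dots,x_t-\lambda_t)^s\subseteq \n\dif{n}{L}$. Fix $s<n$, an element $r\in\m\dif{n-s}{K}$, and a product $\pi$ of $s$ of the $x_i-\lambda_i$ (with repetition); I must show $\delta(r\pi)\in\n$ for every $\delta\in D^{n-1}_{S|L}$. The key observation is that $D_{S|L}$ is generated by (the localizations of) $D_{R|K}$ together with the divided-power operators $\frac1{a!}\partial^a/\partial x_i^a$ in the $x$-variables: indeed $S$ is a localization of $R\otimes_K L' $-type construction, more precisely of $R[x_1,\dots,x_t]$ over $R$, and differential operators of a polynomial extension over the base are generated over $D_{R|K}$ by the $x$-partials (one can make this precise via Corollary~\ref{JacobiTayloroperators} applied to the presentation of $S$, or via the product rule for $R\otimes_K K[x_i]$; and $L$-linearity versus $K(x)$-linearity costs nothing since $D_{S|L}\subseteq D_{S|K}$ and inverting the $x_i$ is harmless by Proposition~\ref{localization1}). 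Then any $\delta\in D^{n-1}_{S|L}$ is an $S$-linear combination of composites $\theta\circ\partial_x^{(a)}$ with $\theta\in D_{R|K}$, $|a|+\ord(\theta)\le n-1$. Applying $\partial_x^{(a)}$ to $r\pi$ via the product rule: since $r$ has no $x$-dependence, $\partial_x^{(a)}(r\pi)=r\,\partial_x^{(a)}(\pi)$, and $\partial_x^{(a)}(\pi)$ is a sum of terms each of which is a product of at least $s-|a|$ of the $x_i-\lambda_i$'s (each $x$-derivative lowers the number of factors by at most one), hence lies in $(x_1-\lambda_1,\dots,x_t-\lambda_t)^{s-|a|}$ when $|a|\le s$, and equals $0$ contribution to being outside $\n$ otherwise — in all cases $\partial_x^{(a)}(r\pi)\in r\cdot(x_1-\lambda_1,\dots,x_t-\lambda_t)^{\max(s-|a|,0)}$. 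Now apply $\theta\in D^{\le n-1-|a|}_{R|K}\subseteq D^{\le n-1-|a|}_{S|K}$: if $|a|\ge s$, then $\theta$ has order $<n-s$, so $\theta(r)\in\m\subseteq\n$ (as $r\in\m\dif{n-s}{K}$), and $\theta$ of $r$ times an element of $S$ stays in $\n$; if $|a|<s$, then $\partial_x^{(a)}(r\pi)\in\m\dif{n-s}{K}\cdot(x_1-\lambda_1,\dots)^{s-|a|}\subseteq \n$ outright since $\m\dif{n-s}{K}\subseteq\m\subseteq\n$ and $(x_1-\lambda_1,\dots)\subseteq\n$, and any further operator preserves the ideal $\n$ only if it is order zero — so here I should instead argue that $\theta$ applied to a product is handled by expanding $\theta$ via Leibniz over the factorization $r\cdot(\text{stuff in }(x_i-\lambda_i)^{s-|a|})$: distributing $\theta$'s order budget of $\le n-1-|a|$ between $r$ and the remaining $s-|a|$ factors, every term either hits $r$ with an operator of order $<n-s$ (landing in $\m$) or spreads $\le n-1-|a|$ derivatives over $s-|a|$ factors of the form $x_i-\lambda_i$ and must leave at least one factor untouched when $n-1-|a| < s-|a|$, i.e. $n\le s$, which is false, so more care is needed — the clean way is: since $(x_i-\lambda_i)$ generate a $D_{S|K}$-stable situation only after accounting for $r$, one checks by induction on $s$ using $[\theta,(x_i-\lambda_i)]$ having strictly smaller order that $\theta\big(r\cdot\pi\big)\in\m+$ (terms with $r$ hit by lower-order operators) and concludes by the hypothesis on $r$.

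The expected main obstacle is precisely this bookkeeping in the last step — making the Leibniz/commutator expansion of an operator of order $n-1$ applied to $r\cdot\pi$ rigorous, i.e. showing that the ``derivative budget'' $n-1$ can never simultaneously reduce $r$ below the threshold $n-s$ \emph{and} exhaust all $s$ factors of $\pi$, because $s<n$. I would organize this as a double induction (on $s$, and on $\ord\theta$), using at each step that $[\theta, x_i-\lambda_i]\in D^{\ord\theta-1}$, that $(x_i-\lambda_i)\in\n$, and that $\m\dif{m}{K}$ is an ideal with $\m\dif{m+1}{K}\subseteq\m\dif{m}{K}$ and $\m\dif{1}{K}=\m$ (Proposition~\ref{properties-diff-powers}), together with the fact from Example~\ref{diff-powers-regular} applied to the smooth extension $S/R$ that $\partial_x$-type operators see $(x_i-\lambda_i)$ with exactly the expected order drop. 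Once the inclusion of differential powers is in hand, the remaining claims were already disposed of above, so this is the only place where real work is required.
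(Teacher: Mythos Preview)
The paper does not prove this lemma; it introduces it as ``a straightforward lemma'' and moves on. So there is no paper argument to compare against, only the question of whether your proposal stands on its own.

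Your treatment of the first two assertions (pseudocoefficient field and regular sequence) is fine. For the main inclusion, however, you have a genuine misidentification that sends you on an unnecessary detour. You assert that $D_{S|L}$ is generated by $D_{R|K}$ together with the divided-power partials $\frac{1}{a!}\partial^a/\partial x_i^a$. But $L=K(x_1,\dots,x_t)$, so any $L$-linear operator must commute with multiplication by $x_i$; the $x$-partials are \emph{not} in $D_{S|L}$. What is actually true, and much simpler, is that $S$ is a localization of $R\otimes_K L$, so by base change for principal parts \cite[16.4.5]{EGAIV} and Proposition~\ref{diffmod-localize} one has $\ModDif{n-1}{S}{L}\cong \ModDif{n-1}{R}{K}\otimes_R S$; since $R\to S$ is flat and $\ModDif{n-1}{R}{K}$ is finitely presented, dualizing gives $D^{n-1}_{S|L}\cong D^{n-1}_{R|K}\otimes_R S$. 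Thus every $\delta\in D^{n-1}_{S|L}$ is an $S$-combination of extensions $\tilde\theta$ of operators $\theta\in D^{n-1}_{R|K}$, where $\tilde\theta$ is $L$-linear and in particular commutes with each $x_i$.

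With this in hand, the commutator argument you gesture at becomes a clean single induction on $s$ (for all $n>s$ at once). Since $\tilde\theta$ commutes with $x_i$, one has $[\tilde\theta, x_i-\lambda_i]=-\widetilde{[\theta,\lambda_i]}$, the extension of an operator in $D^{n-2}_{R|K}$. Writing $\pi=(x_{i_1}-\lambda_{i_1})\pi'$ with $\pi'$ a product of $s-1$ factors,
\[
\tilde\theta(r\pi)=(x_{i_1}-\lambda_{i_1})\,\tilde\theta(r\pi')-\widetilde{[\theta,\lambda_{i_1}]}(r\pi').
\]
The first term lies in $\n$ trivially; the second is handled by the inductive hypothesis applied with $(n,s)$ replaced by $(n-1,s-1)$, noting $r\in\m\dif{n-s}{K}=\m\dif{(n-1)-(s-1)}{K}$. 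The base case $s=0$ is just $\tilde\theta(r)=\theta(r)\in\m\subseteq\n$. No $\partial_x$-operators, no case split on $|a|$, no double induction.

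Your underlying instinct (Leibniz/commutator with a derivative budget of $n-1$ that cannot both exhaust $s$ factors and push $r$ out of $\m$) is correct; the confusion about which operators lie in $D_{S|L}$ is what produced the muddle in your middle paragraph.
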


\begin{proposition}
	Let $K$ be a field, and $(R,\m,\kk)$ be a local $K$-algebra essentially of finite type over   {$K$}. Assume that $\kk$ is separable over $K$. Then, $\pps{K}(R) \leq \dm{K}(R)$.
\end{proposition}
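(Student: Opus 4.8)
\emph{Plan.} The idea is to use Lemma~\ref{lem:Lunz} to replace $R$ by the algebra $S$, which \emph{does} have a pseudocoefficient field, prove the inequality for $S$ (where Proposition~\ref{freerank}(1) is available), and transfer it back to $R$ via the displayed containment of that lemma. Write $d=\dim R$ and let $t$, $\lambda_1,\dots,\lambda_t\in R$, $T=R[x_1,\dots,x_t]$, $S=T_{\n}$, $L:=K(x_1,\dots,x_t)$, and $\underline y:=(x_1-\lambda_1,\dots,x_t-\lambda_t)$ be as in Lemma~\ref{lem:Lunz}; thus $S$ is an algebra with pseudocoefficient field $L$, $\underline y$ is a regular sequence on $S$ with $S/\underline y S\cong R$, $\dim S=d+t$, and $\sum_{s<m}\m\dif{m-s}{K}\,\underline y^{\,s}\subseteq \n\dif{m}{L}$ for every $m\ge 1$. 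Since $\kk$ is finite over $L$, the field $\Frac(S)$ has transcendence degree $d+t$ over $L$. (If $t=0$ the construction is vacuous, $S=R$, and the argument below applies verbatim with $R$ in place of $S$, as then $R$ itself has pseudocoefficient field $K$.)

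First I would record three facts. \textbf{(i)} Since $T=R\otimes_K K[x_1,\dots,x_t]$ and each $\ModDif{n}{R}{K}$ is finitely presented, a direct computation with the definition gives $\ModDif{n}{T}{K[x_1,\dots,x_t]}\cong T\otimes_R \ModDif{n}{R}{K}$, and combining this with Proposition~\ref{diffmod-localize} (localizing $T\to S$, using that $\n\cap K[x_1,\dots,x_t]=0$ as the $\bar\lambda_i$ are algebraically independent over $K$) yields a natural isomorphism $\ModDif{n}{S}{L}\cong S\otimes_R\ModDif{n}{R}{K}$. As $R\hookrightarrow S$ is an inclusion of domains, free summands and generic rank are preserved, so $\frk_R(\ModDif{n}{R}{K})\le\frk_S(\ModDif{n}{S}{L})$ and $\Rank_R(\ModDif{n}{R}{K})=\Rank_S(\ModDif{n}{S}{L})$. \textbf{(ii)} As $S$ is an algebra with pseudocoefficient field $L$, Proposition~\ref{freerank}(1) gives $\frk_S(\ModDif{n}{S}{L})=\lambda_S(S/\n\dif{n+1}{L})$. \textbf{(iii)} Since $\Rank_S(\ModDif{n}{S}{L})=\dim_{\Frac S}\ModDif{n}{\Frac S}{L}$ and $\Frac(S)$ has transcendence degree $d+t$ over $L$, the lower bound $\Rank_S(\ModDif{n}{S}{L})\ge\binom{n+d+t}{d+t}$ holds for the module of principal parts of any field extension of that transcendence degree (with equality when the extension is separably generated, cf.\ Proposition~\ref{RankDiff}).

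Next I would turn the containment of Lemma~\ref{lem:Lunz} into an upper bound for colengths over $S$. Put $\mathfrak c=\underline y S$ and, for $m\ge 1$, let $\widetilde J_m:=\mathfrak c^{\,m}+\sum_{s=0}^{m-1}(\m\dif{m-s}{K}S)\,\mathfrak c^{\,s}$. Then $\widetilde J_m\subseteq \n\dif{m}{L}$: the sum lies in $\n\dif{m}{L}$ by Lemma~\ref{lem:Lunz}, and $\mathfrak c^{\,m}\subseteq\n^m\subseteq\n\dif{m}{L}$ by Proposition~\ref{properties-diff-powers}(ii). Since $\mathfrak c$ is generated by the regular sequence $\underline y$, the associated graded ring $\gr_{\mathfrak c}S$ is the polynomial ring $R[Y_1,\dots,Y_t]$; using that $\{\m\dif{j}{K}\}_j$ is a decreasing chain, a routine computation with the $\mathfrak c$-adic filtration of $S/\widetilde J_m$ identifies its $s$-th graded piece ($0\le s<m$) with $(R/\m\dif{m-s}{K})[Y_1,\dots,Y_t]_s$, so that
\[\lambda_S(S/\widetilde J_m)=\sum_{s=0}^{m-1}\binom{s+t-1}{t-1}\lambda_R(R/\m\dif{m-s}{K})=\sum_{j=1}^{m}\binom{m-j+t-1}{t-1}\lambda_R(R/\m\dif{j}{K}),\]
where $\binom{s+t-1}{t-1}$ denotes the number of degree-$s$ monomials in $t$ variables. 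Hence $\lambda_S(S/\n\dif{m}{L})\le\sum_{j=1}^{m}\binom{m-j+t-1}{t-1}\lambda_R(R/\m\dif{j}{K})$.

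Finally, combining (i)--(iii) with this bound, for every $n$ we obtain
\[\frac{\frk_R(\ModDif{n}{R}{K})}{\Rank_R(\ModDif{n}{R}{K})}\ \le\ \frac{\lambda_S(S/\n\dif{n+1}{L})}{\binom{n+d+t}{d+t}}\ \le\ \frac{\sum_{j=1}^{n+1}\binom{n+1-j+t-1}{t-1}\lambda_R(R/\m\dif{j}{K})}{\binom{n+d+t}{d+t}}.\]
Set $\sigma:=\dm{K}(R)$. Using $\lambda_R(R/\m\dif{j}{K})\le\lambda_R(R/\m^j)$ and that $\lambda_R(R/\m^j)/\binom{j+d}{d}$ is bounded in $j$, for each $\varepsilon>0$ there are $C>0$ and $M$ with $\lambda_R(R/\m\dif{j}{K})\le(\sigma+\varepsilon)\binom{j+d}{d}$ for $j\ge M$ and $\lambda_R(R/\m\dif{j}{K})\le C$ for $j<M$. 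Splitting the numerator at $j=M$, the $j<M$ part is $O(n^{t-1})$, and the $j\ge M$ part is at most $(\sigma+\varepsilon)\sum_{j\ge 0}\binom{n+1-j+t-1}{t-1}\binom{j+d}{d}=(\sigma+\varepsilon)\binom{n+1+d+t}{d+t}$ by the Vandermonde identity $\sum_{i+j=N}\binom{i+t-1}{t-1}\binom{j+d}{d}=\binom{N+d+t}{d+t}$. Since $n^{t-1}/\binom{n+d+t}{d+t}\to 0$ and $\binom{n+1+d+t}{d+t}/\binom{n+d+t}{d+t}\to 1$, taking $\limsup_{n\to\infty}$ gives $\pps{K}(R)\le\sigma+\varepsilon$, and letting $\varepsilon\to 0$ yields $\pps{K}(R)\le\dm{K}(R)$. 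The main technical point is the colength identity for $S/\widetilde J_m$ via the regular sequence $\underline y$ (which is where the hypothesis that $\kk/K$ be separable is used, through Lemma~\ref{lem:Lunz}); the combinatorial estimate and the passage through $S$ are then bookkeeping with results already established.
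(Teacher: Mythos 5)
Your argument is correct and follows the paper's own route: pass to $S$ via Lemma~\ref{lem:Lunz}, use Proposition~\ref{freerank} together with base change and localization of the modules of principal parts to bound $\frk_R(\ModDif{n}{R}{K})$ by the $t$-iterated sum transform of $\lambda_R(R/\m\dif{j}{K})$, and then compare asymptotics. The only difference is that you write out what the paper compresses --- the finite-colength bookkeeping (adding $\mathfrak{c}^{\,m}$ so the relevant quotient has finite length), the Vandermonde estimate behind the paper's ``elementary analysis fact,'' and the lower bound on $\Rank_R(\ModDif{n}{R}{K})$ in the denominator --- which is elaboration of the same proof rather than a different approach.
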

\begin{proof}
	We use the notation of Lemma~\ref{lem:Lunz}. For the ring $S$, Proposition~\ref{freerank} applies, so $\frk_S(\ModDif{n}{S}{K(\underline{x})})=\ell_S(S/\n\dif{n}{K(\underline{x})})$. Since $S$ is obtained from $R$ by base change and localization, using Proposition~\ref{diffmod-localize}, we find that $\frk_S(\ModDif{n}{S}{K(\underline{x})})=\frk_R(\ModDif{n}{R}{K})$. Applying Lemma~\ref{lem:Lunz}, we obtain an inequality \[\frk_R(\ModDif{n}{R}{K})\leq \ell_S\left(\frac{S}{\sum_{s<n}\m\dif{n-s}{K} (x_1-\lambda_1,\dots,x_t-\lambda_t)^s}\right).\]
	The function $f(n)$ determining the RHS above is the $t$-iterated sum transform of the function $g(n)=\ell_R(R/\m\dif{n}{K})$. It is then an elementary analysis fact that one has $\limsup_n \frac{g(n)}{\binom{n+d}{n}} \geq \limsup_n \frac{f(n)}{\binom{n+t+d}{n}}$. The proposition follows.
\end{proof}

We now turn the case of complete local rings. We extend the definition of principal parts signature to this case.

\begin{definition} Let $(R,\m,\kk)$ be a complete local domain of dimension $d$ with coefficient field $K\cong \kk$. We define the  \emph{principal parts signature}\index{principal parts signature}\index{$\pps{K}(R)$} of $R$ over $K$ as
	\[\pps{K}(R):=\limsup_{n \rightarrow \infty} \frac{\frk_R(\ModDif{n}{R}{A})}{\binom{n+d}{n}}.\]
\end{definition}

 Using the second part of Proposition~\ref{freerank}, the same proof as Theorem~\ref{ThmDiffSigRanks} establishes the following.
 
 \begin{theorem}\label{ThmDiffSigRanksComplete} Let $(R,\m,\kk)$ be a complete local domain with coefficient field $K\cong \kk$. Then $\pps{K}(R) = \dm{K}(R)$.
 \end{theorem}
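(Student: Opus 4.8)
The plan is to reduce Theorem~\ref{ThmDiffSigRanksComplete} to Theorem~\ref{ThmDiffSigRanks} by the same two-step mechanism, now that the necessary complete-local analogues have been put in place earlier in the excerpt. The two ingredients I would assemble are: (i) the second part of Proposition~\ref{freerank}, which gives for a complete local ring $(R,\m,\kk)$ with coefficient field $K\cong\kk$ the equality
\[ \lambda_R(R/\m\dif{n+1}{K}) = \frk_R(\ModDif{n}{R}{K}) \]
for every $n$; and (ii) a rank computation showing that $\Rank_R(\ModDif{n}{R}{K}) = \binom{d+n}{d}$, so that $\binom{n+d}{n}$ is the correct normalizing denominator. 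For (ii) I would invoke Proposition~\ref{RankDiff}: a complete local domain with coefficient field $K\cong\kk$ is a local domain essentially of finite type over $K$ (indeed it is a quotient of a power series ring, which is the completion of a localized polynomial ring), and since $\kk = K$ the transcendence degree $t$ of $\kk$ over $K$ is zero and $\Frac(R)$ is automatically separable over $K$ in the relevant sense; hence $\Rank_R(\ModDif{n}{R}{K}) = \binom{d+0+n}{d+0} = \binom{d+n}{n}$.

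With these in hand the computation is formally identical to the proof of Theorem~\ref{ThmDiffSigRanks}. Starting from the definition,
\[ \dm{K}(R) = \limsup_{n\to\infty} \frac{\lambda_R(R/\m\dif{n}{K})}{n^d/d!} = \limsup_{n\to\infty} \frac{\frk_R(\ModDif{n}{R}{K})}{n^d/d!}, \]
where the second equality is Proposition~\ref{freerank}(2) (reindexing $n\mapsto n+1$, which does not affect the limsup). Since $\lim_{n\to\infty}\frac{n^d/d!}{\binom{d+n}{d}}=1$, the denominator $n^d/d!$ may be replaced by $\binom{d+n}{n}=\Rank_R(\ModDif{n}{R}{K})$ without changing the limsup, and the resulting expression is exactly the definition of $\pps{K}(R)$ for complete local domains just given. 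This yields $\dm{K}(R)=\pps{K}(R)$.

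The only point requiring any care — and the place I'd expect a referee to look — is verifying that Proposition~\ref{RankDiff} genuinely applies to a complete local domain that is a priori not essentially of finite type: one must observe that $R$ is a module-finite (or at least finite-type) quotient of $\kk\ps{x_1,\dots,x_m}$, that $\kk\ps{x_1,\dots,x_m}$ is the completion of $\kk[x_1,\dots,x_m]_{(x_1,\dots,x_m)}$, and that the rank of $\ModDif{n}{R}{K}$ is computed at the generic point and therefore agrees with the rank over the corresponding essentially-of-finite-type ring; alternatively one can compute the rank directly over $\Frac(R)$ using Proposition~\ref{diffmod-localize} and Proposition~\ref{localization1} as in the proof of Proposition~\ref{RankDiff}, noting $\trd(\Frac(R)/K)=d$. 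Either way the argument is short; no genuinely new obstacle arises beyond what was already handled in Proposition~\ref{freerank}(2) and Lemma~\ref{separatedquot}.
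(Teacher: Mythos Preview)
Your core argument is correct and matches the paper's: apply Proposition~\ref{freerank}(2) to get $\lambda_R(R/\m\dif{n+1}{K}) = \frk_R(\ModDif{n}{R}{K})$, then use $\lim_{n\to\infty}\frac{n^d/d!}{\binom{d+n}{d}}=1$ to pass from the denominator $n^d/d!$ to $\binom{n+d}{n}$, which is already the denominator in the complete-local definition of $\pps{K}(R)$.

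However, your step~(ii) is both unnecessary and incorrectly justified. It is unnecessary because the definition of $\pps{K}(R)$ for a complete local domain has $\binom{n+d}{n}$ hard-coded as the denominator; there is nothing to identify with a rank. It is incorrectly justified because a complete local ring is \emph{not} essentially of finite type over its coefficient field (e.g., $K\llbracket x\rrbracket$ is not a localization of a finitely generated $K$-algebra), so Proposition~\ref{RankDiff} simply does not apply. Your proposed workarounds do not repair this: $R$ is not a quotient of a localized polynomial ring, and the transcendence degree of $\Frac(R)$ over $K$ is typically far larger than $d$ (for instance, $\trd(K((x))/K)$ is uncountable when $K$ is countable). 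Indeed, $\ModDif{n}{R}{K}$ need not even be a finitely generated $R$-module in the complete case --- this is exactly why the paper introduces $\wModDif{n}{R}{K}$ (Lemma~\ref{separatedquot}) and why the complete-local definition of $\pps{K}(R)$ avoids any mention of rank. Delete step~(ii) and the stray equality $\binom{d+n}{n}=\Rank_R(\ModDif{n}{R}{K})$; what remains is precisely the paper's proof.
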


We mostly work in the situation of algebras with pseudocoefficient fields and complete local rings henceforth, in which case we freely use Theorems~\ref{ThmDiffSigRanks}~and~\ref{ThmDiffSigRanksComplete} to move between the two definitions.

\subsection{Basic properties of differential signature}

We first note that the differential signature is also bounded by one.
\begin{proposition}\label{leq-1}
If $(R,\m,\kk)$ is a domain with pseudocoefficient field $K$ and $\Frac(R) / K$ is separable,
then ${\dm{K}(R)\leq 1}$.
% Equality holds if $R$ is regular.
\end{proposition}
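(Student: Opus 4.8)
The key is Theorem~\ref{ThmDiffSigRanks}, which tells us $\dm{K}(R) = \pps{K}(R) = \limsup_n \frac{\frk_R(\ModDif{n}{R}{K})}{\Rank_R(\ModDif{n}{R}{K})}$. So it suffices to prove that $\frk_R(\ModDif{n}{R}{K}) \leq \Rank_R(\ModDif{n}{R}{K})$ for every $n$. This is a purely formal inequality about finitely generated modules over a domain: a free direct summand of a module $M$ over a domain $R$ has rank at most $\Rank_R(M)$, since if $M = R^a \oplus M'$ then $\Rank_R M = a + \Rank_R M' \geq a$. Thus the ratio $\frk_R(\ModDif{n}{R}{K})/\Rank_R(\ModDif{n}{R}{K})$ is bounded above by $1$ for each $n$, and the $\limsup$ is therefore $\leq 1$.

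First I would invoke Theorem~\ref{ThmDiffSigRanks} to rewrite $\dm{K}(R)$ as $\pps{K}(R)$; this is where the hypotheses (domain, pseudocoefficient field, separability of $\Frac(R)/K$) are used, and they are exactly the hypotheses of the present proposition, so no extra work is needed. Then I would note that $\ModDif{n}{R}{K}$ is a finitely generated $R$-module (true for algebras with pseudocoefficient field) with a well-defined rank, and that $R$ is a domain, so the elementary rank/free-rank inequality applies termwise. Finally, taking $\limsup$ over $n$ of a sequence bounded above by $1$ yields the claim.

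I expect no real obstacle here: the statement is a short corollary of the identification of the two signatures together with a one-line module-theoretic fact. The only thing to be careful about is making sure the rank of $\ModDif{n}{R}{K}$ is indeed defined and finite, which is guaranteed in the pseudocoefficient-field setting (and computed explicitly in Proposition~\ref{RankDiff}), and that the free summand bound is being applied over a domain so that rank is additive on direct sums.

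\begin{proof}
By Theorem~\ref{ThmDiffSigRanks}, $\dm{K}(R) = \pps{K}(R) = \limsup_{n\to\infty} \frac{\frk_R(\ModDif{n}{R}{K})}{\Rank_R(\ModDif{n}{R}{K})}$. Since $R$ is an algebra with pseudocoefficient field $K$, each $\ModDif{n}{R}{K}$ is a finitely generated $R$-module, and since $R$ is a domain its rank is well-defined and finite. If $\ModDif{n}{R}{K} \cong R^{a} \oplus M'$ realizes the free rank $a = \frk_R(\ModDif{n}{R}{K})$, then $\Rank_R(\ModDif{n}{R}{K}) = a + \Rank_R(M') \geq a$, because rank is additive on direct sums over a domain and $\Rank_R(M') \geq 0$. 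Hence $\frac{\frk_R(\ModDif{n}{R}{K})}{\Rank_R(\ModDif{n}{R}{K})} \leq 1$ for every $n$, and therefore $\dm{K}(R) = \limsup_{n\to\infty} \frac{\frk_R(\ModDif{n}{R}{K})}{\Rank_R(\ModDif{n}{R}{K})} \leq 1$.
\end{proof}
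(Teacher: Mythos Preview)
Your proof is correct and takes essentially the same approach as the paper, which simply states that the inequality follows immediately from Theorem~\ref{ThmDiffSigRanks}. You have spelled out the implicit step that the paper leaves to the reader, namely that $\frk_R(M)\leq \Rank_R(M)$ for a finitely generated module over a domain.
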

\begin{proof}
This follows immediately from Theorem~\ref{ThmDiffSigRanks}.
\end{proof}

Differential signature behaves well under completion.

\begin{proposition}\label{diff-sig-completion} Let $(R,\m,\kk)$ be a local algebra with coefficient field $K\cong \kk$; we may identify $K$ with a coefficient field for $\widehat{R}$. Then $\dm{K}{(R)}=\dm{K}{(\widehat{R})}$. The same equality holds when one replaces limits superior with limits inferior in the definition of differential signature.
\end{proposition}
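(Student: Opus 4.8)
The plan is to reduce the statement to a term-by-term comparison of the two sequences whose $\limsup$ (resp.\ $\liminf$) defines the differential signatures. The key input is Lemma~\ref{diff-powers-completion}, which gives $\m\dif{n}{K}\widehat{R} = (\m\widehat{R})\dif{n}{K}$ for every $n$; here one first records that the composite $K \hookrightarrow R \to \widehat{R} \to \widehat{R}/\m\widehat{R}$ is the given isomorphism $K \cong \kk$, so $K$ is indeed (identified with) a coefficient field for $\widehat{R}$, the quantity $\dm{K}(\widehat{R})$ refers to the same base field, and Lemma~\ref{diff-powers-completion} applies. I would also note at the outset that $\m\dif{n}{K}$ is $\m$-primary: by Proposition~\ref{properties-diff-powers}(ii) it contains $\m^n$, so $R/\m\dif{n}{K}$ is a quotient of the Artinian ring $R/\m^n$ and hence has finite length.

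Next I would run the standard completion argument. Since $R/\m\dif{n}{K}$ is Artinian local, it is already $\m$-adically complete, so $\widehat{R}\otimes_R (R/\m\dif{n}{K}) = R/\m\dif{n}{K}$; on the other hand $\widehat{R}\otimes_R (R/\m\dif{n}{K}) = \widehat{R}/\m\dif{n}{K}\widehat{R} = \widehat{R}/(\m\widehat{R})\dif{n}{K}$ by Lemma~\ref{diff-powers-completion}. Thus $R/\m\dif{n}{K} \cong \widehat{R}/(\m\widehat{R})\dif{n}{K}$ as rings, and since $R$ and $\widehat{R}$ share the residue field $\kk$, the lengths over $R$ and over $\widehat{R}$ agree:
\[
\lambda_R(R/\m\dif{n}{K}) = \lambda_{\widehat{R}}\big(\widehat{R}/(\m\widehat{R})\dif{n}{K}\big) \qquad \text{for every } n.
\]
Finally, $\dim R = \dim \widehat{R} =: d$, so the denominators $n^d/d!$ in the two definitions coincide as well; hence the sequences $n \mapsto \lambda_R(R/\m\dif{n}{K})\big/(n^d/d!)$ and $n \mapsto \lambda_{\widehat{R}}(\widehat{R}/(\m\widehat{R})\dif{n}{K})\big/(n^d/d!)$ are literally identical. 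Taking $\limsup$ yields $\dm{K}(R) = \dm{K}(\widehat{R})$, and taking $\liminf$ yields the analogous identity; in fact every accumulation point of the two sequences (and the limit, if it exists) agrees.

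I do not expect a genuine obstacle here: the entire content is carried by Lemma~\ref{diff-powers-completion}, already established, together with the elementary fact that finite-length modules and their lengths are insensitive to completion. The only subtlety worth a sentence is the identification of $K$ as a coefficient field of $\widehat{R}$, handled as above, which is what legitimizes writing $\dm{K}(\widehat{R})$ and invoking Lemma~\ref{diff-powers-completion} over $\widehat{R}$.
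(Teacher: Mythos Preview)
Your proposal is correct and follows exactly the route the paper takes: the paper's proof is the one-line ``This is immediate from Proposition~\ref{diff-powers-completion}'' (i.e., Lemma~\ref{diff-powers-completion}), and you have simply spelled out the standard length-under-completion argument that makes this immediacy explicit.
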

\begin{proof}
This is immediate from Proposition~\ref{diff-powers-completion}.
\end{proof}

We now give a preparatory lemma  to show that the differential signature detects $D$-simplicity. We note that this is one of the properties that $F$-signature has.

\begin{lemma}\label{LemmaDimDiffPrimeNotDsimple}
Let $(R,\m)$ be reduced local  {or graded} ring, and  $A$ be a subring.
If $R$ is not a simple  $D_{R|A}$-module, then
$\dim (R/\cP_A)<\dim(R).$
\end{lemma}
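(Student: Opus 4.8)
The claim is that if $R$ is reduced and not $D_{R|A}$-simple, then $\dim(R/\cP_A) < \dim(R)$. The key point is that $\cP_A$ is a nonzero ideal (since $R$ is not $D$-simple, Corollary~\ref{CorDifPrimeDsimple} gives $\cP_A \neq 0$), and I want to show that this nonzero ideal cannot be contained in any minimal prime of $R$; combined with reducedness, this will force $\dim(R/\cP_A)$ to drop.

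\textbf{Step 1: reduce to a statement about minimal primes.} Since $R$ is reduced, $\dim(R/\cP_A) = \dim(R)$ if and only if $\cP_A$ is contained in some minimal prime $\p$ of $R$ with $\dim(R/\p) = \dim(R)$. So it suffices to prove that $\cP_A$ is not contained in any minimal prime of $R$, equivalently, for each minimal prime $\p$ of $R$, there exists an element of $\cP_A$ not in $\p$.

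\textbf{Step 2: use primariness of $\cP_A$.} By Proposition~\ref{PropDiffPrime}(\ref{diff-prime-primary}), $\cP_A$ is primary, say $\sqrt{\cP_A} = \q$ for some prime $\q$. If $\cP_A \subseteq \p$ for a minimal prime $\p$ of $R$, then $\q = \sqrt{\cP_A} \subseteq \p$, and since $\p$ is minimal in $R$ (which is reduced, so $\p$ is a minimal prime of the zero ideal) we would get $\q = \p$, hence $\cP_A \subseteq \p = \q = \sqrt{\cP_A}$, so $\cP_A$ is $\p$-primary with $\p$ minimal. Now I invoke that $\cP_A$ is a $D_{R|A}$-ideal (Proposition~\ref{PropDiffPrime}(1)) together with Lemma~\ref{PropMinimalPrime} and Lemma~\ref{LemmaDidealDifPower}: since $\p$ is a minimal prime of $R$, its primary component in the zero ideal, which is $(0)$ localized appropriately, gives that $\p_\p = (0)$ in $R_\p$, and the $D$-ideal $(\cP_A)_\p$ would have to be a $D_{R_\p|A}$-ideal contained in the nilpotent-free local ring $R_\p$ whose maximal ideal is nilpotent — but $R_\p$ is a field (as $\p$ is minimal in a reduced ring), so $(\cP_A)_\p$ is either $0$ or all of $R_\p$. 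It can't be all of $R_\p$ because $\cP_A \subseteq \m \subsetneq R$ and $\cP_A \subseteq \p$; and it can't be $0$ because $\cP_A$ is $\p$-primary, so $(\cP_A)_\p$ is $\p_\p$-primary, i.e. nonzero. This contradiction shows $\cP_A \not\subseteq \p$ for every minimal prime $\p$ of $R$.

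\textbf{Step 3: conclude.} Since $\cP_A$ is a nonzero ideal not contained in any minimal prime of the reduced ring $R$, passing to $R/\cP_A$ we have that $\cP_A$ contains an element that is a nonzerodivisor on $R$ (a standard prime avoidance argument using that the associated primes of $R$ are exactly its minimal primes). Hence $\dim(R/\cP_A) \leq \dim(R) - 1 < \dim(R)$.

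\textbf{Main obstacle.} The delicate point is Step 2: carefully justifying that a nonzero primary $D$-ideal cannot be $\p$-primary for a minimal prime $\p$ when $R$ is reduced. The cleanest route is via Proposition~\ref{PropDidealsLoc} (bijection of $D$-ideals under localization) to pass to $R_\p$, note $R_\p$ is a field, and observe that a field has no proper nonzero $D$-ideals while a nonzero proper primary ideal localizes to something nonzero and proper — a contradiction. One must be slightly careful that $\ModDif{n}{R}{A}$ is finitely presented so that Proposition~\ref{PropDidealsLoc} and Lemma~\ref{PropMinimalPrime} apply; this holds in the settings of interest (algebras with pseudocoefficient field), and in general one should either assume it or argue directly with differential powers and Lemma~\ref{diff-localize}.
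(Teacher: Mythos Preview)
Your overall plan is sound and close to the paper's, but Step~2 contains a genuine error. You claim $(\cP_A)_\p$ is a nonzero proper $D$-ideal of the field $R_\p$, which would be a contradiction. Properness is fine, but ``nonzero'' fails: since you are assuming $\cP_A \subseteq \p$, and $\p R_\p = 0$ (as $\p$ is minimal in the reduced ring $R$, so $R_\p$ is a field), you get $(\cP_A)_\p \subseteq \p R_\p = 0$, hence $(\cP_A)_\p = 0$. Your justification ``$\cP_A$ is $\p$-primary, so $(\cP_A)_\p$ is $\p_\p$-primary, i.e., nonzero'' is exactly backwards here: $\p_\p = (0)$, so being $\p_\p$-primary means being $(0)$. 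Concretely, in $R = K[x,y]/(xy)$ with $\p = (x)$, the ideal $(x)$ is $\p$-primary and nonzero, yet $(x)R_\p = 0$ in the field $R_\p \cong K(y)$. So no contradiction arises from the localization argument.

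The gap is easily repaired, and the repair is essentially the paper's argument: since $R$ is reduced, $(0)$ is a radical $D_{R|A}$-ideal, so by Lemma~\ref{PropMinimalPrime} every minimal prime $\p_j$ of $R$ is itself a $D_{R|A}$-ideal, and therefore $\p_j \subseteq \cP_A$ by Proposition~\ref{PropDiffPrime}(\ref{cPAJ-contains}). If $\cP_A \subseteq \p$ for some minimal prime $\p$, then $\p_j \subseteq \p$ for all $j$, forcing $\p_j = \p$ by minimality; hence $R$ is a domain, $\p = (0)$, and $\cP_A = (0)$, contradicting Corollary~\ref{CorDifPrimeDsimple}. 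With this replacement for Step~2, your Step~3 goes through. The paper phrases the non-domain case slightly differently, observing directly that $\cP_A \supseteq \sum_j \p_j$ and hence $\dim(R/\cP_A) \leq \dim\bigl(R/\sum_j \p_j\bigr) < \dim(R)$.
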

\begin{proof}
If $R$ is a domain, the result follows from Corollary~\ref{CorDifPrimeDsimple}, because $\cP_A\neq 0.$
We suppose that $R$ is not a domain. The zero ideal is radical but not prime by hypothesis. Then, by Proposition~\ref{PropMinimalPrime}, the minimal primes of $R$ are $D_{R|A}$-ideals. Thus, the sum $I$ of any set of minimal primes is a $D_{R|A}$-ideal, and for such an ideal $\dim (R/I)<\dim(R)$. Then, since $\cP_A \supseteq I$ by Lemma~\ref{PropDiffPrime}, the inequality holds.
\end{proof}

The following result resembles one of the key features of $F$-signature in prime characteristic for $F$-pure rings. However,  {Theorem~\ref{ThmDifMultDsimple}} holds in characteristic zero and prime. 

\begin{theorem}\label{ThmDifMultDsimple}
Let $(R,\m)$ be reduced local  {or graded} ring, and  $A$ be a subring.
If $\dm{A}(R)>0,$ then  $R$ is a simple  $D_{R|A}$-module.
\end{theorem}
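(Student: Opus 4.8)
The plan is to argue by contrapositive: assuming $R$ is not $D_{R|A}$-simple, I would show $\dm{A}(R) = 0$. By Lemma~\ref{LemmaDimDiffPrimeNotDsimple}, the hypothesis that $R$ is not a simple $D_{R|A}$-module gives $\dim(R/\cP_A) < \dim(R) = d$; say $\dim(R/\cP_A) = e < d$. The key observation is that the differential core $\cP_A$ controls all the differential powers $\m\dif{n}{A}$ from below: since $\cP_A$ is a $D_{R|A}$-ideal contained in $\m$ (indeed $\cP_A = \cP_A(\m) \subseteq \m$), Lemma~\ref{LemmaDidealDifPower} yields $(\cP_A)\dif{n}{A} = \cP_A$ for all $n$, and since $\cP_A \subseteq \m$ we get $\cP_A = (\cP_A)\dif{n}{A} \subseteq \m\dif{n}{A}$ for every $n$. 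Hence there is a surjection $R/\cP_A \twoheadrightarrow R/\m\dif{n}{A}$, so
\[
\lambda_R(R/\m\dif{n}{A}) \leq \lambda_R\big( (R/\cP_A)\,/\,(\m\dif{n}{A}/\cP_A)\big).
\]

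Next I would bound the right-hand side. Since $\m^n \subseteq \m\dif{n}{A}$ by Proposition~\ref{properties-diff-powers}(ii), the image of $\m^n$ in $R/\cP_A$ is contained in $\m\dif{n}{A}/\cP_A$, so $\lambda_R(R/\m\dif{n}{A}) \leq \lambda_R\big((R/\cP_A)/\m^n(R/\cP_A)\big)$. The ring $\bar R := R/\cP_A$ is a local (or graded) ring of dimension $e < d$, so its Hilbert–Samuel function satisfies $\lambda_{\bar R}(\bar R/\m^n \bar R) = O(n^e)$ as $n \to \infty$. Therefore
\[
\frac{\lambda_R(R/\m\dif{n}{A})}{n^d/d!} \leq \frac{d!\cdot \lambda_{\bar R}(\bar R/\m^n\bar R)}{n^d} = \frac{O(n^e)}{n^d} \longrightarrow 0
\]
since $e < d$, giving $\dm{A}(R) = \limsup_n \lambda_R(R/\m\dif{n}{A})/(n^d/d!) = 0$. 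By contraposition, $\dm{A}(R) > 0$ forces $R$ to be a simple $D_{R|A}$-module.

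I do not anticipate a serious obstacle here: the two ingredients — the containment $\cP_A \subseteq \m\dif{n}{A}$ coming from Lemma~\ref{LemmaDidealDifPower} and Lemma~\ref{PropDiffPrime}, and the dimension drop $\dim(R/\cP_A) < \dim(R)$ coming from Lemma~\ref{LemmaDimDiffPrimeNotDsimple} — are both already in hand, and the passage to Hilbert–Samuel growth of order $n^e$ with $e < d$ is standard. The only point requiring minor care is that the statement allows $R$ merely reduced (not a domain) and local \emph{or graded}; in the graded case one uses the homogeneous maximal ideal throughout and the graded Hilbert function, and in the non-domain case Lemma~\ref{LemmaDimDiffPrimeNotDsimple} has already done the work of producing the dimension drop from the minimal primes. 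One should also note that $\cP_A$ need not be prime, but that is irrelevant: all we use is that $R/\cP_A$ is a Noetherian local (or graded) ring of dimension strictly less than $d$, so its length modulo powers of its maximal ideal grows like a polynomial of degree $< d$.
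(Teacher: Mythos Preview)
Your proof is correct and follows essentially the same approach as the paper's: prove the contrapositive, use $\cP_A \subseteq \m\dif{n}{A}$ together with $\m^n \subseteq \m\dif{n}{A}$ to bound $\lambda_R(R/\m\dif{n}{A})$ by $\lambda_R(R/(\cP_A + \m^n))$, and then invoke Lemma~\ref{LemmaDimDiffPrimeNotDsimple} to see this grows like $O(n^e)$ with $e<d$. The only minor difference is that you justify $\cP_A \subseteq \m\dif{n}{A}$ via Lemma~\ref{LemmaDidealDifPower}, whereas it is immediate from the definition $\cP_A = \bigcap_n \m\dif{n}{A}$.
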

\begin{proof}
We set $d=\dim(R).$
We prove the equivalent statement:   if $R$ is not a simple  $D_{R|A}$-module, then $\dm{A}(R)=0$. 
Since $ \m^n\subseteq \m\dif{n}{A}$ and $\cP_A\subseteq  \m\dif{n}{A}$ for every $n\in\NN,$
we have that
\[
\begin{aligned}
\dm{A}(R)=\limsup\limits_{n\to\infty}\frac{\lambda_R(R/\m\dif{n}{A})}{n^d/d!}
&=\limsup\limits_{n\to\infty}\frac{\lambda_R(R/(\cP_A+\m\dif{n}{A}))}{n^d/d!} \\
&\leq \limsup\limits_{n\to\infty}\frac{\lambda_R(R/(\cP_A+\m^n))}{n^d/d!}.
\end{aligned}
\]
By Lemma~\ref{LemmaDimDiffPrimeNotDsimple}, we have that 
$\dim(R/\cP(A))<d,$ and so $\displaystyle \limsup\limits_{n\to\infty}\frac{\lambda_R(R/(\cP_A+\m^n))}{n^d/d!}$ is zero. Hence, $\dm{A}(R)=0$.
\end{proof}

We end this subsection by noticing that if one replaces the differential operators by Hasse-Schmidt  differentials in the definition of differential signature, a great deal of information is lost.

\begin{remark}
	Let $K$ be a field of characteristic zero, and $(R,\m)$ be an algebra with pseudocoefficient field $K$. One can define a ``Hasse-Schmidt signature'' $s^{HS}_{K}(R)$ by replacing the $n$-th differential power of $\m$ with the set of elements of $\m$ that are sent into $\m$ by every product of at most $n-1$ derivations. If $R$ is not regular, by Remark~\ref{rem:der-simple}, there exists an ideal stable under the action of all derivations. By an argument analogous to Theorem~\ref{ThmDifMultDsimple}, $s^{HS}_{K}(R)=0$ for any such $R$.
\end{remark}

\subsection{Some basic examples}

\begin{example}
	The inequality  {$\dm{K}(R)\leq 1$} in Proposition~\ref{leq-1} does not necessarily hold if $R$ is not a domain. For example, let $K$ be a field and $R=K[x]/(x^2)$. By Remark~\ref{rem-radicals-D-ideals}, we have $\m\dif{n}{K}=(0)$ for $n>2$. Thus, $\dm{K}(R)=2$. The modules of principal parts $\ModDif{n}{R}{K}$ are free $R$-modules of rank 2 for all $n\geq 3$.
\end{example}

We now prepare to show that $\dm{K}(R)=1$ does not imply that $R$ is regular, even if $R$ is a complete domain.

 {
\begin{lemma}\label{normalization-lemma} Let $(R,\m)\subseteq (S,\n)$ be local domains that are $K$-algebras, and suppose that $K$ is a coefficient field of each. If $R$ and $S$ have the same fraction field,  and there is some differential operator $\alpha\in D_{S|K}$ such that $\alpha(S)\subseteq R$ and $\alpha(1)=1$, then $\dm{K}(R)\geq \dm{K}(S)$.
\end{lemma}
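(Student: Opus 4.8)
The plan is to compare the descending filtrations $\{\m\dif{n}{K}\}_{n}$ and $\{\n\dif{n}{K}\}_{n}$ by using $\alpha$ to transport differential conditions between $R$ and $S$. Set $F=\Frac(R)=\Frac(S)$ and let $c$ be the order of $\alpha$. Since $R$ and $S$ have the same fraction field they have the same dimension $d$; since $K$ is a coefficient field of each, both residue fields are $K$, the inclusion $R\hookrightarrow S$ is local (so $\n\cap R=\m$), and lengths over either ring coincide with $K$-dimensions.

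First I would dispose of a degenerate case. If $S$ is $D_{S|K}$-simple then $R$ is $D_{R|K}$-simple: given $0\neq h\in R\subseteq S$, $D$-simplicity of $S$ yields $\epsilon\in D_{S|K}$ with $\epsilon(h)=1$, and then $\alpha\circ\epsilon$ maps $S$ into $R$ and, restricted to $R$, is a differential operator on $R$ sending $h$ to $\alpha(1)=1$, so no proper ideal of $R$ containing $h$ is a $D_{R|K}$-ideal. Hence, if $R$ is not $D_{R|K}$-simple then neither is $S$, and by Theorem~\ref{ThmDifMultDsimple} both differential signatures vanish; and if $S$ is not $D_{S|K}$-simple then $\dm{K}(S)=0\le\dm{K}(R)$. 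In either case there is nothing to prove, so henceforth I assume both $R$ and $S$ are $D$-simple.

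The key step is the containment $\alpha\!\left(\n\dif{n+c}{K}\right)\subseteq\m\dif{n}{K}$ for all $n\ge 1$. Fix $\delta\in D^{n-1}_{R|K}$; by Proposition~\ref{localization1} it extends to $\tilde\delta\in D^{n-1}_{F|K}$. Working inside $F$: because $S\subseteq F$ we have $\tilde\delta\in D^{n-1}_{S|K}(F,F)$, while $\alpha\in D^{c}_{S|K}(S,F)$, so $\tilde\delta\circ\alpha\in D^{n+c-1}_{S|K}(S,F)$; but $\tilde\delta\circ\alpha$ carries $S$ into $\delta(\alpha(S))\subseteq\delta(R)\subseteq R$, so in fact $\tilde\delta\circ\alpha\in D^{n+c-1}_{S|K}$. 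Thus, for $g\in\n\dif{n+c}{K}$ we get $\delta(\alpha(g))=(\tilde\delta\circ\alpha)(g)\in\n$, an element of $R$, hence of $\n\cap R=\m$; as $\delta$ was arbitrary, $\alpha(g)\in\m\dif{n}{K}$.

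This yields a $K$-linear map $\bar\alpha\colon S/\n\dif{n+c}{K}\to R/\m\dif{n}{K}$, and therefore
\[
\lambda_R\!\left(R/\m\dif{n}{K}\right)=\dim_K\!\left(R/\m\dif{n}{K}\right)\ \ge\ \lambda_S\!\left(S/\n\dif{n+c}{K}\right)-\dim_K\ker\bar\alpha .
\]
Dividing by $n^{d}/d!$, using $(n+c)^{d}/n^{d}\to 1$, and passing to limits superior, the desired inequality $\dm{K}(R)\ge\dm{K}(S)$ follows as soon as one shows $\dim_K\ker\bar\alpha=o(n^{d})$, and this asymptotic estimate is the main obstacle. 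Here $\ker\bar\alpha=\alpha^{-1}(\m\dif{n}{K})/\n\dif{n+c}{K}$, and $D$-simplicity of $R$ forces $\bigcap_n\alpha^{-1}(\m\dif{n}{K})=\alpha^{-1}(\cP_K(\m))=\ker\alpha$, so these numerators stabilize. To control the defect I would use that $R\subseteq S$ is birational, so (in the intended applications, where $S$ is module-finite over $R$) there is a nonzero $c_0\in R$ with $c_0S\subseteq R$ and $R[1/c_0]=S[1/c_0]$; since differential powers commute with localization (Lemma~\ref{diff-localize2}), the discrepancy between $\alpha^{-1}(\m\dif{n}{K})$ and $\n\dif{n+c}{K}$ is supported on $V(c_0)$, a closed subset of dimension $<d$, and a Hilbert-function count along $V(c_0)$ should give $\dim_K\ker\bar\alpha=O(n^{d-1})$, which is exactly what is needed. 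Making that last count precise is the crux of the proof.
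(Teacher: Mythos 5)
Your proposed containment $\alpha\bigl(\n\dif{n+c}{K}\bigr)\subseteq\m\dif{n}{K}$ is correct (and is a valid observation), but it points in a different direction than what the paper uses, and that difference is precisely why your proof cannot be finished without extra hypotheses. The map $\bar\alpha\colon S/\n\dif{n+c}{K}\to R/\m\dif{n}{K}$ is only $K$-linear, not linear over $R$ or $S$, so its kernel is not an ideal and the "Hilbert-function count along $V(c_0)$'' you describe does not apply; moreover, the existence of a conductor element and module-finiteness of $S$ over $R$ are not part of the lemma's hypotheses. You candidly flag this gap yourself, and it is a real one: you have reduced the lemma to the claim $\dim_K\ker\bar\alpha=o(n^d)$, which your argument does not establish.

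The paper takes the opposite composition order and, as a result, gets a containment of ideals \emph{of $R$} rather than a $K$-linear map. Starting from $x\in R\setminus\n\dif{n}{K}$, pick $\delta\in D^{n-1}_{S|K}$ with $\delta(x)=1$ (normalize the unit); then $\alpha\circ\delta$ is an order-$\le t+n-1$ operator on $S$ with image in $R$, so its restriction lies in $D^{t+n-1}_{R|K}$ and sends $x$ to $\alpha(1)=1$. Contrapositively, $\m\dif{n+t}{K}\subseteq R\cap\n\dif{n}{K}$. This inclusion is between honest $R$-ideals, so the length comparison proceeds through the natural surjection $R/\m\dif{n+t}{K}\twoheadrightarrow R/(R\cap\n\dif{n}{K})$ and the exact sequence
\[
0\longrightarrow \frac{R}{R\cap\n\dif{n}{K}}\longrightarrow \frac{S}{\n\dif{n}{K}}\longrightarrow \frac{S}{R+\n\dif{n}{K}}\longrightarrow 0,
\]
and the error term $\lambda_R\bigl(S/(R+\n\dif{n}{K})\bigr)$ is killed asymptotically because $S/R$ has dimension $<d$ and $\m^n$ maps into $\n\dif{n}{K}$. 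In short: postcomposing with $\alpha$ (paper) turns the problem into a containment of $R$-ideals with a controllable cokernel, while precomposing with $\alpha$ (your route) produces only a $K$-linear map whose kernel you would have to bound by hand, and that bound is not available from the hypotheses of the lemma. If you reverse the composition order, your argument becomes the paper's argument.

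Two smaller remarks. First, your reduction to the $D$-simple case is unnecessary once you have the ideal containment; the paper does not need it. Second, the step $\tilde\delta\circ\alpha\in D^{n+c-1}_{S|K}$ in your proof is true but deserves a word of justification (restricting $\tilde\delta\in D^{n-1}_{F|K}$ along $S\subseteq F$ and composing across module categories), whereas the paper only needs the elementary fact that a differential operator on $S$ whose image lies in $R$ restricts to a differential operator on $R$ of the same order.
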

}
\begin{proof}
	Let $\alpha\in D_{S|K}$ be as in the statement of the lemma, and suppose that $\alpha$ has order $\leq t$. If $x\in R\setminus \n \dif{n}{K}$, then there is some $\delta\in D^{n-1}_{S|K}$ such that $\delta(x)=1$. Then, $\alpha\circ\delta$ restricted to $R$ is a differential operator in $D_{R|K}$ of order at most $t+n-1$, and $\alpha\circ\delta(x)=1$. Therefore, $x\notin \m\dif{n+t}{K}$. Thus, $\m\dif{n+t}{K}\subseteq R\cap \n\dif{n}{K}$. Now, consider the short exact sequence
	\[ 0 \lra \frac{R}{R\cap \n\dif{n}{K}} \lra \frac{S}{\n\dif{n}{K}} \lra \frac{S}{R+\n\dif{n}{K}} \lra 0\]
	 {As $S/R$ has dimension strictly less than that of $S$,} and since the image of $\m^n$ is contained in the image of $\n\dif{n}{K}$ in $S/R$, by comparison with the Hilbert function one has that 
	\[\limsup_{n\rightarrow\infty}\frac{d!}{n^d}\; \lambda_R  \left(\frac{S}{R+\n\dif{n}{K}}\right)=0,\]
	so 
	\[\dm{K}(S)=\limsup_{n\rightarrow\infty}\frac{d!}{n^d}\; \lambda_R   \left( \frac{R}{R\cap \n\dif{n}{K}} \right) \leq \limsup_{n\rightarrow\infty}\frac{d!}{n^d}\; \lambda_R \left( \frac{R}{\m\dif{n+t}{K}} \right) =  \dm{K}(R),\]
	where the last equality follows from shifting indices by $t$ and ${\limsup\limits_{n\to\infty} (n+t)^d / n^d =1}$.
\end{proof}

\begin{corollary}
	If $(R,\m)$ is a local domain with a perfect coefficient field $K$, the normalization $R'$ of $R$ is local and regular, and $R'/R$ has finite length, then $\dm{K}(R)=1$.
\end{corollary}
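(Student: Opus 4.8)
The plan is to apply Lemma~\ref{normalization-lemma} with $S=R'$. Write $\n$ for the maximal ideal of $R'$. First dispose of the trivial case: if $R=R'$ then $R$ is regular and $\dm{K}(R)=1$ by Example~\ref{reg-1}, so assume $R\subsetneq R'$. Since $R'/R$ has finite length it is a finitely generated $R$-module, so $R'$ is module-finite over $R$; as $R'$ is local with residue field $K$, we regard $K$ as a coefficient field of $R'$ as well, and $\Frac(R')=\Frac(R)$. Because $R'$ is regular, $\dm{K}(R')=1$ by Example~\ref{reg-1}; because $K$ is perfect, $\Frac(R)/K$ is separable, so $\dm{K}(R)\le 1$ by Proposition~\ref{leq-1}. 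It therefore suffices to prove $\dm{K}(R)\ge\dm{K}(R')$, and by Lemma~\ref{normalization-lemma} this reduces to producing a single differential operator $\alpha\in D_{R'|K}$ with $\alpha(1)=1$ and $\alpha(R')\subseteq R$.

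To set up the construction, note that the conductor $\mathfrak{c}=\{r\in R\mid rR'\subseteq R\}$ is a nonzero ideal of both $R$ and $R'$; since $R'/R$ has finite length it contains a power of $\m$, and since $\m R'$ is $\n$-primary in $R'$ we get $\n^N\subseteq\mathfrak{c}\subseteq R$ for some $N$. Fix a regular system of parameters $y_1,\dots,y_d$ for $\n$. The polynomial subring $K[y_1,\dots,y_d]\subseteq R'$ is dense in $R'$ for the $\n$-adic topology (it is already dense in $\widehat{R'}=K[[y_1,\dots,y_d]]$), while $R$ is closed in $R'$: indeed $R\supseteq\n^N$, and $R/\n^N$ is a $K$-subspace of the Artinian ring $R'/\n^N$, on which the induced topology is discrete, so $R$ is the preimage of a closed set under the continuous map $R'\to R'/\n^N$. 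Since differential operators are $\n$-adically continuous by Lemma~\ref{diff-ops-are-cts}, it is enough to build $\alpha$ with $\alpha(1)=1$ and $\alpha(y^\gamma)\in R$ for every monomial $y^\gamma$, $\gamma\in\NN^d$; monomials of degree $\ge N$ lie in $\n^N\subseteq R$ automatically, so only finitely many degrees matter.

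In characteristic zero I would take $\alpha$ to be a polynomial in the Euler operator: using the operators $\delta_{e_i}\in D^1_{R'|K}$ of Example~\ref{example-regular-D} (so $\delta_{e_i}(y^\gamma)=\gamma_i y^{\gamma-e_i}$), set $\Theta=\sum_{i=1}^d y_i\delta_{e_i}\in D^1_{R'|K}$, so that $\Theta(y^\gamma)=|\gamma|\,y^\gamma$. Let $\mathcal{D}=\{\,j<N\mid y^\gamma\notin R\text{ for some }\gamma\text{ with }|\gamma|=j\,\}$, a finite set not containing $0$ since $1\in R$, and put $\alpha=\prod_{j\in\mathcal{D}}\bigl(1-\tfrac{1}{j}\Theta\bigr)\in D_{R'|K}$. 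Then $\alpha(1)=1$, and $\alpha(y^\gamma)$ is a $K$-multiple of $y^\gamma$ which is $0$ when $|\gamma|\in\mathcal{D}$ and otherwise equals $c\,y^\gamma$ with $y^\gamma\in R$ (either $|\gamma|\ge N$, or $|\gamma|<N$ and $|\gamma|\notin\mathcal{D}$, which forces $y^\gamma\in R$); in every case $\alpha(y^\gamma)\in R$. In characteristic $p>0$ I would instead use Frobenius: choose $e$ with $p^e\ge N$. Then $(R')^{p^e}\subseteq R$, because for $G\in R'$ one has $G^{p^e}=G(0)^{p^e}+(G-G(0))^{p^e}$ with $G(0)^{p^e}\in K\subseteq R$ and $(G-G(0))^{p^e}\in\n^{p^e}\subseteq\n^N\subseteq R$. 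Since $R'$ is $F$-finite and regular it is free over $(R')^{p^e}$ with basis the monomials $y^\gamma$, $0\le\gamma_i<p^e$; let $\alpha\colon R'\to R'$ be the $(R')^{p^e}$-linear projection onto the summand $(R')^{p^e}\cdot 1$. Any $(R')^{p^e}$-linear endomorphism of $R'$ is a differential operator, $\alpha(1)=1$, and $\alpha(R')=(R')^{p^e}\subseteq R$.

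With $\alpha$ in hand, Lemma~\ref{normalization-lemma} gives $\dm{K}(R)\ge\dm{K}(R')=1$, and combined with $\dm{K}(R)\le 1$ this yields $\dm{K}(R)=1$. The vanishing-free estimates ($\dm{K}(R')=1$, $\dm{K}(R)\le 1$) and the conductor bookkeeping are routine invocations of results already available; the one genuinely new point, and the main obstacle, is producing $\alpha$ — a single finite-order differential operator on $R'$ whose image is contained in the possibly non-homogeneous subring $R$ while fixing the unit — which is precisely what forces the dichotomy between a polynomial in the Euler operator in characteristic zero and a Frobenius-linear projection in positive characteristic.
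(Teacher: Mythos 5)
Your proof follows the same skeleton as the paper's — record $\dm{K}(R')=1$ (Example~\ref{reg-1}) and $\dm{K}(R)\le 1$ (Proposition~\ref{leq-1}), then reduce via Lemma~\ref{normalization-lemma} to producing $\alpha\in D_{R'|K}$ with $\alpha(1)=1$ and $\alpha(R')\subseteq R$ — but the construction of $\alpha$ is genuinely different and, to my eye, closes a real gap in the paper's argument. The paper cites differential smoothness of $R'$ to obtain, for $K$-linearly independent $f_1,\dots,f_s$ with images spanning $R'/R$, an operator with $\alpha(1)=1$ and $\alpha(f_i)=0$, and then asserts that $\alpha$ ``hence sends $R'$ into $R$.'' That last implication is not automatic: the conditions $\alpha(1)=1$, $\alpha(f_i)=0$ prescribe $\alpha$ only on the span of $\{1,f_1,\dots,f_s\}$ and say nothing about $\alpha(r)$ for $r\in R$, so one still needs to force $\alpha(R)\subseteq R$ — say by also pinning $\alpha$ on a basis of $R$ modulo a conductor power and controlling $\operatorname{ord}(\alpha)$, which the paper does not do. Your argument sidesteps this cleanly: use the conductor $\n^N\subseteq R$, the density of $K[y_1,\dots,y_d]$ in $R'$, closedness of $R$, and $\n$-adic continuity of differential operators (Lemma~\ref{diff-ops-are-cts}) to reduce to checking $\alpha$ on finitely many monomials, and then pick $\alpha$ that visibly scales or projects each monomial into $R$ — a polynomial in the Euler operator in characteristic zero, and a Frobenius-linear projection in characteristic $p$ (which lies in $D_{R'|K}$ by Proposition~\ref{RemYek} together with $K=K^{p^e}\subseteq(R')^{p^e}$). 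Two small caveats, both shared with the paper: Example~\ref{reg-1} as stated needs $R'$ essentially of finite type (for the complete local case one substitutes Lemma~\ref{diff-powers-completion}), and your remark ``as $R'$ is local with residue field $K$'' does not in fact follow from the hypotheses as written (e.g.\ $R=K+tL\llbracket t\rrbracket$ with $L/K$ a nontrivial finite extension has $R'=L\llbracket t\rrbracket$ with larger residue field, yet $R'/R$ has finite length), so the statement should be read as tacitly assuming $K$ is a coefficient field of $R'$ — which Lemma~\ref{normalization-lemma} in any case requires.
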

\begin{proof}
	 {By Example \ref{reg-1} and Proposition~\ref{leq-1},} we have that $\dm{K}(R')=1$ and $\dm{K}(R)\leq 1$. It then suffices to show that $\dm{K}(R)\geq\dm{K}(R')$. By Lemma~\ref{normalization-lemma}, it suffices to show that there is some $\alpha\in D_{R'|K}$ sending $1$ to $1$ and with image in $R$. Under the hypotheses, $R'$ is differentially smooth over $K$  \cite[17.15.5]{EGAIV}. Given finitely many $K$-linearly independent elements $f_1,\dots,f_s$ of $R'$ and equally many elements $g_1,\dots,g_s$ of $R'$, there is some differential operator $\alpha$ such that $\alpha(f_i)=g_i$. In particular, if we choose $f_1,\dots,f_s$ whose images form a basis of $R'/R$, there is a differential operator that sends $1$ to $1$ and each $f_i$ to 0, and hence sends $R'$ into $R$.
\end{proof}

\begin{example}\label{example-equals-1}
	If $R=K\llbracket x^a \ | \ a \in \Theta \rrbracket \subseteq K\llbracket x \rrbracket$ for some numerical semigroup $\Theta\subseteq \NN$ and some perfect field $K$, then $\dm{K}(R)=1$. One may also compute this example explicitly for $\Theta=\langle 2,3\rangle$ using the description of the differential operators on this ring found in the work of Smith \cite{PaulSmith} and Smith-Stafford \cite{SmithStafford}.
\end{example}

\begin{example}
	If $R=K\llbracket x^2,x^3,y,xy\rrbracket\subseteq R'=K\llbracket x,y\rrbracket$, the normalization of $R$ is $R'$, and the quotient has length one. We have that $\dm{K}(R)=1$. We note that, for $K=\CC$, $D_{R|\CC}$ is not a simple ring \cite[0.13.3]{LS}. Thus positivity of differential signature does not imply simplicity of the ring of differential operators.
\end{example}

We do not know examples of normal rings with differential signature equal to one that are not regular. We thus pose the following question.

\begin{question} If $R$ is a normal domain with coefficient field $K$ and $\dm{K}(R)=1$, must $R$ be regular?
\end{question}

We do not know whether the differential signature exists as a limit rather than a limit superior. If the differential powers form a \emph{graded family}\index{graded family}, i.e., satisfy the containments $\m\dif{a}{K}\m\dif{b}{K}\subseteq \m\dif{a+b}{K}$ for all $a,b\in \NN$, and $R$ is reduced, then this follows from work of Cutkosky. Namely, as an immediate consequence of \cite[Theorem~1.1]{Cutkosky}, we have the following.

\begin{proposition}
	If $(R,\m)$ is local, the differential powers of $\m$ form a graded family, and the dimension of the nilradical of $R$ as a module is less than the dimension of $R$ (e.g., $R$ is reduced), then the differential signature of $R$ exists as a limit.
\end{proposition}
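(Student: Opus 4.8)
The plan is to deduce this directly from \cite[Theorem~1.1]{Cutkosky}. That result states that if $(R,\m)$ is a Noetherian local ring of dimension $d$ in which the nilradical, viewed as an $R$-module, has dimension strictly less than $d$, and $\{I_n\}_{n\in\NN}$ is a graded family of $\m$-primary ideals, then
\[\lim_{n\to\infty}\frac{\lambda_R(R/I_n)}{n^{d}}\]
exists. So the whole content of the argument is to verify that the family $\{\m\dif{n}{K}\}_{n\ge 1}$ meets these hypotheses.

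First I would record that each $\m\dif{n}{K}$ is $\m$-primary: by Proposition~\ref{properties-diff-powers}(ii) we have $\m^{n}\subseteq\m\dif{n}{K}$, while $1\in D^{0}_{R|K}$ forces $\m\dif{n}{K}\subseteq\m$ from the definition of differential powers; hence $\m^{n}\subseteq\m\dif{n}{K}\subseteq\m$ and $R/\m\dif{n}{K}$ has finite length. Second, the hypothesis of the proposition is exactly the inclusion $\m\dif{a}{K}\,\m\dif{b}{K}\subseteq\m\dif{a+b}{K}$ for all $a,b\in\NN$, which together with the $\m$-primary property says precisely that $\{\m\dif{n}{K}\}_n$ is a graded family of $\m$-primary ideals in the sense of \cite{Cutkosky}. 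Third, the remaining hypothesis of \cite[Theorem~1.1]{Cutkosky}, on the dimension of the nilradical of $R$, is assumed in the statement; in particular it is automatic when $R$ is reduced, since then the nilradical is zero. Applying \cite[Theorem~1.1]{Cutkosky} therefore shows that $\lim_{n\to\infty}\lambda_R(R/\m\dif{n}{K})/n^{d}$ exists.

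Finally, since
\[\dm{K}(R)=\limsup_{n\to\infty}\frac{\lambda_R(R/\m\dif{n}{K})}{n^{d}/d!}=d!\cdot\limsup_{n\to\infty}\frac{\lambda_R(R/\m\dif{n}{K})}{n^{d}},\]
and the limit superior on the right has just been identified with an honest limit, we conclude that $\dm{K}(R)$ exists as a limit. The only step needing care, and the closest thing to an obstacle, is matching the precise form of the cited theorem: one should confirm that the version of \cite[Theorem~1.1]{Cutkosky} being invoked imposes its condition on the nilradical of $R$ itself (as stated in the proposition) rather than on that of $\widehat{R}$. One may also dispose of the degenerate case $d=0$ directly, where $\lambda_R(R/\m\dif{n}{K})$ is a nondecreasing sequence of integers bounded by $\lambda_R(R)$, hence eventually constant, so the limit exists trivially.
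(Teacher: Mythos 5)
Your argument is exactly the paper's: the proposition is stated there as an immediate consequence of \cite[Theorem~1.1]{Cutkosky}, and your verification that the differential powers of $\m$ form a graded family of $\m$-primary ideals (via $\m^{n}\subseteq\m\dif{n}{K}\subseteq\m$, or equivalently Proposition~\ref{properties-diff-powers}) is precisely the intended check. Your closing caveat about whether Cutkosky's hypothesis concerns the nilradical of $R$ or of $\widehat{R}$ is a reasonable point of care, but it applies equally to the paper's own statement, so your proof matches the paper's route.
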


Alas, it is not always the case that differential powers form a graded family.

\begin{example}\label{example-not-D-graded}
	Let $R=K\llbracket x^2, x^3 \rrbracket$. By the description of the differential operators on this ring found in the work of Smith  \cite{PaulSmith} and Smith-Stafford \cite{SmithStafford}, one sees that $x^n \in \m\dif{n+1}{K} \setminus \m\dif{n+2}{K}$ for all $n>1$. In particular, $x^2 \in \m\dif{3}{K}$, but $x^4=(x^2)^2 \notin \m\dif{6}{K}$. Thus, $\{ \m\dif{n}{K} \}_{n\in\NN}$ does not form a graded family.
\end{example}

 {We do not know examples of normal domains whose differential powers do not form a graded family. Then, we  pose the following question.}

 {\begin{question} If $R$ is a normal domain with coefficient field $K$, do the differential powers $\{ \m\dif{n}{K} \}_{n\in\NN}$  form a graded family?
\end{question}
}

 {We give some positive results on convergence and rationality in Section~\ref{sec-duality}.}

%%%%%%%%%%%%%%%%%%%%%%%%%
\subsection{Algorithmic aspects}\label{SubAlg}
%%%%%%%%%%%%%%%%%%%%%%%%%

In this subsection we deal with the question how the free ranks of the modules of principal parts and the differential powers can be computed algorithmically. Throughout we work with a family of  polynomials $\{f_i, 1 \leq i \leq m\}$, in $K[x_1, \ldots , x_k]$ and with the Jacobi-Taylor matrices over the residue class ring $R=K[x_1, \ldots, x_k] /\left(f_1, \ldots, f_m\right)$ from Section~\ref{basics}, or localizations thereof.

\begin{corollary}
	\label{JacobiTaylorunitaryoperators}
	Let $f_1 , \ldots , f_m \in K[x_1 , \ldots , x_k]$ denote polynomials with residue class ring
	$R  =  K[x_1 , \ldots , x_k]/ \left( f_1 , \ldots , f_m \right)$.
Then a differential operator on $R$ of order $\leq n $ is unitary if and only
if the corresponding tuple (see Corollary~\ref{JacobiTayloroperators}) $\left( a_\lambda \right)$ in the kernel of the $n$-th Jacobi-Taylor matrix generates the unit ideal in $R$. In the graded case this is true if and only if one $a_\lambda$ is a unit.
\end{corollary}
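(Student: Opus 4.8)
The plan is to combine Lemma~\ref{unitary} with the explicit presentation of $P^n_{R|K}$ from Corollary~\ref{principalpartsrepresentation}. First I would recall, from Corollary~\ref{JacobiTayloroperators} and its proof, that the operator $\delta$ of order $\leq n$ attached to a kernel tuple $\left(a_\lambda\right)$ is exactly $\phi\circ d^n$, where $\phi\colon P^n_{R|K}\to R$ is the $R$-linear form obtained by descending the map $\varphi\colon\bigoplus_{\mondeg{\lambda}\leq n}Re_\lambda\to R$, $e_\lambda\mapsto a_\lambda$, along the surjection $\bigoplus_{\mondeg{\lambda}\leq n}Re_\lambda\twoheadrightarrow P^n_{R|K}$ of Corollary~\ref{principalpartsrepresentation}. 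This descent is legitimate precisely because $\left(a_\lambda\right)$ lies in the kernel of $J_n$, i.e.\ $\varphi\circ J_n^{\mathrm{tr}}=0$. (One must use here that the $R$-module structure on $P^n_{R|K}$ appearing in Lemma~\ref{unitary} and the one used in the Jacobi--Taylor presentation coincide: both are the ``left'' structure.)

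Next I would observe that, since the images $\overline{y^\lambda}$ of the $e_\lambda$ with $\mondeg{\lambda}\leq n$ generate $P^n_{R|K}$ as an $R$-module, the image of $\phi$ equals the image of $\varphi$, namely the ideal $\left(a_\lambda\ \middle|\ \mondeg{\lambda}\leq n\right)$. By Lemma~\ref{unitary}, $\delta$ is unitary if and only if $\phi$ is surjective, that is, if and only if this ideal is all of $R$. This gives the first assertion.

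For the graded statement, when the $f_i$ are homogeneous and $\delta$ is taken graded, the entries $a_\lambda$ are homogeneous, so the ideal $I=\left(a_\lambda\right)$ is homogeneous. Since $R$ is positively graded with $R_0=K$ a field, $I=R$ if and only if $I_0\neq 0$; and because $R$ has no elements of negative degree, the degree-zero part of $I$ is the $K$-span of those $a_\lambda$ of degree $0$. Hence $I=R$ if and only if some $a_\lambda$ is a nonzero element of $K$, i.e.\ a unit (every homogeneous unit of a positively graded ring with $R_0$ a field lies in degree zero). Combined with the first part, this proves the graded claim.

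I do not expect a serious obstacle: the content is essentially bookkeeping with the identifications already established in Corollaries~\ref{JacobiTayloroperators} and~\ref{principalpartsrepresentation}, together with the elementary fact about homogeneous ideals and degree-zero units. The single point requiring care is the one flagged above, that ``the image of $\delta$ viewed as a linear form on $P^n_{R|K}$'' genuinely coincides with the ideal generated by the $a_\lambda$, which rests on the compatibility of the various $R$-module structures and on $P^n_{R|K}$ being generated by the classes of the monomials $y^\lambda$.
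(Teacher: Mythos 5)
Your proposal is correct and follows essentially the same route as the paper: the paper also invokes Lemma~\ref{unitary} to reduce unitarity to surjectivity of the associated linear form on $P^n_{R|K}$, and then notes that surjectivity of that form is equivalent to the tuple $(a_\lambda)$ generating the unit ideal, since the classes of the $y^\lambda$ generate $P^n_{R|K}$. Your extra bookkeeping about the compatibility of $R$-module structures and the explicit homogeneous-ideal argument for the graded case are fine elaborations of steps the paper leaves implicit.
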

\begin{proof}
A differential operator is unitary if and only if the corresponding linear form on $P^n_{R|K}$ is surjective by Lemma~\ref{unitary}. It is clear that a linear form on $P^n_{R|K}$ is surjective if and only if the corresponding tuple $\left(  a_\lambda \right) $ defines a surjection, and this is the same as the $a_\lambda$ generating the unit ideal. 
\end{proof}

Note that if $a_\nu$ is a unit, then $X^\nu$ is sent to a unit by the corresponding operator, since by Corollary~\ref{JacobiTayloroperators}
\[  \sum_\lambda a_\lambda \frac{ 1 }{ \lambda! } \partial^\lambda \left( x^\nu \right) =   a_\nu \frac{ 1 }{ \nu ! }  \partial^\nu \left( x^\nu \right) +  \sum_{\lambda < \nu} a_\lambda \frac{ 1 }{ \lambda! } \partial^\lambda \left( x^\nu \right)  \in a_\nu + (x_1, \ldots, x_k) \,  .\]
In the graded case, the induced operator with values in $K$ does only depend on those $a_\nu$ where $\nu$ has minimal degree.

\begin{example}
For $R=K[x,y,z]/(z^2-xy)$, the transposed second Jacobi-Taylor matrix is
\[ \begin{blockarray}{ccccc} & 1 & a & b & c \\
\begin{block}{c[cccc]}
 1 & 0 & 0 & 0 & 0 \\
  a & -y & 0 & 0 & 0 \\
   b & -x & 0 & 0 & 0 \\
    c & 2z & 0 & 0 & 0 \\
   a^2 & 0 & -y & 0 & 0 \\
   ab & -1 & -x & -y & 0 \\
     ac & 0 & 2z & 0 & -y \\
      b^2 & 0 & 0 & -x & 0 \\
       bc & 0 & 0 & 2z & -x \\
        c^2 & 1 & 0 & 0 & 2z \\
        \end{block} \end{blockarray} .\]
From this we get the unitary differential operators $(1,0,0,0 \ldots, 0,0)$ (which exists always and corresponds to the identity) and
\[ (0,1,0,0,4x,0,2z,0,0,y),  \, (0,0,1,0,0,0,0,4y,2z,x),\,   (0,0,0,1,0,4z,2x,0,2y,2z) . \]
The first of these corresponds to $\partial_x +2x \partial_x \circ \partial_x + 2z \partial_x \circ \partial_z + y \frac{1}{2} \partial_z \circ \partial_z $ and sends $x$ to $1$.
\end{example}

\begin{remark}
\label{JacobiTayloralgorithms}
For $R=K[x_1, \ldots, x_k]/\left(f_1, \ldots, f_m \right)$ we can compute the free rank of $P^n_{R|K}$ with the help of the Jacobi-Taylor matrix $J_n$. It is the maximal number $r$ of tuples
$\left (a_{\lambda}\right)_i $, $i=1, \ldots , r$, inside the kernel of the Jacobi-Taylor matrix such that there exists $\left (c_{\lambda}\right)_i$, $i=1, \ldots , r$, fulfilling the orthogonal relations $a_i \cdot c_j= \delta_{ij}$, since this relation describes the surjectivity of the map from $P^n_{R|K}$ to $R^r$.
	
For a localization $ ( S,\m) $ of $R$, we interpret the short exact sequence from Remark~\ref{localunitaryoperators} 
as
\[ 0 \longrightarrow \ker (J_n) \cap \m \,\left(\bigoplus_{ \mondeg {\lambda} \leq n} Re_\lambda\right) \longrightarrow \ker (J_n) \longrightarrow Q_n \longrightarrow 0 .\]
This provides a way to compute the free ranks of the modules of principal parts algorithmically.

This applies also to the differential powers $ \m\dif{n}{K}$. An element $h \in R$ belongs to $\m\dif{n}{K}$ if and only if the following hold: For all elements $\left( a_\lambda \right) $ in the kernel of the Jacobi-Taylor matrix $J_{n-1}$ we have $\sum_{ \mondeg {\lambda} \leq n-1} \frac{1}{\lambda!} \partial^\lambda (h) a_\lambda \in \m$. As the kernel is a finitely generated module, this is a finite test for one element $h$. In this, we only have to consider a maximal unitary system for the $\left( a_\lambda \right) $.

If we want to know for a fixed element $h$ whether there exists an $n$ such that ${h \notin \m\dif{n}{K}}$, the situation is more complicated. For $n $ large enough the terms 
$$\sum_{ \mondeg {\lambda} \leq n-1} \frac{1}{\lambda!} \partial^\lambda (h) e_\lambda$$ do not change anymore. However, the containment $\sum_{ \mondeg {\lambda} \leq n-1} \frac{1}{\lambda!} \partial^\lambda (h) a_\lambda \in \m$ has to be checked for all kernel elements of all higher Jacobi-Taylor matrices. 

The computation of $\m\dif{n}{K}$ for fixed $n$ is also more complicated. At least over a finite field this is possible. By Proposition~\ref{properties-diff-powers}~(ii) we know that $\m^n \subseteq \m\dif{n}{K}$, and since $R/\m^n$ is finite we can
check the containments for all $h$ separately.
\end{remark}

\begin{corollary}
Let $R= \left(  K[x_1, \ldots, x_k]	/(f_1, \ldots , f_m)\right)_\p $ be a local ring essentially of finite type. Then $R$ is $D$-simple if and only if for every $h \in R$, $h \neq 0$, there exists an element $(a_\lambda)$ in the kernel of some  Jacobi-Taylor matrix such that $ \sum_{\lambda} \frac{1}{\lambda!} \partial^\lambda (h)   a_\lambda $ is a unit.
\end{corollary}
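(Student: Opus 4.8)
The plan is to combine the Jacobi--Taylor description of differential operators from Corollary~\ref{JacobiTayloroperators} with the characterization of $D$-simplicity coming from differential cores. Recall from Corollary~\ref{CorDifPrimeDsimple} that $R$ is $D$-simple if and only if $\cP_K = 0$, which in turn (by the discussion following that corollary) is equivalent to the statement that for every nonzero $h\in R$ there exists a differential operator $\delta\in D_{R|K}$ with $\delta(h)$ a unit. So the task reduces to translating ``there is $\delta\in D^n_{R|K}$ with $\delta(h)\notin\p R_{\p}$'' into ``there is a tuple $(a_\lambda)$ in the kernel of some Jacobi--Taylor matrix with $\sum_{\lambda}\frac1{\lambda!}\partial^\lambda(h)\,a_\lambda$ a unit.''

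First I would invoke the remark following Corollary~\ref{JacobiTayloroperators} together with the preceding remark on localization: since $P^n_{W^{-1}R|K}\cong W^{-1}P^n_{R|K}$ by Proposition~\ref{diffmod-localize}, the Jacobi--Taylor matrices $J_n$ over $R$ (localized at $\p$) still present $P^n_{R_\p|K}$, and by Corollary~\ref{JacobiTayloroperators} a differential operator on $R_\p$ of order $\le n$ is exactly a tuple $(a_\lambda)_{\mondeg\lambda\le n}$ in $\ker(J_n)$, acting by $\sum_\lambda a_\lambda \frac1{\lambda!}\partial^\lambda$. Then for a fixed $h\in R$, the image $\delta(h)$ of $h$ under the operator corresponding to $(a_\lambda)$ is precisely $\sum_\lambda \frac1{\lambda!}\partial^\lambda(h)\,a_\lambda$, as computed in the proof of Corollary~\ref{JacobiTayloroperators}. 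Since every element of $D_{R_\p|K}$ lies in some $D^n_{R_\p|K}$, ranging over all $n$ ranges over all differential operators.

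The remaining point is the equivalence ``$\delta(h)$ is a unit'' $\iff$ ``$\delta(h)\notin\p R_\p$'', which is immediate in a local ring, and the observation that $R$ being $D$-simple is unaffected by whether one writes the operator sending $h$ to a unit $u$ or (after composing with $u^{-1}$) to $1$. Assembling: if for every nonzero $h$ such a kernel tuple exists, then for every nonzero $h$ there is $\delta\in D_{R|K}$ with $\delta(h)$ a unit, so no proper nonzero $D$-ideal can contain a nonzero element, hence $\cP_K=0$ and $R$ is $D$-simple by Corollary~\ref{CorDifPrimeDsimple}; conversely, $D$-simplicity gives such a $\delta$ for each nonzero $h$, and expressing $\delta$ via its Jacobi--Taylor tuple $(a_\lambda)\in\ker(J_n)$ yields the stated condition.

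I do not anticipate a serious obstacle here; this is essentially a bookkeeping corollary of the machinery already built. The one point requiring a little care is that the Jacobi--Taylor presentation was established for $R=K[x_1,\dots,x_k]/(f_1,\dots,f_m)$ and one must be sure the localization $R_\p$ is handled correctly: one uses that $P^n_{R_\p|K}$ is the localization of $P^n_{R|K}$, so kernels of $J_n$ localize accordingly, and a tuple over $R_\p$ may be cleared of denominators to a tuple over $R$ without affecting which $a_\lambda$ are units — so it is harmless to phrase the condition over $R$. No further subtlety arises.
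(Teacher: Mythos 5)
Your proposal is correct and follows essentially the same route as the paper's proof, which cites Corollaries~\ref{CorDifPrimeDsimple} and~\ref{JacobiTayloroperators} together with Remark~\ref{JacobiTayloralgorithms}; your argument just fills in the bookkeeping (the localization via Proposition~\ref{diffmod-localize} and the formula $\delta(h)=\sum_\lambda\frac{1}{\lambda!}\partial^\lambda(h)\,a_\lambda$ from the proof of Corollary~\ref{JacobiTayloroperators}) that the paper compresses into a one-line citation.
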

\begin{proof}
This follows from Corollaries~\ref{CorDifPrimeDsimple} and \ref{JacobiTayloroperators} in connection with Remark~\ref{JacobiTayloralgorithms}. 	
\end{proof}

The Jacobi-Taylor matrices are given by finitely many data, all partial derivatives of the defining functions. Therefore it is reasonable to expect that there are certain patterns in them to get some finistic results, to put it optimistically: finite determination of differential signature, its rationality, that the limsup is in fact a limit.
A first result in this direction is the following.

\begin{lemma}
	\label{JacobiTaylorperiodic}
Let $f_1 , \ldots , f_m \in K[x_1 , \ldots , x_k]$ denote polynomials and let $\delta_j $ be the maximum of the exponents of $x_j$ in any monomial in any $f_i$. Set
\[\Lambda = \{\lambda \in \NN^k \ | \  \lambda_j \geq \delta_j \text{ for all } j \}.\]
Let $\sum_{\lambda \in \Lambda} a_\lambda C_\lambda=0$ (over the residue class ring) be a relation among the columns of a Jacobi-Taylor matrix for these data which involves only columns with indices from $\Lambda$.
Then for all $\beta \in \NN^k$ also $\sum_{\lambda \in \Lambda} a_\lambda C_{\lambda+ \beta}=0$, where the columns may refer to a sufficiently larger Jacobi-Taylor matrix.
\end{lemma}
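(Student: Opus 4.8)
The plan is to translate the column relation into entrywise identities over the residue class ring and then exploit the two defining features of $\Lambda$: every $\lambda\in\Lambda$ has $\lambda_j\ge\delta_j$ in each coordinate, and $\frac{1}{\gamma!}\partial^\gamma(f_i)=0$ as soon as $\gamma_j>\delta_j$ for some $j$, since then $\frac{1}{\gamma!}\partial^\gamma$ annihilates every monomial of $f_i$ (each of which has $x_j$-exponent at most $\delta_j$). Write $g_{i,\gamma}:=\frac{1}{\gamma!}\partial^\gamma(f_i)$, with the convention $g_{i,\gamma}=0$ for $\gamma\notin\NN^k$, so that the column $C_\nu$ of a Jacobi-Taylor matrix has $(\mu,i)$-entry $g_{i,\nu-\mu}$. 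For $\mu\in\ZZ^k$ and each $i$, put $h_{i,\mu}:=\sum_{\lambda\in\Lambda}a_\lambda\,g_{i,\lambda-\mu}$; this is a finite sum, since only finitely many $a_\lambda$ are nonzero. Suppose the given relation is among the columns of $J_n$; it then says precisely that $h_{i,\mu}=0$ in $R$ for every $i$ and every $\mu\in\NN^k$ with $\mondeg{\mu}\le n-1$. Set $D_0:=\max\{\mondeg{\lambda}\mid a_\lambda\ne 0\}$, so that $D_0\le n$.

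The first step is to upgrade this to: $h_{i,\mu}=0$ in $R$ for \emph{every} $\mu\in\ZZ^k$. For $\mu\in\NN^k$ with $\mondeg{\mu}\le n-1$ this is the hypothesis. For $\mu\in\NN^k$ with $\mondeg{\mu}\ge n\ge D_0$: a $\lambda$-term $a_\lambda g_{i,\lambda-\mu}$ is nonzero only if $\lambda-\mu\in\NN^k$, i.e. $\lambda\ge\mu$, in which case $\mondeg{\lambda}\ge\mondeg{\mu}\ge D_0\ge\mondeg{\lambda}$, so $\mondeg{\lambda}=\mondeg{\mu}$ and hence $\lambda=\mu$; thus $h_{i,\mu}$ is either $0$ or $a_\mu g_{i,0}=a_\mu f_i$, which is $0$ in $R$. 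For $\mu$ with a negative coordinate, say $\mu_j<0$: for every $\lambda\in\Lambda$ we have $(\lambda-\mu)_j=\lambda_j-\mu_j\ge\delta_j+1>\delta_j$, so $g_{i,\lambda-\mu}=0$ and therefore $h_{i,\mu}=0$. These cases exhaust $\ZZ^k$.

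The second step is pure bookkeeping. Fix $\beta\in\NN^k$ and pass to a Jacobi-Taylor matrix $J_N$ with $N\ge D_0+\mondeg{\beta}$, so that $C_{\lambda+\beta}$ is an actual column whenever $a_\lambda\ne 0$. Its $(\mu,i)$-entry is $g_{i,(\lambda+\beta)-\mu}=g_{i,\lambda-(\mu-\beta)}$, so the $(\mu,i)$-entry of $\sum_{\lambda\in\Lambda}a_\lambda C_{\lambda+\beta}$ equals $h_{i,\mu-\beta}$, which vanishes in $R$ by the first step applied to the index $\mu-\beta\in\ZZ^k$. Hence $\sum_{\lambda\in\Lambda}a_\lambda C_{\lambda+\beta}=0$ over $R$, which is the assertion.

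I do not expect a genuine obstacle: the whole argument rests on the single observation that $\lambda\in\Lambda$ together with the vanishing $\frac{1}{\gamma!}\partial^\gamma(f_i)=0$ for $\gamma_j>\delta_j$ simultaneously controls the ``shift down'' (the index $\mu-\beta$ may leave $\NN^k$, but then $h_{i,\mu-\beta}$ vanishes for free) and the high-degree terms. The only points needing care are the boundary case $\mondeg{\mu}=D_0=n$ in the first step and keeping the convention $g_{i,\gamma}=0$ for $\gamma\notin\NN^k$ consistent with the genuine absence of such entries in the finite matrices $J_n$.
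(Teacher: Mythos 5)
Your proof is correct and relies on the same two facts as the paper's: $\lambda_j\ge\delta_j$ for $\lambda\in\Lambda$, and $\frac{1}{\gamma!}\partial^\gamma(f_i)=0$ whenever $\gamma_j>\delta_j$ for some $j$. The paper reduces by induction to $\beta=e_1$ and splits on whether $\mu'_1>0$, whereas you handle arbitrary $\beta$ directly by first extending the entrywise vanishings $h_{i,\mu}=0$ to all $\mu\in\ZZ^k$ (negative coordinates fall to the same annihilation argument, large-degree $\mu$ reduce to $a_\mu f_i=0$) and then substituting $\mu\mapsto\mu-\beta$ --- a slightly tidier bookkeeping that, as a bonus, explicitly justifies the step the paper only asserts, namely that the relation persists when read in a larger Jacobi--Taylor matrix.
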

\begin{proof}
The initial relation still holds after passing to a larger Jacobi-Taylor matrix. By induction it is enough to show the statement for $\beta=e_1$. We look at the row given by $(\mu',i)$. If $\mu'=\mu+e_1$, then 
\[ \begin{aligned} \sum_{\lambda \in \Lambda} a_\lambda C_{\lambda +e_1, (\mu',i) } & = \sum_{\lambda \in \Lambda} a_\lambda \frac{1}{ ( \lambda +e_1 - \mu' )!} \partial^{ \lambda +e_1 - \mu'  } (F_i) \\ &=
\sum_{\lambda \in \Lambda} a_\lambda \frac{1}{ ( \lambda - \mu )!} \partial^{ \lambda - \mu  } (F_i) \\ &=   0 . \end{aligned}   \]
If $\mu' \neq \mu+e_1$, then the first component of $\mu'$ is $0$. In this case
\[  \sum_{\lambda \in \Lambda} a_\lambda C_{\lambda +e_1, (\mu',i) }  = \sum_{\lambda \in \Lambda} a_\lambda \frac{1}{ ( \lambda +e_1 - \mu' )!} \partial^{ \lambda +e_1 - \mu'  } (F_i) =0 ,   \]
since the first component is always $\lambda_1+1 > \delta_1$ and so these differential operators annihilate $F_i$.
\end{proof}

\subsection{The graded case}\label{graded case}

We now consider the case of a standard-graded $K$-algebra $R$. In this setting, every differential operator has a decomposition into homogeneous differential operators, and the degree of a homogeneous operator $\delta$ is given as the difference $ \deg (\delta(f)) - \deg (f)$ for every homogeneous element $f$. For example, the degree of $x^\nu \partial^\lambda$ on the polynomial ring is $ \mondeg {\nu} -  \mondeg {\lambda} $. A unitary homogeneous operator sending $f$ to $1$ has degree $- \deg (f)$, and the (non)existence of operators of certain negative degrees imposes strong conditions on the differential signature.

\begin{lemma}
\label{gradedsignature}
	Let $R$ be a standard-graded ring over $K$ of dimension $d$ and multiplicity $e$. Suppose that there exists $\alpha \in {\mathbb R}_{\geq 0}$ such that $(D^n_{R|K} )_\ell = 0 $ for all $\ell < - \alpha n $.	Then $\dm{K}(R) \leq e \alpha^d $.
\end{lemma}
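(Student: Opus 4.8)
The plan is to exploit the graded structure. As recalled just above the statement, on a standard-graded $K$-algebra every differential operator decomposes into homogeneous differential operators of the same order, and a homogeneous operator $\delta$ of degree $\ell$ satisfies $\deg(\delta(f)) = \deg(f) + \ell$ on homogeneous $f$. In particular each ideal $\m\dif{n}{K}$ is homogeneous, so it is enough to decide, for each $j$, whether the whole graded piece $R_j$ lies in $\m\dif{n}{K}$.

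First I would establish the containment $R_j \subseteq \m\dif{n}{K}$ whenever $j > \alpha(n-1)$. Given homogeneous $f \in R_j$ and $\delta \in D^{n-1}_{R|K}$, write $\delta = \sum_\ell \delta_\ell$ with $\delta_\ell$ homogeneous of degree $\ell$ and of order $\le n-1$; then $\delta_\ell(f) \in R_{j+\ell}$. If $j + \ell > 0$ this lies in $\m$; if $j + \ell \le 0$ then $\ell \le -j < -\alpha(n-1)$, so $\delta_\ell = 0$ by the hypothesis $(D^{n-1}_{R|K})_\ell = 0$, and again $\delta_\ell(f) = 0 \in \m$. Hence $\delta(f) \in \m$ for every $\delta \in D^{n-1}_{R|K}$, i.e.\ $f \in \m\dif{n}{K}$. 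Equivalently, $\m^{s(n)} \subseteq \m\dif{n}{K}$ where $s(n) := \lceil \alpha(n-1)\rceil + 1$, since $\m^{s(n)} = \bigoplus_{j \ge s(n)} R_j$ and every such $j$ satisfies $j > \alpha(n-1)$.

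Finally I would pass to lengths. From $\m^{s(n)} \subseteq \m\dif{n}{K}$ we get $\lambda_R(R/\m\dif{n}{K}) \le \lambda_R(R/\m^{s(n)})$, and by the theory of Hilbert--Samuel multiplicity $\lambda_R(R/\m^{s}) = \tfrac{e}{d!}\,s^d + O(s^{d-1})$ as $s \to \infty$ (when $d = 0$ the claim is immediate, as $\lambda_R(R/\m\dif{n}{K}) \le e$ for all $n$). Dividing by $n^d/d!$ and using $s(n)/n \to \alpha$,
\[ \dm{K}(R) \;=\; \limsup_{n\to\infty}\frac{\lambda_R(R/\m\dif{n}{K})}{n^d/d!} \;\le\; \limsup_{n\to\infty}\,\frac{\tfrac{e}{d!}\,s(n)^d + O(s(n)^{d-1})}{n^d/d!} \;=\; e\alpha^d. \]
The only genuinely new input is the observation in the middle paragraph that, in high degrees, membership in $\m\dif{n}{K}$ is \emph{forced} precisely by the absence of operators of sufficiently negative degree; the remaining steps are routine Hilbert-function bookkeeping, so I do not anticipate a serious obstacle here.
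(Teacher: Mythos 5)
Your proposal is correct and follows essentially the same route as the paper: both arguments show that a homogeneous element $f$ of sufficiently high degree is forced into $\m\dif{n}{K}$ because the hypothesis rules out homogeneous operators of order $\le n-1$ and sufficiently negative degree, and then both finish by comparing with Hilbert--Samuel lengths. Your bookkeeping (working with $\m\dif{n}{K}$ and order $n-1$ rather than $\m\dif{n+1}{K}$ and order $n$) is just an index shift from the paper's, and does not change the argument.
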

\begin{proof}
	We claim that
		 \[ R_{ >  \alpha n  }  \subseteq \m^{\langle n+1 \rangle} =\{f \in R\;|\; \delta(f) \in \m \text{ for all operators } E \text{ of order } \leq n \} . \] 
		 So let $f$ be a homogeneous element of degree $> \alpha n $. By assumption, every nonzero homogeneous operator $\delta$ of order $ \leq n$ has degree at least $ - \alpha n$. Therefore the degree of $ \delta (f) $ is $ > \alpha n - \alpha n = 0$ and so $\delta(f) \in \m$. It follows that we have a surjection
		 \[   R/  R_{ >  \alpha n  } = R_{\leq  \lfloor \alpha n    \rfloor }     \longrightarrow R/  \m^{\langle n+1 \rangle}  .    \]
		 Hence asymptotically
		 \[ \dim_K ( R/  \m^{\langle n+1 \rangle} )  \leq  \frac{ e}{d!} \alpha^d n^d     \]	 
		  and the result follows.	
\end{proof}

Compare also the proof of Theorem~\ref{ThmDirSumPos} for a bound from below with a similar shape.

\begin{corollary}
	\label{gradedsymdersignature}
	Let $R$ be a standard-graded ring over $K$ of dimension $d$ and multiplicity $e$. Suppose that there exists $\alpha \in {\mathbb R}_{\geq 0}$ such that $ \Hom_R (\Sym^n (\Omega_{R|K}), R )_\ell = 0 $ for all $\ell < - \alpha n $. Then $\dm{K}(R) \leq e \alpha^d $.
\end{corollary}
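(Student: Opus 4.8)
The plan is to deduce this from Lemma~\ref{gradedsignature} by transferring the vanishing hypothesis from $\Hom_R(\Sym^n(\Omega_{R|K}),R)$ to $D^n_{R|K}$, at the cost of a shift in the order parameter. The key structural input is the natural $R$-linear, degree-preserving map
\[
D^n_{R|K} \longrightarrow \Hom_R(\Sym^n(\Omega_{R|K}),R)
\]
appearing in Lemma~\ref{JacobiTaylorsymmetric}, whose kernel is exactly $D^{n-1}_{R|K}$: indeed, in the notation of Remark~\ref{localunitaryoperators} and the exact sequence $0 \to \Delta^{n}/\Delta^{n+1} \to \ModDif{n}{R}{K} \to \ModDif{n-1}{R}{K} \to 0$, dualizing gives that an operator of order $\le n$ maps to zero in $\Hom_R(\Sym^n(\Omega_{R|K}),R)\cong \Hom_R(\Delta^{n}/\Delta^{n+1},R)$ precisely when it factors through $\ModDif{n-1}{R}{K}$, i.e.\ has order $\le n-1$. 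This map is degree-preserving because all the maps involved (the universal differential, the Jacobi-Taylor presentation, the symmetric power construction) are homogeneous of degree $0$.

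\textbf{Main step.} I will show by induction on $n$ that the hypothesis $\Hom_R(\Sym^m(\Omega_{R|K}),R)_\ell = 0$ for all $\ell < -\alpha m$ and all $m$ implies $(D^n_{R|K})_\ell = 0$ for all $\ell < -\alpha n$. The base case $n=0$ is clear since $(D^0_{R|K})_\ell = R_\ell = 0$ for $\ell<0 \le -\alpha\cdot 0$ would force $\alpha \ge 0$, which holds, and in fact $R_\ell = 0$ for $\ell < 0$ gives vanishing in the relevant range as $-\alpha\cdot 0 = 0$. For the inductive step, suppose $\delta \in (D^n_{R|K})_\ell$ with $\ell < -\alpha n$. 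Its image in $\Hom_R(\Sym^n(\Omega_{R|K}),R)$ lies in degree $\ell < -\alpha n$, hence is zero by hypothesis, so $\delta \in D^{n-1}_{R|K}$. But $\ell < -\alpha n \le -\alpha(n-1)$ since $\alpha \ge 0$, so by the inductive hypothesis $\delta = 0$. This establishes the vanishing $(D^n_{R|K})_\ell = 0$ for $\ell < -\alpha n$, which is exactly the hypothesis of Lemma~\ref{gradedsignature}, and applying that lemma yields $\dm{K}(R) \le e\alpha^d$.

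\textbf{Expected obstacle.} The only delicate point is pinning down that the comparison map $D^n_{R|K} \to \Hom_R(\Sym^n(\Omega_{R|K}),R)$ has kernel exactly $D^{n-1}_{R|K}$ and is strictly degree-preserving; everything else is formal. In the graded case one should note $\Sym^n(\Omega_{R|K})$ is a finitely generated graded module and $\Hom_R(-,R)$ of a graded module is graded with the grading $(\Hom_R(M,R))_\ell = \{\phi : \phi(M_j)\subseteq R_{j+\ell}\}$; the identification $\Hom_R(\Sym^n\Omega_{R|K},R)\cong \Hom_R(\Delta^n/\Delta^{n+1},R)$ from the diagram in Lemma~\ref{JacobiTaylorsymmetric} is an isomorphism of graded modules, so the degree bookkeeping is consistent. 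Once this is in place the corollary follows immediately; no separate convergence or limsup analysis is needed beyond what Lemma~\ref{gradedsignature} already provides.
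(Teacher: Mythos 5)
Your proof follows exactly the same route as the paper: an induction on $n$ that pushes the vanishing hypothesis through the exact sequence $0 \to D^{n-1}_{R|K} \to D^n_{R|K} \to \Hom_R(\Sym^n(\Omega_{R|K}),R)$ to deduce $(D^n_{R|K})_\ell = 0$ for $\ell < -\alpha n$, then applies Lemma~\ref{gradedsignature}. Your extra discussion of why the kernel is exactly $D^{n-1}_{R|K}$ and why the map is degree-preserving correctly fills in details the paper leaves to Section~\ref{Comparison}.
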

\begin{proof}
We have to show that the assumption implies that $(D^n_{R|K} )_\ell = 0 $ for all $\ell < - \alpha n $, then the result follows from Lemma~\ref{gradedsignature}. This we prove by induction on $n$. For $n=0$ the statement is true anyway since there is no multiplication of negative degree in a standard-graded ring. For the induction step we look at the short exact sequence
\[  0 \longrightarrow (D^{n-1}_{R|K}) _\ell \longrightarrow (D^{n}_{R|K}) _\ell \longrightarrow  \Hom_R (\Sym^n (\Omega_{R|K}), R )_\ell  \]
which we will discuss in detail in Section~\ref{Comparison}. The homogeneity of the map on the left follows from the beginning of the proof of Theorem~\ref{compareoperatorcomposition}. So suppose that $E$ is an operator of order $\leq n$ and of degree $\ell < - \alpha n$. If $E$ has order $\leq n-1$, then it is $0$ by the induction hypothesis. Hence $E$ does not come from the left and maps to a nonzero element on the right which contradicts the assumption.	
\end{proof}

\begin{remark}
	\label{jacobitaylorhomogeneous}	
	If $E$ is a homogeneous differential operator of degree $ \degoperator$ on a $\ZZ$-graded ring $R$ of finite type and given by a tuple $\left( a_\lambda \right)$ as in Corollary~\ref{JacobiTayloroperators}, then the $a_\lambda$ are homogeneous of degree
	$\deg (a_\lambda) =  \degoperator + \mondeg {\lambda} $. The operator can be decomposed as
	\[\delta = \sum_{\mondeg {\lambda} = u }  a_\lambda \frac{\partial^\lambda}{ \lambda!  }  + \sum_{ \mondeg {\lambda} = u+1 }  a_\lambda \frac{\partial^\lambda}{ \lambda!  } + \cdots   +  \sum_{\mondeg {\lambda} = v }  a_\lambda \frac{\partial^\lambda}{ \lambda!  } ,  \]
	where we suppose that the sums on the very left and on the very right are not $0$. The left sum determines the induced operator with values in $K$ alone, and this induced operator can only be nonzero if $u=- \degoperator $. The number $v$ is the order of the operator, and by Lemma~\ref{JacobiTaylorsymmetric} this last sum determines  the corresponding element in $\Hom (\Sym^v (\Omega_{R|K}) , R)_\degoperator $. 
\end{remark}

If $R$ is a normal standard-graded domain over a field of characteristic $0$ and $U \subseteq \Spec R=X$ is smooth and contains all points of codimension one, then we have
\[ \Hom_R(\Sym^n(\Omega_{R|K}),R ) \cong \Gamma(U, \Hom_X(\Sym^n(\Omega_{X|K}), {\mathcal O}_X ) ) \cong \Gamma(U, \Sym^n (\Der_K {\OO}_X)) . \]
This holds in every degree. If $R$ has an isolated singularity, then we can take $U$ to be the punctured spectrum and we can compute $ \Hom_R(\Sym^n(\Omega_{R|K}),R )_\ell$ on the smooth projective variety  $\Proj R$.

\begin{corollary}
	\label{gradedhypersurfacesymdersignature}
	Let $K$ be a field of characteristic $0$ and let $F \in K[x_1 , \ldots , x_{d+1}]$ ($d \geq 2$) be a homogeneous polynomial of degree $e$. Suppose that $R=K[x_1 , \ldots , x_{d+1}]/(f)$ has an isolated singularity and set $Y=\Proj R$.	Suppose that there exists $\alpha \in {\mathbb R}_{\geq 0}$ such that $ \Gamma(Y,  \Sym^\degsym (\Syz (\partial_1 F, \ldots , \partial_{d+1} F) ) (\degtotal) ) = 0 $ for all $\degtotal < (e- \alpha ) \degsym $. Then $\dm{K}(R) \leq e \alpha^d $.
\end{corollary}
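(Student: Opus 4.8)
The strategy is to deduce this from Corollary~\ref{gradedsymdersignature} by translating the vanishing hypothesis on global sections over $Y = \Proj R$ into the vanishing of $\Hom_R(\Sym^n(\Omega_{R|K}), R)_\ell$ in low degrees. First I would use the discussion immediately preceding the statement: since $R$ is a normal standard-graded domain over a field of characteristic $0$ with an isolated singularity, taking $U$ to be the punctured spectrum (which is smooth and contains all codimension-one points), we have for every $\ell$ the identification
\[ \Hom_R(\Sym^n(\Omega_{R|K}), R)_\ell \cong \Gamma(U, \Sym^n(\Der_K \OO_X))_\ell, \]
and the latter can be computed on the smooth projective hypersurface $Y = \Proj R$ in $\PP^d$.

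\textbf{Translating to the conormal sequence.} The hard part of the plan — really the only substantive step — is to identify the sheaf $\Sym^n(\Der_K \OO_Y)$ (twisted appropriately) with $\Sym^n(\Syz(\partial_1 F, \ldots, \partial_{d+1} F))$ up to the shift indicated in the statement. I would start from the conormal exact sequence for the hypersurface $Y \subseteq \PP^d_K = \Proj K[x_1,\dots,x_{d+1}]$: dualizing and using the Euler sequence, the tangent sheaf $\mathcal{T}_Y$ fits (on the smooth locus, hence everywhere since $Y$ is smooth here) into an exact sequence relating it to the restriction of $\mathcal{T}_{\PP^d}$ and to $\OO_Y(e)$ via the differential of $F$, i.e. the map $(\partial_1 F, \ldots, \partial_{d+1} F)$. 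Dualizing the Jacobian description, the module of derivations is the syzygy module $\Syz(\partial_1 F, \ldots, \partial_{d+1} F)$ up to an Euler-type correction, and passing to $\Sym^n$ introduces the degree shift by $e$: a section of $\Sym^n$ of the syzygy sheaf of degree $\degtotal$ corresponds to an element of $\Hom_R(\Sym^n(\Omega_{R|K}),R)$ of degree roughly $\degtotal - e n$. Matching the inequalities, the hypothesis $\Gamma(Y, \Sym^n(\Syz(\partial_1 F,\dots,\partial_{d+1}F))(\degtotal)) = 0$ for all $\degtotal < (e-\alpha)n$ then becomes exactly $\Hom_R(\Sym^n(\Omega_{R|K}), R)_\ell = 0$ for all $\ell < -\alpha n$.

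\textbf{Conclusion.} Once this identification of graded pieces is in place, the hypothesis of Corollary~\ref{gradedsymdersignature} is verified with the same $\alpha$ and the same multiplicity $e = \deg(F)$ (the multiplicity of the hypersurface $R$), and that corollary immediately yields $\dm{K}(R) \leq e\alpha^d$. I expect the main obstacle to be bookkeeping the twist/degree conventions correctly — keeping straight the shift between "degree of a section of the symmetric power of the syzygy sheaf" and "degree of the corresponding differential operator / element of $\Hom_R(\Sym^n \Omega, R)$" — rather than any conceptual difficulty; everything else is a direct appeal to the preceding corollary and the sheaf-theoretic dictionary already set up in the text.
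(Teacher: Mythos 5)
Your overall strategy — translate the vanishing of $\Gamma\bigl(Y, \Sym^n(\Syz(\partial_1 F,\dots,\partial_{d+1}F))(m)\bigr)$ into the vanishing of $\Hom_R(\Sym^n(\Omega_{R|K}),R)_\ell$ for $\ell<-\alpha n$ and then invoke Corollary~\ref{gradedsymdersignature} with multiplicity $e$ — is exactly what the paper does, and your final bookkeeping of the degree shift ($\ell = m - ne$) is correct. The issue is the middle step.

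Your detour through the conormal exact sequence of $Y\subseteq\PP^d$, the Euler sequence of $\PP^d$, and ``the module of derivations is the syzygy module \ldots\ up to an Euler-type correction'' is both longer than necessary and a potential source of error. The graded $R$-module $\Der_K(R)$ for $R=K[x_1,\dots,x_{d+1}]/(f)$ \emph{is} the syzygy module of the gradient: a $K$-linear derivation is determined by $\delta(x_i)=s_i$ with $\sum s_i\,\partial_i f\in (f)$, with no Euler correction whatsoever. The Euler derivation only enters if you insist on passing to the tangent sheaf $\mathcal T_Y$ of the projective hypersurface, which fits into a nonsplit extension $0\to\OO_Y\to\widetilde{\Der_K(R)}(\ast)\to\mathcal T_Y\to 0$. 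The paper deliberately does \emph{not} pass to $\mathcal T_Y$, because $\Sym^n$ does not commute with that extension — one only gets a filtration of $\Sym^n(\Syz)$ with associated graded pieces built from $\Sym^i(\mathcal T_Y)$ — so computing global sections via $\mathcal T_Y$ requires extra work. Instead the paper simply sheafifies the presentation
\[
0 \to \Syz(\partial_1 f,\dots,\partial_{d+1}f)(m) \to \bigoplus_{d+1}\OO_Y(m-e+1)\xrightarrow{(\partial_i f)}\OO_Y(m)\to 0
\]
on $Y$, reads off $\Der_K(R)_{m-e}\cong\Gamma(Y,\Syz(\partial f)(m))$, applies $\Sym^n$ to the sheafified identification $\widetilde{\Der_K(R)}(m-e)\cong\Syz(\partial f)(m)$, and uses reflexivity of $\Hom_R(\Sym^n\Omega_{R|K},R)$ to recover the module from the sheaf. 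If you want to pursue your route you must be careful to keep the syzygy sheaf, not $\mathcal T_Y$, as the object whose symmetric powers you take; as written, the phrase ``up to an Euler-type correction'' suggests you have not yet noticed that no correction is needed at the level of $\Der_K(R)$, and that the correction you do have in mind would actually break the symmetric-power computation.
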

\begin{proof}
	Since we have an isolated singularity we have on $Y$ short exact sequences of locally free sheaves of the form
	\[  0 \longrightarrow \Syz (\partial_1 f, \ldots , \partial_{d+1} f) (\degtotal)  \longrightarrow \bigoplus_{ d+1}    {\mathcal O}_Y (\degtotal-e+1)  \xrightarrow{\partial_1 f, \ldots , \partial_{d+1} f } {\mathcal O}_Y(\degtotal) \longrightarrow 0 \,  \]
	for all twists $\degtotal$. On the right we have the Jacobi matrix. Hence we get a correspondence	
	\[   \Der_K(R)_{m-e}   = \Gamma(Y,   \Syz (\partial_1 f, \ldots , \partial_{d+1} f) (\degtotal )      ) \,  \]
	because of the following: A global section of $ \Syz (\partial_1 f, \ldots , \partial_{d+1} f ) $ over $Y$ in total degree $\degtotal $ is a syzygy $(s_1, \ldots,s_{d+1})$ where the $s_i$ are homogeneous elements of degree $\degtotal -e+1$. This corresponds via $x_i \mapsto dx_i \mapsto s_i$ to a derivation on $R$ of degree $ \degtotal -e$.
	On a sheaf level we can write this as $ \widetilde {(\Der_KR)} ( \degtotal -e) \cong \Syz ( \partial_1 f, \ldots , \partial_{d+1} f )(\degtotal) $, where the tilde denotes taking the corresponding sheaf of a graded module. From this we get
	\[ \Sym^\degsym  \widetilde {(\Der_KR)}  \cong  \Sym^\degsym ( \Syz ( \partial_1 f, \ldots , \partial_{d+1} f ) (e)  ).   \]
	We translate this back to the punctured cone and  to get	
	\[  \Hom_R(\Sym^n(\Omega_{R|K}),R )_{ \degtotal -\degsym e}  = \Gamma(Y, \Sym^\degsym(\Syz( \partial_1 f, \ldots , \partial_{d+1} f     )     ) (\degtotal ) ), \]
because $ \Hom_R(\Sym^n(\Omega_{R|K}),R ) $ is reflexive.

	By assumption we know the nonexistence of nonzero global sections for the twists $\degtotal < (e- \alpha) \degsym $. Hence we deduce $  \Hom_R(\Sym^n(\Omega_{R|K}),R )_{ \ell } =0 $   for $\ell < - \alpha \degsym $ and Corollary~\ref{gradedsymdersignature} gives the result.	
\end{proof}

\begin{remark} \label{Symsyzcomputation}
The global sections of $\Sym^\degsym ( \Syz (\partial_1 f, \ldots , \partial_{d+1} f )) $ and its twists can be computed with the short exact sequences
{\small
\[ 0 \to \Sym^\degsym ( \Syz (\partial_1 f, \ldots , \partial_{d+1} f )) \to  \Sym^\degsym ( \bigoplus_{ d+1} {\OO}_Y (-e+1)  ) \to \Sym^{\degsym-1} ( \bigoplus_{ d+1} {\OO}_Y  (-e+1) ))  \to 0. \]}
With the identifications $ \Sym^\degsym( \OO_{Y}(-e+1)^{\oplus d+1} ) \cong   \OO_{Y}( \degsym (-e+1))^{\bigoplus \binom{\degsym+d}{d} } $, the map on the right hand side is given as $e_\nu \mapsto  \sum_j \partial_j f \, e_{\nu -e_j}$. A section in the symmetric power of the syzygy bundle (and its twists) is a tuple in the kernel of this map. The matrices describing these maps appear also as the submatrices $T_q^\text{tr}$ of the Jacobi-Taylor matrices $J_q$, see Remark~\ref{JacobiTaylorrelation} and Lemma~\ref{JacobiTaylorsymmetric}.
\end{remark}

\begin{example}
Let $F$ be a homogeneous polynomial of degree $3$ in $3$ variables over an algebraically closed field $K$ of characteristic $0$ such that $Y=\Proj K[x,y,z]/(f)$ is an elliptic curve. The Euler derivation determines because of $x\partial_1f +y\partial_2f+z\partial_3f = 0$ a short exact sequence
\[0 \longrightarrow \OO_Y \longrightarrow \Syz(\partial_1f,\partial_2f,\partial_3f) (3) \longrightarrow \OO_Y \longrightarrow 0 . \]
This does not split since the space of global sections in the middle has dimension $1$. Therefore the syzygy bundle is the bundle $F_2$ in Atiyah's classification of bundles on an elliptic curve \cite{atiyahelliptic}. Hence for the symmetric powers we get
\[ \Sym^\degsym( \Syz(\partial_1f,\partial_2f,\partial_3f) )  \cong \Sym^\degsym (F_2 (-3) ) \cong (\Sym^\degsym (F_2))(-3n) \cong  F_n (-3n)  ,  \]
where $F_n$ is again from Atiyah's classification, i.e. $F_n$ are the (semistable) bundles of rank $n$ which are the unique nontrivial extensions of $F_{n-1}$ by $\OO_Y$. For $\degtotal < 3 \degsym$ there are no global sections of 
$ \Gamma(Y,  \Sym^\degsym (\Syz (\partial_1 f,  \partial_{2} f, \partial_3 f) ) (\degtotal) ) \cong \Gamma(Y, F_\degsym( \degtotal -3 \degsym ) ) $. So 
this reproves known facts  \cite{DiffNonNoeth} mentioned in Example~\ref{example-BGG}, and Corollary~\ref{gradedhypersurfacesymdersignature} with $e=3$ and $\alpha =0$ shows that the differential signature is $0$.
\end{example}

The following theorem shows that positive differential signature is in the graded case related with many other relevant notions for a singularity.

\begin{theorem}
	\label{Possiganeg}
	Let $K$ be an algebraically closed field of characteristic zero and let $R$ be an $\NN$-graded $K$-algebra of dimension at least two that is generated in degree one. Assume that $R$ is a Gorenstein ring and has an isolated singularity at the homogeneous maximal ideal. Suppose that $R$ has positive differential signature. Then, the $a$-invariant of $R$ is negative.
\end{theorem}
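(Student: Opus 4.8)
The plan is to argue by contradiction: assume $a = a(R) \geq 0$ and deduce $\dm{K}(R) = 0$, contradicting the hypothesis. First I would record the setup. Since $R$ is Gorenstein of dimension $d \geq 2$ it is $S_2$, and since the singularity is isolated it is $R_1$; hence $R$ is a normal domain, $U := \Spec R \setminus \{\m\}$ is smooth, and $Y := \Proj R$ is a smooth connected projective variety of dimension $d-1 \geq 1$. As $R$ is graded Gorenstein its graded canonical module is $\omega_R \cong R(a)$, and passing to $\Proj$ gives $\omega_Y \cong \widetilde{\omega_R} = \OO_Y(a)$. The reduction step is then: by Corollary~\ref{gradedsymdersignature} applied with $\alpha = 0$ (legitimate since $d \geq 1$, so $e\alpha^d = 0$), it suffices to prove that
\[ \Hom_R(\Sym^n(\Omega_{R|K}), R)_\ell = 0 \qquad \text{for all } n \text{ and all } \ell < 0 . \]

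To reinterpret this geometrically, note that $\Hom_R(\Sym^n(\Omega_{R|K}), R)$ is reflexive, hence $S_2$, so it agrees with the module of global sections over $U$ of its associated sheaf; since $U$ is smooth and $\Char K = 0$, that sheaf is $\Sym^n(\mathcal{T}_U)$ for $\mathcal{T}_U$ the tangent sheaf, and the identification respects the grading. Writing $\pi\colon U \to Y$ for the $\mathbb{G}_m$-bundle structure of the punctured cone, the Euler vector field $E$, which has degree $0$, generates the relative tangent sheaf, giving the graded Euler sequence
\[ 0 \longrightarrow \OO_U \stackrel{E}{\longrightarrow} \mathcal{T}_U \longrightarrow \pi^* T_Y \longrightarrow 0 . \]
Taking $\Sym^n$ yields a finite filtration of $\Sym^n(\mathcal{T}_U)$ whose graded pieces are $\pi^*(\Sym^i T_Y)$ for $i = 0, \ldots, n$, with no degree shift (because $E$ sits in degree $0$). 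Since $\pi_*\OO_U = \bigoplus_{m \in \ZZ}\OO_Y(m)$, the degree-$\ell$ part of $\Gamma(U, \pi^*\Sym^i T_Y)$ is $H^0(Y, \Sym^i T_Y(\ell))$, so it is enough to show $H^0(Y, \Sym^i T_Y(\ell)) = 0$ for all $i \geq 0$ and all $\ell < 0$. For $i = 0$ this holds because $\OO_Y(1)$ is ample and $\dim Y \geq 1$.

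For $i \geq 1$ a nonzero section of $\Sym^i T_Y(\ell)$ with $\ell < 0$ produces an injection $\OO_Y(c) \hookrightarrow \Sym^i T_Y$ with $c = -\ell \geq 1$. I would restrict to a sufficiently general complete intersection curve $C = Y \cap H_1 \cap \cdots \cap H_{d-2}$ with $H_j \in |m\OO_Y(1)|$, $m \gg 0$, chosen so that the section stays nonzero on $C$ (possible as $\OO_Y(1)$ is very ample). Since $\omega_Y = \OO_Y(a)$ with $a \geq 0$ is nef, hence pseudo-effective, $Y$ is not uniruled, and Miyaoka's generic semipositivity theorem gives that $\Omega_Y|_C$ is a nef vector bundle on $C$; therefore $\Sym^i(\Omega_Y|_C)$ is nef, so its dual $\Sym^i(T_Y)|_C$ has all line subbundles of nonpositive degree. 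Twisting by $\OO_Y(\ell)|_C$, which has negative degree, forces every line subbundle of $\Sym^i T_Y(\ell)|_C$ to have negative degree, contradicting the injection $\OO_C \hookrightarrow \Sym^i T_Y(\ell)|_C$ obtained by restriction. (When $d = 2$, $Y$ is a curve of genus $\geq 1$ and the step is elementary, since $\Sym^i T_Y(\ell) = \omega_Y^{\otimes(-i)}(\ell)$ has negative degree.) This establishes the displayed vanishing, whence $\dm{K}(R) \leq e\cdot 0^d = 0$, a contradiction; so $a < 0$.

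The main obstacle is the input from birational geometry in the case $\dim Y \geq 2$: controlling global sections of $\Sym^i T_Y$ in negative twists requires Miyaoka's generic semipositivity theorem (equivalently, that $\Omega_Y$ is generically nef when $K_Y$ is pseudo-effective), together with care about how slopes and nefness behave under symmetric powers and restriction to a very general complete intersection curve — and this is presumably where the hypotheses $\Char K = 0$, $K$ algebraically closed, and ``isolated singularity'' are really used. The remaining ingredients — the reduction via Corollary~\ref{gradedsymdersignature}, the passage from $R$-modules to sheaves on the punctured cone $U$, the bookkeeping with the graded Euler sequence, and the identity $\omega_Y = \OO_Y(a)$ — are routine once these dictionaries are in place.
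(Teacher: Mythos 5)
Your proof is correct but takes a genuinely different route from the paper's. The paper's argument first passes from positive differential signature to $D$-simplicity via Theorem~\ref{ThmDifMultDsimple}, then invokes a theorem of Hsiao asserting that $D$-simplicity forces the tangent bundle $T_X$ of $X = \Proj R$ to be big; it derives the contradiction with $a \geq 0$ from Peternell's theorem that nef $K_X$ makes $T_X$ semistable, Flenner's restriction theorem for a general complete intersection curve, semistability of symmetric powers, and the fact that a semistable bundle of nonpositive degree on a curve cannot be big. You bypass the $D$-simplicity and bigness inputs entirely: you apply the paper's own Corollary~\ref{gradedsymdersignature} with $\alpha = 0$ to reduce the statement to the vanishing $\Hom_R(\Sym^n (\Omega_{R|K}), R)_\ell = 0$ for all $\ell < 0$, translate this via reflexivity and the Euler sequence on the punctured cone to $H^0(Y, \Sym^i T_Y(\ell)) = 0$ for all $i$ and all $\ell < 0$, and prove the vanishing with Miyaoka's generic semipositivity theorem, using that nef $K_Y$ excludes uniruledness. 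Both approaches descend to general complete intersection curves and both ultimately rest on a positivity result for the cotangent bundle when $K_Y$ is nef — Peternell's semistability in the paper's version, Miyaoka's generic nefness in yours, which are closely related results in the same circle of ideas. What your route buys is a self-contained reduction through the $\Sym^n$-machinery of Subsection~\ref{graded case} (avoiding Hsiao's external bigness theorem) and a sharper intermediate statement, namely the vanishing of all negative twists of $\Sym^i T_Y$; the paper's proof is a little shorter to state and keeps the $D$-simplicity theme of the section visible.
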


\begin{proof}
	Let $ X = \operatorname{Proj} R $ be the smooth projective variety corresponding to $R$, let ${\mathcal O}_X(1)$ its very ample line bundle and let $\omega ={\mathcal O}( a ) $ be the canonical line bundle. For this interpretation of the $a$-invariant, see \cite[Section 3.6]{BrHe}.
	
	Assume that $a \geq 0$ and that the differential signature is positive. By Theorem~\ref{ThmDifMultDsimple}, the ring $R$ is simple as a module over the ring of differential operators. This implies that the tangent bundle $T_X$ is big \cite[Theorem 1.2]{Hsiaobigness}. We have $\bigwedge^{\dim(X)} T_X = {\mathcal O}_X(-a)$. On the other hand, if $a \geq 0$, then  $T_X$ is semistable with respect to ${\mathcal O}_X(1)$  \cite[Theorem 3.1]{Peternell}. Then also the  restriction of $T_X$ to a generic (smooth) complete intersection curve $C$ of sufficiently high degree is semistable  \cite{Flennerrestriction} and its degree is still $\leq 0$. Then also its symmetric powers are semistable  \cite[Theorem I.10.5]{ Hartshorneamplesubvarieties} and of nonpositive degree. 
	Then,   
	%applied to ${\mathcal O}_{ {\mathbb P}(T_X) } (1)$ on ${\mathbb P}(T_X)$
	the restriction of $T_X$ to $C$ is big \cite[Corollary 2.2.11]{LazBook1}. But this contradicts the Riemann-Roch theorem for curves.	
\end{proof}

This means also that the smooth projective variety corresponding to an isolated graded Gorenstein singularity is a Fano variety and has in particular negative Kodaira-dimension \cite[Definition V.1.1]{Kollarrationalcurves}. It follows for example that a graded hypersurface $R=K[x_1, \ldots , x_n]/(f)$ with an isolated singularity with positive differential signature must have degree $\operatorname{deg} (f) \leq \operatorname{dim} (R)$. We conjecture, in analogy with the situation in positive characteristic between $F$-regular, positive $F$-signature and negative $a$-invariant (\cite{Harainjectivity}, \cite[Section 5.3]{Hararational}, \cite[Theorem~0.2]{AL}), that the converse is true, but the first open case is already that of cubics in four variables.

The following corollary uses singularity notions from the minimal model program (see \cite{kollarsingularities}) and $F$-singularities which are explained in the next section.

\begin{corollary}\label{CorRedCharP}
	Let $K$ be an algebraically closed field of characteristic zero and let $R$ be a standard-graded normal $K$-domain with an isolated singularity and that is a Gorenstein ring. Suppose that $R$ has positive differential signature. Then $R$ has a rational singularity, it is log-terminal, and it is of strongly $F$-regular type. 
\end{corollary}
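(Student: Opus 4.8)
The plan is to feed the hypotheses into Theorem~\ref{Possiganeg} to extract negativity of the $a$-invariant, and then invoke the standard characteristic-zero/characteristic-$p$ dictionary for Gorenstein graded rings to upgrade this to the three desired conclusions. First I would observe that all the hypotheses of Theorem~\ref{Possiganeg} are in force: $R$ is $\NN$-graded, standard-graded (hence generated in degree one), Gorenstein, normal with an isolated singularity at $\m$, and $\dm{K}(R)>0$; moreover $\dim R \geq 2$ because a one-dimensional normal graded domain is a polynomial ring, hence regular, hence has $\dm{K}(R)=1>0$ but also has no isolated singularity issue—more carefully, if $\dim R\leq 1$ then $R$ is regular and the statement is trivial (a regular ring has rational, log-terminal, and strongly $F$-regular type singularities), so we may assume $\dim R\geq 2$ and apply Theorem~\ref{Possiganeg} to conclude that the $a$-invariant of $R$ is negative.

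Next I would recall the relevant implications in the minimal model program and $F$-singularity theory for a Gorenstein normal graded ring with negative $a$-invariant and isolated singularity. For such a ring, the log-terminal property is equivalent to the rational singularity property (Gorenstein log-terminal $=$ canonical $=$ rational for cones, by Elkik/Flenner and Kempf-type criteria), and a graded Gorenstein isolated singularity with negative $a$-invariant has rational singularities: indeed the $a$-invariant of a Cohen-Macaulay graded ring equals the top nonvanishing degree of the local cohomology $H^d_\m(R)$, which in the Gorenstein case is $-$ the smallest degree in the graded canonical module; negativity of $a$ then says $\omega_R$ lives in strictly positive degrees, which via the Flenner--Watanabe criterion (or Fedder--Watanabe) forces $R$ to have rational singularities. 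Since $R$ is $\QQ$-Gorenstein (even Gorenstein), rational singularities coincide with log-terminal singularities here. For the positive-characteristic conclusion, I would use the theorem of Hara and Mehta--Srinivas (rational $\Leftrightarrow$ $F$-rational type for $\QQ$-Gorenstein) together with the fact that a Gorenstein $F$-rational ring is strongly $F$-regular (Gorenstein $+$ $F$-rational $\Rightarrow$ $F$-regular, by Hochster--Huneke / Williams), so that $R$ has strongly $F$-regular type.

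Assembling these: from Theorem~\ref{Possiganeg} we get $a(R)<0$; from the Flenner--Watanabe criterion for graded rings we get that $R$ has a rational singularity; since $R$ is Gorenstein, rational is equivalent to canonical, which for this cone is equivalent to log-terminal; and by the Hara--Mehta--Srinivas correspondence together with the Gorenstein-$F$-rational-implies-strongly-$F$-regular principle, $R$ is of strongly $F$-regular type.

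The main obstacle I expect is purely bookkeeping rather than conceptual: one must be careful that ``standard-graded normal Gorenstein with isolated singularity and negative $a$-invariant'' is genuinely enough to invoke the graded rationality criterion (the isolated-singularity hypothesis guarantees that the relevant cohomological vanishing on $\Proj R$ is available via Serre duality, so this is exactly the setting of Flenner--Watanabe), and that the reduction-mod-$p$ statements are applied in their $\QQ$-Gorenstein (here, Gorenstein) form, where the equivalence ``rational type $\Leftrightarrow$ $F$-rational type'' and the implication ``$F$-rational $+$ Gorenstein $\Rightarrow$ strongly $F$-regular'' are both clean. No new computation is needed; the corollary is a direct consequence of Theorem~\ref{Possiganeg} combined with these known results, cited as in the surrounding text (\cite{kollarsingularities} for the MMP notions, and the $F$-singularities references in the next section).
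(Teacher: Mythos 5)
Your proof is correct and takes essentially the same route as the paper: invoke Theorem~\ref{Possiganeg} to get $a(R)<0$, apply Flenner--Watanabe to conclude rationality, use the Gorenstein hypothesis to pass from rational to log-terminal, and then use Hara's theorem (rational $\Rightarrow$ $F$-rational type, and Gorenstein $+$ $F$-rational $\Rightarrow$ strongly $F$-regular type). Your extra remark disposing of the case $\dim R\le 1$ is a sensible addition, since Theorem~\ref{Possiganeg} formally requires dimension at least two while the corollary's statement does not; otherwise the argument matches the paper's proof line for line.
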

\begin{proof}
	The rationality of the singularity follows from work of Flenner-Watanabe \cite{Flennerrational, Watanaberational} which says that under the hypotheses negative $a$-invariant implies rationality. A Gorenstein rational singularity is also log-terminal. 
	Then, the rationality implies that $R$ has $F$-rational type \cite[Theorem~1.1]{HaraSing}.
	Under the Gorenstein condition this means that $R$ has strongly $F$-regular type, which again implies   that $R$ is log-terminal \cite[Theorem~5.2]{HaraSing}.
\end{proof}

We discuss the notions mentioned in Corollary \ref{CorRedCharP} and other connections to  $F$-singularities in the next section.

%%%%%%%%%%%%%%%%%%%%%%%%%%%%%%%%%%%%%%%%%%%%%%%%%%%%%%%%%%%%%%%%%%

%%%%%%%%%%%%%%%%%%%%%%%%%%%%%%%%%%%%%%%%%%
\section{Differential signature in prime characteristic}\label{five}
%%%%%%%%%%%%%%%%%%%%%%%%%%%%%%%%%%%%%%%%%%
%%%%%%%%%%%%%%%%%%%%%%%%%%%%%%%%%%%%%%%%%%

In this section we focus on positive characteristic. In particular, we compare the $F$-signature and differential signature in the case where both invariants can be defined.

%%%%%%%%%%%%%%%%%%%%%%%%%
\subsection{Differential Frobenius powers}
%%%%%%%%%%%%%%%%%%%%%%%%%

\begin{setup}
Unless specified, in this section $R$ denotes an $F$-finite Noetherian ring with prime characteristic $p>0$. 
\end{setup}

\begin{definition}
Let $R$ be a Noetherian  ring with prime characteristic $p>0$. 
\begin{enumerate}
\item[(i)]  We note that $R$ acquires an $R$-module structure by restriction of scalars via the $e$-th iteration of the Frobenius map, $F^e$. We denote this module action on $R$ by $F^e_* R$. 
 {To make explicit what structure is considered, we denote $F^e_* f$ for an element in $F^e_*$.} 
\item[(ii)] We say that $R$ is $F$-finite if  $F^e_* R$ is a finitely generated $R$-module.
\item[(iii)]  {If $R$ is $F$-finite, we say that $R$ is \textit{$F$-pure} if the natural map $R\to F^1_* R$ splits.}
\item[(iv)]  If $R$ is a domain,  we say that $R$ is \textit{strongly $F$-regular} if for every $r\in R$, $r \neq 0$, there exists $e\in\NN$ such that the map $\varphi:R\to F^e_* R$
defined by $1\mapsto F^e_* r$ splits.
\item[(v)] We denote  {$\End_{R^{p^e}}(R)$} by $D^{(e)}_R$.\index{$D^{(e)}_R$}
\item[(vi)]  An additive map $\psi:R\to R$ is a \textit{$p^{e}$-linear map} if
$\psi(r f)=r^{p^e}\psi(f).$ Let $\mathcal{F}^e_R$ be the set of all
the $p^{e}$-linear maps.
\item[(vii)] An additive map $\phi:R\to R$ is a \textit{$p^{-e}$-linear map} if
$\phi(r^{p^e} f)=r\phi(f).$ Let $\cC^e_R$\index{$\cC^e_R$} be the set of all the
$p^{-e}$-linear maps.
\end{enumerate}
\end{definition}

\begin{remark}\label{RemCartFrobDmod}
Let $R$ be a reduced $F$-finite ring.
We note that
$$
 \mathcal{F}^e_R \cong
\Hom_R(R, F^e_* R),\; \cC^e_R \cong
\Hom_R(F^e_* R,R),\;  \text{and} \; D^{(e)}_R\cong \Hom_R(F^e_* R,F^e_* R).
$$
If $R$ is  $F$-pure  and $\pi\in  F^e_* R\to R$ is a splitting of the inclusion, then map that sends  $\phi\in \Hom_R(F^e_* R,F^e_* R)$ to $\pi\circ\phi\in \Hom_R(F^e_* R,R)$ is a surjection. Furthermore, this surjection splits.
 \end{remark}

We now recall a definition of $F$-signature.  

 \begin{definition}[{\cite{SmithVDB,HLMCM,TuckerFSig,WatanabeYoshida}}]
 Let $(R,\m,K)$ be either a local ring or a standard graded $K$-algebra  {of dimension $d$}. Suppose that $R$ is $F$-finite. Let $a_e$ denote the biggest rank of an $R$-free direct summand of $F^e_* R$, and  {$\alpha=\hbox{log}_p[K:K^p]$}. Note that $a_e=0$ for all $e$ if $R$ is not $F$-pure. The \textit{$F$-signature} is defined by
 $$s(R)=\lim\limits_{e\to\infty}\frac{a_e}{p^{d(e+\alpha)}}.$$\index{$s(R)$}
\end{definition}

\begin{remark} [\cite{AE,YaoObsFsig}]
Let $(R,\m,K)$ be an $F$-finite $F$-pure ring   {of dimension~$d$}.
We define
\[
I_e=\{r\in R\;|\; \varphi(r)\in\m\;\; \forall \varphi\in\cC^e_R \}.
\]\index{$I_e$}
Then, if $R$ is $F$-finite, one has the equality $s(R)=\lim_{e\rightarrow\infty} \lambda(R/I_e)/p^{ed}$. In general, if $R$ is not $F$-finite, we define the $F$-signature by this formula.
\end{remark}

We recall a well-known description of the differential operators in prime characteristic. We include Part~(i) for comparison with Lemma~\ref{I-diff-ops}.

\begin{proposition}\label{RemYek}
Let $(R,\m,K)$ be an $F$-finite local ring of prime characteristic $p$. 
\begin{enumerate}
\item[(i)]\label{De-Delta-bracket} $D^{(e)}_{R|\ZZ}$ is the set of $\Delta_{R|\ZZ}^{[p^e]}$-differential operators of $R$.\index{$D^{(e)}_{R}$}
\item[(ii)] $D_{R|\ZZ}=\bigcup_{e\in\NN} D^{(e)}_R$.
\item[(iii)] Set $\mu=\dim_{K^p} (R/\m^{[p]})$. Then $D^{p^e-1}_{R|\ZZ} \subseteq D^{(e)}\subseteq D^{\mu(p^{e}-1)}_{R|\ZZ}$.
\item[(iv)] Suppose that $R$ is the localization of an algebra of finite type and let $\mu$ denote its global embedding dimension. Then $ D^{p^e-1}   \subseteq     D^{(e)}       \subseteq D^{\mu ( p^e -1)} $.
\end{enumerate}
\end{proposition}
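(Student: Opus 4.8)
The statement has four parts; part (i) is a reformulation of the definitions, the two inclusions in (iii) both rest on a single characteristic-$p$ identity, and parts (ii) and (iv) are deduced from (iii). For (i), I would apply Proposition~\ref{representing-differential} with $M=N=R$ and the subring $\ZZ\subseteq R$: it identifies the $\Delta^{[p^e]}_{R|\ZZ}$-differential operators of $R$ with $\Hom_R\!\big((R\otimes_\ZZ R)/\Delta^{[p^e]}_{R|\ZZ},R\big)$. Since $R$ has characteristic $p$, Frobenius is additive, so $\Delta^{[p^e]}_{R|\ZZ}$ is generated by the elements $r^{p^e}\otimes 1-1\otimes r^{p^e}$ ($r\in R$), whence $(R\otimes_\ZZ R)/\Delta^{[p^e]}_{R|\ZZ}=R\otimes_{R^{p^e}}R$, and the tensor--Hom adjunction gives $\Hom_R(R\otimes_{R^{p^e}}R,R)\cong\Hom_{R^{p^e}}(R,R)=D^{(e)}_R$. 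Concretely, $\Delta^{[p^e]}_{R|\ZZ}\cdot\delta=0$ says exactly that $\delta$ commutes with multiplication by every $r^{p^e}$, i.e.\ that $\delta$ is $R^{p^e}$-linear; this proves (i).

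The engine for (iii) is the following identity. For $a\in R$ let $\operatorname{ad}_a$ denote the inner derivation $\delta\mapsto a\delta-\delta a$ of $\operatorname{End}_\ZZ(R)$, where $a$ also denotes multiplication by $a$. Because multiplication operators commute, $\operatorname{ad}_a^{k}(\delta)=\sum_{j=0}^{k}(-1)^j\binom kj a^{k-j}\delta\,a^{j}$; taking $k=p^e$ and using $\binom{p^e}{j}\equiv 0\pmod p$ for $0<j<p^e$ together with $(-1)^{p^e}=-1$ in characteristic $p$, we get
\[ \operatorname{ad}_a^{p^e}(\delta)=a^{p^e}\delta-\delta\,a^{p^e}=\operatorname{ad}_{a^{p^e}}(\delta)\qquad(a\in R,\ \delta\in\operatorname{End}_\ZZ(R)). \]
The lower inclusion $D^{p^e-1}_{R|\ZZ}\subseteq D^{(e)}_R$ is then immediate: if $\delta$ has order $\le p^e-1$ then $\operatorname{ad}_a^{p^e}(\delta)=0$ for all $a$ (each $\operatorname{ad}_a$ drops the order by at least one), so $a^{p^e}\delta=\delta\,a^{p^e}$ for all $a$; since $R^{p^e}=\{r^{p^e}:r\in R\}$ is the subring that is the image of Frobenius, $\delta$ is $R^{p^e}$-linear. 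Note that this inclusion (hence the ``$\subseteq$'' direction of (ii)) needs no finiteness hypothesis.

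For the upper inclusion $D^{(e)}_R\subseteq D^{\mu(p^e-1)}_{R|\ZZ}$, choose $a_1,\dots,a_\mu\in R$ generating $R$ as an $R^p$-module, hence as an $R^{p^e}$-algebra; such a set of size $\mu=\dim_{K^p}(R/\m^{[p]})$ exists by Nakayama, since $\m^{[p]}$ is the expansion to $R$ of the maximal ideal of $R^p$. Put $D_i=\operatorname{ad}_{a_i}$. Three observations: the $D_i$ mutually commute (inner derivations by the commuting operators $a_i$); each $D_i$ carries $\operatorname{End}_{R^{p^e}}(R)$ into itself, since $[a_i,\delta]$ is $R^{p^e}$-linear whenever $\delta$ is; and $D_i^{p^e}$ kills $\operatorname{End}_{R^{p^e}}(R)$, because by the displayed identity $D_i^{p^e}(\delta)=\operatorname{ad}_{a_i^{p^e}}(\delta)=0$ as $a_i^{p^e}\in R^{p^e}$. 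Now let $\delta\in D^{(e)}_R$ and take any word $D_{i_1}\cdots D_{i_N}$ with $N=\mu(p^e-1)+1$: by pigeonhole some $D_i$ occurs at least $p^e$ times, and collecting those factors (legitimate since the $D_i$ commute and every intermediate term remains in $\operatorname{End}_{R^{p^e}}(R)$) makes the word annihilate $\delta$. So all $N$-fold iterated commutators of $\delta$ with the $a_i$ vanish, and by the standard fact that an $R^{p^e}$-linear operator has order $\le n$ over $R^{p^e}$ as soon as every $(n{+}1)$-fold iterated commutator with a set of $R^{p^e}$-algebra generators of $R$ vanishes (using additivity and the Leibniz rule for $\operatorname{ad}$ in its subscript, together with the commutation of multiplication operators with all $D_i$), $\delta$ has order $\le\mu(p^e-1)$ over $R^{p^e}$, hence over $\ZZ$.

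Finally, part (ii) follows: for $p^e>n$ one has $D^n_{R|\ZZ}\subseteq D^{p^e-1}_{R|\ZZ}\subseteq D^{(e)}_R$, while each $D^{(e)}_R\subseteq D^{\mu(p^e-1)}_{R|\ZZ}\subseteq D_{R|\ZZ}$; for $R$ not local one localizes at each maximal ideal, using that $F^1_* R$ is a finitely generated $R$-module (so the modules $R\otimes_{R^{p^e}}R$ are finitely generated over $R$ and everything in sight commutes with localization). Part (iv) is the upper-inclusion argument of (iii) run with the $\mu$ images in $R$ of the variables of a finite polynomial presentation playing the role of algebra generators over $R^{p^e}$, the lower inclusion being unchanged. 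I expect the main obstacle to be precisely the upper inclusion in (iii): the naive estimate ``$R$ is generated over $R^{p^e}$ by $\mu^e$ monomials'' only yields order $\le\mu^e$, which is weaker than $\mu(p^e-1)$ for small $p$; the device that produces the sharp, linear-in-$p^e$ constant is the identity $\operatorname{ad}_a^{p^e}=\operatorname{ad}_{a^{p^e}}$, which converts the problem into a pigeonhole count over $\mu$ pairwise-commuting operators on $\operatorname{End}_{R^{p^e}}(R)$ that are each $p^e$-step nilpotent, and the slightly fiddly (but standard) reduction of the order condition to algebra generators of $R$ over $R^{p^e}$.
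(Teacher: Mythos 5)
Your proof is correct, and it takes a genuinely different route from the paper. For parts (i)--(iii) the paper does not actually give an argument at all: it cites Yekutieli for the results, whereas you prove them from scratch. Your proof of (i) via the identification $(R\otimes_\ZZ R)/\Delta^{[p^e]}_{R|\ZZ}\cong R\otimes_{R^{p^e}}R$ together with Proposition~\ref{representing-differential} is exactly right. For (iii) and (iv) the two approaches are dual formulations of the same idea. The paper works on the ideal side: it uses the elementary containments $\Delta^{\mu(q-1)+1}\subseteq\Delta^{[q]}\subseteq\Delta^{q}$ (valid for any ideal with $\mu$ generators in characteristic $p$; the first inclusion is a pigeonhole on products of the $\mu$ generators, the second is trivial) and passes to $R$-linear forms on the quotients using the principal parts machinery. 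You work on the operator side, translating the Frobenius power containments into the identity $\operatorname{ad}_a^{p^e}=\operatorname{ad}_{a^{p^e}}$ and running the pigeonhole on the $\mu$ commuting nilpotent derivations $\operatorname{ad}_{a_i}$ of $\operatorname{End}_{R^{p^e}}(R)$. The paper's argument is shorter once one accepts Proposition~\ref{representing-differential}; yours is more elementary and self-contained, and makes the mechanism visible at the level of commutators, at the cost of the slightly fiddly reduction you flag yourself (passing from vanishing of iterated brackets with the $a_i$ to vanishing with arbitrary elements of $R$ --- that reduction does work, by induction on $n$, using $[\operatorname{ad}_a,\operatorname{ad}_b]=0$ together with the base case that an element of $E_0$ is $R$-linear).

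One small point worth watching in (iv): your upper inclusion needs the images of $X_1,\dots,X_\mu$ to generate $R$ as an $R^{p^e}$-algebra, which uses $K\subseteq R^{p^e}$, i.e.\ $K$ perfect. The paper's version of (iv) works with $\Delta\subseteq S\otimes_KS$, hence with $K$-linear operators, and so does not visibly confront this; but the statement of (iv) is over $\ZZ$, and identifying $D^{(e)}_R=\End_{R^{p^e}}(R)$ with $\Delta^{[p^e]}_{R|K}$-differential operators also needs $K$ perfect. So this is a shared blind spot rather than a gap unique to your argument; if $K$ is imperfect one would throw a finite $p$-basis of $K$ into the generating set, enlarging $\mu$. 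In the contexts where this proposition is applied in the paper, $K$ is perfect, so the issue is harmless.
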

\begin{proof}
For (i), (ii), (iii), we refer previous work  {\cite[Lemma~1.4.8,~Theorem~1.4.9]{Ye}}  {(see also \cite[Susbection~2.5]{SmithVDB})}.
(iv). We write $R=S_{\mathfrak m}$ with $S=K[X_1, \ldots, X_\mu]/ {\mathfrak a} $.
The ideal $\Delta$ in $ S \tensor_K S $ is generated by $ (X_1-Y_1,   ... ,X_ \mu-Y_\mu)$ and has thus $\mu$ generators. This is also true for $R$. As for any ideal in positive characteristic we have the containments
\[  \Delta^{ \mu (p^e-1)+1} \subseteq \Delta^{[p^e]}  \subseteq \Delta^{p^e} .\]
By looking at the $R$-linear forms on $R \tensor_K R$ modulo these powers we get the inclusions
\[  D^{p^e-1}   \subseteq     D^{(e)}  \subseteq D^{\mu (p^e -1)} . \]
\end{proof}

\begin{definition}
Let $R$ be an $F$-finite ring of characteristic $p>0$.
Let $I$ be an ideal of $R$, and $e$ be a positive integer.
We define the \textit{differential Frobenius powers} of $I$ by
\[ I^{\Fdif{p^e}}= \{f\in R \, | \, \delta(f)\in I \hbox{ for all } \delta\in \End_{R^{p^e}}(R)\}.\]\index{$I^{\Fdif{n}}$}
\end{definition}

This notion enjoys many of the nice properties that differential powers enjoy. For example:

\begin{lemma}\label{properties-Fdiff-powers} Let $R$ be a ring and  $I,J_\alpha\subseteq R$ be ideals.
\begin{itemize}
\item[(i)] $I^\Fdif{p^e}$ is an ideal;
\item[(ii)] $I^{[p^e]}\subseteq I^\Fdif{p^e};$
\item[(iii)] $\left(\bigcap_{\alpha}J_\alpha\right)^\Fdif{p^e}
=\bigcap_{\alpha }(J_\alpha)^\Fdif{p^e}.$
\item[(iv)] If $I$ is $\p$-primary, then $I^\Fdif{p^e}$ is also $\p$-primary.
\end{itemize}
\end{lemma}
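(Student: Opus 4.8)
The plan is to read everything off the single structural description $D^{(e)}_R=\End_{R^{p^e}}(R)\subseteq\Hom_\ZZ(R,R)$. Concretely, I would first record the facts I will use repeatedly: (a) $D^{(e)}_R$ is closed under addition; (b) it contains $\id_R$; (c) it is stable under pre- and post-composition with the multiplication operator $r_R\colon x\mapsto rx$ for any $r\in R$ (since $r_R$ is $R$-linear, hence $R^{p^e}$-linear, and a composite of $R^{p^e}$-linear maps is $R^{p^e}$-linear); and (d) every $\delta\in D^{(e)}_R$ satisfies $\delta(g^{p^e}h)=g^{p^e}\delta(h)$ because $g^{p^e}\in R^{p^e}$. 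I would also note at the outset that, by (b), $\id_R\in D^{(e)}_R$, so $f\in I^{\Fdif{p^e}}$ forces $f=\id_R(f)\in I$; thus $I^{\Fdif{p^e}}\subseteq I$ always.

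For (i), additivity (a) gives $\delta(f+g)=\delta(f)+\delta(g)\in I$ for $f,g\in I^{\Fdif{p^e}}$, and for $r\in R$ the operator $\delta\circ r_R$ lies in $D^{(e)}_R$ by (c), so $\delta(rf)=(\delta\circ r_R)(f)\in I$; hence $I^{\Fdif{p^e}}$ is an ideal. For (ii), I would write an element of $I^{[p^e]}$ as $\sum_j r_j g_j^{p^e}$ with $g_j\in I$; then by (d), $\delta\bigl(\sum_j r_j g_j^{p^e}\bigr)=\sum_j g_j^{p^e}\,\delta(r_j)\in I^{[p^e]}\subseteq I$, so $I^{[p^e]}\subseteq I^{\Fdif{p^e}}$. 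Part (iii) is pure quantifier juggling: $\delta(f)\in\bigcap_\alpha J_\alpha$ for every $\delta\in D^{(e)}_R$ if and only if $\delta(f)\in J_\alpha$ for every $\alpha$ and every $\delta$, i.e.\ $f\in\bigcap_\alpha (J_\alpha)^{\Fdif{p^e}}$.

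The step I expect to be the real point is (iv). From (ii) and $I^{\Fdif{p^e}}\subseteq I$ I get $I^{[p^e]}\subseteq I^{\Fdif{p^e}}\subseteq I$, and since $\sqrt{I^{[p^e]}}=\sqrt{I}=\p$ this pins down $\sqrt{I^{\Fdif{p^e}}}=\p$. For the primary condition, suppose $xy\in I^{\Fdif{p^e}}$ with $x\notin\p$; I want $y\in I^{\Fdif{p^e}}$. The trick is to use that $I^{\Fdif{p^e}}$ is an ideal (part (i)), so $x^{p^e}y=x^{p^e-1}\cdot(xy)\in I^{\Fdif{p^e}}$; then for every $\delta\in D^{(e)}_R$, using (d), $x^{p^e}\delta(y)=\delta(x^{p^e}y)\in I$. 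Since $x\notin\p$ gives $x^{p^e}\notin\p$ and $I$ is $\p$-primary, this forces $\delta(y)\in I$ for all $\delta$, i.e.\ $y\in I^{\Fdif{p^e}}$. The only subtlety to watch is the order of logic — one must invoke (i) before (iv) to license multiplication by $x^{p^e-1}$ — but no serious obstacle remains; the argument is a direct analogue of Proposition~\ref{properties-diff-powers}(i)--(iv), with $R^{p^e}$-linearity playing the role that the order filtration plays there.
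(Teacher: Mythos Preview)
Your proof is correct and follows essentially the same approach as the paper: for (i)--(iii) your arguments match the paper's almost verbatim, and for (iv) the paper simply cites the analogous argument for ordinary differential powers in \cite{SurveySP}, whereas you write out a clean self-contained version exploiting $R^{p^e}$-linearity directly via the $x^{p^e-1}$ multiplication trick.
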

 \begin{proof}
\begin{itemize} 
\item[(i)] This follows from the observation that if $\delta\in D^{(e)}$, then $\delta\circ f\in D^{(e)}$ for any $f\in R$.
\item[(ii)] This is immediate from $I^{[q]}\subseteq (I\cap R^q)R$ and the previous part.
\item[(iii)] Follows  from the definition.
\item[(iv)] This is analogous to the proof for differential powers  {\cite[Proposition 2.6]{SurveySP}.} \qedhere
\end{itemize}
\end{proof} 

We also have the following analogue of Lemma~\ref{diff-localize2}.

\begin{lemma}\label{Fdiff-localize} 
Let $W$ be a multiplicative set in $R$ and $I$ an ideal. Suppose that $R$ is $F$-finite. Then $I^\Fdif{p^e} (W^{-1}R) = (W^{-1} I)^\Fdif{p^e}$.
\end{lemma}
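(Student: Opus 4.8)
The plan is to follow the same strategy as the proof of Lemma~\ref{diff-localize2}: reduce the statement about localized ideals to the elementwise compatibility of differential Frobenius powers with localization, which in turn should come from the fact that $F$-finiteness guarantees good localization behavior of the modules $D^{(e)}_R = \End_{R^{p^e}}(R)$. Concretely, I first observe that for any ideal $J$ of $W^{-1}R$ one has $J = (J\cap R)W^{-1}R$, so it suffices to prove the containment $I^{\Fdif{p^e}}(W^{-1}R) \subseteq (W^{-1}I)^{\Fdif{p^e}}$ together with $I^{\Fdif{p^e}} = (W^{-1}I)^{\Fdif{p^e}}\cap R$ (the analogue of Lemma~\ref{diff-localize} for Frobenius powers), and then combine these exactly as in the proof of Lemma~\ref{diff-localize2}.

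The key input is that $D^{(e)}$ commutes with localization when $R$ is $F$-finite: the natural map $W^{-1}R\otimes_R \End_{R^{p^e}}(R)\to \End_{(W^{-1}R)^{p^e}}(W^{-1}R)$ is an isomorphism, because $F^e_*R$ is a finitely generated $R$-module and $\Hom$ of finitely generated (presented) modules commutes with localization. Equivalently, every $\delta\in \End_{(W^{-1}R)^{p^e}}(W^{-1}R)$ can be written as $\frac{1}{w}\widetilde\eta$ for some $\eta\in \End_{R^{p^e}}(R)$ and $w\in W$, and conversely every $\eta\in D^{(e)}_R$ extends to $\widetilde\eta\in D^{(e)}_{W^{-1}R}$ with $\widetilde\eta(\tfrac r1)=\tfrac{\eta(r)}1$. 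With this, the two containments for the "contraction" identity $I^{\Fdif{p^e}} = (W^{-1}I)^{\Fdif{p^e}}\cap R$ go through verbatim as in Lemma~\ref{diff-localize}: for $(\subseteq)$, if $D^{(e)}_R\cdot f\subseteq I$ and $\delta = \frac1w\widetilde\eta \in D^{(e)}_{W^{-1}R}$, then $\delta(\tfrac f1) = \tfrac{\eta(f)}{w}\in W^{-1}I$; for $(\supseteq)$, if $f\in R$ and $D^{(e)}_{W^{-1}R}\cdot\tfrac f1\subseteq W^{-1}I$, then for any $\eta\in D^{(e)}_R$ we have $\tfrac{\eta(f)}1 = \widetilde\eta(\tfrac f1)\in W^{-1}I\cap R = I$, using that $W\cap I$ may be taken empty without loss of generality since otherwise both sides are $W^{-1}R$. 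Finally, for a general $J\subseteq W^{-1}R$, writing $J = (J\cap R)W^{-1}R$ and applying the contraction identity to $J\cap R$ yields $I^{\Fdif{p^e}}(W^{-1}R) = (W^{-1}I)^{\Fdif{p^e}}$.

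I expect the main (modest) obstacle to be making precise the localization statement for $D^{(e)}_R = \End_{R^{p^e}}(R)$, since the excerpt states the localization results (Propositions~\ref{localization1}, \ref{localization2}) in terms of $D^n_{R|A}$ and the modules of principal parts rather than in terms of $\End_{R^{p^e}}(R)$ directly. One clean way around this is to invoke Proposition~\ref{RemYek}(i), which identifies $D^{(e)}_{R|\ZZ}$ with the $\Delta^{[p^e]}_{R|\ZZ}$-differential operators, and then apply Proposition~\ref{localization2} with $A=\ZZ$ and $I=\Delta^{[p^e]}_{R|\ZZ}$ (noting $\Delta^{[p^e]}_{R|\ZZ}\supseteq \Delta^{\mu(p^e-1)+1}_{R|\ZZ}$ as in the proof of Proposition~\ref{RemYek}(iv), and that $\ModDif{n}{R}{\ZZ}$ is finitely presented here because $F^e_*R$ is module-finite over $R$); this gives the desired isomorphism $W^{-1}R\otimes_R D^{(e)}_R\cong D^{(e)}_{W^{-1}R}$. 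Alternatively, and perhaps more transparently, one can argue directly: $F^e_*R$ is a finitely presented $R$-module since $R$ is $F$-finite (and Noetherian), $W^{-1}(F^e_*R)\cong F^e_*(W^{-1}R)$ as $W^{-1}R$-modules, and $\Hom$ commutes with localization for finitely presented modules, so $W^{-1}\End_{R^{p^e}}(R)\cong \End_{(W^{-1}R)^{p^e}}(W^{-1}R)$. Either route makes the elementwise extension/restriction statements above rigorous, after which the proof is a direct transcription of Lemmas~\ref{diff-localize} and~\ref{diff-localize2}.
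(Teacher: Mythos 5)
Your reduction to a contraction identity is where this breaks down. The assertion that $W\cap I=\varnothing$ forces $W^{-1}I\cap R = I$ is false: take $R=\FF_p[x,y]$, $W=\{x^n : n\geq 0\}$, $I=(xy)$. Then $W\cap I=\varnothing$, but $W^{-1}I = yW^{-1}R$ (since $x$ becomes a unit), so $W^{-1}I\cap R = (y)\supsetneq (xy)$. Worse, the contraction identity you are aiming for, $I^{\Fdif{p^e}} = (W^{-1}I)^{\Fdif{p^e}}\cap R$, is itself false in this example: writing $q=p^e$, a direct computation (using that $\End_{R^q}(R)$ has the monomials $x^iy^j$, $0\le i,j<q$, as a free $R^q$-basis) gives $I^{\Fdif{p^e}} = (x^qy^q)$, while $(W^{-1}I)^{\Fdif{p^e}}\cap R = y^qW^{-1}R\cap R = (y^q)$. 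The lemma you are trying to prove is nevertheless true here, since both $(x^qy^q)W^{-1}R$ and $y^qW^{-1}R$ coincide; but it cannot be obtained by expanding a false contraction identity.

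The paper's actual proof does not pass through any contraction identity; it proves the containment $(W^{-1}I)^{\Fdif{p^e}}\subseteq I^{\Fdif{p^e}}W^{-1}R$ directly, using one extra ingredient that your sketch is missing. Because $W^{-1}I\cap R$ is a finitely generated ideal of $R$, one can fix $v\in W$ with $v\cdot(W^{-1}I\cap R)\subseteq I$. Given $\tfrac{r}{w}\in(W^{-1}I)^{\Fdif{p^e}}$ and $\delta\in D^{(e)}_R$ with extension $\widetilde{\delta}\in D^{(e)}_{W^{-1}R}$, one has $\delta(w^{q-1}r)=w^q\,\widetilde{\delta}(\tfrac{r}{w})\in W^{-1}I\cap R$, hence $\delta(v^qw^{q-1}r)=v^q\delta(w^{q-1}r)\in I$. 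As $\delta$ was arbitrary, $v^qw^{q-1}r\in I^{\Fdif{p^e}}$, and since $v,w$ are units in $W^{-1}R$, $\tfrac{r}{w}=\tfrac{v^qw^{q-1}r}{v^qw^q}\in I^{\Fdif{p^e}}W^{-1}R$. Your treatment of the easy containment, and your identification $W^{-1}D^{(e)}_R\cong D^{(e)}_{W^{-1}R}$ via finite presentation of $F^e_*R$ (or via Propositions~\ref{RemYek} and~\ref{localization2}), are both correct and match the paper. The missing idea is the common-denominator $v$, which lets you absorb the discrepancy between $I$ and $W^{-1}I\cap R$ into a unit of $W^{-1}R$.
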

\begin{proof}
Since $R$ is $F$-finite, we have that 
\[D^{(e)}_{W^{-1}R}=\Hom_{W^{-1}R^{p^e}}(W^{-1}R, W^{-1}R)=W^{-1}\Hom_{R^{p^e}}(R, R)=W^{-1} D^{(e)}_R\] by the natural map. 

We first show $I^\Fdif{p^e} (W^{-1}R) \subseteq (W^{-1} I)^\Fdif{p^e}$.
 Let  {$r\in I^\Fdif{p^e}$}, $w\in W$, and $\delta\in D^{(e)}_{W^{-1}R}$. Write $\delta=\frac{1}{v} \cdot \eta$, with $v\in W$ and $\eta\in D^{(e)}_R$. Then $\delta(\frac{r}{w})=\frac{\eta(rw^{p^e-1})}{vw^{p^e}}$. Because $I^\Fdif{p^e}$ is an ideal containing $r$, we have $\eta(rw^{p^e-1})\in I$, and $\delta(\frac{r}{w})\in (W^{-1} I)^\Fdif{p^e}$. 

We now focus on the other containment. Let $\frac{r}{w}$ lie in $ (W^{-1} I)^\Fdif{p^e}$, fix $v\in W$ such that $v(W^{-1}I \cap R)\subseteq I$, and take some $\delta \in  D^{(e)}_R$. We have that $\delta(w^{p^e-1} r) = w^{p^e} \delta(\frac{r}{w}) \in W^{-1}I \cap R$. Then, $\delta(v^{p^e}w^{p^e-1} r)=v^{p^e} \delta(w^{p^e-1} r) \in I$. Since $\delta$ was arbitrary, $v^{p^e} w^{p^e-1} r \in I^\Fdif{p^e}$. Thus, $\frac{r}{w}\in I^\Fdif{p^e} W^{-1}R$, as required.
\end{proof}

We now give a result that is a key ingredient to compare both signatures. 

\begin{proposition}\label{PropDvsCartIdeals}
Let $(R,\m,K)$ be an $F$-finite $F$-pure local ring.
Then,  $\m^{\Fdif{p^e}}=I_e$.
\end{proposition}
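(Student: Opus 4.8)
The plan is to pass everything through the module-theoretic descriptions recorded in Remark~\ref{RemCartFrobDmod}. Since an $F$-pure ring is reduced, we have the identifications $\cC^e_R \cong \Hom_R(F^e_* R, R)$ and $D^{(e)}_R = \End_{R^{p^e}}(R) \cong \Hom_R(F^e_* R, F^e_* R)$. The one auxiliary gadget needed is the canonical $R$-linear Frobenius inclusion $\iota\colon R \to F^e_* R$, which in the pushforward notation is $r \mapsto F^e_*(r^{p^e})$; the $p^e$-th power is essential, since it is precisely what makes $\iota$ an $R$-module map (the $R$-action on $F^e_* R$ being through Frobenius). Composing with $\iota$, and with a splitting $\pi$ of $\iota$, transports operators between $\cC^e_R$ and $D^{(e)}_R$, and the heart of the proof is that these passages preserve the property ``sends the fixed element $r$ to a unit.''

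First I would prove $\m^{\Fdif{p^e}} \subseteq I_e$, which in fact needs nothing beyond $F$-finiteness and reducedness. Let $r \in \m^{\Fdif{p^e}}$ and let $\varphi \in \cC^e_R \cong \Hom_R(F^e_* R, R)$ be arbitrary. Then $\iota \circ \varphi$ is an $R$-linear endomorphism of $F^e_* R$, hence an element of $D^{(e)}_R$; as a self-map of $R$ it is $r' \mapsto \varphi(r')^{p^e}$. The defining property of $\m^{\Fdif{p^e}}$ gives $\varphi(r)^{p^e} \in \m$, so $\varphi(r) \in \m$ as $\m$ is prime. Since $\varphi$ was arbitrary, $r \in I_e$.

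For the reverse inclusion $I_e \subseteq \m^{\Fdif{p^e}}$ I would use $F$-purity: iterating an $F$-splitting shows $\iota$ splits, so fix $\pi \in \cC^e_R$ with $\pi \circ \iota = \id_R$; in particular, as a self-map of $R$, $\pi(1) = 1$. Suppose $r \notin \m^{\Fdif{p^e}}$; then there is $\delta \in D^{(e)}_R$ with $u := \delta(r)$ a unit. Multiplication by $u^{-1}$ is an $R$-linear automorphism of $F^e_* R$ (multiplication by a scalar commutes with the Frobenius-twisted action), so $\pi \circ (u^{-1}\cdot) \circ \delta$ is an $R$-linear map $F^e_* R \to R$, i.e., an element of $\cC^e_R$, and as a self-map of $R$ it sends $r$ to $\pi(u^{-1}\delta(r)) = \pi(1) = 1 \notin \m$. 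Thus $r \notin I_e$. Combining the two inclusions gives $\m^{\Fdif{p^e}} = I_e$.

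I do not expect a serious obstacle; the argument is a short diagram chase once the right maps are in hand. The subtle points to get right are the precise form of the natural inclusion $R \hookrightarrow F^e_* R$ (namely $r \mapsto F^e_*(r^{p^e})$, not $r \mapsto F^e_* r$) and, in the second inclusion, the observation that one only needs $\pi(1) = 1$ rather than locality of the splitting: an $F$-splitting $\pi$ generally does not satisfy $\pi(\m) \subseteq \m$, so naively pushing $\delta(r) \in \m$ forward along a fixed $\pi$ would fail, and the rescaling by $u^{-1}$ is exactly the device that circumvents this.
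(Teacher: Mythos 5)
Your proof is correct and takes essentially the same approach as the paper's: both directions transport maps between $\cC^e_R$ and $D^{(e)}_R$ by composing with the Frobenius inclusion $\iota$ (for $\m^{\Fdif{p^e}} \subseteq I_e$, no $F$-purity needed) and with a splitting of $\iota$ (for $I_e \subseteq \m^{\Fdif{p^e}}$, using $F$-purity), exactly as in the paper via Remark~\ref{RemCartFrobDmod}. The only cosmetic differences are that the paper argues both inclusions by contrapositive, and normalizes to $\delta(f)=1$ at the outset instead of your rescaling by $u^{-1}$.
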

\begin{proof}
We show the equivalent statement $R\setminus \m^{\Fdif{p^e}}=R\setminus I_e$.

Let $f\not\in \m^{\Fdif{p^e}}.$ Then, there exists $\delta\in D^{(e)}$ such that $\delta(f)=1.$ By Remark~\ref{RemCartFrobDmod}, there exist a map $\tilde{\delta}\in\Hom_R(F^e_*R,F^e_*R)$ such that $\tilde{\delta}(F^e_*f)=1.$
Let $\beta:F^e_*R\to R$ be a splitting. Then, $\beta(\tilde{\delta}(F^e_*f))=1$. 
Since $\beta\circ \tilde{\delta}\in \Hom_R(F^e_*R,R)$, there exists   {$\phi\in \mathcal{C}^e_R$}
such that $\phi(f)=1$ by Remark~\ref{RemCartFrobDmod}. Hence, $f\not\in I_e.$

 {Let $f\not\in I_e$.  Then, there exists $\phi\in \cC^{e}_R$ such that $\phi(f)=1.$ 
By Remark~\ref{RemCartFrobDmod}, there exists a map $\tilde{\phi}\in\Hom_R(F^e_*R,R)$ such that $\tilde{\phi}(F^e_* f)=1.$
Let $\iota:R\to F^e_*R$ be the inclusion. Then, $\iota(\tilde{\phi}(F^e_* f))=1$. 
Since $\iota\circ \tilde{\phi}\in \Hom_R(F^e_*R,F^e_* R)$, there exists  $\delta\in D^{(e)}_R$
such that $\delta\cdot f=1$ by Remark~\ref{RemCartFrobDmod}. Hence, $f\not\in  \m^{\Fdif{p^e}}.$
}
\end{proof}

%%%%%%%%%%%%%%%%%%%%%%%%%%%%%%%%%%%%%%%%%
\subsection{Differential signature and $F$-signature}
%%%%%%%%%%%%%%%%%%%%%%%%%%%%%%%%%%%%%%%%%%

Using Proposition~\ref{PropDvsCartIdeals}, we observe that the $F$-signature can be defined in terms of differential Frobenius powers.

\begin{corollary}[{see~\cite[Corollary 2.8]{AE}}]\label{RemFsigDifPowers}
Suppose that $(R,\m,K)$ is an $F$-finite $F$-pure  {local} ring. Let $d=\dim(R)$.
Then,  
\[s(R)=\lim_{e\rightarrow\infty} \frac{\lambda(R/\m^{\Fdif{p^e}})}{p^{ed}}.\]
\end{corollary}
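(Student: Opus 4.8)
The plan is to combine the standard characterization of $F$-signature in terms of the ideals $I_e$ with the identification $\m^{\Fdif{p^e}} = I_e$ just established in Proposition~\ref{PropDvsCartIdeals}. Concretely, recall from the Remark following the definition of $F$-signature (attributed to \cite{AE,YaoObsFsig}) that for an $F$-finite $F$-pure local ring $(R,\m,K)$ of dimension $d$ one has
\[
s(R) = \lim_{e\to\infty} \frac{\lambda(R/I_e)}{p^{ed}}.
\]
This is exactly the content we want to transport.

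First I would invoke Proposition~\ref{PropDvsCartIdeals}, which gives the equality of ideals $\m^{\Fdif{p^e}} = I_e$ for every $e$. Substituting this into the displayed formula above immediately yields
\[
s(R) = \lim_{e\to\infty} \frac{\lambda(R/\m^{\Fdif{p^e}})}{p^{ed}},
\]
which is the claim. Since the equality $\m^{\Fdif{p^e}}=I_e$ holds as ideals, the lengths $\lambda(R/\m^{\Fdif{p^e}})$ and $\lambda(R/I_e)$ agree term by term, so there is no subtlety in passing to the limit: the two sequences whose limits define $s(R)$ are literally the same sequence. The hypotheses needed — that $R$ be $F$-finite and $F$-pure, local, of dimension $d$ — are precisely those assumed in the statement of the corollary and in Proposition~\ref{PropDvsCartIdeals}, so everything lines up.

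There is essentially no obstacle here; the corollary is a formal consequence of Proposition~\ref{PropDvsCartIdeals} together with the already-recorded description of $s(R)$ via the $I_e$. The only thing worth a sentence of care is making sure the existence of the limit (rather than merely a limsup) is inherited: this is automatic, because the limit on the right-hand side of the $I_e$-formula is known to exist by Tucker's theorem \cite{TuckerFSig} (as recalled in the introduction), and we have not changed the sequence. Hence the proof is a one-line citation of Proposition~\ref{PropDvsCartIdeals} applied to the $I_e$-characterization of the $F$-signature.
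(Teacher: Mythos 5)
Your proposal is correct and matches the paper's own argument: the paper likewise deduces the corollary in one line from Proposition~\ref{PropDvsCartIdeals}, which gives $\m^{\Fdif{p^e}}=I_e$ for every $e$, combined with the previously recorded formula $s(R)=\lim_{e\to\infty}\lambda(R/I_e)/p^{ed}$. Your extra remark about the limit existing (via Tucker) is a harmless, accurate addition but not needed beyond what the cited remark already provides.
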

\begin{proof}
This follows from the fact that 
$\m^{\Fdif{p^e}}=I_e$
for every $e\in\NN$ by Proposition~\ref{PropDvsCartIdeals}.
\end{proof}

\begin{remark}\label{analogy} Let $K$ be a perfect field of positive characteristic, and $(R,\m)$ be an algebra with pseudocoefficient field $K$. Set $\ModDif{[p^e]}{R}{K}=(R\otimes_K R)/\Delta^{[p^e]}_{R|K}$.
By the same argument in Proposition~\ref{freerank}, using Proposition~\ref{RemYek}~(i) and Proposition~\ref{representing-differential}, one can show that $\lambda_R(R/\m^{\Fdif{p^e}})=\frk_R(\ModDif{[p^e]}{R}{K})$. Thus, if $R$ is $F$-pure,
\[s(R)=\lim_{e\rightarrow\infty} \frac{\frk_R\big(\ModDif{[p^e]}{R}{K}\big)}{p^{ed}}.\]
We note also that 
\[\mu_R\big(\ModDif{[p^e]}{R}{K}\big)=\lambda_R\left( \frac{R/\m \otimes_K R}{\Delta^{[p^e]}_{R|K}}\right)=\lambda_R \big(R/\m^{[p^e]}\big),\]
so that $e_{HK}(R)=\lim_{e\rightarrow\infty} \mu_R\big(\ModDif{[p^e]}{R}{K}\big)/p^{ed}$. For comparison, \[\mu_R\big(\ModDif{n}{R}{K}\big) =\lambda_R\left( \frac{R/\m \otimes_R R}{\Delta^{n+1}_{R|K}}\right)=\lambda_R \big(R/\m^{n+1}\big),\]
where the right hand equation comes from \cite[Corollaire~16.4.12]{EGAIV}, so that $e(R)= d! \lim_{e\rightarrow\infty} \mu_R\big(\ModDif{n}{R}{K}\big)/n^d$. This motivates the analogy that the differential signature is to  the $F$-signature as Hilbert-Samuel multiplicity is to the Hilbert-Kunz multiplicity.

The function $\mu_R\big(\ModDif{n}{R}{K}\big)$ is studied by Kunz~\cite{KunzDiff} under the name of \emph{differential Hilbert series}, without the assumption that $R$ is algebra with a pseudocoefficient field.
\end{remark}

\begin{remark}
	Continuing with the previous remark, one may speculate what the analogue of the Hilbert-Kunz function for an $\m$-primary ideal $I$ or an Artinian $R$-module $M$ and what the analogue of tight closure in the setting of principal parts might be. Since the Hilbert-Kunz numerator function is given as $e \mapsto \lambda_R ( R/I^{[p^e]})= \lambda_R (R/I \tensor_R {}^e R)$, the analogue function is $n \mapsto \lambda_R\big( M \otimes_R \ModDif{n}{R}{K} \big) $ for an Artinian $R$-module $M$. Tight closure can be reduced to the tight closure of $0$ in an Artinian module $M$, by declaring $v \in 0^*$ if the normalized Hilbert-Kunz functions (divided by $p^{e d}$) of $M$ and of $M/(v)$ agree asymptotically (see \cite[Theorem 5.4]{CraigBookTC}). Hence the condition that $ \frac{\lambda_R\big( M \otimes_R \ModDif{n}{R}{K} \big) }{n^d} $ and  $ \frac{\lambda_R\big( M/(v) \otimes_R \ModDif{n}{R}{K} \big) }{n^d} $ coincide asymptotically defines a closure operation. If the differential signature is positive, then the substantial free part of $\ModDif{n}{R}{K} $ should imply that this closure is trivial for such rings.
\end{remark}

\begin{lemma} \label{LemmaCofinalDif}
 {Let $(R,\m,K)$ be a ring of positive characteristic $p$, and let $\mu$ be one of the numbers of Proposition~\ref{RemYek} (iii), (iv).} Then, for any ideal $I \subseteq R$ and any $e>0$ one has the containments
 { 
\[
I\dif{\mu (p^e-1)}{\ZZ} \subseteq I^{\Fdif{p^e}} \subseteq  I\dif{p^e-1}{\ZZ}.
 \]
 }
\end{lemma}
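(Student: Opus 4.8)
The plan is to deduce both containments formally from the sandwiching of $D^{(e)}_R=\End_{R^{p^e}}(R)$ between two ordinary rings of differential operators, which is exactly the content of Proposition~\ref{RemYek}(iii) (for $\mu=\dim_{K^p}(R/\m^{[p]})$) and Proposition~\ref{RemYek}(iv) (for $\mu$ the global embedding dimension): in either case one has $D^{p^e-1}_{R|\ZZ}\subseteq D^{(e)}_R\subseteq D^{\mu(p^e-1)}_{R|\ZZ}$. The feature to exploit is that both ``differential powers'' are inclusion-reversing in the family of operators against which an element is tested: if a set of operators $\mathcal{D}'$ contains $\mathcal{D}$, then $\{f\mid \mathcal{D}'(f)\subseteq I\}\subseteq\{f\mid \mathcal{D}(f)\subseteq I\}$. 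Once the operator inclusions are in hand, the lemma is a bookkeeping exercise with the definitions $I\dif{n}{\ZZ}=\{f\mid \delta(f)\in I\ \forall\,\delta\in D^{n-1}_{R|\ZZ}\}$ and $I^{\Fdif{p^e}}=\{f\mid \delta(f)\in I\ \forall\,\delta\in D^{(e)}_R\}$.

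For the right-hand containment $I^{\Fdif{p^e}}\subseteq I\dif{p^e-1}{\ZZ}$, I would take $f\in I^{\Fdif{p^e}}$, so $\delta(f)\in I$ for every $\delta\in D^{(e)}_R$. Since $D^{p^e-2}_{R|\ZZ}\subseteq D^{p^e-1}_{R|\ZZ}\subseteq D^{(e)}_R$ — the first inclusion by the order filtration, the second by Proposition~\ref{RemYek} — in particular every $\delta\in D^{p^e-2}_{R|\ZZ}$ sends $f$ into $I$, which is precisely the condition $f\in I\dif{p^e-1}{\ZZ}$. One in fact gets the slightly stronger chain $I^{\Fdif{p^e}}\subseteq I\dif{p^e}{\ZZ}\subseteq I\dif{p^e-1}{\ZZ}$.

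For the left-hand containment, I would start from $f$ in the appropriate differential power and take an arbitrary $\delta\in D^{(e)}_R$; by Proposition~\ref{RemYek} such a $\delta$ lies in $D^{\mu(p^e-1)}_{R|\ZZ}$, i.e.\ has order at most $\mu(p^e-1)$, so testing $f$ against all operators of order at most $\mu(p^e-1)$ forces $\delta(f)\in I$, whence $f\in I^{\Fdif{p^e}}$. The only point requiring attention is the one-step shift between the order bound $\mu(p^e-1)$ attached to elements of $D^{(e)}_R$ and the bound $n-1$ built into the definition of $I\dif{n}{\ZZ}$, so that the differential power to feed in is the one testing against all of $D^{\mu(p^e-1)}_{R|\ZZ}$. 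There is no genuine obstacle here: after invoking Proposition~\ref{RemYek} everything reduces to unwinding the two definitions and using monotonicity of $n\mapsto I\dif{n}{\ZZ}$.
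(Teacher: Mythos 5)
Your right-hand containment is correct, and the sharpening $I^{\Fdif{p^e}}\subseteq I\dif{p^e}{\ZZ}$ is a genuine improvement over the stated bound; both follow exactly as you describe from $D^{p^e-1}_{R|\ZZ}\subseteq D^{(e)}_R$ together with the inclusion-reversing property of the differential-power operation, which is also how the paper's (one-line) proof proceeds.

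There is, however, a real gap in the left-hand containment. Proposition~\ref{RemYek} gives $D^{(e)}_R\subseteq D^{\mu(p^e-1)}_{R|\ZZ}$, and $I\dif{n}{\ZZ}$ tests membership against $D^{n-1}_{R|\ZZ}$; so the inclusion-reversing principle yields
\[
I\dif{\mu(p^e-1)+1}{\ZZ}\subseteq I^{\Fdif{p^e}},
\]
not the stated $I\dif{\mu(p^e-1)}{\ZZ}\subseteq I^{\Fdif{p^e}}$. You acknowledge this one-step shift and then appeal to ``monotonicity of $n\mapsto I\dif{n}{\ZZ}$'' to dismiss it, but that monotonicity runs the wrong way: the differential powers form a \emph{descending} chain, so $I\dif{\mu(p^e-1)+1}{\ZZ}\subseteq I\dif{\mu(p^e-1)}{\ZZ}$, which turns the inclusion you can actually prove into something weaker rather than into the one claimed. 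Indeed the left containment as written fails already for $R=\FF_p[x]$ with $\mu=1$ and $I=\m=(x)$: here $\m\dif{p^e-1}{\ZZ}=\m^{p^e-1}=(x^{p^e-1})$ by Example~\ref{diff-powers-regular}, while $\frac{1}{(p^e-1)!}\partial^{p^e-1}$ lies in $D^{(e)}_R=\End_{R^{p^e}}(R)$ and sends $x^{p^e-1}$ to $1\notin\m$, so $x^{p^e-1}\notin\m^{\Fdif{p^e}}$. The fix is to adjust the exponent by one, which is exactly what your own argument produces: $I\dif{\mu(p^e-1)+1}{\ZZ}\subseteq I^{\Fdif{p^e}}$. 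This correction is harmless for the downstream asymptotic uses in Lemma~\ref{LemmaCompDifMultFsig} and Theorem~\ref{ThmFregPos}, since $\mu(p^e-1)$ and $\mu(p^e-1)+1$ give the same normalization as $e\to\infty$, but as stated the left inclusion does not follow from the definitions and Proposition~\ref{RemYek}, and the appeal to monotonicity cannot close the gap.
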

\begin{proof}
This follows directly  from 
the definition of $\ZZ$-linear differential powers, differential Frobenius powers, and Proposition~\ref{RemYek}.
\end{proof}

We are ready to compare both signatures. In particular, we obtain that, in the $F$-pure case, one is positive if and only if the other is. As a consequence, we have that the differential signature also detects strong $F$-regularity when the ring is $F$-pure.

\begin{lemma}\label{LemmaCompDifMultFsig}
Let $(R,\m,K)$ be  local $F$-pure ring of prime characteristic $p$ and let $\mu$ be one of the numbers of Proposition \ref{RemYek} (iii), (iv). Then,
$$
 s(R)\leq  \frac{\mu^d}{d!}  \dm{\ZZ}(R).
$$
\end{lemma}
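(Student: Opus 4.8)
The plan is to compare the normalized length functions $n \mapsto \lambda_R(R/\m\dif{n}{\ZZ})$ and $e \mapsto \lambda_R(R/\m^{\Fdif{p^e}})$ using the cofinality sandwich of Lemma~\ref{LemmaCofinalDif}, and then to pass to the limits/limsups that define $s(R)$ and $\dm{\ZZ}(R)$. Concretely, Lemma~\ref{LemmaCofinalDif} gives the inclusion $\m^{\Fdif{p^e}} \subseteq \m\dif{p^e-1}{\ZZ}$, so that
\[
\lambda_R\!\left(R/\m^{\Fdif{p^e}}\right) \;\geq\; \lambda_R\!\left(R/\m\dif{p^e-1}{\ZZ}\right).
\]
By Corollary~\ref{RemFsigDifPowers}, since $R$ is $F$-pure, the left-hand side divided by $p^{ed}$ converges to $s(R)$. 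Therefore
\[
s(R) \;=\; \lim_{e\to\infty} \frac{\lambda_R(R/\m^{\Fdif{p^e}})}{p^{ed}} \;\geq\; \limsup_{e\to\infty} \frac{\lambda_R(R/\m\dif{p^e-1}{\ZZ})}{p^{ed}}.
\]

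First I would rewrite the right-hand limsup in terms of the normalizing denominator $n^d/d!$ appearing in the definition of $\dm{\ZZ}(R)$. Setting $n = p^e - 1$, we have $p^{ed} = (n+1)^d$ and $n^d/d! = (p^e-1)^d/d!$, so
\[
\frac{\lambda_R(R/\m\dif{p^e-1}{\ZZ})}{p^{ed}} \;=\; \frac{\lambda_R(R/\m\dif{n}{\ZZ})}{n^d/d!}\cdot \frac{n^d/d!}{(n+1)^d} \;=\; \frac{1}{d!}\cdot\frac{\lambda_R(R/\m\dif{n}{\ZZ})}{n^d/d!}\cdot\left(\frac{n}{n+1}\right)^d.
\]
Since $(n/(n+1))^d \to 1$ as $e\to\infty$, and since the values $n = p^e - 1$ form a subsequence of all positive integers, we obtain
\[
\limsup_{e\to\infty} \frac{\lambda_R(R/\m\dif{p^e-1}{\ZZ})}{p^{ed}} \;=\; \frac{1}{d!}\,\limsup_{e\to\infty} \frac{\lambda_R(R/\m\dif{p^e-1}{\ZZ})}{(p^e-1)^d/d!} \;\leq\; \frac{1}{d!}\,\limsup_{n\to\infty} \frac{\lambda_R(R/\m\dif{n}{\ZZ})}{n^d/d!} \;=\; \frac{1}{d!}\,\dm{\ZZ}(R),
\]
where the inequality is because passing from a subsequence limsup to the full-sequence limsup can only increase the value. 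This already yields $s(R) \leq \frac{1}{d!}\dm{\ZZ}(R)$, which is stronger than the claimed bound whenever $\mu \geq 1$ (note $\mu = \dim_{K^p}(R/\m^{[p]}) \geq 1$, or the global embedding dimension $\geq 1$ as long as $R$ is not a field, and the statement is trivial if it is).

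The only subtlety — and the step I would be most careful about — is whether one genuinely needs the factor $\mu^d$ at all, or whether the other half of the cofinality sandwich, $\m\dif{\mu(p^e-1)}{\ZZ} \subseteq \m^{\Fdif{p^e}}$, is the intended route (giving $\lambda_R(R/\m^{\Fdif{p^e}}) \leq \lambda_R(R/\m\dif{\mu(p^e-1)}{\ZZ})$, hence $s(R) \leq \mu^d \cdot \frac{1}{d!}\dm{\ZZ}(R)$ after the analogous renormalization with $n = \mu(p^e-1)$ and $p^{ed} = (n/\mu + 1)^d \sim (n/\mu)^d$). Both arguments work and both conclude the lemma; I would present the first (sharper) one as the main argument and remark that the $\mu^d$ version follows similarly from the other inclusion, or simply record the weaker published bound since that is all that is needed downstream. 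The renormalization bookkeeping — keeping track of $d!$ and the ratio $(n/(n+1))^d \to 1$ (resp. $(n/(n/\mu+1))^d \to \mu^d$) — is the only place an off-by-a-constant slip could occur, so I would write those limits out explicitly as above. No deeper obstacle is present; the content is entirely in Lemma~\ref{LemmaCofinalDif} and Corollary~\ref{RemFsigDifPowers}, both already available.
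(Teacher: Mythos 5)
Your main argument has a genuine logical error in the direction of the inequality, and the claim you rely on — that it yields $s(R) \leq \tfrac{1}{d!}\dm{\ZZ}(R)$ — does not follow from what you have established.

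From the inclusion $\m^{\Fdif{p^e}} \subseteq \m\dif{p^e-1}{\ZZ}$ you correctly derive $\lambda_R(R/\m^{\Fdif{p^e}}) \geq \lambda_R(R/\m\dif{p^e-1}{\ZZ})$, hence
\[
s(R) \;=\; \lim_{e\to\infty}\frac{\lambda_R(R/\m^{\Fdif{p^e}})}{p^{ed}} \;\geq\; \limsup_{e\to\infty}\frac{\lambda_R(R/\m\dif{p^e-1}{\ZZ})}{p^{ed}}.
\]
You then (correctly) bound the right-hand side from above by $\tfrac{1}{d!}\dm{\ZZ}(R)$ via the subsequence limsup. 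But at this point you have two inequalities of the shape $s(R) \geq B$ and $B \leq \tfrac{1}{d!}\dm{\ZZ}(R)$, which do not combine to give $s(R) \leq \tfrac{1}{d!}\dm{\ZZ}(R)$. In fact the chain you produced goes in the direction used by the paper's subsequent remark (a lower bound for $s(R)$ in terms of a $\liminf$ of differential lengths), not toward the upper bound in the lemma. The remark you call ``sharper'' would, if true, combine with that lower bound to force $s(R) = \tfrac{1}{d!}\dm{\ZZ}(R)$ whenever the differential signature exists as a limit, which is certainly not established and is not expected.

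The correct route is precisely the one you relegate to an afterthought: use the \emph{other} half of the sandwich in Lemma~\ref{LemmaCofinalDif}, namely $\m\dif{\mu(p^e-1)}{\ZZ} \subseteq \m^{\Fdif{p^e}}$, which gives $\lambda_R(R/\m^{\Fdif{p^e}}) \leq \lambda_R(R/\m\dif{\mu(p^e-1)}{\ZZ})$. Dividing by $p^{ed}$, passing to the limit on the left and a $\limsup$ on the right, and reindexing by $n = \mu(p^e-1)$ with $p^{ed} = (n/\mu + 1)^d \sim (n/\mu)^d$ gives $s(R) \leq \tfrac{\mu^d}{d!}\dm{\ZZ}(R)$ exactly as in the paper. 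Your renormalization computation for this version is fine; the issue is only that you misjudged which inclusion gives the upper bound and claimed the wrong one as the main argument.
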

\begin{proof}
We have, by Lemma~\ref{LemmaCofinalDif}, 
 {$\m\dif{\mu(p^e-1)}{\ZZ} \subseteq \m^{\Fdif{p^e}}$}.
As a consequence,
$$
 \lambda_R(R/ \m^{\Fdif{p^e}})\leq \lambda_R(R/\m\dif{\mu(p^e-1)}{\ZZ} ).
$$
By dividing by $p^{e\dim(R)},$ we obtain that
$ s(R)\leq \frac{\mu^d}{d!} \dm{\ZZ}(R).$
\end{proof}

\begin{remark}
	Under the same hypotheses, a similar argument yields the inequality 
	$\liminf\limits_{n\to\infty} \lambda_R(R/\m\dif{n}{\FF_p})/n^d \leq s(R).$ In particular, when the sequence defining differential signature converges, we have that  {$\frac{1}{d!} \dm{\ZZ}(R)\leq s(R)$}.
\end{remark}

\begin{theorem}\label{ThmFregPos}
Let $(R,\m)$ be an $F$-finite $F$-pure local ring of prime characteristic $p$.
Then,  $\dm{\ZZ}(R)>0$ if and only if $R$ is a strongly $F$-regular ring.
\end{theorem}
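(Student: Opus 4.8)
The plan is to prove the two implications separately, exploiting the characterization of $F$-signature via Cartier maps and the comparison results already assembled in this section.

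For the direction ``strongly $F$-regular $\Rightarrow \dm{\ZZ}(R) > 0$'', I would argue as follows. Since $R$ is strongly $F$-regular, it is a normal domain, and $s(R) > 0$ by the theorem of Aberbach--Leuschke \cite{AL} cited as item (iii) in the introduction. By Lemma~\ref{LemmaCompDifMultFsig}, we have $s(R) \leq \frac{\mu^d}{d!}\dm{\ZZ}(R)$, where $\mu$ is (say) the global embedding dimension from Proposition~\ref{RemYek}(iv). Since $s(R) > 0$ and $\mu$ is a fixed finite number, this immediately forces $\dm{\ZZ}(R) > 0$. This direction is essentially formal given what has been established.

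For the converse, ``$\dm{\ZZ}(R) > 0 \Rightarrow R$ strongly $F$-regular'', the strategy is to use the other inequality in Lemma~\ref{LemmaCofinalDif}, namely $\m^{\Fdif{p^e}} \subseteq \m\dif{p^e-1}{\ZZ}$, together with Corollary~\ref{RemFsigDifPowers}, which expresses the $F$-signature as $s(R) = \lim_{e\to\infty}\lambda_R(R/\m^{\Fdif{p^e}})/p^{ed}$ (using that $R$ is $F$-pure). From the containment we get $\lambda_R(R/\m^{\Fdif{p^e}}) \geq \lambda_R(R/\m\dif{p^e-1}{\ZZ})$. Now the hypothesis $\dm{\ZZ}(R) = \limsup_{n\to\infty} \lambda_R(R/\m\dif{n}{\ZZ})/(n^d/d!) > 0$ gives a subsequence $n_k \to \infty$ along which $\lambda_R(R/\m\dif{n_k}{\ZZ})/(n_k^d/d!)$ is bounded below by a positive constant $c$. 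The main obstacle is that the $F$-signature is defined along the specific sequence $n = p^e - 1$, whereas the $\limsup$ defining $\dm{\ZZ}$ may be attained along an arbitrary subsequence. To bridge this, I would use monotonicity: the differential powers are a decreasing chain of ideals, $\m\dif{n+1}{\ZZ}\subseteq \m\dif{n}{\ZZ}$, so $\lambda_R(R/\m\dif{n}{\ZZ})$ is nondecreasing in $n$; hence for any $n$ with $p^{e-1} \leq n \leq p^e - 1$ we have $\lambda_R(R/\m\dif{p^e-1}{\ZZ}) \geq \lambda_R(R/\m\dif{n}{\ZZ})$. Given a subsequence $n_k$ with $\lambda_R(R/\m\dif{n_k}{\ZZ}) \geq c\, n_k^d/d!$ for infinitely many $k$, choose for each such $k$ the integer $e_k$ with $p^{e_k - 1} \leq n_k \leq p^{e_k} - 1$; then $\lambda_R(R/\m\dif{p^{e_k}-1}{\ZZ}) \geq c\, n_k^d/d! \geq c\, p^{(e_k-1)d}/d! = (c/(p^d d!))\, p^{e_k d}$. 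Combining with the containment, $\lambda_R(R/\m^{\Fdif{p^{e_k}}})/p^{e_k d} \geq c/(p^d d!) > 0$ for infinitely many $k$, so the limit $s(R) = \lim_e \lambda_R(R/\m^{\Fdif{p^e}})/p^{ed}$ (which exists by Tucker's theorem, or directly since $R$ is $F$-finite $F$-pure) is positive. Then $s(R) > 0$ together with the Aberbach--Leuschke theorem \cite{AL} yields that $R$ is strongly $F$-regular.

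I expect the main subtlety to be exactly this passage between the arbitrary subsequence realizing the $\limsup$ and the geometric sequence $p^e - 1$ on which $F$-signature lives; the monotonicity of $n \mapsto \lambda_R(R/\m\dif{n}{\ZZ})$ (immediate from Proposition~\ref{properties-diff-powers} and the definition of differential powers) is the key technical input that makes this work, together with the elementary estimate $p^{(e-1)d} \geq p^{-d}\, p^{ed}$. One should double-check that $R$ being a normal domain — needed so that ``strongly $F$-regular'' is the right target and so that $s(R)$ behaves well — is available: $F$-purity plus the eventual conclusion handles this, but in fact we only need $R$ reduced for the definitions, and the Aberbach--Leuschke criterion $s(R)>0 \iff$ strongly $F$-regular applies to $F$-finite reduced local rings, so no extra hypothesis is required.
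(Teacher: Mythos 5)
Your proof is correct, and for the direction ``strongly $F$-regular $\Rightarrow \dm{\ZZ}(R)>0$'' it matches the paper exactly: both invoke Aberbach--Leuschke for $s(R)>0$ and then Lemma~\ref{LemmaCompDifMultFsig}.

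For the converse direction, ``$\dm{\ZZ}(R)>0 \Rightarrow R$ strongly $F$-regular'', you take a genuinely different route. The paper's proof is conceptual: $F$-purity gives reducedness, Theorem~\ref{ThmDifMultDsimple} then upgrades $\dm{\ZZ}(R)>0$ to $D_{R|\ZZ}$-simplicity, and Smith's $D$-module characterization of $F$-finite $F$-pure $D$-simple rings (the theorem cited from \cite{DModFSplit}) concludes strong $F$-regularity. You instead go purely numerically: the containment $\m^{\Fdif{p^e}} \subseteq \m\dif{p^e-1}{\ZZ}$ from Lemma~\ref{LemmaCofinalDif} gives $\lambda_R(R/\m^{\Fdif{p^e}}) \geq \lambda_R(R/\m\dif{p^e-1}{\ZZ})$, monotonicity of $n \mapsto \lambda_R(R/\m\dif{n}{\ZZ})$ (from the decreasing chain of differential powers) transfers a lower bound from an arbitrary subsequence $n_k$ realizing the $\limsup$ to the geometric sequence $p^{e_k}-1$, the elementary estimate $n_k^d \geq p^{(e_k-1)d}$ controls the normalization, and the existence of $s(R)$ as an honest limit (Tucker) together with Corollary~\ref{RemFsigDifPowers} (using $F$-purity to identify $\m^{\Fdif{p^e}}$ with $I_e$) yields $s(R) \geq \dm{\ZZ}(R)/(p^d d!) > 0$; then Aberbach--Leuschke finishes. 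Your argument is essentially the $\limsup$-upgrade of the Remark following Lemma~\ref{LemmaCompDifMultFsig} (which the paper only states for $\liminf$), with the monotonicity/subsequence bridge being exactly the new input needed. This buys you an explicit quantitative comparison between the two signatures and avoids $D$-module simplicity entirely; the paper's approach, on the other hand, is characteristic-free at the $D$-simplicity step and slots this theorem into the broader $D$-module framework the paper is building (Theorem~\ref{ThmDifMultDsimple} is reused in several other places). Both are valid proofs; yours is more self-contained within the numerical comparison machinery of this section, theirs is more structural.
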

\begin{proof}
We first assume that  $\dm{\ZZ}(R)>0$. Since every $F$-pure ring is reduced, we can apply Theorem~\ref{ThmDifMultDsimple}. We conclude that $R$ is $D_{R|\ZZ}$-simple. Then, $R$ is strongly $F$-regular 
\cite[Theorem~2.2]{DModFSplit}.

We now assume that $R$ is strongly $F$-regular. Then,  $s(R)>0$  \cite[Theorem 0.2]{AL}.
Then, the claim follows from  Lemma~\ref{LemmaCompDifMultFsig}.
\end{proof}

The previous theorem does not hold without the assumption that the ring is $F$-pure, see Example~\ref{palmostfermat} below.

\begin{remark}
We point out that if $(R,\m,K)$ is a complete local ring with $K$ perfect, then
$D_{R|\ZZ}=D_{R|K}$, and the relation in Lemma~\ref{LemmaCofinalDif} still holds when 
$\ZZ$ is replaced by $K$ \cite{Ye}. As a consequence, we can replace 
$\dm{\ZZ}(R)$ for $\dm{K}(R)$ in the previous theorem.
\end{remark}

\begin{proposition}
	\label{sandwichpositive}
	Let $(R,\m,K)$ be a local ring.
	Let $(S,\n,K)$ be a regular ring of positive characteristic $p$ and let $\iota: S\subseteq R $ be a module-finite  extension of local rings. Suppose that there is an embedding $\rho:R \subseteq S^{1/p^t}$ of local rings such that the composition $\rho\circ \iota: S \to S^{1/p^t} $ is the inclusion. Then, $\dm{\ZZ}(R)>0.$
\end{proposition}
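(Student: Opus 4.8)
The plan is to reduce to the concrete situation $T^{p^t}\subseteq R\subseteq T$, where $T$ is regular and $F$-finite and, for a ring $B$, the symbol $B^{p^m}$ denotes the subring of $p^m$-th powers. I would set $T=S^{1/p^t}$; it is regular and $F$-finite because it is module-finite over $S$, and it is local because $\rho$ is an embedding of local rings. Since $\rho$ and $\iota$ are injective and $\rho\circ\iota$ is the canonical inclusion $S\hookrightarrow S^{1/p^t}$, whose image is exactly $T^{p^t}$, identifying $R$ with $\rho(R)$ puts us in the stated situation; note that then $T^{p^e}\subseteq T^{p^t}\subseteq R\subseteq T$ for all $e\ge t$, that $R$ is module-finite over $T^{p^e}$, and that $\dim R=\dim T=:d$.

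Fix $e\ge t$. The idea is to borrow level-$e$ differential operators on $R$ from the free structure of $T$. By Kunz's theorem $T$ is free as a module over $T^{p^e}$, and likewise $T^{p^t}$ is free over $(T^{p^t})^{p^{e-t}}=T^{p^e}$; in both cases $1$ may be taken to lie in the chosen basis. Splicing the two decompositions yields a free $T^{p^e}$-basis $\{g_i\}_{i\in I}$ of $T$ together with a subset $\Lambda\subseteq I$ such that $g_i\in T^{p^t}\subseteq R$ for all $i\in\Lambda$ and $|\Lambda|=\Rank_{T^{p^e}}(T^{p^t})=p^{(e-t)(d+\alpha)}$, where $p^{\alpha}=[L:L^p]$ for the residue field $L$ of $T$. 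Let $\pi_i\colon T\to T^{p^e}$ be the dual-basis projections, $\pi_i(g_j)=\delta_{ij}$. Each $\pi_i$ is $T^{p^e}$-linear with image contained in $T^{p^e}\subseteq R$; since $R^{p^e}\subseteq T^{p^e}$ it is moreover $R^{p^e}$-linear, so $\pi_i|_R\in\End_{R^{p^e}}(R)=D^{(e)}_R$. If $\sum_{i\in\Lambda}\lambda_i g_i\in\m^{\Fdif{p^e}}$ with $\lambda_i\in T^{p^e}$, then applying $\pi_j|_R\in D^{(e)}_R$ and using $T^{p^e}$-linearity gives $\lambda_j=\pi_j\bigl(\sum\lambda_i g_i\bigr)\in\m\cap T^{p^e}=\m_{T^{p^e}}$ for every $j$. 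Hence the classes of $\{g_i\}_{i\in\Lambda}$ in $R/\m^{\Fdif{p^e}}$ are linearly independent over the subfield $T^{p^e}/\m_{T^{p^e}}\cong L^{p^e}$ of $R/\m$, and since $[R/\m:L^{p^e}]\le[L:L^{p^e}]=p^{e\alpha}$ we would get
\[ \lambda_R\bigl(R/\m^{\Fdif{p^e}}\bigr)=\dim_{R/\m}\bigl(R/\m^{\Fdif{p^e}}\bigr)\ \ge\ \frac{p^{(e-t)(d+\alpha)}}{p^{e\alpha}}\ =\ p^{-t(d+\alpha)}\,p^{ed}. \]

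Finally, Lemma~\ref{LemmaCofinalDif} gives $\m^{\Fdif{p^e}}\subseteq\m\dif{p^e-1}{\ZZ}$, hence $\lambda_R\bigl(R/\m\dif{p^e-1}{\ZZ}\bigr)\ge p^{-t(d+\alpha)}p^{ed}$; dividing by $(p^e-1)^d/d!$ and letting $e\to\infty$ would give $\dm{\ZZ}(R)\ge d!\,p^{-t(d+\alpha)}>0$. The crux of the argument — and essentially the only step that is not bookkeeping — is the observation that the dual-basis maps of $T$ over $T^{p^e}$ descend verbatim to honest elements of $D^{(e)}_R$ because their image lands in $T^{p^e}\subseteq R$, so that the count $|\Lambda|\gtrsim p^{(e-t)d}$ of basis elements that already lie in $R$ translates into the lower bound $\gtrsim p^{ed}$ for $\lambda_R(R/\m^{\Fdif{p^e}})$. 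The residue-field indices are harmless and disappear entirely when $L$ is perfect (for instance after reducing to the complete case with $\{g_i\}$ a monomial basis), in which case $\alpha=0$.
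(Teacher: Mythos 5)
Your argument up through the lower bound $\lambda_R(R/\m^{\Fdif{p^e}})\ge p^{(e-t)(d+\alpha)}/[R/\m:L^{p^e}]$ is correct, and it is genuinely different from the paper's. The paper works element-wise: for each $f\in R$ it composes $\rho$, a level-$(e-t)$ operator $\widetilde\delta$ on $S^{1/p^t}$, an $S$-linear splitting $\beta$, and $\iota$ to produce an element of $D^{(e)}_R$ sending $f$ to a unit, deducing the containment $\m^{\Fdif{p^e}}\subseteq(\n^{1/p^t})^{[p^e]}\cap R$ and finishing with a short length chase through $S/\n^{[p^{e-t}]}$. You instead build the operators all at once from dual-basis projections associated to a $T^{p^e}$-basis of $T$ that refines a basis of $T^{p^t}$, and count directly. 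Your route is more constructive and makes the freeness of $T$ over $T^{p^e}$ do all the work; the paper's route avoids any discussion of bases or ranks of Frobenius pushforwards and is perhaps shorter. (Your estimate $[R/\m:L^{p^e}]\le[L:L^{p^e}]$ is not sharp — since $L=K^{1/p^t}$ one actually has $[R/\m:L^{p^e}]=[K:K^{p^{e-t}}]=p^{(e-t)\alpha}$, giving the cleaner bound $\lambda_R(R/\m^{\Fdif{p^e}})\ge p^{(e-t)d}$, matching the paper — but that only affects the constant, not positivity.)

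However, the final step has a genuine logical error. You invoke the containment $\m^{\Fdif{p^e}}\subseteq\m\dif{p^e-1}{\ZZ}$ from Lemma~\ref{LemmaCofinalDif} and conclude $\lambda_R(R/\m\dif{p^e-1}{\ZZ})\ge\lambda_R(R/\m^{\Fdif{p^e}})$. This is backwards: an inclusion of ideals $I\subseteq J$ yields a surjection $R/I\twoheadrightarrow R/J$, hence $\lambda_R(R/I)\ge\lambda_R(R/J)$; so $\m^{\Fdif{p^e}}\subseteq\m\dif{p^e-1}{\ZZ}$ only gives $\lambda_R(R/\m^{\Fdif{p^e}})\ge\lambda_R(R/\m\dif{p^e-1}{\ZZ})$, which is an \emph{upper} bound on the quantity you need, not a lower bound. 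You should use the other containment in Lemma~\ref{LemmaCofinalDif}, namely $\m\dif{\mu(p^e-1)}{\ZZ}\subseteq\m^{\Fdif{p^e}}$, which yields
\[ \lambda_R\bigl(R/\m\dif{\mu(p^e-1)}{\ZZ}\bigr)\ \ge\ \lambda_R\bigl(R/\m^{\Fdif{p^e}}\bigr)\ \ge\ p^{-t(d+\alpha)}p^{ed}. \]
Dividing by $(\mu(p^e-1))^d/d!\sim\mu^d p^{ed}/d!$ and taking $\limsup$ gives $\dm{\ZZ}(R)\ge d!\,\mu^{-d}p^{-t(d+\alpha)}>0$, which completes the argument. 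With this correction your proof is sound; note that you pick up an extra factor of $\mu^{-d}$ (where $\mu$ is as in Proposition~\ref{RemYek}), which the paper's formulation also implicitly incurs in its passage from the differential Frobenius powers to the ordinary differential powers.
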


\begin{proof}
Let $d=\dim(R)=\dim(S)$.
Let $f \in R\setminus \{0\}$ such that 
$f^{p^t}\not\in \n^{{\Fdif{p^e}}}$.
Then, there exists $\delta\in \Hom_{S^{p^e}} (S,S)$ such that $\delta(f^{p^t})=1$. By extracting $p^t$-th roots, we get $\widetilde{\delta}\in \Hom_{S^{p^{e-t}}} (S^{1/p^t},S^{1/p^t})$ such that $\widetilde{\delta}(f)=1$.
Let $\beta: S^{1/p^t}\to S$ be an $S$-linear splitting.

Observe that $R^{p^e} \subseteq R \cap S^{p^{e-t}} \cap S$. Thus, $\beta,\iota,\rho$, and $\widetilde{\delta}$ are all $R^{p^e}$-linear, hence the map $\iota\circ\beta
\circ
\widetilde{\delta} \circ \rho: R\to R$ is as well. Since this composition sends $f$ to $1$, we conclude that $f\not\in \m^{{\Fdif{p^e}}}$.
Thus, \[\m^{{\Fdif{p^e}}} \subseteq  (\n^{{\Fdif{p^e}}})^{1/p^t} \cap R = (\n^{1/p^t})^{[p^e]} \cap R.\] It follows that there is a surjection \[R/\m^{\Fdif{p^e}} \twoheadrightarrow R/((\n^{1/p^t})^{[p^e]}\cap R).\] It follows $\lambda_R(R/\m^{\Fdif{p^e}}) \geq \lambda_R(R/((\n^{1/p^t})^{[p^e]}\cap R))$. Also, there is an injective map \[S/((\n^{1/p^t})^{[p^e]}\cap S)\hookrightarrow R/((\n^{1/p^t})^{[p^e]}\cap R),\] hence the inequality $\lambda_S(S/((\n^{1/p^t})^{[p^e]}\cap S)) \leq \lambda_R(R/((\n^{1/p^t})^{[p^e]}\cap R))$: we have used that the $R$-length of the latter module equals its $S$-length, since the residue fields agree. Now, $(\n^{1/p^t})^{[p^e]}\cap S = \n^{[p^{e-t}]}$ for $e\geq t$, so we obtain the inequality
\[\dm{\ZZ}(R)=\limsup_{e\to\infty}\frac{\lambda_R( R/ \m^{\Fdif{p^{e}}})}{p^{ed}}\geq \limsup_{e\to\infty}\frac{\lambda_S(S/  \n^{[p^{e-t}]})}{p^{ed}}=\limsup_{e\to\infty}\frac{p^{(e-t)d}}{p^{ed}}=\frac{1}{p^{td}}>0.\]
\end{proof}

%%%%%%%%%%%%%%%%%%%%%%%%%
\subsection{Characteristic zero and reduction to prime characteristic}\label{SubSecrelative}
%%%%%%%%%%%%%%%%%%%%%%%%%

We want to compare the differential signature in an algebra over a field of characteristic zero with the differential signature of reductions modulo a prime number and hence also with 
the $F$-signature of the reductions. We fix the following situation.

\begin{setup}
	\label{relativesetup}
Let $K$ be  a field of characteristic zero,  $X$ a scheme of finite type over $K$, and  $x$ a $K$-point in $X$. There exists a finitely generated $\ZZ$-subalgebra $A\subseteq K$ together with a scheme $X_A$ of finite type over $A$ and an $A$-point $x_A$ of $ X_A$ such that
$(X_A,x_A)\times_{\Spec A} K \cong (X,x)$. We can assume that $X_A$ and $x_A$ are flat over $\Spec A$ by generic freeness. For a closed point $ s \in \Spec A$ let $X_s$ denote the fiber of $X_A$ over $s$, which is a scheme of finite type over the residue field $\kappa(s)$ of $s$. We observe that $\kappa(s)$ is finite, and so $\kappa(s)$ and $X_s$ are $F$-finite. We take $x_s \in X_s$ to be the fiber of $x \in X$ over $s$. Under these conditions we say that  {$(X_A,x_A)$ is a \emph{model}} of $x\in X$.
\index{model}

 {
If $X=\Spec R$, 
$X_A$ and $X_s$ are affine. 
We denote the corresponding rings by $R_A$ and $R_s$ (or just $R_s$).} Moreover, $Q$  denotes the quotient field of $A$ and $R_Q$ the ring over $Q$. If $R$ is the localization of an algebra of finite type, then we can also find $A$ and a prime ideal $\m_A$ of $R_A$ which extends to the maximal ideal of $R$.
\end{setup}

We want to compare the free ranks of $P^n_{R_K|K} $, $P^n_{R_Q|Q} $, $P^n_{R_A|A} $, and $P^n_{R_{\kappa(s)} |\kappa(s)} $, and the differential signatures of these algebras. The basis for such comparison follows from  the fact that 
$P^n_{R'|A'} \cong P^n_{R|A} \tensor_R R' $ for any base change $A \rightarrow A'$ and $R'=R \otimes_AA'$  \cite[Proposition~16.4.5]{EGAIV}.

The following lemma implies that we can choose $A$ such that the free rank of $P^n_{R_K|K} $ equals the free rank of $P^n_{R_Q|Q} $.

\begin{lemma}
\label{freerankflatextend}
Let $(R,\m,\kk)$ and $(R',\m',\kk')$ be local rings and let $R \rightarrow R'$ be a flat ring homomorphism such that $\m R'=\m'$. Let $M$ be a finitely generated $R$-module and $M'=M \otimes_RR'$. Then
\[\frk_R M = \frk_{R'}M'  . \]
\end{lemma}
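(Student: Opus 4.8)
The statement to prove is Lemma~\ref{freerankflatextend}: for a flat local homomorphism $(R,\m,\kk)\to(R',\m',\kk')$ with $\m R'=\m'$ and $M$ finitely generated, $\frk_R M=\frk_{R'}M'$ where $M'=M\otimes_R R'$. The cleanest route is to use the characterization of free rank from Lemma~\ref{freerankinterpretation}(1) (or equivalently (2)), namely $\frk_R M=\dim_\kk(M/\NF{M})$, and to show that $\NF{M}$ behaves well under the base change.

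First I would reduce to a statement about $\Hom$. By Lemma~\ref{freerankinterpretation}(2), $\frk_R M=\dim_\kk Q$ where $Q=\Hom_R(M,R)/\Hom_R(M,\m)$, and similarly $\frk_{R'}M'=\dim_{\kk'}Q'$ with $Q'=\Hom_{R'}(M',R')/\Hom_{R'}(M',\m')$. Since $M$ is finitely presented over $R$ (it is finitely generated and $R$ is local Noetherian — or at any rate we only need finite presentation, which we may assume as in the rest of the excerpt) and $R\to R'$ is flat, the natural map $R'\otimes_R\Hom_R(M,R)\to\Hom_{R'}(M',R')$ is an isomorphism; likewise $R'\otimes_R\Hom_R(M,\m)\to\Hom_{R'}(M',\m R')=\Hom_{R'}(M',\m')$ is an isomorphism, using $\m R'=\m'$ and flatness (the inclusion $\m\hookrightarrow R$ stays injective after $\otimes_R R'$ and after $\Hom_R(M,-)$ since $M$ is finitely presented). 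Therefore $Q'\cong R'\otimes_R Q$, because $-\otimes_R R'$ is exact and takes the defining short exact sequence $0\to\Hom_R(M,\m)\to\Hom_R(M,R)\to Q\to 0$ to the corresponding one for $M'$.

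Next I would compute $\dim_{\kk'}(R'\otimes_R Q)$. The module $Q$ is annihilated by $\m$ (it is a $\kk$-vector space, of dimension $r:=\frk_R M<\infty$), so $Q\cong\kk^{\oplus r}$ as an $R$-module, hence $R'\otimes_R Q\cong (R'/\m R')^{\oplus r}=(R'/\m')^{\oplus r}=\kk'^{\oplus r}$. Thus $\dim_{\kk'}Q'=r$, i.e. $\frk_{R'}M'=r=\frk_R M$, which is exactly the claim. The one point to be careful about is the finiteness of $\dim_\kk Q$ and the finite presentation of $M$ so that $\Hom$ commutes with the flat base change; both are immediate in the Noetherian local setting in which this lemma is applied (the ambient rings here are essentially of finite type over a field or finitely generated over a Noetherian ring), so I would simply remark that $M$ is finitely presented and invoke the standard base-change isomorphism for $\Hom$.

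I do not expect any serious obstacle; the only thing requiring a word of justification is the $\Hom$–flat-base-change isomorphism, and I would phrase the argument so that it reads off directly from exactness of $-\otimes_R R'$ applied to the short exact sequence in Lemma~\ref{freerankinterpretation}(2). If one prefers to avoid invoking finite presentation explicitly, an alternative is to work with the submodule description $\NF{M}$ from Lemma~\ref{freerankinterpretation}(1) and check $\NF{M}\otimes_R R'=\NF{M'}$ directly, but the $\Hom$-exact-sequence route is shorter, so that is the one I would write up.
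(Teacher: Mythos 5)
Your proof is correct and matches the paper's argument essentially verbatim: both start from the short exact sequence of Lemma~\ref{freerankinterpretation}(2), tensor with $R'$ using flatness, invoke the $\Hom$--flat-base-change isomorphism for finitely presented modules together with $\m R'=\m'$, and conclude that the quotient becomes $\kk'^{\oplus r}$.
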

\begin{proof}
We look at the short exact sequence
\[0 \longrightarrow \Hom_R(M,\m) \longrightarrow   \Hom_R(M,R)  \longrightarrow N \longrightarrow  0 , \]
where the  {$\kk$-dimension} of $N$ gives the free rank of $M$ by  {Proposition~\ref{freerank-conditions}~(2).} We tensor with $R'$ and get by flatness
\[0 \longrightarrow \Hom_R(M,\m) \otimes_RR' \longrightarrow   \Hom_R(M,R)  \otimes_RR' \longrightarrow N \otimes_RR' \longrightarrow  0 . \]
By  the assumptions we have $\Hom_R(M,R) \otimes_RR' \cong \Hom_{R'} (M', R' ) $  \cite[Proposition 2.10]{Eisenbud}  and 
\[ \Hom_R(M,\m) \otimes_RR' \cong \Hom_R (M', \m \otimes_R R' )  \cong \Hom_{R'} (M',  \m R' )  \cong \Hom_{R'} (M', \m' )  . \]
So the quotient of $ \Hom_{R' }(M', \m' ) \rightarrow \Hom_{R'} (M', R' ) $ is $N \otimes_RR'$. Suppose that 
 {$N\cong \kk^r$}. Then,
\[ N \otimes_RR' = (R/\m)^r \otimes_RR' = (R/\m \otimes_RR' )^r = (R'/\m R')^r =\kk'^r .\]\qedhere
\end{proof}

\begin{lemma}
	\label{freerankrelative}
	Let $R$ be a local $K$-algebra essentially of finite type over a field $K$ of characteristic $0$ and let $R_A$ be a model for $R=R_K$ in the sense of Setup~\ref{relativesetup}. Let $M_K$ be a finitely generated $R_K$-module and $M_A$ be a finitely generated $R_A$-module with $M_A \otimes_A K=M_K$.
	Then for all points $s \in \Spec A$ in an open nonempty subset we have
	\[\frk M_K  =  \frk M_{{\kappa}(s)}  \, \]
\end{lemma}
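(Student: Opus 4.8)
The statement is essentially a constructibility/semicontinuity assertion: the free rank of a module in a family is constant on a dense open subset of the base. I would reduce to the key exact sequence of Lemma~\ref{freerank-conditions}~(2) and use generic freeness. First I would spread out: since $R_A$ is finitely generated over $A$ and $M_A$ is a finitely generated $R_A$-module, after inverting finitely many elements of $A$ I may assume (by generic freeness) that $R_A$ is free over $A$ and $M_A$ is free over $A$; I may further assume $\m_A$ defines a section so that $\m_A R_{\kappa(s)} = \m_{\kappa(s)}$ for all closed $s$, and similarly $\m_A R_K = \m_K$.

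Next I would form the short exact sequence of finitely generated $R_A$-modules
\[ 0 \longrightarrow \Hom_{R_A}(M_A,\m_A) \longrightarrow \Hom_{R_A}(M_A,R_A) \longrightarrow N_A \longrightarrow 0. \]
Since $\Hom$ commutes with flat base change for finitely presented modules \cite[Proposition~2.10]{Eisenbud} and $\Hom_{R_A}(M_A,\m_A) \cong \Hom_{R_A}(M_A, R_A) \cap \Hom_{R_A}(M_A,\m_A)$, the formation of all three terms commutes with the base changes $A \to K$ and $A \to \kappa(s)$, provided the relevant $\Tor$'s vanish; shrinking $\Spec A$ further (again by generic freeness applied to each of the finitely many modules appearing, and to $N_A$ and its annihilator), I may assume $N_A$ is free over $A$ and that $N_A \otimes_A K = N_K$ and $N_A \otimes_A \kappa(s) = N_{\kappa(s)}$ for all closed $s$ in the (now shrunken) base. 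By Lemma~\ref{freerankflatextend} (or directly from Lemma~\ref{freerank-conditions}~(2)), $\frk M_K$ is the $\kk_K$-dimension of $N_K$ and $\frk M_{\kappa(s)}$ is the $\kappa(s)$-dimension of $N_{\kappa(s)}$.

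It then remains to see that $\dim_{\kappa(s)}(N_A \otimes_A \kappa(s))$ is constant on a dense open. Here I would use that $N_A$ is a module over $R_A$, but the quotient by $\m_A$ forces $N_A$ to be supported on the section $x_A$; more precisely, $N_A$ is already annihilated by $\m_A$ in the sense that it is a module over $R_A/\m_A \cong A'$ for $A'$ a finite extension of (a localization of) $A$ — in fact over $A$ itself after shrinking, since $x_A$ is a section. Thus $N_A$ is a finitely generated $A$-module which, after inverting one more element of $A$, is free of some rank $r$; then $N_A \otimes_A \kappa(s) \cong \kappa(s)^r$ and $N_A \otimes_A K \cong K^r$, giving $\frk M_K = r = \frk M_{\kappa(s)}$ for all $s$ in the resulting open set.

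\textbf{Main obstacle.} The only delicate point is justifying that all the base-change isomorphisms for $\Hom$ and for the cokernel $N$ hold simultaneously after a single shrinking of $\Spec A$; this is handled by repeated application of generic freeness to the finitely many $R_A$- and $A$-modules that occur (the two $\Hom$-modules, $N_A$, and $\Hom_{R_A}(M_A, R_A/\m_A)$ if one prefers that route), together with the fact that base change commutes with the relevant kernels/cokernels once the obstructing $\Tor_1$ terms vanish generically. Everything else is bookkeeping with finitely generated modules over a finitely generated $\ZZ$-algebra.
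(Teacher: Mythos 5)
Your proof takes essentially the same route as the paper: pass to the short exact sequence of Lemma~\ref{freerank-conditions}~(2) over $R_A$, shrink $\Spec A$ so that formation of both $\Hom$-modules and the cokernel $N_A$ commutes with the base changes $A\to K$ and $A\to\kappa(s)$, and read off the free rank from the fiber dimension of $N_A$. The one genuine addition you make is the observation that $\m_A$ annihilates $N_A$, so that $N_A$ is a finitely generated module over $R_A/\m_A\cong A$ and generic freeness applies directly to it; this is implicit in the paper's assertion that $N_A\cong A^r$ but nice to have spelled out. One small imprecision to watch: $R_A$ is only \emph{essentially} of finite type over $A$ (a localization), so you cannot literally make $R_A$ or $M_A$ free over $A$ by inverting a single element; what generic freeness and \cite[Theorem~2.3.5]{HHCharZero} actually give you is compatibility of $\Hom$, kernels, and cokernels with base change on a dense open, which is all the argument needs.
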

\begin{proof}
	Let $Q=Q(A)$. The free ranks of  {$M_K$  and $M_Q$} coincide by Lemma~\ref{freerankflatextend}. Note that the condition $\m_Q R_K = \m_K$ can be obtained by enlarging $A$. By further enlarging $A$ the short exact sequence
	\[0 \longrightarrow \Hom_{R_Q} (M_Q,\m_Q) \longrightarrow \Hom_{R_Q} (M_Q,R_Q) \longrightarrow N \cong Q^r \longrightarrow 0 , \]
	which describes the free rank of $M_Q$, descends to a short exact sequence
	\[0 \longrightarrow \Hom_{R_A} (M_A,\m_A) \longrightarrow \Hom_{R_A} (M_A,R_A) \longrightarrow N_A \cong A^r \longrightarrow 0 \,  \]
	of $R_A$-modules. 	We tensor this sequence with $\otimes_A \kappa(s)$ (which is the same as $\otimes_{R_A} R_{\kappa(s)}$) and get a short exact sequence
	\[0 \longrightarrow \Hom_{R_A} (M_A,\m_A)\otimes_A \kappa(s) \longrightarrow \Hom_{R_A} (M_A,R_A)\otimes_A \kappa(s) \longrightarrow N_{\kappa(s)}  \cong \kappa(s)^r \longrightarrow 0 ,\]
	since $N_A \cong A^r $ is a flat $A$-module (see \cite[Observation 2.1.5]{HHCharZero}).
	Moreover, we have \[\Hom_{R_A} (M_A, L_A) \otimes_A \kappa(s) \cong \Hom_{R_{\kappa(s)} } (M_{\kappa(s)}, L_{\kappa(s)}) \]
	for $L_A=R_A,\m_A$   after enlarging $A$ again \cite[Theorem~2.3.5 (e)]{HHCharZero}). 
	Then,  the tensored sequence is the sequence which computes the free rank of $M_{\kappa(s)}$ to be $r$.
\end{proof}

\begin{corollary}
	\label{freerankprincipalrelative}
	Let $R$ be a local ring essentially of finite type over a field $K$ of characteristic $0$ and let $R_A$ be a model for $R=R_K$ in the sense of Setup~\ref{relativesetup}.
	Then for all points $s \in \Spec A$  {and for any fixed $n$}, in an open nonempty subset we have
	\[\frk P^n_{R_K|K}  = \frk P^n_{R_s|{\kappa}(s)}  \, \]
\end{corollary}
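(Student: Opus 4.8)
The plan is to deduce Corollary~\ref{freerankprincipalrelative} from Lemma~\ref{freerankrelative} by choosing the right module to which the lemma applies. Since $R$ is essentially of finite type over $K$, the modules of principal parts $P^n_{R|K}$ are finitely presented (as noted after Proposition~\ref{diff-ops-completion}), so $P^n_{R_K|K}$ is a finitely generated $R_K$-module. First I would fix the integer $n$. The base-change formula $P^n_{R'|A'}\cong P^n_{R|A}\otimes_R R'$ for $R'=R\otimes_A A'$ (from \cite[16.4.5]{EGAIV}, as recalled just before Lemma~\ref{freerankflatextend}) shows that $P^n_{R_A|A}$ is a finitely generated $R_A$-module with $P^n_{R_A|A}\otimes_A K\cong P^n_{R_K|K}$, and, for a closed point $s\in\Spec A$, $P^n_{R_A|A}\otimes_A \kappa(s)\cong P^n_{R_s|\kappa(s)}$.

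Next I would apply Lemma~\ref{freerankrelative} with $M_K=P^n_{R_K|K}$ and $M_A=P^n_{R_A|A}$. The hypothesis $M_A\otimes_A K=M_K$ is exactly the base-change identity above, so the lemma yields a nonempty open subset $U_n\subseteq\Spec A$ such that $\frk_{R_K}P^n_{R_K|K}=\frk_{R_{\kappa(s)}}P^n_{R_s|\kappa(s)}$ for all $s\in U_n$. Since the statement of the corollary is ``for any fixed $n$, in an open nonempty subset,'' this is precisely what is asserted, and no further intersection over $n$ is needed (such an intersection would in general only be a countable intersection of opens, which is the subtlety handled later in the paper; here $n$ is held fixed).

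There is one point of care. Lemma~\ref{freerankrelative} is stated for a local ring $R$ essentially of finite type, while the corollary also allows $R$ to be a graded ring finitely generated over $R_0=K$ — actually the corollary as stated says ``local ring essentially of finite type,'' so the hypotheses match directly and no extra work is required. In the local case one should make sure that after possibly enlarging $A$ one has a prime $\m_A\subseteq R_A$ extending to $\m_K$, which is part of Setup~\ref{relativesetup}; this is what legitimizes viewing $P^n_{R_s|\kappa(s)}$ as a module over the local ring $(R_s,\m_s)$ when computing free rank, and Lemma~\ref{freerankrelative} already incorporates this enlargement.

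The main obstacle, such as it is, is purely bookkeeping: one must track that all the relevant Hom-modules and the cokernel $N$ appearing in the proof of Lemma~\ref{freerankrelative} descend to $R_A$ and base change correctly, and that finitely many enlargements of $A$ suffice (one set of enlargements per fixed $n$). Since Lemma~\ref{freerankrelative} already packages exactly this, the proof of the corollary is a one-line invocation once the base-change identities for $P^n$ are written down. Hence I expect the write-up to consist of: (i) recall $P^n_{R|K}$ is finitely presented; (ii) invoke $P^n_{R_A|A}\otimes_A(-)$ compatibility; (iii) apply Lemma~\ref{freerankrelative}.
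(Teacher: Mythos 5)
Your proposal is correct and is essentially the paper's own proof: the paper's argument consists exactly of invoking the base-change compatibility $P^n_{R_A|A}\otimes_{R_A}R_K\cong P^n_{R_K|K}$ and $P^n_{R_A|A}\otimes_{R_A}R_{\kappa(s)}\cong P^n_{R_{\kappa(s)}|\kappa(s)}$ from \cite[Proposition 16.4.5]{EGAIV} and then applying Lemma~\ref{freerankrelative} with $M_A=P^n_{R_A|A}$, $M_K=P^n_{R_K|K}$. Your additional remarks on finite presentation and on holding $n$ fixed are consistent with what the paper does implicitly.
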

\begin{proof}
	We have $  P^n_{R_A|A} \otimes_{R_A} R_K   \cong  P^n_{R_Q|Q} \otimes_{R_Q} R_K \cong    P^n_{R_K|K} $ and  $P^n_{R_A|A} \otimes_{R_A} R_{\kappa(s)} \cong P^n_{R_{\kappa(s)}|\kappa(s) }$  \cite[Proposition 16.4.5]{EGAIV}. 
	So, this follows from Lemma~\ref{freerankrelative}.	
\end{proof}

If $A$ has dimension $1$, then the existence of the open subset means that the statement is true for all sufficiently large prime characteristics. In this situation we have
\[  \dm{\Q}(R_\Q) = \lim_{n \rightarrow \infty } \left( \lim_{p \rightarrow \infty} \frac{ \frk(P^n_{R_{\kappa(p)}|\kappa(p) } ) }{  n^{d}  /d! }   \right) .  \]

\begin{remark}
In Corollary~\ref{freerankprincipalrelative} the elements $f$ which describe the shrinking to $D(f)$ depend on $n$. It is not clear in general whether there exists an $f$ which works for all $n$. However, Lemma~\ref{JacobiTaylorperiodic} provides an instance where one unitary operator produces many unitary operators, so by extending the operator over $A_f$ extends all its companions as well.

In addition, if $R_K$ has positive differential signature, and if we can find $A \subseteq K$ such that also $R_A$ has positive differential signature, then we get that almost all reductions $R_s$ have positive differential signature bounded from below by $\dm{A}(R_A) $, since the estimate $\frk P^n_{R_{\kappa(s)|\kappa(s)} } \geq \frk P^n_{R_A|A} $ holds without further shrinking. Hence, under the assumption that the reductions are $F$-pure, they also have positive $F$-signature by Lemma~\ref{LemmaCofinalDif} with a common bound from below.
\end{remark}

We now present a corollary that gives an instance of how the differential signature in characteristic zero affects the behavior in varying positive characteristic.
\begin{corollary}	
\label{freerankprincipalrelativesequence}	
Suppose that $R_\ZZ$ is a generically flat $\ZZ$-algebra essentially of finite type of relative dimension $d$ such that $R_\QQ$ is local. Then there exists a sequence of prime numbers $p_n$, $n \in \NN$, such that
 {\[  \dm{\Q}(R_\Q) = \limsup_{n \rightarrow \infty } \frac{ \frk(P^n_{R_{\kappa(p_n)}|\kappa(p_n) } ) }{  n^{d}  /d! }     \]}
\end{corollary}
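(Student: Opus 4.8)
The statement is essentially a diagonalization of Corollary~\ref{freerankprincipalrelative} across the parameter $n$. The plan is to combine three facts: (a) for each fixed $n$ there is a nonempty open subset $U_n \subseteq \Spec A$ over whose closed points the free rank of $P^n_{R_{\kappa(s)}|\kappa(s)}$ equals $\frk(P^n_{R_\Q|\Q})$; (b) each such $U_n$ contains infinitely many closed points whose residue characteristic is an arbitrarily large prime, since $A$ is a finitely generated $\ZZ$-algebra of positive dimension and the complement of $U_n$ is a proper closed subset, so it cannot contain all closed points lying over any cofinal set of primes; and (c) $\dm{\Q}(R_\Q) = \limsup_{n\to\infty} \frac{\frk(P^n_{R_\Q|\Q})}{n^d/d!}$, which is just Theorem~\ref{ThmDiffSigRanks} (more precisely its liminf/limsup-agnostic form, identifying $\pps{\Q}(R_\Q)$ with $\dm{\Q}(R_\Q)$ via Proposition~\ref{RankDiff} and Proposition~\ref{freerank}).

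The key steps, in order, are as follows. First I would fix the model $R_A$ as in Setup~\ref{relativesetup}, enlarging $A$ once and for all so that $R_A$ is flat over $A$ and $R_\Q$ is local; here $d$ is the relative dimension. Second, for each $n$ apply Corollary~\ref{freerankprincipalrelative} to obtain $f_n \in A$ with $f_n \neq 0$ such that $\frk(P^n_{R_s|\kappa(s)}) = \frk(P^n_{R_\Q|\Q})$ for every closed point $s \in D(f_n)$. Third, observe that $D(f_n)$ is a nonempty open subscheme of the positive-dimensional scheme $\Spec A$, so it has closed points of arbitrarily large residue characteristic: concretely, $A/f_nA$ is a finitely generated $\ZZ$-algebra that is not the zero ring, so it surjects onto infinitely many finite fields $\FF_p$ only for those $p$ dividing $f_n$ in a trivial sense — more robustly, $\Spec A \setminus D(f_n) = V(f_n)$ has dimension strictly less than $\dim \Spec A$ when $f_n \neq 0$ and $A$ is a domain, hence cannot contain all maximal ideals lying over the cofinitely many primes $p$ appearing in $\Spec A$. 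Thus one may choose recursively a strictly increasing sequence of primes $p_n$ and closed points $s_n \in D(f_n)$ with $\kappa(s_n)$ of characteristic $p_n$. Fourth, for this sequence we get $\frk(P^n_{R_{\kappa(p_n)}|\kappa(p_n)}) = \frk(P^n_{R_\Q|\Q})$ for all $n$, so
\[
\limsup_{n\to\infty} \frac{\frk(P^n_{R_{\kappa(p_n)}|\kappa(p_n)})}{n^d/d!} = \limsup_{n\to\infty} \frac{\frk(P^n_{R_\Q|\Q})}{n^d/d!} = \dm{\Q}(R_\Q),
\]
where the last equality is Theorem~\ref{ThmDiffSigRanks} together with Proposition~\ref{RankDiff} (which gives $\Rank_{R_\Q}(P^n_{R_\Q|\Q}) = \binom{d+n}{d}$ since the residue field of $R_\Q$ equals $\Q$, so $t = 0$) and the asymptotic $\binom{d+n}{d} \sim n^d/d!$.

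The main obstacle is step three: ensuring the primes $p_n$ can be taken distinct (indeed strictly increasing) while staying inside each $D(f_n)$. This is where one genuinely uses that $A$ has dimension at least $1$ — equivalently that $R_\ZZ$ has a model over a positive-dimensional base — rather than merely over $\Q$. If $A$ had dimension $0$ it would be a finite field and the statement would be vacuous or false. The cleanest way to handle this rigorously is: since $A$ is a domain finitely generated over $\ZZ$ and $f_n \neq 0$, the image of $f_n$ in $A \otimes_\ZZ \overline{\FF_p}$ is nonzero for all but finitely many $p$ (generic flatness / spreading out of nonvanishing), so for all sufficiently large $p$ the fiber of $D(f_n)$ over $(p)$ is a nonempty scheme of finite type over $\FF_p$, hence has closed points with finite residue field of characteristic $p$. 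Given the finitely many bad primes for each $n$, a straightforward recursion picks $p_n$ larger than $p_{n-1}$ and avoiding the bad primes of $f_1, \dots, f_n$, and then picks $s_n$ among the closed points of the fiber of $D(f_n)$ over $p_n$. Everything else is bookkeeping with Corollary~\ref{freerankprincipalrelative} and Theorem~\ref{ThmDiffSigRanks}.
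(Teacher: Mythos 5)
Your proposal is correct and follows essentially the same route as the paper, whose proof is simply to diagonalize Corollary~\ref{freerankprincipalrelative} over $n$, choosing for each $n$ a large prime in the nonempty open locus where $\frk(P^n_{R_{\kappa(p)}|\kappa(p)})=\frk(P^n_{R_\Q|\Q})$ and then identifying $\dm{\Q}(R_\Q)$ with $\limsup_n \frk(P^n_{R_\Q|\Q})/(n^d/d!)$. The only cosmetic difference is that the identification of the two signatures needs no more than Proposition~\ref{freerank} (so no domain or rank hypotheses via Theorem~\ref{ThmDiffSigRanks} are required), and over the base $\ZZ$ the open sets are simply cofinite sets of primes, which makes your recursive choice of strictly increasing $p_n$ immediate.
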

\begin{proof}
This follows from Corollary	\ref{freerankprincipalrelative}.
\end{proof}

\begin{definition}
We retain the situation described in Setup~\ref{relativesetup}.
	
We say that $x \in X$ is of {\it strongly $F$-regular type } (resp.\,\,{\it $F$-pure type}) if there exists a model of $x\in X$ over a finitely generated $\ZZ$-subalgebra $A$ of $K$ and a dense open subset  $U\subseteq \Spec A$ such that $x_s\in X_s$ is strongly $F$-regular (resp.\,\,$F$-pure) for all closed points $s\in U$.

We say that $x\in X$ is of {\it dense strongly $F$-regular type} or {\it dense $F$-pure type} if there exists a model and a dense (not necessarily open) set as above.
\end{definition}

We note that the previous definitions do not depend of the choice of the model (see \cite[Chapter~2]{HHCharZero} and \cite[Section~3.2]{MustataSrinivas})

Hara showed that strongly  $F$-regular type is equivalent to log-terminal singularities \cite[Theorem~5.2]{HaraSing} (see also \cite{KarenFrational}). Hara and Watanabe extended this result to dense strongly $F$-regular type \cite[Theorem~3.3]{HaraWatanabe}.

Aberbach and Leuschke \cite[Theorem 0.2]{AL}  established that $R$ is strongly $F$-regular if and only if $s(R)>0$. The following result gives a partial analogue for the differential signature in characteristic zero.

We recall that the \emph{anticanonical cover}\index{anticanonical cover} of a normal local ring is the symbolic Rees algebra of the inverse of the canonical module (in the class group of $R$). The condition that the anticanonical cover of $R$ is finitely generated (as an $R$-algebra) is a weakening of the condition that $R$ is $\QQ$-Gorenstein.

\begin{theorem}\label{ThmKLTPos}
Let $K$ be  a field of characteristic zero and  $X$ be a scheme of finite type over $K$.
Let $x\in X$ be a normal singularity defined over a field of characteristic zero $K$. Let $R=\cO_{X,x}$ be the germ of functions at $x$ and $\m$ its maximal ideal.
If $x\in X$ is of dense $F$-pure type, the anticanonical cover of $R$ is finitely generated, and $\dm{K}(R)>0$, then $x\in X$ is log-terminal.
\end{theorem}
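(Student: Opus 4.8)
The plan is to combine the differential-signature criterion for $D$-simplicity (Theorem~\ref{ThmDifMultDsimple}) with the reduction-to-positive-characteristic machinery of Subsection~\ref{SubSecrelative} and the known equivalence between strongly $F$-regular type and log-terminality. Concretely: since $\dm{K}(R)>0$, Theorem~\ref{ThmFregPos}'s characteristic-zero companion is not available directly, so instead I would argue through the models. First I would choose a model $R_A$ of $R$ over a finitely generated $\ZZ$-subalgebra $A\subseteq K$ as in Setup~\ref{relativesetup}, arranged so that $\m_A$ specializes to $\m$, and so that a handful of unitary operators witnessing positivity of $\dm{K}(R)$ are already defined over $A_f$ for some $f\in A$; by Corollary~\ref{freerankprincipalrelative} (applied to the finitely many values of $n$ needed, together with Lemma~\ref{JacobiTaylorperiodic} to propagate one unitary operator to its infinitely many companions) the free ranks $\frk P^n_{R_s|\kappa(s)}$ agree with $\frk P^n_{R_K|K}$ for $s$ in a dense open subset of $\Spec A$. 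Combined with the hypothesis of dense $F$-pure type, this yields a dense set of closed points $s$ for which $R_s$ is $F$-pure \emph{and} $\dm{\ZZ}(R_s)=\dm{K}(R)>0$ (using Proposition~\ref{freerank}, Proposition~\ref{RankDiff}, and the translation $\lambda_R(R/\m\dif{n+1}{K})=\frk_R(\ModDif{n}{R}{K})$, plus the remark that $\dm{\ZZ}$ and $\dm{K}$ agree on complete local rings with perfect residue field).

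Next, for each such $s$, Theorem~\ref{ThmFregPos} shows that $R_s$ is strongly $F$-regular: it is $F$-finite (since $\kappa(s)$ is finite), $F$-pure by construction, and has positive differential signature. Thus $R$ is of dense strongly $F$-regular type. Finally I would invoke the theorem of Hara and Watanabe \cite[Theorem~3.3]{HaraWatanabe} (the extension of \cite[Theorem~5.2]{HaraSing} to dense $F$-regular type) to conclude that $x\in X$ is log-terminal; the hypothesis that the anticanonical cover of $R$ is finitely generated is exactly what is needed to run the Hara--Watanabe equivalence in the non-$\QQ$-Gorenstein setting (it guarantees a well-behaved boundary divisor so that ``log-terminal'' makes sense and reduction works).

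The main obstacle I expect is the uniformity issue flagged in the Remark after Corollary~\ref{freerankprincipalrelative}: the open set $D(f_n)\subseteq\Spec A$ over which $\frk P^n_{R_s|\kappa(s)}=\frk P^n_{R_K|K}$ a priori depends on $n$, so naively one only controls finitely many $n$ at a time and cannot compute the limsup at a single prime. However, positivity of the limsup only requires a single infinite sequence of values of $n$ along which $\frk P^n_{R_K|K}/(n^d/d!)$ stays bounded below, and — crucially — for the direction we need (producing points where $\dm{\ZZ}(R_s)>0$) the one-sided estimate $\frk P^n_{R_s|\kappa(s)}\geq \frk P^n_{R_A|A}$ holds over \emph{all} of $\Spec A$ without further shrinking, as noted in that Remark. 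So it suffices to first pass to a model $R_A$ with $\dm{A}(R_A)>0$ (which can be arranged by shrinking $\Spec A$ once, using that $R_K = R_A\otimes_A K$ and free ranks only drop under specialization up to a set we remove), and then every closed point $s$ gives $\dm{\ZZ}(R_s)\geq \dm{A}(R_A)>0$. Intersecting with the dense set where $R_s$ is $F$-pure finishes the argument; one must also check along the way that $F$-purity of $R_s$ forces $R_s$ to be reduced so that Theorem~\ref{ThmDifMultDsimple} and Theorem~\ref{ThmFregPos} genuinely apply, which is automatic.
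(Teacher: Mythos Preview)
Your overall strategy is sound, but the critical step---spreading the positivity of $\dm{K}(R)$ to $\dm{\ZZ}(R_s)>0$ for a dense set of closed points $s$---has a genuine gap. You assert that one can ``pass to a model $R_A$ with $\dm{A}(R_A)>0$, which can be arranged by shrinking $\Spec A$ once,'' but this is precisely what is \emph{not} known. The Remark after Corollary~\ref{freerankprincipalrelative} is phrased conditionally (``\emph{if} we can find $A\subseteq K$ such that also $R_A$ has positive differential signature\ldots'') because the shrinkings $D(f_n)$ depend on $n$, and neither Lemma~\ref{JacobiTaylorperiodic} nor the inequality $\frk P^n_{R_s|\kappa(s)}\geq\frk P^n_{R_A|A}$ addresses this: the latter is only useful once $\dm{A}(R_A)>0$ is already in hand, and the former produces companions of a given unitary operator but does not bound the limsup from below over $A$ in general. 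So your fallback argument is circular.

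The paper sidesteps this uniformity problem entirely by passing through $D$-simplicity rather than differential signature. From $\dm{K}(R)>0$ and Theorem~\ref{ThmDifMultDsimple} one gets that $R$ is $D_{R|K}$-simple; then a result of Smith and Van den Bergh \cite[Theorem~5.2.1]{SmithVDB} spreads $D$-simplicity to $R_s$ for $s$ in a dense open $U\subseteq\Spec A$ (this is a qualitative statement about a single ideal being all of $R$, so it descends and spreads without any limiting process). On $U\cap W$ (where $W$ is the dense $F$-pure locus), the implication ``$F$-finite, $F$-pure, $D$-simple $\Rightarrow$ strongly $F$-regular'' \cite[Theorem~2.2]{DModFSplit} gives dense strongly $F$-regular type, and then \cite[Theorem~D]{CEMS} (rather than Hara--Watanabe, which covers only the $\QQ$-Gorenstein case) supplies the log-terminal conclusion under the finitely-generated-anticanonical-cover hypothesis. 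The moral: it is easier to spread $D$-simplicity than to spread positivity of an asymptotic invariant.
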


\begin{proof}
Since  $x\in X$ is of  dense $F$-pure type   there exists a model of $x\in X$ over a finitely generated $\ZZ$-subalgebra $A$ of $K$ and a dense subset of closed points $W\subseteq \Spec A$ such that $x_s\in X_s$ is $F$-pure for all $s\in W$.

Since $\dm{K}(R)>0$,  $R$ is $D$-simple by Theorem~\ref{ThmDifMultDsimple}.
Then,
 {$ R_s:=\cO_{X_s,x_s}=(R_A)_\eta\otimes_A A/\m_S$} is a simple  $D_{R_s| \kappa(S)}$-module for every closed point $s\in U$, where $U$ is a dense open subset $U\subseteq \Spec A$ \cite[Theorem 5.2.1]{SmithVDB}.
Since $\kappa(s)$ is $F$-finite,
$R_s$ is strongly  $F$-regular for every $s\in U\cap W$ \cite[Theorem 2.2]{DModFSplit}. 
Since $U\cap W$ is dense, $x\in X$ is of dense strongly $F$-regular type. Hence, $x\in X$ is log-terminal \cite[Theorem~D]{CEMS}.
\end{proof}

	We recall a conjecture that relates $F$-purity with log-canonical singularities.
	Let $K$ be  a field of characteristic zero and $X$ be a scheme of finite type over $K$.
	Let $x\in X$ be an $n$-dimensional normal $\QQ$-Gorenstein singularity. Then $x\in X$ is log canonical if and only if it is of dense $F$-pure type.

	The direction ``$F$-pure implies log-canonical'' of this conjecture is already known \cite{HaraWatanabe}. There has been intense research regarding the other direction;  {see \cite{FujTak,TakagiANT,Her} for some positive results.} Assuming this conjecture, Theorem~\ref{ThmKLTPos} states that if $\dm{K}(R)>0$ and $x\in X$ is log-canonical, then $x\in X$ is log-terminal.

%%%%%%%%%%%%%%%%%%%%%%%%%%%%%%%%%%%%%%%%%%
%%%%%%%%%%%%%%%%%%%%%%%%%%%%%%%%%%%%%%%%%%
\section{Extending and restricting operators}
%%%%%%%%%%%%%%%%%%%%%%%%%%%%%%%%%%%%%%%%%%
%%%%%%%%%%%%%%%%%%%%%%%%%%%%%%%%%%%%%%%%%%

 We devote this section to establish concepts and results that allow us to compute the differential signature for  several examples. 
 
 \subsection{Definitions and examples}

\begin{definition}\label{def:extensible}$ $
	\begin{enumerate}
\item An inclusion of $A$-algebras $R \subseteq S$ is \emph{differentially extensible}\index{differentially extensible} over $A$ if for every $\delta \in D_{R|A}$ there exists an element $\tilde{\delta}\in D_{S|A}$ such that $\tilde{\delta}|_{R} = \delta$.
\item We say $R\subseteq S$ as above is \emph{differentially extensible with respect to the order filtration} or \emph{order-differentially extensible} if for  {every $n\in\NN$ and every}  $\delta \in D^n_{R|A}$ there exists an element $\tilde{\delta}\in D^n_{S|A}$ such that $\tilde{\delta}|_{R} = \delta$.
\item  If $A$ has characteristic $p>0$, we say $R\subseteq S$  is \emph{differentially extensible with respect to the level filtration} or \emph{level-differentially extensible} if  {for every $e\in\NN$ and every $\delta \in D^{(e)}_{R|A}$} there exists an element $\tilde{\delta}\in D^{(e)}_{S|A}$ such that $\tilde{\delta}|_{R} = \delta$.
\end{enumerate}
\end{definition}

Even though these notions are subtle, it includes many interesting examples, e.g., the inclusions of many classical invariant rings in their ambient polynomial rings. We discuss these examples further in this and the next section.

Clearly, if $R\subseteq S$ is differentially extensible with respect to the order or the level filtration, it is differentially extensible. However, there are inclusions that are level-differentially extensible but not order-differentially extensible.

\begin{example} Let $R=\FF_p[xy] \subseteq S=\FF_p[x,y]$. We have that $D^{(e)}_R=\End_{R^q}(R)$ and $D^{(e)}_S=\End_{S^q}(S)$. Since $R$ is free over $R^q$, any map in $D^{(e)}_R$ corresponds to a choice of images for $1, xy, \dots, (xy)^{q-1}$. As these elements form part of a free basis for $S$ over $S^q$, one can extend the operator to an element of $D^{(e)}_S$ by choosing arbitrary values for the rest of the free basis. However, the map $\delta\in D^{p-1}_{R|\FF_p}$ that sends $xy \mapsto 1$ and the rest of the free basis to zero does not extend to an element of $D^{p-1}_{S|\FF_p}$, since any such operator decreases degrees by at most $p-1$.
	
	We also note that in characteristic zero, the analogous inclusion $R=\CC[xy] \subseteq S=\CC[x,y]$ is not differentially extensible. Indeed, the derivation $\theta=\frac{d}{d(xy)}$ on $R$ does not extend to a differential operator of any order on $S$. To see this, observe that any extension of $\theta$ must be of the form  {$\theta=\sum_{a,b\geq 1} c_{a,b} x^{a-1} y^{b-1} \frac{1}{a!b!} \partial^{(a,b)}$ for some constants $c_{a,b}\in \CC$, with $c_{a,b}=0$ for all but finitely many pairs $a,b$. Plugging in $(xy)^n$ and extracting the $(xy)^{n-1}$ coefficient yields the equality $\sum_{a,b\geq 1} c_{a,b} \binom{n}{a}\binom{n}{b} =n$ for all $n$. But, for fixed integers $a,b\geq 1$, we have $n \mid \binom{n}{a}$ and $n\mid \binom{n}{b}$ in $\CC[n]$, so} the left-hand side is a polynomial divisible by $n^2$, so that the equation above for fixed constants yields a nonzero polynomial with roots for all integers $n$, a contradiction.
\end{example}

The notion of differential extensibility has been studied earlier in the literature, though not under this name
\cite{LS,Schwarz}.

\begin{remark} By Proposition~\ref{localization1}, localization maps are order-differentially extensible over a field $K$.
\end{remark}

Part of the following lemma is well-known; see, e.g., {\cite[Proposition~3.2]{Knop}}.
\begin{proposition}\label{prop-extensible-finite}
	Let $R$ and $S$ be algebras essentially of finite type over a field $K$. Suppose that both $R$ and $S$ are normal, and that $S$ is a module-finite extension of $R$, \'etale in codimension one, and split as $R$-modules. Then the inclusion of $R$ into $S$ is order-differentially extensible over $K$. If $K$ has characteristic $p>0$, then the inclusion of $R$ into $S$ is also level-differentially extensible over $K$. 
\end{proposition}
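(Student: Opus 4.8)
The plan is to exploit the splitting of $R \subseteq S$ together with the fact that $S$ is étale over $R$ in codimension one, reducing to the regular (smooth) locus where one can extend operators freely, and then to show that such an extension, built on a big open set, actually extends over all of $S$ because $S$ is normal.

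First I would fix $\delta \in D^n_{R|K}$ and let $\pi : S \to R$ be an $R$-linear splitting of the inclusion $\iota : R \hookrightarrow S$. Write $U \subseteq \Spec S$ and $V \subseteq \Spec R$ for the open loci where $R \to S$ is étale; by hypothesis the complements have codimension $\ge 2$. On $U$, Proposition~\ref{localization1} (applied to the localizations, which are formally étale over the corresponding localizations of $R$, and using that the $\ModDif{n}{R}{K}$ are finitely presented) shows that $S \otimes_R D^n_{R|K} \to D^n_{S|K}$ is an isomorphism after localizing; more precisely, for each point of $U$ the operator $\delta$ extends uniquely to a differential operator of order $\le n$ on the corresponding localization of $S$. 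I would then use Lemma~\ref{etalemap} (with $I = \Delta^n_{R|K}$, $J = \Delta^n_{S|K}$, and in characteristic $p$ with the Frobenius powers $\Delta^{[p^e]}$) to get that the comparison map $\alpha : S \otimes_R \ModDif{n}{R}{K} \to \ModDif{n}{S}{K}$ is an isomorphism over $U$. Dualizing, one obtains an honest differential operator $\delta_U \in D^n_{S|K}(S|_U, S|_U)$ agreeing with $\delta$ on $R$; concretely, $\delta_U$ is the map represented by $\Hom_S(\ModDif{n}{S}{K}, S)$ restricted to $U$ that corresponds to $\delta \otimes 1$ under $\alpha^{-1}$.

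The key step is then to see that $\delta_U$ extends across the codimension-$\ge 2$ complement. Here I would use normality of $S$: the module $D^n_{S|K} \cong \Hom_S(\ModDif{n}{S}{K}, S)$ is an $S$-dual, hence reflexive (or at least $S_2$), so a section of it defined on an open set whose complement has codimension $\ge 2$ extends uniquely to a global section. Thus $\delta_U$ lifts to some $\tilde\delta_0 \in D^n_{S|K}$. However, $\tilde\delta_0$ need not restrict to $\delta$ on $R$ — it only does so after localizing at $U$ — so I would correct it: the composite $\pi \circ \tilde\delta_0 \circ \iota$ lies in $D^n_{R|K}$ (postcomposition with an $R$-linear map and precomposition with the inclusion preserve order), and it agrees with $\delta$ generically on $\Spec R$, hence everywhere since $R$ is a domain (or, if $R$ is not a domain, since $R$ is $S_2$ and they agree in codimension $\le 1$; normality gives what we need). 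Actually the cleanest route: after the reflexive extension, $\tilde\delta_0|_R$ and $\delta$ are two operators in $D^n_{R|K}$ agreeing on the étale locus $V$, whose complement has codimension $\ge 2$; since $D^n_{R|K}$ is a reflexive $R$-module and $R$ is normal, they are equal. Set $\tilde\delta = \tilde\delta_0$. The characteristic-$p$ level-filtration statement is proved by exactly the same argument with $D^n$ replaced by $D^{(e)}_{R|K} = \End_{R^{p^e}}(R) \cong \Hom_R(F^e_* R, F^e_* R)$ and with the Frobenius-power version of Lemma~\ref{etalemap}; reflexivity of $\End_{R^{p^e}}(R)$ over $R$ (for $R$ normal and $F$-finite) again lets one extend across codimension $\ge 2$.

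The main obstacle I anticipate is the bookkeeping around the comparison map $\alpha$ and making sure the extension of an operator defined only on the étale locus genuinely lands in $D^n_{S|K}$ with the correct order — i.e., verifying that the reflexive hull of the sheaf $\mathcal{H}om(\ModDif{n}{S}{K}, \mathcal{O}_S)$ really is the module $D^n_{S|K}$ and not something larger, and that this identification is compatible with the order filtration. A secondary subtlety is whether one needs the splitting $\pi$ at all, or whether étale-in-codimension-one plus normality already forces $S \otimes_R D^n_{R|K} \cong (D^n_{S|K})^{**}$; I expect the splitting is used precisely to guarantee that the corrected operator $\pi \circ \tilde\delta_0 \circ \iota$ recovers $\delta$ rather than a multiple of it, and to handle the non-domain case cleanly. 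I would present the domain case first and remark that normality handles the general reduced case by the reflexivity argument.
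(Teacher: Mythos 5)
Your proposal is correct and follows essentially the paper's argument: both rest on Lemma~\ref{etalemap} giving the comparison map $\alpha\colon S\otimes_R \ModDif{n}{R}{K}\to \ModDif{n}{S}{K}$ that is an isomorphism in codimension one, the representation $D^n_{S|K}\cong\Hom_S(\ModDif{n}{S}{K},S)$, and normality/reflexivity to pass from the punctured (étale) locus to everything --- the paper just packages your operator-by-operator sheaf extension as the statement that the restriction map $D^n_{S|K}\to\Hom_R(\ModDif{n}{R}{K},S)$ is an isomorphism of reflexive modules. Your extra correction step with $\pi\circ\tilde\delta_0\circ\iota$ is not needed (as you yourself observe, agreement of $\tilde\delta_0|_R$ with $\delta$ on a dense open set plus torsion-freeness of $S$ over $R$ already forces equality in $\Hom_R(\ModDif{n}{R}{K},S)$), and the characteristic-$p$ case goes through verbatim with $\Delta^{[p^e]}$ in place of $\Delta^{n+1}$, exactly as in the paper.
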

\begin{proof}
	By Lemma~\ref{etalemap} and the hypotheses, there is an $S$-module homomorphism $\alpha:S\otimes_R \ModDif{n}{R}{K} \to  \ModDif{n}{S}{K}$ that is an isomorphism in codimension one. Then, we obtain a map
	 {
	\[\Hom_R(\ModDif{n}{R}{K},S)\cong\Hom_S(S\otimes_R \ModDif{n}{R}{K},S) \leftarrow \Hom_S(\ModDif{n}{S}{K},S) \cong D^n_{S|K} \]
	}
	that is an isomorphism in codimension one. We observe that this map agrees with restriction of functions. We obtain that $D^n_{R|K}(S) \cong D^n_{S|K}$ \cite[{Lemma 0AV6 and Lemma 0AV9}]{stacks-project}. Since $R$ is a a direct summand of $S$ as an $R$-module, we find that $D^n_{R|K} \subseteq D^n_{R|K}(S)$.
	
	The statement about level-extensibility in positive characteristic follows in the same way.
\end{proof}

Much of the literature in terms of extending differential operators is in the context of invariant rings. If $S$ is a $K$-algebra with an action of a linearly reductive group $G$, any $G$-invariant differential operator on $S$ yields a differential operator on $S^G$. The question of whether this restriction homomorphism $\pi: (D_{S|K})^G \rightarrow D_{S^G|K}$ is surjective, and whether it is surjective for each filtered piece $\pi_n: (D^n_{S|K})^G \rightarrow D^n_{S^G|K}$ has been studied (see~\cite{LS, Schwarz} among others) in connection to the question of when rings of differential operators on invariant rings are simple algebras. For example, one has the following result.

\begin{theorem}[\cite{Musson}]\label{Musson}
Let $K$ be an algebraically closed field of characteristic zero, and $\Lambda$ be a semigroup of the form $L\cap \NN^d$ for some linear space $L\subseteq \RR^d$. Then the inclusion of $K[\Lambda]\subseteq K[\NN^d]$ is order-differentially extensible if and only if the following hold.
\begin{itemize}
\item[(i)] The spaces $L\cap \{x_i \geq 0\}$ are distinct for distinct $i$.
\item[(ii)] The spaces $L\cap \{x_i = 0\}$ are facets of $L\cap \RR_{\geq 0}^d$.
\item[(iii)] The image of $\Lambda$ under each coordinate function generates $\ZZ$ as a group.
\end{itemize}
\end{theorem}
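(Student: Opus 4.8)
\textbf{Proof proposal for Theorem~\ref{Musson}.}

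The plan is to reduce the statement to a purely combinatorial criterion about which toric differential operators on $K[\NN^d]$ preserve the subring $K[\Lambda]$, and then to match that criterion against the three geometric conditions (i)--(iii). First I would recall the standard description of the ring of differential operators on the polynomial ring $K[\NN^d] = K[x_1,\dots,x_d]$ in terms of the torus grading: every operator decomposes into homogeneous pieces, and a homogeneous operator of degree $\mathbf{a} \in \ZZ^d$ has the form $x^{\mathbf{a}_+} p(\theta_1,\dots,\theta_d) x^{-\mathbf{a}_-}$ where $\theta_i = x_i \partial_i$ are the Euler operators and $p$ is a polynomial; equivalently, such an operator acts on the monomial $x^{\mathbf{v}}$ by $x^{\mathbf{v}} \mapsto c(\mathbf{v})\, x^{\mathbf{v}+\mathbf{a}}$ for a function $c$ that is polynomial in $\mathbf{v}$ and vanishes whenever $\mathbf{v}$ or $\mathbf{v}+\mathbf{a}$ leaves $\NN^d$. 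The $G$-invariant operators (for the torus $G = \Ker(\NN^d \to \Lambda)$ acting on $K[\NN^d]$) are exactly the homogeneous operators whose degree $\mathbf{a}$ lies in the group $\ZZ\Lambda \subseteq \ZZ^d$ generated by $\Lambda$; restriction of such an operator to $K[\Lambda]$ is obtained simply by restricting the scalar function $c$ to $\mathbf{v} \in \Lambda$. So order-differential extensibility is equivalent to: for every $\mathbf{a} \in \ZZ\Lambda$ and every polynomial function $c$ on $\Lambda$ such that $\mathbf{v} \mapsto c(\mathbf{v})\,x^{\mathbf{v}+\mathbf{a}}$ is a well-defined operator on $K[\Lambda]$ (i.e. $c$ vanishes on the appropriate boundary strata of the cone), there is a polynomial function $\tilde c$ on $\ZZ^d$ of the same degree extending $c$ and vanishing on the boundary strata of $\NN^d$ that are needed for $\mathbf{v} \mapsto \tilde c(\mathbf{v})\,x^{\mathbf{v}+\mathbf{a}}$ to be an operator on $K[\NN^d]$.

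The next step is to analyze the obstruction strata precisely. An operator of degree $\mathbf{a}$ on $K[\NN^d]$ with negative component $\mathbf{a}_i < 0$ must have its coefficient function divisible by $\theta_i(\theta_i-1)\cdots(\theta_i + \mathbf{a}_i +1)$, i.e. it must vanish on the facet $\{v_i = 0\}$ of $\NN^d$ (and its lower strata) to order $|\mathbf{a}_i|$. On the $K[\Lambda]$ side the analogous requirement only involves the facets of the cone $\sigma = L \cap \RR_{\geq 0}^d$. Condition~(ii) says the coordinate hyperplane sections $L \cap \{x_i = 0\}$ are genuine facets of $\sigma$, so that the $i$-th obstruction on $K[\Lambda]$ lives on an actual facet and not on a higher-codimension face (where vanishing to high order is automatic and forces nothing); condition~(i), that the half-spaces $L \cap \{x_i \geq 0\}$ are pairwise distinct, guarantees that distinct coordinate directions impose independent facet conditions, so the divisibility requirements coming from different $i$ can be satisfied simultaneously by a single polynomial $\tilde c$; and condition~(iii), that each coordinate function maps $\Lambda$ onto all of $\ZZ$, ensures that the monomial $x^{\mathbf{v}+\mathbf{a}}$ for $\mathbf{v}\in\Lambda$ with $\mathbf{a}\in\ZZ\Lambda$ really does stay in $\Lambda$ exactly when $\mathbf{v}+\mathbf{a}\in\NN^d$ componentwise — there is no extra arithmetic obstruction from the semigroup being a proper subsemigroup of its saturation-plus-lattice. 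Under (i)--(iii), I would construct $\tilde c$ explicitly: start from a polynomial interpolating $c$ on $\Lambda$ (which exists since $\Lambda$ is Zariski-dense in its linear span), then multiply/adjust by the product of the factors $\theta_i(\theta_i - 1)\cdots$ over those $i$ with $\mathbf{a}_i < 0$, checking that this does not change the restriction to $\Lambda$ because $c$ already vanished on $\Lambda \cap \{v_i = 0\}$ to the required order by hypothesis (this is where (ii) is used). Conversely, if any of (i)--(iii) fails, I would exhibit an explicit operator on $K[\Lambda]$ that does not extend: e.g. if (iii) fails with image $m\ZZ$, $m>1$, there is an operator that is "sensitive mod $m$" in a coordinate and cannot be matched by a polynomial operator on $K[\NN^d]$; if (ii) fails, the facet condition on the polynomial ring is strictly stronger than anything visible on $K[\Lambda]$, and one can write down a low-degree $c$ on $\Lambda$ whose necessary lift would have to vanish on a face of $\sigma$ that is not a face of $\Lambda \cap\{v_i=0\}$; if (i) fails, two coordinate directions collapse and one builds an operator whose coefficient function cannot be made to vanish to the correct orders on both coordinate facets at once.

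The main obstacle I anticipate is the "only if" direction — turning each failure of (i), (ii), (iii) into a concrete non-extendable operator — since this requires a careful bookkeeping of exactly which vanishing orders are forced on the polynomial ring versus the subring, and the three conditions interact (for instance the failure of (i) partially overlaps with degenerate situations also governed by (ii)). A secondary technical point is verifying that "polynomial in $\mathbf{v}$" coefficient functions suffice, i.e. that the abstract description $\ModDif{n}{K[\Lambda]}{K} \cong \Hom(\text{--})$ of Proposition~\ref{representing-differential}, combined with the torus grading, really does present every differential operator on $K[\Lambda]$ in the monomial form above with a polynomial coefficient; this is standard for toric varieties but I would want to state it cleanly (possibly citing the toric structure results) before the combinatorial argument begins. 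I would organize the write-up so that the grading reduction and the monomial normal form are established first as a lemma, then the equivalence is a finite case analysis on the sign pattern of $\mathbf{a}$ and the face structure of $\sigma$.
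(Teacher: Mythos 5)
Note first that this theorem is cited in the paper from \cite{Musson} rather than proved there, so there is no in-paper argument to compare against; you are reconstructing the proof from scratch, and the question is whether your outline is sound on its own terms.

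Your overall framework --- decompose $D_{K[\Lambda]|K}$ into torus-homogeneous pieces, represent each homogeneous piece of degree $\mathbf a$ as $\mathbf v \mapsto c(\mathbf v)\,x^{\mathbf v+\mathbf a}$ with $c$ a polynomial on the lattice of $\Lambda$, and then ask when $c$ extends to a polynomial on $\ZZ^d$ with the vanishing orders forced by $K[\NN^d]$ --- is indeed the standard machinery for differential operators on toric rings and is what Musson's argument rests on. But there are two concrete issues with the proposal as written. First, you identify the degrees of invariant operators with $\ZZ\Lambda$, the group generated by the semigroup. This is not the right lattice: the homogeneous operators on $K[\Lambda]$ coming from the torus grading are indexed by $L\cap\ZZ^d$ (the full character group of the torus of $\Spec K[\Lambda]$), and $\ZZ\Lambda$ is \emph{a priori} only a subgroup of this. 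You then attribute the role of condition~(iii) to keeping $x^{\mathbf v+\mathbf a}$ in $\Lambda$, but (iii) is a statement about a single coordinate function $x_i$ at a time; it governs whether the vanishing order forced on the $i$-th coordinate hyperplane by the polynomial ring (namely $|\mathbf a_i|$ consecutive integer roots of $\theta_i$) can be realized by a polynomial on $L$, given that $x_i$ restricted to $\Lambda$ may only take values in a proper subgroup $m\ZZ$. Conflating the ambient lattice with $\ZZ\Lambda$ blurs exactly the distinction that~(iii) is designed to control, and will make the bookkeeping in your ``only if'' argument go wrong.

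Second, and more seriously, you flag the converse (``only if'') direction as the main obstacle but leave it entirely as a plan: ``one builds an operator whose coefficient function cannot be made to vanish to the correct orders.'' This is where the actual content of the theorem lives. For instance, when~(i) fails one needs to see that two coordinate functions $x_i, x_j$ restricting to proportional (not merely equal-half-space) functions on $L$ force a single one-dimensional family of divisibility conditions on the $K[\Lambda]$ side, while the polynomial-ring side still demands two independent families, and to exhibit a degree-$\mathbf a$ for which a coefficient polynomial $c$ on $L$ satisfying the $K[\Lambda]$-side constraints of its own order cannot be extended to $\ZZ^d$ satisfying both constraints without raising the order. Similar explicit constructions are needed for failures of~(ii) (where $L\cap\{x_i=0\}$ has codimension $\ge 2$ in $\sigma$, so the $K[\Lambda]$-side constraint becomes vacuous at low order while the $K[\NN^d]$ side does not) and~(iii). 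These are genuinely the hard parts, and without them the proposal is an accurate description of the shape of the proof rather than a proof. If you intend to carry this out, you should also make precise the ``polynomial coefficient'' normal form you invoke at the start --- Proposition~\ref{representing-differential} gives the Hom-description but not the monomial/Euler-operator form directly, and you would want to either cite the toric structure theorem from Musson or derive it as a separate lemma before the combinatorial case analysis.
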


For contrast, we note the following.

\begin{proposition}\label{charp-toric-extend} Let $K$ be a field of characteristic $p>0$, and $\Lambda$ be a semigroup of the form $L\cap \NN^d$ for some linear space $L\subseteq \RR^d$. Then $K[\Lambda]\subseteq K[\NN^d]$ is level-differentially extensible.
\end{proposition}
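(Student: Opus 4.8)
The goal is to show that for $K$ of characteristic $p>0$ and $\Lambda = L\cap \NN^d$ with $L\subseteq \RR^d$ a linear subspace, the inclusion $K[\Lambda]\subseteq K[\NN^d] = K[x_1,\dots,x_d]$ is level-differentially extensible; that is, every $\delta\in D^{(e)}_{K[\Lambda]|K} = \End_{K[\Lambda]^{p^e}}(K[\Lambda])$ extends to some $\tilde\delta\in D^{(e)}_{K[\NN^d]|K} = \End_{K[\NN^d]^{p^e}}(K[\NN^d])$. The key structural feature of toric rings in characteristic $p$ is that $K[\NN^d]$ is a free module over its Frobenius subring $K[p^e\NN^d]$, with the monomial basis $\{x^\nu : \nu\in \NN^d,\ 0\le \nu_i < p^e\}$; and moreover $K[\NN^d]$ decomposes as a $\ZZ^d$-graded (equivalently, $\NN^d$-graded) module over $K[\Lambda]^{p^e} = K[p^e\Lambda]$, where the grading is by the class of $\nu$ modulo the subgroup $p^e\ZZ^d + \ZZ\Lambda$ (or more precisely by cosets that make the $K[p^e\Lambda]$-action homogeneous). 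I would organize the proof around identifying $K[\Lambda]$ as a graded $K[p^e\Lambda]$-direct summand of the graded free module $K[\NN^d]$ in a way that is compatible with all $K[p^e\Lambda]$-linear endomorphisms.

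\textbf{Step 1: The monomial decomposition.} First I would fix $q = p^e$ and observe that $K[\NN^d] = \bigoplus_{\gamma} M_\gamma$ as a $K[q\NN^d]$-module, where $\gamma$ ranges over $\NN^d/q\NN^d$ and $M_\gamma$ is the free rank-one $K[q\NN^d]$-module spanned by $x^\gamma$ (representatives chosen with $0\le \gamma_i < q$). This is just the statement that $K[\NN^d]$ is $K[q\NN^d]$-free on the $q$-th-power-free monomials. Restricting to $K[\Lambda]$: a monomial $x^\nu$ lies in $K[\Lambda]$ iff $\nu\in\Lambda = L\cap\NN^d$. The subring $K[q\Lambda] = K[\Lambda]^{q}$ acts, and $K[\Lambda]$ decomposes as a $K[q\Lambda]$-module along cosets. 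The point to establish is that an endomorphism $\delta\in\End_{K[q\Lambda]}(K[\Lambda])$ is determined by, and can be freely prescribed by, its behavior on a $K[q\Lambda]$-generating set that can be taken to consist of (images of) monomials, and that these monomials extend to a partial free $K[q\NN^d]$-basis of $K[\NN^d]$.

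\textbf{Step 2: Extending the operator.} Given $\delta\in D^{(e)}_{K[\Lambda]|K}$, I would use the $\ZZ^d$-grading (all of $K[\NN^d]$, $K[\Lambda]$, their Frobenius subrings, and the Hom-modules are $\ZZ^d$-graded, and any $\delta$ decomposes into its graded components, so WLOG $\delta$ is homogeneous of some degree $c\in\ZZ^d$). A homogeneous $K[q\Lambda]$-linear map sends each monomial basis element $x^\nu$ of $K[\Lambda]$ (over $K[q\Lambda]$) to an element of $K[\Lambda]$ in degree $\nu+c$, which is a $K[q\Lambda]$-multiple of a monomial, hence of the form $a_\nu x^{\nu+c}$ with $a_\nu\in K[q\Lambda]\subseteq K[q\NN^d]$ (interpreting $x^{\nu+c}$ via the unique monomial representative of the $q$-free class; the relevant bookkeeping is that $\nu + c$ and its chosen $q$-reduction differ by an element of $q\Lambda$). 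Now define $\tilde\delta$ on $K[\NN^d]$: on the monomial $K[q\NN^d]$-basis element $x^\mu$ (with $0\le\mu_i<q$), set $\tilde\delta(x^\mu) = a_\mu x^{\mu+c}$ if $\mu\in\Lambda$ (using the same scalar $a_\mu\in K[q\Lambda]$ that $\delta$ used), and $\tilde\delta(x^\mu) = 0$ otherwise, then extend $K[q\NN^d]$-linearly. This is manifestly a well-defined $K[q\NN^d]$-linear endomorphism of $K[\NN^d]$, i.e.\ an element of $D^{(e)}_{K[\NN^d]|K}$. One then checks $\tilde\delta|_{K[\Lambda]} = \delta$: both are $K[q\Lambda]$-linear, and they agree on the $K[q\Lambda]$-generators $x^\nu$ of $K[\Lambda]$ by construction (the only subtlety is that a single $x^\nu\in K[\Lambda]$ with $0\le\nu_i<q$ is simultaneously a $K[q\NN^d]$-basis element and a $K[q\Lambda]$-module generator, and the $K[q\NN^d]$-linear extension restricts to the $K[q\Lambda]$-linear extension — this follows since $q\Lambda\subseteq q\NN^d$). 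Summing over the graded pieces of $\delta$ (which are finite in number for a given $\delta$) gives the general $\tilde\delta$.

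\textbf{Main obstacle.} The delicate point is the coset bookkeeping in Step 2: when I write $\delta(x^\nu) = a_\nu x^{\nu+c}$, the exponent $\nu+c$ need not have entries in $[0,q)$, so I must track which $q$-free monomial class it represents and absorb the discrepancy into $a_\nu$; I need this absorption to land in $K[q\Lambda]$ (not merely $K[q\NN^d]$) so that $\delta$ is genuinely recovered, and I need the corresponding extension formula on $K[\NN^d]$ to use only that $a_\nu\in K[q\NN^d]$. The saving grace, which I would make explicit, is that everything respects the $\ZZ^d$-grading and the subgroup $q\ZZ^d\subseteq\ZZ^d$; the monomial $K[q\NN^d]$-basis of $K[\NN^d]$ is indexed by $\NN^d/(q\ZZ^d)$-classes represented in $[0,q)^d$, the monomial $K[q\Lambda]$-basis of $K[\Lambda]$ is indexed by the subset of those classes lying in $\Lambda$, and the two bases are literally compatible — the latter sits inside the former. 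Because of this compatibility, the naive "extend by zero on the complementary basis elements" construction works with no further hypotheses on $L$ (in sharp contrast to the characteristic-zero situation of Theorem~\ref{Musson}, precisely because in characteristic $p$ the relevant module is free over the Frobenius rather than generated by derivations). I do not expect any genuinely hard estimate; the work is entirely in setting up the two compatible monomial bases carefully and verifying $K[q\NN^d]$-linearity and the restriction identity.
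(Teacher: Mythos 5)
There is a genuine gap, and it is the claim in Step~1/Step~2 that a $K[\Lambda]^q$-module generating set of $K[\Lambda]$ can be taken to consist of monomials $x^\mu$ with $\mu\in\Lambda\cap[0,q)^d$, so that these sit inside the standard $K[\NN^d]^q$-basis of $K[\NN^d]$. That is false: the componentwise $q$-reduction of $\nu\in\Lambda$ need not lie in $L$ at all, so $x^\nu$ cannot be reduced into the box over $K[\Lambda]^q$ (the quotient $q$-th power lies in $K[\NN^d]^q$ but not in $K[\Lambda]^q$). Concretely, take $d=3$, $L=\{(a,b,c)\in\RR^3 : a+b=2c\}$, $q=2$. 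Then $\Lambda\cap[0,2)^3=\{(0,0,0),(1,1,1)\}$, but $\nu=(1,3,2)\in\Lambda$ is a genuine $K[\Lambda]^2$-module generator (its box reduction $(1,1,0)\notin L$, and neither $(1,3,2)$ nor $(1,3,2)-(1,1,1)=(0,2,1)$ lies in $2\Lambda$). Your construction sets $\tilde\delta(x^{(1,1,0)})=0$ and therefore, by $K[\NN^d]^q$-linearity, $\tilde\delta(x^{(1,3,2)})=(x_2x_3)^2\cdot\tilde\delta(x^{(1,1,0)})=0$, so it fails to restrict to any $\delta$ with $\delta(x^{(1,3,2)})\ne 0$. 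The analogy with the ``toric rings are $F$-split'' picture is misleading here precisely because $K[\Lambda]$ need not be $K[\Lambda]^q$-free, and its natural monomial generators over $K[\Lambda]^q$ need not be $q$-reduced in the ambient $\NN^d$.

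You should also know that the paper's proof takes a different route and has a closely related (but not identical) problem. It sets $\tilde\phi(x^\alpha)=x^{q\gamma}\phi(x^\beta)$ whenever $\alpha=\beta+q\gamma$ for some $\beta\in\Lambda$, $\gamma\in\NN^d$, and $0$ otherwise; well-definedness is proved via $\Lambda\cap q\ZZ^d=q\Lambda$, but the asserted $K[\NN^d]^q$-linearity of $\tilde\phi$ is not verified, and it fails in the same example ($\alpha=(1,1,0)$ has no such decomposition while $\alpha+2(0,1,1)=(1,3,2)$ does). In fact, for this $\Lambda$ the statement itself appears to fail: the map $\phi$ of degree $(-1,-1,-1)$ given by $\phi(x^\nu)=x^{\nu-(1,1,1)}$ for $\nu$ in the coset of $(1,3,2)$ in $\ZZ\Lambda/2\ZZ\Lambda$, and $\phi=0$ on the remaining cosets, is a well-defined element of $D^{(1)}_{K[\Lambda]}$, yet any $K[\NN^3]^2$-linear extension would have to satisfy $\tilde\phi(x_1x_2)=\phi(x^{(1,3,2)})/(x_2x_3)^2=x_3^{-1}\notin K[\NN^3]$. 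The underlying obstruction is that $L\cap\{x_3=0\}$ is not a facet of $L\cap\RR_{\ge 0}^3$; compare condition (ii) of Theorem~\ref{Musson}, which is exactly what rules this out in characteristic zero. Both your argument and the paper's seem to need an additional hypothesis of that kind (or a restriction to the facet embedding of Setup~\ref{toric-setup}, which is what Theorem~\ref{WYVKS} actually uses), and you should double-check the example above before relying on the proposition as stated.
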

\begin{proof}
Note first that, from the $\NN^d$-grading,  $D^{(e)}_{K[\Lambda]}$ is generated by maps that send monomials to monomials, so it suffices to show that such a map extends. Let $\phi: K[\Lambda] \rightarrow K[\Lambda]$ be a $K[\Lambda]^q$-linear map that sends monomials to monomials. Define a map $\tilde{\phi}:K[\NN^d]\rightarrow K[\NN^d]$ as follows. For a monomial $x^\alpha$, set $\tilde{\phi}(x^\alpha)=x^{q\gamma}\phi(x^\beta)$ if $\alpha$ can be written as $\beta + q \gamma$ with $\beta \in \Lambda$ and $\gamma\in \NN^d$; set $\tilde{\phi}(x^\alpha)=0$; otherwise, extend by $K$-linearity. To see that this map is well-defined, write $\alpha=\beta+q\gamma=\beta' + q\gamma'$ for $\beta, \beta'\in \Lambda$ and $\alpha,\gamma,\gamma'\in\NN^d$. Then, $(q-1)\beta + \beta' = q(\beta + \gamma-\gamma')\in \Lambda \cap q\ZZ^d = q\Lambda$. Thus, 
\[	
\frac{x^{q\gamma}\phi(x^\beta) }{ x^{q\gamma'} \phi(x^{\beta'})} = \frac{x^{q(\beta + \gamma - \gamma')}\phi(x^{\beta}) }{ x^{q\beta} \phi(x^{\beta'})}  =\frac{ \phi(x^{q(\beta + \gamma - \gamma')+\beta})}{\phi(x^{q\beta +\beta'})} = \frac{
	\phi(x^{(q-1)\beta + \beta' +\beta}) }{\phi(x^{q\beta +\beta'})}=1.
\]
It follows from the definition that $\tilde{\phi}$ is $K[\NN^d]^q$--linear and agrees with $\phi$ on $K[\Lambda]$. 
\end{proof}

We pose the following dual condition to differential extensibility. We find it useful to pose the following in a bit more generality than the setting of Definition~\ref{def:extensible}.

\begin{definition}
	Let $R$ be an $A$-algebra, and $S$ be a $B$-algebra. Suppose we have a commutative diagram
	\[ \xymatrix{R \ar[r]^{\varphi} & S \\ A \ar[u]\ar[r] & B, \ar[u]}\]
	which we refer to as a map of algebras $(R,A)\to (S,B)$ for short. A map of algebras is \emph{differentially retractable}\index{differentially retractable} if for every $\delta \in D_{S|B}$ there exists an element $\hat{\delta}\in D_{R|A}$ such that $\delta \circ \varphi = \varphi \circ \hat{\delta}$; it is \emph{order-differentially retractable} if we require the order of $\hat{\delta}$ to be no greater than that of $\delta$.
\end{definition}

The following fact was used systematically by \`Alvarez-Montaner, Huneke, and the third author \cite{AMHNB} to obtain results on local cohomology and Bernstein-Sato polynomials of direct summands of polynomial rings.

\begin{proposition}[{\cite[Lemma~3.1]{AMHNB}}]
	Let $R\subseteq S$ be an inclusion of $A$-algebras such that $R$ is a ($R$-linear) direct summand of $S$. Then the inclusion $(R,A)\to (S,A)$ is order-differentially retractable.
\end{proposition}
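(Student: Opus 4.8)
The statement to prove is: if $R\subseteq S$ is an inclusion of $A$-algebras with $R$ an $R$-linear direct summand of $S$ (say $\rho:S\to R$ is a retraction of $R$-modules with $\rho|_R=\mathrm{id}_R$), then $(R,A)\to(S,A)$ is order-differentially retractable; that is, for every $\delta\in D^n_{S|A}$ there is $\hat\delta\in D^n_{R|A}$ with $\delta\circ\iota=\iota\circ\hat\delta$, where $\iota:R\hookrightarrow S$ is the inclusion.

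First I would write down the obvious candidate: set $\hat\delta := \rho\circ\delta\circ\iota:R\to R$. This is visibly $A$-linear, and since $\rho|_R=\mathrm{id}_R$, for $r\in R$ we have $\iota(\hat\delta(r))=\iota(\rho(\delta(r)))$; so to get $\delta\circ\iota=\iota\circ\hat\delta$ I must show $\delta(r)\in R$ for all $r\in R$, i.e. $\rho(\delta(r))=\delta(r)$. This is the point of real content. I would prove it by induction on the order $n$ of $\delta$, using that $\rho$ is $R$-linear. For $n=0$: $\delta$ is multiplication by some $s\in S$, so $\delta(r)=sr$; but then $\delta(1)=s\in S$, and actually we only need that $\hat\delta=\rho\circ\delta\circ\iota$ works — here one must be slightly careful, since an order-zero operator on $S$ need not map $R$ into $R$. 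Let me instead argue directly that the candidate $\hat\delta$ has order $\le n$, which is cleaner and does not require $\delta(R)\subseteq R$.

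So the cleaner route: show by induction on $n$ that for every $\delta\in D^n_{S|A}$, the map $\hat\delta:=\rho\circ\delta\circ\iota$ lies in $D^n_{R|A}$, and that $\delta\circ\iota=\iota\circ\hat\delta$ whenever $\delta$ actually restricts, but more to the point that the commuting square needed is automatic. Concretely: for $n=0$, $\delta=s\cdot(-)$ for some $s\in S$; then $\hat\delta(r)=\rho(sr)$, and by $R$-linearity of $\rho$ this is $\rho(s)r$, hence $\hat\delta$ is multiplication by $\rho(s)\in R$, so $\hat\delta\in D^0_{R|A}$. For the inductive step, take $\delta\in D^n_{S|A}$ and $r_0\in R$; then $[\hat\delta,r_0]=\rho\circ\delta\circ\iota\circ r_0-r_0\circ\rho\circ\delta\circ\iota$, and using that $\iota$ is a ring map and $\rho$ is $R$-linear (so $\rho\circ r_0=r_0\circ\rho$ on the relevant domain) one computes $[\hat\delta,r_0]=\rho\circ[\delta,r_0]\circ\iota=\widehat{[\delta,r_0]}$, where $[\delta,r_0]\in D^{n-1}_{S|A}$; by induction this lies in $D^{n-1}_{R|A}$, so $\hat\delta\in D^n_{R|A}$. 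Finally, for the retraction identity $\delta\circ\iota=\iota\circ\hat\delta$: I would note $\iota\circ\hat\delta=\iota\circ\rho\circ\delta\circ\iota$, so the claim is $\delta(r)=\iota\rho(\delta(r))$ for $r\in R$, i.e. $\delta(R)\subseteq R$ — which in general is false, so in fact the correct reading of ``differentially retractable'' must be exactly $\delta\circ\varphi=\varphi\circ\hat\delta$ only up to the projection, and I should recheck the definition: the definition in the excerpt literally requires $\delta\circ\varphi=\varphi\circ\hat\delta$. I would therefore verify directly that with $\varphi=\iota$ and $\hat\delta=\rho\circ\delta\circ\iota$ one does get equality, which forces $\delta(R)\subseteq R$; if that fails the correct $\hat\delta$ must instead be built so that the square commutes, which it does precisely because $R$ is a summand — so I expect the resolution is that one proves $\delta(R)\subseteq R$ is \emph{not} needed because the definition is interpreted as: $\rho\circ\delta\circ\iota$ works and $\delta\circ\iota = \iota\circ\rho\circ\delta\circ\iota$ holds by splitting. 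The main obstacle, then, is getting the bookkeeping of this last identity exactly right; I expect it reduces to the observation that $\iota\rho$ is the identity on $R$ composed appropriately, and that the only genuine input is the $R$-linearity of $\rho$ feeding the induction on order. I would present it as the short induction above, stating at the outset the splitting $\rho$ and emphasizing that $\rho$ being $R$-linear is what makes $[\,\cdot\,,r_0]$ and $\rho$ commute.
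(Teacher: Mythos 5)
Your core computation is correct and is essentially the paper's argument (via the cited \cite[Lemma~3.1]{AMHNB}): fix an $R$-linear retraction $\rho\colon S\to R$, set $\hat\delta := \rho\circ\delta\circ\iota$, and show by induction on order that $\hat\delta\in D^n_{R|A}$, using $[\hat\delta,r_0]=\rho\circ[\delta,r_0]\circ\iota$ (which follows from $R$-linearity of $\rho$ and from $\iota$ being a ring map), together with the base case $\hat\delta = \rho(s)\cdot(-)$ when $\delta = s\cdot(-)$. So the induction you sketch is both right and the intended proof.

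Where you go astray is in the closing paragraph, which does not resolve the genuine tension you noticed. Taken literally, the requirement $\delta\circ\varphi=\varphi\circ\hat\delta$ forces $\delta(R)\subseteq R$ and $\hat\delta=\delta|_R$, and this is false for general direct summands: take $R=K[xy]\subseteq S=K[x,y]$ and $\delta=\partial/\partial x$, a first-order operator on $S$ with $\delta(xy)=y\notin R$, although $R$ is a graded $R$-module direct summand of $S$. Your proposed remedy, that ``$\delta\circ\iota=\iota\circ\rho\circ\delta\circ\iota$ holds by splitting,'' is also false: evaluated at $r\in R$ it reads $\delta(r)=\iota\rho(\delta(r))$, which again asserts $\delta(r)\in R$. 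The resolution is that the displayed equality in the definition is not what the proposition, nor its later use (see the proof of Proposition~\ref{behave-diff-powers}(i), which uses $\delta'(f)=\pi(\delta(f))$, not $\delta'(f)=\delta(f)$), actually asserts. What is proved and used is precisely that $\hat\delta:=\rho\circ\delta\circ\iota$ belongs to $D^n_{R|A}$; there is no claim that $\delta$ preserves $R$. Read the definition in that weaker sense --- asking for a retraction $\rho$ with $\rho\circ D^n_{S|A}\circ\varphi\subseteq D^n_{R|A}$ --- and your induction is a complete proof; you should state that reading explicitly and drop the attempt to salvage the commuting square, which cannot be made to hold.
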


\begin{lemma}
	Let $R$ be a polynomial ring over a field $K$, and $S=R/I$ for some ideal $I$. Then the surjection $(R,K)\to (S,K)$ is order-differentially retractable.
\end{lemma}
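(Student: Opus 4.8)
The plan is to exploit that for the polynomial ring $R$ the module of principal parts $\ModDif{n}{R}{K}$ is \emph{free} over $R$, so that every $R$-linear map out of it lifts along any surjection of $R$-modules; the operator $\hat\delta$ is then read off from such a lift via the universal differential.

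First I would record the functoriality. The surjection $\varphi\colon R\to S$ induces a surjective ring homomorphism $R\otimes_K R\to S\otimes_K S$ carrying $\Delta_{R|K}^{n+1}$ onto $\Delta_{S|K}^{n+1}$ (as in Setup~\ref{fin-type}), hence a surjection $q\colon \ModDif{n}{R}{K}\to\ModDif{n}{S}{K}$ which is $R$-linear once $\ModDif{n}{S}{K}$ is regarded as an $R$-module through $\varphi$ and the left factor. Moreover $q\circ d^n_R=d^n_S\circ\varphi$, because both universal differentials send $x$ to the class of $1\otimes x$.

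Next, given $\delta\in D^n_{S|K}$, use Proposition~\ref{representing-differential} (equivalently Proposition~\ref{universaldifferential}) to write $\delta=\phi\circ d^n_S$ for an $S$-linear map $\phi\colon\ModDif{n}{S}{K}\to S$. Then $\phi\circ q\colon\ModDif{n}{R}{K}\to S$ is $R$-linear. Since $\ModDif{n}{R}{K}$ is free on the monomials $y^\lambda$, $\mondeg{\lambda}\le n$ (Corollary~\ref{principalpartsrepresentation} with no relations, cf.\ Example~\ref{example-regular-D}), and $\varphi$ is surjective, I can lift $\phi\circ q$ to an $R$-linear map $\psi\colon\ModDif{n}{R}{K}\to R$ with $\varphi\circ\psi=\phi\circ q$: simply choose, for each basis element, a preimage under $\varphi$ of its image. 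Put $\hat\delta:=\psi\circ d^n_R$, which lies in $D^n_{R|K}$ by Proposition~\ref{universaldifferential}, so it has order at most $n$.

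Finally I would verify the retraction identity: $\varphi\circ\hat\delta=\varphi\circ\psi\circ d^n_R=\phi\circ q\circ d^n_R=\phi\circ d^n_S\circ\varphi=\delta\circ\varphi$, using the compatibility from the first step. As $\delta\in D^n_{S|K}$ was arbitrary, the surjection $(R,K)\to(S,K)$ is order-differentially retractable. There is essentially no obstacle here: the only care needed is in keeping track of the two $R$-module structures on $\ModDif{n}{S}{K}$ and checking $q\circ d^n_R=d^n_S\circ\varphi$, while the decisive input---projectivity of $\ModDif{n}{R}{K}$---is automatic for a polynomial ring.
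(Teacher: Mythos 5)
Your proof is correct. Note that the paper does not actually prove this lemma --- it simply cites \cite[Section 15.5.6]{MCR} as a well-known fact --- so you are supplying an argument where the paper has none; what you give is essentially the standard lifting argument, carried out with the paper's own machinery. Each step checks out: the induced map $q\colon \ModDif{n}{R}{K}\to\ModDif{n}{S}{K}$ is well defined (the image of $\Delta_{R|K}$ lands in $\Delta_{S|K}$, hence of $\Delta_{R|K}^{n+1}$ in $\Delta_{S|K}^{n+1}$), is $R$-linear for the left structures, and satisfies $q\circ d^n_R=d^n_S\circ\varphi$; writing $\delta=\phi\circ d^n_S$ via Proposition~\ref{universaldifferential}, the composite $\phi\circ q$ lifts to $\psi\colon\ModDif{n}{R}{K}\to R$ because $\ModDif{n}{R}{K}$ is a free $R$-module for a polynomial ring; and $\hat\delta=\psi\circ d^n_R$ then has order at most $n$ and satisfies $\varphi\circ\hat\delta=\delta\circ\varphi$. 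Two cosmetic remarks: the appeal to Setup~\ref{fin-type} is tangential (that setup concerns a localized presentation and the bracket powers), and you only need that the image of $\Delta_{R|K}^{n+1}$ is \emph{contained} in $\Delta_{S|K}^{n+1}$, not that $q$ is surjective --- though the surjectivity you assert is true. The decisive input, freeness (projectivity) of the module of principal parts of the polynomial ring, is exactly what makes the retraction exist with no loss of order.
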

\begin{proof} This is a well-known fact; see \cite[Section 15.5.6]{MCR}
\end{proof}

\subsection{Applications to Bernstein-Sato polynomials and differential powers}

The property of differential extensibility has interesting consequences for differential invariants of rings. For example, this notion has strong implications for Bernstein-Sato polynomials. We first recall the following definition.

\begin{definition}\label{def-BS-poly}
	Let $R$ be a ring, $A$ a subring, and $f$ be an element of $R$. We say that the polynomial $b^{R|A}_{f}(s)\in A[s]$ is the \emph{Bernstein-Sato polynomial of $f$}\index{Bernstein-Sato polynomial}\index{$b^{R \vert A}_{f}(s)$}  {if $b_f$ is monic,} there exists a differential operator $\delta\in D_{R|A}$ such that, for $b(t)=b^{R|A}_{f}(t)$,
		\begin{equation}\tag{$\dagger$}\label{BS-equation} \delta\cdot f^{t+1}=b(t) f^t \quad \text{ for all } \quad t\in \ZZ,
		\end{equation}
 and for any pair $\delta$ and $b$ that satisfy~(\ref{BS-equation}), $b^{R|A}_{f}(s)$ divides $b$. If there do not exist $\delta$ and  {nonzero} $b$ that satisfy~(\ref{BS-equation}), we say that $f$ has no Bernstein-Sato polynomial.
\end{definition}

It is well-known that Bernstein-Sato polynomials exist for every element of $S$ when $A=K$ is a field of characteristic zero and $S=K[x_1,\dots,x_n]$ \cite{BernsteinPoly,SatoPoly}. Recently, it was shown that when $R$ is a direct summand of a polynomial ring $S$, then $b^{R|K}_{f}(s)$ exists and $b^{R|K}_{f}(s)\;\big|\;b^{S|K}_{f}(s)$ for any $f\in R$, and examples are given where  {$b^{R|K}_{f}(s)\neq b^{S|K}_{f}(s)$ \cite[Example~3.17]{AMHNB}}. The following result establishes a case where the equality is reached.

\begin{theorem}\label{BS-Thm} Let $A\subseteq R\subseteq S$ be rings, $f$ be an element of $R$, and suppose that $R$ is a direct summand of $S$ that is differentially extensible over $A$. If $b^{S|A}_{f}(s)$ exists, then $b^{R|A}_{f}(s)$ exists and $b^{R|A}_{f}(s)=b^{S|A}_{f}(s)$.
\end{theorem}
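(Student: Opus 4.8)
The strategy is to exploit the differential extensibility of $R\subseteq S$ together with the splitting $\pi\colon S\to R$ to transfer the Bernstein--Sato functional equation back and forth between $R$ and $S$. First I would show that $b^{R|A}_f(s)$ divides $b^{S|A}_f(s)$: given a differential operator $\delta\in D_{S|A}$ satisfying $\delta\cdot f^{t+1}=b^{S|A}_f(t)f^t$ for all $t\in\ZZ$, compose with $\pi$ and restrict to $R$. Since $\pi$ is $R$-linear and fixes $R$ pointwise, for $f\in R$ one has $\pi(f^{t+1})=f^{t+1}$, but this is not quite enough because $\delta$ need not send powers of $f$ into $R$. The correct move is to note that the composite $\pi\circ\delta\circ\iota$ (where $\iota\colon R\hookrightarrow S$) need not be a differential operator on $R$ in general; instead, the cleanest route is to observe that $\delta\cdot f^{t+1}$ already equals $b^{S|A}_f(t)f^t$, an element of $R$, so applying $\pi$ gives $\pi(\delta\cdot f^{t+1})=b^{S|A}_f(t)f^t$ as well. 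One then checks that $f\mapsto \pi(\delta(f\cdot(-)))$ restricted appropriately is governed by an honest operator in $D_{R|A}$ — here I would invoke that $R$ is a direct summand of $S$, which by the proposition of \`Alvarez-Montaner--Huneke--N\'u\~nez-Betancourt (quoted just above as differential retractability of summand inclusions) gives, for $\pi\circ\delta\in D_{S|A}(S,R)$, a retraction $\widehat{\pi\circ\delta}\in D_{R|A}$ agreeing with it on $R$. Applying this retract to the equation $\delta\cdot f^{t+1}=b^{S|A}_f(t)f^t$ yields $\widehat{\pi\circ\delta}\cdot f^{t+1}=b^{S|A}_f(t)f^t$ in $R$ for all $t$, so $b^{R|A}_f(s)$ exists and divides $b^{S|A}_f(s)$.

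For the reverse divisibility, I would use differential extensibility directly: suppose $\eta\in D_{R|A}$ realizes $\eta\cdot f^{t+1}=b^{R|A}_f(t)f^t$ for all $t\in\ZZ$. By hypothesis there is $\widetilde\eta\in D_{S|A}$ with $\widetilde\eta|_R=\eta$. Then for every $t\in\ZZ$ the element $f^{t+1}$ lies in $R$ (for $t\ge -1$ directly, and for $t<-1$ in the localization $R_f\subseteq S_f$, where the extended operator still restricts correctly — one should check that extensibility over $A$ for $D_{R|A}\subseteq D_{S|A}$ combined with Proposition~\ref{localization1}-type compatibility passes to $R_f\subseteq S_f$, or simply argue the equation for $t\ge 0$ and use that the Bernstein--Sato polynomial is determined by the functional equation over nonnegative $t$ as in the standard theory). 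Thus $\widetilde\eta\cdot f^{t+1}=\eta\cdot f^{t+1}=b^{R|A}_f(t)f^t$, which shows $b^{S|A}_f(s)$ divides $b^{R|A}_f(s)$. Combining the two divisibilities and using that both polynomials are monic gives $b^{R|A}_f(s)=b^{S|A}_f(s)$.

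**Main obstacle.** The delicate point is the handling of \emph{negative} exponents $t$ and the passage to localizations at $f$. The functional equation $(\dagger)$ is quantified over all $t\in\ZZ$, so strictly one works inside $R_f$ or $S_f$; I must make sure that differential extensibility of $R\subseteq S$ over $A$ is compatible with inverting $f$, i.e.\ that $\widetilde\eta$ extends the localized action of $\eta$ on $R_f$, and likewise that the retraction argument for the summand inclusion survives localization. The former follows from the uniqueness of extension of a differential operator to a localization (Proposition~\ref{localization1}, noting $R\to R_f$ and $S\to S_f$ are localizations and the relevant modules of principal parts are finitely presented in the cases of interest), and the latter from the fact that $R_f$ remains a direct summand of $S_f$ via the localized $\pi$. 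A secondary subtlety is ensuring that the retract $\widehat{\pi\circ\delta}$ of the order-differentially-retractable inclusion genuinely satisfies the functional equation and not merely a divisibility up to a spurious factor; this is immediate once one writes $\widehat{\pi\circ\delta}(f^{t+1})=(\pi\circ\delta)(f^{t+1})=\pi(b^{S|A}_f(t)f^t)=b^{S|A}_f(t)f^t$, the last step since $b^{S|A}_f(t)f^t\in R$ and $\pi|_R=\mathrm{id}$. With these compatibilities in place the proof is short; the rest is bookkeeping.
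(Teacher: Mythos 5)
Your proposal is essentially the paper's proof: the paper obtains existence together with the divisibility $b^{R|A}_{f}(s)\mid b^{S|A}_{f}(s)$ by citing \`Alvarez-Montaner--Huneke--N\'u\~nez-Betancourt (Theorem~3.14 of \cite{AMHNB}), whose underlying mechanism is exactly the retraction $\pi\circ\delta\circ\iota$ you reconstruct, and it gets the reverse divisibility just as you do, by extending an operator realizing the functional equation for $b^{R|A}_{f}$ to $D_{S|A}$ via differential extensibility and reading the same equation in $S$. Your added care about negative exponents, i.e.\ checking that extensions and retractions of operators are compatible with inverting $f$ so the equation makes sense in $R_f\subseteq S_f$, addresses a point the paper passes over silently, and your treatment of it is sound.
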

\begin{proof} We know that the Bernstein-Sato polynomial $b^{R|A}_{f}(s)$ exists and $b^{R|A}_{f}(s)\;\big|\;b^{S|A}_{f}(s)$ \cite[Theorem~3.14]{AMHNB}, so it suffices to show that $b^{S|A}_{f}(s)\;\big|\;b^{R|A}_{f}(s)$. Choose a differential operator $\delta\in D_{R|A}$ that satisfies the equation~(\ref{BS-equation}) for $b=b^{R|A}_{f}$ as an equation in $R$. Replacing $\delta$ by an extension to $D_{S|A}$, we may view this as an equation in $S$. It then follows from the defintion of $b^{S|A}_{f}(s)$ that $b^{S|A}_{f}(s)\;\big|\;b^{R|A}_{f}(s)$.
	\end{proof}

\begin{remark}\label{Rem-com-BS}
Let $K$ be a field of characteristic zero,  $S=K[x_1,\ldots,x_d]$, and $R\subseteq S$ be a $K$-subalgebra that is a direct summand of $S$. Suppose that the inclusion of $R$ into $S$ is differentially extensible over $K$. Given the previous result,  one can compute $b^{R|K}_f(s)$ by using the existing methods \cite{Noro,Oaku} to compute $b^{S|K}_f(s)$.
\end{remark}

The following propositions show the utility of the notion of extensibility in computing differential powers.

\begin{proposition}\label{behave-diff-powers} Let $A \subseteq R \subseteq S$ be rings. Let $I\subseteq R$ and $J\subseteq S$ be ideals.
\begin{enumerate}
\item[(i)] For any $R$-linear map $\pi:S\rightarrow R$, if $\pi^{-1}(I)\subseteq J$, one has $I\dif{n}{A} \subseteq J\dif{n}{A} \cap R$.
\item[(ii)] If the inclusion of $R$ into $S$ is order-differentially extensible and $I \supseteq J\cap R$, then $I\dif{n}{A} \supseteq J\dif{n}{A} \cap R$.
\end{enumerate}
\end{proposition}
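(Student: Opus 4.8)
The plan is to verify each containment from the definitions, mirroring the structure used throughout the paper for differential powers.

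For part (i): Let $f \in I\dif{n}{A}$, so $\delta(f) \in I$ for every $\delta \in D^{n-1}_{R|A}$. I want to show $f \in J\dif{n}{A} \cap R$; since $f \in R \subseteq S$, it remains to show $\delta'(f) \in J$ for every $\delta' \in D^{n-1}_{S|A}$. The natural move is to use the $R$-linear map $\pi : S \to R$. Given such a $\delta'$, I would like to produce from it an operator on $R$; the composite $\pi \circ \delta'|_R : R \to R$ is $A$-linear, and a routine check (using that $\pi$ is $R$-linear, hence $D^0$, and that $D^0_{S|A}(S,R) \circ D^{n-1}_{S|A} \subseteq D^{n-1}_{S|A}(S,R)$) shows $\pi \circ \delta'|_R \in D^{n-1}_{R|A}$. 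Actually one should be slightly careful: the cleanest argument is that restriction of $\delta'$ to $R$ lands in $D^{n-1}_{S|A}(R,S)$, and postcomposing with the $R$-linear (order-zero) map $\pi$ gives an element of $D^{n-1}_{R|A}(R,R) = D^{n-1}_{R|A}$, using the composition rule $D^m_{R|A}(N,L) \circ D^{n}_{S|A}(M,N) \subseteq D^{m+n}_{S|A}(M,L)$ together with $A \subseteq R$. Then $\pi(\delta'(f)) = (\pi \circ \delta'|_R)(f) \in I$. Since $\pi^{-1}(I) \subseteq J$, this forces $\delta'(f) \in J$. Hence $f \in J\dif{n}{A}$.

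For part (ii): Let $f \in J\dif{n}{A} \cap R$, so $f \in R$ and $\delta'(f) \in J$ for every $\delta' \in D^{n-1}_{S|A}$. I want $\delta(f) \in I$ for every $\delta \in D^{n-1}_{R|A}$. By order-differential extensibility of $R \subseteq S$, each such $\delta$ extends to some $\tilde\delta \in D^{n-1}_{S|A}$ with $\tilde\delta|_R = \delta$. Then $\delta(f) = \tilde\delta(f) \in J$. But $\delta(f) \in R$ as well, since $\delta$ is an operator on $R$ and $f \in R$. So $\delta(f) \in J \cap R \subseteq I$. Hence $f \in I\dif{n}{A}$, giving $J\dif{n}{A} \cap R \subseteq I\dif{n}{A}$.

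I do not anticipate a serious obstacle here; the only point requiring a little care is the bookkeeping in part (i) that $\pi \circ \delta'|_R$ genuinely has order $\le n-1$ as an operator in $D_{R|A}$ — this is where one must invoke the composition property of differential operators between modules (as stated in the paper's definition, the submodules $D^n_{R|A}(M,N)$) rather than just the ring-level statement $D^m_{R|A}D^n_{R|A} \subseteq D^{m+n}_{R|A}$. Everything else is a direct unwinding of the definition of $I\dif{n}{A}$.
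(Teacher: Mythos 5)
Your argument is correct and matches the paper's proof in substance: part (i) relies on the fact that precomposing a $\delta' \in D^{n-1}_{S|A}$ with the inclusion $R \hookrightarrow S$ and postcomposing with the $R$-linear retraction $\pi$ produces an operator in $D^{n-1}_{R|A}$, and part (ii) uses order-differential extensibility to lift a witness; the only difference is that you argue the containments directly while the paper phrases both parts as contrapositives, and the paper simply cites a reference for the composition fact in (i) that you instead sketch.
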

\begin{proof}
\begin{enumerate}
\item[(i)] We show that if $f \in R\setminus J\dif{n}{A}$, then $f \notin I\dif{n}{A}$. Suppose that $f\in R$ is not in $J\dif{n}{A}$. Then there is a differential operator $\delta\in D^{n-1}_{S|A}$ such that $\delta(f)\notin J$. By precomposing with the inclusion and postcomposing with $\pi$, one gets a differential operator $\delta' \in D^{n-1}_{R|A}$; see \cite[Proof of Proposition 3.1]{DModFSplit}. Then, since $\delta'(f)=\pi(\delta(f))\notin I$, it follows that $f \notin I\dif{n}{A}$.
\item[(ii)] Suppose that $f \notin I \dif{n}{A}$. Then, there is a differential operator $\delta\in D^{n-1}_{R|A}$ with $\delta(f)\notin I$. Since the inclusion is order-differentially extensible over $A$, there is a differential operator $\delta'\in D^{n-1}_{S|A}$ with $\delta'(f)\in (R\setminus I) \subseteq (S\setminus J)$. Thus, $f\notin J\dif{n}{A}$.\qedhere
\end{enumerate}
\end{proof}

\begin{proposition}\label{intersect-maximal} Let $A \subseteq R \subseteq S$ be rings, with $(R,\m)$ and $(S,\n)$ local. Suppose that the inclusion of $R$ into $S$ is local, admits an $R$-linear splitting, and is order-differentially extensible over $A$. Then, $\m\dif{n}{A}=\n\dif{n}{A}\cap R$. If one removes the hypothesis that the inclusion is order-differentially extensible, then one still has that $\m\dif{n}{A}\subseteq \n\dif{n}{A}\cap R$.
\end{proposition}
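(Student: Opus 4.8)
The plan is to prove the two containments separately. Only the inclusion $\m\dif{n}{A}\subseteq\n\dif{n}{A}\cap R$ is needed in the general case, and it uses just that $R\subseteq S$ is a local inclusion admitting an $R$-linear splitting; the reverse inclusion is where order-differential extensibility enters, via Proposition~\ref{behave-diff-powers}(ii).

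First I would handle $\m\dif{n}{A}\subseteq\n\dif{n}{A}$, the intersection with $R$ being automatic since $\m\dif{n}{A}\subseteq R$. Fix an $R$-linear splitting $\pi\colon S\to R$ of the inclusion $\iota\colon R\hookrightarrow S$ and argue the contrapositive: given $f\in R$ with $f\notin\n\dif{n}{A}$, choose $\delta\in D^{n-1}_{S|A}$ with $\delta(f)\notin\n$. Since $S$ is local, $u:=\delta(f)$ is a unit, so after composing $\delta$ with multiplication by $u^{-1}$ (an operator of order $0$) we may assume $\delta(f)=1$. The composite $\pi\circ\delta\circ\iota$ then lies in $D^{n-1}_{R|A}$: indeed $\iota\in D^{0}_{R|A}(R,S)$ and $\pi\in D^{0}_{R|A}(S,R)$, one has $D^{n-1}_{S|A}\subseteq D^{n-1}_{R|A}(S,S)$ by a short induction on $n$, and the composition rule $D^{b}_{R|A}(M,N)\circ D^{a}_{R|A}(L,M)\subseteq D^{a+b}_{R|A}(L,N)$ follows from the commutator formalism (cf. Lemma~\ref{diff-ann}). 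This composite sends $f$ to $\pi(1)=1\notin\m$, so $f\notin\m\dif{n}{A}$, as desired.

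For the reverse inclusion I would simply invoke Proposition~\ref{behave-diff-powers}(ii) with $I=\m$ and $J=\n$: locality of $R\subseteq S$ gives $\n\cap R=\m$, so in particular $\n\cap R\subseteq\m$, and order-differential extensibility is exactly the remaining hypothesis of that proposition, which then yields $\m\dif{n}{A}\supseteq\n\dif{n}{A}\cap R$. Combining the two containments gives $\m\dif{n}{A}=\n\dif{n}{A}\cap R$.

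The one point that requires care, and the reason the first containment is not a verbatim application of Proposition~\ref{behave-diff-powers}(i), is that an arbitrary $R$-linear splitting $\pi$ need not satisfy $\pi^{-1}(\m)\subseteq\n$: for instance the $\CC$-linear splitting $\CC[\varepsilon]/(\varepsilon^{2})\to\CC$, $a+b\varepsilon\mapsto a+b$, sends the unit $1-\varepsilon$ to $0$. Normalizing $\delta$ so that $\delta(f)=1$ before composing with $\pi$ removes this difficulty, and this step is precisely where the locality of $S$ is used. The remaining ingredients — the operator-composition bookkeeping and the inclusion $D^{n-1}_{S|A}\subseteq D^{n-1}_{R|A}(S,S)$ — are routine, so I do not expect any further obstacle.
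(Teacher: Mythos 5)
Your proof is correct, and it follows the paper's route in outline—Proposition~\ref{behave-diff-powers}(ii) for the reverse inclusion and a push-down through the splitting for the forward inclusion—but with an extra normalization step that the paper's own proof in fact needs and omits. The paper asserts that Proposition~\ref{behave-diff-powers}(i) applies with $I=\m$, $J=\n$, which requires $\pi^{-1}(\m)\subseteq\n$, equivalently $\ker\pi\subseteq\n$; as you correctly observe, this is not automatic for an arbitrary $R$-linear splitting. Your $\CC\hookrightarrow\CC[\varepsilon]/(\varepsilon^{2})$ example is right, and one can go further: for $\RR\hookrightarrow\CC[\varepsilon]/(\varepsilon^{2})$ no $\RR$-linear splitting whatsoever has $\ker\pi\subseteq\n$, since any $R$-complement of $R$ in $S$ is three-dimensional over $\RR$ while $\n$ is only two-dimensional. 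Your device of first replacing $\delta$ by $\delta(f)^{-1}\cdot\delta$—legitimate because $D^{n-1}_{S|A}$ is an $S$-module under postcomposition and $\delta(f)$ is a unit by locality of $S$—removes any dependence on a special property of the splitting, and it is precisely the missing step: with $\delta(f)=1$ one has $\pi(\delta(f))=\pi(1)=1\notin\m$ for every splitting $\pi$. The remaining bookkeeping ($\iota\in D^{0}_{R|A}(R,S)$ and $\pi\in D^{0}_{R|A}(S,R)$, the inclusion $D^{n-1}_{S|A}\subseteq D^{n-1}_{R|A}(S,S)$ by induction on the order, additivity of orders under composition) and the appeal to Proposition~\ref{behave-diff-powers}(ii) with $\n\cap R=\m$ are both correct. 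So your proposal is not merely a valid alternative; it is the careful version of the paper's intended argument, and it makes transparent exactly where the locality of $S$ enters.
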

\begin{proof}
If $\pi$ is a splitting of the inclusion and the inclusion is differentially extensible, the hypotheses of both parts of the previous proposition are satisfied for $I=\m$, $J=\n$. For the second statement, the first part of the previous proposition applies.
\end{proof}

Since direct summands of regular rings are strongly $F$-regular \cite{HHStrongFreg}, they also have positive $F$-signature \cite{AL}. We show a similar result for differential signature.

\begin{theorem}\label{ThmDirSumPos} Let $(R,\m,K)$ be an algebra with pseudocoefficient field $K$ that is a direct summand of a regular ring $(S,\n,K)$ that is an algebra with pseudocoefficient field $K$. Then $\dm{K}(R)>0$.
\end{theorem}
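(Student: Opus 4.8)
The plan is to bound $\pps{K}(R)$ from below and then invoke Theorem~\ref{ThmDiffSigRanks}; that theorem applies since $R$ is a domain (a subring of the domain $S$) and $\Frac(R)/K$ is separable (as $R\subseteq S$ and $S$ is smooth over $K$). Since $\Rank_R(\ModDif{n}{R}{K})=\binom{n+d}{d}$ by Proposition~\ref{RankDiff} (the residue field of $R$ being finite over $K$), it suffices to find $c>0$ with $\frk_R(\ModDif{n}{R}{K})\ge c\,n^{d}$ for $n\gg 0$, i.e.\ (Remark~\ref{localunitaryoperators}) to produce at least $c\,n^{d}$ independent unitary operators of order $\le n$ on $R$. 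The mechanism is to transport differential operators from the regular ring $S$, which has a maximal supply of them: writing $\n=(y_1,\dots,y_m)$ for a regular system of parameters, Example~\ref{example-regular-D} gives $\tfrac{1}{\alpha!}\partial_y^{\alpha}\in D^{|\alpha|}_{S|K}$ for every $\alpha\in\NN^{m}$. Fix an $R$-linear splitting $\pi\colon S\to R$ of the inclusion $\iota$ (homogeneous of degree $0$ in the graded case) and set $\hat\delta_{\alpha}:=\pi\circ\bigl(\tfrac{1}{\alpha!}\partial_y^{\alpha}\bigr)\circ\iota$; since pre- and post-composing a differential operator with $R$-linear maps ($\iota$ being a ring map) does not raise the order, an induction on $|\alpha|$ gives $\hat\delta_{\alpha}\in D^{|\alpha|}_{R|K}$.

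First I would treat the graded case, where $S=K[y_1,\dots,y_m]$, $S_0=R_0=K$, and $\hat\delta_{\alpha}$ is homogeneous of negative degree. For each $\ell\ge 0$ and $n\ge\ell$, consider the $K$-linear map $\operatorname{span}_K\{\tfrac{1}{\alpha!}\partial_y^{\alpha}:\ \tfrac{1}{\alpha!}\partial_y^{\alpha}\text{ is homogeneous of degree }-\ell\}\to (Q_{n})_{-\ell}$, $\delta\mapsto[\hat\delta]$, where $Q_{n}=D^{n}_{R|K}/D^{n}_{R|K}(R,\m)$ is as in Remark~\ref{localunitaryoperators} (such $\delta$ have order $\le\ell\le n$, so this makes sense). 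Since $\partial_y^{\alpha}(y^{\beta})$ is a nonzero multiple of the Kronecker delta when $y^{\alpha},y^{\beta}$ have the same degree $\ell$, these operators form a $K$-basis of $\Hom_K(S_\ell,K)$; because $\pi$ fixes $S_0=K$ pointwise, one checks that $[\hat\delta]=0$ exactly when $\delta$ vanishes on the subspace $R_\ell\subseteq S_\ell$, so the image of the map has dimension $\dim_K R_\ell$. As the pieces $(Q_{n})_{-\ell}$ for distinct $\ell$ are $K$-linearly independent,
\[\frk_R(\ModDif{n}{R}{K})=\dim_K Q_{n}\ \ge\ \sum_{\ell=0}^{n}\dim_K R_\ell\ =\ \dim_K R_{\le n}.\]
Since $R$ is a graded domain of dimension $d$, $\dim_K R_{\le n}$ grows like a positive multiple of $n^{d}$; dividing by $\binom{n+d}{d}$ and taking $\limsup$ gives $\dm{K}(R)=\pps{K}(R)>0$. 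This is the lower bound "of a similar shape" to Lemma~\ref{gradedsignature}.

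For the local case I would complete, using $\dm{K}(R)=\dm{K}(\widehat R)$ (Proposition~\ref{diff-sig-completion}), so that $\widehat R$ is a direct summand of the complete regular local ring $\widehat S\cong K[[y_1,\dots,y_m]]$ with $K$ a common coefficient field, and rerun the argument with the $\n$-adic order replacing the degree: if $\ord_{\n}(r)=\ell$ and $\alpha$ is chosen so that $\partial_Y^{\alpha}$ does not kill the leading form of $r$ in $\gr_\n(\widehat S)$, then $\tfrac{1}{\alpha!}\partial_y^{\alpha}(r)$ is a unit of $\widehat S$, and $\hat\delta_{\alpha}(r)=\pi\bigl(\tfrac{1}{\alpha!}\partial_y^{\alpha}(r)\bigr)$ should be a unit of $\widehat R$; granting this, the same counting yields $\frk_{\widehat R}(\ModDif{n}{\widehat R}{K})\ge\lambda_{\widehat R}\!\bigl(\widehat R/(\n^{\,n+1}\cap\widehat R)\bigr)$, which is the Hilbert function of the tangent cone $\operatorname{in}_{\n}(\widehat R)\subseteq\gr_{\n}(\widehat S)$ and again grows like $c\,n^{d}$. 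An equivalent packaging is to invoke Proposition~\ref{intersect-maximal}, which already gives $\m\dif{n}{K}\subseteq\n\dif{n}{K}\cap\widehat R=\n^{n}\cap\widehat R$ (using $\n\dif{n}{K}=\n^{n}$ from Example~\ref{diff-powers-regular}), reducing the whole theorem to the estimate $\lambda_{\widehat R}\bigl(\widehat R/(\n^{n}\cap\widehat R)\bigr)\gtrsim c\,n^{d}$.

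The main obstacle is precisely this last point in the non-graded case: one must know that $\n^{n}\cap\widehat R$ is not "too large", equivalently that the retraction $\pi$ can be chosen compatible with the $\n$-adic filtration (equivalently again, that $\operatorname{in}_{\n}(\widehat R)$ has Krull dimension $d$, or is itself a direct summand of the polynomial ring $\gr_{\n}(\widehat S)$). In the graded case this difficulty is invisible because $\pi$ fixes $K=S_0$ pointwise, so no correction terms intrude; the remedy in the local case is to take $\pi$ to be a local homomorphism — exactly the standing "the inclusion is local" hypothesis in Proposition~\ref{intersect-maximal} — so that it respects orders, after which the tangent-cone Hilbert function argument goes through.
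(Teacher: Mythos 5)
Your graded-case argument is fine, and in substance your route is the paper's: transporting operators through the splitting is just another way of proving the containment $\m\dif{n}{K}\subseteq \n\dif{n}{K}\cap R=\n^{n}\cap R$ that the paper gets from Proposition~\ref{intersect-maximal} together with $\n\dif{n}{K}=\n^{n}$ for the regular ring $S$. In the graded setting the rest is immediate because $\n^{n}\cap R\subseteq R_{\geq n}$ (degrees are preserved), so the length of $R/\m\dif{n}{K}$ is at least $\dim_K R_{<n}\sim c\,n^{d}$.

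The genuine gap is in the local case, and it is exactly the point you flag at the end: after reducing to $\m\dif{n}{K}\subseteq \n^{n}\cap \widehat R$, you still must show that $\lambda_{\widehat R}\bigl(\widehat R/(\n^{n}\cap\widehat R)\bigr)$ grows like $n^{d}$, i.e.\ that the filtration $\{\n^{n}\cap\widehat R\}$ is not drastically coarser than $\{\m^{n}\}$. Your proposed remedy — choose the splitting $\pi$ to be local so that it ``respects orders'' — does not deliver this: locality of $\pi$ only means $\pi(\n)\subseteq\m$ (which you already need just to send units to units), and it gives no control of the form $\pi(\n^{n})\subseteq\m^{\Omega(n)}$, nor does it show that the tangent cone $\operatorname{in}_{\n}(\widehat R)\subseteq\gr_{\n}(\widehat S)$ has dimension $d$. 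A priori, Chevalley's lemma only gives that for each $m$ there is some $n(m)$ with $\n^{n(m)}\cap\widehat R\subseteq\m^{m}$, and if $n(m)$ grows superlinearly the length function above would have degree $<d$ and the argument would collapse; asserting the tangent-cone Hilbert function ``again grows like $c\,n^{d}$'' is precisely the statement that needs proof. The paper closes this gap by quoting H\"ubl's linear comparison theorem \cite{Hubl-completions}: there is a constant $C$ with $\n^{n}\cap R\subseteq\m^{\lfloor n/C\rfloor}$, whence
\[
\lambda_R\bigl(R/\m\dif{n}{K}\bigr)\ \geq\ \lambda_R\bigl(R/\m^{\lfloor n/C\rfloor}\bigr)\ \sim\ \frac{e(R)}{C^{d}}\cdot\frac{n^{d}}{d!},
\]
so $\dm{K}(R)\geq e(R)/C^{d}>0$. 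To repair your write-up, replace the ``take $\pi$ local'' step by an appeal to this (or an Izumi/Rees-type) linear equivalence of topologies; everything else you wrote then goes through.
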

\begin{proof} By Proposition~\ref{intersect-maximal}, we have that $\m\dif{n}{K} \subseteq \n^n\cap R$. There is a constant $C$ such that $\n^n \cap R \subseteq \m^{\lfloor n/C \rfloor}$ \cite[Theorem~1]{Hubl-completions}. Then, setting $d=\dim(R)$,
\begin{align*}
\dm{K}(R)&=\limsup_{n\rightarrow\infty}\frac{\lambda_R(R/\m\dif{n}{K})}{n^d / d!}\\
&  \geq \limsup_{n\rightarrow\infty}\frac{\lambda_R(R/\m^{\lfloor n/C \rfloor})}{n^d / d!}\\
&  = \limsup_{n\rightarrow\infty}\frac{\lambda_R(R/\m^n)}{(Cn)^d / d!} = \frac{e(R)}{C^d},
\end{align*}
which is positive, as required.
\end{proof}

We conclude this section by noting that differential signature has a very simple interpretation for order-differentially extensible direct summands of polynomial rings.

\begin{theorem}\label{COMPUT-prop}
	Let $S$ be a standard graded polynomial ring over a field $K$, and $R$ be a graded subring such that $R\rightarrow S$ is split and differentially extensible. Then, the differential powers of $\m =R_+$ form a graded system and  $R\cong \bigoplus_{n\in \NN} \frac{\m\dif{n}{K}}{\m\dif{n+1}{K}}$. In particular, the gradings agree. Consequently, the differential signature of $R$ converges as a limit to a rational number, which is the degree of the graded ring $R$.
	
	If, in addition, $R$ is generated in a single degree $t$, then $\displaystyle \dm{K}(R)=\frac{e(R)}{t^{\dim(R)}}$.
\end{theorem}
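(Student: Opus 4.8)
The plan is to make the differential powers $\m\dif{n}{K}$ completely explicit; once that is done, every assertion reduces to bookkeeping with graded Hilbert functions. Observe first that $R$, being an $R$-module direct summand of the Noetherian ring $S$, is itself Noetherian (for an ideal $I\subseteq R$ one has $IS\cap R=I$, so ACC descends from $S$ to $R$), hence a finitely generated graded $K$-algebra with $R_0=K$. Since $S$ is a standard graded polynomial ring it is smooth over $K$, so $\n\dif{n}{K}=\n^n=S_{\geq n}$ by Example~\ref{diff-powers-regular}. We may choose the splitting $\pi\colon S\to R$ to be graded (replace $\pi$ by its degree-preserving component, which still restricts to the identity on $R$), whence $\pi^{-1}(\m)=\n$; then Proposition~\ref{behave-diff-powers}(i) gives $\m\dif{n}{K}\subseteq\n\dif{n}{K}\cap R$. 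For the opposite inclusion I would invoke Proposition~\ref{behave-diff-powers}(ii) with $\m=\n\cap R$, using that the inclusion $R\subseteq S$ is (order-)differentially extensible. Combining the two, $\m\dif{n}{K}=\n\dif{n}{K}\cap R=S_{\geq n}\cap R=R_{\geq n}$, where $R_{\geq n}:=\bigoplus_{j\geq n}R_j$ and the last equality is because $R$ is a graded subring of $S$.

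From $\m\dif{n}{K}=R_{\geq n}$ everything else follows formally. First, $\m\dif{a}{K}\m\dif{b}{K}=R_{\geq a}R_{\geq b}\subseteq R_{\geq a+b}=\m\dif{a+b}{K}$, so the differential powers of $\m$ form a graded family. Second, $\m\dif{n}{K}/\m\dif{n+1}{K}=R_{\geq n}/R_{\geq n+1}\cong R_n$, and the induced multiplication on $\bigoplus_{n}\m\dif{n}{K}/\m\dif{n+1}{K}$ is exactly the original $R_a\times R_b\to R_{a+b}$, so $\bigoplus_{n}\m\dif{n}{K}/\m\dif{n+1}{K}\cong R$ as graded rings, with the degree-$n$ summand on the left corresponding to $R_n$; in particular the gradings agree.

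For convergence, rationality, and the value: since $R_0=K$ we have $\lambda_R(R/\m\dif{n}{K})=\dim_K(R/R_{\geq n})=\sum_{j=0}^{n-1}\dim_K R_j$, the sum transform of the Hilbert function of $R$. As $R$ is a domain (a subring of a polynomial ring) and the differential powers form a graded family, the limit defining $\dm{K}(R)$ exists, as in the Cutkosky-type proposition preceding Example~\ref{example-not-D-graded}; its value is read off from the Hilbert series $H_R(\mathbf t)=Q(\mathbf t)/\prod_i(1-\mathbf t^{b_i})$, whose pole at $\mathbf t=1$ has order $d=\dim R$, so $\sum_{j<n}\dim_K R_j=\tfrac{e_0}{d!}n^d+O(n^{d-1})$ with $e_0=\lim_{\mathbf t\to 1}(1-\mathbf t)^dH_R(\mathbf t)\in\QQ_{>0}$, the degree of the graded ring $R$; hence $\dm{K}(R)=e_0$. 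If moreover $R$ is generated in a single degree $t$, then $R_j=0$ for $t\nmid j$, and the regrading $\deg'=\tfrac1t\deg$ makes $R$ into a standard graded ring $R'$ with $R'_m=R_{tm}$, $\dim R'=d$, and $e(R')=e(R)$ (both are recovered from $\lambda(R/\m^n)=\sum_{m<n}\dim_K R'_m$, using $\m^n=R_{\geq tn}$). Then
\[\lambda_R(R/\m\dif{n}{K})=\sum_{m<\lceil n/t\rceil}\dim_K R'_m=\lambda_{R'}\!\big(R'/(R'_+)^{\lceil n/t\rceil}\big)=\frac{e(R')}{d!}\Big(\frac nt\Big)^{\!d}+O(n^{d-1}),\]
by ordinary Hilbert--Samuel theory for the standard graded ring $R'$, and dividing by $n^d/d!$ yields $\dm{K}(R)=e(R')/t^d=e(R)/t^{\dim R}$ (so in fact $e_0=e(R)/t^{\dim R}$, consistent with the previous sentence).

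The only genuinely delicate point is the identification of $\m\dif{n}{K}$: the containment $\m\dif{n}{K}\subseteq R_{\geq n}$ holds from the splitting alone (and would already give a one-sided bound on $\dm{K}(R)$), but the reverse inclusion — and hence the exact equality $\m\dif{n}{K}=\n\dif{n}{K}\cap R$ on which the whole formula hinges — rests squarely on order-differential extensibility feeding Proposition~\ref{behave-diff-powers}(ii). Everything after that is routine manipulation of graded Hilbert functions.
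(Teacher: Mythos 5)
Your argument is correct and follows essentially the same route as the paper: the paper identifies $\m\dif{n}{K}=\n\dif{n}{K}\cap R=[R]_{\geq n}$ in one stroke via Proposition~\ref{intersect-maximal} (which is precisely the combination of the two parts of Proposition~\ref{behave-diff-powers} that you invoke separately, with the same appeal to the splitting and to order-differential extensibility), and then reads off convergence, rationality, and the value $e(R)/t^{\dim R}$ from graded Hilbert-function theory exactly as you do, only more tersely. Your extra details (choosing a graded splitting, the quasi-polynomial estimate for non-standard gradings, the regrading argument in the single-degree case) simply flesh out what the paper leaves as standard, and your side remark via Cutkosky is redundant once $\m\dif{n}{K}=R_{\geq n}$ is known.
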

\begin{proof}
	By Proposition~\ref{intersect-maximal}, we have that $(R_+)\dif{n}{K}=(S_+) \dif{n}{K}\cap R = [R]_{\geq n}$ for all $n$. Thus, $\cG_n=[R]_{\geq n} / [R]_{\geq n+1}$ for all $n$. Then, since $R$ is a Noetherian graded ring, its degree is a rational number.
	
	In the case $R$ is generated in a single degree $d$, the stated formula is the degree of $R$ as a graded ring.
\end{proof}

For the most general results on which inclusions of invariant rings are differentially extensible, we refer the reader to the work of Schwarz \cite{Schwarz}.

%%%%%%%%%%%%%%%%%%%%%%%%%%%%%%%%%%%%%%%
%%%%%%%%%%%%%%%%%%%%%%%%%%%%%%%%%%%%%%%
\section{Differential signature of certain invariant rings}
%%%%%%%%%%%%%%%%%%%%%%%%%%%%%%%%%%%%%%%
%%%%%%%%%%%%%%%%%%%%%%%%%%%%%%%%%%%%%%%

In this section, we apply Proposition~\ref{COMPUT-prop} to compute differential signature for certain rings of invariants.

\subsection{Formula for invariant rings under the action of a finite group}

We now compute the differential signature for rings associated to finite groups.
In this case, we obtain that $\dm{K}(R^G)=s(R^G).$

\begin{theorem}\label{group-formula} Let $G$ be a finite group such that $|G| \in K^\times$, and let $G$ act linearly on a polynomial ring $R$ over $K$. Suppose that $G$ contains no elements that fix a hyperplane in the space of one-forms $[R]_1$. Then $\dm{K}(R^G)=1/|G|$.
\end{theorem}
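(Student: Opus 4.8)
The plan is to recognize $R^G\subseteq R$ as an instance of Theorem~\ref{COMPUT-prop} and then evaluate the resulting invariant using Molien's formula. Throughout I would write $N$ for the number of variables of the polynomial ring $R$, so that $\dim R^G=\dim R=N$, and put $\m=(R^G)_+$.

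First I would verify the hypotheses of Theorem~\ref{COMPUT-prop}. Since $|G|$ is invertible in $K$, the Reynolds operator $\tfrac1{|G|}\sum_{g\in G}g\colon R\to R^G$ is an $R^G$-linear retraction of $R^G\hookrightarrow R$, so $R^G$ is a direct summand of $R$. Moreover $R$ is regular, hence normal; $R^G$ is normal, being the invariant ring of a normal domain under a finite group; and $R$ is module-finite over $R^G$ by Noether's finiteness theorem. The essential use of the hypothesis is that for every $g\ne e$ the fixed locus $V^g\subseteq\Spec R$ has codimension at least two; since the ramification locus of the quotient morphism $\Spec R\to\Spec R^G$ is $\bigcup_{g\ne e}V^g$, this morphism is étale in codimension one. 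Therefore Proposition~\ref{prop-extensible-finite} applies: the inclusion $R^G\hookrightarrow R$ is split and order-differentially extensible over $K$, and so Theorem~\ref{COMPUT-prop} yields $\m\dif{n}{K}=[R^G]_{\ge n}$ for all $n$, that $\dm{K}(R^G)$ exists as a genuine limit, and the identity
\[\dm{K}(R^G)=\lim_{n\to\infty}\frac{N!\,\lambda_{R^G}\!\big(R^G/[R^G]_{\ge n}\big)}{n^N}=\lim_{n\to\infty}\frac{N!\,\dim_K[R^G]_{<n}}{n^N}.\]

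Next I would compute this limit by extracting the leading asymptotics of the Hilbert function of $R^G$ from Molien's formula $H_{R^G}(t)=\tfrac1{|G|}\sum_{g\in G}\det_{[R]_1}(1-tg)^{-1}$, which is valid because $|G|\in K^\times$ (the Hilbert function is unchanged by base change to $\overline K$, over which the determinants may be taken). The identity element contributes $\tfrac1{|G|}(1-t)^{-N}$, giving $\tfrac1{|G|}\binom{i+N-1}{N-1}$ to the coefficient of $t^i$. Every other $g$ contributes a rational function whose poles all lie at roots of unity and whose pole at $t=1$ has order $\dim V^g\le N-1$; hence the contribution of the terms $g\ne e$ to the coefficient of $t^i$ is a bounded quasi-polynomial in $i$ of degree at most $N-2$ (from the pole at $t=1$) together with oscillating terms $c\,\zeta^{-i}i^{m-1}$ for roots of unity $\zeta\ne1$, and summing any of these over $i<n$ produces only $O(n^{N-1})$, the oscillation lowering the degree by one. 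Consequently $\dim_K[R^G]_{<n}=\tfrac1{|G|}\binom{n+N-1}{N}+O(n^{N-1})=\tfrac1{|G|}\cdot\tfrac{n^N}{N!}+O(n^{N-1})$, and dividing by $n^N/N!$ and letting $n\to\infty$ gives $\dm{K}(R^G)=1/|G|$.

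The step I expect to demand the most care is the reduction in the second paragraph — specifically, identifying the ramification locus of $\Spec R\to\Spec R^G$ with $\bigcup_{g\ne e}V^g$ so that the no-pseudo-reflection hypothesis literally produces étaleness in codimension one, as is needed to invoke Proposition~\ref{prop-extensible-finite}. The Molien asymptotics of the third paragraph, while it needs attention for the higher-order poles at roots of unity $\ne1$ (for instance those coming from scalar elements of $G$, which can have a pole of order $N$ away from $t=1$), is otherwise a routine partial-summation argument.
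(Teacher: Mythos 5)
Your proof is correct and follows essentially the same route as the paper: identify the ramification locus of $\Spec R \to \Spec R^G$ with $\bigcup_{g\neq e}V^g$ so that the no-pseudoreflection hypothesis gives \'etaleness in codimension one, invoke Proposition~\ref{prop-extensible-finite} for order-differential extensibility and the Reynolds operator for the splitting, apply Theorem~\ref{COMPUT-prop} to reduce the signature to the degree of $R^G$, and finish with Molien's formula. The only difference is that you spell out the hypotheses of Proposition~\ref{prop-extensible-finite} and the Abel-summation bookkeeping behind the Molien asymptotics, which the paper leaves implicit.
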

\begin{proof} The ramification locus of a finite group action corresponds to the union of fixed spaces of elements of $G$. Consequently, the assumption that no element fixes a hyperplane ensures that the extension is unramified in codimension one. The inclusion is order-differentially extensible over $K$ by Proposition~\ref{prop-extensible-finite}. Furthermore, since $|G| \in K^\times$, there exists a Reynolds operator that splits the inclusion map. By Theorem~\ref{COMPUT-prop}, the differential signature is just the degree, which, by Molien's formula, is
\[ \dm{K}(R^G)=\limsup_{n \rightarrow \infty} \frac{d!}{n^d}\; \lambda_{R^G}\left(  {R^G}/{[R^G]_{\geq n}}\right) = \frac{1}{|G|}. \]
	\qedhere
\end{proof}

\subsection{Volume formulas for toric rings}\label{SubSecToric}

We also obtain a formula for pointed normal affine semigroup rings. We  simply refer to such rings as \emph{toric} rings\index{toric ring}, although different authors mean different things by this term.

\begin{setup}\label{toric-setup}
Let $C$ be a pointed rational cone in $\RR^d$. That is, the rays of $C$ each contain a nonzero point in $\ZZ^d$, and $C$ contains no lines.
Let $K$ be a field, and $R$ be the semigroup ring $K[\ZZ^d \cap C]\subseteq K[\ZZ^d]=K[x_1^{\pm1} , \dots, x_d^{\pm 1}]$; that is, the subring of the  {Laurent polynomials} with $K$-basis given by monomials whose exponents lie in $\ZZ^d \cap C$.
Given such a realization, let $F_1,\dots,F_r$ be the facets. For each facet $F_i$ there exists a unique linear form $\ell_i$ with $F_i \subseteq \ker \ell_i$, with integral coprime coefficients and positive in the interior of the cone. Let $\ell =(\ell_1, \ldots, \ell_r)$ be the (injective) map from $\RR^d$ to $\RR^r$. Note that $\ell(C)=\RR_{\geq 0}^r\cap \ell(\RR^d)$. This gives an embedding of $R$ into $S=K[\NN^r]=K[y_1,\dots,y_r]$ and $R$ is the degree-$0$-part of $ K[y_1,\dots,y_r]$ under the grading given by the group $\ZZ^r/\ell(\ZZ^d)$.
\end{setup}

The grading group $\ZZ^r/\ell(\ZZ^d)$ is known to be the divisor class group of the monoid ring. We also use the standard-grading of $S=K[y_1,\dots,y_r]$, and a monomial $x^\mu$ will have in $S$ the degree  $ |\ell(\mu)|=\ell_1(\mu) + \cdots + \ell_r(\mu) $. The linear form $|\ell|=\ell_1 + \ldots+ \ell_r$ defines in $\RR^d$ the compact polytope
\[ \{P \in C\ |\ (\ell_1 + \cdots+ \ell_r ) (P) \leq 1 \} .\]
Its volume is directly related to the differential signature of the normal monoid ring $K[\ZZ^d \cap C]$.

\begin{lemma}[{\cite[Proof of Theorem~3.4]{Hsiao-Matusevich}},\cite{Musson}]\label{toric-extl} If $K$ is algebraically closed of characteristic zero, the inclusion of $R$ into $S$ is order-differentially extensible over $K$.
\end{lemma}

Hsiao and Matusevich gave a construction for an order-preserving lift of an arbitrary differential operator from $R$ to $S$ \cite[Proof of Theorem~3.4]{Hsiao-Matusevich}.

\begin{theorem}\label{ThmToric}
In the context of Setup~\ref{toric-setup}, if $K$ has characteristic zero, then $\dm{K}(R)= d! \ \vol  \{P \in C \ | \ (\ell_1 + \cdots+ \ell_r ) (P)  \leq 1 \}$.
\end{theorem}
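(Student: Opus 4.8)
The plan is to reduce the computation to the graded setting of Theorem~\ref{COMPUT-prop}. By Lemma~\ref{toric-extl}, the inclusion $R = K[\ZZ^d\cap C] \hookrightarrow S = K[\NN^r]$ is order-differentially extensible over $K$; passing to the relevant localizations at the homogeneous maximal ideals, this inclusion is also local and split (the degree-$0$ projection with respect to the $\ZZ^r/\ell(\ZZ^d)$-grading on $S$ is an $R$-linear splitting). Here $S$ carries the standard grading $\deg(y_i)=1$, under which $R$ is the graded subring given by $x^\mu \mapsto$ (monomial of degree $|\ell|(\mu)$). However $R$ need not be generated in a single degree, so I would invoke the first (more general) part of Theorem~\ref{COMPUT-prop}: the differential powers $\m\dif{n}{K}$ (with $\m = R_+$) form a graded system, $\m\dif{n}{K} = [R]_{\geq n}$, and consequently $\dm{K}(R)$ exists as a limit equal to the degree (normalized leading Hilbert coefficient) of $R$ as a graded ring, i.e.
\[ \dm{K}(R) = \limsup_{n\to\infty} \frac{d!}{n^d}\, \lambda_R\!\left( R / [R]_{\geq n} \right) = \lim_{n\to\infty} \frac{d!}{n^d}\, \dim_K [R]_{<n}. \]

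The second and main step is to identify this leading term of the Hilbert function with a volume. The graded piece $[R]_{<n}$ has $K$-basis the monomials $x^\mu$ with $\mu \in \ZZ^d\cap C$ and $|\ell|(\mu) = (\ell_1+\cdots+\ell_r)(\mu) < n$; thus $\dim_K[R]_{<n}$ counts lattice points of $\ZZ^d$ in the dilated region $n\cdot Q$, where $Q = \{ P \in C \mid (\ell_1+\cdots+\ell_r)(P) \leq 1\}$ (the strict-versus-nonstrict distinction and the shift by $1$ are lower-order and do not affect the leading term). Since $Q$ is a compact (the cone is pointed, and $|\ell|$ is positive in the interior, cutting off a bounded slice) $d$-dimensional polytope with rational vertices, the standard Ehrhart-theoretic asymptotics give
\[ \#\bigl( \ZZ^d \cap nQ \bigr) = \vol_d(Q)\, n^d + O(n^{d-1}), \]
so that $\lim_{n\to\infty} \frac{d!}{n^d}\dim_K[R]_{<n} = d!\,\vol_d(Q)$. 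Combining this with the previous step yields $\dm{K}(R) = d!\,\vol\{ P \in C \mid (\ell_1+\cdots+\ell_r)(P)\leq 1 \}$, as claimed.

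I expect the genuinely delicate point to be the verification that Theorem~\ref{COMPUT-prop} truly applies: one must check that the hypotheses (split and order-differentially extensible inclusion of graded rings) survive localization at the respective graded maximal ideals and that $R$, as a subring of the standard-graded $S$, inherits the correct grading — in particular that $R$ is a \emph{standard} subring in the weak sense needed, namely that $\m\dif{n}{K}=[R]_{\geq n}$ follows from $(S_+)\dif{n}{K}=[S]_{\geq n}$ together with Proposition~\ref{intersect-maximal}. Everything else — the Ehrhart estimate and the combinatorial description of $[R]_{<n}$ — is routine. One should also note that $R$ normal and $K$ algebraically closed of characteristic zero are exactly the hypotheses under which Lemma~\ref{toric-extl} is stated; if $K$ is merely of characteristic zero, a base change to $\overline{K}$ (which commutes with formation of principal parts and does not change lengths up to the field-degree factor, and does not change $\dm{K}$) reduces to the algebraically closed case.
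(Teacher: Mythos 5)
Your proposal is correct and follows essentially the same route as the paper: base change to $\overline{K}$, order-differential extensibility from Lemma~\ref{toric-extl}, splitting via the degree-zero projection for the $\ZZ^r/\ell(\ZZ^d)$-grading, the general (graded) part of Theorem~\ref{COMPUT-prop} to identify $\m\dif{n}{K}=[R]_{\geq n}$, and then the standard lattice-point/volume asymptotics for the polytope $\{P\in C \mid (\ell_1+\cdots+\ell_r)(P)\leq 1\}$. The only cosmetic difference is that you phrase the last step via Ehrhart asymptotics where the paper invokes the Riemann-integral definition, and you spell out the base-change and splitting verifications that the paper leaves implicit.
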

\begin{proof} By extending the base field, we may assume that $K$ is algebraically closed. It follows from Lemma~\ref{toric-extl} that the inclusion of $R$ into $S$ is order-differentially extensible over $K$. Since $R$ is the degree-$0$-part under a grading (or the invariant ring by a linearly reductive group) of $S$, the inclusion map is split. We can then apply Theorem~\ref{COMPUT-prop} to get
\[ \dim_K(R/ (R_+)\dif{n+1}{K} ) = \dim_K(R/R_{\geq n+1})   = \# \{ \mu \in \ZZ^d \cap C\ |\ (\ell_1 + \cdots+ \ell_r )  (\mu ) \leq n \}            .     \]
This number, divided by $n^d$, converges to the volume of the polytope \[ \{P \in C|\, (\ell_1 + \cdots+ \ell_r ) (P)  \leq 1 \} .\qedhere\]
\end{proof}

We note that a simpler version of the argument for Theorem~\ref{ThmToric} can be used to obtain the description of the $F$-signature of toric varieties due to Watanabe--Yoshida, Von Korff \cite{WatanabeYoshida,VonKorff} (see also \cite{SinghSemigroup}). Our statement of their result, which is somewhat simpler than but closely parallels our description of differential signature, differs from that of the original sources. The key difference is that less specific realizations of toric rings as direct summands of polynomial rings suffice, so it is not necessary to appeal to Setup~\ref{toric-setup}.

\begin{theorem}[Watanabe--Yoshida, Von Korff, Singh]\label{WYVKS} Let $K$ be a field of positive characteristic, $L$ be a rational linear subspace of $\RR^r$, and $R=K[\NN^r \cap L] \subseteq S=K[\NN^r]$. Then $s(R)=\vol_L(\cube \cap L)$ where $\cube$ is the unit cube in $\RR^r$.
\end{theorem}
\begin{proof} The inclusion of $R$ into $S$ is split, so every element of $D^{(e)}_S$ restricts to an element of $D^{(e)}_R$ by composition with a retraction. By Proposition~\ref{charp-toric-extend}, every element of $D^{(e)}_R$ extends to an element of  $D^{(e)}_S$. The same proof as Proposition~\ref{intersect-maximal} shows that $\m^\Fdif{p^e}=\n^\Fdif{p^e}\cap R$, where $\m$ and $\n$ are the homogeneous maximal ideals of $R$ and $S$, respectively. As these ideals are generated by monomials, the length of $R/I_e=R/\m^\Fdif{p^e}$ may be computed as $\# (n\cube \cap L \cap \NN^r) = \# (\cube \cap \frac{1}{n}(L \cap \NN^r))$. The formula again follows by definition of the Riemann integral.
\end{proof}

This also shows that for a simplicial cone, i.e., a cone where the number of facets coincides with the dimension ($d=r$ in Setup~\ref{toric-setup}), the $F$-signature and the differential signature are the same. In fact, both are the inverse of the order of the finite group $D= \ZZ^d/\ell(\ZZ^d)$. This follows, since in the simplicial case the  determinant of the ray vectors normalized by the condition $|\ell|=1$ determines the volume of the relevant polytope as well as the order of this group. Since the $D$-grading of the polynomial ring corresponds to an action of the dual group, and the invariant ring is the degree-$0$-part, this also follows from Theorem~\ref{group-formula}.

We give some examples to illustrate how to use Theorem~\ref{ThmToric}.

\begin{example}
Let $R$ be the $d$-th Veronese subring of $K[x,y]$. One may realize this ring as $K[C \cap \ZZ^2]$, where $C$ is the cone bounded by the rays $\RR_{\geq 0} (1,0)$ and $\RR_{\geq 0} (1,d)$. Then, the linear forms are
$\ell_1=y$ and $\ell_2=dx-y$, and the generators of the monoid, namely $(1,0),(1,1), \ldots , (1,d-1), (1,d)$, are sent under $\ell$ to $(d-i,i)$, $i=0, \ldots, d$. The condition $dx=1$ determines the points $(\frac{1}{d},0)$ and $(\frac{1}{d},1 )$ on the rays and the area of the given triangle is $1/2d$, which gives $1/d$ when multiplied by $2$.
\end{example}

\begin{example} 
Let $R$ be the hypersurface given by the equation $ac - b^2d$. This can be realized as $K[C \cap \ZZ^3]$, where $C$ is the cone generated by $ (1,0,1), (1,0,0), (1,1,0)$ and $(0,1,1)$. The primitive linear forms for the facets of $C$ are $y,z, x-y+z $ and $x+y-z $. The sum of these linear forms is $2x+y+z$ and the condition $2x+y+z=1$ yields the points on the rays
$ ( \frac{1}{3},0,\frac{1}{3} ), (\frac{1}{2},0,0), (\frac{1}{3},\frac{1}{3},0)$ and $(0,\frac{1}{2},\frac{1}{2})$. 
We triangulate the polytope using these points and get $\dm{K}(R)=1/6$.
\end{example}

\subsection{Determinantal rings}\label{SubSecDet}
 We are now ready to compute the differential signature for rings obtained from matrices of variables. We point out that  the $F$-signature is not known in the following examples.
%\begin{setup}\label{determinantal} 
%\end{setup}

\begin{theorem}\label{ThmDet}
 Let $K$ be a field of characteristic zero, $Y=[y_{ij}]$ be an $m\times r$ matrix of indeterminates and $Z=[z_{ij}]$ be an $r\times n$ matrix of indeterminates, with $r \leq  m \leq n$. Let $X=YZ$ be the $m\times n$ matrix obtained as the product of $Y$ and $Z$. Let $S=\CC[Y,Z]$ be the polynomial ring in the entries of $Y$ and $Z$, and $R=K[X]$ be the subring of $S$ generated by the entries of $X$. We note that $R$ is isomorphic to a ring generated by an $m \times n$ matrix of indeterminates quotiented out by the ideal generated by the $(r+1) \times (r+1)$ minors.
Then,
% In the context of Setup~\ref{determinantal},
  \[\displaystyle\dm{K}(R)=\frac{1}{2^{r(m+n-r)}} \det\left[ \binom{m+n-i-j}{m-i} \right]_{i,j=1,\dots,r}.\]
\end{theorem}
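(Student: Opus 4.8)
The strategy is to realize $R = K[X]$ as an order-differentially extensible, split subring of the polynomial ring $S = K[Y,Z]$ and then invoke Theorem~\ref{COMPUT-prop}, reducing the computation of $\dm{K}(R)$ to the computation of the degree of $R$ as a standard graded ring. First I would record the grading: assigning $\deg y_{ij} = \deg z_{ij} = 1$ makes $S$ standard graded, each entry $x_{ij} = \sum_k y_{ik} z_{kj}$ has degree $2$, and $R$ is a graded subring generated in the single degree $t = 2$. The inclusion $R \subseteq S$ is split because $R = S^{G}$ where $G = \mathrm{GL}_r(K)$ acts on $S$ by $Y \mapsto Y g^{-1}$, $Z \mapsto g Z$ (the classical first fundamental theorem for $\mathrm{GL}_r$ realizes the ring generated by the entries of $YZ$ as the ring of invariants); since $G$ is linearly reductive in characteristic zero, the Reynolds operator gives an $R$-linear retraction $S \to R$. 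The order-differential extensibility of $R \hookrightarrow S$ over $K$ is exactly the statement that every $\mathrm{GL}_r$-invariant differential operator descends, together with surjectivity of $\pi_n\colon (D^n_{S|K})^G \to D^n_{R|K}$; this is the core input and I would cite the work of Schwarz \cite{Schwarz} (alternatively Levasseur--Stafford \cite{LS} for the relevant $\mathrm{SL}$/$\mathrm{GL}$ cases) — this is the place where "classical invariant rings are differentially extensible in their ambient polynomial rings," flagged in the excerpt, gets used.

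With these three hypotheses in hand, Theorem~\ref{COMPUT-prop} applies verbatim: the differential powers of $\m = R_+$ form a graded family, $R \cong \bigoplus_n \m\dif{n}{K}/\m\dif{n+1}{K}$, the limsup defining $\dm{K}(R)$ is a genuine limit and a rational number, and because $R$ is generated in the single degree $t = 2$,
\[
\dm{K}(R) = \frac{e(R)}{2^{\dim(R)}}.
\]
So the proof reduces to two purely commutative-algebra computations: the Krull dimension and the Hilbert--Samuel multiplicity of the determinantal ring $R$. For $\dim(R)$ I would use the standard formula for the dimension of the variety of $m\times n$ matrices of rank $\leq r$, namely $\dim(R) = r(m+n-r)$ (equivalently the dimension of the image of the multiplication map $(Y,Z)\mapsto YZ$, which is $mr + rn - r^2$). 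That gives the denominator $2^{r(m+n-r)}$ matching the claimed formula.

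The multiplicity $e(R)$ is the main computational obstacle, and here I would appeal to the known formula for the degree of generic determinantal varieties. The classical result (going back to Giambelli; see also the treatments via Schubert calculus, or Bruns--Vetter / the $F$-signature-style Hilbert-series manipulations) expresses the degree of the variety of $m\times n$ matrices of rank at most $r$ as a determinant of binomial coefficients:
\[
e(R) = \det\!\left[ \binom{m+n-i-j}{\,m-i\,} \right]_{i,j=1,\dots,r}.
\]
Combining this with $\dim(R) = r(m+n-r)$ in the displayed formula for $\dm{K}(R)$ yields exactly
\[
\dm{K}(R) = \frac{1}{2^{r(m+n-r)}}\,\det\!\left[ \binom{m+n-i-j}{m-i} \right]_{i,j=1,\dots,r},
\]
as asserted. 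I expect the write-up effort to be concentrated in (i) justifying that the hypotheses of Theorem~\ref{COMPUT-prop} hold — in particular locating a clean citation for order-differential extensibility of $K[X] \subseteq K[Y,Z]$, which may require passing through the $\mathrm{SL}_r$ case and checking the $\mathrm{GL}_r$ version, or invoking Proposition~\ref{prop-extensible-finite}-type reasoning is \emph{not} available here (the extension is not finite) so one genuinely needs the invariant-theoretic result — and (ii) citing the determinantal degree formula precisely, since different sources index it differently; I would double-check the index ranges on the binomial determinant against a small case such as $r=1$ (where $R$ is the Segre product, $\dim = m+n-1$, $e = \binom{m+n-2}{m-1}$, so $\dm{K}(R) = \binom{m+n-2}{m-1}/2^{m+n-1}$) to make sure the stated formula is normalized correctly.
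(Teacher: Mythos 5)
Your proposal follows the paper's proof almost exactly: both reduce to $K=\CC$, realize $R\subseteq S$ as a split, order-differentially extensible invariant-ring inclusion (citing Levasseur--Stafford Case~A), apply Theorem~\ref{COMPUT-prop} to get $\dm{K}(R)=e(R)/2^{\dim R}$ since $R$ is generated in degree $2$, and then plug in the classical dimension $r(m+n-r)$ and the Giambelli-type determinantal degree formula (the paper cites Herzog--Trung for these). Your added sanity check of the $r=1$ Segre case and the explicit observation that Proposition~\ref{prop-extensible-finite} is unavailable here (the extension is not finite) are sensible but don't change the route.
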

\begin{proof} By base change, we can reduce to the case where $K=\CC$. Then,  the inclusion map of $R$ into $S$ is order-differentially extensible over $\CC$ \cite[Case~A, Main Theorem 0.3, 0.7]{LS}. As this is an invariant ring of a linearly reductive group, the inclusion splits. Theorem~\ref{COMPUT-prop} then applies. The ring $R$ is generated in degree 2, and the formulas for the dimension and multiplicity are classical {; see, e.g., \cite[Theorem~3.5]{HerzogTrung}.}
\end{proof}

\begin{example}
	\label{signaturesegre}
 As the special case where $r=1$, we obtain the analogue of Singh's formula \cite[Example~7]{SinghSemigroup} for the $F$-signature of the homogeneous coordinate ring for the Segre embedding of $\PP^{m-1} \times \PP^{n-1}$. The differential signature of this ring is $\displaystyle \frac{\binom{m+n-2}{m-1}}{2^{m+n-1}}$. For comparison, the $F$-signature is $\displaystyle \frac{A(m+n-1,n)}{n!}$, where $A$ denotes the Eulerian numbers.
\end{example}

%\begin{setup}\label{symmetric}  

%\end{setup}

\begin{theorem}
Let $K$ be a field of characteristic zero. $Y=[y_{ij}]$ be an $k\times n$ matrix of indeterminates, with $n > k$. Let $X=Y^t Y$, $S=\CC[Y]$ be the polynomial ring in the entries of $Y$, and $R=K[X]$ be the subring of $S$ generated by the entries of $X$. We note that $R$ is isomorphic to a ring generated by a $n \times n$ symmetric matrix of indeterminates quotiented out by the ideal generated by the $k+1\times k+1$ minors. Then,
% In the context of Setup~\ref{symmetric}, 
 \[\displaystyle\dm{K}(R)=\frac{1}{2^{kn-\binom{k}{2}}} \sum_{1\leq \ell_1 < \dots < \ell_k \leq n} \det\left[ \binom{n-i}{n-\ell_j}\right]_{i,j=1,\dots,k}.\]
\end{theorem}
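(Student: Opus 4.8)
The plan is to follow exactly the template established for the generic and skew-symmetric determinantal cases in Theorems~\ref{ThmDet} (and its symmetric analogue). The key structural input is the classical description of the symmetric determinantal ring as a ring of invariants: if $Y=[y_{ij}]$ is a $k\times n$ matrix of indeterminates with $n>k$ and $X=Y^tY$, then $R=K[X]$ sits inside $S=K[Y]$ as the ring of invariants of the orthogonal group $O_k(K)$ acting on the columns of $Y$, and $R$ is the coordinate ring of the variety of $n\times n$ symmetric matrices of rank at most $k$. First I would reduce to the case $K=\CC$ by base change, just as in Theorem~\ref{ThmDet}: differential signature and the relevant Hilbert-series data are insensitive to extension of the base field of characteristic zero.

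Next I would invoke the two hypotheses needed to apply Theorem~\ref{COMPUT-prop}: that $R\hookrightarrow S$ is split and that it is order-differentially extensible over $\CC$. The splitting is immediate since $R$ is the ring of invariants of a linearly reductive group (equivalently, the degree-zero part of an appropriate grading on $S$), so the Reynolds operator provides an $R$-linear retraction. The order-differential extensibility is the analogue, in Case~B / the symmetric case, of the Levasseur--Stafford result cited for Case~A in the proof of Theorem~\ref{ThmDet}; this is exactly the content of \cite[Case~B, Main Theorem]{LS}, which establishes surjectivity of the restriction map $\pi_n:(D^n_{S|\CC})^{O_k}\to D^n_{R|\CC}$ for the symmetric determinantal invariants. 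With both inputs in hand, Theorem~\ref{COMPUT-prop} tells us that the differential powers of $\m=R_+$ form a graded system with $(R_+)\dif{n}{K}=[R]_{\geq n}$, that the differential signature exists as a limit, and — since $R$ is generated in degree $2$ (the entries of $X=Y^tY$ are quadratic in the $y_{ij}$) — that
\[
\dm{K}(R)=\frac{e(R)}{2^{\dim(R)}}.
\]

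It then remains to substitute the classical formulas for the dimension and multiplicity of the symmetric determinantal ring. The variety of symmetric $n\times n$ matrices of rank $\le k$ has dimension $kn-\binom{k}{2}$, which accounts for the exponent of $2$ in the denominator. The multiplicity $e(R)$ of this ring — with respect to the standard grading in which the generators have degree $2$, matching the grading induced from $S$ — is given by a Giambelli/Abhyankar-type determinantal sum, namely
\[
e(R)=\sum_{1\le \ell_1<\cdots<\ell_k\le n}\det\!\left[\binom{n-i}{n-\ell_j}\right]_{i,j=1,\dots,k};
\]
this is a well-known formula (see the references on Hilbert series of symmetric determinantal rings, e.g. work of Conca, Abhyankar, Harris--Tu, or Herzog--Trung as cited in Theorem~\ref{ThmDet}). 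Plugging these two values into $\dm{K}(R)=e(R)/2^{\dim R}$ yields the stated formula. The main obstacle I anticipate is not any of the signature machinery — that is entirely parallel to the generic case — but rather pinning down the correct classical references for (i) order-differential extensibility of the orthogonal invariants $K[Y^tY]\subseteq K[Y]$ and (ii) the precise multiplicity formula in the grading where the symmetric matrix entries have degree $2$; one must be careful that the cited Hilbert-series formula is stated for the grading compatible with the embedding into $S$, since a rescaling of the grading by a factor of $2$ would otherwise throw off the power of $2$ in the answer.
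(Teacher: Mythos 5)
Your proposal is correct and follows essentially the same route as the paper: reduce to $K=\CC$ by base change, cite Levasseur--Stafford (Case~B) for order-differential extensibility, use the Reynolds operator for the $O_k$-action to get the splitting, apply Theorem~\ref{COMPUT-prop} with generators in degree $2$, and substitute the classical dimension and multiplicity formulas (the paper cites Conca's theorem on Hilbert series of symmetric determinantal rings for the latter). Your closing caution about the grading normalization is well-placed but is handled implicitly in the paper by the observation that $R$ is generated in degree $2$.
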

\begin{proof} By base change, we can reduce to the case where $K=\CC$. Then, the inclusion map of $R$ into $S$ is order-differentially extensible over $\CC$ \cite[Case~B, Main Theorem 0.3, 0.7]{LS}. Since $R$ is the invariant ring of $S$ under a natural action of the orthogonal group  \cite[II~3.3]{LS},  the inclusion splits. Theorem~\ref{COMPUT-prop} then applies. The ring $R$ is generated in degree 2, and the multiplicity of these rings is known by the work of Conca  {\cite[Theorem~3.6]{ConcaSym}.}
\end{proof}

\begin{theorem}
Let $K$ be a field of characteristic zero. $Y=[y_{ij}]$ be an $2k\times n$ matrix of indeterminates, with $n > 2(k+1)$. Let $Q$ be the $2n \times 2n$ antisymmetric matrix $\begin{bmatrix} 0_{n \times n} & I_{n \times n} \\ -I_{n \times n} & 0_{n \times n} \end{bmatrix}$. Let $X=Y^t Q Y$, $S=\CC[Y]$ be the polynomial ring in the entries of $Y$, and $R=K[X]$ be the subring of $S$ generated by the entries of $X$. We note that $R$ is isomorphic to a ring generated by a $n \times n$ matrix of indeterminates quotiented out by the ideal generated by the $2(k+1) \times 2(k+1)$ minors. Then,
 %In the context of Setup~\ref{pfaffian}, 
 \[\displaystyle\dm{K}(R)=\frac{1}{2^{r(2n-2r-1)}} \det\left[ \binom{2n-4r-2}{n-2r-i+j-1}-\binom{2n-4r-2}{n-2r-i-j-1} \right]_{i,j=1,\dots,r}.\]
\end{theorem}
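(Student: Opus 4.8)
The plan is to proceed exactly as in the two preceding determinantal theorems, reducing the computation to the degree of a graded ring via Theorem~\ref{COMPUT-prop}. First I would base change to assume $K = \CC$, which is harmless since both sides of the claimed formula are insensitive to field extension (the module of principal parts and its free rank behave well under the flat base change $K \to \CC$, by the discussion in Subsection~\ref{SubSecrelative}, and the right-hand side is a constant). The key input is that the inclusion $R = \CC[X] \hookrightarrow S = \CC[Y]$ is order-differentially extensible over $\CC$; this is the content of the Pfaffian case (Case~C) of the Levasseur--Stafford main theorems \cite[Main Theorem 0.3, 0.7]{LS}, just as Cases A and B were invoked for the generic and symmetric determinantal rings above. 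Since $R$ is the ring of invariants of $S$ under the natural action of the symplectic group $\mathrm{Sp}_{2n}$ \cite[II~3.3]{LS}, which is linearly reductive in characteristic zero, the inclusion $R \hookrightarrow S$ is split by a Reynolds operator. Thus the hypotheses of Theorem~\ref{COMPUT-prop} are met.

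Next I would apply Theorem~\ref{COMPUT-prop}: since $R$ is generated in the single degree $t = 2$ (the entries of $X = Y^t Q Y$ are quadratic in the entries of $Y$), the differential powers $\m\dif{n}{K}$ of $\m = R_+$ form a graded family with $\m\dif{n}{K} = [R]_{\geq n}$, the differential signature exists as an honest limit, and
\[
\dm{K}(R) = \frac{e(R)}{2^{\dim(R)}}.
\]
It then remains purely to identify $\dim(R)$ and the Hilbert--Samuel multiplicity $e(R)$ of this Pfaffian ring and to check that $e(R)/2^{\dim R}$ equals the stated determinantal expression. The ring $R$ is the coordinate ring of the variety of $n \times n$ antisymmetric matrices of rank at most $2r$ (where here $r = k+1$ in the notation matching the minors, though I would fix notation carefully so that the index ranges in the determinant match the ring being described), defined by the vanishing of the $2(k+1) \times 2(k+1)$ Pfaffians — equivalently, since the rank of an antisymmetric matrix is even, by the vanishing of the corresponding minors. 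The dimension of this determinantal variety and its multiplicity are classical: the multiplicity formula for Pfaffian ideals is due to Herzog--Trung and to Abhyankar-type counting arguments, and I would cite the relevant source (e.g., the work computing Hilbert series of Pfaffian rings, parallel to \cite[Theorem~3.5]{HerzogTrung} for the generic case and \cite[Theorem~3.6]{ConcaSym} for the symmetric case). Substituting these classical values of $\dim(R)$ and $e(R)$ into $e(R)/2^{\dim R}$ yields the asserted formula after elementary manipulation of the binomial determinant.

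The main obstacle I anticipate is not conceptual but bookkeeping: matching the indexing conventions in the statement (the determinant is $r \times r$ with entries $\binom{2n-4r-2}{n-2r-i+j-1} - \binom{2n-4r-2}{n-2r-i-j-1}$, and the exponent is $2^{r(2n-2r-1)}$) against whichever reference one uses for the multiplicity and dimension of Pfaffian rings, since different authors parametrize the rank condition and the matrix size differently. One must verify that $r$ in the formula corresponds to "rank at most $2r$" and "$2(r+1) \times 2(r+1)$ Pfaffians vanish," that $\dim(R) = r(2n - 2r - 1)$ under these conventions, and that the multiplicity of the Pfaffian ring equals exactly the displayed binomial determinant. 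A secondary point requiring care is confirming that the Levasseur--Stafford extensibility result does cover this symplectic/Pfaffian case with the stated genericity hypothesis $n > 2(k+1)$ — the inequality is presumably needed to ensure $R$ is a normal, non-regular invariant ring to which their structure theory applies — and that the split and extensibility hold over $\CC$ as required by Theorem~\ref{COMPUT-prop}. Once these identifications are pinned down, the proof is a one-line application of Theorem~\ref{COMPUT-prop} followed by citation of the classical Hilbert-function data.
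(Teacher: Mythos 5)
Your proposal follows essentially the same route as the paper's proof: reduce to $K=\CC$ by base change, invoke Case~C of the Levasseur--Stafford main theorems for order-differential extensibility, use the splitting coming from the symplectic group action (the paper cites \cite[II~4.3]{LS} for this rather than II~3.3, a minor citation point), apply Theorem~\ref{COMPUT-prop} with $t=2$, and then quote the classical dimension and multiplicity of the Pfaffian ring, which the paper takes from Herzog--Trung \cite[Theorem~5.6]{HerzogTrung}. The bookkeeping you flag (matching $r$ with the rank condition and checking $\dim R = r(2n-2r-1)$) is exactly what the paper leaves to that citation, so your argument is complete in the same sense the paper's is.
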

\begin{proof} By base change, we can reduce to the case where $K=\CC$. Then, the inclusion map of $R$ into $S$ is order-differentially extensible over $\CC$  \cite[Case~C, Main Theorem 0.3, 0.7]{LS}. Since $R$ is the invariant ring of $S$ under the action of the symplectic group  \cite[II~4.3]{LS},  the inclusion splits. Again, we apply Theorem~\ref{COMPUT-prop} then applies. The ring $R$ is generated in degree 2, and the multiplicity of these rings is known by the work of Herzog and Trung  {\cite[Theorem~5.6]{HerzogTrung}.}
\end{proof}

We end with one more related example.

\begin{example}
	\label{Grassmann}
	Let $\displaystyle R=\frac{\CC[u,v,w,x,y,z]}{(ux+vy+wz)}$. If each variable has degree two, this is isomorphic to the coordinate ring of the Grassmannian $\operatorname{Gr}(2,4)$, which is an invariant ring of an action by $SL_2(\CC)$. The inclusion map of this invariant ring is order-differentially extensible  \cite[Theorem~11.9]{Schwarz}. Since $R$ has multiplicity 2, we find $\dm{\CC}(R)=1/16$.
\end{example}

%%%%%%%%%%%%%%%%%%%%%%%%%
\subsection{Quadrics}\label{SubQuadrics}
%%%%%%%%%%%%%%%%%%%%%%%%%

We deal now with the quadric hypersurface \[ R=K[x_1, \ldots, x_{d+1}]/(x_1^2 + \cdots + x_{d+1}^2) .\] 
Over an algebraically closed field of characteristic $0$, all nondegenerate quadrics can be brought into this form. Nondegeneracy is equivalent to the property that $\Proj R$ is smooth. We show that in this case the differential signature is $ \left( \frac{1}{2} \right)^{d-1}$ provided $d \geq 2$. The arguments are quite involved and need several preparations. The first lemma gives explicitely the existence of sufficiently many unitary operators to deduce the estimate $\geq \left( \frac{1}{2} \right)^{d-1} $. For the other estimate we then study global sections of symmetric powers of syzygy bundles on the quadric $\Proj R$. The cases $d=2,3,5$ are covered by the examples done in previous subsections; see Theorem~\ref{group-formula}, Theorem~\ref{ThmToric}, Example~\ref{signaturesegre}, and Example~\ref{Grassmann}.

\begin{lemma} \label{quadriclemma}
	Let $f=x_1^2 + \cdots + x_{d+1}^2$, $d \geq 2$, and $R=K[x_1, \ldots, x_{d+1}]/(f)$ over a field $K$ of characteristic $0$. Then the following hold.
	\begin{enumerate}
		\item There exists a differential operator $\delta_1$ of order $2$ and homogeneous of degree $-1$ given by
		\[\delta_1 =(d-1) \partial_1 +x_1 \partial_1^2- \sum_{j \neq 1} x_1 \partial_j^2 + 2 \sum_{ j \neq 1} x_j \partial_1 \partial_j . \]
		
		\item
		A monomial $x^\lambda$ is sent by $\delta_1$ to 
		\[ \delta_1 (x^\lambda) =\lambda_1 \left( (d-1 ) + (\lambda_1-1) + 2 \sum_{j \neq 1} \lambda_j \right) x^{\lambda -e_1 } - \sum_{j\neq 1} \lambda_j ( \lambda_j-1) x^{ \lambda + e_1-2e_j} .\]
		
		\item
		We have the identity (as operators on the polynomial ring)
		\[\partial^\nu \circ \delta_1 =  \left( d-1+ \nu_1 +2 \sum_{j \neq 1} \nu_j \right) \partial^{\nu+e_1} - \nu_1 \sum_{j \neq 1 } \partial^{\nu -e_1+2e_j } + \theta ,  \]
		where $\theta$ is a sum of operators of the form $f_\lambda \partial^\lambda $ with $f_\lambda$ homogeneous of positive degree.
		
		\item
		For every monomial $x^\nu$ of degree $n$ with $\nu_{d+1} \leq 1$ there exists a differential operator $\xi_\nu$ of order $ \leq 2n$ and homogeneous of degree $-n$ of the form 
		\[\xi_\nu  = \frac{1}{\nu!} \partial^\nu + \zeta+\theta \, \] 
		where $\zeta$ is a linear combination of $\partial^\mu$ with $ \mu_{d+1} \geq 2 $ and where $\theta$ is a sum of operators of the form $f_\lambda \partial^\lambda $ with $f_\lambda$ homogeneous of positive degree.
		
		\item
		The induced $K$-valued operators $\tilde{\xi}_\nu$ have the property that
		\[\tilde{\xi}_\nu (x^\nu) =1 \text{ and } \tilde{\xi}_\nu (x^\mu) = 0 \text{ for all monomials }   \mu \neq \nu   \text{ with } \mu_{d+1} \leq 1 .  \]
	\end{enumerate}
\end{lemma}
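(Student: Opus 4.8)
The plan is to prove the five parts of Lemma~\ref{quadriclemma} essentially in order, since each one feeds into the next. For part (1), I would first check that $\delta_1$ descends to a differential operator on $R$ by verifying, via the criterion recalled after Corollary~\ref{JacobiTayloroperators}, that $\delta_1(x^{\lambda} f) \in (f)$ for every monomial $x^{\lambda}$ of degree $\le 1$; equivalently, one computes $\delta_1(f)$ and $\delta_1(x_i f)$ directly and sees they are multiples of $f$. The homogeneity and the order-$\le 2$ claims are immediate from the shape of the formula: $x_1\partial_1^2$, $x_1\partial_j^2$, $x_j\partial_1\partial_j$ all have degree $-1$, and $(d-1)\partial_1$ has degree $-1$. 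Part (2) is then a bare hand computation: apply each summand of $\delta_1$ to $x^{\lambda}$ using $\partial_i(x^{\lambda}) = \lambda_i x^{\lambda - e_i}$ and $\partial_i^2(x^{\lambda}) = \lambda_i(\lambda_i - 1) x^{\lambda - 2e_i}$, then collect terms; the coefficient $\lambda_1\big((d-1) + (\lambda_1 - 1) + 2\sum_{j\ne1}\lambda_j\big)$ comes from the $x^{\lambda - e_1}$-terms of $(d-1)\partial_1$, $x_1\partial_1^2$, and $2\sum_{j\ne1}x_j\partial_1\partial_j$, while the $-\sum_{j\ne1}\lambda_j(\lambda_j-1)x^{\lambda+e_1-2e_j}$ comes from $-\sum_{j\ne1}x_1\partial_j^2$.

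For part (3), I would compute the composite $\partial^{\nu}\circ\delta_1$ as operators on the polynomial ring, using the Leibniz/commutator rules $\partial^{\nu}\circ(x_i\cdot) = x_i\partial^{\nu} + \nu_i\partial^{\nu - e_i}$ applied iteratively. Writing $\delta_1 = (d-1)\partial_1 + x_1\partial_1^2 - \sum_{j\ne1}x_1\partial_j^2 + 2\sum_{j\ne1}x_j\partial_1\partial_j$ and pushing $\partial^{\nu}$ past the leading coefficients, the terms where $\partial^{\nu}$ passes cleanly (no commutator) contribute $x_1\partial^{\nu+2e_1} - \sum_{j\ne1}x_1\partial^{\nu+2e_j} + 2\sum_{j\ne1}x_j\partial^{\nu+e_1+e_j}$, which have positive-degree coefficients and go into $\theta$, together with $(d-1)\partial^{\nu+e_1}$; the commutator terms produce $\nu_1\partial^{\nu+e_1}$ (from $x_1\partial_1^2$, since the coefficient is $x_1$ and $\partial^{\nu}$ hits it once, leaving $\partial^{\nu - e_1}\cdot\partial_1^2 = \partial^{\nu+e_1}$), $-\nu_1\sum_{j\ne1}\partial^{\nu-e_1+2e_j}$ (from $-\sum_{j\ne1}x_1\partial_j^2$), and $2\sum_{j\ne1}\nu_j\partial^{\nu+e_1}$ (from $2\sum_{j\ne1}x_j\partial_1\partial_j$, where the commutator of $\partial^{\nu}$ with $x_j$ gives $\nu_j\partial^{\nu-e_j}$ and then $\partial^{\nu - e_j}\cdot\partial_1\partial_j = \partial^{\nu+e_1}$). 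Summing gives the stated coefficient $d-1+\nu_1+2\sum_{j\ne1}\nu_j$ of $\partial^{\nu+e_1}$.

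Part (4) is the inductive heart of the lemma, and I expect it to be the main obstacle. Given a monomial $x^{\nu}$ of degree $n$ with $\nu_{d+1}\le 1$, the idea is to build $\xi_{\nu}$ by composing operators of the form $\partial^{\mu}\circ\delta_1$ to successively lower the exponent in the first coordinate (and other coordinates) down to $\nu$, starting from a pure power of $\partial_1$; the key input is part (3), which says that $\partial^{\mu}\circ\delta_1$ has leading term a nonzero multiple of $\partial^{\mu+e_1}$ plus a correction $-\mu_1\sum_{j\ne1}\partial^{\mu-e_1+2e_j}$ that only creates exponents $\ge2$ in coordinates $j\ne1$ (these feed the $\zeta$ term) plus positive-degree stuff $\theta$. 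The nonvanishing of the coefficient $d-1+\mu_1+2\sum_{j\ne1}\mu_j$ in characteristic zero is what lets us normalize; one must track that every operator used has order $\le2(\deg)$ and is homogeneous of the correct negative degree, and that the "bad" terms $\zeta$ genuinely only involve $\mu$ with $\mu_{d+1}\ge2$ — this is where the hypothesis $\nu_{d+1}\le1$ is used, since $\delta_1$ is built around the first coordinate and the correction terms $\partial^{\mu-e_1+2e_j}$ bump coordinate $d+1$ to $\ge2$ as soon as $j=d+1$. Finally, for part (5): the induced $K$-valued operator $\tilde\xi_{\nu}$ kills $\theta$ (positive-degree coefficients vanish mod $\m$), so $\tilde\xi_{\nu}$ is given by the top-order part $\frac1{\nu!}\partial^{\nu} + \zeta$ evaluated at $K$; applied to a monomial $x^{\mu}$ of the same degree $n$ with $\mu_{d+1}\le1$, the $\zeta$ part contributes nothing because $\zeta$ is a combination of $\partial^{\rho}$ with $\rho_{d+1}\ge2 > \mu_{d+1}$, so $\partial^{\rho}(x^{\mu})=0$; and $\frac1{\nu!}\partial^{\nu}(x^{\mu})$ is $1$ if $\mu=\nu$ and $0$ otherwise. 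Thus $\tilde\xi_{\nu}(x^{\mu})=\delta_{\mu\nu}$ on the monomials in question, which is the assertion. The only subtlety I would flag is making the induction in (4) fully rigorous — specifying the exact order in which coordinates are decremented and confirming the order/degree bookkeeping closes up — but the structure is forced by part (3).
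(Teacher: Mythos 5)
Your parts (1)--(3) and (5) match the paper's argument: (1) is the same descent check (the paper phrases it as exhibiting the tuple $(a_\lambda)$ in the kernel of the second Jacobi--Taylor matrix, which is the same verification that $\delta_1(x^\mu f)\in(f)$ for $\deg\mu\le 1$), (2) and (3) are the same direct computations, and (5) is the same evaluation argument. The problem is part (4), which you correctly identify as the heart of the lemma but whose structure is \emph{not} forced by part (3) in the way you claim.

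The gap is twofold. First, the correction terms $\partial^{\mu-e_1+2e_j}$ produced by composing with $\delta_1$ are only $\zeta$-admissible when $j=d+1$: the definition of $\zeta$ requires exponent at least $2$ in the \emph{last} coordinate specifically, whereas for $2\le j\le d$ these corrections are constant-coefficient operators with exponent $\ge 2$ in coordinate $j$ and possibly $\le 1$ in coordinate $d+1$, so they can go neither into $\zeta$ nor into $\theta$. Your sketch has no mechanism to remove them. Second, right-composition with $\delta_1$ always produces leading terms $\partial^{\mu+e_1}$ with strictly positive first exponent, so monomials such as $x_2^n$ (with $\nu_1=0$, $\nu_{d+1}\le 1$) are unreachable using $\delta_1$ alone; and note also that $\partial^\mu\circ\delta_1$ by itself does not descend to $R$, so the induction must compose previously constructed operators $\xi_{\nu'}$ on $R$ with operators on $R$. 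The paper's proof resolves both issues by using the whole symmetric family $\delta_1,\dots,\delta_{d+1}$ (these already give the base case $n=1$) and by forming the specific combinations $\xi_\nu\circ\delta_1-\xi_{\nu-e_1+e_2}\circ\delta_2$, whose polynomial-level expansion cancels the non-admissible corrections up to terms proportional to $\partial^{\nu+e_1}$ and $\partial^{\nu-e_1+2e_2}$; iterating this shifts all unwanted exponents into coordinate $1$, leaving only a residual case analysis for operators of the shape $\partial_1^{n+1}+c\,\partial_{d+1}^{n+1}$, $\partial_1^{n+1}+c\,\partial_1\partial_{d+1}^{n}$, etc., which are handled separately. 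The nonvanishing in characteristic zero of the coefficients $d-1+2\sum_j\nu_j$ appearing in these cancellations is what makes the normalization possible, not merely the coefficient $d-1+\nu_1+2\sum_{j\ne1}\nu_j$ from part (3). So as written your induction would fail already for $d\ge 3$ and, for the monomials with $\nu_1=0$, even for $d=2$.
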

\begin{proof}	
	\begin{enumerate}
		\item We claim that the tuple $a_\lambda$ indexed by monomials of degree $\leq 2$ given by
		\[ a_{ e_1 }  = d-1,\,   a_{ 2e_1} =2x_1,\, a_{2e_j }= -2x_1, a_{e_1+e_j} = 2 x_j  \text{ for } j \neq 1,  \]
		and all other entries $0$, gives a relation between the columns of the second Jacobi-Taylor matrix. From this relation, the corresponding differential operator arises via Corollary~\ref{JacobiTayloroperators}. To prove the claim we have to establish the relations \[\sum_\lambda a_\lambda \frac{\partial^{\lambda - \mu } }{ (\lambda - \mu)!}(x_1^2 + \cdots + x_{d+1}^2) =0\] in $R$ for all $\mu$ of degree $\leq 1$. For $\mu =0$ we get	
		\[ \begin{aligned}
		\sum_\lambda a_\lambda \frac{\partial^{\lambda   } }{ \lambda !}(f)	
		& =  (d-1) \partial^1 (f) + 2x_1 \frac{ \partial_1^2}{2}   (f) - 2 x_1 \sum_{j \neq 1} \frac{\partial_j^2 }{2}  (f)     \\
		& =  (d-1) 2x_1 + 2x_1 - \sum_{j \neq 1} 2x_1 \\
		& = 0 ,  \end{aligned}\]
		for $\mu =e_1$ we get	
		\[ \begin{aligned}
		\sum_\lambda a_\lambda \frac{\partial^{\lambda - \e_1  } }{ ( \lambda -e_1) !}(f)	
		& = (d-1) f +    2x_1  \partial_1 (f) + 2 x_j \sum_{j \neq 1} \partial_j (f)         \\
		& =4x_1^2 + \sum_{j \neq 1} 4x_j^2 \\
		& = 0  , \end{aligned}\]
		and for $\mu =e_k$, $k \neq 1$, we get	
		\[ \begin{aligned}
		\sum_\lambda a_\lambda \frac{\partial^{\lambda -e_k   } }{ ( \lambda-e_k) !}(f)	
		& = - 2x_1  \partial_k (f) + 2 x_k \partial_1 (f)     \\
		& = 0 .\end{aligned}\]
		\item This is a direct computation.
		
		\item
		A direct computation using $ \partial^\nu \circ x_1 \partial_j^2 = x_1 \partial^{\nu+2e_j} + \nu_1 \partial^{\nu-e_1+2e_j}  $ gives
		\[ \begin{aligned} \partial^\nu \circ \delta_1  & = { \partial^\nu  \left( (d-1) \partial_{1} +x_1 \partial_{1}^2 - \sum _{j \neq 1} x_1 \partial_{j}^2 + 2 \sum_{j \neq 1} x_j \partial_{1} \partial_{j} \right) } \\
		& =  (d-1) \partial^{\nu+e_1} + \nu_1 \partial_1^{\nu+e_1} - \nu_1 \sum_{j \neq 1} \partial^{\nu -e_1+2e_j } +2 \sum_{j \neq 1} \nu_j \partial^{\nu+e_1} + \theta \\
		& =  \left( d-1+ \nu_1 +2 \sum_{j \neq 1} \nu_j \right) \partial^{\nu+e_1} - \nu_1 \sum_{j \neq 1} \partial^{\nu -e_1+2e_j } + \theta  . \end{aligned} \]
		
		\item	
		We do induction on $n$. For $n=0$ the statements are clear and for  $n=1$ the operators $\delta_1, \ldots, \delta_{d+1}$ have the required properties. We construct the operators $\xi_\nu$ inductively using compositions of the $\delta_i$. So assume that we have already constructed the operators $\xi_\nu$ for all $\nu$ of degree $n$. 
		
		With the help of (3) we get	(with some $\theta$ as in (3))
	{	\[	 \begin{aligned}
			& \frac{ 1 }{ \nu! } \partial^\nu \circ \delta_1 - \frac{ \nu_1 }{ (\nu_2+1) \nu! } \partial^{\nu -e_1+e_2} \circ \delta_2 \\
		& =  \frac{ d-1+ \nu_1 + 2 \sum_{j \neq 1} \nu_j }{ \nu! } \partial^{\nu+e_1} - \frac{ \nu_1 }{ \nu! } \sum_{j \neq 1} \partial^ {\nu-e_1 +2e_j} \\  & \qquad - \frac{ \nu_1 (d-1+ \nu_2 +1 + 2 \sum_{j \neq 2} \nu_j -2 ) }{ (\nu_2+1) \nu! } \partial^{\nu-e_1+2e_2} \\  & \qquad + \frac{ \nu_1 }{ (\nu_2+1) \nu! }(\nu_2+1) \sum_{j \neq 2} \partial^{\nu-e_1 +2e_j} +\theta \\
		& =  \frac{ d-1+ \nu_1 + 2 \sum_{j \neq 1} \nu_j }{ \nu! } \partial^{\nu+e_1} - \frac{ \nu_1 (d-2+ \nu_2 + 2 \sum_{j \neq 2} \nu_j ) }{ (\nu_2+1) \nu! }\partial^{\nu-e_1+2e_2} \\
		& \qquad - \frac{ \nu_1 }{ \nu! } \partial^{\nu-e_1+2e_2} + \frac{ \nu_1 }{ \nu! } \partial^{\nu-e_1 +2e_1} +\theta \\
		& = \frac{ d-1+ 2 \sum_{j} \nu_j }{ \nu! } \partial^{\nu+e_1} - \frac{ \nu_1 (d-1+ 2 \sum_{j } \nu_j ) }{ (\nu_2+1) \nu! }\partial^{\nu-e_1+2e_2} +\theta \\
		& =  \left( d-1+ 2 \sum_{j} \nu_j \right) \left( \frac{ \nu_1 +1 }{ (\nu+e_1)! } \partial^{\nu+e_1} - \frac{ \nu_2+2 }{ (\nu-e_1+2e_2)! } \partial^{\nu-e_1+2e_2} \right) +H . \end{aligned}\]}
		From this we get operators
		\[	 \begin{aligned}
		\xi_\nu \circ \delta_1 &- \xi_ { \nu-e_1 + e_2 } \circ \delta_2 \\	& = \left( \frac{  \partial^\nu }{ \nu! } +\zeta_\nu +\theta_\nu \right) \circ \delta_1 -  \left(\frac{  \partial^{\nu-e_1 + e_2 } } { ( \nu-e_1 + e_2 )! } +\zeta_{\nu-e_1 + e_2 } +H_{\nu-e_1 + e_2 } \right) \circ \delta_2  \\ 
		& = a \partial^{\nu+e_1} +b \partial^{\nu-e_1+2e_2} +G+H
		\end{aligned}\]
		with certain coefficients $a,b \neq 0$. Summing up such operators we get for each $\mu$ of degree $n+1$ an operator of the form
		\[  \frac{ \partial^\mu}{\mu!} + c \partial^\lambda +\zeta+\theta  \]
		where $\lambda_j \leq 1$ for $j=2, \ldots, d+1$ (we shift as much as possible to the first exponent).
		If, say, $\lambda_2=1$, then $\xi_{\lambda-e_2} \circ \delta_2$ shows the existence of an operator as looked for: the summand $\zeta_{\lambda-e_2}$ gives the new summand of this kind; this does not work for $\lambda_{d+1}=1$. So, assume that $ \lambda_2=\cdots= \lambda_{d}=0$, $\lambda_{d+1} =0,1$ and $\lambda_1=n+1,n$.
		We have operators of the form
		$\partial_1^{n+1} +c \partial_{d+1}^{n+1} +\zeta+\theta$ or $\partial_1^{n+1} + c \partial_1 \partial_{d+1}^{n}+\zeta+\theta$ or $\partial_1^{n}\partial_{d+1} + c \partial_1 \partial_{d+1}^{n}+\zeta+\theta$  or $\partial_1^{n} \partial_{d+1} + c \partial_{d+1}^{n+1}+\zeta+\theta$. For $n \geq 2$ the second summand can be moved to the $\zeta$, for $n=1$ the operators can be given directly anyway. 
		
		\item
		This follows from (4). Indeed, the operator $\theta$ induces the zero operator to $K$ by base change, and the operator $\zeta$ annihilates all monomials $x^\mu$ with ${\mu_{d+1} \leq 1}$.\qedhere
	\end{enumerate}
\end{proof}

The previous lemma shows the existence of many unitary differential operators on a quadric. In order to get an upper bound for the differential signature we apply the methods from Subsection~\ref{graded case}, in particular Corollary~\ref{gradedhypersurfacesymdersignature} and Remark~\ref{Symsyzcomputation}. Note that in the quadric case the bundle $\Syz( \partial_1f, \ldots, \partial_{d+1} f)$ is (up to the scalar $2$) just the syzygy bundle $\Syz(x_1, \ldots,x_{d+1})$ on $Q_d=\Proj R$, which is the restriction of the cotangent bundle on $\PP^d$.

\begin{lemma}
	\label{quadricsymmetricsection}
	Let $K$ be an algebraically closed field of characteristic $0$ and let $f$ be an irreducible quadric in $d+1$ variables, $d \geq 2$. Then $\Sym^q(\Syz(x_1, \ldots, x_{d+1} )) (\degtotal )$ on $Q_d = \Proj K[x_1, \ldots,x_{d+1}]/(f)$ has no nontrivial section for $\degtotal < \frac{3}{2}q$.
\end{lemma}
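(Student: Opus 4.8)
\emph{Plan.}
First I would rewrite the statement in purely algebraic terms. By Remark~\ref{Symsyzcomputation} (with $e=2$, so $\partial_j f = 2x_j$ and $\OO_{Q_d}(-e+1)=\OO_{Q_d}(-1)$), there is an exact sequence on $Q_d=\Proj R$, $R=K[x_1,\dots,x_{d+1}]/(x_1^2+\cdots+x_{d+1}^2)$,
\[0\longrightarrow \Sym^q(\Syz(x_1,\dots,x_{d+1}))(m)\longrightarrow \OO_{Q_d}(m-q)^{\oplus b_0}\xrightarrow{\ \Theta\ } \OO_{Q_d}(m-q+1)^{\oplus b_1},\]
where $\Theta$ sends $e_\nu$ to $\sum_j x_j e_{\nu-e_j}$. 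Encoding a tuple $(a_\nu)_{|\nu|=q}$, $a_\nu\in R_{m-q}$, as $P=\sum_\nu a_\nu T^\nu$, bihomogeneous of $T$-degree $q$ and $x$-degree $m-q$, the condition for a global section is $\Theta P:=\sum_j x_j\partial_{T_j}P=0$ in $R[T]$. So Lemma~\ref{quadricsymmetricsection} is equivalent to: a bihomogeneous $P$ of bidegree $(m-q,q)$ with $\Theta P=0$ in $R[T]$ and $m-q<q/2$ is zero. The cases $m\le q$ are immediate: for $m<q$ the module $R_{m-q}$ vanishes, and for $m=q$ the $a_\nu$ are scalars, and $\Theta P=0$ forces them to vanish since $x_1,\dots,x_{d+1}$ are $K$-linearly independent in $R_1$. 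Hence I may assume $q<m<\tfrac32 q$ (in particular $q\ge 3$).

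The key step is to compare with $\PP^d$. Since $\Syz(x_1,\dots,x_{d+1})=\Omega^1_{\PP^d}$ and $\Sym^q(\Omega^1_{\PP^d})|_{Q_d}=\Sym^q(\Syz(x_1,\dots,x_{d+1}))$, the hyperplane(-section) sequence for $Q_d=V(f)\subset\PP^d$ gives
\[0\longrightarrow \Sym^q\Omega^1_{\PP^d}(m-2)\longrightarrow \Sym^q\Omega^1_{\PP^d}(m)\longrightarrow \Sym^q(\Syz(x_1,\dots,x_{d+1}))(m)\longrightarrow 0 .\]
On $\PP^d$ the same $3$-term sequence as above (now exact, the contraction being fibrewise surjective because $x_1,\dots,x_{d+1}$ have no common zero) identifies $H^0(\PP^d,\Sym^q\Omega^1_{\PP^d}(m))$ with $\ker(\Theta)$ on bidegree $(m-q,q)$ over $K[x]$. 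Here the operator $\Theta=\sum_j x_j\partial_{T_j}$, its transpose $E=\sum_j T_j\partial_{x_j}$, and $H=\sum_j x_j\partial_{x_j}-\sum_j T_j\partial_{T_j}$ form an $\mathfrak{sl}_2$-triple acting on $K[x,T]$, preserving total degree; on the bidegree $(a,b)$ space $H$ acts by the scalar $a-b$, and in a finite-dimensional $\mathfrak{sl}_2$-representation the raising operator $\Theta$ is injective on every weight space of negative weight. Thus $H^0(\PP^d,\Sym^q\Omega^1_{\PP^d}(m))=0$ for $m<2q$, and in particular for $m<\tfrac32 q$. For $d\ge 2$ we have $H^1(\PP^d,\OO(\ast))=0$, so the same $3$-term sequence identifies $H^1(\PP^d,\Sym^q\Omega^1_{\PP^d}(\ell))$ with $\operatorname{coker}(\Theta)$ on bidegree $(\ell-q,q)$ over $K[x]$. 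Feeding this into the long exact cohomology sequence, and using the vanishing of $H^0(\PP^d,\Sym^q\Omega^1_{\PP^d}(m))$, one gets
\[H^0\big(Q_d,\Sym^q(\Syz(x_1,\dots,x_{d+1}))(m)\big)\;\cong\;\ker\Big(H^1(\PP^d,\Sym^q\Omega^1_{\PP^d}(m-2))\xrightarrow{\ \cdot f\ }H^1(\PP^d,\Sym^q\Omega^1_{\PP^d}(m))\Big).\]

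It remains to show this kernel vanishes for $q<m<\tfrac32 q$. Translated back, this says: if $Q\in K[x,T]$ is bihomogeneous of bidegree $(m-1-q,\,q-1)$ and $(\sum_i x_i^2)\,Q=\Theta P$ for some bihomogeneous $P$ of bidegree $(m-q,q)$, then $Q=\Theta P'$ for some bihomogeneous $P'$ of bidegree $(m-2-q,q)$ — in particular, when $m-2-q<0$ (the case $m=q+1$), $Q=0$. I would prove this by differentiating the defining system $\partial_{T_j}R_j=Q$, $\partial_{T_k}R_j+\partial_{T_j}R_k=0$ (writing $P=\sum_j x_j R_j$), which forces all $\partial_{T_i}^2Q=0$, so $Q$ is multilinear in $T$; a further examination of the same system then kills the remaining freedom, exploiting that multiplication by $f=\sum x_i^2$ commutes with $\Theta$ and has $H$-weight $+2$, so that on the relevant (negative-weight) part of $K[x,T]$ the equation $\Theta P=fQ$ forces $Q\in\operatorname{im}\Theta$.

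The main obstacle is precisely this last step, and more specifically the sharpness of the constant $\tfrac32$: the input from $\PP^d$ alone only yields the weaker threshold $m=2q$ (indeed restriction of $\Sym^q\Omega^1_{\PP^d}|_{Q_d}$ to linear subspaces or conics of $Q_d$, and the order-filtration on $\Sym^q(\Omega^1_{\PP^d}|_{Q_d})$, only give $m=q$), and it is exactly the extra constraint that $fQ$, not $Q$, lies in the image of $\Theta$ which improves $2q$ to $\tfrac32 q$. Making this quantitative — i.e. showing that no nonzero multilinear $Q$ of $T$-degree $q-1$ with $x$-degree below $q/2$ can satisfy the system, matching the $d=2$ computation where $\Syz(x_1,x_2,x_3)|_{Q_2}\cong\OO_{\PP^1}(-3)^{\oplus 2}$ gives the threshold $\tfrac32 q$ on the nose — is the delicate part of the argument.
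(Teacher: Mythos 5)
Your reduction to the kernel of the map $H^1(\PP^d,\Sym^q\Omega^1(m-2))\xrightarrow{\cdot f}H^1(\PP^d,\Sym^q\Omega^1(m))$ is correct, and so is the $\mathfrak{sl}_2$-computation giving $H^0(\PP^d,\Sym^q\Omega^1(m))=0$ for $m<2q$. But the proposal stops exactly where the content of the lemma begins. As you yourself flag, the assertion that a bihomogeneous $Q$ of bidegree $(m-1-q,\,q-1)$ with $fQ=\Theta P$ and $m-q<q/2$ must already lie in $\operatorname{im}\Theta$ is not proven: the sketch ``differentiate the system, get $\partial_{T_i}^2Q=0$, then a further examination kills the remaining freedom'' is not an argument, and the claimed intermediate ``$Q$ is $T$-multilinear'' is only even plausible in the boundary case $m=q+1$ where the $R_j$ are $x$-constant; for $q+1<m<\tfrac32q$ the $R_j$ genuinely depend on $x$ and the system you write down (with $\partial_{T_j}R_j=Q$ and $\partial_{T_k}R_j+\partial_{T_j}R_k=0$) is not the correct translation of $\Theta P=fQ$ in $K[x,T]$. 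In short, the step that actually produces the sharp constant $\tfrac32$ rather than $2$ is missing.

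For comparison, the paper proves the lemma by induction on $d$, with the constant $\tfrac32$ coming from the explicit base case $d=2$: there $Q_2\cong\PP^1$ and $\Syz(x_1,x_2,x_3)\cong\shL^{-3}\oplus\shL^{-3}$ for $\shL$ of degree $1$, so $\Sym^q(\Syz)(\degtotal)\cong\bigoplus_{q+1}\shL^{2\degtotal-3q}$ has no section precisely for $\degtotal<\tfrac32 q$. The inductive step restricts to a generic hyperplane section $Q_{d-1}\subset Q_d$ and uses the splitting $\shF_d|_{Q_{d-1}}\cong\shF_{d-1}\oplus\OO_{Q_{d-1}}(-1)$, so that $\Sym^q(\shF_d)|_{Q_{d-1}}$ contains $\Sym^q(\shF_{d-1})$ as a direct summand. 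The subtlety is that the restriction of a nonzero section to a hyperplane might vanish; the paper handles this with the generic-coordinate-change device (Lemma~\ref{matrixsymmetric}, Corollary~\ref{matrixtranscendent}), which guarantees that after a transcendental change of coordinates all entries of the section tuple are nonzero, and then chooses a hyperplane avoiding the finitely many linear factors. If you want to salvage your cohomological route, you would need an independent proof that $\cdot f$ is injective on $H^1(\PP^d,\Sym^q\Omega^1(m-2))$ in the range $q<m<\tfrac32q$; this is not a consequence of the Euler-sequence and $\mathfrak{sl}_2$ facts you invoke, and I do not see a short argument for it.
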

\begin{proof}
	We do induction on the dimension $d$. For $d=2$, the quadric $Q$ is a projective line $\PP^1$ as an abstract variety, but embedded as a quadric. Let $\shL$ be the unique ample line bundle of degree $1$ on $Q$, so that ${\mathcal O}_Q(1) \cong \shL^2$. It is known that $\Syz(x_1,x_2, x_3)$ splits as $\shL^{-3} \oplus \shL^{-3}$ on $Q$. Hence $\Sym^q( \Syz(x_1,x_2, x_3)) \cong \bigoplus_{q+1} \shL^{-3q}$. So
	\[ \Sym^q( \Syz(x_1,x_2, x_3)) (\degtotal) \cong \bigoplus_{q+1} \shL^{-3q} (\degtotal) \cong \bigoplus_{q+1} \shL^{-3q+2\degtotal} \]
	has no nontrivial section for $\degtotal < \frac{3}{2}q$.
	
	So suppose now that $d \geq 3$ and that the statement is true for smaller $d$. A generic hyperplane section of a smooth quadric is again a smooth quadric. The restriction of the syzygy bundle $\shF_{d} = \Syz(x_1, \ldots, x_{d+1} )$ on $Q_d$ to $Q_{d-1}$ (say, given by $x_{d+1}=0$) is isomorphic to 
	\[\shF_{d} |_{Q_{d-1} }\cong \shF_{d-1} \oplus {\mathcal O}_{Q_{d-1} } (-1) . \]
	Therefore, the restriction of the symmetric powers of $\shF_d$, which are the symmetric powers of the restriction, $\Sym^q ( \shF_{d-1} \oplus  {\mathcal O}_{Q_{d-1} } (-1)   )$, is
	\[ \Sym^q (\shF_{d-1} ) \oplus \Sym^{q-1} (\shF_{d-1} ) (-1) \oplus \Sym^{q-2} (\shF_{d-1} ) (-2) \oplus    \cdots  \oplus {\mathcal O}_{Q_{d-1} } (-q)  . \]
	By the induction hypothesis, we have information about the global sections of the summand on the left, but not about the other summands. This decomposition is compatible with the decomposition of $\Sym^q\( \OO_{Q_d } (-1)^{\oplus d+1} \)$ coming from $  \OO_{Q_d }^{\oplus d+1} \cong ( \OO_{Q_{d} }^{\oplus d} ) \oplus \OO_{Q_d} $. Therefore, if a section of $\Sym^q(\shF_d) (\degtotal)$ on $Q_d$ is given as a tuple $(\alpha_\nu)$ in the kernel of the map $ \bigoplus \OO_{Q_d}(\degtotal -q) \rightarrow  \bigoplus \OO_{Q_d}(\degtotal -q+1) $, then its restriction to $Q_{d-1}=V_+(X_{d+1})$ is directly given with respect to the decomposition $\bigoplus_{k =0}^q \Sym^{q-k} ( \shF_{d-1}) (-k) $ as the family of the kernel elements of \[  \Sym^{q-k} \(\bigoplus_d \OO_{Q_{d-1} }(-1)\) \rightarrow  \Sym^{q-k-1} \(\bigoplus_d \OO_{Q_{d-1} }(-1) \) .\]
	
	So assume now that there is a nonzero section of $\Sym^q(\shF_{d}) (\degtotal)$ with $\degtotal <\frac{3}{2}q$ given by a tuple $(\alpha_\nu)$, which are homogeneous elements of $K[x_1, \ldots,x_{d+1}]/(f)$ of degree $ \degtotal -q $. We look at linear coordinate changes given by an invertible $(d+1) \times (d+1)$-matrix $M$ over $K$ (or field extensions of it) and giving rise to the commutative diagram
	\[ \xymatrixcolsep{5pc}\xymatrix{  \OO_{Q_{d} }(-1)^{\oplus d+1} \ar[r]^-{x_1, \ldots ,x_{d+1} }\ar[d]^-M &   \OO_{Q_{d} } \ar[d]^-{=}  \\  \OO_{Q_{d} }(-1)^{\oplus d+1} \ar[r]^-{ y_1, \ldots, y_{d+1} } &  \OO_{Q_{d} }, } \]
	and to
	\[ \xymatrix{ \Sym^q ( \OO_{Q_{d} }(-1)^{\oplus d+1} ) \ar[r] \ar[d]^{\Sym^q(M) } &  \Sym^{q-1} ( \OO_{Q_{d} }(-1)^{\oplus d+1} )  \ar[d]^{  \Sym^{q-1}(M) }  \\  \Sym^q ( \OO_{Q_{d} }(-1)^{\oplus d+1} ) \ar[r] &  \Sym^{q-1} (  \OO_{Q_{d} }(-1)^{\oplus d+1} ),  } \]
	and also $\degtotal$-twists thereof. Here $\Sym^q(M) $ is the $q$th symmetric power of $M$. Now, we look at the field extension $K \subseteq K'=K(t_{ij})$ (or its algebraic closure) with new algebraically independent elements $t_{ij}$, $1 \leq i,j \leq d+1$, and we consider the matrix $ M=(t_{ij} )$ which gives corresponding diagrams over $K'$. Corollary~\ref{matrixtranscendent} below applied to the vector space of forms of degree $\degtotal-q$  shows that in $\Sym^q(M) (\alpha_\nu)  $ all entries are nonzero. As the transformed representation is as good as the starting one, we may assume that all entries of $\alpha_\nu$ are nonzero. Now we restrict to $Q_{d-1}=V_+(L)$ for a linear form $L$. As the polynomials $\alpha_\nu$ only have finitely many linear factors altogether (since $K[x_1, \ldots, x_{d+1}]/(f) $ is factorial for $d \geq 4$, for $d=3$ the argument is slightly more complicated), we find $L$ such that the restrictions of all $\alpha_\nu$ to $V_+(L)$ are nonzero. So this produces a contradiction to the induction hypothesis.	
\end{proof}

\begin{lemma}
	\label{matrixsymmetric}
	Let $K$ be a field of characteristic $0$ and let $  M  = [t_{ij}]$ be an $n \times n$ matrix of indeterminates. Then the entries in the symmetric powers $\operatorname{Sym}^{ q } \left( M \right)  $ are linearly independent over $K$.
\end{lemma}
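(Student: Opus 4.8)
The statement is: for an $n\times n$ matrix $M = [t_{ij}]$ of indeterminates over a field $K$ of characteristic $0$, the entries of $\operatorname{Sym}^q(M)$ are linearly independent over $K$. The plan is to unwind what the entries of $\operatorname{Sym}^q(M)$ actually are as polynomials in the $t_{ij}$, and then exhibit, for each entry, a monomial that occurs in that entry but in no other entry (or more precisely, find a $K$-linear functional on the polynomial ring that separates the entries). Recall that $\operatorname{Sym}^q(M)$ is the matrix of the induced linear map on $\operatorname{Sym}^q(K^n)$ with respect to the monomial basis $\{e^\nu : |\nu| = q\}$, where $e^\nu = e_1^{\nu_1}\cdots e_n^{\nu_n}$. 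Writing $M e_j = \sum_i t_{ij} e_i$, the image $M^{\otimes}(e^\nu)$ expands as $\prod_j (\sum_i t_{ij} e_i)^{\nu_j}$, and the coefficient of a basis vector $e^\mu$ in this expansion is precisely the $(\mu,\nu)$ entry of $\operatorname{Sym}^q(M)$. This entry is a sum, over all ways of ``routing'' the multi-index $\nu$ to the multi-index $\mu$, of monomials $\prod t_{ij}^{c_{ij}}$ where the matrix $(c_{ij})$ of nonnegative integers has row sums $\mu$ and column sums $\nu$, each such monomial appearing with a positive multinomial coefficient (positive since $\operatorname{char} K = 0$).

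The key combinatorial point is then: given the entry indexed by $(\mu,\nu)$, I want a monomial $\prod t_{ij}^{c_{ij}}$ that appears in it and in no other entry. But the column sums of $(c_{ij})$ recover $\nu$ and the row sums recover $\mu$; so \emph{every} monomial appearing in the $(\mu,\nu)$ entry determines both $\mu$ and $\nu$ uniquely from its exponent matrix. This immediately shows that distinct entries involve disjoint sets of monomials, hence are linearly independent over $K$ (the positivity of the coefficients guarantees the entries are actually nonzero, so no entry is the zero polynomial and the ``disjoint support'' argument genuinely gives linear independence). So the first step I would carry out is to set up this monomial expansion carefully with the contingency-table description of entries; the second step is the one-line observation that row/column sums of the exponent matrix recover the index; the third step is to assemble these into the linear independence conclusion, noting that characteristic zero is used exactly to ensure the multinomial coefficients don't vanish.

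I do not expect a serious obstacle here — the argument is essentially bookkeeping. The one place to be slightly careful is the precise convention for $\operatorname{Sym}^q(M)$ (whether one uses the divided-power / monomial basis or the symmetrized-tensor basis); in characteristic zero these differ only by positive scalar factors on the basis vectors, which rescale the entries by nonzero constants and do not affect linear independence, so I would just fix the monomial-basis convention at the outset and proceed. A secondary point worth a sentence: one should confirm that for a \emph{generic} matrix each routing $(c_{ij})$ with the prescribed row and column sums is actually realized (i.e. the contingency-table set is nonempty), which is elementary — e.g. one can always take a ``greedy'' filling — so every entry is a genuinely nonzero polynomial.

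\medskip
\noindent\emph{(Remark for the write-up: the lemma is the combinatorial engine behind Corollary~\ref{matrixtranscendent}, used in the proof of Lemma~\ref{quadricsymmetricsection}; the phrasing should make clear that the entries being $K$-linearly independent is exactly what is needed to run the coordinate-change argument there, since it lets one pass from a section with some zero components to one with all components nonzero after a generic transformation.)}
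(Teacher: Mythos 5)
Your proposal is correct and is essentially the same argument as the paper's: both expand the $(\mu,\nu)$ entry of $\operatorname{Sym}^q(M)$ as a sum of monomials in the $t_{ij}$ whose exponent data (a contingency table in your notation, a tuple of multi-indices $\lambda_1,\dots,\lambda_n$ in the paper's) recovers both $\mu$ and $\nu$, so distinct entries have disjoint monomial support, and characteristic zero ensures the multinomial coefficients are nonzero. The only cosmetic difference is that the paper phrases the induced map on the degree-$q$ piece of a polynomial ring rather than on $\operatorname{Sym}^q(K^n)$, which is the same thing.
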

\begin{proof}
	The symmetric power  $\operatorname{Sym}^{ q } \left( M \right)$ of the matrix describes the induced map on the polynomial ring $K(t_{ij})[x_1 , \ldots , x_n]$ in degree $q$ given by the linear map $ x_i \mapsto \sum_{j = 1}^n t_{ji} x_j$. The entry in the $\mu$th row and the $\nu$th column is the coefficient of  $x^\mu$ of
	\[ \left( t_{11}x_1 + \cdots + t_{n1}x_n \right)^{\nu_1} \cdots \left( t_{1n}x_1 + \cdots + t_{nn} x_n \right)^{\nu_n} \,  . \]
	The $i$-th power is
	\[  \sum_{ \mondeg {\lambda_i } = \nu_i} { { \nu_i } \choose \lambda_i} \cdot t_i^{\lambda_i} x^{\lambda_i}   . \]
	To determine the coefficient of $x^\mu$ in the product, we have to consider the
	product of the form
	\[ t_1^{\lambda_1} x^{\lambda_1} \cdots t_n^{\lambda_n} x^{\lambda_n}  = t_1^{ \lambda_1} \cdots t_n^{\lambda_n} x^{ \lambda_1 + \cdots + \lambda_n} \]
	with $\lambda_1 + \cdots + \lambda_n  = \mu $. Since the $t_{ij}$ are variables, we get from the monomial  $t_1^{ \lambda_1} \cdots t_n^{\lambda_n} $ the multi-tuple $(\lambda_1 , \ldots , \lambda_n)$. This determines $\mu$ as the sum and it determines $\nu$ via $\nu_i =\mondeg {\lambda_i }$. This means that each  $t_1^{ \lambda_1} \cdots t_n^{\lambda_n} $ occurs only in one entry of the symmetric power matrix. Since the binomial coefficients are not $0$ in characteristic zero, the entries are linearly independent.
\end{proof}

\begin{corollary}
	\label{matrixtranscendent}
	Let $V$ be a finite dimensional $K$-vector space, let $t_{ij}$, $1 \leq i,j \leq n$, be variables with corresponding field extension $K \subseteq K'=K(t_{ij})$ and let $M=[t_{ij}]$. Let $ \Sym^q (M): V^{q+n-1 \choose n-1 } \tensor_K K' \rightarrow V^{ q +n-1\choose n-1 } \tensor_KK' $. Then every nonzero element in $ V^{ q +n-1\choose n-1 } $ is sent by this map to an element such that all its entries are nonzero.
\end{corollary}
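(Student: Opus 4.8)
The plan is to deduce this from the linear independence of Lemma~\ref{matrixsymmetric}, reducing the $V$-valued statement to one about $K$-linear relations among polynomials. Set $N = \binom{q+n-1}{n-1}$, let $\Lambda \subseteq \NN^n$ be the set of multi-indices of total degree $q$ (so $|\Lambda| = N$, and these index the rows and columns of $\Sym^q(M)$), and write $m_{\mu\nu} \in K[t_{ij}]$ for the $(\mu,\nu)$-entry of $\Sym^q(M)$. I would fix a $K$-basis $w_1,\dots,w_k$ of $V$, which is simultaneously a $K'$-basis of $V \otimes_K K'$. Given a nonzero $v = (v_\nu)_{\nu\in\Lambda} \in V^N$, its image under $\Sym^q(M)$ has $\mu$-th entry $u_\mu = \sum_{\nu\in\Lambda} m_{\mu\nu}\, v_\nu \in V\otimes_K K'$, and the goal is to show $u_\mu \neq 0$ for each $\mu \in \Lambda$.

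First I would expand each $v_\nu = \sum_{l=1}^k c_{\nu l} w_l$ with $c_{\nu l} \in K$, so that $u_\mu = \sum_{l=1}^k \big(\sum_{\nu\in\Lambda} c_{\nu l}\, m_{\mu\nu}\big) w_l$. Because the $w_l$ are $K'$-linearly independent in $V\otimes_K K'$, the equation $u_\mu = 0$ is equivalent to $\sum_{\nu\in\Lambda} c_{\nu l}\, m_{\mu\nu} = 0$ in $K[t_{ij}]$ for every $l$; that is, to a $K$-linear dependence among the polynomials forming the $\mu$-th row of $\Sym^q(M)$.

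The decisive step is then to apply Lemma~\ref{matrixsymmetric}: since all entries of $\Sym^q(M)$ are linearly independent over $K$, in particular the entries in the fixed row $\mu$ are, which forces $c_{\nu l} = 0$ for all $\nu$ and all $l$, i.e.\ $v = 0$, contrary to hypothesis. Hence every $u_\mu$ is nonzero, as claimed. I do not anticipate a genuine obstacle here: the substance is entirely contained in Lemma~\ref{matrixsymmetric}, and the only point requiring a moment's care is that although the $u_\mu$ live over the transcendental extension $K'$, the relations extracted in coordinates have coefficients $c_{\nu l}$ in the base field $K$, which is precisely the setting of that lemma. (The argument in fact only needs the per-row version of the independence, but invoking the lemma as stated is cleanest.)
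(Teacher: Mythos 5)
Your argument is correct and follows essentially the same route as the paper's proof: write out the image entry $u_\mu$ in coordinates with respect to a fixed $K$-basis of $V$, observe that $u_\mu = 0$ would yield a nontrivial $K$-linear relation among the entries of the $\mu$-th row of $\Sym^q(M)$, and invoke Lemma~\ref{matrixsymmetric} to rule that out. The paper phrases this by taking $V = K^m$ and working with the coordinate functions $\alpha_{\nu,j}$ directly, but this is the same reduction.
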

\begin{proof}
	Let $V=K^m$ and let $\beta=(\alpha_\nu) =(\alpha_{\nu,j}) \neq 0$. Then $\alpha_{\nu,j} \neq 0 $ for some $\nu,j$. Assume that $(\Sym^q(M) (\beta))_\mu = 0$. Writing $  \Sym^q(M) = (c_{\mu , \nu} ) $, this means that $\sum_\nu  c_{\mu, \nu}   \alpha_\nu =0$ and in particular  $\sum_\nu  c_{\mu, \nu}   \alpha_{\nu,j} =0$ for all $j=1, \ldots, m$. But this means that there exists a nontrivial $K$-linear relation between the entries in the $\mu$-row of $\Sym^q(M)$ contradicting Lemma~\ref{matrixsymmetric}.
\end{proof}

\begin{theorem}
	\label{quadricsignature}
	Let $R=K[x_1, \ldots, x_{d+1}]/(x_1^2 + \cdots + x_{d+1}^2) $, $ d \geq 2$, over an algebraically closed field $K$ of characteristic $0$. Then the differential signature of $R$ is $\left( \frac{1}{2} \right)^{d-1}$.
\end{theorem}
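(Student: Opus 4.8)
The plan is to prove the two inequalities $\dm{K}(R)\le (1/2)^{d-1}$ and $\dm{K}(R)\ge (1/2)^{d-1}$ separately: the upper bound comes from Lemma~\ref{quadricsymmetricsection} via the symmetric‑syzygy criterion, and the lower bound from the explicit unitary operators produced in Lemma~\ref{quadriclemma}.

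For the upper bound I would apply Corollary~\ref{gradedhypersurfacesymdersignature} directly. Here $F=x_1^2+\cdots+x_{d+1}^2$ has degree $e=2$; the Jacobian ideal is $(2x_1,\dots,2x_{d+1})=\m$, so the singularity is isolated and $Y=\Proj R=Q_d$ is smooth; and $\Syz(\partial_1F,\dots,\partial_{d+1}F)=\Syz(2x_1,\dots,2x_{d+1})$ equals the syzygy bundle $\Syz(x_1,\dots,x_{d+1})$ on $Q_d$ (the scalar $2$ is a unit). Lemma~\ref{quadricsymmetricsection} says precisely that $\Gamma\big(Q_d,\Sym^n(\Syz(x_1,\dots,x_{d+1}))(m)\big)=0$ for all $m<\tfrac32 n$, which is the hypothesis of Corollary~\ref{gradedhypersurfacesymdersignature} with $e-\alpha=\tfrac32$, i.e.\ $\alpha=\tfrac12$. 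The corollary then yields $\dm{K}(R)\le e\alpha^d=2\cdot(1/2)^d=(1/2)^{d-1}$.

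For the lower bound I would count independent unitary operators. Since $R$ is a graded domain that is an algebra with coefficient field $K$ and $\Frac(R)/K$ is separable, Theorem~\ref{ThmDiffSigRanks} together with Proposition~\ref{RankDiff} (residue‑field transcendence degree $t=0$) gives $\dm{K}(R)=\pps{K}(R)=\limsup_{N}\frac{\frk_R(\ModDif{N}{R}{K})}{\binom{N+d}{d}}$. Fix $m$, and for each $j\le m$ take the operators $\xi_\nu$ of Lemma~\ref{quadriclemma}(4) attached to the monomials $x^\nu$ with $|\nu|=j$ and $\nu_{d+1}\le 1$; each is homogeneous of degree $-j$ and has order $\le 2j\le 2m$. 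Being homogeneous of degree $-j$, the induced $K$‑valued operator $\tilde\xi_\nu$ vanishes on every graded piece $[R]_i$ with $i\ne j$, so operators attached to different degrees are automatically linearly independent over $K$; within a fixed degree $j$ the relations $\tilde\xi_\nu(x^\nu)=1$ and $\tilde\xi_\nu(x^\mu)=0$ for $\mu\ne\nu$ with $\mu_{d+1}\le 1$ of Lemma~\ref{quadriclemma}(5) force linear independence. Hence, by Lemma~\ref{unitarysystem}, this collection of $t:=\sum_{j=0}^m\#\{\nu\in\NN^{d+1}:\,|\nu|=j,\ \nu_{d+1}\le1\}$ operators determines a surjection $\ModDif{2m}{R}{K}\twoheadrightarrow R^t$, which splits, so $\frk_R(\ModDif{2m}{R}{K})\ge t$. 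The hockey‑stick identity gives
\[ t=\sum_{j=0}^m\left(\binom{j+d-1}{d-1}+\binom{j+d-2}{d-1}\right)=\binom{m+d}{d}+\binom{m+d-1}{d}\ \sim\ \frac{2m^d}{d!}, \]
while $\binom{2m+d}{d}\sim\frac{(2m)^d}{d!}$; dividing and letting $m\to\infty$ yields $\dm{K}(R)\ge \frac{2}{2^d}=(1/2)^{d-1}$, matching the upper bound.

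Both technical inputs being already in hand, the remaining work is essentially bookkeeping, and the one point to be careful about is that the operators of Lemma~\ref{quadriclemma} asymptotically account for the full fraction $2/2^d$ of the rank: this is why Lemma~\ref{quadriclemma}(5) is phrased with the sharp vanishing $\tilde\xi_\nu(x^\mu)=0$ on \emph{all} monomials with last exponent $\le 1$, why one must use both the $\nu_{d+1}=0$ and the $\nu_{d+1}=1$ slices, and why the order bound $\ord(\xi_\nu)\le 2|\nu|$ (rather than a coarser linear bound) is what produces the $2^d$ in the denominator and hence the correct constant.
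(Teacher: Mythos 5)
Your proposal is correct and follows essentially the same route as the paper: the lower bound by counting the independent unitary operators $\xi_\nu$ of order $\le 2n$ from Lemma~\ref{quadriclemma}(4)--(5) (whose number is $\dim_K R/\m^{n+1}\sim 2n^d/d!$) against the rank $\binom{2n+d}{d}\sim (2n)^d/d!$, and the upper bound by feeding the vanishing of Lemma~\ref{quadricsymmetricsection} into Corollary~\ref{gradedhypersurfacesymdersignature} with $e=2$, $\alpha=\tfrac12$. The only cosmetic difference is that you phrase the lower-bound count via the hockey-stick identity and an explicit surjection $\ModDif{2m}{R}{K}\twoheadrightarrow R^t$, while the paper states the same count as $\dim_K R/\m^{n+1}$; the substance is identical.
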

\begin{proof}
	Lemma~\ref{quadriclemma}~(5) tells us that for every
	monomial $x^\nu$ of degree $n$ there exists an operator $F_\nu$ of order $\leq 2n$ and homogeneous of degree $-n$ sending  $x^\nu$ to a unit and sending the other monomials to $0$. This means that these operators form an independent system of unitary operators of order $2n$ and of cardinality 
	 {
	\[\sum_{j=0}^n \dim_K( R_j) = \dim_K R/\m^{n+1} =2 \frac{n^{d} }{d!}. \]
	}
	Therefore the quotient of unitary operators of order up to $2n$ compared with all operators of order up to $2n$ is $\geq \frac{2n^d/d!}{ (2n)^d/d!} $ and the differential signature $\geq (1/2)^{d-1}$.
	
	From Lemma~\ref{quadricsymmetricsection} and Corollary~\ref{gradedhypersurfacesymdersignature} we get (with $e=2$ and $\alpha =1/2$) $ \dm{K}(R) \leq  (1/2)^{d-1}   $.
	\end{proof}

\begin{remark}
	For $d=1$ the equation is $x^2+y^2=0$. In this case the situation is completely different. On one hand, there are no unitary operators beside the identity at all (the operators constructed in Lemma~\ref{quadriclemma} exist, but are not unitary). On the other hand, there are many global sections of $\Sym^q (\Syz(x,y) ) \cong \Sym^q ( \OO (-2)) $ of low degree.
\end{remark}

\begin{remark}
	The operators $\xi_\nu$ of order $\geq 2n$ from Lemma~\ref{quadriclemma} yield sections in $\Sym^{2n} (\Syz(x_1, \ldots, x_{d+1} )) (3n) $ on $Q_d $, hence Lemma~\ref{quadricsymmetricsection} is best possible. For example, $\delta_1$ yields a section in $\Sym^{2} (\Syz(x_1, \ldots, x_{d+1} )) (3) $ given by variables.
\end{remark}

\begin{remark}
	With the methods of this section we can also compute the differential powers of the maximal ideal of a quadric, the result is \[ \m\dif{2n-1}{K}= \m\dif{2n}{K} = \m^n .\]
	We restrict ourselves to monomials. If $x^\nu \notin \m^n$, then the degree of $x^\nu$ is $ \leq n-1$ and then there exists by Lemma~\ref{quadriclemma} a unitary operator of order $\leq 2n-2$ sending it to a unit. Hence $x^\nu \notin \m\dif{2n-1}{}$. If $x^\nu \in \m^n$, then its degree is $\geq n$. Lemma~\ref{quadricsymmetricsection} and the proof of Theorem~\ref{quadricsignature} shows that there does not exist an operator of order $< 2n$ sending $x^\nu$ to a unit. Hence $x^\nu \in \m\dif{2n}{K}$.	
\end{remark}

\begin{remark}
	The limit of the $F$-signature of the quadrics $R_{d,p}=\FF_p[x_1, \ldots, x_{d+1}]/(x_1^2+ \cdots + x_{d+1}^2)$ as $p$ goes to infinity can be computed via \cite[Example~2.3]{WatanabeYoshida} from results of Gessel and Monsky \cite[Theorem~3.8]{GesselMonsky}. The result is that the limit is one minus the coefficient of $z^d$ in the power series expansion of $\operatorname{tan}(z) + \operatorname{sec}(z)$. This gives the values
	
	\
	
	\begin{center}
		\begin{tabular}{|c|c|c|c|c|c|c|} \hline $ d $ & 2 & 3 &4&5&6&7  \\ \hline $ \lim_{p \rightarrow \infty} s(R_{d,p} )$ & $\frac{ 1 }{ 2 }$ & $ \frac{ 2 }{ 3 }$ &$ \frac{ 19 }{ 24 }$  &$\frac{ 13 }{ 15 }$ &$ \frac{ 659 }{ 720 }$ & $\frac{ 298 }{ 315 }$ \\ \hline \end{tabular}
	\end{center}
	\
	
	\noindent which look much wilder than the differential signature.
\end{remark}

\begin{remark}
	For $d=2c+1$ odd, and $K$ algebraically closed of characteristic zero, the quadric hypersurface $R=K[x_1, \ldots, x_{d+1}]/(x_1^2+ \cdots + x_{d+1}^2) \cong K[y_1, \ldots, y_{d+1}]/(y_1 y_2+ y_3 y_4 + \cdots + y_d y_{d+1})$ can be realized as a ring of invariants of an action of $\mathrm{SL}_{c}(K)$: if $V$ is the standard representation, then the invariant ring of the representation $V^{\oplus c-1} \oplus V^*$ is isomorphic to $R$ \cite[\S6,\S14]{Weyl}. However, the inclusion of this invariant ring into the ambient polynomial ring is not differentially extensible \cite[Theorem~11.15]{Schwarz}, so the methods of the previous subsections of this section do not apply.
\end{remark}

%%%%%%%%%%%%%%%%%%%%%%%%%%%%%%%%%%%%%%%%%%%%%
\section{Comparison with differential symmetric signature}\label{Comparison}
%%%%%%%%%%%%%%%%%%%%%%%%%%%%%%%%%%%%%%%%%%%%%

In this section we compare the differential signature and the differential symmetric signature recently introduced by Caminata and the first author \cite{SymSig,BCHigh}. Before recalling the definition of this signature, we make a few observations.

We have the short exact sequence
\[  0 \longrightarrow \Delta^{n}/\Delta^{n+1} \longrightarrow P^n_{R|A} = (R \tensor_A R)/\Delta^{n+1}  \longrightarrow P^{n-1}_{R|A} = (R \tensor_A R)/\Delta^{n}   \longrightarrow 0 \]
of $R$-modules. The direct sum $ \gr_{\bullet}(\ModDif{\bullet}{R}{A})=\bigoplus _{n \in \NN}  \Delta^n/\Delta^{n+1}$ is the \emph{graded associated ring} (for the diagonal embedding). There is a surjective graded $R$-linear map
 \cite[16.3.1.1]{EGAIV}:
 \[ \bigoplus_{n \in \NN} \Sym^n (\Omega_{R|A} ) \longrightarrow  \bigoplus_{n \in \NN} \Delta^{n}/\Delta^{n+1}.    \]
 The algebra on the left is called the \emph{tangent algebra} as its spectrum gives the tangent scheme over $\Spec R$. This map is induced in degree one by the identity $\Omega_{R|A} \rightarrow \Delta/\Delta^2$. If $R$ is differentially smooth in the sense of \cite[D{e}finition~16.10.1]{EGAIV}, then by definition
$\Omega_{R|A}$ is locally free and this canonical homomorphism is an isomorphism. In this case also the modules of principal parts are locally free. So in the affine smooth case we have a decomposition
\[ P^n_{R|A} = \bigoplus_{k \leq n}  \Sym^k (\Omega_{R|A} )  . \]

The \emph{symmetric signature}\index{symmetric signature}  \cite{SymSig,BCHigh} is defined as the limit (if it exists) for $n \rightarrow \infty$ of
\[ \frac{ \frk \left( \bigoplus_{k \leq n} \Sym^k (\Omega_{R|A} )^{**} \right) }{ \Rank \left( \bigoplus_{k \leq n} \Sym^k (\Omega_{R|A} ) \right) }  =  \frac{ \frk \left( \bigoplus_{k \leq n} \Sym^k (\Omega_{R|A} )^{**} \right) }{ \binom{d+n}{n} } , \]
where $\, ^{**}$ denotes the double dual functor $\Hom_R (\Hom_R (-,R),R)$. This gives the reflexive hull of the module, which is also the evaluation of the (sheaf) module over an open subset $U$ containing all points of codimension one. If $U$ is also smooth, and such subsets exist in the normal case, then there is an exact sequence
\[ 0 \longrightarrow  \Sym^n (\Omega_{R|K})|_U  \longrightarrow  P^n_{R|K}|_U  \longrightarrow  P^{n-1}_{R|K}|_U \longrightarrow 0 \,  \]
of locally free sheaves, which does not split in general. We describe a situation where the module of principal parts on $U$ splits and is isomorphic to the direct sum of the symmetric powers of the K\"ahler differentials.

\begin{theorem}
	\label{differentialsymmetriccompare}
	Let $S=K[x_1, \ldots, x_d]	$ be a polynomial ring and $G$ a finite group acting on $S$, with order coprime to the characteristic of the field $K$, with invariant ring $R=S^G$. Suppose that $G$ contains no elements that fix a hyperplane in the space of one-forms $[S]_1$. Let $U$ denote the smooth locus of $\Spec R$. Then there exists the following commutative diagram   
	\[ \xymatrix{ 
		\Sym^n (\Omega_{R|K}) \ar[r]\ar[dr] &  \Delta^{n}/\Delta^{n+1} \ar[r]\ar[d] & \ModDif{n}{R}{K} \ar[r]\ar[d] & \ModDif{n-1}{R}{K} \ar[r]\ar[d] & 0 \\
		0 \ar[r] & \Sym^n (\Omega_{R|K}) (U) \ar[r]\ar[d]^-{\cong} & \ModDif{n}{R}{K}(U) \ar[r]\ar[d]^-{\cong} \ar@/_/@{-->}[l] & \ModDif{n-1}{R}{K}(U) \ar[r]\ar[d]^-{\cong}\ar@/_/@{-->}[l] & 0\\
		0 \ar[r] & (\Sym^n (\Omega_{S|K}))^G \ar[r] & (\ModDif{n}{S}{K})^G \ar[r]\ar@/_/@{-->}[l] & (\ModDif{n-1}{S}{K})^G \ar[r]\ar@/_/@{-->}[l] & 0
	 }\]
	where the dotted arrows indicate splittings. The free rank of $P^n_{R|K} $ and of $(P^{n}_{ S|K} )^G$ coincide, and they equal the sum of the free ranks of $\Sym^k (\Omega_{R|K}) (U)$ for $k \leq n$. In particular, the differential signature equals the symmetric signature, namely $1/ |G|$.	
\end{theorem}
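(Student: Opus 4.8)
The plan is to produce the claimed commutative diagram and then read off the equality of signatures from it. The whole argument is essentially a $G$-invariant refinement of the exact sequences and splittings already available to us. First I would invoke Proposition~\ref{prop-extensible-finite} (order-differential extensibility of $R=S^G\subseteq S$, which applies since the action is unramified in codimension one by the no-hyperplane hypothesis), together with the fact that the Reynolds operator $\rho = \frac{1}{|G|}\sum_{g\in G} g$ splits the inclusion as $R$-modules. Extensibility says every operator on $R$ lifts to one on $S$; combined with the retraction coming from the splitting, one gets that the natural restriction map $(D^n_{S|K})^G \to D^n_{R|K}$ is surjective, and it is clearly injective since $R$ spans $S$ over the appropriate localization. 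Dualizing via Proposition~\ref{representing-differential} and Proposition~\ref{universaldifferential}, this translates into the bottom isomorphism $\ModDif{n}{R}{K}(U) \cong (\ModDif{n}{S}{K})^G$, where $U$ is the smooth locus; here one uses that $S$ is smooth, so $\ModDif{n}{S}{K}$ is locally free and decomposes as $\bigoplus_{k\leq n}\Sym^k(\Omega_{S|K})$, that taking $G$-invariants is exact (order coprime to the characteristic), and that $R=S^G$ is normal with $\operatorname{codim}(\Spec R\setminus U)\geq 2$, so that reflexive sheaves on $\Spec R$ are computed on $U$ and invariants of (reflexive) $S$-modules restrict to the reflexive hulls of the corresponding $R$-modules on $U$.

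Next I would assemble the diagram row by row. The top row is the standard exact sequence $\Sym^n(\Omega_{R|K}) \to \Delta^n/\Delta^{n+1} \to \ModDif{n}{R}{K} \to \ModDif{n-1}{R}{K} \to 0$ from \cite[16.3.1.1]{EGAIV} and the start of Section~\ref{Comparison}. The middle row is the restriction of $0\to \Sym^n(\Omega_{R|K})(U)\to \ModDif{n}{R}{K}(U)\to \ModDif{n-1}{R}{K}(U)\to 0$ to the smooth open $U$, which is exact and, crucially, \emph{split} there: this is exactly the decomposition $\ModDif{n}{R}{K}(U)\cong\bigoplus_{k\leq n}\Sym^k(\Omega_{R|K})(U)$ coming from differential smoothness of $R$ on $U$ (using $\Omega_{R|K}|_U$ locally free and the canonical map $\Sym^\bullet(\Omega)\to\gr^\bullet$ being an isomorphism there). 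The bottom row is the $G$-invariant part of the split exact sequence $0\to\Sym^n(\Omega_{S|K})\to\ModDif{n}{S}{K}\to\ModDif{n-1}{S}{K}\to 0$, and taking invariants of a split sequence of $K[G]$-modules (with $|G|$ invertible) keeps it split. The vertical isomorphisms between the middle and bottom rows are the compatible identifications $\Sym^k(\Omega_{R|K})(U)\cong(\Sym^k(\Omega_{S|K}))^G$, $\ModDif{k}{R}{K}(U)\cong(\ModDif{k}{S}{K})^G$, which follow from the same Reynolds-operator / codimension-two argument applied degreewise; the diagonal arrow $\Sym^n(\Omega_{R|K})\to\Sym^n(\Omega_{R|K})(U)$ is the sheafification/restriction map, and commutativity of each square is a routine naturality check.

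Finally, to extract the numerical statement: by Lemma~\ref{freerankinterpretation}, free rank is $\lambda_R(M/\NF{M})$, and for a reflexive module it is computed on $U$; since the middle row is split on $U$, one gets $\frk_R(\ModDif{n}{R}{K}) = \frk(\ModDif{n}{R}{K}(U)) = \sum_{k\leq n}\frk(\Sym^k(\Omega_{R|K})(U))$, and equally this equals $\frk\big((\ModDif{n}{S}{K})^G\big)$. (One also notes $\frk_R(\ModDif{n}{R}{K})=\frk(\ModDif{n}{R}{K}(U))$ directly because free summands of a finitely generated module over a normal domain extend across a codimension-two locus.) Dividing by $\Rank(\ModDif{n}{R}{K})=\binom{d+n}{n}$ (Proposition~\ref{RankDiff}) and comparing with the definition of the symmetric signature as the limit of $\frk\big(\bigoplus_{k\leq n}\Sym^k(\Omega_{R|K})^{**}\big)/\binom{d+n}{n}$, the two limits literally agree termwise, so $\dm{K}(R)=$ symmetric signature; by Theorem~\ref{group-formula} both equal $1/|G|$. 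The main obstacle I anticipate is the careful bookkeeping in the middle of the diagram: verifying that invariants commute with restriction to $U$ and with reflexive hull, and that the splitting of the middle row is $G$-equivariantly compatible with the splitting of the bottom row, so that the vertical maps really are isomorphisms of exact (and split) sequences rather than just of their terms; everything else is a direct citation of Proposition~\ref{prop-extensible-finite}, Proposition~\ref{representing-differential}, and the smooth-case decomposition.
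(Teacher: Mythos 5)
Your overall architecture matches the paper's (identify $D^n_{R|K}$ with the invariant operators, identify the middle row with the $G$-invariants of the split sequence over $S$, then compare free ranks and quote Theorem~\ref{group-formula}), but there is a genuine gap at the decisive numerical step: the equality $\frk_R(\ModDif{n}{R}{K})=\frk\big(\ModDif{n}{R}{K}(U)\big)$. Only one inequality is automatic: a split surjection $\ModDif{n}{R}{K}\to R^r$ stays split after taking sections over $U$, so $\frk(\ModDif{n}{R}{K}(U))\geq \frk_R(\ModDif{n}{R}{K})$. The direction you actually need — descending a free summand of the sections module over $U$ (equivalently of $(\ModDif{n}{S}{K})^G$) to a free summand of $\ModDif{n}{R}{K}$ itself — is false for general finitely generated modules over a normal domain, contrary to your parenthetical claim that ``free summands extend across a codimension-two locus'': take $M=\m$ in a normal non-regular local domain of dimension $\geq 2$ with $U$ the punctured spectrum; then $\Gamma(U,\widetilde M)\cong R$ is free of rank one, while $\m$ has no free summand. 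Your earlier remark that the free rank ``is computed on $U$'' for reflexive modules does not help, because $\ModDif{n}{R}{K}$ is not reflexive — that is exactly the issue. The paper closes this gap using the group action: a surjection $(\ModDif{n}{S}{K})^G\to R$ corresponds to a $G$-invariant unitary operator $\tilde{\delta}$ on $S$ with $\tilde{\delta}(f)=1$ for some $f\in S$; by invariance $\tilde{\delta}\bigl(\sum_{\varphi\in G}\varphi(f)\bigr)=|G|$, a unit, and $\sum_{\varphi\in G}\varphi(f)\in R$, so the restricted operator is unitary on $R$ and Lemma~\ref{unitary} (and Lemma~\ref{unitarysystem} for systems) produces a genuine surjection $\ModDif{n}{R}{K}\to R^r$. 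Without this averaging argument (or a substitute), the termwise identification of the two signatures is unproved.

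A secondary, repairable issue: you justify the splitting of the middle row by ``differential smoothness of $R$ on $U$.'' Differential smoothness gives exactness and local freeness of the sheaf sequence on $U$, but not a splitting — the splitting argument via projectivity works only in the affine smooth case, and $U$ is not affine (its complement has codimension $\geq 2$). The paper instead obtains the splitting by transporting it from the bottom row (where it is clear, since $\ModDif{n}{S}{K}=\bigoplus_{k\leq n}\Sym^k(\Omega_{S|K})$ for the polynomial ring and invariants preserve split exactness) through the vertical isomorphisms, which it establishes directly from the fact that $V\to U$ is étale. Since you also construct those vertical isomorphisms (albeit somewhat vaguely, via Reynolds/reflexivity rather than étale base change as in Lemma~\ref{etalemap}), this part of your argument can be fixed by routing the splitting through the bottom row; but the free-rank descent in the first paragraph is the step that genuinely requires the paper's averaging trick.
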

\begin{proof}	
	The first row exists for every $K$-algebra $R$. The first downarrows on the right are the restrictions for the open subset $U$. The exact row in the middle comes from the smoothness of $U$ (without the splittings). The first downarrows on the left are induced by the exactness we have so far.
	
	The smooth locus $U$ contains by the smallness assumption on the group action all points of codimension one and the same is true for its preimage $V \subseteq {\mathbb A}^d_K$. Hence we have natrual maps	
	$	 \ModDif{n}{R}{K}(U) \rightarrow  \ModDif{n}{S}{K}(V) =  \ModDif{n}{S}{K}$
	where the identity comes from the freeness of $ \ModDif{n}{S}{K} $ and the codimension property of $V$. The image lies in the invariant subspace of the induced action on $P^n_{S|K}$. This gives the second downarrows.
	
	For the polynomial ring we have $P^n_{S|K} = \bigoplus_{k \leq n} \Sym^k(\Omega_{S|K} ) $ and hence the invariants of the induced action on $P^n_{S|K}$  are
	$(P^n_{S|K})^G = \bigoplus_{k \leq n} ( \Sym^k(\Omega_{S|K} ))^G $.
	Hence the splitting in the last row is clear. The induced map $V \rightarrow U=V/G$ is \'{e}tale, hence the second downarrows are isomorphisms, as they are locally isomorphisms on the affine smooth (invariant) subsets. Therefore we get the splitting in the second rows.
	
	The differential operators on $R$ correspond to the invariant differential operators on $S$. This is true for the quotient fields $Q(R) \subseteq Q(S)$ (which is a Galois extension) and so it is also true for the rings as every operator on $R$ has an extension to $S$ by Proposition \ref{prop-extensible-finite} which must be the invariant one.	
	
	A free summand of $P^n_{R|K}$ is the same as a surjection $P^n_{R|K} \rightarrow R $ which gives also a surjection $P^n_{R|K} (U) \rightarrow R$. On the other hand, such a map corresponds to a differential operator $\delta$ on $U$ and on $R$. Let $\tilde{\delta}$ be the corresponding invariant differential operator on $S$. Suppose now that $P^n_{R|K} (U) \cong (P^n_{S|K} )^G \rightarrow R$ is surjective. Then also
	 $\tilde{\delta}: P^n_{S|K} \rightarrow S$ is surjective and so there exists $ f \in S$ such that $\tilde{\delta} (f) =1$. Then by the invariance of the operator $\tilde{\delta}$, $\tilde{\delta} \left(\sum_{\varphi \in G} \varphi(f) \right) = |G| $, which is a unit, and since $\sum_{\varphi \in G} \varphi(f)  \in R$, also the operator $\delta$ is unitary. Hence $\delta $ defines a surjection $P^n_{R|K} \rightarrow R$ by Lemma~\ref{unitary}. This argument works also for a family of unitary operators and shows that the free rank of $P^n_{R|K}$ and of $P^n_{R|K} (U) = (P^{n}_{ S|K} )^G $ coincide.
	 
	 Therefore the free rank of $P^n_{R|K}$ equals by the splitting of the second row the sum of the free ranks of $\Sym^k (\Omega_{R|K}) (U)$ for $k \leq n$. By the codimension property of $U$, $\Sym^k (\Omega_{R|K}) (U)$ is the reflexive hull of $ \Sym^k (\Omega_{R|K}) $ and the sum of its free ranks enters as denominators the definition of the symmetric signature. Hence the signatures must be the same in the current setting. The symmetric signature was alredy known \cite[Theorem 2.8]{BCHigh} and the differential signature was computed in Theorem~\ref{group-formula}.
\end{proof}

\begin{example}
The first downarrows in Theorem	\ref{differentialsymmetriccompare} are not isomorphisms, not even for $n=1$. We consider the invariant ring $R=K[x,y,z]/(xy -z^2)\cong K[s^2,t^2,st] \subseteq K[s,t]$ with the group action of $\ZZ/2$ given by sending the variables to their negatives. The module $\Omega_{R|K}$ is generated by $dx=2sds$, $dy=2tdt$ and $dz=sdt+tds$, whereas $(\Omega_{S|K})^G$ contains also $sdt$ and $tds$.
\end{example}

Theorem~\ref{differentialsymmetriccompare} can not be extended to more general situations.

\begin{example}
 For the toric (and determinantal) hypersurface given by the equation $ux-vy $, the symmetric signature is $0$  \cite[Example 3.9]{BCHigh}, but the differential signature is $1/4$ by Example~\ref{signaturesegre}.
\end{example}

We expect that, at least under some conditions, the symmetric signature gives a lower bound for the differential signature. The following considerations deal with this point. See also Example~\ref{palmostfermat} below for what can go wrong.

\begin{lemma}
	Let $(R,\m,\kk)$ be a local $K$-algebra essentially of finite type over a field $K$. Suppose that $R$ is an isolated singularity and that $\operatorname{Hom}_R(\Sym^n( \Omega_{R|K}),R)$ has depth $\geq 3$
	for all $n$. Then the natural map $D^n_{R|K} \rightarrow \operatorname{Hom}_R(\Sym^n( \Omega_{R|K}),R)$ is surjective.	
\end{lemma}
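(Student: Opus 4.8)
The plan is to compare the short exact sequences of $R$-modules
\[ 0 \longrightarrow \Sym^n(\Omega_{R|K}) \longrightarrow \Delta^n/\Delta^{n+1} \longrightarrow 0 \qquad \text{(on the smooth locus)} \]
and
\[ 0 \longrightarrow D^n_{R|K} \longrightarrow \Hom_R(\ModDif{n}{R}{K},R) \longrightarrow \Hom_R(\ModDif{n-1}{R}{K},R) \]
coming from dualizing the filtration $\ModDif{n-1}{R}{K}$ of $\ModDif{n}{R}{K}$ against $R$. First I would dualize the exact sequence $0 \to \Delta^n/\Delta^{n+1} \to \ModDif{n}{R}{K} \to \ModDif{n-1}{R}{K} \to 0$ to obtain an exact sequence
\[ 0 \to \Hom_R(\ModDif{n-1}{R}{K},R) \to \Hom_R(\ModDif{n}{R}{K},R) \to \Hom_R(\Delta^n/\Delta^{n+1},R) \to \Ext^1_R(\ModDif{n-1}{R}{K},R), \]
and use the identification $\Hom_R(\ModDif{n}{R}{K},R) \cong D^n_{R|K}$ of Proposition~\ref{universaldifferential} (equivalently Proposition~\ref{representing-differential}). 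The surjectivity I want is equivalent to the vanishing of the connecting map into $\Ext^1_R(\ModDif{n-1}{R}{K},R)$ after composing with the surjection $\Sym^n(\Omega_{R|K}) \twoheadrightarrow \Delta^n/\Delta^{n+1}$ from \cite[16.3.1.1]{EGAIV}.

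Next I would exploit the isolated singularity hypothesis: on the punctured spectrum $U = \Spec R \setminus \{\m\}$ the ring is smooth, so there the canonical surjection $\Sym^n(\Omega_{R|K})|_U \to (\Delta^n/\Delta^{n+1})|_U$ is an isomorphism of locally free sheaves and the filtration sequence $0 \to \Sym^n(\Omega_{R|K})|_U \to \ModDif{n}{R}{K}|_U \to \ModDif{n-1}{R}{K}|_U \to 0$ is exact with locally free terms. Dualizing this sequence on $U$ is exact (local freeness), so the map $D^n_{R|K}|_U \to \Hom(\Sym^n(\Omega_{R|K}),R)|_U$ is surjective on $U$, i.e. the obstruction sheaf is supported at $\m$. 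To promote this to a statement about global sections over $\Spec R$, the idea is that a section of the quotient sheaf over $U$ that extends to a section of $\Hom_R(\Sym^n(\Omega_{R|K}),R)$ over all of $\Spec R$ — and every section of a module does extend, this is automatic — lifts to a section of $D^n_{R|K}$ over $\Spec R$ once the relevant $H^1$ obstruction over $U$ vanishes. This is where the depth hypothesis enters: if $M = \Hom_R(\Sym^n(\Omega_{R|K}),R)$ has depth $\geq 3$, then $H^0_\m(M) = H^1_\m(M) = 0$, hence the restriction map $M \to \Gamma(U,\widetilde M)$ is an isomorphism, and more importantly $\Gamma(U, -)$ applied to the dualized filtration sequence on $U$ loses no cohomology in the relevant range, so surjectivity on $U$ propagates to surjectivity of global sections.

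The main obstacle I anticipate is bookkeeping the precise cohomological input: one must show that taking $\Gamma(U,-)$ of the exact sequence of sheaves
\[ 0 \to \Hom(\ModDif{n-1}{R}{K},R)|_U \to \Hom(\ModDif{n}{R}{K},R)|_U \to \Hom(\Sym^n(\Omega_{R|K}),R)|_U \to 0 \]
stays exact on the right, which requires controlling $H^1(U, \Hom(\ModDif{n-1}{R}{K},R)|_U) = H^2_\m(\Hom(\ModDif{n-1}{R}{K},R))$, and inductively that $\Hom(\ModDif{n-1}{R}{K},R)$ (not just the symmetric power piece) has depth $\geq 3$ as well. I would handle this by induction on $n$: the hypothesis is stated for all $n$ for the $\Sym$ pieces, and from the filtration sequence for $D$ together with the inductive surjectivity one can bound the depth of $\Hom_R(\ModDif{n}{R}{K},R) = D^n_{R|K}$ from below using the depth lemma applied to the short exact sequence $0 \to D^{n-1}_{R|K} \to D^n_{R|K} \to \Hom_R(\Sym^n(\Omega_{R|K}),R) \to 0$ (once surjectivity at stage $n$ is established, or by a simultaneous induction). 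Finally, the base case $n=0$ is trivial ($D^0_{R|K} = R = \Hom_R(\Sym^0(\Omega_{R|K}),R)$), and $n=1$ is the standard fact that derivations are all of $\Hom_R(\Omega_{R|K},R)$, which gets the induction started.
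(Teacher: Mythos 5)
Your proposal is correct and follows essentially the same route as the paper's proof: restrict to the punctured spectrum $U = \Spec R \setminus \{\m\}$, use reflexivity (depth $\geq 2$ for $\operatorname{Hom}$-modules into $R$) to identify each module with its sections over $U$, and induct on $n$ carrying a cohomological vanishing for $D^{n-1}_{R|K}$ alongside surjectivity. The only cosmetic difference is that you propose to track $\operatorname{depth} D^{n-1}_{R|K} \geq 3$ via the depth lemma applied to the short exact sequence, whereas the paper tracks $H^1(U, D^{n-1}_{R|K}) = 0$ directly via the long exact sequence of sheaf cohomology on $U$; both amount to the same vanishing $H^2_\m(D^{n-1}_{R|K}) = 0$ that makes the inductive step close.
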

\begin{proof}
	Let $U=\Spec R \setminus \{ {\mathfrak m} \}$.
	We proof by induction the statement that the map is surjective and
	that $H^1(U, D^n_{R|K} )=0$. For $n=1$ the statement is clear since $D^1_{R|K}= R \oplus \operatorname{Der}_{R|K}$ and $H^1(U,R)=H^2_{\mathfrak m}(R)=0$ due to the depth assumption (for $n=0$). Let now the statement be known for $n-1$ and look at the commutative diagram

	\[ \xymatrix{ 0 \ar[r] & D^{n-1}_{R|K} \ar[r]\ar[d]^-{\cong} &  D^{n}_{R|K} \ar[r]\ar[d]^-{\cong} & \Hom_R(\Sym^n( \Omega_{R|K}),R) \ar[d]^-{\cong} &  \\
		0 \ar[r] & D^{n-1}_{R|K}(U) \ar[r] &  D^{n}_{R|K}(U) \ar[r] & \Hom_R(\Sym^n( \Omega_{R|K}),R) (U)\ar[r] & H^1(U,D^{n-1}_{R|K}).
	 } \]
	
	The downarrow maps are isomorphisms because of reflexivity. On the smooth locus $U$ we have a short exact sequence of sheaves and so the second row is exact. By the induction hypothesis, $H^1(U, D^{n-1}_{R|K} )=0$, and hence the map is surjective. The second statement follows from
	\[ \cdots \rightarrow   H^1(U, D^{n-1}_{R|K})  \rightarrow   H^1(U, D^n_{R|K}) \rightarrow H^1(U,\operatorname{Hom}_R(\Sym^n( \Omega_{R|K}),R)     )  \rightarrow \cdots \,  \]
	and the depth assumption.
\end{proof}

\begin{remark}
There are many results on depth properties for $\Sym^n(\Omega_{R|K})$ and on conditions for
$ \Sym^n(\Omega_{R|K}) \rightarrow \Delta^n/\Delta^{n+1}  $ to be a bijection in the literature.  For instance, the K\"ahler differentials in a complete intersection ring  has projective dimension $\leq 1$ and one can deduce that the depth of $\Sym^n(\Omega_{R|K})$ is $ \geq \dim(R)-1$ \cite[Proposition 3 (3)]{Avramovcomplete}; see also
 \cite[Propositions~2.10 \&~3.4]{SimisUlrichVasconcelostangentalgebrastangentstar}.
It is however more difficult to find depth conditions for $\operatorname{Hom}_R (\Sym^n (\Omega_{R|K} ),R )$ and even for $\Der_{R|K} $. If $\Omega_{R|K}$ itself is a maximal Cohen-Macaulay module, then for Gorenstein rings \cite[Proposition 3.3.3]{BrHe}  
%\cite[Theorem 21.21 (a)]{Eisenbud} 
also the dual is a maximal Cohen-Macaulay module. 
This can be applied to certain determinantal rings \cite[Proposition 14.7]{BrunsVetter}. In addition,  the derivation module for Pl\"ucker algebras of Grasmannians $\neq G(2,4)$ has depth $\geq \dim(R) -2$  \cite[Proposition 3.4]{ChristophersenIlten}. It would be interesting to know whether these results extend to depth conditions on $\operatorname{Hom}_R (\Sym^n (\Omega_{R|K} ) ,R)$.
\end{remark}

\begin{lemma}
	\label{differentialdualsymmetric}
	Let $(R,\m,\kk)$ be local and essentially of finite type over a field $K$. Suppose that the natural maps $D^n_{R|K} \rightarrow \operatorname{Hom}_R(\Sym^n( \Omega_{R|K}),R)$ are surjective and that the free ranks of $ P^n_{R|K}$ and of $D^n_{R|K}$ conincide.
	Then the sum of the free ranks of $(\Sym^k( \Omega_{R|K}))^{**}$ for $k \leq n$ is bounded above by the free rank of $ P^n_{R|K}$, and the symmetric signature is bounded above by the principal parts signature. 
\end{lemma}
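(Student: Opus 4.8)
The strategy is to exploit the short exact sequences
\[ 0 \longrightarrow \Sym^n(\Omega_{R|K})^{**} \longrightarrow (P^n_{R|K})' \longrightarrow (P^{n-1}_{R|K})' \longrightarrow \cdots \]
only in spirit; concretely, I would instead work directly with free ranks and the hypotheses at hand. First, by the two hypotheses we have $\frk_R(P^n_{R|K}) = \frk_R(D^n_{R|K})$, so it suffices to bound $\sum_{k\le n}\frk_R\big((\Sym^k(\Omega_{R|K}))^{**}\big)$ above by $\frk_R(D^n_{R|K})$. The key observation is that the filtration
\[ 0 = D^{-1}_{R|K}(R,\m)\text{-part} \subseteq D^0_{R|K} \subseteq D^1_{R|K} \subseteq \cdots \subseteq D^n_{R|K} \]
has successive quotients $D^k_{R|K}/D^{k-1}_{R|K}$, and by the surjectivity hypothesis each such quotient surjects onto (a submodule of, actually onto all of) $\Hom_R(\Sym^k(\Omega_{R|K}),R)$. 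Since $\Hom_R(\Sym^k(\Omega_{R|K}),R)$ is reflexive, it equals $(\Sym^k(\Omega_{R|K}))^{*}$, which has the same free rank as its double dual $(\Sym^k(\Omega_{R|K}))^{**}$ — indeed a module and its reflexive hull have the same free rank, as a free summand of $M$ produces a free summand of $M^{**}$ by applying $(-)^{**}$, and conversely a splitting $M^{**}\to R$ restricts to a splitting of $M$ since $R$ is reflexive.

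The main step is then the following sub-additivity principle for free rank along a short exact sequence: if $0\to A\to B\to C\to 0$ is exact and the surjection $B\to C$ splits, or more generally if $C$ is obtained as a quotient in a way compatible with free summands, then $\frk_R(B)\ge \frk_R(A)+\frk_R(C)$ can fail in general, so I must be careful. The correct route is to use the characterization from Lemma~\ref{freerank-conditions}(1): $\frk_R(M)=\dim_\kk(M/\NF{M})$. Applying this to the filtration $D^0_{R|K}\subseteq\cdots\subseteq D^n_{R|K}$, one checks that $\NF{D^n_{R|K}}\cap D^k_{R|K}\subseteq \NF{D^k_{R|K}}$ is not automatic; instead I would argue at the level of the associated graded: the map $D^n_{R|K}\to \bigoplus_{k\le n}\Hom_R(\Sym^k(\Omega_{R|K}),R)$ induced by the surjections on graded pieces need not itself be surjective, but it is surjective onto each graded piece, and a diagram chase (using that the image of a maximal free system of $D^n$ maps onto a system realizing the free rank in the associated graded) gives that $\frk_R(D^n_{R|K})$ is at least the free rank of the associated graded module $\bigoplus_{k\le n}\Hom_R(\Sym^k(\Omega_{R|K}),R)$, which is $\sum_{k\le n}\frk_R\big((\Sym^k(\Omega_{R|K}))^{*}\big)=\sum_{k\le n}\frk_R\big((\Sym^k(\Omega_{R|K}))^{**}\big)$.

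More carefully, the clean way is: choose differential operators $\delta_1,\dots,\delta_s\in D^n_{R|K}$ giving an independent system of unitary operators realizing $\frk_R(P^n_{R|K})=s$; conversely, given for each $k\le n$ an independent unitary system $\xi^{(k)}_1,\dots,\xi^{(k)}_{r_k}$ of homomorphisms in $\Hom_R(\Sym^k(\Omega_{R|K}),R)$ (of size $r_k=\frk_R((\Sym^k(\Omega_{R|K}))^{**})$, using that the open smooth locus computes these), lift each $\xi^{(k)}_j$ along the surjection $D^k_{R|K}\twoheadrightarrow\Hom_R(\Sym^k(\Omega_{R|K}),R)$ to an operator $\tilde\xi^{(k)}_j\in D^k_{R|K}\subseteq D^n_{R|K}$. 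Then I claim the full collection $\{\tilde\xi^{(k)}_j\}_{k\le n, j\le r_k}$ is an independent system of unitary operators of order $\le n$: order the operators by increasing $k$, and use that the symbol map is a triangular system — a nontrivial $\kk$-linear combination hits its top symbol in the highest $k$ appearing, which is nonzero by independence of the $\xi^{(k)}_j$ there. Hence $\sum_{k\le n} r_k \le \frk_R(P^n_{R|K})$, and dividing by $\binom{d+n}{n}=\Rank_R(P^n_{R|K})$ and taking $\limsup$ gives the signature inequality. The main obstacle is making the triangularity argument rigorous, i.e.\ verifying that the associated-graded / symbol map genuinely detects $\kk$-linear independence of the lifted unitary operators; this requires care because the filtration pieces $D^{k}_{R|K}(R,\m)$ inside $D^k_{R|K}$ interact nontrivially, but it follows from the compatibility of the universal differential $d^n$ with the graded pieces $\Delta^k/\Delta^{k+1}$ established in Remark~\ref{JacobiTaylorrelation} and Lemma~\ref{JacobiTaylorsymmetric}.
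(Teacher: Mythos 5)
Your overall strategy — reduce to bounding $\sum_k \frk\bigl((\Sym^k\Omega_{R|K})^{**}\bigr)$ by $\frk(D^n_{R|K})$ and build up along the order filtration using the surjections onto $\Hom_R(\Sym^k\Omega_{R|K},R)$ — is exactly the paper's, and your reduction is correct. But the place where you pivot from free summands to ``independent systems of unitary operators'' introduces the very gap you flag as ``the main obstacle,'' and the references you cite (Remark~\ref{JacobiTaylorrelation}, Lemma~\ref{JacobiTaylorsymmetric}) do not close it. The issue is concrete: your triangularity argument shows that a nontrivial $\kk$-combination $\sum c_{k,j}\tilde\xi^{(k)}_j$ has nonzero top symbol in $\Hom_R(\Sym^{k_0}\Omega_{R|K},R)$ for the top $k_0$, and hence is a nonzero element of $D^n_{R|K}$. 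But what you need is that this combination is not in $D^n_{R|K}(R,\m)$, i.e.\ not annihilated modulo $\m$ upon evaluation on $R$. Having nonzero top symbol is far weaker than that: for instance every element of $\m\cdot D^n_{R|K}$ has image in $\m$ on all of $R$, yet such elements can have nonzero top symbol. You would additionally need that a combination whose top symbol is a unit-combination of a basis of the free summand $M^*$ cannot lie in $D^n_{R|K}(R,\m)$, and the symbol map does not see the $R\to\kk$ evaluation at all; those are quotients of $D^n_{R|K}$ in ``orthogonal'' directions. Relatedly, your parenthetical claim that the lifts are automatically unitary fails: the basis elements of a free summand of $\Hom_R(\Sym^k\Omega_{R|K},R)$ need not be surjective as maps $\Sym^k\Omega_{R|K}\to R$ — e.g.\ for $M=\m^2$ inside a $2$-dimensional regular local ring one has $\Hom_R(M,R)\cong R$, but the generator, the inclusion $\m^2\hookrightarrow R$, is not surjective — so the lift need not send any element of $R$ to a unit.

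The paper avoids this entirely by never evaluating operators on $R$. It works with free summands of $D^n_{R|K}$ directly: the inductive free summand $N$ is realized as $\Hom_R(V,R)$ for $V$ a free summand of $P^{n-1}_{R|K}$, and since $V$ lifts through the surjection $P^n_{R|K}\twoheadrightarrow P^{n-1}_{R|K}$ to a free summand of $P^n_{R|K}$, $N$ is a free summand of $D^n_{R|K}$ as well. Then one lifts a free summand $M^*$ of $\Hom_R(\Sym^n\Omega_{R|K},R)$ to a free summand $M$ of $D^n_{R|K}$ by splitting the surjection onto $M^*$, and the crucial structural point is that the resulting projection $D^n_{R|K}\to M$ factors through the symbol map and therefore vanishes on $D^{n-1}_{R|K}\supseteq N$. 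The two projections then assemble to a block-upper-triangular automorphism of $N\oplus M$, showing it is a free summand of rank $\sum_{k\le n} r_k$, and the hypothesis $\frk(P^n_{R|K})=\frk(D^n_{R|K})$ converts this to the desired bound on $\frk(P^n_{R|K})$. This is precisely the ``clean way'' to make your triangularity intuition rigorous: the triangularity lives in $\Hom_R(D^n_{R|K},R)$, not in the evaluation on $R$. Finally, a side remark: your assertion that ``a module and its reflexive hull have the same free rank'' is false (take $\m^2$ in a $2$-dimensional regular local ring: $\frk(\m^2)=0$ but $\m^{2\,**}\cong R$), and your argument that a splitting $M^{**}\to R$ restricts to one of $M$ is incorrect because the biduality map need not hit generators modulo $\m$. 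The correct statement you need is that a \emph{reflexive} module has the same free rank as its dual, which does hold and suffices here because $\Hom_R(\Sym^k\Omega_{R|K},R)$ is a dual, hence reflexive.
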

\begin{proof}
	Suppose by induction that we have already a free summand $N$ of $D^{n-1}_{R|K}$ of rank equal to $\sum_{k = 0}^{n-1} \frk (  (\Sym^k( \Omega_{R|K}))^{**} )$. By assumption, there exists a free summand $V$ of $P^{n-1}_{R|K}$ such that $N=\operatorname{Hom}_R(V,R)$. As $V$ is also a free summand of $P^{n}_{R|K}$, also $N$ is a free summand of $D^n_{R|K}$.
	
	Let $M$ be a free direct summand of $(\Sym^n( \Omega_{R|K}))^{**} $. This defines a corresponding free direct summand of the dual of it, which is isomorphic to $(\Sym^n( \Omega_{R|K}))^{*} $. 
	By assumption we have the short exact sequence
	\[ 0  \longrightarrow  D^{n-1}_{R|K} \longrightarrow D^n_{R|K} \longrightarrow  \operatorname{Hom}_R(\Sym^n( \Omega_{R|K}),R)  \longrightarrow 0 \]
	hence we get a free direct summand $M$ of $ D^n_{R|K}$. As the free summand $N$ of $D^n_{R|K}$ maps to $0$, we have $N \cap M= 0$. Hence $N\oplus M$ is a free summand of  $D^n_{R|K}$.	
\end{proof}

The following theorem says that a significant part of $\operatorname{Hom}_R ( \operatorname{Sym}_R^n( \Omega_{R|K} ),R)$ is always inside the image of the map from $D^n_{R|K}$.

\begin{theorem}
	\label{compareoperatorcomposition}
	Let $K$ be a field, $R$ be a $K$-algebra, and let  $\delta_1 , \ldots , \delta_n$ denote derivations. Then the composition $\delta_n \circ \cdots \circ \delta_1$ is mapped under the natural mapping \[  D^n_{R|K} \longrightarrow \operatorname{Hom}_R ( \operatorname{Sym}_R^n(\Omega_{R{{|K}} } ),R) \] to the image of the symmetric product
	$\delta_n \cdots \delta_1$ under the natural map \[ {\operatorname{Sym}_R^n( \operatorname{Der}_{R|K}) \cong \operatorname{Sym}^n_R (\operatorname{Hom}_R ( \Omega_{R{{|K}} },R)) \longrightarrow \operatorname{Hom}_R ( \operatorname{Sym}_R^n(\Omega_{R{{|K}} } ),R) } . \]
\end{theorem}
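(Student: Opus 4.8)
The plan is to verify the claim by an explicit local computation in a presentation of $R$. First I would reduce to the case where $R$ is a polynomial ring localized or not: by Corollary~\ref{JacobiTayloroperators} and the discussion of the Jacobi-Taylor matrices, a differential operator on $R = K[x_1,\dots,x_k]/(f_1,\dots,f_m)$ of order $\le n$ is described by a tuple $(a_\lambda)_{\mondeg{\lambda}\le n}$ in the kernel of $J_n^{\mathrm{tr}}$, and by Lemma~\ref{JacobiTaylorsymmetric} its image in $\Hom_R(\Sym^n(\Omega_{R|K}),R)$ is read off from the truncation $(a_\lambda)_{\mondeg{\lambda}=n}$; both the composition map $D^n_{R|K}\to\Hom_R(\Sym^n\Omega_{R|K},R)$ and the symmetric-power map $\Sym^n_R(\Der_{R|K})\to\Hom_R(\Sym^n\Omega_{R|K},R)$ are $R$-linear and compatible with the presentation $S=K[x_1,\dots,x_k]\to R$, so it suffices to prove the identity for the polynomial ring $S$ and for derivations of the form $\delta_i=\sum_j g_{ij}\partial_j$ with $g_{ij}\in S$. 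I would then compute the symbol (top-order part) of the composition $\delta_n\circ\cdots\circ\delta_1$ directly.

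The key computation: expanding $\delta_n\circ\cdots\circ\delta_1$ by the Leibniz rule, one gets $\sum_{j_1,\dots,j_n} g_{1,j_1}\cdots g_{n,j_n}\,\partial_{j_1}\cdots\partial_{j_n} \;+\;(\text{lower-order terms})$, where the lower-order terms arise from the derivatives $\delta_s$ hitting the coefficient functions $g_{t,j_t}$ of the $\delta_t$ with $t<s$. Modulo $\Delta^{n+1}$ — equivalently, passing to $\gr$ and using Lemma~\ref{JacobiTaylorsymmetric} to land in $\Hom_S(\Sym^n\Omega_{S|K},S)$ — only the top-order part survives, and its value on a symmetric monomial $(dx_{i_1})\cdots(dx_{i_n})$ is obtained by the combinatorial pairing encoded in the Jacobi-Taylor entry $a_\lambda = \frac{1}{\lambda!}\partial^\lambda$. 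On the other side, the symmetric product $\delta_n\cdots\delta_1\in\Sym^n_S(\Der_{S|K})$ maps under $\Sym^n_S(\Hom_S(\Omega_{S|K},S))\to\Hom_S(\Sym^n_S\Omega_{S|K},S)$ to the linear form sending $(dx_{i_1})\cdots(dx_{i_n})$ to $\operatorname{perm}$-type sum $\sum_{\sigma}\prod_s\langle\delta_{\sigma(s)},dx_{i_s}\rangle$, suitably normalized by multiplicities; I would check that these two expressions on generators agree by a bookkeeping argument matching the multinomial coefficients $\binom{n}{\lambda}$ coming from $\Sym^n$ against the coincidences among the indices $j_1,\dots,j_n$ in the Leibniz expansion. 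Since both maps are $R$-linear and the $(dx_i)^{\lambda}$ generate $\Sym^n\Omega$, this suffices.

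Finally I would push the polynomial-ring identity down to $R=S/(f_1,\dots,f_m)$ and then to localizations: the surjection $S\to R$ induces compatible maps of all four objects in the statement (derivations on $R$ lift to $S$-derivations preserving the ideal by the order-differential retractability discussion, and $\Omega$, $\Sym^n\Omega$, $\Hom_R(\Sym^n\Omega,R)$ are all computed from their $S$-analogues by base change/restriction), and by Proposition~\ref{diffmod-localize} and Proposition~\ref{localization2} formation of $\ModDif{n}{\cdot}{K}$, $D^n$, and hence of the natural comparison map commutes with localization, so the case of an algebra essentially of finite type follows from the finite-type case, which follows from the polynomial case. The main obstacle I anticipate is purely combinatorial: correctly tracking the normalization constants — the $\tfrac{1}{\lambda!}$ in the Jacobi-Taylor entries, the multinomial weights inherent in $\Sym^n$, and the multiplicities with which a given monomial differential operator $\partial^\lambda$ appears in the Leibniz expansion of $\delta_n\circ\cdots\circ\delta_1$ — so that the two sides match on the nose rather than up to a scalar; getting these to cancel is the crux, and this is exactly where the characteristic-zero hypothesis (implicit through the $\tfrac{1}{\lambda!}$ being honest operators) or at least the formal compatibility set up in Example~\ref{example-regular-D} is used.
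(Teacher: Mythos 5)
Your strategy is genuinely different from the paper's, and its core is sound in spirit: the paper never passes to a presentation, but instead evaluates $\delta_n\circ\cdots\circ\delta_1$ directly on the elements $df_1\cdots df_n$ of $\Sym^n_R(\Omega_{R|K})$, using that $df_i = f_i\otimes 1 - 1\otimes f_i$ in $\Delta/\Delta^2$, expanding the product in $P^n_{R|K}$ over subsets $I\subseteq\{1,\dots,n\}$, and killing all but the permutation terms by an inclusion--exclusion over ordered partitions; the surviving sum $\sum_{\pi\in S_n}\delta_{\pi(1)}(f_1)\cdots\delta_{\pi(n)}(f_n)$ is then compared with the permanent-type formula defining the map $\Sym^n_R(\Der_{R|K})\to\Hom_R(\Sym^n_R(\Omega_{R|K}),R)$. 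Your symbol computation would reach the same endpoint for $R$ essentially of finite type, and the bookkeeping you defer does close: with the paper's convention (the pairing $\omega_1\cdots\omega_n\mapsto\sum_\pi\prod_i\delta_{\pi(i)}(\omega_i)$, with no division by stabilizers), the factor $\lambda!$ arising from rewriting $\partial_{j_1}\cdots\partial_{j_n}$ as $\lambda!\cdot\frac{1}{\lambda!}\partial^\lambda$ exactly matches the count of tuples versus permutations, and the identity is an identity of integer polynomials, so it holds in every characteristic. Your closing worry that characteristic zero is needed is therefore unfounded, but it does flag that you must fix the normalization of the map $\Sym^n\Hom\to\Hom\Sym^n$ before the scalars can be said to match "on the nose"; with a normalization that divides by multiplicities the statement would be false (or meaningless in characteristic $p$). (Minor slip: operators correspond to the kernel of $J_n$, not of $J_n^{\mathrm{tr}}$, in Corollary~\ref{JacobiTayloroperators}.)

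The genuine gap is generality. The theorem is stated for an arbitrary $K$-algebra $R$, and the paper's argument uses nothing beyond the universal property of $P^n_{R|K}$, so it applies verbatim. Your reduction runs through the Jacobi--Taylor presentation and hence only covers $R$ (essentially) of finite type over $K$, and there is no routine passage from the general case to that one: a derivation or differential operator on $R$ does not restrict to a finitely generated subalgebra, and neither $D^n_{R|K}$ nor $\Hom_R(\Sym^n_R(\Omega_{R|K}),R)$ behaves well under the relevant colimits, so "reduce to finite type" is not available. Within the finite-type setting your plan also leaves the descent and localization compatibilities (that the comparison maps for $R=S/I$ are computed by lifting the $\delta_i$ to $S$ and reducing, and that everything commutes with localization) as assertions; these are true and provable from Lemma~\ref{JacobiTaylorsymmetric}, Proposition~\ref{diffmod-localize}, and Proposition~\ref{localization2}, but they are exactly the overhead the paper's intrinsic computation avoids. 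So as a proof of the stated theorem the proposal is incomplete; as a proof for algebras essentially of finite type it would work once the coefficient matching and the descent steps are written out.
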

\begin{proof}
	The homomorphisms in $\operatorname{Hom}_R ( \operatorname{Sym}_R^n(\Omega_{R{{|K}} },R)) $ are determined on the symmetric products of the differential forms $df$, as they generate this module. Let $ f_1 , \ldots , f_n \in R$. The $df_i \in \Omega_{R|K} \cong \Delta/\Delta^2$ are $f_i \otimes 1 -1 \otimes f_i$ and their product $f_1 \cdots f_n$ is sent to $  \sum_{I \subseteq \{1 , \ldots , n\} } (-1)^ { \#( I ) } \left( \prod_{i \notin I} f_i \right) \otimes \left( \prod_{i \in I} f_i \right) $ in $P^n_{R|K}$. Under a differential operator $\eta$ this is sent to
	\[ \sum_{I \subseteq \{1, \ldots, n\} } (-1)^{ \#( I ) } \left( \prod_{i \notin I} f_i \right) \eta \left( \prod_{i \in I} f_i \right) . \]
	In the case $\eta= \delta_n \circ \cdots \circ \delta_1$ we have for $I=\{i_1, \ldots, i_m\}$
	\[ (\delta_n \circ \cdots \circ \delta_1) \left( \prod_{i \in I} {f_i} \right) = \left( \sum_{ \{1 , \ldots , n \} = A_1 \uplus \cdots \uplus A_m } \delta_{A_1} (f_{i_1}) \cdots \delta_{A_m} (f_{i_m}) \right) ,  \]
	where $\delta_{A_j}(f_{i_j})$ denotes the composition of the derivations given by $A_j$ in the given order applied to $f_{i_j}$ and where the sum runs over all ordered partitions of $\{1, \ldots, n\}$.
	Hence the evaluation yields
	\begin{eqnarray*} 
		& \, &  \sum_{I \subseteq \{1 , \ldots , n\} } (-1)^{ \#( I ) } \left( \prod_{i \notin I} f_i \right) (\delta_n \circ \cdots \circ \delta_1 ) \left( \prod_{i \in I} f_i \right) \\
		& =& \sum_{I \subseteq \{1 , \ldots , n\} } (-1)^{ \#( I ) } \left( \prod_{i \notin I} f_i \right) \left( \sum_{ \{1 , \ldots , n \} = A_1 \uplus \cdots \uplus A_m } \delta_{A_1} (f_{i_1}) \cdots \delta_{A_m} (f_{i_m}) \right) \\
		& =& \sum_{I \subseteq \{1, \ldots, n\} } (-1)^{ \#( I ) } \sum_{ \{1 , \ldots , n \} = B_1 \uplus \cdots \uplus B_n \text{ with } B_i = \emptyset \text{ for } i \notin I } \delta_{B_1} (f_{1}) \cdots \delta_{B_n} (f_{n}) \\
		&= & \sum_{ \{1 , \ldots , n \} = B_1 \uplus \cdots \uplus B_n } \left( \sum_{I \subseteq I(B) } (-1)^{ \#( I ) } \right) \delta_{B_1} (f_{1}) \cdots \delta_{B_n} (f_{n}) ,
	\end{eqnarray*} 
	where here $I(B)$ denotes for an ordered partition $B=(B_1 , \ldots , B_n)$ the set of indices $i$ for which $B_i$ is empty. Note that in the first equation we can omit the summand corresponding to $ I  =  \emptyset$, since every derivation annihilates $1$. 
	
	For $ I(B)   \neq   \emptyset$ the inner sum is $0$, and for $I(B)  =  \emptyset $ the inner sum is $1$. Hence only those partitions are relevant, where no subset is empty, thus all subsets contain just one element. These correspond to the permutations  on $ \{ 1 , \ldots , n \}$, so this is the same as
	\[ \sum_{\pi \in S_n} \delta_{\pi (1)}(f_1) \cdots \delta_{\pi (n)}(f_n). \]
	
	The symmetric product
	\[\delta_n \cdots \delta_1 \in \operatorname{Sym}_R^n( \operatorname{Der}_{R|K}) \cong \operatorname{Sym}^n_R (\operatorname{Hom}_R ( \Omega_{R{{|K}} },R)) \]
	is sent under the natural map $\operatorname{Sym}^n_R (\operatorname{Hom}_R ( \Omega_{R{{|K}} },R)) \rightarrow \operatorname{Hom}_R ( \operatorname{Sym}_R^n(\Omega_{R{{|K}} } ),R)  $ to
	\[ \omega_1 \cdots \omega_n \longmapsto \sum_{\pi \in S_n} \delta_{\pi(1)} (\omega_1) \cdots \delta_{\pi(n)} (\omega_n) .\]
	For $ \omega_i =  df_i $ this coincides with the above result.
\end{proof}

This theorem says that we have a  commutative diagram
 \[ \xymatrix{  & \operatorname{Sym}^n(\Der_{R|K} ) \ar[d] \\
\mathscr{D}_n \ar[d]\ar[r] & \operatorname{im} \big(\operatorname{Sym}^n(\Der_{R|K}) \rightarrow \Hom_R(\Sym^n(\Omega_{R|K}),R) ) \big) \ar[d] \\
D^n_{R|K} \ar[r] & \Hom_R(\Sym^n(\Omega_{R|K}),R) }\]
where the downarrows in the second row are injective, $ \mathscr{D}_n$ denotes the submodule generated by the composition of $n$ derivations as in Remark~\ref{rem:der-simple}, and the first horizontal map is surjective.

\begin{example}
\label{palmostfermat}
Fix a prime number $p$, let $K$ be a field of characteristic $p$ and consider $f=x^p+y^{p+1}+z^{p+1}$ and the ring $R=K[x,y,z]/(f)$. As $f$ is irreducible, $R$ is a domain. The partial derivatives are $ \frac{\partial f}{\partial x} =0$, $ \frac{\partial f}{\partial y} =y^p$, and $ \frac{\partial f}{\partial z} =z^p$. Hence, in the singular locus $y$ and $z$ vanish, and then also $x$ has to vanish, so we have an isolated singularity and $R$ is a normal domain.

Because of $x^p=-yy^p -zz^p$ we have $x^p \in (y^p,z^p)$, so $x$ is in the Frobenius closure of the ideal $y,z$, but $x \notin (y,z)$. Hence $R$ is not $F$-pure and thus not strongly $F$-regular. Then,  the $F$-signature of $R$ is $0$ \cite[Theorem~0.2]{AL}. 

We compute the other signatures considered in this paper. We first show that the differential signature is positive. As a consequence, Theorem~\ref{ThmFregPos} does not hold for non $F$-pure rings.
We have the following sandwich situation
 {\[ K[y,z] \subseteq R \subseteq K[y^{1/p},z^{1/p}]\cong K[y,z]  , \]
where $R$ is a free module over $K[y,z]$ of rank $p$.} In this situation it follows from Proposition~\ref{sandwichpositive} that $R$ has positive differential signature. The ratios start in characteristic $2$ with $1/1$, $2/3$, $4/6$, $7/10$, but we do not know the value of the signature.

The module of K\"ahler differentials is given by the exact sequence
\[0 \longrightarrow R \stackrel{(0,y^p,z^p)}{\longrightarrow} R^3 \longrightarrow \Omega_{R|K} \cong R(dx,dy,dz)/df \longrightarrow 0 .\]
Hence \[\Omega_{R|K} \cong R \oplus R^2/(y^p,z^p) \cong R \oplus I , \]
where $I=(y^p,z^p)$. The second isomorphism comes from the fact that $I$ is a parameter ideal.
Hence the symmetric powers of the K\"ahler differentials itself are
\[\Sym^n(\Omega_{R|K} ) \cong R \oplus I \oplus I^{\otimes 2} \oplus \cdots \oplus I^{\otimes n} , \]
with just one free summand.

The derivation module $\Der_{R|K} \cong \Hom_R (\Omega_{R|K},R)$ is free (so this is another example showing that for Zariski-Lipman we need characteristic $0$, see also \cite[Section~7]{LipmanfreeDerivation}). A basis for the derivations is given by $\delta= \frac{ \partial}{\partial x}$ and $\epsilon = z^p \frac{ \partial}{\partial y}-y^p \frac{ \partial}{\partial z}$. The two derivations commute, and $\delta$ is a unitary derivation but $\epsilon$ is not. From that we get that $\Sym^n(\Der_K(R,R)) \cong R^{n+1}$ with the basis $\delta^i\epsilon^j$, $i+j=n$. Therefore also the double duals $(\Sym^n(\Omega_{R|K} ))^{**} $ are free and hence the symmetric signature is $1$, though $R$ is normal and not regular.

To set up the Jacobi-Taylor matrices, only the following entries are relevant (and those with $z$ instead of $y$).
\[ \frac{1}{p!} \left( \partial_x \right)^p (f) =1,\,  \partial_y (f) = y^p   ,\,
\frac{1}{p!} \left( \partial_y \right)^{p} (f) =y,\, 
 \frac{1}{(p+1)!} \left( \partial_y \right)^{p+1} (f) =1 .  \]

For the element $x$ the unitary derivation $\delta$ sends $x$ to $1$. But for $x^2$ we have to go in characteristic $2$ up to order $8$ to find an operator sending $x^2$ to $1$. A computation with the Jacobi-Taylor matrices yields
\[ a^2+b^3+yb^4+y^3a^2b^3 +y^3a^4+y^4a^4b+y^5a^4b^2+y^7a^6b+y^{9} a^8 . \]
This operator is homogeneous of degree $-6$ and involves only partial derivatives with respect to $x$ and $y$.

We claim that
$\Delta^n/\Delta^{n+1} \cong \bigoplus_{\ell =0 }^n I^\ell$. This rests on the fact that the matrices $T_n$ in the sense of Remark~\ref{JacobiTaylorrelation} have the block matrix form
\[ T_n = \begin{pmatrix} 0 & 0& 0 & \cdots \\ M_1  & 0 &0 & \cdots \\ 0  & M_2 &0 & \cdots \\ 0& 0 & M_3 & \cdots \\ \vdots & \vdots & \vdots & \ddots  \end{pmatrix}  ,\, \text{ where }
 M_\ell = \begin{pmatrix} y^p & 0& \cdots& 0  \\ z^p  & y^p & 0 & \vdots \\ 0  & z^p & y^p & \vdots \\ \vdots   & \ddots & \ddots & \vdots \\ 0& 0 & z^p & y^p \\ 0 & \cdots & 0 & z^p  \end{pmatrix} \]
with $\ell +1$ rows. The $T_n$ and hence the Jacobi-Taylor matrices define injective maps, and so, 
$\Delta^n/\Delta^{n+1}$ is the cokernel of the $T_n$. The cokernel of every matrix $M_\ell$
is $I^\ell =(y^{\ell p}, y^{(\ell -1)p } z^p, \ldots , z^{\ell p})$.
The natural surjection
\[ \Sym^n( \Omega_{R|K}) \cong \bigoplus_{\ell =0}^n  I^{\tensor \ell}  \longrightarrow   \Delta^n/\Delta^{n+1} \cong \bigoplus_{\ell =0 }^n I^\ell    \]
is naturally given by $ I^{\tensor \ell}  \rightarrow I^\ell    $.

From this it also follows that in the exact complex
\[ 0 \longrightarrow D^{n-1}_{R|K} \longrightarrow D^n_{R|K} \longrightarrow \Hom_R (\Delta^n/\Delta^{n+1}, R ) = \Hom_R (\Sym^n( \Omega_{R|K}) ,R  ) \cong R^{n+1} \]
the last map is not surjective. The relation $(1,0, \ldots ,0)$ for the rows of the matrix $T_n$ can not for $n \geq p$ be extended to a relation on $J_n^\text{tr} $.
\end{example}

\section{Duality and convergence}\label{sec-duality}

In this section we discuss the differential signature for rings such that the associated graded of $D_{R|K}$ is a finitely generated $R$-algebra. Our main goal is to show that the differential signature is a limit rather than a limsup and that it is a rational number.

 {Our approach involves the approach of Matlis duality for $D$-modules. This idea appears first in work of Yekutieli \cite{YekDuality}, and is developed further developed by Switala~\cite{Switala} and Switala and Zhang~\cite{SwiZha}. We recall some facts about this duality; see \cite[\S 3 and \S 4]{Switala}. For an algebra $(R,\m,K)$ with coefficient field $K$, and $R$-modules $M,N$, we use the notation $\Hom^{\m\mathrm{-cts}}_{K}(M,N):=\varinjlim \Hom_K(M/\m^n M,N)$.\index{$\Hom^{\m\mathrm{-cts}}_{K}(M,N)$} We denote by $E$ the injective hull of the residue field.}

\begin{proposition}\label{MatlisD}
	Let $(R,\m,K)$ be a complete or graded ring with coefficient field $K$.
	\begin{enumerate}
		\item There is an exact functor $(-)^{\vee}$\index{$M^{\vee}$} from $R$-modules to $R$-modules that sends left $D$-modules to right $D$-modules and vice versa, such that $(-)^{\vee}$ agrees with Matlis duality up to $R$-isomorphism for $R$-modules that are Noetherian or Artinian.
		\item For $M$ Noetherian, one has $M^{\vee}=\Hom^{\m\mathrm{-cts}}_{K}(M,K)\cong\Hom_R(M,E)$. The last isomorphism comes from composition with a fixed $K$-linear projection onto the socle.
		\item The right $D$-action on $E=R^{\vee}=\Hom^{\m\mathrm{-cts}}_{K}(R,K)$ comes from precomposition with a differential operator.
	\end{enumerate}
\end{proposition}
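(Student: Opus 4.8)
This is essentially a recollection of the $D$-module Matlis duality developed by Switala, so my plan is to organize the three parts as citations and short verifications rather than full constructions.

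First, for part (1), I would recall the functor $(-)^{\vee} := \Hom^{\m\text{-}\mathrm{cts}}_K(-,K)$ of continuous $K$-duals. The key point is that for a left $D_{R|K}$-module $M$, an element $\delta\in D_{R|K}$ acts on $\varphi\in\Hom^{\m\text{-}\mathrm{cts}}_K(M,K)$ by $(\varphi\cdot\delta)(m) = \varphi(\delta\cdot m)$; since differential operators are $\m$-adically continuous (Lemma~\ref{diff-ops-are-cts}), $\delta$ sends $\m^n M$ into $\m^{n-\mathrm{ord}(\delta)}M$ when $n$ is large, hence preserves the continuous dual, and the order-filtration bookkeeping shows $\varphi\cdot\delta$ is again continuous. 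One checks associativity and $K$-linearity to see this is a right $D$-action; the same formula read the other way sends right $D$-modules to left $D$-modules. Exactness of $(-)^{\vee}$ follows because $K$ is a field: $\Hom_K(-,K)$ is exact, and passing to the filtered colimit over $\m^n M \subseteq M$ preserves exactness of sequences of finitely generated (or colimits thereof) modules in the relevant categories, using that $\m$-adic completion/the $\m$-adic topology interacts well here. For the identification with classical Matlis duality on Noetherian or Artinian modules, I would cite \cite[\S3--\S4]{Switala}: for Noetherian $M$, $\Hom^{\m\text{-}\mathrm{cts}}_K(M,K) \cong \Hom_R(M,E)$ naturally, and for Artinian $M$ the continuous dual is just the ordinary $K$-dual which is again $\Hom_R(M,E)$; naturality makes these $R$-isomorphisms, which is all that is claimed.

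For part (2), with $M$ Noetherian, the equality $M^{\vee} = \Hom^{\m\text{-}\mathrm{cts}}_K(M,K)$ is the definition of the functor on Noetherian modules, so the content is the isomorphism $\Hom^{\m\text{-}\mathrm{cts}}_K(M,K)\cong \Hom_R(M,E)$. I would produce this by fixing once and for all a $K$-linear projection $\pi: E \twoheadrightarrow \mathrm{soc}(E)\cong K$ and sending $\psi\in\Hom_R(M,E)$ to $\pi\circ\psi$; conversely, given a continuous $K$-functional $\varphi: M \to K$, one uses the injectivity of $E$ over $R$ together with the fact that $E$ represents continuous $K$-duals of Noetherian modules to lift $\varphi$ uniquely to an $R$-linear map into $E$. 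The verification that these are mutually inverse is the standard Matlis-duality argument and I would cite \cite[\S3--\S4]{Switala} and \cite{YekDuality}; I do not expect any difficulty here beyond the routine.

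For part (3), I would specialize the general right $D$-action from part (1) to $M = R$, so $R^{\vee} = \Hom^{\m\text{-}\mathrm{cts}}_K(R,K)$, which by part (2) is $E$ (this recovers the description of $E$ as continuous $K$-functionals on $R$, as in \cite{YekDuality, Switala}). The action formula from part (1) reads $(\varphi\cdot\delta)(r) = \varphi(\delta(r))$ for $\varphi\in R^{\vee}$, $r\in R$, $\delta\in D_{R|K}$ — i.e., precisely precomposition with $\delta$. I would note that this is well-defined precisely because of the $\m$-adic continuity of $\delta$. \textbf{The main obstacle} is not any single computation but rather being careful about what genuinely needs proof versus what is cited: the substantive analytic input (exactness, the Noetherian/Artinian comparison, representability of continuous duals by $E$) is all in Switala's papers, so the proof should be a clean sequence of pointers plus the two or three one-line verifications of the action formulas and their compatibility, together with an explicit check that the order filtration on $D_{R|K}$ controls the $\m$-adic shifts so that the dual action lands in the continuous dual.
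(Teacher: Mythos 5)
The paper itself offers no proof of this proposition: it is stated purely as a recollection of facts from Yekutieli and Switala (``see \cite[\S 3 and \S 4]{Switala}''), so your overall strategy of citing those sources and only verifying the action formulas is in the same spirit as the paper. The verifications you do spell out for parts (2) and (3) — the right $D$-action by precomposition, its well-definedness via $\m$-adic continuity of differential operators (Lemma~\ref{diff-ops-are-cts}), and the identification $\Hom^{\m\mathrm{-cts}}_K(M,K)\cong\Hom_R(M,E)$ via a fixed projection onto the socle — are fine.

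There is, however, a genuine gap in your treatment of part (1). You take the global duality functor to be $(-)^{\vee}:=\Hom^{\m\mathrm{-cts}}_K(-,K)=\varinjlim_n\Hom_K(M/\m^nM,K)$ on \emph{all} $R$-modules and assert that it is exact and agrees with Matlis duality on Artinian modules, with ``the continuous dual of an Artinian module is just the ordinary $K$-dual.'' With the paper's definition of $\Hom^{\m\mathrm{-cts}}_K$, this is false: an Artinian module such as $E$ itself is divisible, so $\m^nE=E$ for all $n$ and your formula gives $E^{\vee}=0$, not the Matlis dual $R$; likewise exactness of $\varinjlim_n\Hom_K((-)/\m^n(-),K)$ requires an Artin--Rees argument and only holds on Noetherian modules, not on the whole category. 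This is precisely why the paper restricts the identification $M^{\vee}=\Hom^{\m\mathrm{-cts}}_K(M,K)$ to Noetherian $M$ in part (2). The correct global functor (as in Switala's construction) is the honest Matlis dual $\Hom_R(-,E)$ — exact because $E$ is injective — with $E$ realized as $\Hom^{\m\mathrm{-cts}}_K(R,K)$ so that the right $D$-structure is induced by precomposition; equivalently, one works in Switala's topological category in which Noetherian modules carry the $\m$-adic topology but Artinian modules carry the \emph{discrete} topology, so that their continuous dual is the full $K$-dual (which does agree with $\Hom_R(-,E)$, since an Artinian module is the union of its finite-length submodules $(0:_M\m^n)$). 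Your sketch needs this correction before parts (1)--(3) fit together as stated.
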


\begin{remark}\label{RemJ}
We set $\cJ_{R|K}=\{\delta \in D_{R|K} \;|\; \delta(R)\subseteq \m\}= \bigcup_{n \in \NN} D^n_{R|K} (R,\m) $, i.e., the collection of all nonunitary operators.\index{$\cJ_{R \vert K}$}
Then, $\cJ_{R|K}$ is a right ideal of $D_{R|K}$ and $\m D_{R|K}\subseteq \cJ_{R|K}$.
\end{remark}

The following lemma is an immediate consequence of Proposition~\ref{MatlisD}.

\begin{lemma}\label{LemmaESimple}
Let $(R,\m,K)$ be a complete or graded ring with coefficient field $K$, and $E$ be the injective hull of $K$.
Suppose that $R$ is a simple $D_{R|K}$-module. Then, $E\cong R^\vee$ is a simple right $D_{R|K}$-module.
\end{lemma}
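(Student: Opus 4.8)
The plan is to deduce this from the functorial properties of Matlis duality for $D$-modules, as collected in Proposition~\ref{MatlisD}. The key point is that the duality functor $(-)^{\vee}$ is exact, sends left $D_{R|K}$-modules to right $D_{R|K}$-modules, and (for Noetherian or Artinian modules) agrees with ordinary Matlis duality, which is a faithful exact contravariant functor that is an anti-equivalence between Noetherian and Artinian $R$-modules in the complete case (and its graded analogue in the graded case). Under such an anti-equivalence, the lattice of $D_{R|K}$-submodules of $R$ is order-reversed onto the lattice of right $D_{R|K}$-submodules of $R^{\vee}$; since $R$ is Noetherian and $E = R^{\vee}$ is Artinian, all submodules in sight are Noetherian resp. Artinian, so this correspondence is honest.

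First I would record that $R^{\vee}\cong E$ as $R$-modules with a compatible right $D_{R|K}$-action, which is part (2) and part (3) of Proposition~\ref{MatlisD}. Then I would observe that a right $D_{R|K}$-submodule $N\subseteq E$ is, in particular, an $R$-submodule, hence corresponds under Matlis duality to a quotient $R \twoheadrightarrow R/I$ for some ideal $I\subseteq R$; the right $D$-stability of $N$ translates, via the exactness of $(-)^{\vee}$ and the fact that $(-)^{\vee}$ interchanges left and right $D$-actions, into the statement that $I$ is a left $D_{R|K}$-submodule of $R$, i.e., a $D_{R|K}$-ideal. Concretely, $N = (R/I)^{\vee}$ sits inside $R^{\vee}$ as the kernel of $R^{\vee}\to I^{\vee}$, and the right $D$-action on this kernel is exactly the one obtained by dualizing the left $D$-action on $R/I$; this uses Lemma~\ref{LemmaDidealDifPower}-style reasoning only implicitly, since what we really need is just that $(-)^{\vee}$ is $D$-linear in the appropriate sense. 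Conversely every $D_{R|K}$-ideal $I$ produces such an $N$. Thus the proper nonzero right $D_{R|K}$-submodules of $E$ are in bijection with the proper nonzero $D_{R|K}$-ideals of $R$. By hypothesis $R$ is $D_{R|K}$-simple, so there are no such ideals, whence $E$ has no proper nonzero right $D_{R|K}$-submodule; i.e., $E$ is a simple right $D_{R|K}$-module.

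The main obstacle, and the step requiring the most care, is checking that the order-reversing bijection between $R$-submodules given by Matlis duality actually respects the $D$-module structure — that a right $D_{R|K}$-submodule of $E$ corresponds precisely to a left $D_{R|K}$-ideal of $R$ and not merely to some $R$-submodule. This is where one must use that $(-)^{\vee}$ is an \emph{exact} functor interchanging left and right $D$-actions (Proposition~\ref{MatlisD}(1)): given a short exact sequence $0\to I\to R\to R/I\to 0$ of left $D_{R|K}$-modules, applying $(-)^{\vee}$ yields a short exact sequence $0\to (R/I)^{\vee}\to E\to I^{\vee}\to 0$ of right $D_{R|K}$-modules, so $(R/I)^{\vee}$ is a right $D$-submodule; double-duality (valid since everything is Noetherian or Artinian over a complete/graded local ring) gives the reverse implication and the bijectivity. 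Once this dictionary is in place the conclusion is immediate, so I expect the write-up to be short, with essentially all the content being the invocation of Proposition~\ref{MatlisD} together with the standard properties of Matlis duality.
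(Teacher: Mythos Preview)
Your proposal is correct and takes essentially the same approach as the paper: both deduce the result directly from Proposition~\ref{MatlisD}. The paper's proof is a single sentence (``This is an immediate consequence of Proposition~\ref{MatlisD}''), while you have spelled out the details of how exactness and the left/right interchange of the duality functor yield the required lattice correspondence between $D_{R|K}$-ideals of $R$ and right $D_{R|K}$-submodules of $E$.
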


\begin{setup}\label{Setup9}
Let $(R,\m,K)$ be a complete or graded ring with coefficient field $K$,   and $E$ be the injective hull of $K$. As in Proposition~\ref{MatlisD}, we identify $E=\Hom^{\m\mathrm{-cts}}_{K}(R,K)$ and pick a generator $\eta\in \Hom^{\m\mathrm{-cts}}_{K}(R,K)$ for its socle. Let $\displaystyle G_{R|K}=\bigoplus_{n=0}^{\infty} \frac{D^n_{R|K}}{D^{n-1}_{R|K}}$ be the associated graded ring of $D_{R|K}$ with respect to the order filtration.
\end{setup}

We now present a few preparation lemmas in order to reduce, in some cases,  the study of differential signature to the classical Hilbert-Samuel theory.

\begin{lemma}\label{LemmaLenDeta}
 {In the situation of Setup~\ref{Setup9},
we} have the equality
$\lambda_R(R/\m\dif{n}{K})=\lambda_R(D^{n-1}_{R|K}\cdot \eta)$.
\end{lemma}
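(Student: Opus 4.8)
The strategy is to identify $D^{n-1}_{R|K}\cdot\eta$ inside $E=\Hom^{\m\mathrm{-cts}}_K(R,K)$ and relate the kernel of the evaluation pairing with $R/\m\dif{n}{K}$. Recall from Proposition~\ref{MatlisD}(3) that the right $D_{R|K}$-action on $E$ is precomposition with differential operators: for $\phi\in E$ and $\delta\in D_{R|K}$, $(\phi\cdot\delta)(r)=\phi(\delta(r))$. Applying this to the socle generator $\eta:R\to K$ (which factors as $R\twoheadrightarrow R/\m\cong K$, up to the fixed projection), an element $\eta\cdot\delta$ with $\delta\in D^{n-1}_{R|K}$ is the $K$-linear functional $r\mapsto \overline{\delta(r)}\in R/\m$. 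So $D^{n-1}_{R|K}\cdot\eta$ is exactly the $K$-subspace of $E$ spanned by all such functionals $r\mapsto \overline{\delta(r)}$ as $\delta$ ranges over $D^{n-1}_{R|K}$.

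The key observation is then a duality: a functional $\psi\in E$ lies in $D^{n-1}_{R|K}\cdot\eta$ iff it factors through $R/\m\dif{n}{K}$; more precisely, the submodule $D^{n-1}_{R|K}\cdot\eta\subseteq E$ is the annihilator (under the Matlis pairing $E\times R\to K$, $(\phi,r)\mapsto \phi(r)$, with its $\m$-adic continuity) of the ideal $\m\dif{n}{K}$. Indeed, by definition $r\in\m\dif{n}{K}$ means $\delta(r)\in\m$ for all $\delta\in D^{n-1}_{R|K}$, i.e., $(\eta\cdot\delta)(r)=0$ for all $\delta\in D^{n-1}_{R|K}$; so $\m\dif{n}{K}=\{r: \psi(r)=0\ \forall\psi\in D^{n-1}_{R|K}\cdot\eta\}$. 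Conversely one needs that $D^{n-1}_{R|K}\cdot\eta$ is all of $(0:_E \m\dif{n}{K})$, i.e., that every functional killing $\m\dif{n}{K}$ arises this way. This is where one invokes that $\m\dif{n}{K}$ is $\m$-primary (Proposition~\ref{properties-diff-powers}(ii) gives $\m^n\subseteq\m\dif{n}{K}$, so $R/\m\dif{n}{K}$ has finite length), so $(0:_E\m\dif{n}{K})$ is a finite-length submodule of $E$ and, by Matlis duality over the complete (or graded-complete) ring $R$, it is canonically $(R/\m\dif{n}{K})^\vee$, with $\lambda_R\big((0:_E\m\dif{n}{K})\big)=\lambda_R(R/\m\dif{n}{K})$. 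The containment $D^{n-1}_{R|K}\cdot\eta\subseteq(0:_E\m\dif{n}{K})$ is immediate from the previous sentence; for the reverse containment, dualize: a functional $\psi\in(0:_E\m\dif{n}{K})$ corresponds, via $E\cong\Hom^{\m\mathrm{-cts}}_K(R,K)$, to a $K$-linear map $\bar\psi:R/\m\dif{n}{K}\to K$, and since $R/\m\dif{n}{K}$ has finite length, $\bar\psi$ is a $K$-combination of maps of the form $\bar r\mapsto \overline{\delta(r)}$ — this holds because the evaluation map $D^{n-1}_{R|K}\to \Hom_K(R/\m\dif{n}{K}, K)$, $\delta\mapsto(\bar r\mapsto\overline{\delta(r)})$, is surjective: its kernel is precisely the operators sending $R$ into $\m\dif{n}{K}$... rather, one argues directly that the image separates points of $R/\m\dif{n}{K}$ (if $\bar r\ne 0$ then $r\notin\m\dif{n}{K}$, so some $\delta\in D^{n-1}_{R|K}$ has $\delta(r)\notin\m$), and a subspace of a dual space that separates points and is closed under the $R$-action is everything — here one uses the right-$D$-module, hence in particular $R$-module, structure, plus finite length, to conclude the image is all of $\Hom_K(R/\m\dif{n}{K},K)$.

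Finally, combining: $D^{n-1}_{R|K}\cdot\eta=(0:_E\m\dif{n}{K})\cong(R/\m\dif{n}{K})^\vee$, and Matlis duality preserves length, so $\lambda_R(D^{n-1}_{R|K}\cdot\eta)=\lambda_R(R/\m\dif{n}{K})$, as claimed. The main obstacle is the surjectivity of the evaluation map $D^{n-1}_{R|K}\to\Hom_K(R/\m\dif{n}{K},K)$ — equivalently, that $D^{n-1}_{R|K}\cdot\eta$ exhausts $(0:_E\m\dif{n}{K})$ rather than being a proper submodule. The clean way to see this is to observe that $(0:_E\m\dif{n}{K})$ is, by Matlis duality, the dual of $R/\m\dif{n}{K}$ as a right $D$-module (indeed as an $R$-module suffices), that it is generated over the right $D$-action by $\eta$ since $E$ is... more carefully: $\eta$ generates the socle of $E$, and $(0:_E\m\dif{n}{K})$ is an essential extension... actually the cleanest argument is the point-separation plus $R$-linearity argument above, packaged as: a finite-length $K$-subspace $V$ of $\Hom_K(W,K)$ for finite-dimensional $W$ which separates points of $W$ has $\dim_K V\ge\dim_K W=\dim_K\Hom_K(W,K)\ge\dim_K V$, forcing equality.
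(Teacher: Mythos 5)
Your proof is correct, and it reaches the crux by a somewhat different route than the paper. Both arguments start the same way: $D^{n-1}_{R|K}\cdot\eta=\{\eta\circ\delta \ | \ \delta\in D^{n-1}_{R|K}\}$ and, since $\ker\eta=\m$, the ideal $\m\dif{n}{K}$ is exactly the common kernel of these functionals. From there the paper never proves (nor needs) your stronger assertion that $D^{n-1}_{R|K}\cdot\eta$ exhausts $(0:_E\m\dif{n}{K})$; it instead establishes a general fact: for \emph{any} finite-length submodule $N\subseteq R^{\vee}$, the ideal $I=\{r\in R \ | \ \psi(r)=0 \ \forall\psi\in N\}$ satisfies $\lambda_R(N)=\lambda_R(R/I)$, proved by moving $N$ into $\Hom_R(R,E)$ via Proposition~\ref{MatlisD}(2), using divisibility of $E$ to identify $I$ with the annihilator of the transported module, and quoting that Matlis duality preserves length. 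Your route proves the sharper identification $D^{n-1}_{R|K}\cdot\eta=(0:_E\m\dif{n}{K})\cong\Hom_K(R/\m\dif{n}{K},K)$ by elementary linear algebra: the functionals factor through $W=R/\m\dif{n}{K}$, which is finite-dimensional over $K$ because $\m^n\subseteq\m\dif{n}{K}$, they separate points of $W$ by the definition of the differential power, and a subspace of $W^{\ast}$ that separates points of a finite-dimensional $W$ is all of $W^{\ast}$. What this buys is a more self-contained argument (no injectivity/divisibility of $E$ is needed, and the Matlis formalism can be replaced by counting $K$-dimensions, since $\lambda_R=\dim_K$ here), together with the extra information that the operators realize the full annihilator submodule; what it costs is generality, since it exploits the existence, for each $r\notin\m\dif{n}{K}$, of an operator with $\delta(r)\notin\m$, whereas the paper's lemma applies to arbitrary finite-length submodules of $R^{\vee}$. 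Your argument is in fact close in spirit to Remark~\ref{equalityvariant}, extended so that it applies in the full generality of Setup~\ref{Setup9}. Two small points to tighten: the containment $D^{n-1}_{R|K}\cdot\eta\subseteq(0:_E\m\dif{n}{K})$ needs the one-line remark that $(r\psi)(s)=\psi(rs)$ and $\m\dif{n}{K}$ is an ideal, so the condition amounts to the functionals vanishing on that ideal; and the hesitations in your middle paragraph should be replaced outright by the closing dimension count (separation of points gives $\dim_K W\le\dim_K V$, and $V\subseteq W^{\ast}$ gives the reverse), since the appeal to closure under the $R$-action is not needed.
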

\begin{proof}
	Let $\eta:R\rightarrow K$ be the quotient map. We identify $\eta$ as a generator of $R^{\vee}$. Evidently, $f\in \m$ if and only if $\eta(f)=0$. We claim that $f\in \m\dif{n}{K}$ if and only if $\mu(f)=0$ for all $\mu\in D^{n-1}_{R|K}\cdot \eta$. Indeed, this is immediate from $D^{n-1}_{R|K}\cdot \eta = \{ \eta \circ \delta \ | \ \delta \in D^{n-1}_{R|K}\}$.
	
	To conclude the proof of the lemma, it suffices to show that, given a finite length submodule $N\subseteq R^{\vee}=\Hom^{\m\mathrm{-cts}}_{K}(R,K)$, the ideal $I=\{r\in R \ | \ \psi(r)=0 \ \text{for all} \ \psi \in N\}$ satisfies $\lambda_R(N)=\lambda_R(R/I)$. To see this, write $\widetilde{N}$ for the image of $N$ in $\Hom_R(R,E)$ via Proposition \ref{MatlisD}(2), and set $J=\{r\in R \ | \ \rho(r)=0 \ \text{for all} \ \rho \in \widetilde{N} \}$. It is evident that $J\subseteq I$. If $r\notin J$, there is some $\rho\in \widetilde{N}$ and $\theta \in E\setminus\{0\}$ with $\rho(r)=\theta$. Since $E$ is divisible, there is some $s\in R$ such that $s\theta$ is nonzero in the socle. Then $s\rho$ is a map in $\widetilde{N}$ that corresponds to a map in $N$ that sends $r$ to a nonzero element, so $r\notin I$. Thus $I=J$, so $\lambda_R(R/I)=\lambda_R(R/J)=\lambda_R(\widetilde{N})=\lambda_R(N),$
	where the middle equality is a standard fact from Matlis duality.
\end{proof}

\begin{remark}
 {In the situation of Setup~\ref{Setup9},
	the} cyclic $D^{n-1}_{R|K}$-module $D^{n-1}_{R|K} \cdot \eta \subseteq \Hom_K(R,K) $ is isomorphic to $D^{n-1}_{R|K}/D^{n-1}_{R|K}(R,\m)  $. Therefore, if $R$ is essentially of finite type over $K$ with residue class field $K$, the equality of Lemma \ref{LemmaLenDeta}
follows also directly from Proposition~\ref{freerank} or Remark~\ref{equalityvariant}. 
\end{remark}

\begin{lemma}\label{LemmaGradedE}
Suppose that $R$ is $D_{R|K}$-simple. Then, the map $\psi:D_{R|K} \to E$ defined by $\psi(\delta)=\eta\circ\delta$
is a surjective morphism of right $D_{R|K}$-modules with kernel $\cJ_{R|K}$.
As a consequence, $$
\bigoplus_{n=0}^{\infty} \frac{D^n_{R|K}\cdot \eta}{D^{n-1}_{R|K}\cdot \eta}\cong \bigoplus_{n=0}^{\infty} \frac{D^n_{R|K}}{ \cJ_{R|K} \cap D^{n}_{R|K}+D^{n-1}_{R|K}}
$$
as graded $G_{R|K}$-modules.
\end{lemma}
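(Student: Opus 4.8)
The plan is to view $\psi$ as the Matlis dual of the statement that $R$ is a cyclic $D_{R|K}$-module generated by $1$: dually, $E$ should be a cyclic right $D_{R|K}$-module with $\eta$ as generator and the right ideal $\cJ_{R|K}$ of nonunitary operators as the annihilating relations, and $\psi$ realizes this. The displayed isomorphism then follows by passing $\psi$, which is strictly compatible with the order filtrations, to associated graded objects.

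First I would check that $\psi$ is a homomorphism of right $D_{R|K}$-modules. It is visibly additive and $K$-linear. By Setup~\ref{Setup9} the socle generator $\eta$ is (up to a nonzero scalar) the canonical projection $R\twoheadrightarrow R/\m\cong K$, so $\ker\eta=\m$; by Proposition~\ref{MatlisD}(3) the right action of $\delta'\in D_{R|K}$ on $\phi\in E=R^{\vee}=\Hom^{\m\mathrm{-cts}}_{K}(R,K)$ is $\phi\mapsto\phi\circ\delta'$. Since the multiplication on $D_{R|K}$ is composition, $\psi(\delta\delta')=\eta\circ\delta\circ\delta'=(\eta\circ\delta)\circ\delta'=\psi(\delta)\cdot\delta'$, so $\psi$ is right $D_{R|K}$-linear. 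For the kernel: $\psi(\delta)=0$ iff $\eta(\delta(r))=0$ for every $r\in R$ iff $\delta(R)\subseteq\ker\eta=\m$, which is exactly $\delta\in\cJ_{R|K}$ (Remark~\ref{RemJ}); hence $\ker\psi=\cJ_{R|K}$. For surjectivity, $\psi(D_{R|K})$ is a right $D_{R|K}$-submodule of $E$ containing $\psi(1)=\eta\neq 0$, and $E$ is a simple right $D_{R|K}$-module by Lemma~\ref{LemmaESimple} because $R$ is $D_{R|K}$-simple; therefore $\psi(D_{R|K})=E$. This settles the first assertion.

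For the consequence, I would restrict $\psi$ to each order-filtration step. Because $\psi$ is right $D_{R|K}$-linear, it restricts to a surjection $D^n_{R|K}\twoheadrightarrow D^n_{R|K}\cdot\eta$ (the latter being by definition $\psi(D^n_{R|K})$, in the notation of Lemma~\ref{LemmaLenDeta}) with kernel $\cJ_{R|K}\cap D^n_{R|K}$, and these surjections are compatible with the inclusions $D^{n-1}_{R|K}\hookrightarrow D^n_{R|K}$. Taking successive quotients yields, for each $n$, a $K$-linear isomorphism
\[ \frac{D^n_{R|K}}{\cJ_{R|K}\cap D^n_{R|K}+D^{n-1}_{R|K}}\ \xrightarrow{\ \sim\ }\ \frac{D^n_{R|K}\cdot\eta}{D^{n-1}_{R|K}\cdot\eta}, \]
whose well-definedness uses $\psi(\cJ_{R|K}\cap D^n_{R|K})=0$ and whose injectivity uses $\ker\psi=\cJ_{R|K}$ together with $\cJ_{R|K}\cap D^n_{R|K}=(\ker\psi)\cap D^n_{R|K}$. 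Assembling over $n$, I would identify the left-hand side with the quotient of $G_{R|K}$ by the graded right ideal $\bigoplus_n\big(\cJ_{R|K}\cap D^n_{R|K}+D^{n-1}_{R|K}\big)/D^{n-1}_{R|K}$ (a right ideal because $\cJ_{R|K}$ is a right ideal of $D_{R|K}$ and $D^aD^b\subseteq D^{a+b}$), equip the right-hand side with its evident right $G_{R|K}$-action $[\eta\circ\delta]\cdot[\delta']=[\eta\circ\delta\circ\delta']$, and observe that the assembled isomorphism respects these actions, being once more the identity $\psi(\delta\delta')=\psi(\delta)\cdot\delta'$ read in the associated graded.

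The only real work here is bookkeeping: keeping the left/right module conventions straight, and checking that $\psi$ is \emph{strictly} filtered, so that the degree-by-degree isomorphisms above assemble into an isomorphism of graded $G_{R|K}$-modules and not merely of graded $K$-vector spaces. A subsidiary point to nail down is that $D^n_{R|K}\cdot\eta$, a priori only a $K$-subspace of $E$, is stable under the left $R$-module structure on $E$ — it is, since for $r\in R$ one has $r\cdot(\eta\circ\delta)=\eta\circ(\delta\circ r)$ with $\delta\circ r\in D^n_{R|K}$ — so that the right-hand quotients are the same whether regarded as $R$-modules or as $K$-spaces, which is what makes them match the left-hand side.
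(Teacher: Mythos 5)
Your proposal is correct and follows essentially the same route as the paper: surjectivity via simplicity of $E$ as a right $D_{R|K}$-module (Lemma~\ref{LemmaESimple}) together with $\eta\neq 0$, the direct computation $\ker\psi=\cJ_{R|K}$ from $\ker\eta=\m$, and then passing to the associated graded of $D_{R|K}/\cJ_{R|K}$ with the image filtration. Your extra bookkeeping (right-linearity of $\psi$, strictness of the filtered map, the $G_{R|K}$-action on the graded pieces) only spells out what the paper leaves implicit.
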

\begin{proof}
Since $R$ is a simple left $D_{R|K}$-module, we have that  $E$ is a simple $D_{R|K}$-module by Lemma~\ref{LemmaESimple}.
Since $\eta\neq 0,$ we have that $E$ is generated by $\eta$ as $D_{R|K}$-module. Then, $\psi$ is a surjective map.

We now show that $\Ker(\psi)=\cJ_{R|K}.$
We have that
\begin{align*}
\delta \in \Ker(\psi) &\Longleftrightarrow \eta\circ \delta=0\\
&\Longleftrightarrow \eta( \delta(f))=0\; \;\forall f\in R\\
&\Longleftrightarrow \delta(f)\in\Ker(\eta)\;\;\forall f\in R\\
&\Longleftrightarrow  \delta(f)\in \m\; \;\forall f\in R.
\end{align*}

We conclude that $E$ is isomorphic to $D_{R|K}/\cJ_{R|K}$.
The last claim follows from giving to $D_{R|K}/\cJ_{R|K}$ the filtration by the image of the filtration $\{D^n_{R|K}\}$ and passing to the associated graded.
\end{proof}

The following theorem presents the existence and rationality of differential signature for rings such that $G_{R|K}$ is a finitely generated $R$-algebra.

\begin{theorem}\label{existenceandraionality} Let $(R,\m,\kk)$ be an algebra with coefficient field $K$, and suppose that  $G_{R|K}$ is a finitely generated $R$-algebra. Then the sequence $\displaystyle \frac{\lambda_R(R/\m\dif{n}{K})}{n^d/d!}$ converges to $\dm{K}(R)=\dm{K}(\widehat{R})$, and the limit is rational.
\end{theorem}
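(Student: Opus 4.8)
The plan is to interpret $\lambda_R(R/\m\dif{n}{K})$, after one summation, as the Hilbert function of a finitely generated graded module over the commutative Noetherian graded ring $G_{R|K}$, and then to apply the theory of Hilbert functions together with an averaging argument for the sum transform. First I would reduce to the situation of Setup~\ref{Setup9}, i.e.\ $R$ complete local (or graded) with coefficient field $K$: if $R$ is graded there is nothing to do, and if $R$ is local essentially of finite type over $K=\kk$ then Lemma~\ref{diff-powers-completion} gives $\m\dif{n}{K}\widehat{R}=(\m\widehat{R})\dif{n}{K}$, so (these ideals being $\m$-primary) the sequences $\lambda_R(R/\m\dif{n}{K})$ and $\lambda_{\widehat{R}}(\widehat{R}/(\m\widehat{R})\dif{n}{K})$ coincide term by term, while $G_{\widehat{R}|K}\cong\widehat{R}\otimes_R G_{R|K}$ by Proposition~\ref{diff-ops-completion} and flatness, so the finite-generation hypothesis is inherited; by Proposition~\ref{diff-sig-completion} it then suffices to prove convergence and rationality for $\widehat R$.

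Next I would consider the cyclic $D_{R|K}$-submodule $N=D_{R|K}\cdot\eta\subseteq E$ of the injective hull, where $D_{R|K}$ acts on $E$ by precomposition (Proposition~\ref{MatlisD}(3)), equipped with the filtration $\Gamma_n=D^{n-1}_{R|K}\cdot\eta$; by Lemma~\ref{LemmaLenDeta}, $\lambda_R(\Gamma_n)=\lambda_R(R/\m\dif{n}{K})$ for all $n$. Form the associated graded module $\gr(N)=\bigoplus_{n\ge 0}\Gamma_{n+1}/\Gamma_n$ over $G_{R|K}=\bigoplus_n D^n_{R|K}/D^{n-1}_{R|K}$. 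Since $N$ is a cyclic $D_{R|K}$-module generated by $\eta\in\Gamma_1$, the filtration $\Gamma_\bullet$ is a good filtration and $\gr(N)$ is cyclic over $G_{R|K}$, generated by the class $\bar\eta\in\Gamma_1/\Gamma_0=R\eta$; in particular it is finitely generated. As $\eta$ generates the socle of $E$ we have $R\eta\cong R/\m$, so $\m\bar\eta=0$, and because $G_{R|K}$ is an $R$-algebra (hence acts $R$-linearly) $\m$ annihilates all of $\gr(N)$. Thus $\gr(N)$ is a finitely generated graded module over $\bar G:=G_{R|K}/\m G_{R|K}$.

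Now $\bar G$ is a commutative Noetherian graded ring with $\bar G_0=R/\m=K$ a field and $\bar G$ a finitely generated $K$-algebra: commutativity of $G_{R|K}$ is the standard containment $[D^i_{R|K},D^j_{R|K}]\subseteq D^{i+j-1}_{R|K}$, and finite generation is the hypothesis reduced mod $\m$. By the Hilbert--Serre theorem for finitely generated graded modules over a finitely generated (possibly non-standard) graded $K$-algebra, the function $h(n):=\dim_K\gr(N)_n=\lambda_R(\Gamma_{n+1}/\Gamma_n)$ agrees for $n\gg 0$ with a quasi-polynomial: there exist $\pi\ge 1$ and $p_0,\dots,p_{\pi-1}\in\QQ[t]$ of degree at most $e$ with $h(n)=p_{n\bmod\pi}(n)$ for $n$ large. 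Writing $\lambda_R(R/\m\dif{n+1}{K})=\lambda_R(\Gamma_{n+1})=\lambda_R(R/\m)+\sum_{k=1}^{n}h(k)$ and using $\sum_{1\le k\le n,\ k\equiv j\,(\pi)}k^e=\tfrac{1}{\pi(e+1)}n^{e+1}+O(n^e)$, this sum agrees for $n\gg 0$ with a polynomial of degree $e+1$ whose leading coefficient is the genuine constant $\bar c/(e+1)$, where $\bar c=\tfrac1\pi\sum_j(\text{leading coefficient of }p_j)\in\QQ_{\ge 0}$. Since $\m^n\subseteq\m\dif{n}{K}$ gives $\lambda_R(R/\m\dif{n}{K})\le\lambda_R(R/\m^n)=O(n^d)$, necessarily $e+1\le d$, so $\lim_{n\to\infty}\lambda_R(R/\m\dif{n}{K})/(n^d/d!)$ exists, equal to $0$ if $e+1<d$ and to $\bar c\,(d-1)!$ if $e+1=d$; in either case it is rational, and being a limit it equals the $\limsup$ defining $\dm{K}(R)=\dm{K}(\widehat R)$.

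The one step that is not a formal manipulation is the passage from quasi-polynomiality of $h(n)$ to a non-periodic leading coefficient after summation: in general the top coefficient of $h$ genuinely varies periodically, since the grading on $\bar G$ need not be generated in degree one, and it is precisely the sum transform that averages this periodicity away in the top degree — this is the heart of the argument and the place to be careful about indexing and the $O(n^e)$ error terms. A secondary technical point, purely formal, is to confirm that $\gr(N)$ is finitely generated over $G_{R|K}$ even without control over the degrees of its generators; this follows from $N$ being a \emph{cyclic} $D_{R|K}$-module, which makes $\Gamma_\bullet$ automatically a good filtration.
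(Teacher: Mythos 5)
Your proof is correct and its skeleton is the same as the paper's: reduce to the complete (or graded) case, use Lemma~\ref{LemmaLenDeta} to translate $\lambda_R(R/\m\dif{n}{K})$ into the length of the filtration steps $\Gamma_n=D^{n-1}_{R|K}\cdot\eta\subseteq E$, pass to the associated graded module over $\bar G=G_{R|K}/\m G_{R|K}$, and invoke the theory of Hilbert functions over a finitely generated graded $K$-algebra. There are, however, two places where you diverge from the paper's execution in a way worth noting. First, the paper begins by disposing of the non-$D$-simple case (``then $\dm{K}(R)=0$'') and then uses Lemma~\ref{LemmaGradedE}, whose hypothesis is $D$-simplicity, to identify $D^n\eta/D^{n-1}\eta$ with $D^n/(\cJ_{R|K}\cap D^n+D^{n-1})$ and obtain the surjection from $\bar G$. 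You instead work directly with the cyclic $D_{R|K}$-module $N=D_{R|K}\cdot\eta$: since $N$ is cyclic with generator in $\Gamma_1$ and $\m\bar\eta=0$ (socle!), $\gr(N)$ is automatically a cyclic $\bar G$-module, with no case split needed. This is cleaner and slightly more robust, since the paper's first reduction leans on Theorem~\ref{ThmDifMultDsimple}, which carries a reducedness hypothesis not present in the statement being proved. Second, the paper compresses the final step into ``follows by standard Hilbert function theory,'' whereas you correctly flag the real content: $\bar G$ need not be standard-graded, so the Hilbert function of $\gr(N)$ is only a quasi-polynomial whose leading coefficient may genuinely oscillate, and it is precisely the passage to the cumulative sum $\lambda_R(\Gamma_{n+1})=\sum_{k\le n}h(k)$ that averages the top-degree oscillation into a bona fide constant. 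That identification of where the convergence actually comes from is the nontrivial point, and your treatment of it is accurate (the cumulative sum remains a quasi-polynomial, but with constant leading coefficient, which suffices). Both approaches buy the same conclusion; yours trades the use of Lemma~\ref{LemmaGradedE} and the $D$-simplicity dichotomy for the simple observation that a cyclic module yields a cyclic associated graded module, and it makes the Hilbert-theoretic step explicit.
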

\begin{proof}
%	Let $L$ be a coefficient field of $\widehat{R}$ that contains $K$. By Proposition~\ref{diff-sig-completion}, $\dm{K}(R)=\dm{L}{(\widehat{R})}$, and the same equalities hold for the limits inferior. Additionally,  we have that $G_{\widehat{R}|L} = G_{\widehat{R}|K} \cong \widehat{R}\otimes_R G_{R|K}$ is a finitely generated $\widehat{R}$-algebra from Proposition~\ref{diff-ops-completion}. If $R$ is not $D_{R|K}$-simple, then $\dm{K}(R)=0$. 

 {By Proposition~\ref{diff-ops-completion},   $G_{\widehat{R}|K} \cong \widehat{R}\otimes_R G_{R|K}$ is a finitely generated $\widehat{R}$-algebra. If $R$ is not $D_{R|K}$-simple, then $\dm{K}(R)=0$. }
	
Now, we can assume that $R$ is a complete local ring and a simple  $D_{R|K}$-module.	
Let $\gr(E)=\bigoplus_{n \in \NN } \frac{D^n\cdot\eta}{D^{n-1}\cdot\eta}.$
We note that $D^n_{R|K}/\m D^n_{R|K}$ surjects onto $D^n_{R|K}/(\cJ_{R|K}\cap D^n_{R|K}).$
Then, $\displaystyle G_{R|K}/\m G_{R|K}\to \gr(E)$ is a surjection of graded $G_{R|K}$-modules.
As a consequence, we have that 
$\gr(E)$ is a cyclic. Therefore, it is a finitely generated, graded $G_{R|K}/\m G_{R|K}$-module.
Then, 
$$
\dm{K}(R)=\lim\limits_{n\to\infty}\frac{\lambda_R(R/\m\dif{n}{K})}{n^d/d!}
=\lim\limits_{n\to\infty}\frac{\dim_K \gr(E)_{\leq n-1} }{n^d/d!}
$$
by Lemmas~\ref{LemmaLenDeta} and \ref{LemmaGradedE}. The convergence and rationality statements follow from the last description by standard Hilbert function theory.
\end{proof}

\section{Applications to symbolic powers}

In this section, we discuss some connections between differential operators, symbolic powers, and singularities. A classical theorem of Zariski and Nagata \cite{ZariskiHolFunct,Nagata} characterizes the symbolic powers of primes in $\CC[x_1,\dots,x_n]$ as differential powers: $\p\dif{n}{\CC}=\p^{(n)}$. More generally, if $K$ is a perfect field, and  $R=K[x_1,\ldots,x_n]$, and $I\subseteq R$ is a radical ideal, then $I\dif{n}{K}=I^{(n)}$. We point out that there is a  recent extension of this result to mixed characteristic using $p$-derivations  \cite{DSGJ}.

With the general notion of differential powers, one may ask to what extent the Zariski-Nagata theorem holds over other $K$-algebras $R$. It turns out that this is very closely tied to the singularities of $R$. The first two results below show that in reasonably geometric situations, the Zariski-Nagata theorem actually characterizes smoothness.

\begin{proposition}\label{ZN-reg-primes} Let $K$ be a perfect field, and $R$ be a ring essentially of finite type over $K$. Let $\p \in \Spec(R)$ be a prime such that $R_{\p}$ is regular. Then, $\p\dif{n}{K}=\p^{(n)}$.
\end{proposition}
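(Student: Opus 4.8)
The strategy is to reduce the claimed equality to the known Zariski--Nagata theorem for regular local rings (equivalently, for polynomial rings via completion and Cohen's structure theorem), using the fact that both $\p\dif{n}{K}$ and $\p^{(n)}$ are $\p$-primary and hence are detected after localizing at $\p$. First I would observe that $R$ being essentially of finite type over the perfect field $K$ means the modules of principal parts $\ModDif{m}{R}{K}$ are finitely presented, so Lemma~\ref{diff-localize} applies: $\p\dif{n}{K}=(R\setminus\p)^{-1}\p\dif{n}{K}\cap R=(\p R_\p)\dif{n}{K}\cap R$, where the differential powers on the right are taken with respect to $D_{R_\p|K}$. On the other side, $\p^{(n)}$ is by definition $\p^n R_\p\cap R$. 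Thus it suffices to prove $(\p R_\p)\dif{n}{K}=\p^n R_\p$ in the regular local ring $(R_\p,\p R_\p)$.

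Next I would pass to the completion. By Lemma~\ref{diff-powers-completion} (or more precisely the first equality $\m\dif{n}{K}\widehat{R}=(\m\widehat{R})\dif{n}{K}$, applied with $R_\p$ in place of $R$; here one must check $R_\p$ is an algebra with pseudocoefficient field $K$, which follows since $K$ is perfect and $R$ is finitely generated essentially of finite type, as noted after Definition~\ref{def-pseudocoefficient}), the differential power of $\p R_\p$ expands to the differential power of the maximal ideal of $\widehat{R_\p}$. Since $R_\p$ is regular, $\widehat{R_\p}$ is a power series ring over its residue field, and the residue field of $R_\p$ is a finite separable (in fact, separably generated algebraic, since $K$ perfect) extension of $K$. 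The last assertion of Lemma~\ref{diff-powers-completion} then gives $\n\dif{n}{L}=\n^n$ for a power series ring $(S,\n,L)$ over $L$; combined with the comment that $\m^n\widehat{R}$ is contracted back correctly, this yields $(\p R_\p)\dif{n}{K}=\p^n R_\p$ after faithfully flat descent along $R_\p\to\widehat{R_\p}$. Alternatively, one can invoke Example~\ref{diff-powers-regular} directly once one knows $R_\p$ is smooth over $K$ (which holds: $K$ perfect and $R_\p$ regular implies $R_\p$ is smooth over $K$, per the discussion in Example~\ref{example-regular-D}), giving $\m\dif{n}{K}=\m^n$ in $R_\p$ immediately, which is cleaner.

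So the cleanest route is: (1) note $R_\p$ is an algebra with pseudocoefficient field $K$ and is smooth over $K$ because $K$ is perfect and $R_\p$ is regular (Example~\ref{example-regular-D}); (2) apply Example~\ref{diff-powers-regular} to conclude $(\p R_\p)\dif{n}{K}=\p^n R_\p$; (3) contract back to $R$ via Lemma~\ref{diff-localize}, getting $\p\dif{n}{K}=(\p R_\p)\dif{n}{K}\cap R=\p^n R_\p\cap R=\p^{(n)}$. The one subtlety worth spelling out is the compatibility of the coefficient-field setup: $D_{R_\p|K}$-differential powers must be the ones that localize from $D_{R|K}$, and this is exactly Proposition~\ref{localization2}/Lemma~\ref{diff-localize}, so no gap there. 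I expect the main (minor) obstacle to be bookkeeping around which ring the differential operators are taken over --- ensuring that "differential power of $\p$ in $R$" and "differential power of $\p R_\p$ in $R_\p$" use compatible operator rings --- but this is fully handled by the finite-presentation hypothesis on $\ModDif{m}{R}{K}$ and Lemma~\ref{diff-localize}, so the argument is essentially a three-line citation chain.
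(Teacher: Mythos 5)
Your initial reduction — localizing at $\p$ via Lemma~\ref{diff-localize} to reduce to showing $(\p R_\p)\dif{n}{K}=\p^n R_\p$ — is correct and matches the paper's first step. But there is a genuine gap in the local argument whenever $\p$ is not maximal. In that case $R_\p$ is \emph{not} an algebra with pseudocoefficient field $K$: its residue field $\kk=\Frac(R/\p)$ has positive transcendence degree over $K$, so the extension $K\hookrightarrow\kk$ is not finite (nor even algebraic), contrary to what Definition~\ref{def-pseudocoefficient} requires. The remark you cite after that definition is explicitly restricted to localizations at maximal ideals. As a result, your ``cleaner'' route — citing Example~\ref{diff-powers-regular} to get $(\p R_\p)\dif{n}{K}=\p^n R_\p$ directly — is out of scope, since that example is stated only for algebras with pseudocoefficient field $K$. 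Your other route suffers from the same error at the point where you write that the residue field is ``a finite separable (in fact, separably generated algebraic, since $K$ perfect) extension of $K$'': ``separably generated'' is correct but ``algebraic'' and ``finite'' are not, and this is precisely the case that needs care.

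What the paper does to close this gap: after completing, it chooses a coefficient field $L$ for $\widehat{R_\p}$ \emph{containing} $K$, which exists because $\kk$ is separable over $K$. Then $\widehat{R_\p}\cong L\llbracket y_1,\dots,y_e\rrbracket$ and Lemma~\ref{diff-powers-completion} gives $(\p\widehat{R_\p})\dif{n}{L}=\p^n\widehat{R_\p}$ — note this is the differential power with respect to the coefficient field $L$, not the original base field $K$. Since $K\subseteq L$, one has $I\dif{n}{K}\subseteq I\dif{n}{L}$ (observed after Proposition~\ref{properties-diff-powers}), and combined with $\p^n\subseteq\p\dif{n}{K}$ this yields the squeeze
\[
\p^n\widehat{R_\p}\subseteq(\p\widehat{R_\p})\dif{n}{K}\subseteq(\p\widehat{R_\p})\dif{n}{L}=\p^n\widehat{R_\p},
\]
forcing equality. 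This squeeze across the intermediate field $L$ is what your proposal is missing; ``faithfully flat descent'' alone does not address the mismatch between $K$ and $L$.
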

\begin{proof} 
	By Proposition~\ref{properties-diff-powers}, we have that $\p\dif{n}{K} \supseteq \p^{(n)}$,
	and that both ideals are $\p$-primary. It suffices to check the equality after localizing at $\p$ and completing.
	Then, by Lemma~\ref{diff-localize2} and Lemma~\ref{diff-powers-completion}, 
	$\p\dif{n}{K} \widehat{R_{\p}} = (\p \widehat{R_{\p}} )\dif{n}{K}$.
	
	Since $K$ is perfect, the residue field of $R_{\p}$ is separable over $K$. Then, there exists a coefficient field $L$ for $\widehat{R_{\p}}$ containing $K$ \cite[Theorem~28.3~(iii,iv)]{MatsumuraRing}.
	Since $R_{\p}$ is regular, we have $\widehat{R_{\p}}\cong L\llbracket y_1,\dots, y_e \rrbracket$ for some $e$. Under this isomorphism, $\p=(y_1,\dots, y_e)$. By Lemma~\ref{diff-powers-completion}, it follows that $(\p \widehat{R_{\p}} )\dif{n}{L} = \p^n \widehat{R_{\p}}$. We obtain containments
	\[ \p^n \widehat{R_{\p}} = (\p \widehat{R_{\p}})^n \subseteq (\p \widehat{R_{\p}} )\dif{n}{K} \subseteq (\p \widehat{R_{\p}} )\dif{n}{L} = \p^n \widehat{R_{\p}}, \]
	so that equality must hold throughout.
\end{proof}

For maximal ideals in algebras with pseudocoefficient fields, the converse holds. We thank Mel Hochster for helping us complete the proof below.

\begin{theorem}\label{Mel}
	Let $(R,\m,\kk)$ be a domain with pseudocoefficient field $K$ such that $\Frac(R)$ is separable over $K$.
	Then, $\m\dif{n}{K}=\m^n$ for some $n \geq 2$ if and only if $R$ is smooth over $K$. Furthermore, if $\kk$ is perfect, then the previous statements are equivalent to the property  that  $\ModDif{n}{ {R}}{K}$ is free for some $n \geq 1$.
\end{theorem}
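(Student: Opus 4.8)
The "if" direction is already essentially Example~\ref{diff-powers-regular}: if $R$ is smooth over $K$, then $\m\dif{n}{K}=\m^n$ for all $n\geq 1$. So the content is the "only if" direction, together with the $\ModDif{n}{R}{K}$-free reformulation. For the converse, suppose $\m\dif{n}{K}=\m^n$ for some fixed $n\geq 2$. The plan is to first reduce to the complete case: by Lemma~\ref{diff-powers-completion} (together with Proposition~\ref{diff-ops-completion}) the hypothesis passes to $\widehat{R}$, i.e. $(\m\widehat{R})\dif{n}{K}=(\m\widehat{R})^n$; and $R$ is smooth over $K$ if and only if $\widehat{R}$ is (regularity is checked after completion, and smoothness over $K$ for an algebra with pseudocoefficient field is equivalent to $R\otimes_K L$ regular for $L/K$ perfect as recalled in Example~\ref{example-regular-D}, which is insensitive to completion). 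Thus we may assume $(R,\m,\kk)$ is complete. Choose a coefficient field $L\supseteq K$ for $\widehat R$ using separability of $\kk/K$; note $D_{R|K}\supseteq D_{R|L}$, so $\m\dif{n}{L}\supseteq \m\dif{n}{K}=\m^n$, and since always $\m^n\subseteq \m\dif{n}{L}$ we also get $\m\dif{n}{L}=\m^n$.

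\textbf{The regularity argument.} Now I have a complete local domain $(R,\m)$ with coefficient field $L=\kk$, and $\m\dif{n}{L}=\m^n$ for some $n\geq 2$; I want to conclude $R$ is regular. Write $d=\dim R$ and $e=\edim R$, and pick a minimal generating set $x_1,\dots,x_e$ of $\m$, so $R=L\llbracket x_1,\dots,x_e\rrbracket/I$ with $I\subseteq (x_1,\dots,x_e)^2$. The key is that on the power series ring $P=L\llbracket x_1,\dots,x_e\rrbracket$ the operators $\frac{1}{\alpha!}\partial^\alpha$ of order $\leq n-1$ all exist, and by the differential extensibility of quotient maps (the Lemma after Remark~\ref{Rem-com-BS}, or directly: a differential operator on $P$ descends to $R$ iff it preserves $I$), I get operators on $R$. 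Concretely: if $I\neq 0$, choose $0\neq f\in I$ of minimal order $m\geq 2$ (where order is $\m$-adic order in $P$); then some $\frac{1}{\alpha!}\partial^\alpha$ with $|\alpha|=m$ has $\frac{1}{\alpha!}\partial^\alpha(f)$ a unit in $P$. The obstruction is that $\frac{1}{\alpha!}\partial^\alpha$ may not preserve $I$ and hence not descend; to fix this I would instead argue with the module of principal parts directly, which is why the $\ModDif{n}{R}{K}$-free statement is the natural vehicle. By Proposition~\ref{freerank}(2) (complete case), $\lambda_R(R/\m\dif{n}{L})=\frk_R(\wModDif{n-1}{R}{L})$, and by Proposition~\ref{RankDiff} $\Rank_R(\ModDif{n-1}{R}{L})=\binom{d+n-1}{d}$ since $\kk=L$. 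On the other hand $\lambda_R(R/\m^n)=\binom{e+n-1}{e}$ by the classical Hilbert--Samuel count for the regular ring $P$ modulo... no — $\lambda_R(R/\m^n)$ depends on $R$; rather use $\mu_R(\ModDif{n-1}{R}{L})=\lambda_R(R/\m^n)$ from Remark~\ref{analogy} (the computation $\mu_R(\ModDif{n}{R}{K})=\lambda_R(R/\m^{n+1})$ via \cite[Corollaire~16.4.12]{EGAIV}). The hypothesis $\m\dif{n}{L}=\m^n$ then reads $\frk_R(\wModDif{n-1}{R}{L})=\lambda_R(R/\m^n)=\mu_R(\ModDif{n-1}{R}{L})$. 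A finitely generated module $M$ over a local ring with $\frk(M)=\mu(M)$ is free (a minimal generating set can be taken to include a basis of a maximal free summand, so if these numbers agree $M$ is free of that rank). Hence $\wModDif{n-1}{R}{L}$ is free, of rank $\binom{d+n-1}{d}=\mu_R(\ModDif{n-1}{R}{L})=\lambda_R(R/\m^n)$.

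\textbf{From freeness of principal parts to regularity.} Now I must extract regularity from: $\wModDif{n-1}{R}{L}$ free and $\lambda_R(R/\m^n)=\binom{d+n-1}{d}$ for some single $n\geq 2$. The equality $\lambda_R(R/\m^n)=\binom{d+n-1}{d}$ alone already forces regularity when $n\geq 2$: the Hilbert--Samuel function $\lambda_R(R/\m^t)$ of a $d$-dimensional local ring satisfies $\lambda_R(R/\m^t)\geq\binom{d+t-1}{d}$ with equality for all large $t$ iff $e(R)=1$ iff (for $R$ a domain, hence unmixed) $R$ is regular — and in fact for a \emph{single} $t\geq 2$ the equality $\lambda_R(R/\m^t)=\binom{d+t-1}{d}$ already implies $e(R)=1$, because $\lambda_R(R/\m^t)=\sum_{i<t}\dim_\kk(\m^i/\m^{i+1})\geq \sum_{i<t}\binom{d+i-1}{i}=\binom{d+t-1}{d}$ using $\dim_\kk(\m^i/\m^{i+1})\geq\binom{d+i-1}{i}$ (the associated graded surjects onto... no, rather $\dim_\kk \m^i/\m^{i+1}\geq$ the value for a regular ring of the same dimension), with equality at any $i\geq 1$ forcing minimal embedding dimension. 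The cleanest route, which I will use: equality at a single $t=n\geq 2$ forces $\dim_\kk(\m/\m^2)=d$ (the $i=1$ term), i.e. $\edim R=\dim R$, i.e. $R$ is regular. So in fact I only need the hypothesis to produce $\lambda_R(R/\m^n)=\binom{d+n-1}{d}$, which came from combining $\m\dif{n}{L}=\m^n$, Proposition~\ref{freerank}(2), Proposition~\ref{RankDiff}, and the inequality $\frk\leq\Rank$ together with $\frk=\mu$ forcing $\mu=\Rank$. \textbf{The main obstacle} is this last numerical step — making precise that $\frk_R(\wModDif{n-1}{R}{L})=\lambda_R(R/\m^n)$ combined with $\frk\leq\Rank_R(\ModDif{n-1}{R}{L})=\binom{d+n-1}{d}\leq\lambda_R(R/\m^n)=\mu_R(\ModDif{n-1}{R}{L})$ pins down all quantities to $\binom{d+n-1}{d}$ and hence $\dim_\kk(\m/\m^2)=d$; I must check the inequality $\binom{d+n-1}{d}\leq\lambda_R(R/\m^n)$ and that equality is equivalent to regularity for $n\geq 2$, which is standard Hilbert--Samuel theory since $R$ is a domain (hence equidimensional and $e(R)\geq 1$ with equality iff regular). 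The final clause about $\kk$ perfect is then immediate: when $\kk$ is perfect we may take $L=\kk=K$ (graded case automatic), so "$\ModDif{n}{R}{K}$ free for some $n\geq 1$" is equivalent to "$\frk=\Rank$ for that $n$", equivalent via the chain above to $\m\dif{n+1}{K}=\m^{n+1}$, equivalent to smoothness.
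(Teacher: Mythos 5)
Your overall skeleton is the same as the paper's: convert the hypothesis into the numerical bound $\lambda_R(R/\m\dif{n}{K})=\frk_R(\ModDif{n-1}{R}{K})\leq \Rank_R(\ModDif{n-1}{R}{K})=\binom{n+d-1}{d}$ via Propositions~\ref{freerank} and~\ref{RankDiff}, and then argue that a single small value of the Hilbert--Samuel function forces regularity. However, two steps are genuinely gapped. First, your reduction to $\widehat{R}$ with a coefficient field $L$: you deduce $\m\dif{n}{L}=\m^n$ from ``$\m\dif{n}{L}\supseteq\m\dif{n}{K}=\m^n$ and $\m^n\subseteq\m\dif{n}{L}$'' --- both containments point the same way, so the equality is a non sequitur. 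Enlarging the base field shrinks $D_{R|-}$ and can only enlarge differential powers; the reverse containment would need an extra argument (e.g.\ that $D_{R|K}=D_{R|L}$ because $L/K$ is finite separable), which you do not supply. The detour is also unnecessary and costly: Proposition~\ref{freerank}(1) is stated precisely for algebras with pseudocoefficient field, so the paper applies it to $K$ itself without completing, and Proposition~\ref{RankDiff} (which requires essentially finite type over $K$ and separability of $\Frac(R)/K$) is then literally available, whereas after completion it no longer applies as stated, so your appeal to it for $\wModDif{n-1}{R}{L}$ needs separate justification.

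Second, and more seriously, the endgame. You need that for a $d$-dimensional local ring, $\lambda_R(R/\m^n)\geq\binom{n+d-1}{d}$ with equality at a single $n\geq 2$ forcing $\edim R=d$; equivalently the degree-wise bound $\dim_\kk \gr_{\m}(R)_i\geq\binom{i+d-1}{d-1}$ for all $i<n$. You assert this as ``standard Hilbert--Samuel theory'' with a visibly uncertain justification, but it is exactly where the paper does its real work: after a flat base change making $K$ infinite and perfect, it takes a $d$-generated minimal reduction $J$ of $\m$ and shows that $K\otimes_R\gr_J(R)$, a $d$-variable polynomial ring, embeds as a graded subring of $\gr_{\m}(R)$, using $J^n\cap\m^{n+1}=\m J^n$ for minimal reductions; non-regularity then adds at least $1$ in degree one and contradicts the upper bound. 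Some such argument (a graded Noether normalization of $\gr_\m(R)$ by linear forms over an infinite residue field would also do) must be given; note that the asymptotic statement ``$e(R)=1$ iff regular'' is of no direct use, since you control only one value of the Hilbert--Samuel function. Finally, a minor slip: perfectness of $\kk$ does not allow you to ``take $L=\kk=K$''; for the last clause the paper simply reads the equivalence off from Propositions~\ref{freerank} and~\ref{RankDiff} (freeness forces $\frk=\mu=\Rank$, hence $\lambda_R(R/\m\dif{n+1}{K})=\lambda_R(R/\m^{n+1})$ and thus the ideal equality), which is close to what you sketch.
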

\begin{proof}
	Since we already know that if $R$ is regular, then  $\m\dif{n}{K}=\m^n$, we focus on the other implication.
	We have that $\lambda_R(R/\m^n) =\lambda_R(R/\m\dif{n}{K})\leq \binom{n+d}{d}$ by Proposition~\ref{RankDiff}.
	We now show that this inequality forces $R$ to be regular. For this purpose, after a flat base change, we can reduce to the case where $K$ is infinite and perfect. Then there exists a $d$-generated minimal reduction of $\m$, say $J=(x_1,\dots,x_d)$. Let $G=\gr_{\m}(R)$ be the associated graded ring of $R$. Set $T=K \otimes_R \gr_{J}(R) \cong K[\overline{x_1},\dots,\overline{x_d}]$; since $J$ is generated by a system of parameters, this is a $d$-dimensional polynomial ring over $K$. We claim that $T$ is a graded subring of $G$. Since $J$ is a minimal reduction of $\m$, we have that $J^n \cap \m^{n+1} = \m J^n$ \cite[Corollary~8.3.6]{SwansonHuneke}.
	
	Suppose that $R$ is not regular. Then $\dim_K G_1 >d$. Since $\dim_K G_i \geq \dim_K T_i = \binom{i+d-1}{d-1}$ for every $i$, we get since $n \geq 2$
	\[ \lambda_R(R/\m^n) = \sum_{i=0}^{n-1} \dim_K G_i \geq \sum_{i=0}^{n-1} \dim_K T_i + 1 > \binom{n+d}{d},\]
	contradicting the inequality above. Thus, $R$ is regular.
	The last statement follows from Propositions~\ref{freerank} and \ref{RankDiff}.
\end{proof}

The previous result was independently and simultaneously proven for hypersurfaces by  Barajas and Duarte \cite{BarajasDuarte}.

	We also have an analogue of the Zariski-Nagata Theorem/Proposition~\ref{ZN-reg-primes} that describes the differential Frobenius powers of primes outside of the singular locus.
	
	\begin{proposition}\label{ZN-frobenius} Let $K$ be a perfect field, and $R$ be a ring essentially of finite type over $K$. Let $\p \in \Spec(R)$ be a prime such that $R_{\p}$ is regular. Then $\p^\Fdif{p^e}$ is the $\p$-primary component of $\p^{[p^e]}$. In particular, if $R$ is regular, $\p^\Fdif{p^e}=\p^{[p^e]}$.
	\end{proposition}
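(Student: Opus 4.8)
The plan is to follow the proof of Proposition~\ref{ZN-reg-primes} with ordinary differential powers replaced by Frobenius differential powers, using that $R$ is $F$-finite since it is essentially of finite type over the perfect field $K$. First I would record the global reduction. The ideal $\p$ is $\p$-primary, so $\p^{\Fdif{p^e}}$ is $\p$-primary by Lemma~\ref{properties-Fdiff-powers}~(iv) and contains $\p^{[p^e]}$ by Lemma~\ref{properties-Fdiff-powers}~(ii). Since $\sqrt{\p^{[p^e]}}=\p$, the prime $\p$ is the unique minimal prime of $\p^{[p^e]}$, and its $\p$-primary component is $\p^{[p^e]}R_\p\cap R$. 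Because $\p^{\Fdif{p^e}}$ is $\p$-primary it is contracted from $R_\p$, so $\p^{\Fdif{p^e}}=\p^{\Fdif{p^e}}R_\p\cap R=(\p R_\p)^{\Fdif{p^e}}\cap R$ by Lemma~\ref{Fdiff-localize}; hence the statement reduces to proving the equality $(\p R_\p)^{\Fdif{p^e}}=(\p R_\p)^{[p^e]}$ in the regular local ring $R_\p$.

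Next I would prove: for an $F$-finite regular local ring $(S,\m)$ of characteristic $p$ one has $\m^{\Fdif{p^e}}=\m^{[p^e]}$. By Kunz's theorem \cite{KunzReg} the Frobenius on $S$ is flat, so $S$ is a finitely generated projective --- hence free --- module over the subring $S^{p^e}$, say with basis $u_1,\dots,u_N$. The $e$-th Frobenius gives a ring isomorphism $S\xrightarrow{\sim}S^{p^e}$ sending $\m$ to the maximal ideal $\m'$ of $S^{p^e}$, and $\m^{[p^e]}=\m'S=\bigoplus_i\m' u_i$. Given $f\in S\setminus\m^{[p^e]}$, write $f=\sum_i g_i u_i$ uniquely with $g_i\in S^{p^e}$; then some $g_{i_0}$ is a unit in $S^{p^e}$, and the $S^{p^e}$-linear endomorphism $\delta\in\End_{S^{p^e}}(S)=D^{(e)}_S$ determined by $\delta(u_{i_0})=1$ and $\delta(u_i)=0$ for $i\neq i_0$ satisfies $\delta(f)=g_{i_0}\notin\m$. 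Thus $f\notin\m^{\Fdif{p^e}}$, which proves $\m^{\Fdif{p^e}}\subseteq\m^{[p^e]}$; the reverse containment is Lemma~\ref{properties-Fdiff-powers}~(ii). Combined with the previous paragraph, this gives the first assertion of the proposition.

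Finally, for the ``in particular'' statement: when $R$ is regular every $R_\p$ is regular, so the first part applies, and it remains to see that $\p^{[p^e]}$ is unmixed, hence equal to its own $\p$-primary component. If $\q\supsetneq\p$ were associated to $\p^{[p^e]}$, then working in the regular local ring $R_\q$ and applying the flat Frobenius to a finite free resolution of the $R_\q$-module $R_\q/\p R_\q$ shows $\operatorname{pd}_{R_\q}(R_\q/\p^{[p^e]}R_\q)\le\operatorname{pd}_{R_\q}(R_\q/\p R_\q)$, so by the Auslander--Buchsbaum formula $\operatorname{depth}(R_\q/\p^{[p^e]}R_\q)\ge\operatorname{depth}(R_\q/\p R_\q)\ge 1$, the last inequality because $R_\q/\p R_\q$ is a domain of dimension $\dim R_\q-\Ht(\p R_\q)\ge 1$. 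This contradicts $\q R_\q\in\Ass(R_\q/\p^{[p^e]}R_\q)$, so $\p^{[p^e]}$ has no embedded primes and $\p^{\Fdif{p^e}}=\p^{[p^e]}$. The only delicate points are bookkeeping --- identifying the $\p$-primary component with the contraction from $R_\p$ and using that $\p$-primary ideals are contracted --- together with the appeal to Kunz's freeness theorem, which is precisely what makes the explicit separating operator $\delta$ available; no genuinely new difficulty arises.
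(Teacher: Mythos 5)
Your proof is correct, and the overall reduction is the same as the paper's: show both ideals are $\p$-primary with $\p^{[p^e]}\subseteq\p^{\Fdif{p^e}}$, use Lemma~\ref{Fdiff-localize} to pass to $R_\p$, and then settle the regular local case. Where you diverge is in how the local case is handled: the paper cites the proof of Proposition~\ref{ZN-reg-primes}, which passes to the completion and identifies $\widehat{R_\p}$ with a power series ring over a coefficient field, whereas you avoid completion altogether by invoking Kunz's theorem directly to get a free basis $u_1,\dots,u_N$ of $R_\p$ over $(R_\p)^{p^e}$ and constructing an explicit separating operator $\delta\in\End_{(R_\p)^{p^e}}(R_\p)$. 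This is a genuine streamlining --- it exploits the fact that $\p^{\Fdif{p^e}}$ is intrinsic (it depends only on $R^{p^e}\subseteq R$, not on a choice of base subring), so the ``sandwich'' between $K$- and $L$-linear powers in Proposition~\ref{ZN-reg-primes} has no analogue and no completion is needed. You also supply a full proof (flat Frobenius, Auslander--Buchsbaum, positive depth of $R_\q/\p R_\q$) of the fact that $\p^{[p^e]}$ is unmixed in a regular ring, which the paper invokes as known without argument. Both differences make your write-up more self-contained than the paper's.
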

	\begin{proof} The proof is the same as that of Proposition~\ref{ZN-reg-primes}, using Lemmas~\ref{properties-Fdiff-powers}~and~\ref{Fdiff-localize}. The second claim follows from the first because $\p^{[p^e]}$ is $\p$-primary in a regular ring. 
	\end{proof}

	\begin{remark}\label{Kunz}
		A result of Kunz~\cite{KunzReg} combined with Propositions~\ref{ZN-frobenius}~and~\ref{PropDvsCartIdeals} gives a characterization of regularity for a local $F$-pure $F$-finite ring $(R,\m)$. Namely, the following are equivalent:
		\begin{itemize}
			\item $R$ is a regular ring;
			\item $\m^\Fdif{p^e}=\m^{[p^e]}$ for every $e\in \NN;$
			\item $\m^\Fdif{p^e}=\m^{[p^e]}$ for some $e\in\NN.$
		\end{itemize}

		Then, we can think of the Zariski-Nagata theorem and Theorem~\ref{Mel} as a differential version analogue of Kunz's Theorem. We point out that unlike Kunz's result, Theorem~\ref{Mel} is characteristic-free.
		
	\end{remark}

The comparison between symbolic powers and differential operators also reflects finer qualities of singularities, beyond smoothness versus nonsmoothness. In strongly $F$-regular rings, the Zariski-Nagata Theorem fails, but the topologies defined by symbolic powers and differntial powers are linearly equivalent.

\begin{theorem}[Linear Zariski-Nagata Theorem]
	Let $R$ be an $F$-finite $F$-pure $K$-algebra, where $K$ is a perfect field. Then, for any $\p\in \Spec(R)$,
	\[ R_{\p} \text{ is strongly $F$-regular } \Longleftrightarrow \exists C>0 : \p\dif{Cn}{K}\subseteq \p^{(n)}\text{ for all }n>0. \]
\end{theorem}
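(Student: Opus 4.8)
The plan is to localize at $\p$, rephrase everything in terms of the differential and differential-Frobenius powers of the maximal ideal, and handle the two implications separately. Both $\p\dif{m}{K}$ and $\p^{(n)}$ are $\p$-primary (Proposition~\ref{properties-diff-powers}(iv) and the definition of symbolic power), so $\p\dif{Cn}{K}\subseteq\p^{(n)}$ holds iff it holds after localizing at $\p$; there $(\p\dif{m}{K})_\p=(\p R_\p)\dif{m}{K}$ by Lemma~\ref{diff-localize2}, while $(\p^{(n)})_\p=(\p R_\p)^{n}$ since $\p R_\p$ is maximal in $R_\p$. Writing $(R,\m)$ for $(R_\p,\p R_\p)$---still $F$-finite and $F$-pure, since both localize---the statement becomes: $R$ is strongly $F$-regular iff there is $C>0$ with $\m\dif{Cn}{K}\subseteq\m^{n}$ for all $n$. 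As $K$ is perfect, $\Omega_{K/\FF_p}=0$, so every $\FF_p$-linear operator on an $R$-algebra is $K$-linear, giving $\m\dif{m}{K}=\m\dif{m}{\ZZ}$ (cf.\ the Remark after Theorem~\ref{ThmFregPos} and \cite{Ye}); we may then apply Lemma~\ref{LemmaCofinalDif}, $\m\dif{\mu(p^e-1)}{\ZZ}\subseteq\m^{\Fdif{p^e}}\subseteq\m\dif{p^e-1}{\ZZ}$, with $\mu$ the constant of Proposition~\ref{RemYek}.

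For ``$\Leftarrow$'': if $\m\dif{Cn}{\ZZ}\subseteq\m^{n}$ for all $n$, then $\cP_\ZZ=\bigcap_m\m\dif{m}{\ZZ}\subseteq\bigcap_n\m^{n}=0$ by Krull's intersection theorem, so $R$ is $D_{R|\ZZ}$-simple (Corollary~\ref{CorDifPrimeDsimple}), hence strongly $F$-regular by \cite[Theorem~2.2]{DModFSplit}, since $R$ is $F$-finite and $F$-pure---this is the implication already used in the proof of Theorem~\ref{ThmFregPos}. Conversely, if $R_\p$ is not strongly $F$-regular it is not $D_{R_\p|\ZZ}$-simple (being $F$-pure), so $\cP_\ZZ\neq0$ in $R_\p$; any nonzero element of $\cP_\ZZ$ lies in $(\p R_\p)\dif{m}{K}=(\p\dif{m}{K})_\p$ for every $m$ but, by Krull's theorem in $R_\p$, in no $(\p R_\p)^{n}=(\p^{(n)})_\p$, so no constant $C$ can work. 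This is what makes the equivalence sharp.

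For ``$\Rightarrow$'': assume $R$ strongly $F$-regular. By the sandwich it suffices to find $c>0$ with $\m^{\Fdif{p^e}}\subseteq\m^{\lceil cp^e\rceil}$ for all $e$; then $\m\dif{m}{\ZZ}\subseteq\m^{\Fdif{p^e}}\subseteq\m^{\lceil cp^e\rceil}$ for the largest $e$ with $\mu(p^e-1)\le m$, so $\m\dif{m}{\ZZ}\subseteq\m^{\lceil cm/(p\mu)\rceil}$ and $C=\lceil p\mu/c\rceil+1$ works. By Proposition~\ref{PropDvsCartIdeals}, $\m^{\Fdif{p^e}}=I_e$, so the bound says: if $\ord_\m(z)<cp^e$ then $F^e_* z$ generates a free $R$-summand of $F^e_* R$. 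I would establish it after completing (the claim is about $\m$-primary ideals, completion is faithfully flat, and $\m^{\Fdif{p^e}}$ commutes with completion by the $D^{(e)}$-analogue of Proposition~\ref{diff-ops-completion}) and after enlarging the residue field so that $\m$ has a minimal reduction $\mathfrak q=(x_1,\dots,x_d)$ with reduction number $r_0$, so $\m^{r_0+j}\subseteq\mathfrak q^{j}$. Strong $F$-regularity forces Cohen--Macaulayness \cite{HHStrongFreg}, so $R$ is finite free over the regular subring $A=K'\ps{x_1,\dots,x_d}$ ($K'$ a coefficient field), with $1$ part of an $A$-basis; over $A$ the bound is elementary, since if $\ord_{\m_A}(g)=\ell$ and $p^e>\ell$, writing $g$ as a unit times its lowest-order monomial plus higher-order terms and expanding $F^e_* g$ in the monomial basis of $F^e_* A$ exhibits a unit coordinate, so $F^e_* g$ splits off (recovering the regular case of Proposition~\ref{ZN-frobenius}).

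The genuine obstacle is to transfer this from $A$ to $R$ with only a \emph{linear} loss in the Frobenius exponent: given $z\in R$ of small $\m$-adic order one must produce an $R$-linear map $F^e_* R\to R$ carrying $F^e_* z$ to a unit, using the $A$-splitting together with strong $F$-regularity rather than mere $D$-simplicity. I expect the right tool is a uniform test-exponent bound in the spirit of Aberbach--MacCrimmon and Polstra--Tucker: strong $F$-regularity supplies, for a fixed test element, a bound---linear in $\ord_\m(z)$---on the power of Frobenius needed to split $z$, equivalently a linear lower bound on the $\m$-adic order of every element of $I_e$. With such a bound in hand the reductions above finish the argument; without it one obtains only that for each fixed $n$ some non-uniform $m$ satisfies $\m\dif{m}{\ZZ}\subseteq\m^{n}$, i.e.\ the weaker non-linear comparison.
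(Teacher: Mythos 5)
Your reduction to the local case, your use of the $\p$-primary property and Lemma~\ref{diff-localize}, and your treatment of the ``not strongly $F$-regular $\Rightarrow$ containment fails'' and ``containment $\Rightarrow$ $\cP=0$ $\Rightarrow$ $D$-simple $\Rightarrow$ strongly $F$-regular'' directions all match the paper's argument. The problem is the forward implication. Everything you do there reduces, as you say, to producing a uniform linear bound: a constant (independent of $e$) forcing $I_e=\m^{\Fdif{p^e}}$ into a power of $\m$ growing linearly in $p^e$. You do not prove this. Your route through completion, a minimal reduction, Cohen--Macaulayness, and a finite free extension $A=K'\ps{x_1,\dots,x_d}\subseteq R$ only establishes the bound for the regular ring $A$, and you concede that transferring it to $R$ with linear loss is exactly the ``genuine obstacle'' you cannot close; gesturing at ``a uniform test-exponent bound in the spirit of Aberbach--MacCrimmon and Polstra--Tucker'' names the kind of statement needed but neither proves nor precisely cites it. So as written the proposal only yields, for each fixed $n$, some non-uniform $m$ with $\m\dif{m}{K}\subseteq\m^n$, which is weaker than the theorem.

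The paper closes this gap by invoking the key lemma of Aberbach--Leuschke \cite{AL}: for a strongly $F$-regular local ring there is a constant $e_0$ with $I_{e+e_0}(R)\subseteq\m^{[p^e]}$ for all $e$. (This is precisely the uniform, test-element-based statement you were hoping for; it is the engine of their proof that strong $F$-regularity gives positive $F$-signature.) With it, and with $\m\dif{\mu p^e}{K}\subseteq I_e$ coming from $D^{(e)}\subseteq D^{\mu(p^e-1)}_{R|K}$ as in Proposition~\ref{RemYek} (the step you also use via Lemma~\ref{LemmaCofinalDif}), the forward direction is two lines: setting $l(n)=\lceil\log_p n\rceil$,
\[
\m\dif{\mu p^{e_0+1} n}{K}\subseteq\m\dif{\mu p^{l(n)+e_0}}{K}\subseteq I_{l(n)+e_0}(R)\subseteq\m^{[p^{l(n)}]}\subseteq\m^{n},
\]
so $C=\mu p^{e_0+1}$ works. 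Note also that passing through Frobenius powers $\m^{[p^{l(n)}]}$ avoids your intermediate ideals $\m^{\lceil cp^e\rceil}$ entirely. If you want a self-contained argument rather than the citation, the statement you must prove is exactly $I_{e+e_0}\subseteq\m^{[p^e]}$ for some fixed $e_0$, and the standard proof uses a test element $c$ with $cR^{1/p^{e_0}}\subseteq\sum_{\phi}\phi(R^{1/p^{e_0}})$-type splitting data, not the freeness of $R$ over $A$.
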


\begin{proof}
	Since the ideals $\p^{(n)}$ and $\p\dif{n}{K}$ are $\p$-primary for all $n$, by Proposition~\ref{diff-localize}, the condition on the right-hand side is equivalent to that for some $C$, $(\p R_{\p})\dif{Cn}{K}=\p\dif{Cn}{K} R_{\p} \subseteq \p^{n}R_{\p}$ for all $n>0$. Thus, since $F$-finiteness and $F$-purity localize, we can assume that $(R,\m)$ is local and $\p=\m$.
	
	If $R$ is not strongly $F$-regular, then $R$ is not $D$-simple  \cite{DModFSplit}, so $\cP_K\neq 0$ by Corollary~\ref{CorDifPrimeDsimple}. Since $\bigcap_{n\in \NN} \m^n =0$, the condition on the right-hand side fails.
	
	Now, assume that $R$ is strongly $F$-regular. For an integer $n$, set $l(n)=\lceil \log_p(n) \rceil$: this is the smallest integer $e$ such that $p^e\geq n$. Observe that  $n \leq p^{l(n)} \leq pn$. If $\mu$ is the embedding dimension of $R$, then $D^{(e)}_{R | K} \subseteq D_{R|K}^{\mu(p^e-1)}$.  We obtain that $\m\dif{\mu p^e}{K} \subseteq  I_e(R)$.	There is a constant $e_0$ such that $I_{e+e_0}(R)\subseteq \m^{[p^e]}$ for all $e$ \cite{AL}. Put together, we obtain
	
	\[ \m\dif{\mu p^{e_0+1} n}{K} \subseteq \m\dif{\mu p^{l(n)+e_0}}{K} \subseteq I_{l(n) + e_0}(R) \subseteq \m^{[p^{l(n)}]} \subseteq \m^n, \]
	so the constant $C=\mu p^{e_0+1}$ suffices.
\end{proof}

This result should be compared with the linear comparison between
ordinary and symbolic powers \cite{SwansonLinear}; see also \cite{HKV,HKV2}. Connections between strong $F$-regularity and symbolic powers are not new; we refer the reader to \cite{HH-Symb,GrifoHuneke,Smolkin,JavierSmolkin}.

We end this section with an algorithm to compute symbolic powers for radical ideals. For algorithmic aspects of the computations of differential powers for other rings, see Remark~\ref{JacobiTayloralgorithms}. 

\begin{proposition}\label{PropAlgSymbPowers}
	Let $S=K[x_1,\ldots,x_d]$  is a polynomial ring over a field $K$, and $J$ an ideal.
	Let $T=K[x_1,\ldots,x_d,\tilde{x}_1 , \ldots , \tilde{x}_d]\cong \ModDif{}{S}{K}$, and $\Delta=(x_1-\tilde{x}_1,\dots,x_d-\tilde{x}_d)$.
	Then,
	$$
	J\dif{n}{K}=\Big( \tilde{J} + \Delta^n \Big)\cap S,
	$$
	where $\tilde{J}$  denotes the ideal in $T$ generated by the elements in $J$ written in the variables $\{\tilde{x}_i\}$. As a consequence,
	if $K$ is perfect and $I$ is radical, then 
	$$J^{(n)}=\Big( \tilde{J} + \Delta^n \Big)\cap S.$$
\end{proposition}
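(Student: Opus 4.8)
The plan is to prove the formula for $J\dif{n}{K}$ directly, and then deduce the statement about symbolic powers from the Zariski--Nagata theorem recalled at the start of this section. For the first part, recall from Lemma~\ref{principalpartdescription} (with no $f_i$'s, i.e. $R=S$) that $\ModDif{}{S}{K}\cong T=K[x_1,\dots,x_d,\tilde x_1,\dots,\tilde x_d]$ under the identification $y_i=\tilde x_i-x_i$, and that under this identification $\Delta_{S|K}$ corresponds to $\Delta=(x_1-\tilde x_1,\dots,x_d-\tilde x_d)$, so $\ModDif{n-1}{S}{K}\cong T/\Delta^n$, and the universal differential $d^{n-1}\colon S\to \ModDif{n-1}{S}{K}$ is the composition $S\to T\to T/\Delta^n$ sending $g(x)\mapsto g(\tilde x)\bmod \Delta^n$. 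By definition $J\dif{n}{K}=\{g\in S\mid \delta(g)\in J\ \forall\,\delta\in D^{n-1}_{S|K}\}$. Using Proposition~\ref{representing-differential} (the case $M=N=R=S$, $I=\Delta^n_{S|K}$), a differential operator of order $\le n-1$ is the same as an $S$-linear map $\phi\colon \ModDif{n-1}{S}{K}\to S$, and $\delta=\phi\circ d^{n-1}$. Since $\ModDif{n-1}{S}{K}$ is a free $S$-module (this is the smooth case, Example~\ref{example-regular-D}, and indeed $T/\Delta^n$ is $S$-free on the monomials $y^\lambda$, $|\lambda|\le n-1$), we get: $g\in J\dif{n}{K}$ iff $\phi(d^{n-1}(g))\in J$ for every $S$-linear $\phi$, iff every coordinate of $d^{n-1}(g)$ in the free basis $\{y^\lambda\}$ lies in $J$, iff $d^{n-1}(g)\in J\cdot\ModDif{n-1}{S}{K}$.

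Now I translate the condition $d^{n-1}(g)\in J\cdot \ModDif{n-1}{S}{K}$ back into $T$. Writing $\tilde J$ for the ideal of $T$ generated by the elements of $J$ in the $\tilde x$-variables, the image of $J\cdot\ModDif{n-1}{S}{K}$ in $T/\Delta^n$ is exactly $(\tilde J+\Delta^n)/\Delta^n$: indeed $J\cdot\ModDif{n-1}{S}{K}$ is generated over $S$ (acting through the left copy, i.e. the $x$-variables) by the images $d^{n-1}(h)$ of generators $h$ of $J$, and $d^{n-1}(h)=h(\tilde x)\bmod\Delta^n$ generates $\tilde J\bmod \Delta^n$ as an ideal modulo $\Delta^n$ once one notes that $x$-multiplication and $\tilde x$-multiplication agree modulo $\Delta$, hence modulo $\Delta^n$ differ only by elements already in $\Delta$ times lower terms — more cleanly, $h(\tilde x)\cdot x_i \equiv h(\tilde x)\tilde x_i \pmod{\Delta\cdot(h(\tilde x))}$, and $\Delta\cdot(h(\tilde x))\subseteq \tilde J$. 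So $d^{n-1}(g)\in J\cdot\ModDif{n-1}{S}{K}$ iff $g(\tilde x)\in \tilde J+\Delta^n$ in $T$. Finally, $g(\tilde x)$ and $g(x)$ differ by an element of $\Delta$, and in fact $g(x)-g(\tilde x)\in\Delta$, but I want an honest statement about $g(x)$: since $\tilde J+\Delta^n$ contains $\Delta$ only modulo... — here I should be careful and instead argue symmetrically. By the symmetry $x_i\leftrightarrow\tilde x_i$ of the whole setup (it interchanges $\tilde J$ with the ideal $J$ generated in the $x$-variables, and fixes $\Delta^n$), the condition "$g(\tilde x)\in\tilde J+\Delta^n$" is equivalent to "$g(x)\in J+\Delta^n$" — this uses that applying the swap automorphism $\sigma$ of $T$ to the membership $g(\tilde x)\in\tilde J+\Delta^n$ gives $g(x)\in J+\Delta^n$. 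Therefore $g\in J\dif{n}{K}$ iff $g(x)\in (JT+\Delta^n)\cap S = (\tilde J+\Delta^n)^\sigma\cap S$; and since $\sigma$ fixes $S$ (as a subring generated by the $x_i$) pointwise only up to relabeling — the cleanest formulation is $(\,\tilde J+\Delta^n\,)\cap S$, which equals $(JT+\Delta^n)\cap S$ by applying $\sigma$. This yields the claimed equality $J\dif{n}{K}=(\tilde J+\Delta^n)\cap S$.

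For the consequence: assume $K$ is perfect and $I=J$ is radical. The Zariski--Nagata theorem in the form recalled at the beginning of the section states precisely that $J\dif{n}{K}=J^{(n)}$ for radical $J$ in $K[x_1,\dots,x_d]$ over a perfect field. Combining this with the formula just proved gives $J^{(n)}=(\tilde J+\Delta^n)\cap S$, as asserted.

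I expect the main obstacle to be the careful bookkeeping in the middle step: cleanly identifying the image of $J\cdot\ModDif{n-1}{S}{K}$ inside $T/\Delta^n$ with $(\tilde J+\Delta^n)/\Delta^n$ and handling the asymmetry between the "left" ($x$) and "right" ($\tilde x$) module structures — one must make sure that multiplying a generator $h(\tilde x)$ by $x_i$ versus by $\tilde x_i$ does not change the ideal generated modulo $\Delta^n$, for which the inclusion $\Delta\cdot(h(\tilde x))\subseteq \tilde J$ together with the fact that $\Delta^n\supseteq \Delta\cdot\Delta^{n-1}$ does the job after an easy induction on the number of $x$-factors. Everything else (freeness of $\ModDif{n-1}{S}{K}$, the dictionary between operators and $S$-linear forms, the symmetry automorphism $\sigma$) is already available from the cited results. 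I would also double-check that $\Delta^n\cap S=0$ is not needed — it is not, since we intersect the whole ideal $\tilde J+\Delta^n$ with $S$, not $\Delta^n$ alone.
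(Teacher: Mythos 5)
Your overall strategy is close to the paper's (operators of order $\le n-1$ are left-$S$-linear forms on the free module $\ModDif{n-1}{S}{K}\cong T/\Delta^{n}$, hence $g\in J\dif{n}{K}$ if and only if $d^{n-1}(g)\in J\cdot \ModDif{n-1}{S}{K}$, and the last sentence follows from Zariski--Nagata), but the translation step into $T$ is wrong and the error propagates to a false conclusion. With $x_i=x_i\otimes 1$ and $\tilde x_i=1\otimes x_i$, the submodule $J\cdot\ModDif{n-1}{S}{K}$ (the ideal $J$ acting through the left, i.e.\ $x$-, copy) is the image in $T/\Delta^{n}$ of $J(x)T+\Delta^{n}$, \emph{not} of $\tilde J+\Delta^{n}$; nor is it generated over $S$ by the elements $d^{n-1}(h)$ for $h$ generating $J$ (it contains $h(x)\cdot\overline{1}$, which in general does not lie in the left $S$-span of $h(\tilde x)$). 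Concretely, for $S=K[x]$, $J=(x)$, $n=2$ one has $J(x)T+\Delta^{2}=(x,\tilde x^{2})$ while $\tilde J+\Delta^{2}=(\tilde x,x^{2})$, and these differ. So the correct intermediate criterion is $g(\tilde x)\in J(x)T+\Delta^{n}$, not $g(\tilde x)\in\tilde J+\Delta^{n}$; one application of the swap $\sigma$ to the former gives exactly $g(x)\in\tilde J+\Delta^{n}$, which is the statement. Because you start from the wrong criterion, your swap lands you on $g(x)\in J(x)T+\Delta^{n}$, and you are then forced to assert $(J(x)T+\Delta^{n})\cap S=(\tilde J+\Delta^{n})\cap S$ ``by applying $\sigma$''; this is false, since $\sigma$ does not fix $S$ (it sends $S$ to $\tilde S$). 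In the example above, $(J(x)T+\Delta^{2})\cap S=(x)$ whereas $(\tilde J+\Delta^{2})\cap S=(x^{2})=J\dif{2}{K}$; in fact $(J(x)T+\Delta^{n})\cap S$ always contains $J$, so it cannot equal $J\dif{n}{K}$ in general.

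The repair is short: since $T/\Delta^{n}$ is free over the $x$-copy on the classes of $y^{\lambda}=(\tilde x-x)^{\lambda}$, $|\lambda|\le n-1$, and $g(\tilde x)\equiv\sum_{\lambda}\tfrac{1}{\lambda!}\partial^{\lambda}g\,(x)\,y^{\lambda}$ modulo $\Delta^{n}$, the condition $d^{n-1}(g)\in J\cdot\ModDif{n-1}{S}{K}$ says precisely that $g(\tilde x)\in J(x)T+\Delta^{n}$; now apply $\sigma$ once to conclude $g(x)\in\tilde J+\Delta^{n}$, i.e.\ $J\dif{n}{K}=(\tilde J+\Delta^{n})\cap S$. (The paper's proof avoids the swap entirely by identifying the universal differential with the inclusion of the $x$-copy, so that operators become $\tilde S$-linear forms, and using freeness of $T/\Delta^{n}$ over $\tilde S$ to write $\tilde J+\Delta^{n}$ as an intersection of preimages under such forms.) Your deduction of the symbolic-power statement from the Zariski--Nagata theorem, given the displayed formula, is fine.
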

\begin{proof} Let $\tilde{S}$ be the polynomial subring of $T$ generated by the variables $\{\tilde{x}_i\}$.
	We know that $T\cong {\ModDif{}{S}{K}}$ and that $d$ corresponds to the inclusion $S\to T.$
	We note that 
	$T/\Delta^n \cong {\ModDif{n-1}{S}{K}}$. 
	We have then that
	$T/\Delta^n$ is a free $S$-module. As a consequence, $\Big( \tilde{J} + \Delta^n \Big)=\bigcap_{\phi} \phi^{-1}(\tilde{J})$, where $\phi$ runs over all $\tilde{S}$-module morphisms $\phi:S/\Delta^n\to  \tilde{S}/\tilde{J}$.
	Then, by Proposition~\ref{universaldifferential}, we have 
	$J\dif{n}{K}=\Big( \tilde{J} + \Delta^n \Big)\cap S$.
	The claim about symbolic powers follows from the characterization of differential powers in this case \cite{SurveySP}.  
\end{proof}

The formula in Proposition \ref{PropAlgSymbPowers} is similar in spirit to the characterization of symbolic powers in terms of joins of ideals \cite{Sullivant}. The key advantage of the formula above is that the computation involves only twice (not three times) as many variables.

\section*{Acknowledgments}

  We thank Josep \`Alvarez Montaner, Alessio Caminata, Elo\'isa Grifo,  Daniel Hern\'andez, Mel Hochster, Craig Huneke, Luis Narv\'aez-Macarro, Claudia Miller, Mircea Musta\c{t}\u{a}, Julio Moyano, Eamon Quinlan, Anurag K.~Singh, Ilya Smirnov, Karen E.~Smith, Jonathan Steinbuch, Robert Walker, Wenliang Zhang,  {and the anonymous referee} for helpful conversations and comments. 

We are very thankful for the work of  Gennady Lyubeznik, which introduced us to $D$-module theory.
%%%%%%%%%%%%%%%%%%%%%%%%%%%%%%%%%%%%%%%%%%%%%%%%%%%%%%%%%%%%%

\printindex

%%%%%%%%%%%%%%%%%%%%%%%%%%%%%%%%%%%%%%%%%%%%%%%%%%%%%%%%%

\bibliographystyle{alpha}
\bibliography{References}

\newcommand{\etalchar}[1]{$^{#1}$}
\def\cprime{$'$} \def\cprime{$'$}
\begin{thebibliography}{{\`A}MHNB17}

\bibitem[AE05]{AE}
Ian~M. Aberbach and Florian Enescu.
\newblock The structure of {$F$}-pure rings.
\newblock {\em Math. Z.}, 250(4):791--806, 2005.

\bibitem[AL03]{AL}
Ian~M. Aberbach and Graham~J. Leuschke.
\newblock The {$F$}-signature and strong {$F$}-regularity.
\newblock {\em Math. Res. Lett.}, 10(1):51--56, 2003.

\bibitem[{\`A}MHNB17]{AMHNB}
Josep {\`A}lvarez~Montaner, Craig Huneke, and Luis N{\'u}{\~n}ez-Betancourt.
\newblock {$D$}-modules, {B}ernstein-{S}ato polynomials and {$F$}-invariants of
  direct summands.
\newblock {\em Adv. Math.}, 321:298--325, 2017.

\bibitem[{Ati}57]{atiyahelliptic}
Michael~F. {Atiyah}.
\newblock {Vector bundles over an elliptic curve.}
\newblock {\em {Proc. Lond. Math. Soc. (3)}}, 7:414--452, 1957.

\bibitem[{Avr}81]{Avramovcomplete}
Luchezar~L. {Avramov}.
\newblock {Complete intersections and symmetric algebras.}
\newblock {\em {J. Algebra}}, 73:248--263, 1981.

\bibitem[BC17]{SymSig}
Holger Brenner and Alessio Caminata.
\newblock The symmetric signature.
\newblock {\em Commun. Algebra,}, 45(9):3730--3756, 2017.

\bibitem[BC19]{BCHigh}
Holger Brenner and Alessio Caminata.
\newblock Differential symmetric signature in high dimension.
\newblock {\em Proc. Amer. Math. Soc.}, 147(10):4147--4159, 2019.

\bibitem[BD20]{BarajasDuarte}
Paul Barajas and Daniel Duarte.
\newblock On the module of differentials of order {$n$} of hypersurfaces.
\newblock {\em J. Pure Appl. Algebra}, 224(2):536--550, 2020.

\bibitem[Ber72]{BernsteinPoly}
I.~N. Bern{\v{s}}te{\u\i}n.
\newblock Analytic continuation of generalized functions with respect to a
  parameter.
\newblock {\em Funkcional. Anal. i Prilo\v zen.}, 6(4):26--40, 1972.

\bibitem[BGG72]{DiffNonNoeth}
I.~N. Bern{\v{s}}te{\u\i}n, I.~M. Gel{\cprime}fand, and S.~I. Gel{\cprime}fand.
\newblock Differential operators on a cubic cone.
\newblock {\em Uspehi Mat. Nauk}, 27(1(163)):185--190, 1972.

\bibitem[BH93]{BrHe}
Winfried Bruns and J{\"u}rgen Herzog.
\newblock {\em Cohen-{M}acaulay rings}, volume~39 of {\em Cambridge Studies in
  Advanced Mathematics}.
\newblock Cambridge University Press, Cambridge, 1993.

\bibitem[BV88]{BrunsVetter}
Winfried {Bruns} and Udo {Vetter}.
\newblock {\em {Determinantal rings.}}
\newblock Berlin etc.: Springer-Verlag, 1988.

\bibitem[CEMS18]{CEMS}
Alberto Chiecchio, Florian Enescu, Lance~Edward Miller, and Karl Schwede.
\newblock Test ideals in rings with finitely generated anti-canonical algebras.
\newblock {\em J. Inst. Math. Jussieu}, 17(1):171--206, 2018.

\bibitem[CI17]{ChristophersenIlten}
Jan~Arthur {Christophersen} and Nathan~Owen {Ilten}.
\newblock {Vanishing cotangent cohomology for Pl\"ucker algebras.}
\newblock {\em {Commun. Algebra}}, 45(9):3929--3947, 2017.

\bibitem[Con94]{ConcaSym}
Aldo Conca.
\newblock Gr\"obner bases of ideals of minors of a symmetric matrix.
\newblock {\em J. Algebra}, 166(2):406--421, 1994.

\bibitem[CRS18]{JavierSmolkin}
Javier Carvajal-Rojas and Daniel Smolkin.
\newblock The uniform symbolic topology property for diagonally {$ F $}-regular
  algebras.
\newblock {\em arXiv preprint arXiv:1807.03928}, 2018.

\bibitem[Cut14]{Cutkosky}
Steven~Dale Cutkosky.
\newblock Asymptotic multiplicities of graded families of ideals and linear
  series.
\newblock {\em Adv. Math.}, 264:55--113, 2014.

\bibitem[DDSG{\etalchar{+}}18]{SurveySP}
Hailong Dao, Alessandro De~Stefani, Elo{\'i}sa Grifo, Craig Huneke, and Luis
  N{\'u}{\~n}ez-Betancourt.
\newblock Symbolic powers of ideals.
\newblock In {\em Singularities and foliations. geometry, topology and
  applications}, volume 222 of {\em Springer Proc. Math. Stat.}, pages
  387--432. Springer, Cham, 2018.

\bibitem[DSGJar]{DSGJ}
Alessandro De~Stefani, Elo{\'i}sa Grifo, and Jack Jeffries.
\newblock A {Zariski-Nagata} theorem for smooth {${\mathbb Z}$}-algebras.
\newblock {\em J. Reine Angew. Math.}, to appear.

\bibitem[Dua17]{Duarte}
Daniel Duarte.
\newblock Computational aspects of the higher {N}ash blowup of hypersurfaces.
\newblock {\em J. Algebra}, 477:211--230, 2017.

\bibitem[Eis95]{Eisenbud}
David Eisenbud.
\newblock {\em Commutative algebra: with a view toward algebraic geometry},
  volume 150 of {\em Graduate Texts in Mathematics}.
\newblock Springer-Verlag, New York, 1995.

\bibitem[Fed83]{FedderFputityFsing}
Richard Fedder.
\newblock {$F$}-purity and rational singularity.
\newblock {\em Trans. Amer. Math. Soc.}, 278(2):461--480, 1983.

\bibitem[{Fle}81]{Flennerrational}
Hubert {Flenner}.
\newblock {Rationale quasihomogene Singularit\"aten.}
\newblock {\em {Arch. Math.}}, 36:35--44, 1981.

\bibitem[{Fle}84]{Flennerrestriction}
Hubert {Flenner}.
\newblock {Restrictions of semistable bundles on projective varieties.}
\newblock {\em {Comment. Math. Helv.}}, 59:635--650, 1984.

\bibitem[FT13]{FujTak}
Osamu Fujino and Shunsuke Takagi.
\newblock On the {$F$}-purity of isolated log canonical singularities.
\newblock {\em Compos. Math.}, 149(9):1495--1510, 2013.

\bibitem[GH17]{GrifoHuneke}
Elo{\'\i}sa Grifo and Craig Huneke.
\newblock Symbolic powers of ideals defining {F}-pure and strongly {F}-regular
  rings.
\newblock {\em Int. Math. Res. Not. IMRN}, 2017.

\bibitem[Gla96]{Glassbrenner}
Donna Glassbrenner.
\newblock Strong {$F$}-regularity in images of regular rings.
\newblock {\em Proc. Amer. Math. Soc.}, 124(2):345--353, 1996.

\bibitem[GM04]{GesselMonsky}
I.~M. Gessel and P.~Monsky.
\newblock The limit as {$p \rightarrow \infty$} of the {Hilbert-Kunz}
  multiplicity of {$\sum x^{d_i}$}.
\newblock {\em preprint arXiv:1007.2004}, 2004.

\bibitem[Gro67]{EGAIV}
A.~Grothendieck.
\newblock \'{E}l{\'e}ments de g{\'e}om{\'e}trie alg{\'e}brique. {IV}. \'{E}tude
  locale des sch{\'e}mas et des morphismes de sch{\'e}mas {IV}.
\newblock {\em Inst. Hautes {\'E}tudes Sci. Publ. Math.}, (32):361, 1967.

\bibitem[{Har}70]{Hartshorneamplesubvarieties}
Robin {Hartshorne}.
\newblock {Ample subvarieties of algebraic varieties. Notes written in
  collaboration with C. Musili.}
\newblock {Lecture Notes in Mathematics. 156. Berlin-Heidelberg-New York:
  Springer-Verlag. XIII, 256 p. (1970).}, 1970.

\bibitem[{Har}95]{Harainjectivity}
Nobuo {Hara}.
\newblock {$F$-injectivity in negative degree and tight closure in graded
  complete intersection rings.}
\newblock {\em {C. R. Math. Acad. Sci., Soc. R. Can.}}, 17(6):247--252, 1995.

\bibitem[{Har}96]{Hararational}
Nobuo {Hara}.
\newblock {A characterization of rational singularities in terms of injectivity
  of Frobenius maps.}
\newblock {\em {RIMS Kokyuroku}}, 964:138--144, 1996.

\bibitem[Har98]{HaraSing}
Nobuo Hara.
\newblock A characterization of rational singularities in terms of injectivity
  of {F}robenius maps.
\newblock {\em Amer. J. Math.}, 120(5):981--996, 1998.

\bibitem[Her16]{Her}
Daniel~J. Hern\'andez.
\newblock {$F$}-purity versus log canonicity for polynomials.
\newblock {\em Nagoya Math. J.}, 224(1):10--36, 2016.

\bibitem[HH89]{HHStrongFreg}
Melvin Hochster and Craig Huneke.
\newblock Tight closure and strong {$F$}-regularity.
\newblock {\em M\'em. Soc. Math. France (N.S.)}, (38):119--133, 1989.
\newblock Colloque en l'honneur de Pierre Samuel (Orsay, 1987).

\bibitem[HH99]{HHCharZero}
Melvin Hochster and Craig Huneke.
\newblock Tight closure in equal characteristic zero.
\newblock 1999.

\bibitem[HH02]{HH-Symb}
Melvin Hochster and Craig Huneke.
\newblock Comparison of symbolic and ordinary powers of ideals.
\newblock {\em Invent. Math.}, 147(2):349--369, 2002.

\bibitem[HKV09]{HKV}
Craig Huneke, Daniel Katz, and Javid Validashti.
\newblock Uniform equivalence of symbolic and adic topologies.
\newblock {\em Illinois J. Math.}, 53(1):325--338, 2009.

\bibitem[HKV15]{HKV2}
Craig Huneke, Daniel Katz, and Javid Validashti.
\newblock Uniform symbolic topologies and finite extensions.
\newblock {\em J. Pure Appl. Algebra}, 219(3):543--550, 2015.

\bibitem[HL02]{HLMCM}
Craig Huneke and Graham~J. Leuschke.
\newblock Two theorems about maximal {C}ohen-{M}acaulay modules.
\newblock {\em Math. Ann.}, 324(2):391--404, 2002.

\bibitem[HM18]{Hsiao-Matusevich}
Jen-Chieh Hsiao and Laura~Felicia Matusevich.
\newblock Bernstein-{S}ato polynomials on normal toric varieties.
\newblock {\em Michigan Math. J.}, 67(1):117--132, 2018.

\bibitem[HS69]{HeynemannSweedler}
Robert~G Heyneman and Moss~Eisenberg Sweedler.
\newblock Affine {H}opf algebras, {I}.
\newblock {\em J. Algebra}, 13(2):192--241, 1969.

\bibitem[{Hsi}15]{Hsiaobigness}
Jen-Chieh {Hsiao}.
\newblock {A remark on bigness of the tangent bundle of a smooth projective
  variety and $D$-simplicity of its section rings.}
\newblock {\em {J. Algebra Appl.}}, 14(7):10, 2015.

\bibitem[HT92]{HerzogTrung}
J\"urgen Herzog and Ng\^o~Vi\^et Trung.
\newblock Gr\"obner bases and multiplicity of determinantal and {P}faffian
  ideals.
\newblock {\em Adv. Math.}, 96(1):1--37, 1992.

\bibitem[H{\"u}b01]{Hubl-completions}
Reinhold H{\"u}bl.
\newblock Completions of local morphisms and valuations.
\newblock {\em Math. Z.}, 236(1):201--214, 2001.

\bibitem[Hun96]{CraigBookTC}
Craig Huneke.
\newblock {\em Tight closure and its applications}, volume~88 of {\em CBMS
  Regional Conference Series in Mathematics}.
\newblock Published for the Conference Board of the Mathematical Sciences,
  Washington, DC; by the American Mathematical Society, Providence, RI, 1996.
\newblock With an appendix by Melvin Hochster.

\bibitem[Hun13]{CraigSurvey}
Craig Huneke.
\newblock Hilbert-{K}unz multiplicity and the {F}-signature.
\newblock In {\em Commutative algebra}, pages 485--525. Springer, New York,
  2013.

\bibitem[HW02]{HaraWatanabe}
Nobuo Hara and Kei-Ichi Watanabe.
\newblock F-regular and {F}-pure rings vs. log terminal and log canonical
  singularities.
\newblock {\em J. Algebraic Geom.}, 11(2):363--392, 2002.

\bibitem[Ish85]{Ishibashi}
Yasunori Ishibashi.
\newblock Nakai's conjecture for invariant subrings.
\newblock {\em Hiroshima Math. J.}, 15(2):429--436, 1985.

\bibitem[JS]{JS}
Jack Jeffries and Ilya Smirnov.
\newblock Transformation rules for natural multiplicites.
\newblock {\em preprint}.

\bibitem[Kno06]{Knop}
Friedrich Knop.
\newblock Graded cofinite rings of differential operators.
\newblock {\em Michigan Math. J.}, 54(1):3--23, 2006.

\bibitem[{Kol}95]{Kollarrationalcurves}
J\'anos {Koll\'ar}.
\newblock {\em {Rational curves on algebraic varieties.}}
\newblock Berlin: Springer-Verlag, 1995.

\bibitem[{Kol}13]{kollarsingularities}
J\'anos {Koll\'ar}.
\newblock {\em {Singularities of the minimal model program. With the
  collaboration of S\'andor Kov\'acs.}}
\newblock Cambridge: Cambridge University Press, 2013.

\bibitem[Kun69]{KunzReg}
Ernst Kunz.
\newblock Characterizations of regular local rings for characteristic {$p$}.
\newblock {\em Amer. J. Math.}, 91:772--784, 1969.

\bibitem[Kun01]{KunzDiff}
Ernst Kunz.
\newblock The differential {H}ilbert series of a local algebra.
\newblock {\em Arch. Math. (Basel)}, 76(4):274--282, 2001.

\bibitem[{Laz}04]{LazBook1}
Robert {Lazarsfeld}.
\newblock {\em {Positivity in algebraic geometry I: Classical setting: Line
  bundles and linear series. II: Positivity for vector bundles, and multiplier
  ideals.}}
\newblock Berlin: Springer, 2004.

\bibitem[{Lip}65]{LipmanfreeDerivation}
J.~{Lipman}.
\newblock {Free derivation modules on algebraic varieties.}
\newblock {\em {Am. J. Math.}}, 87:874--898, 1965.

\bibitem[LS89]{LS}
T.~Levasseur and J.~T. Stafford.
\newblock Rings of differential operators on classical rings of invariants.
\newblock {\em Mem. Amer. Math. Soc.}, 81(412):vi+117, 1989.

\bibitem[Lyu00]{LyuUMC}
Gennady Lyubeznik.
\newblock Finiteness properties of local cohomology modules for regular local
  rings of mixed characteristic: the unramified case.
\newblock {\em Comm. Algebra}, 28(12):5867--5882, 2000.
\newblock Special issue in honor of Robin Hartshorne.

\bibitem[Mas91]{Masson}
Gisli Masson.
\newblock {\em Rings of differential operators and \'etale homomorphisms}.
\newblock ProQuest LLC, Ann Arbor, MI, 1991.
\newblock Thesis (Ph.D.)--Massachusetts Institute of Technology.

\bibitem[Mat89]{MatsumuraRing}
Hideyuki Matsumura.
\newblock {\em Commutative ring theory}, volume~8 of {\em Cambridge Studies in
  Advanced Mathematics}.
\newblock Cambridge University Press, Cambridge, second edition, 1989.
\newblock Translated from the Japanese by M. Reid.

\bibitem[MR01]{MCR}
J.~C. McConnell and J.~C. Robson.
\newblock {\em Noncommutative {N}oetherian rings}, volume~30 of {\em Graduate
  Studies in Mathematics}.
\newblock American Mathematical Society, Providence, RI, revised edition, 2001.
\newblock With the cooperation of L. W. Small.

\bibitem[MS11]{MustataSrinivas}
Mircea Musta{\c{t}}{\u{a}} and Vasudevan Srinivas.
\newblock Ordinary varieties and the comparison between multiplier ideals and
  test ideals.
\newblock {\em Nagoya Math. J.}, 204:125--157, 2011.

\bibitem[Mus87]{Musson}
Ian~M. Musson.
\newblock Rings of differential operators on invariant rings of tori.
\newblock {\em Trans. Amer. Math. Soc.}, 303(2):805--827, 1987.

\bibitem[Nag75]{Nagata}
Masayoshi Nagata.
\newblock {\em Local rings}.
\newblock Robert E. Krieger Publishing Co., Huntington, N.Y., 1975.
\newblock Corrected reprint.

\bibitem[Nor02]{Noro}
Masayuki Noro.
\newblock An efficient modular algorithm for computing the global
  {$b$}-function.
\newblock In {\em Mathematical software ({B}eijing, 2002)}, pages 147--157.
  World Sci. Publ., River Edge, NJ, 2002.

\bibitem[Oak97]{Oaku}
Toshinori Oaku.
\newblock An algorithm of computing {$b$}-functions.
\newblock {\em Duke Math. J.}, 87(1):115--132, 1997.

\bibitem[{Pet}01]{Peternell}
Thomas {Peternell}.
\newblock {Subsheaves in the tangent bundle: Integrability, stability and
  positivity.}
\newblock In {\em {School on vanishing theorems and effective results in
  algebraic geometry. Lecture notes of the school held in Trieste, Italy, April
  25--May 12, 2000}}, pages 285--334. Trieste: The Abdus Salam International
  Centre for Theoretical Physics, 2001.

\bibitem[Sch95]{Schwarz}
Gerald~W. Schwarz.
\newblock Lifting differential operators from orbit spaces.
\newblock {\em Ann. Sci. \'Ecole Norm. Sup. (4)}, 28(3):253--305, 1995.

\bibitem[Sei67]{Seidenberg}
A.~Seidenberg.
\newblock Differential ideals in rings of finitely generated type.
\newblock {\em Amer. J. Math.}, 89:22--42, 1967.

\bibitem[{Sei}97]{SeibertHilbertKunz}
Gerhard {Seibert}.
\newblock {The Hilbert-Kunz function of rings of finite Cohen-Macaulay type.}
\newblock {\em {Arch. Math.}}, 69(4):286--296, 1997.

\bibitem[SH06]{SwansonHuneke}
Irena Swanson and Craig Huneke.
\newblock {\em Integral closure of ideals, rings, and modules}, volume 336 of
  {\em London Mathematical Society Lecture Note Series}.
\newblock Cambridge University Press, Cambridge, 2006.

\bibitem[Sin05]{SinghSemigroup}
Anurag~K. Singh.
\newblock The {$F$}-signature of an affine semigroup ring.
\newblock {\em J. Pure Appl. Algebra}, 196(2-3):313--321, 2005.

\bibitem[Smi81]{PaulSmith}
S.~P. Smith.
\newblock An example of a ring {M}orita equivalent to the {W}eyl algebra
  {$A_{1}$}.
\newblock {\em J. Algebra}, 73(2):552--555, 1981.

\bibitem[Smi95]{DModFSplit}
Karen~E. Smith.
\newblock The {$D$}-module structure of {$F$}-split rings.
\newblock {\em Math. Res. Lett.}, 2(4):377--386, 1995.

\bibitem[Smi97]{KarenFrational}
Karen~E. Smith.
\newblock {$F$}-rational rings have rational singularities.
\newblock {\em Amer. J. Math.}, 119(1):159--180, 1997.

\bibitem[Smoar]{Smolkin}
Daniel Smolkin.
\newblock A new subadditivity formula for test ideals.
\newblock {\em J. Pure Appl. Algebra}, to appear.

\bibitem[SS72]{SatoPoly}
Mikio Sato and Takuro Shintani.
\newblock On zeta functions associated with prehomogeneous vector spaces.
\newblock {\em Proc. Nat. Acad. Sci. U.S.A.}, 69:1081--1082, 1972.

\bibitem[SS88]{SmithStafford}
S.~P. Smith and J.~T. Stafford.
\newblock Differential operators on an affine curve.
\newblock {\em Proc. London Math. Soc. (3)}, 56(2):229--259, 1988.

\bibitem[{Sta}18]{stacks-project}
The {Stacks Project Authors}.
\newblock \textit{Stacks Project}.
\newblock \url{https://stacks.math.columbia.edu}, 2018.

\bibitem[Sul08]{Sullivant}
Seth Sullivant.
\newblock Combinatorial symbolic powers.
\newblock {\em J. Algebra}, 319(1):115--142, 2008.

\bibitem[SUV97]{SimisUlrichVasconcelostangentalgebrastangentstar}
Aron {Simis}, Bernd {Ulrich}, and Wolmer~V. {Vasconcelos}.
\newblock {Tangent star cones.}
\newblock {\em {J. Reine Angew. Math.}}, 483:23--59, 1997.

\bibitem[SVdB97]{SmithVDB}
Karen~E. Smith and Michel Van~den Bergh.
\newblock Simplicity of rings of differential operators in prime
  characteristic.
\newblock {\em Proc. London Math. Soc. (3)}, 75(1):32--62, 1997.

\bibitem[Swa00]{SwansonLinear}
Irena Swanson.
\newblock Linear equivalence of ideal topologies.
\newblock {\em Math. Z.}, 234(4):755--775, 2000.

\bibitem[{Swi}17]{Switala}
Nicholas {Switala}.
\newblock {On the de Rham homology and cohomology of a complete local ring in
  equicharacteristic zero.}
\newblock {\em {Compos. Math.}}, 153(10):2075--2146, 2017.

\bibitem[SZ18]{SwiZha}
Nicholas Switala and Wenliang Zhang.
\newblock Duality and de {R}ham cohomology for graded {D}-modules.
\newblock {\em Adv. Math.}, 340:1141--1165, 2018.

\bibitem[Tak13]{TakagiANT}
Shunsuke Takagi.
\newblock Adjoint ideals and a correspondence between log canonicity and
  {$F$}-purity.
\newblock {\em Algebra Number Theory}, 7(4):917--942, 2013.

\bibitem[Tra00]{Traves}
William~N. Traves.
\newblock Tight closure and differential simplicity.
\newblock {\em J. Algebra}, 228(2):457--476, 2000.

\bibitem[Tuc12]{TuckerFSig}
Kevin Tucker.
\newblock {$F$}-signature exists.
\newblock {\em Invent. Math.}, 190(3):743--765, 2012.

\bibitem[VK12]{VonKorff}
Michael~R. Von~Korff.
\newblock {\em The {F}-{S}ignature of {T}oric {V}arieties}.
\newblock ProQuest LLC, Ann Arbor, MI, 2012.
\newblock Thesis (Ph.D.)--University of Michigan.

\bibitem[Wat83]{Watanaberational}
Kei-ichi Watanabe.
\newblock {Rational singularities with k*-action.}
\newblock {Commutative algebra, Proc. Conf., Trento/Italy 1981, Lect. Notes
  Pure Appl. Math. 84, 339-351 (1983).}, 1983.

\bibitem[Wey97]{Weyl}
Hermann Weyl.
\newblock {\em The classical groups}.
\newblock Princeton Landmarks in Mathematics. Princeton University Press,
  Princeton, NJ, 1997.
\newblock Their invariants and representations, Fifteenth printing, Princeton
  Paperbacks.

\bibitem[WY04]{WatanabeYoshida}
Kei-ichi Watanabe and Ken-ichi Yoshida.
\newblock Minimal relative {H}ilbert-{K}unz multiplicity.
\newblock {\em Illinois J. Math.}, 48(1):273--294, 2004.

\bibitem[Yao06]{YaoObsFsig}
Yongwei Yao.
\newblock Observations on the {$F$}-signature of local rings of characteristic
  {$p$}.
\newblock {\em J. Algebra}, 299(1):198--218, 2006.

\bibitem[Yek92]{Ye}
Amnon Yekutieli.
\newblock An explicit construction of the {G}rothendieck residue complex.
\newblock {\em Ast{\'e}risque}, (208):127, 1992.
\newblock With an appendix by Pramathanath Sastry.

\bibitem[Yek95]{YekDuality}
Amnon Yekutieli.
\newblock Traces and differential operators over {B}e\u{\i}linson completion
  algebras.
\newblock {\em Compositio Math.}, 99(1):59--97, 1995.

\bibitem[Zar49]{ZariskiHolFunct}
Oscar Zariski.
\newblock A fundamental lemma from the theory of holomorphic functions on an
  algebraic variety.
\newblock {\em Ann. Mat. Pura Appl. (4)}, 29:187--198, 1949.

\end{thebibliography}

\end{document}